%% LyX 2.2.3 created this file.  For more info, see http://www.lyx.org/.
%% Do not edit unless you really know what you are doing.
\documentclass[oneside,french,english]{amsart}
\usepackage[T1]{fontenc}
\usepackage[latin9]{inputenc}
\usepackage{geometry}
\geometry{verbose,tmargin=3cm,bmargin=3cm,lmargin=3cm,rmargin=3cm}
\setcounter{tocdepth}{4}
\usepackage{babel}
\makeatletter
\addto\extrasfrench{%
   \providecommand{\fg}{\ifdim\lastskip>\z@\unskip\fi~\frqq}%
}

\makeatother
\usepackage{units}
\usepackage{mathrsfs}
\usepackage{amstext}
\usepackage{amsthm}
\usepackage{amssymb}
\usepackage{stmaryrd}
\usepackage{graphicx}
\usepackage{esint}
\usepackage{xargs}[2008/03/08]
\usepackage[unicode=true,pdfusetitle,
 bookmarks=true,bookmarksnumbered=false,bookmarksopen=false,
 breaklinks=false,pdfborder={0 0 1},backref=false,colorlinks=false]
 {hyperref}
\usepackage{breakurl}

\makeatletter
%%%%%%%%%%%%%%%%%%%%%%%%%%%%%% Textclass specific LaTeX commands.
\numberwithin{equation}{section}
\numberwithin{figure}{section}
\theoremstyle{plain}
\newtheorem{thm}{\protect\theoremname}[section]
  \theoremstyle{definition}
  \newtheorem{defn}[thm]{\protect\definitionname}
  \theoremstyle{remark}
  \newtheorem{rem}[thm]{\protect\remarkname}
  \theoremstyle{plain}
  \newtheorem{prop}[thm]{\protect\propositionname}
  \theoremstyle{plain}
  \newtheorem{lem}[thm]{\protect\lemmaname}
  \theoremstyle{plain}
  \newtheorem{cor}[thm]{\protect\corollaryname}
  \theoremstyle{plain}
  \newtheorem{fact}[thm]{\protect\factname}
  \theoremstyle{plain}
  \newtheorem*{fact*}{\protect\factname}

\makeatother

  \addto\captionsenglish{\renewcommand{\corollaryname}{Corollary}}
  \addto\captionsenglish{\renewcommand{\definitionname}{Definition}}
  \addto\captionsenglish{\renewcommand{\factname}{Fact}}
  \addto\captionsenglish{\renewcommand{\lemmaname}{Lemma}}
  \addto\captionsenglish{\renewcommand{\propositionname}{Proposition}}
  \addto\captionsenglish{\renewcommand{\remarkname}{Remark}}
  \addto\captionsenglish{\renewcommand{\theoremname}{Theorem}}
  \addto\captionsfrench{\renewcommand{\corollaryname}{Corollaire}}
  \addto\captionsfrench{\renewcommand{\definitionname}{Définition}}
  \addto\captionsfrench{\renewcommand{\factname}{Fait}}
  \addto\captionsfrench{\renewcommand{\lemmaname}{Lemme}}
  \addto\captionsfrench{\renewcommand{\propositionname}{Proposition}}
  \addto\captionsfrench{\renewcommand{\remarkname}{Remarque}}
  \addto\captionsfrench{\renewcommand{\theoremname}{Théorème}}
  \providecommand{\corollaryname}{Corollary}
  \providecommand{\definitionname}{Definition}
  \providecommand{\factname}{Fact}
  \providecommand{\lemmaname}{Lemma}
  \providecommand{\propositionname}{Proposition}
  \providecommand{\remarkname}{Remark}
\providecommand{\theoremname}{Theorem}

\begin{document}
%Ecriture

\global\long\def\eps{\varepsilon}

\global\long\def\pain{{\displaystyle \left(P_{j}\right)}}

\global\long\def\tt#1{\mathtt{#1}}

\global\long\def\tx#1{\mathrm{#1}}

\global\long\def\cal#1{\mathcal{#1}}

\global\long\def\scrib#1{\mathscr{#1}}

\global\long\def\frak#1{\mathfrak{#1}}

\global\long\def\math#1{{\displaystyle #1}}

%Opérations maths

\global\long\def\pare#1{\Big({\displaystyle #1}\Big)}

\global\long\def\acc#1{\left\{  #1\right\}  }

\global\long\def\cro#1{\left[#1\right]}

\newcommandx\dd[1][usedefault, addprefix=\global, 1=]{\tx d#1}

\global\long\def\DD#1{\tx D#1}

\global\long\def\ppp#1#2{\frac{\partial#1}{\partial#2}}

\global\long\def\ddd#1#2{\frac{\tx d#1}{\tx d#2}}

\global\long\def\norm#1{\left\Vert #1\right\Vert }

\global\long\def\abs#1{\left|#1\right|}

\global\long\def\ps#1{\left\langle #1\right\rangle }

\global\long\def\crocro#1{\left\llbracket #1\right\rrbracket }

\global\long\def\ord#1{\tx{ord}\left(#1\right)}

%Ensembles

\global\long\def\wc{\mathbb{C}}

\global\long\def\ww#1{\mathbb{#1}}

\global\long\def\wr{\mathbb{R}}

\global\long\def\wn{\mathbb{N}}

\global\long\def\wn{\mathbb{Z}}

\global\long\def\wq{\mathbb{Q}}

\global\long\def\wcc{\left(\ww C^{2},0\right)}

\global\long\def\wccc{\left(\ww C^{3},0\right)}

\global\long\def\wcn{\left(\ww C^{n},0\right)}

\global\long\def\cn{\ww C^{n},0}

%Ensembles de Fonctions

\global\long\def\germ#1{\ww C\left\{  #1\right\}  }

\global\long\def\germinv#1{\ww C\left\{  #1\right\}  ^{\times} }

\global\long\def\form#1{\ww C\left\llbracket #1\right\rrbracket }

\global\long\def\forminv#1{\ww C\left\llbracket #1\right\rrbracket ^{\times}}

\global\long\def\maxid{\mathfrak{m}}

\newcommandx\jet[1][usedefault, addprefix=\global, 1=k]{\mbox{\ensuremath{\tx J}}_{#1}}

\global\long\def\hol#1{\mathcal{O}\left(#1\right)}

\newcommandx\formdiff[1][usedefault, addprefix=\global, 1=]{\widehat{\Omega^{#1}}}

%Champs de vecteurs

\global\long\def\pp#1{\frac{\partial}{\partial#1}}

\newcommandx\der[2][usedefault, addprefix=\global, 1=\mathbf{x}, 2=]{\tx{Der}_{#2}\left(\form{#1}\right)}

\newcommandx\vf[1][usedefault, addprefix=\global, 1={\ww C^{3},0}]{\chi\left(#1\right)}

\newcommandx\fvf[1][usedefault, addprefix=\global, 1={\ww C^{3},0}]{\widehat{\chi}\left(#1\right)}

\newcommandx\sfvf[1][usedefault, addprefix=\global, 1={\ww C^{3},0}]{\widehat{\chi}_{\omega}\left(#1\right)}

\newcommandx\fisot[2][usedefault, addprefix=\global, 1=\tx{fib}]{\widehat{\tx{Isot}}_{#1}\left(#2\right)}

\global\long\def\isotsect#1#2#3{\tx{Isot}_{\tx{fib}}\left(#1,\cal S_{#2,#3};\tx{Id}\right)}

\global\long\def\ynorm{Y_{\tx{norm}}}

\global\long\def\lie#1{\cal L_{\math{#1}}}

%Difféos

\newcommandx\fdiff[1][usedefault, addprefix=\global, 1={\ww C^{3},0}]{\tx{Diff}_{\tx{fib}}\left(#1\right)}

\newcommandx\diff[1][usedefault, addprefix=\global, 1={\ww C^{3},0}]{\tx{Diff}\left(#1\right)}

\newcommandx\diffid[1][usedefault, addprefix=\global, 1={\ww C^{3},0}]{\tx{Diff}\left(#1;\tx{Id}\right)}

\newcommandx\fdiffid[1][usedefault, addprefix=\global, 1={\ww C^{3},0}]{\tx{Diff}_{\tx{fib}}\left(#1;\tx{Id}\right)}

\newcommandx\diffsect[2][usedefault, addprefix=\global, 1=\theta, 2=\eta]{\tx{Diff}_{\tx{fib}}\left(\cal S_{#1,#2};\tx{Id}\right)}

\newcommandx\diffform[1][usedefault, addprefix=\global, 1={\ww C^{3},0}]{\widehat{\tx{Diff}}\left(#1\right)}

\newcommandx\fdiffform[1][usedefault, addprefix=\global, 1={\ww C^{3},0}]{\widehat{\tx{Diff}}_{\tx{fib}}\left(#1\right)}

\newcommandx\fdiffformid[1][usedefault, addprefix=\global, 1={\ww C^{3},0}]{\widehat{\tx{Diff}}_{\tx{fib}}\left(#1;\tx{Id}\right)}

\newcommandx\gid[1][usedefault, addprefix=\global, 1=k]{\cal G_{\tx{Id}}^{\left(#1\right)}}

\newcommandx\diffformid[1][usedefault, addprefix=\global, 1={\ww C^{3},0}]{\widehat{\tx{Diff}}\left(#1;\tx{Id}\right)}

\newcommandx\sdiff[1][usedefault, addprefix=\global, 1={\ww C^{3},0}]{\tx{Diff}_{\omega}\left(#1\right)}

\newcommandx\sdiffid[1][usedefault, addprefix=\global, 1={\ww C^{3},0}]{\tx{Diff}_{\omega}\left(#1;\tx{Id}\right)}

\newcommandx\sdiffform[1][usedefault, addprefix=\global, 1={\ww C^{3},0}]{\widehat{\tx{Diff}}_{\omega}\left(#1\right)}

\newcommandx\sdiffformid[1][usedefault, addprefix=\global, 1={\ww C^{3},0}]{\widehat{\tx{Diff}}_{\omega}\left(#1;\tx{Id}\right)}

%Géométrie

\global\long\def\sect#1#2#3{S\left(#1,#2,#3\right)}

\global\long\def\germsect#1#2{\cal S_{#1,#2}}

\global\long\def\asympsect#1#2{\cal{AS}_{#1,#2}}

\global\long\def\fol#1{\mathcal{F}_{#1}}

%Algèbre

\global\long\def\rrel{\mathcal{R}}

\global\long\def\surj{\twoheadrightarrow}

\global\long\def\inj{\hookrightarrow}

\global\long\def\bij{\simeq}

\global\long\def\quotient#1#2{\bigslant{#1}{#2}}

%Nœud-cols 2-résonants

\global\long\def\res#1{\tx{res}\left(#1\right)}

\global\long\def\param{\cal P}

\global\long\def\po{\cal P_{0}}

\global\long\def\pfib{\cal P_{\tx{fib}}}

\global\long\def\pw{\cal P_{\omega}}

\global\long\def\porb{\cal P_{\tx{orb}}}

\global\long\def\ls#1{\Gamma_{#1\lambda}}

\global\long\def\lsp#1{\Gamma'_{#1\lambda}}

\global\long\def\lsr#1#2{\Gamma_{#1\lambda}\left(#2\right)}

\global\long\def\sn{\mathbb{\cal{SN}}}

\global\long\def\sndiag{\mathbb{\cal{SN}}_{\tx{diag}}}

\global\long\def\sndiagnd{\mathbb{\cal{SN}}_{\tx{diag,nd}}}

\global\long\def\snodiag{\cal{SN}_{\tx{diag},0}}

\global\long\def\sns{\mathbb{\cal{SN}}_{\omega}}

\global\long\def\snsdiag{\mathbb{\cal{SN}}_{\mbox{\ensuremath{\tx{diag}}},\omega}}

\global\long\def\snsnd{\mathbb{\cal{SN}}_{\tx{snd}}}

\global\long\def\sno{\cal{SN}_{0}}

\global\long\def\snfib{\cal{SN}_{\tx{fib}}}

\global\long\def\snofib{\cal{SN}_{\tx{fib},0}}

\global\long\def\snoid{\cal{SN}_{\tx{Id},0}}

\global\long\def\fsn{\mathbb{\widehat{\cal{SN}}}}

\global\long\def\fsns{\widehat{\mathbb{\cal{SN}}}_{\omega}}

\global\long\def\fsndiag{\mathbb{\widehat{\cal{SN}}}_{\tx{diag}}}

\global\long\def\fsnnd{\mathbb{\widehat{\cal{SN}}}_{\tx{nd}}}

\global\long\def\fsnfibnd{\mathbb{\widehat{\cal{SN}}}_{\tx{fib,nd}}}

\global\long\def\fsndiagnd{\mathbb{\widehat{\cal{SN}}}_{\tx{diag,nd}}}

\global\long\def\fsno{\cal{\widehat{SN}}_{*}}

\global\long\def\fsnfib{\cal{\widehat{SN}}_{\tx{fib}}}

\global\long\def\mfibid{\mathscr{M}_{\tx{fib,Id}}}

\global\long\def\mfib{\mathscr{M}_{\tx{fib}}}

\global\long\def\mspaceid{\mathscr{M}_{\tx{Id}}}

\global\long\def\ms{\mathscr{M}_{\omega}}

\newcommand{\bigslant}[2]{{\raisebox{.3em}{$#1$}\left/\raisebox{-.3em}{$#2$}\right.}}

\title[Analytic classification of doubly-resonant saddle-nodes]{Doubly-resonant saddle-nodes in $\left(\ww C^{3},0\right)$ and
the fixed singularity at infinity in Painlevé equations: analytic
classification}

\author{To appear in \emph{Annales de l'Institut Fourier}\\
\\
Amaury Bittmann}

\address{IRMA, Université de Strasbourg, 7 rue René Descartes, 67084 Strasbourg
Cedex, France}

\email{\href{mailto:amaury.bittmann@ac-strasbourg.fr}{amaury.bittmann@ac-strasbourg.fr}}
\begin{abstract}
In this work, we consider germs of analytic singular vector fields
in $\ww C^{3}$ with an isolated and doubly-resonant singularity of
saddle-node type at the origin. Such vector fields come from irregular
two-dimensional differential systems with two opposite non-zero eigenvalues,
and appear for instance when studying the irregular singularity at
infinity in Painlevé equations $\left(P_{j}\right)_{j=I,\dots,V}$
for generic values of the parameters. Under suitable assumptions,
we prove a theorem of analytic normalization over sectorial domains,
analogous to the classical one due to Hukuhara-Kimura-Matuda for saddle-nodes
in $\ww C^{2}$. We also prove that these maps are in fact the Gevrey-1
sums of the formal normalizing map, the existence of which has been
proved in a previous paper. Finally we provide an analytic classification
under the action of fibered diffeomorphisms, based on the study of
the so-called \emph{Stokes diffeomorphisms} obtained by comparing
consecutive sectorial normalizing maps \emph{à la} Martinet-Ramis~/~Stolovitch
for 1-resonant vector fields.
\end{abstract}

\keywords{Painlevé equations, singular vector field, irregular singularity,
resonant singularity, analytic classification, Stokes diffeomorphisms.}
\maketitle

\section{Introduction}

~

As in \cite{bittmann1}, we consider (germs of) singular vector fields
$Y$ in $\ww C^{3}$ which can be written in appropriate coordinates
$\left(x,\mathbf{y}\right):=\left(x,y_{1},y_{2}\right)$ as 
\begin{eqnarray}
Y & = & x^{2}\pp x+\Big(-\lambda y_{1}+F_{1}\left(x,\mathbf{y}\right)\Big)\pp{y_{1}}+\Big(\lambda y_{2}+F_{2}\left(x,\mathbf{y}\right)\Big)\pp{y_{2}}\,\,\,\,\,,\label{eq: intro}
\end{eqnarray}
where $\lambda\in\ww C^{*}$ and $F_{1},\,F_{2}$ are germs of holomorphic
functions in $\left(\ww C^{3},0\right)$ of homogeneous valuation
(order) at least two. They represent irregular two-dimensional differential
systems having two opposite non-zero eigenvalues:
\[
\begin{cases}
{\displaystyle x^{2}\ddd{y_{1}\left(x\right)}x=-\lambda y_{1}\left(x\right)+F_{1}\left(x,\mathbf{y}\left(x\right)\right)}\\
{\displaystyle x^{2}\ddd{y_{2}\left(x\right)}x=\lambda y_{2}\left(x\right)+F_{2}\left(x,\mathbf{y}\left(x\right)\right)} & .
\end{cases}
\]
 These we call doubly-resonant vector fields of saddle-node type (or
simply \textbf{doubly-resonant saddle-nodes}). We will impose more
(non-generic) conditions in the sequel. The motivation for studying
such vector fields is at least of two types.
\begin{enumerate}
\item There are two independent resonance relations between the eigenvalues
(here $0$, $-\lambda$ and $\lambda$): we generalize then the study
in \cite{MR82,MR83}. More generally, this work is aimed at understanding
singularities of vector fields in $\ww C^{3}$. According to a theorem
of resolution of singularities in dimension less than three in \cite{mcquillan2013almost},
there exists a list of ``final models'' for singularities (\emph{log-canonical})\emph{
}obtained after a finite procedure of \emph{weighted blow-ups} for
three dimensional singular analytic vector fields. In this list, we
find in particular doubly-resonant saddles-nodes, as those we are
interested in. In dimension $2$, these final models have been intensively
studied (for instance by Martinet, Ramis, Ecalle, Ilyashenko, Teyssier,
...) from the view point of both formal and analytic classification
(some important questions remain unsolved, though). In dimension $3$,
the problems of formal and analytic classification are still open
questions, although Stolovitch has performed such a classification
for 1-resonant vector fields in any dimension \cite{Stolo}. The presence
of two kinds of resonance relations brings new difficulties.
\item Our second main motivation is the study of the irregular singularity
at infinity in Painlevé equations$\left(P_{j}\right)_{j=I,\dots,V}$,
for generic values of the parameters (\emph{cf.} \cite{Yoshida85}).
These equations were discovered by Paul Painlevé~\cite{Painleve}
because the only movable singularities of the solutions are poles
(the so-called \emph{Painlevé property}). Their study has become a
rich domain of research since the important work of Okamoto~\cite{OkamotoSpace}.
The fixed singularities of the Painlevé equations, and more particularly
those at infinity, where notably investigated by Boutroux with his
famous \emph{tritronquées} solutions \cite{Boutroux13}. Recently,
several authors provided more complete information about such singularities,
studying ``quasi-linear Stokes phenomena'' and also giving connection
formulas; we refer to the following (non-exhaustive) sources \cite{Joshi,Kapaev,KapaevKitaev,KitaevJosi,ClarksonMcLeod,Costin15}.
Stokes coefficients are invariant under local changes of analytic
coordinates, but do not form a complete invariant of the vector field.
To the best of our knowledge there currently does not exist a general
analytic classification for doubly-resonant saddle-nodes. Such a classification
would provide a new framework allowing to analyze Stokes phenomena
in that class of singularities.
\end{enumerate}
In this paper we provide an analytic classification under the action
of fibered diffeomorphisms for a specific (to be defined later on)
class of doubly-resonant saddle-nodes which contains the Painlevé
case. For this purpose, the main tool is a theorem of analytic normalization
over sectorial domain \emph{(à la }Hukuhara-Kimura-Matuda \cite{HKM}
for saddle-nodes in $\left(\ww C^{2},0\right)$) for a specific class
(to be defined later on) of doubly-resonant saddle-nodes which contains
the Painlevé case. The analytic classification for this class of vector
fields, inspired by the important works \cite{MR82,MR83,Stolo} for
1-resonant vector fields, is based on the study of so-called \emph{Stokes
diffeomorphisms, }which are the transition maps between different
sectorial domains\emph{ }for the normalization\emph{.}\bigskip{}

In \cite{Yoshida84,Yoshida85} Yoshida shows that doubly-resonant
saddle-nodes arising from the compactification of Painlevé equations
$\left(P_{j}\right)_{j=I,\dots,V}$ (for generic values for the parameters)
are conjugate to vector fields of the form: 
\begin{eqnarray}
 & Z= & x^{2}\pp x+\pare{-\left(1+\gamma y_{1}y_{2}\right)+a_{1}x}y_{1}\pp{y_{1}}\nonumber \\
 &  & +\pare{1+\gamma y_{1}y_{2}+a_{2}x}y_{2}\pp{y_{2}}\,\,\,\,\,,\label{eq: normal form Yoshida-1}
\end{eqnarray}
with $\gamma\in\ww C^{*}$ and $\left(a_{1},a_{2}\right)\in\ww C^{2}$
such that $a_{1}+a_{2}=1$. One should notice straight away that this
``conjugacy'' does not agree with what is traditionally (in particular
in this paper) meant by conjugacy, for Yoshida's transform $\Psi\left(x,\mathbf{y}\right)=\left(x,\psi_{1}\left(x,\mathbf{y}\right),\psi_{2}\left(x,\mathbf{y}\right)\right)$
takes the form
\begin{eqnarray}
\psi_{i}\left(x,\mathbf{y}\right) & = & y_{i}\left(1+\sum_{\substack{\left(k_{0},k_{1},k_{2}\right)\in\ww N^{3}\\
k_{1}+k_{2}\geq1
}
}\frac{q_{i,\mathbf{k}}\left(x\right)}{x^{k_{0}}}y_{1}^{k_{1}+k_{0}}y_{2}^{k_{1}+k_{0}}\right)\,\,\,\,\,,\label{eq: Yoshida-1}
\end{eqnarray}
where each $q_{i,\mathbf{k}}$ is formal power series although $x$
\emph{appears with negative exponents}. This expansion may not even
be a formal Laurent series. It is, though, the asymptotic expansion
along $\left\{ x=0\right\} $ of a function analytic in a domain 
\begin{eqnarray*}
 & \Big\{\left(x,\mathbf{z}\right)\in\text{\ensuremath{S\times\mathbf{D}\left(0,\mathbf{r}\right)\mid\abs{z_{1}z_{2}}<\nu\abs x}}\Big\}
\end{eqnarray*}
for some small $\nu>0$, where $S$ is a sector of opening greater
than $\pi$ with vertex at the origin and $\mathbf{D}\left(0,\mathbf{r}\right)$
is a polydisc of small poly-radius $\mathbf{r}=\left(r_{1},r_{2}\right)\in\left(\ww R_{>0}\right)^{2}$.
Moreover the $\left(q_{i,\mathbf{k}}\left(x\right)\right)_{i,\mathbf{k}}$
are actually Gevrey-1 power series. The drawback here is that the
transforms are convergent on regions so small that taken together
they cannot cover an entire neighborhood of the origin in $\ww C^{3}$
(which seems to be problematic to obtain an analytic classification
\emph{à la }Martinet-Ramis).

\bigskip{}

Several authors studied the problem of convergence of formal transformations
putting vector fields as in $\left(\mbox{\ref{eq: intro}}\right)$
into ``normal forms''. Shimomura, improving on a result of Iwano
\cite{Iwano}, shows in \cite{Shimo} that analytic doubly-resonant
saddle-nodes satisfying more restrictive conditions are conjugate
(formally and over sectors) to vector fields of the form
\begin{eqnarray*}
 &  & x^{2}\pp x+\left(-\lambda+a_{1}x\right)y_{1}\pp{y_{1}}+\left(\lambda+a_{2}x\right)y_{2}\pp{y_{2}}\,\,\,\,
\end{eqnarray*}
\emph{via} a diffeomorphism whose coefficients have asymptotic expansions
as $x\rightarrow0$ in sectors of opening greater than $\pi$. 

Stolovitch then generalized this result to any dimension in \cite{Stolo}.
More precisely, Stolovitch's work offers an analytic classification
of vector fields in $\ww C^{n+1}$ with an irregular singular point,
without further hypothesis on eventual additional resonance relations
between eigenvalues of the linear part. However, as Iwano and Shimomura
did, he needed to impose other assumptions, among which the condition
that the restriction of the vector field to the invariant hypersurface
$\acc{x=0}$ is a linear vector field. In \cite{DeMaesschalck}, the
authors obtain a \emph{Gevrey-1 summable} ``normal form'', though
not as simple as Stolovitch's one and not unique \emph{a priori},
but for more general kind of vector field with one zero eigenvalue.
However, the same assumption on hypersurface $\acc{x=0}$ is required
(the restriction is a linear vector field). Yet from \cite{Yoshida85}
(and later \cite{bittmann1}) stems the fact that this condition is
not met in the case of Painlevé equations $\left(P_{j}\right)_{j=I,\dots,V}$. 

In comparison, we merely ask here that the restricted vector field
be orbitally linearizable (see Definition \ref{def: asympt hamil}),
\emph{i.e.} the foliation\emph{ }induced by $Y$ on $\acc{x=0}$ (and
not the vector field $Y_{|\acc{x=0}}$ itself) be linearizable. The
fact that this condition is fulfilled by the singularities of Painlevé
equations formerly described is well-known. As discussed in Remark
\ref{rem:New difficulties}, the more general context also introduces
new phenomena and technical difficulties as compared to prior classification
results. 

\subsection{Scope of the paper}

~

The action of local analytic~/~formal diffeomorphisms $\Psi$ fixing
the origin on local holomorphic vector fields $Y$ of type $\left(\mbox{\ref{eq: intro}}\right)$
by change of coordinates is given by
\begin{eqnarray*}
\Psi_{*}Y & := & \mathrm{D}\Psi\left(Y\right)\circ\Psi^{-1}~.
\end{eqnarray*}
In \cite{bittmann1} we performed the formal classification of such
vector fields by exhibiting an explicit universal family of vector
fields for the action of formal changes of coordinates at $0$ (called
a family of normal forms). Such a result seems currently out of reach
in the analytic category: it is unlikely that an explicit universal
family for the action of local analytic changes of coordinates be
described anytime soon. If we want to describe the space of equivalent
classes (of germs of a doubly-resonant saddle-node under local analytic
changes of coordinates) with same formal normal form, we therefore
need to find a complete set of invariants which is of a different
nature. We call \textbf{moduli space} this quotient space and would
like to give it a (non-trivial) presentation based on functional invariants
\emph{à la} Martinet-Ramis \cite{MR82,MR83}. 

We only deal here with $x$-fibered local analytic conjugacies acting
on vector fields of the form $\left(\mbox{\ref{eq: intro}}\right)$
with some additional assumptions detailed further down (see Definitions
\ref{def: drsn}, \ref{def: non-deg} and \ref{def: asympt hamil}).
Importantly, these hypothesis are met in the case of Painlevé equations
mentioned above. The classification under the action of general (not
necessarily fibered) diffeomorphisms can be found in \cite{bittmann:tel-01367968}).

\bigskip{}

First we prove a theorem of analytic sectorial normalizing map (over
a pair of opposite ``wide'' sectors of opening greater than $\pi$
whose union covers a full punctured neighborhood of $\left\{ x=0\right\} $).
Then we attach to each vector field a complete set of invariants given
as transition maps (over ``narrow'' sectors of opening less than
$\pi$) between the sectorial normalizing maps. Although this viewpoint
has become classical since the work of Martinet and Ramis, and has
latter been generalized by Stolovitch as already mentioned, our approach
has some geometric flavor. For instance, we avoid the use of fixed-point
methods altogether to establish the existence of the normalizing maps,
and generalize instead the approach of Teyssier~\cite{teyssier2004equation,Teyssier03}
relying on path-integration of well-chosen $1$-forms (following Arnold's
method of characteristics~\cite{Arnold}).

As a by-product of this normalization we deduce that the normalizing
sectorial diffeomorphisms are Gevrey-$1$ asymptotic to the normalizing
formal power series of \cite{bittmann1}, retrospectively proving
their $1$-summability (with respect to the $x-$coordinate). When
the vector field additionally supports a symplectic transverse structure
(which is again the case of Painlevé equations) we prove that the
(essentially unique) sectorial normalizing map is performed by a transversally
symplectic diffeomorphism. We deduce from this a theorem of analytic
classification under the action of \emph{transversally symplectic}
diffeomorphisms.

\subsection{Definitions and main results}

~

To state our main results we need to introduce some notations and
nomenclature. 
\begin{itemize}
\item For $n\in\ww N_{>0}$, we denote by $\left(\ww C^{n},0\right)$ an
(arbitrary small) open neighborhood of the origin in $\ww C^{n}$.
\item We denote by $\germ{x,\mathbf{y}}$, with $\mathbf{y}=\left(y_{1},y_{2}\right)$,
the $\ww C$-algebra of germs of holomorphic functions at the origin
of $\ww C^{3}$, and by $\germ{x,\mathbf{y}}^{\times}$ the group
of invertible elements for the multiplication (also called units),
\emph{i.e. }elements $U$ such that $U\left(0\right)\neq0$.
\item $\vf$ is the Lie algebra of germs of singular holomorphic vector
fields at the origin $\ww C^{3}$. Any vector field in $\vf$ can
be written as 
\[
Y={\displaystyle b\left(x,y_{1},y_{2}\right)\pp x+b_{1}\left(x,y_{1},y_{2}\right)\pp{y_{1}}+b_{2}\left(x,y_{1},y_{2}\right)\pp{y_{2}}}
\]
with $b,b_{1},b_{2}\in\germ{x,y_{1},y_{2}}$ vanishing at the origin.
\item $\diff$ is the group of germs of a holomorphic diffeomorphism fixing
the origin of $\ww C^{3}$. It acts on $\vf$ by conjugacy: for all
\[
\left(\Phi,Y\right)\in\diff\times\vf
\]
we define the push-forward of $Y$ by $\Phi$ by
\begin{equation}
\Phi_{*}\left(Y\right):=\left(\mbox{D}\Phi\cdot Y\right)\circ\Phi^{-1}\qquad,\label{eq: push forward intro}
\end{equation}
where $\mbox{D}\Phi$ is the Jacobian matrix of $\Phi$.
\item $\fdiff$ is the subgroup of $\diff$ of fibered diffeomorphisms preserving
the $x$-coordinate, \emph{i.e. }of the form $\left(x,\mathbf{y}\right)\mapsto\left(x,\phi\left(x,\mathbf{y}\right)\right)$.
\item We denote by $\fdiffid$ the subgroup of $\fdiff$ formed by diffeomorphisms
tangent to the identity.
\end{itemize}
All these concepts have \emph{formal} analogues, where we only suppose
that the objects are defined with formal power series, not necessarily
convergent near the origin. 
\begin{defn}
\label{def: drsn}A \textbf{diagonal doubly-resonant saddle-node}
is a vector field $Y\in\vf$ of the form 
\begin{eqnarray*}
Y & = & x^{2}\pp x+\Big(-\lambda y_{1}+F_{1}\left(x,\mathbf{y}\right)\Big)\pp{y_{1}}+\Big(\lambda y_{2}+F_{2}\left(x,\mathbf{y}\right)\Big)\pp{y_{2}}\,\,\,\,\,,
\end{eqnarray*}
with $\lambda\in\ww C^{*}$ and $F_{1},F_{2}\in\germ{x,\mathbf{y}}$
of order at least two. We denote by $\sndiag$ the set of such vector
fields.
\end{defn}

\begin{rem}
One can also define the foliation associate to a diagonal doubly-resonant
saddle-node in a geometric way. A vector field $Y\in\vf$ is orbitally
equivalent to a diagonal doubly-resonant saddle-node $\Big($\emph{i.e.}
$Y$ is conjugate to some $VX$, where $V\in\germ{x,\mathbf{y}}^{\times}$
and $X\in\snfib$$\Big)$ if and only if the following conditions
hold:

\begin{enumerate}
\item $\tx{Spec}\left(\tx D_{0}Y\right)=\acc{0,-\lambda,\lambda}$ with
$\lambda\neq0$;
\item there exists a germ of irreducible analytic hypersurface $\mathscr{H}_{0}=\acc{S=0}$
which is transverse to the eigenspace $E_{0}$ (corresponding to the
zero eigenvalue) at the origin, and which is stable under the flow
of $Y$;
\item $\cal L_{Y}\left(S\right)=U.S^{2}$, where $\cal L_{Y}$ is the Lie
derivative of $Y$ and $U\in\germ{x,\mathbf{y}}^{\times}$. 
\end{enumerate}
\end{rem}

By Taylor expansion up to order $1$ with respect to $\mathbf{y}$,
given a vector field $Y\in\sndiag$ written as in (\ref{eq: intro})
we can consider the associate 2-dimensional system:
\begin{equation}
x^{2}\ddd{\mathbf{y}}x=\mathbf{\alpha}\left(x\right)+\mathbf{A}\left(x\right)\mathbf{y}\left(x\right)+\mathbf{F}\left(x,\mathbf{y}\left(x\right)\right)\qquad,\label{eq: system doubly resonant saddle node}
\end{equation}
with ${\displaystyle \mathbf{y}}=\left(y_{1},y_{2}\right)$, such
that the following conditions hold: 
\begin{itemize}
\item ${\displaystyle \alpha\left(x\right)=\left(\begin{array}{c}
\alpha_{1}\left(x\right)\\
\alpha_{2}\left(x\right)
\end{array}\right)},$ with $\alpha_{1},\alpha_{2}\in\germ x$ and ${\displaystyle {\displaystyle \alpha_{1},\alpha_{2}\in\tx O\left(x^{2}\right)}}$
\item ${\displaystyle \mathbf{A}\left(x\right)\in\mbox{Mat}_{2,2}\left(\germ x\right)}$
with ${\displaystyle \mathbf{A}\left(0\right)=\tx{diag}\left(-\lambda,\lambda\right)}$,
$\lambda\in\ww C^{*}$
\item ${\displaystyle \mathbf{F}\left(x,\mathbf{y}\right)=\left(\begin{array}{c}
F_{1}\left(x,\mathbf{y}\right)\\
F_{2}\left(x,\mathbf{y}\right)
\end{array}\right)}$, with ${\displaystyle F_{1},F_{2}\in\germ{x,\mathbf{y}}}$ and ${\displaystyle F_{1},F_{2}\in\tx O\left(\norm{\mathbf{y}}^{2}\right)}$.
\end{itemize}
Based on this expression, we state:
\begin{defn}
\label{def: non-deg} The \textbf{residue} of $Y\in\sndiag$ is the
complex number
\[
{\displaystyle \tx{res}\left(Y\right):=\left(\frac{\mbox{Tr}\left(\mathbf{A}\left(x\right)\right)}{x}\right)_{\mid x=0}}\qquad.
\]
We say that $Y$ is\textbf{ non-degenerate }(\emph{resp. }\textbf{strictly
non-degenerate) }if $\tx{res}\left(Y\right)\notin\ww Q_{\leq0}$ (\emph{resp.
}$\Re\left(\tx{res}\left(Y\right)\right)>0$).
\end{defn}

\begin{rem}
It is obvious that there is an action of $\fdiff[\ww C^{3},0,\tx{Id}]$
on $\sndiag$. The residue is an invariant of each orbit of $\snfib$
under the action of $\fdiff[\ww C^{3},0,\tx{Id}]$ by conjugacy (see~\cite{bittmann1}).
\end{rem}

The main result of \cite{bittmann1} can now be stated as follows:
\begin{thm}
\cite{bittmann1}\label{thm: forme normalel formelle} Let $Y\in\sndiag$
be non-degenerate. Then there exists a unique formal fibered diffeomorphism
$\hat{\Phi}$ tangent to the identity such that: 
\begin{eqnarray}
\hat{\Phi}_{*}\left(Y\right) & = & x^{2}\pp x+\left(-\lambda+a_{1}x+c_{1}\left(y_{1}y_{2}\right)\right)y_{1}\pp{y_{1}}\nonumber \\
 &  & +\left(\lambda+a_{2}x+c_{2}\left(y_{1}y_{2}\right)\right)y_{2}\pp{y_{2}}\,\,\,\,,\label{eq: fibered normal form-1-2}
\end{eqnarray}
where $\lambda\in\ww C^{*}$, ${\displaystyle c_{1},c_{2}\in\form v}$
are formal power series in $v=y_{1}y_{2}$ without constant term and
$a_{1},a_{2}\in\ww C$ are such that ${\displaystyle a_{1}+a_{2}=\tx{res}\left(Y\right)\in\ww C\backslash\ww Q_{\leq0}}$.
\end{thm}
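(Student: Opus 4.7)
The plan is to use the Poincaré--Dulac strategy adapted to the doubly-resonant saddle-node setting, building $\hat{\Phi}$ inductively in the $\mathbf y$-adic topology. Write $Y = Y_0 + R$ where $Y_0 := x^2\pp x - \lambda y_1\pp{y_1} + \lambda y_2\pp{y_2}$ is the linear part and $R$ collects the perturbation of order $\geq 2$. Conjugation by the time-one map of a fibered vector field $\phi$ of $\mathbf y$-degree $\geq 2$ produces, at leading order, the cohomological equation $[Y_0,\phi] = -(\text{terms to remove})$, so the key operator is $\mathrm{ad}_{Y_0}$. Since $Y_0$ is homogeneous of degree $0$ in the grading by $\mathbf y$-degree, $\mathrm{ad}_{Y_0}$ preserves this grading and the reduction can be performed degree by degree.

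A direct computation shows that $\mathrm{ad}_{Y_0}$ sends $x^{k_0}\mathbf y^{\mathbf k}\pp{y_i}$ to a multiple of itself with semisimple eigenvalue $\lambda(1 - k_1 + k_2)$ for $i=1$ (resp.\ $\lambda(k_2 - k_1 - 1)$ for $i=2$), plus a nilpotent shift $x^{k_0}\mapsto k_0 x^{k_0+1}$ coming from $x^2\pp x$. On non-resonant monomials, i.e.\ those with $k_1 \neq k_2+1$ (resp.\ $k_2 \neq k_1+1$), the semisimple eigenvalue is nonzero and the shift is a locally nilpotent perturbation, so $\mathrm{ad}_{Y_0}$ is formally invertible and those terms can be killed by a unique tangent-to-identity fibered diffeomorphism. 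On the resonant submodule, spanned by $x^{k_0} y_1(y_1y_2)^k\pp{y_1}$ and $x^{k_0} y_2(y_1y_2)^k\pp{y_2}$, only the shift survives; its image in $\mathbb{C}[[x]]$ is $x^2\mathbb{C}[[x]]$, so all resonant terms with $k_0 \geq 2$ can be eliminated. At this stage $Y$ is conjugate to a pre-normal form
\[
Y' = x^2\pp x + \bigl(-\lambda + xA_1(v) + C_1(v)\bigr) y_1\pp{y_1} + \bigl(\lambda + xA_2(v) + C_2(v)\bigr) y_2\pp{y_2},
\]
with $v = y_1y_2$, $A_i, C_i \in \mathbb{C}[[v]]$ and $C_i(0) = 0$.

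It remains to reduce $A_i(v)$ to its constant term $a_i := A_i(0)$. An auxiliary diagonal fibered conjugacy $(x,y_1,y_2) \mapsto (x, y_1 U_1(x,v), y_2 U_2(x,v))$ with $U_i(0,0)=1$ preserves the class of pre-normal forms, and computing $\Psi_* Y'$ shows that adjusting $\log U_i$ at each $v$-degree amounts to solving a Fuchsian ODE in $v$ whose indicial behaviour is governed by $a_1 + a_2 = \mathrm{res}(Y)$. Under the hypothesis $\mathrm{res}(Y) \notin \mathbb{Q}_{\leq 0}$, the indicial polynomial has no non-negative integer root, so the ODE admits a unique solution in $v\mathbb{C}[[v]]$ at each degree, and the iteration converges in the $v$-adic topology, yielding the desired $a_1, a_2 \in \mathbb{C}$ and $c_1, c_2 \in v\mathbb{C}[[v]]$. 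Uniqueness of $\hat{\Phi}$ tangent to the identity follows because at every step the ambiguity in solving the cohomological equation lies in $\ker(\mathrm{ad}_{Y_0})$ — the space of normal-form monomials — and this ambiguity is eliminated by the tangency constraint.

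The main obstacle is the last reduction: in contrast with the simply-resonant case of \cite{Stolo}, doubly-resonant geometry produces a two-parameter family of resonant monomials, and the pre-normal form carries a continuous $v$-dependence in the coefficient of $x$. Removing this $v$-dependence requires working within the resonant sector itself, where no diagonal eigenvalue separation is available, and is precisely where the arithmetic non-degeneracy condition $\mathrm{res}(Y) \notin \mathbb{Q}_{\leq 0}$ is essential.
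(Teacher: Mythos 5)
Your existence argument is sound and is essentially the strategy the paper itself follows: this theorem is only quoted from \cite{bittmann1}, but Section 3 of the present paper re-runs the same construction in the summable category (straighten the centre manifold, diagonalise the linear part, then remove non-resonant terms using the fact that $\mathrm{ad}_{Y_0}$ splits as an invertible semisimple part plus the locally nilpotent shift $x^{k_0}\mapsto k_0x^{k_0+1}$, and finally deal with the resonant block). Two points, however, are genuinely defective rather than merely compressed.

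First, the uniqueness argument fails as stated. The kernel of $\mathrm{ad}_{Y_0}$ on fibered formal vector fields is not exhausted by "normal-form monomials killed by the tangency constraint": it contains all $x$-independent resonant generators $y_1(y_1y_2)^k\pp{y_1}$ and $y_2(y_1y_2)^k\pp{y_2}$ with $k\geq1$, since the shift $x^2\frac{d}{dx}$ annihilates constants in $x$. Their time-one flows are nontrivial fibered diffeomorphisms \emph{tangent to the identity}, so tangency alone does not remove the ambiguity. Uniqueness must instead be proved against the full normal form $\ynorm$ (not $Y_{0}$): writing out the conjugacy equation for a tangent-to-identity fibered isotropy between two normal forms, the coefficient of each monomial satisfies a scalar ODE whose solutions are of the form $b\,x^{-k(a_{1}+a_{2})}$ or $b\exp\bigl(\frac{c}{x}\bigr)x^{\mu}$, and these are formal power series only when $b=0$ — precisely because $a_{1}+a_{2}\notin\ww Q_{\leq0}$. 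This is the formal counterpart of the computation the paper performs for sectorial isotropies in Proposition \ref{prop: isot sect}; non-degeneracy is used again here, not just in the existence half.

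Second, your final reduction of $xA_{i}(v)$ to $xa_{i}$ cannot be a single Fuchsian ODE in $v$: an $x$-dependent diagonal conjugacy $(x,\mathbf y)\mapsto(x,y_{1}U_{1},y_{2}U_{2})$ preserves the class of diagonal fields with $(x,v)$-dependent coefficients but re-introduces resonant terms of every $x$-order, so the reduction is a double induction on the $x$-order $N$ and the $v$-order $k$. The solvability condition at stage $(N,k)$ reads $k(a_{1}+a_{2})+N-1\neq0$ (this is exactly the equation for $\tau_{0}$ in the proof of Proposition \ref{prop: ordre N avec eq homologique}), and it is only the union of these conditions over all $N,k\geq1$ that forces the exclusion of the whole of $\ww Q_{\leq 0}$; a single indicial polynomial, as in your sketch, would only rule out one arithmetic progression (for $N=1$, merely $a_{1}+a_{2}\neq0$). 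As written, your argument does not account for the hypothesis actually assumed in the theorem.
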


\begin{defn}
The vector field obtained in (\ref{eq: fibered normal form-1-2})
is called the \textbf{formal normal form }of $Y$. The formal fibered
diffeomorphism $\hat{\Phi}$ is called the \textbf{formal normalizing
map} of $Y$.
\end{defn}

The above result is valid for formal objects, without considering
problems of convergence. The first main result in this work states
that this formal normalizing map is analytic in sectorial domains,
under some additional assumptions that we are now going to precise.
\begin{defn}
\label{def: asympt hamil}~

\begin{itemize}
\item We say that a germ of a vector field $X$ in $\left(\ww C^{2},0\right)$
is \textbf{orbitally linear} if 
\[
X=U\left(\mathbf{y}\right)\left(\lambda_{1}y_{1}\pp{y_{1}}+\lambda_{2}y_{2}\pp{y_{2}}\right)\,\,,
\]
for some ${\displaystyle U\left(\mathbf{y}\right)\in\germ{\mathbf{y}}^{\times}}$
and $\left(\lambda_{1},\lambda_{2}\right)\in\ww C^{2}$.
\item We say that a germ of vector field $X$ in $\left(\ww C^{2},0\right)$
is analytically (\emph{resp. formally}) \textbf{orbitally linearizable}
if $X$ is analytically (\emph{resp.} formally) conjugate to an orbitally
linear vector field.
\item We say that a diagonal doubly-resonant saddle-node $Y\in\sndiag$
is \textbf{div-integrable} if $Y_{\mid\acc{x=0}}\in\vf[\ww C^{2},0]$
is (analytically) orbitally linearizable. 
\end{itemize}
\end{defn}

\begin{rem}
Alternatively we could say that the foliation associated to ${\displaystyle Y_{\mid\acc{x=0}}}$
is linearizable. Since ${\displaystyle Y_{\mid\acc{x=0}}}$ is analytic
at the origin of $\ww C^{2}$ and has two opposite eigenvalues, it
follows from a classical result of Brjuno (see \cite{Martinet}),
that $Y_{\mid\acc{x=0}}$ is analytically orbitally linearizable if
and only if it is formally orbitally linearizable.
\end{rem}

\begin{defn}
We denote by $\snodiag$ the set of strictly non-degenerate diagonal
doubly-resonant saddle-nodes which are div-integrable.
\end{defn}

The vector field corresponding to the irregular singularity at infinity
in the Painlevé equations $\left(P_{j}\right)_{j=I,\dots,V}$ is orbitally
equivalent to an element of $\snofib$, for generic values of the
parameters (see \cite{Yoshida85}).

We can now state the first main result of our paper (we refer to section
$2.$ for more details on\emph{ 1-summability}). 
\begin{thm}
\label{Th: Th drsn}Let $Y\in\snodiag$ and let $\hat{\Phi}$ (given
by Theorem \ref{thm: forme normalel formelle}) be the unique formal
fibered diffeomorphism tangent to the identity such that 
\begin{eqnarray*}
\hat{\Phi}_{*}\left(Y\right) & = & x^{2}\pp x+\left(-\lambda+a_{1}x+c_{1}\left(y_{1}y_{2}\right)\right)y_{1}\pp{y_{1}}+\left(\lambda+a_{2}x+c_{2}\left(y_{1}y_{2}\right)\right)y_{2}\pp{y_{2}}\\
 & =: & \ynorm\,\,,
\end{eqnarray*}
where $\lambda\neq0$ and ${\displaystyle c_{1}\left(v\right),c_{2}\left(v\right)\in v\form v}$
are formal power series without constant term. Then:

\begin{enumerate}
\item the normal form $\ynorm$ is analytic \emph{(i.e. ${\displaystyle c_{1},c_{2}\in\germ v}$),
}and it also is div-integrable,\emph{ }i.e. $c_{1}+c_{2}=0$;
\item \label{enu:the-formal-normalizing}the formal normalizing map $\hat{\Phi}$
is 1-summable (with respect to $x$) in every direction $\theta\neq\arg\left(\pm\lambda\right)$.
\item \label{enu:there-exist-analytic}there exist analytic sectorial fibered
diffeomorphisms $\Phi_{+}$ and $\Phi_{-}$, (asymptotically) tangent
to the identity, defined in sectorial domains of the form ${\displaystyle S_{+}\times\left(\ww C^{2},0\right)}$
and ${\displaystyle S_{-}\times\left(\ww C^{2},0\right)}$ respectively,
where 
\begin{eqnarray*}
S_{+} & := & \acc{x\in\ww C\mid0<\abs x<r\mbox{ and }\abs{\arg\left(\frac{x}{i\lambda}\right)}<\frac{\pi}{2}+\epsilon}\\
S_{-} & := & \acc{x\in\ww C\mid0<\abs x<r\mbox{ and }\abs{\arg\left(\frac{-x}{i\lambda}\right)}<\frac{\pi}{2}+\epsilon}
\end{eqnarray*}
(for any ${\displaystyle \epsilon\in\left]0,\frac{\pi}{2}\right[}$
and some $r>0$ small enough), which admit $\hat{\Phi}$ as weak Gevrey-1
asymptotic expansion in these respective domains, and which conjugate
$Y$ to $\ynorm$. Moreover $\Phi_{+}$ and $\Phi_{-}$ are the unique
such germs of analytic functions in sectorial domains (see Definition
\ref{def: sectorial diff}).
\end{enumerate}
\end{thm}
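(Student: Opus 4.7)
The plan is to establish the three assertions in order, with (3) being the technical core from which (1)--(2) follow by essentially standard arguments.

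For assertion (1), I would restrict the formal conjugacy $\hat\Phi$ to the invariant hypersurface $\{x=0\}$: this yields a formal diffeomorphism of $\wcc$ tangent to the identity that conjugates $Y_{\mid\{x=0\}}$ to $\ynorm_{\mid\{x=0\}}$. By Brjuno's theorem (invoked in the remark following Definition~\ref{def: asympt hamil}), formal orbital linearizability here coincides with analytic orbital linearizability; div-integrability of $Y$ therefore transfers through $\hat\Phi$ to force $\ynorm_{\mid\{x=0\}}$ to itself be orbitally linearizable. Inspection of the explicit shape of the normal form shows that this happens if and only if $c_1(v)+c_2(v)\equiv 0$, and that $c_1$ (equivalently $-c_2$) must coincide with an analytic function associated to the orbital linearization of $Y_{\mid\{x=0\}}$; hence $c_1, c_2 \in \germ v$.

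For assertion (3), I would follow Teyssier's method of characteristics, bypassing fixed-point arguments. After a preliminary polynomial pre-normalization using a truncation of $\hat\Phi$, assume the defect $Y-\ynorm$ is flat to high order in $x$. Interpret the conjugacy equation via the homotopy $Y_t := (1-t)\ynorm + tY$ and seek a time-dependent fibered vector field $X_t$ satisfying the cohomological equation $[X_t, Y_t] = \ynorm - Y$. Using a carefully chosen closed $1$-form encoding the foliation of $Y_t$, this reduces to a scalar transport equation $\lie{Y_t}(h) = g$ to be solved by integration of $g$ along trajectories of $Y_t$. The projected trajectories on the $x$-plane are arcs $x(s) = x_0/(1-x_0 s)$ of the integral curves of $x^2\partial_x$; as $s \to \pm\infty$ they return to the origin tangentially to the direction opposite $\pm x_0$. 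The sectors $S_\pm$ are precisely those from which the chosen semi-infinite trajectory stays inside a fixed sectorial neighborhood and along which the transverse components $\mathbf{y}(s)$ remain uniformly bounded: strict non-degeneracy provides exponential decay of the dominant modes, while div-integrability keeps the resonant invariant $y_1 y_2$ controlled, together ensuring convergence of the path integral defining $X_t$ and hence of $\Phi_\pm$.

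The weak Gevrey-$1$ asymptotic expansion $\Phi_\pm \sim \hat\Phi$ then follows from Laplace-type estimates on these path integrals: the $n$-th coefficient in the $x$-expansion is bounded by $Cn!\rho^{-n}$ uniformly on a fixed polydisc in $\mathbf{y}$, the ``weak'' qualifier reflecting that Gevrey bounds are imposed only on the $x$-expansion while the coefficients remain honest analytic functions of $\mathbf{y}$. Uniqueness on sectors of opening $>\pi$ is standard: the ratio of two candidates is a sectorial symmetry of $\ynorm$ flat at $x=0$, and an analysis of fibered commutants of $\ynorm$ forces such a symmetry to be the identity. Assertion (2) is then a direct application of the Ramis--Sibuya theorem: $S_+$ and $S_-$ cover a punctured disk with intersection consisting of two subsectors around the directions $\arg(\pm\lambda)$, on which $\Phi_+ - \Phi_-$ is Gevrey-$1$ flat, so $\hat\Phi$ is $1$-summable in every direction $\theta \neq \arg(\pm\lambda)$. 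The principal obstacle is to push the convergence of the path integrals onto \emph{wide} sectors of opening strictly greater than $\pi$: naive exponential-decay estimates would yield only sectors of opening slightly exceeding $\pi/2$, insufficient to apply Ramis--Sibuya. The gain comes from carefully exploiting the invariance of $v=y_1 y_2$ along the linear flow of $\ynorm$, a feature absent from the classical Hukuhara--Kimura--Matuda situation and precisely what makes the div-integrability hypothesis both natural and indispensable.
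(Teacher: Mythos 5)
Your treatment of items (1) and (3) follows essentially the same architecture as the paper: restriction to $\{x=0\}$ and Brjuno's theorem for (1), and for (3) a fixed-point-free construction of $\Phi_{\pm}$ by solving homological equations through integration along paths asymptotic to the origin (the paper does this in two stages, for the radial and tangential parts, realizing the conjugacies as flows of $\overrightarrow{\cal R}$ and $\overrightarrow{\cal C}$ at sectorial ``times'' $\alpha_{\pm}$ obtained as such path integrals, but this is the same circle of ideas), with uniqueness obtained from the triviality of sectorial fibered isotropies of $\ynorm$ over sectors of opening greater than $\pi$.

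The gap is in your derivation of item (2). You assert that ``$\Phi_{+}-\Phi_{-}$ is Gevrey-$1$ flat'' on the two narrow overlap sectors and then invoke Ramis--Sibuya. But what the construction of item (3) delivers is only the \emph{weak} Gevrey-$1$ asymptotic expansion: exponential flatness of each coefficient $F_{\mathbf{j}}(x)$ of the $\mathbf{y}$-expansion separately, with no uniform control in $\mathbf{y}$. Weak Gevrey-$1$ flatness of the cocycle does not imply true Gevrey-$1$ flatness (the paper even exhibits weakly $1$-summable series that are not Gevrey-$1$), so the hypothesis of a Ramis--Sibuya-type theorem is simply not available at this stage. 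Establishing that the Stokes diffeomorphisms $\Phi_{\pm\lambda}=\Phi_{\pm}\circ\Phi_{\mp}^{-1}$ are exponentially close to the identity \emph{uniformly} on a polydisc in $\mathbf{y}$ is precisely the content of Proposition \ref{prop: isotropies plates} and Lemma \ref{lem: isotropies exp plates}, and it is the central technical difficulty of the paper: it requires passing to the space of leaves via the first integrals (\ref{eq: integrales premieres}), expanding the induced isotropy in Laurent series in $h_{1}$ (resp. $h_{2}$) with entire coefficients in $w=h_{1}h_{2}$, and controlling the resonant term $c_{m}\left(y_{1}y_{2}\right)^{m}\log\left(x\right)/x$ appearing in the exponentials --- none of which is addressed in your proposal. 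A smaller point: even once flatness is known, the scalar Ramis--Sibuya theorem does not directly produce the result for a pair of diffeomorphisms; the paper uses the non-abelian version due to Martinet--Ramis (Theorem \ref{th: martinet ramis}) to build the $1$-sums and then identifies them with $\Phi_{+}$ and $\Phi_{-}$ via the uniqueness of the formal normalizing map.
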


\begin{rem}
Although item \ref{enu:there-exist-analytic} above is a straightforward
consequence of the\emph{ 1-summability} of $\hat{\Phi}$ (item \ref{enu:the-formal-normalizing}
above), we will in fact start by proving item \ref{enu:there-exist-analytic}
in Corollary \ref{cor: existence normalisations sectorielles}, and
establish the 1-summability of item \ref{enu:the-formal-normalizing}
in a second step (see Proposition \ref{prop: sommabilit=0000E9 normalisation formelle}).
What we will obtain at first directly is only the \emph{weak} 1-summability
(see subsection \ref{subsec:Weak-Gevrey-1-power}) of $\hat{\Phi}$
(see Proposition \ref{prop: Weak sectorial normalizations}), and
not immediately the 1-summability. To obtain the ``true'' 1-summability,
we will need to prove that the transition maps between $\Phi_{+}$
and $\Phi_{-}$ are exponentially close to the identity (see Proposition
\ref{prop: isotropies plates}), and then to use a fundamental theorem
of Martinet and Ramis (see Theorem \ref{th: martinet ramis}).
\end{rem}

\begin{defn}
\label{def: sectorial normalizations}We call $\Phi_{+}$ and $\Phi_{-}$
the \textbf{sectorial normalizing maps} of $Y\in\snodiag$. 
\end{defn}

They are the 1-sums of $\hat{\Phi}$ along the respective directions
$\arg\left(i\lambda\right)$ and $\arg\left(-i\lambda\right)$. Notice
that $\Phi_{+}$ and $\Phi_{-}$ are \emph{germs of analytic sectorial
fibered diffeomorphisms}, \emph{i.e.} they are of the form
\begin{eqnarray*}
\Phi_{+}:S_{+}\times\left(\ww C^{2},0\right) & \longrightarrow & S_{+}\times\left(\ww C^{2},0\right)\\
\left(x,\mathbf{y}\right) & \longmapsto & \left(x,\Phi_{+,1}\left(x,\mathbf{y}\right),\Phi_{+,2}\left(x,\mathbf{y}\right)\right)
\end{eqnarray*}
and 
\begin{eqnarray*}
\Phi_{-}:S_{-}\times\left(\ww C^{2},0\right) & \longrightarrow & S_{-}\times\left(\ww C^{2},0\right)\\
\left(x,\mathbf{y}\right) & \longmapsto & \left(x,\Phi_{-,1}\left(x,\mathbf{y}\right),\Phi_{-,2}\left(x,\mathbf{y}\right)\right)
\end{eqnarray*}
(see section 2. for a precise definition of \emph{germ of analytic
sectorial fibered diffeomorphism}). The fact that they are\emph{ }also
\emph{(asymptotically) tangent to the identity }means that we have:
\[
{\displaystyle \Phi_{\pm}\left(x,\mathbf{y}\right)=\tx{Id}\left(x,\mathbf{y}\right)+\tx O\left(\norm{\left(x,\mathbf{y}\right)}^{2}\right)}\,\,.
\]

In fact, we can prove the uniqueness of the sectorial normalizing
maps under weaker assumptions.
\begin{prop}
\label{prop: unique normalizations}Let $\varphi_{+}$ and $\varphi_{-}$
be two germs of sectorial fibered diffeomorphisms in ${\displaystyle S_{+}\times\left(\ww C^{2},0\right)}$
and ${\displaystyle S_{-}\times\left(\ww C^{2},0\right)}$ respectively,
where $S_{+}$ and $S_{-}$ are as in Theorem \ref{Th: Th drsn},
which are (asymptotically) tangent to the identity and such that 
\[
\left(\varphi_{\pm}\right)_{*}\left(Y\right)=\ynorm\,\,.
\]
Then, they necessarily coincide with the the sectorial normalizing
maps $\Phi_{+}$ and $\Phi_{-}$ defined above.
\end{prop}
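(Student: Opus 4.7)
\emph{Plan of proof.} The argument reduces the uniqueness to a rigidity statement about self-isotropies of the normal form $\ynorm$: it suffices to show that any fibered sectorial diffeomorphism of $S_\pm \times \wcc$ which is asymptotically tangent to the identity and preserves $\ynorm$ must equal the identity. Indeed, set $\Psi_\pm := \Phi_\pm \circ \varphi_\pm^{-1}$. As a composition of two such maps, $\Psi_\pm$ is again a fibered sectorial diffeomorphism asymptotically tangent to identity on $S_\pm \times \wcc$; and since both $\Phi_\pm$ and $\varphi_\pm$ conjugate $Y$ to $\ynorm$, one has $(\Psi_\pm)_*\ynorm = (\Phi_\pm)_* Y = \ynorm$. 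Then $\varphi_\pm = \Phi_\pm$ is equivalent to $\Psi_\pm = \tx{Id}$.

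To establish the rigidity, I would write $\Psi_\pm(x, \mathbf{y}) = (x,\, y_1 + h_1(x, \mathbf{y}),\, y_2 + h_2(x, \mathbf{y}))$ with $h_j$ holomorphic on $S_\pm \times \mathbf{D}(0, \mathbf{r})$ and expand each $h_j(x, \mathbf{y}) = \sum_{\mathbf{k} \in \ww N^2} h_{j, \mathbf{k}}(x)\, y_1^{k_1} y_2^{k_2}$. Cauchy estimates on polydiscs combined with the asymptotic condition force $h_{j, \mathbf{k}}(x) \to 0$ as $x \to 0$ in $S_\pm$, for every $\mathbf{k}$. Projecting the invariance equation $\mathrm{D}\Psi_\pm \cdot \ynorm = \ynorm \circ \Psi_\pm$ onto each monomial $\mathbf{y}^{\mathbf{k}}$ yields a cascade of linear first-order ODEs
\[
x^2 h'_{j, \mathbf{k}}(x) = \bigl(\mu_{j, \mathbf{k}} + x\, \omega_{j, \mathbf{k}}(x)\bigr)\, h_{j, \mathbf{k}}(x) + R_{j, \mathbf{k}}(x),
\]
where $\mu_{1, \mathbf{k}} = \lambda(k_1 - k_2 - 1)$, $\mu_{2, \mathbf{k}} = \lambda(k_1 - k_2 + 1)$, $\omega_{j, \mathbf{k}}$ is a germ of holomorphic function at $0$, and $R_{j, \mathbf{k}}$ is polynomial in the coefficients $h_{j', \mathbf{k}'}$ with $|\mathbf{k}'| < |\mathbf{k}|$. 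I then argue by induction on $|\mathbf{k}|$: assuming lower-order vanishing, $R_{j, \mathbf{k}} = 0$, and the homogeneous equation has general solution $h_{j, \mathbf{k}}(x) = C \cdot x^{\omega_{j, \mathbf{k}}(0)} e^{-\mu_{j, \mathbf{k}}/x} V(x)$ for some germ of unit $V$ at $0$.

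In the non-resonant case $\mu_{j, \mathbf{k}} \neq 0$, the exponential $e^{-\mu_{j, \mathbf{k}}/x}$ is unbounded along some direction of $S_\pm$ \emph{precisely} because the sector opening strictly exceeds $\pi$ and is bisected by $\pm i\lambda$; combined with $h_{j, \mathbf{k}}(x) \to 0$, this forces $C = 0$. In the resonant case $\mu_{j, \mathbf{k}} = 0$, which occurs for $\mathbf{k} = (k+1, k)$ if $j = 1$ and $\mathbf{k} = (k, k+1)$ if $j = 2$ with $k \geq 0$, an explicit computation yields $\omega_{j, \mathbf{k}}(0) = -k \cdot \res Y$. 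Since $Y \in \snodiag$ is strictly non-degenerate, $\mathrm{Re}(\res Y) > 0$, so $|x^{\omega_{j, \mathbf{k}}(0)}| \to \infty$ as $x \to 0$ for $k \geq 1$, again forcing $C = 0$; the sub-case $k = 0$ gives $\omega_{j, \mathbf{k}}(0) = 0$ and homogeneous solutions $C \cdot V(x)$ with $V(0) \neq 0$, so $C = 0$ follows directly from $h_{j, \mathbf{k}}(x) \to 0$. In all cases $h_{j, \mathbf{k}} \equiv 0$, and the induction yields $\Psi_\pm = \tx{Id}$.

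The main obstacle is the careful derivation and bookkeeping of the ODE cascade, in particular identifying the exponent $\omega_{j, \mathbf{k}}(0) = -k \cdot \res Y$ at the resonant orders --- this is where strict non-degeneracy enters in an essential way. Geometrically, the hypothesis that $S_\pm$ has opening strictly greater than $\pi$ is indispensable to rule out the exponential homogeneous solutions in the non-resonant case; over narrower sectors these solutions survive and give rise to nontrivial isotropies (the Stokes diffeomorphisms defined on $S_+ \cap S_-$) that constitute the analytic moduli studied later in the paper.
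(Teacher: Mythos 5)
Your proof is correct and follows essentially the same route as the paper: there, Proposition \ref{prop: unique normalizations} is deduced from Proposition \ref{prop: isot sect}, which is exactly the rigidity statement you reduce to, proved by the same monomial expansion in $\mathbf{y}$, the same cascade of irregular ODEs for the coefficients, and the same two ingredients (opening $>\pi$ bisected by $\arg\left(\pm i\lambda\right)$ to exclude the exponential homogeneous solutions, and $\Re\left(\res Y\right)>0$ to exclude the power-law solutions $x^{-k\left(a_{1}+a_{2}\right)}$ along the resonant chain). The only organizational difference is that the paper first shows the isotropy preserves the resonant monomial, \emph{i.e.} $\phi_{1}\phi_{2}=y_{1}y_{2}$, in order to decouple the two components before treating them separately, whereas you run a single induction on $\abs{\mathbf{k}}$ directly on $h_{j}=\phi_{j}-y_{j}$; both variants work.
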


Since two analytically conjugate vector fields are also formally conjugate,
we fix now a normal form 
\[
\ynorm=x^{2}\pp x+\left(-\lambda+a_{1}x-c\left(v\right)\right)y_{1}\pp{y_{1}}+\left(\lambda+a_{2}x+c\left(v\right)\right)y_{2}\pp{y_{2}}\,\,\,\,,
\]
with $\lambda\in\ww C^{*}$, $\Re\left(a_{1}+a_{2}\right)>0$ and
$c\in v\germ v$ vanishing at the origin.
\begin{defn}
\label{def: ynorm class}We denote by ${\displaystyle \cro{\ynorm}}$
the set of germs of holomorphic doubly-resonant saddle-nodes in $\left(\ww C^{3},0\right)$
which are formally conjugate to $\ynorm$ by formal fibered diffeomorphisms
tangent to the identity, and denote by ${\displaystyle \quotient{\cro{\ynorm}}{{\fdiff[\ww C^{3},0,\tx{Id}]}}}$
the set of orbits of the elements in this set under the action of
${\displaystyle \fdiff[\ww C^{3},0,\tx{Id}]}$.
\end{defn}

According to Theorem \ref{Th: Th drsn}, to any ${\displaystyle Y\in{\displaystyle \cro{\ynorm}}}$
we can associate two sectorial normalizing maps $\Phi_{+},\Phi_{-}$,
which can in fact extend analytically in domains $S_{+}\times\left(\ww C^{2},0\right)$
and $S_{-}\times\left(\ww C^{2},0\right)$, where $S_{\pm}$ is an
asymptotic sector in the direction $\arg\left(\pm i\lambda\right)$
with opening $2\pi$ (see Definition \ref{def: asymptotitc sector}):
\[
\left(S_{+},S_{-}\right)\in\asympsect{\arg\left(i\lambda\right)}{2\pi}\times\asympsect{\arg\left(-i\lambda\right)}{2\pi}\,\,.
\]
Then, we consider two germs of sectorial fibered diffeomorphisms $\Phi_{\lambda},\Phi_{-\lambda}$
analytic in $S_{\lambda},S_{-\lambda}$, with 
\begin{eqnarray}
S_{\lambda} & := & S_{+}\cap S_{-}\cap\acc{\Re\left(\frac{x}{\lambda}\right)>0}\in\asympsect{\arg\left(\lambda\right)}{\pi}\label{eq: petits secteurs}\\
S_{-\lambda} & := & S_{+}\cap S_{-}\cap\acc{\Re\left(\frac{x}{\lambda}\right)<0}\in\asympsect{\arg\left(-\lambda\right)}{\pi}\,\,,\nonumber 
\end{eqnarray}
defined by: 
\[
\begin{cases}
\Phi_{\lambda}:=\left(\Phi_{+}\circ\Phi_{-}^{-1}\right)_{\mid S_{\lambda}\times\left(\ww C^{2},0\right)}\in\diffsect[\arg\left(\lambda\right)][\epsilon] & ,\,\forall\epsilon\in\left[0,\pi\right[\\
\Phi_{-\lambda}:=\left(\Phi_{-}\circ\Phi_{+}^{-1}\right)_{\mid S_{-\lambda}\times\left(\ww C^{2},0\right)}\diffsect[\arg\left(-\lambda\right)][\epsilon] & ,\,\forall\epsilon\in\left[0,\pi\right[\,\,.
\end{cases}
\]
Notice that $\Phi_{\lambda},\Phi_{-\lambda}$ are \emph{isotropies}
of $\ynorm$, \emph{i.e. }they satisfy:
\begin{eqnarray*}
\left(\Phi_{\pm\lambda}\right)_{*}\left(\ynorm\right) & = & \ynorm\,\,.
\end{eqnarray*}
\begin{defn}
\label{def: Stokes diffeo}With the above notations, we define ${\displaystyle \Lambda_{\lambda}\left(\ynorm\right)}$
${\displaystyle \left(\mbox{\emph{resp.} }\Lambda_{-\lambda}\left(\ynorm\right)\right)}$
as the group of germs of sectorial fibered isotropies of $\ynorm$,
tangent to the identity, and admitting the identity as Gevrey-1 asymptotic
expansion (see Definition \ref{def:Gevrey-1_expansion}) in sectorial
domains of the form ${\displaystyle S_{\lambda}\times\left(\ww C^{2},0\right)}$
${\displaystyle \left(resp.\,S_{-\lambda}\times\left(\ww C^{2},0\right)\right)}$,
with $S_{\pm\lambda}\in\asympsect{\arg\left(\pm\lambda\right)}{\pi}$. 

The two sectorial isotropies $\Phi_{\lambda}$ and $\Phi_{-\lambda}$
defined above are called the \textbf{Stokes diffeomorphisms} associate
to ${\displaystyle Y\in{\displaystyle \cro{\ynorm}}}$.
\end{defn}

Our second main result gives the moduli space for the analytic classification
that we are looking for.
\begin{thm}
\label{thm: espace de module}The map
\begin{eqnarray*}
\quotient{\cro{\ynorm}}{{\fdiff[\ww C^{3},0,\tx{Id}]}} & \longrightarrow & \Lambda_{\lambda}\left(\ynorm\right)\times\Lambda_{-\lambda}\left(\ynorm\right)\\
Y & \longmapsto & \left(\Phi_{\lambda},\Phi_{-\lambda}\right)\,\,
\end{eqnarray*}
is well-defined and bijective.
\end{thm}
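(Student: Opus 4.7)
My strategy unfolds in three parts: well-definedness, injectivity, and surjectivity.

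For well-definedness, I need to verify two things. First, for any $Y\in\cro{\ynorm}$, the Stokes diffeomorphisms actually belong to $\Lambda_\lambda(\ynorm)\times\Lambda_{-\lambda}(\ynorm)$: by construction $\Phi_{\pm\lambda}$ are fibered sectorial isotropies of $\ynorm$ tangent to the identity, and since both $\Phi_+$ and $\Phi_-$ are Gevrey-1 asymptotic to the same formal normalizer $\hat\Phi$ of $Y$ (Theorem \ref{Th: Th drsn}), their composition admits $\hat\Phi\circ\hat\Phi^{-1}=\tx{Id}$ as Gevrey-1 asymptotic expansion on each narrow sector. Second, the map must factor through the action of $\fdiff[\ww C^{3},0,\tx{Id}]$: if $Y'=\Psi_*Y$ for some $\Psi\in\fdiff[\ww C^{3},0,\tx{Id}]$, then $\Phi_\pm\circ\Psi^{-1}$ are sectorial fibered normalizers of $Y'$ tangent to the identity, hence coincide with $\Phi_\pm'$ by the uniqueness statement of Proposition \ref{prop: unique normalizations}; the factor $\Psi$ cancels when forming $\Phi_+'\circ(\Phi_-')^{-1}$, so the Stokes pair is unchanged.

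For injectivity, suppose $Y,Y'\in\cro{\ynorm}$ yield the same Stokes pair $(\phi_\lambda,\phi_{-\lambda})$. Set $\Psi_\pm:=(\Phi_\pm')^{-1}\circ\Phi_\pm$, each a sectorial fibered conjugacy from $Y$ to $Y'$ on $S_\pm\times(\ww C^2,0)$. On the intersection $S_{\lambda}\times(\ww C^2,0)$, the relations $\Phi_+=\phi_\lambda\circ\Phi_-$ and $\Phi_+'=\phi_\lambda\circ\Phi_-'$ give
\[
\Psi_+=(\Phi_+')^{-1}\circ\phi_\lambda\circ\Phi_-=(\Phi_-')^{-1}\circ\phi_\lambda^{-1}\circ\phi_\lambda\circ\Phi_-=\Psi_-,
\]
and the symmetric computation on $S_{-\lambda}$ yields the same equality there. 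Hence $\Psi_+$ and $\Psi_-$ glue to a single analytic fibered diffeomorphism $\Psi$ on a punctured neighborhood of $\acc{x=0}$, tangent to the identity and bounded. Riemann's removable singularity theorem extends $\Psi$ holomorphically across $\acc{x=0}$, producing the desired element of $\fdiff[\ww C^{3},0,\tx{Id}]$ conjugating $Y$ to $Y'$.

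For surjectivity, given a cocycle $(\phi_\lambda,\phi_{-\lambda})\in\Lambda_\lambda(\ynorm)\times\Lambda_{-\lambda}(\ynorm)$, I need to produce $Y\in\cro{\ynorm}$ realizing this pair. The crux is a nonlinear Cousin problem: find sectorial fibered diffeomorphisms $\Phi_\pm$ on $S_\pm\times(\ww C^2,0)$, tangent to the identity and Gevrey-1 asymptotic to a common formal series $\hat\Phi$, satisfying
\[
\Phi_+\circ\Phi_-^{-1}=\phi_\lambda\text{ on }S_\lambda,\qquad \Phi_-\circ\Phi_+^{-1}=\phi_{-\lambda}\text{ on }S_{-\lambda}.
\]
This is the main obstacle. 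I would solve it via a Ramis-Sibuya-type splitting theorem in the nonlinear setting of fibered diffeomorphism cocycles (equivalently, via a Borel-Laplace transform in the $x$-variable): since $\phi_{\pm\lambda}$ are identity to infinite Gevrey-1 order on the narrow sectors, they form a coboundary whose splitting produces $\Phi_\pm$ as sectorial $1$-sums of a common formal fibered diffeomorphism $\hat\Phi$ tangent to the identity. Once $\Phi_\pm$ are in hand, set $Y_\pm:=(\Phi_\pm^{-1})_*\ynorm$ on $S_\pm\times(\ww C^2,0)$; these expressions agree on each overlap precisely because $\phi_{\pm\lambda}$ are isotropies of $\ynorm$, so they glue to an analytic vector field on a punctured neighborhood of $\acc{x=0}$. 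Hartogs' extension theorem produces a genuine germ $Y\in\vf$, and the Gevrey-1 asymptotics of $\Phi_\pm$ to $\hat\Phi$ together with $\hat\Phi_*Y=\ynorm$ formally show $Y\in\cro{\ynorm}$; by uniqueness (Proposition \ref{prop: unique normalizations}) the $\Phi_\pm$ are its sectorial normalizers, so its Stokes pair is the prescribed $(\phi_\lambda,\phi_{-\lambda})$.
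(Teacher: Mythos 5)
Your proposal is correct and follows essentially the same route as the paper: well-definedness and injectivity via the uniqueness of the sectorial normalizing maps (Proposition \ref{prop: unique normalizations}) plus gluing and Riemann's removable singularity theorem, and surjectivity via the non-abelian Ramis--Sibuya splitting of Martinet--Ramis (Theorem \ref{th: martinet ramis}) followed by gluing the pulled-back vector fields $\left(\Phi_{\pm}^{-1}\right)_{*}\ynorm$ across $\acc{x=0}$. The only cosmetic deviation is your invocation of Hartogs where the paper uses Riemann's theorem for the bounded holomorphic extension across $\acc{x=0}$; the mathematical content is identical.
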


In particular, the result states that Stokes diffeomorphisms only
depend on the class of $Y\in\cro{\ynorm}$ in the quotient ${\displaystyle \quotient{\cro{\ynorm}}{{\fdiff[\ww C^{3},0,\tx{Id}]}}}$.
We will give a description of this set of invariants in terms of power
series in the \emph{space of leaves} in section \ref{sec: analytic classification}.
\begin{rem}
\label{rem:New difficulties}In this paper we start by proving a theorem
of sectorial normalizing map analogous to the classical one due to
Hukuhara-Kimura-Matuda for saddle-nodes in $\left(\ww C^{2},0\right)$
\cite{HKM}, generalized later by Stolovitch in any dimension in \cite{Stolo}.
Unlike the method based on a fixed point theorem used by these authors,
we have used a more geometric approach (following the works of Teyssier~\cite{Teyssier03,teyssier2004equation})
based on the resolution of an homological equation, by integrating
a well chosen 1-form along asymptotic paths. This latter approach
turned out to be more efficient to deal with the fact that $Y_{\mid\acc{x=0}}$
is not necessarily linearizable. Indeed, if we look at \cite{Stolo}
in details, one of the first problem is that in the irregular systems
that needs to be solved by a fixed point method (for instance equation
$\left(2.7\right)$ in the cited paper), the non-linear terms would
not be divisible by the ``time'' variable $t$ in our situation.
This would complicate the different estimations that are done later
in the cited work. This is the first main new phenomena we have met. 

Then we will see that the sectorial normalizing maps $\Phi_{+},\Phi_{-}$
in the corollary above admit in fact the unique formal normalizing
map $\hat{\Phi}$ given by Theorem \ref{thm: forme normalel formelle}
as ``true'' Gevrey-1 asymptotic expansion in $\cal S_{+}\in\cal S_{\arg\left(\lambda\right),\eta}$
and $\cal S_{-}\in\cal S_{\arg\left(-\lambda\right),\eta}$ respectively.
This will be proved by studying $\Phi_{+}\circ\left(\Phi_{-}\right)^{-1}$
in $\cal S_{+}\cap\cal S_{-}$ (and more generally any germ of sectorial
fibered isotropy of $\ynorm$ in ``narrow'' sectorial neighborhoods
$\cal S_{\pm\lambda}\subset\cal S_{+}\cap\cal S_{-}$ which admits
the identity as weak Gevrey-1 asymptotic expansion). The main difficulty
is to prove that such a sectorial isotropy of $\ynorm$ over the ``narrow''
sectors described above is necessarily exponentially close to the
identity (see Lemma \ref{lem: isotropies exp plates}). This will
be done \emph{via} a detailed analysis of these maps in the space
of leaves (see Definition \ref{def: space of leaves}). In fact, this
is the second main new difficulty we have met, which is due to the
presence of the ``resonant'' term 
\[
\frac{c_{m}\left(y_{1}y_{2}\right)^{m}\log\left(x\right)}{x}
\]
in the exponential expression of the first integrals of the vector
field (see $\left(\mbox{\ref{eq: integrales premieres}}\right)$).
In \cite{Stolo}, similar computations are done in Subsection $3.4.1$.
In this part of the paper, infinitely many irregular differential
equations appear when identifying terms of same homogeneous degree.
The fact that $Y_{\mid\acc{x=0}}$ is linear implies that these differential
equations are all linear and independent of each others (\emph{i.e.
}they are not mixed together). In our situation, this is not the case
and then more complicated.
\end{rem}

\subsection{\label{subsec: transversally symplectic}Painlevé equations: the
transversally Hamiltonian case }

~

In \cite{Yoshida85} Yoshida shows that a vector field in the class
$\snofib$ naturally appears after a suitable compactification (given
by the so-called Boutroux coordinates \cite{Boutroux13}) of the phase
space of Painlevé equations $\left(P_{j}\right)_{j=I,\dots,V}$, for
generic values of the parameters. In these cases the vector field
presents an additional transverse Hamiltonian structure. Let us illustrate
these computations in the case of the first Painlevé equation: 
\begin{eqnarray*}
\left(P_{I}\right)\,\,\,\,\,\,\,\,\,\,\,\,\,\,\,\,\,\,\,\,\,\ddd{^{2}z_{1}}{t^{2}} & = & 6z_{1}^{2}+t\,\,\,\,\,\,\,\,\,\,\,\,\,\,.
\end{eqnarray*}
As is well known since Okamoto \cite{Okamoto}, $\left(P_{I}\right)$
can be seen as a non-autonomous Hamiltonian system 
\[
\begin{cases}
{\displaystyle \ppp{z_{1}}t=-\ppp H{z_{2}}}\\
{\displaystyle \ppp{z_{2}}t=\ppp H{z_{1}}}
\end{cases}
\]
with Hamiltonian 
\begin{eqnarray*}
H\left(t,z_{1},z_{2}\right) & := & 2z_{1}^{3}+tz_{1}-\frac{z_{2}^{2}}{2}.
\end{eqnarray*}
More precisely, if we consider the standard symplectic form $\omega:=dz_{1}\wedge dz_{2}$
and the vector field 
\begin{eqnarray*}
Z & := & \pp t-\ppp H{z_{2}}\pp{z_{1}}+\ppp H{z_{1}}\pp{z_{2}}
\end{eqnarray*}
induced by $\left(P_{I}\right)$, then the Lie derivative 
\[
\cal L_{Z}\left(\omega\right)=\left(\ppp{^{2}H}{t\partial z_{1}}\mbox{d}z_{1}+\ppp{^{2}H}{t\partial z_{2}}\mbox{d}z_{2}\right)\wedge\mbox{d}t=\mbox{d}z_{1}\wedge\mbox{d}t
\]
 belongs to the ideal $\ps{\tx dt}$ generated by $\tx dt$ in the
exterior algebra $\Omega^{*}\left(\ww C^{3}\right)$ of differential
forms in variables $\left(t,z_{1},z_{2}\right)$. Equivalently, for
any $t_{1},t_{2}\in\ww C$ the flow of $Z$ at time $\left(t_{2}-t_{1}\right)$
acts as a \emph{symplectomorphism} between fibers $\acc{t=t_{1}}$
and $\acc{t=t_{2}}$. 

The weighted compactification given by the Boutroux coordinates~\cite{Boutroux13}
defines a chart near $\acc{t=\infty}$ as follows: 
\[
\begin{cases}
{\displaystyle z_{2}=y_{2}x^{-\frac{3}{5}}}\\
{\displaystyle z_{1}=y_{1}x^{-\frac{2}{5}}}\\
{\displaystyle t=x^{-\frac{4}{5}}} & .
\end{cases}
\]
In the coordinates $\left(x,y_{1},y_{2}\right)$, the vector field
$Z$ is transformed, up to a translation $y_{1}\leftarrow y_{1}+\zeta$
with ${\displaystyle \zeta=\frac{i}{\sqrt{6}}}$, to the vector field
\begin{eqnarray}
\tilde{Z} & = & -\frac{5}{4x^{\frac{1}{5}}}Y\label{eq: P1 ham at infinity}
\end{eqnarray}
where
\begin{eqnarray*}
Y & = & x^{2}\pp x+\left(-\frac{4}{5}y_{2}+\frac{2}{5}xy_{1}+\frac{2\zeta}{5}x\right)\pp{y_{1}}+\left(-\frac{24}{5}y_{1}^{2}-\frac{48\zeta}{5}y_{1}+\frac{3}{5}xy_{2}\right)\pp{y_{2}}\,\,\,\,\,\,\,\,\,.
\end{eqnarray*}
We observe that $Y$ is a\emph{ }strictly non-degenerate doubly-resonant
saddle-node as in Definitions \ref{def: drsn} and \ref{def: non-deg}
with residue $\tx{res}\left(Y\right)=1$. Furthermore we have: 
\[
\begin{cases}
{\displaystyle \tx dt} & {\displaystyle =-\frac{4}{5}5^{\frac{4}{5}}x^{-\frac{9}{5}}\tx dx}\\
{\displaystyle \tx dz_{1}\wedge\tx dz_{2}} & {\displaystyle =\frac{1}{x}\left(\tx dy_{1}\wedge\tx dy_{2}\right)+\frac{1}{5x^{2}}\left(2y_{1}\tx dy_{2}-3y_{2}\tx dy_{1}\right)\wedge\tx dx}\\
 & {\displaystyle \in\,\frac{1}{x}\left(\tx dy_{1}\wedge\tx dy_{2}\right)+\ps{\tx dx}}
\end{cases}\,\,\,\,\,,
\]
where $\ps{\mbox{d}x}$ denotes the ideal generated by $\mbox{d}x$
in the algebra of holomorphic forms in $\ww C^{*}\times\ww C^{2}$.
We finally obtain 
\[
\begin{cases}
{\displaystyle {\displaystyle \cal L_{Y}\left(\frac{\tx dy_{1}\wedge\tx dy_{2}}{x}\right)}=\frac{1}{5x}\left(3y_{2}\mbox{d}y_{1}-\left(2\zeta+2y_{1}\right)\mbox{d}y_{2}\right)\wedge\mbox{d}x}\\
{\displaystyle \cal L_{Y}\left(\mbox{d}x\right)=2x\mbox{d}x}
\end{cases}\quad.
\]
 Therefore, both ${\displaystyle \cal L_{Y}\left(\omega\right)}$
and $\cal L_{Y}\left(\mbox{d}x\right)$ are differential forms who
lie in the ideal $\ps{\tx dx}$, in the algebra of germs of meromorphic
1-forms in $\left(\ww C^{3},0\right)$ with poles only in $\acc{x=0}$.
This motivates the following:
\begin{defn}
\label{def: intro}Consider the rational 1-form 
\begin{eqnarray*}
\omega & := & \frac{\mbox{d}y_{1}\wedge\mbox{d}y_{2}}{x}~.
\end{eqnarray*}
We say that vector field $Y$ is \textbf{transversally Hamiltonian
}(with respect to $\omega$ and $\mbox{dx}$) if 
\begin{eqnarray*}
\mathcal{L}_{Y}\left(\tx dx\right)\in\left\langle \tx dx\right\rangle  & \mbox{ and } & \mathcal{L}_{Y}\left(\omega\right)\in\left\langle \tx dx\right\rangle \qquad.
\end{eqnarray*}
For any open sector $S\subset\ww C^{*}$, we say that a germ of sectorial
fibered diffeomorphism $\Phi$ in $S\times\left(\ww C^{2},0\right)$
is \textbf{transversally symplectic} (with respect to $\omega$ and
$\mbox{d}x$) if
\[
\Phi^{*}\left(\omega\right)\in\,\omega+\ps{\tx dx}\qquad
\]
 (Here $\Phi^{*}\left(\omega\right)$ denotes the pull-back of $\omega$
by $\Phi$). 

We denote by $\sdiffid$ the group of transversally symplectic diffeomorphisms
which are tangent to the identity.
\end{defn}

\begin{rem}
~

\begin{enumerate}
\item The flow of a transversally Hamiltonian vector field $X$ defines
a map between fibers $\acc{x=x_{1}}$ and $\acc{x=x_{2}}$ which sends
$\omega_{\mid x=x_{1}}$ onto $\omega_{\mid x=x_{2}}$, since 
\[
{\displaystyle \left(\exp\left(X\right)\right)^{*}\left(\omega\right)\in\,\omega+\ps{\mbox{d}x}}\,\,\,.
\]
\item A fibered sectorial diffeomorphism $\Phi$ is transversally symplectic
if and only if $\det\left(\mbox{D}\Phi\right)=1$.
\end{enumerate}
\end{rem}

\begin{defn}
\label{def: nc dr th}A \textbf{transversally Hamiltonian doubly-resonant
saddle-node} is a transversally Hamiltonian vector field which is
conjugate, \emph{via }a transversally symplectic diffeomorphism, to
one of the form 
\begin{eqnarray*}
Y & = & x^{2}\pp x+\Big(-\lambda y_{1}+F_{1}\left(x,\mathbf{y}\right)\Big)\pp{y_{1}}+\Big(\lambda y_{2}+F_{2}\left(x,\mathbf{y}\right)\Big)\pp{y_{2}}\,\,\,\,\,,
\end{eqnarray*}
with $\lambda\in\ww C^{*}$ and $f_{1},f_{2}$ analytic in $\left(\ww C^{3},0\right)$
and of order at least $2$.
\end{defn}

Notice that a transversally Hamiltonian doubly-resonant saddle-node
is necessarily strictly non-degenerate (since its residue is always
equal to $1$), and also div-integrable (see section 3). It follows
from Yoshida's work \cite{Yoshida85} that the doubly-resonant saddle-nodes
at infinity in Painlevé equations$\left(P_{j}\right)_{j=I,\dots,V}$
(for generic values of the parameters) all are transversally Hamiltonian. 

We recall the second main result from \cite{bittmann1}. 
\begin{thm}
\cite{bittmann1}\label{thm: Th ham formel}~

Let $Y\in\sndiag$ be a diagonal doubly-resonant saddle-node which
is supposed to be transversally Hamiltonian. Then, there exists a
unique formal fibered transversally symplectic diffeomorphism $\hat{\Phi}$,
tangent to identity, such that: 
\begin{eqnarray}
\hat{\Phi}_{*}\left(Y\right) & = & x^{2}\pp x+\left(-\lambda+a_{1}x-c\left(y_{1}y_{2}\right)\right)y_{1}\pp{y_{1}}+\left(\lambda+a_{2}x+c\left(y_{1}y_{2}\right)\right)y_{2}\pp{y_{2}}\nonumber \\
 & =: & \ynorm\,\,,\label{eq: fibered normal form-1-1-1}
\end{eqnarray}
where $\lambda\in\ww C^{*}$, $c\left(v\right)\in v\form v$ a formal
power series in $v=y_{1}y_{2}$ without constant term and $a_{1},a_{2}\in\ww C$
are such that $a_{1}+a_{2}=1$.
\end{thm}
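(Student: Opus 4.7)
The plan is to bootstrap from Theorem \ref{thm: forme normalel formelle}, which already provides a unique formal fibered normalizing map tangent to the identity, and to show that under the transverse Hamiltonian hypothesis this map is automatically transversally symplectic while the normal form simplifies to the announced shape.

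\medskip
\textbf{Step 1 (residue).} Applying Cartan's formula to $\omega = \tx dy_1 \wedge \tx dy_2 / x$ and $Y = x^2 \pp x + B_1 \pp{y_1} + B_2 \pp{y_2}$ gives
\[
\cal L_Y \omega \in \ps{\tx dx} \iff \partial_{y_1} B_1 + \partial_{y_2} B_2 = x.
\]
Since $B_i = \pm\lambda y_i + F_i$ with $F_i$ of order at least $2$, evaluating this identity at $\mathbf{y} = 0$ gives $\tx{Tr}(\mathbf{A}(x)) = x$, so that $\res Y = 1 \notin \ww Q_{\leq 0}$. The hypotheses of Theorem \ref{thm: forme normalel formelle} are thus satisfied, and deliver a unique formal fibered diffeomorphism $\hat\Phi$ tangent to the identity conjugating $Y$ to the normal form of that theorem, with $c_1, c_2 \in v \form v$ and $a_1 + a_2 = 1$.

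\medskip
\textbf{Step 2 (symplecticity of $\hat\Phi$).} I would replay the inductive construction of $\hat\Phi$ in the proof of Theorem \ref{thm: forme normalel formelle}: at each stage a homogeneous obstruction $R_k$ of degree $k$ is removed by the time-one flow of a generator $U_k$ solving a homological equation $\cro{Y_0, U_k} = R_k$, where $Y_0 = x^2 \pp x - \lambda y_1 \pp{y_1} + \lambda y_2 \pp{y_2}$ is the principal linear part. Because $Y$ is transversally Hamiltonian, every obstruction $R_k$ satisfies the inductive transverse-divergence condition $\tx{div}_{\mathbf y} R_k \in \ps x$. I would verify that the homological equation admits a solution $U_k$ \emph{within the same class} $\acc{\tx{div}_{\mathbf y} U_k \in \ps x}$ of infinitesimal generators of transversally symplectic diffeomorphisms. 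Composing these elementary flows yields a formal transversally symplectic normalizing map which, by the uniqueness part of Theorem \ref{thm: forme normalel formelle}, must coincide with $\hat\Phi$. Hence $\hat\Phi \in \sdiffid$.

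\medskip
\textbf{Step 3 (shape of the normal form).} Since $\hat\Phi$ is transversally symplectic and $Y$ is transversally Hamiltonian, so is $\hat\Phi_*(Y)$. Applying the criterion of Step 1 to the normal form and using $a_1 + a_2 = 1$ yields
\[
\frac{\tx d}{\tx dv}\bigl[v(c_1(v)+c_2(v))\bigr] = 0,
\]
and since both series vanish at $v = 0$, this forces $c_1 + c_2 = 0$. Setting $c := c_2 = -c_1 \in v \form v$ then produces the announced normal form. Uniqueness (Step 4) is immediate: any competing formal fibered transversally symplectic diffeomorphism tangent to the identity normalizing $Y$ is, in particular, a formal fibered diffeomorphism tangent to the identity normalizing $Y$, hence equal to $\hat\Phi$ by Theorem \ref{thm: forme normalel formelle}.

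\medskip
The main obstacle is Step 2: solvability of the homological equation \emph{within} the transverse-Hamiltonian class at each order. This reduces to a cohomological statement about the action of $\cro{Y_0, \cdot}$ on the graded subspace of transversally divergence-free generators: one must show that this subspace is preserved and that the action on its non-resonant complement is invertible. The required invertibility should follow from the spectral data of $Y_0$ combined with $\res Y = 1 \notin \ww Q_{\leq 0}$, along lines analogous to the classical Birkhoff--Gustavson normalization of Hamiltonian systems.
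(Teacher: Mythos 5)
A preliminary remark: the paper does not actually prove this statement --- it is imported verbatim from \cite{bittmann1} (and the paper's proof of its analytic counterpart, Theorem \ref{thm: Th ham}, \emph{uses} the formal statement as input, so nothing can be borrowed from there). Judged on its own, your architecture is sound and three of your four steps are correct: the computation showing $\cal L_{Y}\left(\omega\right)\in\ps{\tx dx}\iff\partial_{y_{1}}B_{1}+\partial_{y_{2}}B_{2}=x$ is right and does force $\res Y=1$, so Theorem \ref{thm: forme normalel formelle} applies; once $\hat{\Phi}$ is known to be transversally symplectic, the identity $\frac{\tx d}{\tx dv}\cro{v\left(c_{1}+c_{2}\right)}=0$ together with vanishing at $v=0$ correctly kills $c_{1}+c_{2}$; and uniqueness is indeed inherited from the uniqueness clause of Theorem \ref{thm: forme normalel formelle}.

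The genuine gap is Step 2, which you yourself label the ``main obstacle'' and leave at the level of ``should follow from''. That step is the entire mathematical content of the theorem beyond Theorem \ref{thm: forme normalel formelle}. Two things must be verified, not one: (i) that at each order the homological equation $\cro{Y_{0},U_{k}}=R_{k}$ is solvable with $U_{k}$ in the class $\acc{\tx{div}_{\mathbf{y}}U\in\ps x}$, and (ii) that the \emph{resonant} part of $R_{k}$ which cannot be removed --- i.e.\ the part that builds the normal form --- automatically lies in that class as well; without (ii) nothing prevents the unique normalizer of Theorem \ref{thm: forme normalel formelle} from having $\det\left(\tx D\hat{\Phi}\right)\neq1$. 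Neither the compatibility of the resonant/non-resonant splitting with the divergence condition nor the invertibility of $\tx{ad}_{Y_{0}}$ on the relevant complement is checked, and the appeal to Birkhoff--Gustavson is only heuristic here, since the doubly-resonant spectrum $\left(0,-\lambda,\lambda\right)$ is precisely what makes the resonant subspace large. An alternative closing move that avoids redoing the induction is to set $J:=\det\left(\tx D\hat{\Phi}\right)$ and use $\cal L_{Y}\left(J\,\tx dx\wedge\tx dy_{1}\wedge\tx dy_{2}\right)=\hat{\Phi}^{*}\left(\cal L_{\ynorm}\left(\tx dx\wedge\tx dy_{1}\wedge\tx dy_{2}\right)\right)$ to obtain $\cal L_{Y}\left(\log J\right)=\sigma\left(\hat{\phi}_{1}\hat{\phi}_{2}\right)$ with $\sigma=\left(v\left(c_{1}+c_{2}\right)\right)'$, and then show that such a resonant right-hand side lies in the image of $\cal L_{Y}$ acting on formal power series only if $\sigma=0$ (forcing simultaneously $c_{1}+c_{2}=0$ and $J=1$); but that non-solvability assertion is the same difficulty in different clothing and still requires proof.
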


As a consequence of Theorem \ref{thm: Th ham formel}, Theorem \ref{Th: Th drsn}
we have the following:
\begin{thm}
\label{thm: Th ham}Let $Y$ be a transversally Hamiltonian doubly-resonant
saddle-node and let $\hat{\Phi}$ be the unique formal normalizing
map given by Theorem \ref{thm: Th ham formel}. Then the associate
sectorial normalizing maps $\Phi_{+}$ and $\Phi_{-}$ are also transversally
symplectic.
\end{thm}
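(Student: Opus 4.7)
The plan is to combine the formal transversally symplectic property given by Theorem~\ref{thm: Th ham formel} with the Gevrey-$1$ summability from Theorem~\ref{Th: Th drsn}, and to conclude by the uniqueness of the $1$-sum on sectors of opening $>\pi$ (Watson's lemma). By Proposition~\ref{prop: unique normalizations}, the sectorial normalizing maps $\Phi_{\pm}$ furnished by Theorem~\ref{Th: Th drsn} must coincide with the $1$-sums of $\hat{\Phi}$; thus the only point to verify is that the formal transversally symplectic property passes to these Borel-Laplace sums.

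The first step is to record an elementary criterion: for any fibered diffeomorphism $\Phi(x,\mathbf{y})=(x,\phi_{1},\phi_{2})$, a direct expansion gives
\[
\Phi^{*}\left(\omega\right)-\omega \;=\; \frac{\det\left(\tx D_{\mathbf{y}}\Phi\right)-1}{x}\,\dd y_{1}\wedge\dd y_{2}\;+\;\alpha\wedge\dd x,
\]
for some $1$-form $\alpha$, so $\Phi$ is transversally symplectic if and only if $\det(\tx D_{\mathbf{y}}\Phi)=1$. Since $Y$ is transversally Hamiltonian, Theorem~\ref{thm: Th ham formel} yields that the formal normalizing map satisfies $\det(\tx D_{\mathbf{y}}\hat{\Phi})=1$ in $\form{x,\mathbf{y}}$.

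Now set $g_{\pm}:=\det(\tx D_{\mathbf{y}}\Phi_{\pm})-1$. By Theorem~\ref{Th: Th drsn}\,(\ref{enu:the-formal-normalizing}), $\hat{\Phi}$ is $1$-summable in the directions $\arg(\pm i\lambda)$, so the $\Phi_{\pm}$ admit $\hat{\Phi}$ as true Gevrey-$1$ asymptotic expansion on the wide sectorial domains $S_{\pm}\times(\ww C^{2},0)$, whose openings exceed $\pi$. Gevrey-$1$ asymptotics being stable under partial differentiation in $\mathbf{y}$ (by Cauchy's estimates on proper sub-polydiscs and sub-sectors) and under polynomial operations, $g_{\pm}$ is itself Gevrey-$1$ asymptotic to $\hat{0}$ on $S_{\pm}$. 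Applying Watson's lemma to $g_{\pm}(\cdot,\mathbf{y})$ at each fixed $\mathbf{y}$ forces $g_{\pm}\equiv0$, which is exactly the statement that $\Phi_{\pm}$ is transversally symplectic.

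The only technical point to monitor is the classical Cauchy-based preservation of Gevrey-$1$ estimates under differentiation in $\mathbf{y}$, which produces no loss of opening. As a geometric cross-check I would note that from $\Phi_{\pm}^{*}Y=\ynorm$ together with $\tx{div}(Y)=\tx{div}(\ynorm)=3x$ (a direct consequence of transversal Hamiltonianity, since then $\partial_{y_{1}}Y_{1}+\partial_{y_{2}}Y_{2}=x$), applying $\Phi_{\pm}^{*}$ to the identity $\cal L_{Y}(\dd x\wedge\dd y_{1}\wedge\dd y_{2})=3x\,(\dd x\wedge\dd y_{1}\wedge\dd y_{2})$ directly yields $\ynorm(g_{\pm})=0$; so $g_{\pm}$ is a bounded sectorial first integral of $\ynorm$ with vanishing Gevrey-$1$ expansion, which by the same uniqueness statement must vanish identically.
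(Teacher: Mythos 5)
Your proof is correct and follows essentially the same route as the paper: the paper likewise observes that $\det(\tx D\hat{\Phi})$ is a $1$-summable series whose asymptotic expansion is the constant $1$, and concludes $\det(\tx D\Phi_{\pm})=1$ by uniqueness of the $1$-sum on sectors of opening greater than $\pi$ (your invocation of Watson's lemma is the same step). The closing divergence/first-integral cross-check is a harmless extra not present in the paper.
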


\begin{proof}
Since $\hat{\Phi}$ is 1-summable in $S_{\pm}\times\left(\ww C^{2},0\right)$,
the formal power series $\det\left(\mbox{D}\hat{\Phi}\right)$ is
also 1-summable in $S_{\pm}\times\left(\ww C^{2},0\right)$, and its
asymptotic expansion has to be the constant $1$. By uniqueness of
the 1-sum, we thus have $\det\left(\mbox{D}\Phi_{\pm}\right)=1$.
\end{proof}
Let us fix a normal form $\ynorm$ as in Theorem \ref{thm: Th ham},
and consider two sectorial domains ${\displaystyle S_{\lambda}\times\left(\ww C^{2},0\right)}$
and $S_{-\lambda}\times\left(\ww C^{2},0\right)$ as in (\ref{eq: petits secteurs}).
Then, the Stokes diffeomorphisms $\left(\Phi_{\lambda},\Phi_{-\lambda}\right)$
defined in the previous subsection as 
\[
\begin{cases}
\Phi_{\lambda}:=\left(\Phi_{+}\circ\Phi_{-}^{-1}\right)_{\mid S_{\lambda}\times\left(\ww C^{2},0\right)}\\
\Phi_{-\lambda}:=\left(\Phi_{-}\circ\Phi_{+}^{-1}\right)_{\mid S_{-\lambda}\times\left(\ww C^{2},0\right)} & ,
\end{cases}
\]
are also transversally symplectic. 
\begin{defn}
We denote by $\Lambda_{\lambda}^{\omega}\left(\ynorm\right)$ $\big($\emph{resp.
}$\Lambda_{-\lambda}^{\omega}\left(\ynorm\right)$$\big)$ the group
of germs of sectorial fibered isotropies of $\ynorm$, admitting the
identity as Gevrey-1 asymptotic expansion in sectorial domains of
the form ${\displaystyle S_{\lambda}\times\left(\ww C^{2},0\right)}$
${\displaystyle \left(resp.\,S_{-\lambda}\times\left(\ww C^{2},0\right)\right)}$,
and which are transversally symplectic. 
\end{defn}

Let us denote by $\cro{\ynorm}_{\omega}$ the set of germs of vector
fields which are formally conjugate to $\ynorm$\emph{ via }(formal)
transversally symplectic diffeomorphisms tangent to the identity.
As a consequence of Theorems (\ref{thm: espace de module}) and (\ref{thm: Th ham}),
we can now state the following result.
\begin{thm}
\label{th: espace de module symplectic}The map 
\begin{eqnarray*}
\quotient{\cro{\ynorm}_{\omega}}{{\sdiffid}} & \longrightarrow & \Lambda_{\lambda}^{\omega}\left(\ynorm\right)\times\Lambda_{-\lambda}^{\omega}\left(\ynorm\right)\\
Y & \longmapsto & \left(\Phi_{\lambda},\Phi_{-\lambda}\right)\,\,
\end{eqnarray*}
is a well-defined bijection.
\end{thm}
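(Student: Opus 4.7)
The strategy is to deduce Theorem~\ref{th: espace de module symplectic} from Theorem~\ref{thm: espace de module} by verifying that each component of the general bijection restricts well to the transversally symplectic subcategory.

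For well-definedness, let $Y\in\cro{\ynorm}_{\omega}$. By Theorem~\ref{thm: Th ham} its sectorial normalizing maps $\Phi_{+},\Phi_{-}$ are transversally symplectic, hence the Stokes diffeomorphisms $\Phi_{\pm\lambda}=\Phi_{\pm}\circ\Phi_{\mp}^{-1}$ are compositions of transversally symplectic fibered diffeomorphisms and so belong to $\Lambda_{\pm\lambda}^{\omega}(\ynorm)$. Invariance of the resulting pair under $\sdiffid$-conjugacy is inherited from Theorem~\ref{thm: espace de module}.

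For injectivity, suppose $Y_{1},Y_{2}\in\cro{\ynorm}_{\omega}$ produce the same Stokes pair. Theorem~\ref{thm: espace de module} provides $\Psi\in\fdiffid$ with $\Psi_{*}(Y_{1})=Y_{2}$, and we aim to upgrade $\Psi$ to $\sdiffid$. Let $\Phi_{\pm}^{(i)}$ denote the transversally symplectic sectorial normalizations of $Y_{i}$ supplied by Theorem~\ref{thm: Th ham}. Both $\Phi_{\pm}^{(1)}$ and $\Phi_{\pm}^{(2)}\circ\Psi$ are sectorial fibered diffeomorphisms tangent to identity conjugating $Y_{1}$ to $\ynorm$, so the uniqueness statement of Proposition~\ref{prop: unique normalizations} forces their equality, giving $\Psi=(\Phi_{\pm}^{(2)})^{-1}\circ\Phi_{\pm}^{(1)}$ on the sectorial domains $S_{\pm}\times(\ww C^{2},0)$. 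Each side is transversally symplectic, and since transversal symplecticity is a local analytic property of the germ, $\Psi\in\sdiffid$.

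Surjectivity is the principal step. Given $(\phi_{\lambda},\phi_{-\lambda})\in\Lambda_{\lambda}^{\omega}(\ynorm)\times\Lambda_{-\lambda}^{\omega}(\ynorm)$, Theorem~\ref{thm: espace de module} realizes this pair as the Stokes data of some $Y\in\cro{\ynorm}$, with associated sectorial normalizations $\Phi_{\pm}$. The key point is that the realization can be performed within the transversally symplectic class: the form $\omega$ descends to the gluing of two sectorial copies of $\ynorm$ along $\phi_{\pm\lambda}$ precisely because these Stokes maps preserve $\omega$ modulo $\langle\mbox{d}x\rangle$, and $\ynorm$ being transversally Hamiltonian with respect to this descended form then forces the glued vector field $Y$ to satisfy $\mathcal{L}_{Y}(\omega)\in\langle\mbox{d}x\rangle$. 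Equivalently, one tracks the function $J:=\det(\partial_{\mathbf{y}}\Phi_{\pm})$: on the overlaps $S_{\pm\lambda}$, the cocycle identity $\Phi_{+}=\phi_{\lambda}\circ\Phi_{-}$ combined with $\det(\partial_{\mathbf{y}}\phi_{\pm\lambda})=1$ yields $J_{+}=J_{-}$, so $J$ defines a single analytic function on the punctured neighborhood of $\{x=0\}$, which extends holomorphically across $\{x=0\}$ by its Gevrey-$1$ boundedness. Showing $J\equiv 1$ --- equivalently, that the sectorial Cousin splitting underlying Theorem~\ref{thm: espace de module} can be carried out in the transversally symplectic category --- establishes that $Y$ is transversally Hamiltonian. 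Then Theorem~\ref{thm: Th ham formel} supplies a unique transversally symplectic formal normalization of $Y$, which by the uniqueness in Theorem~\ref{thm: forme normalel formelle} coincides with the formal normalizing map $\hat{\Phi}$ of $Y$; hence $\hat{\Phi}\in\sdiffid$ and $Y\in\cro{\ynorm}_{\omega}$.

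The main technical obstacle lies precisely in this transversally symplectic version of the Cousin splitting in the surjectivity step: proving that the globally extended $J$ equals $1$, equivalently that the sectorial cocycle $(\phi_{\lambda},\phi_{-\lambda})$ splits as a coboundary within the transversally symplectic subgroup. This is what connects the purely cohomological invariants of $\Lambda^{\omega}_{\pm\lambda}(\ynorm)$ to the geometric requirement that the realized vector field preserve $\omega$ transversally, and it relies on the $1$-summability of $\hat{\Phi}$ together with the Gevrey-$1$ asymptotics of $\Phi_{\pm}$.
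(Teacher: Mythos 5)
Your overall architecture (deduce the symplectic statement from Theorem~\ref{thm: espace de module} by checking each component restricts to the symplectic subcategory) matches the paper, and your well-definedness and injectivity arguments are correct: in particular, using the uniqueness of the sectorial normalizing maps to identify the conjugacy $\Psi$ with $\left(\Phi_{\pm}^{(2)}\right)^{-1}\circ\Phi_{\pm}^{(1)}$, which has Jacobian determinant $1$, is exactly the right way to upgrade $\Psi$ to $\sdiffid$.

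The surjectivity step, however, contains a genuine gap, and moreover the goal you set yourself there is the wrong one. You propose to show that $J:=\det\left(\tx D\phi_{\pm}\right)\equiv 1$ for the splitting $\left(\phi_{+},\phi_{-}\right)$ produced by Theorem~\ref{th: martinet ramis}. This cannot be proved: that splitting is only unique up to right-composition with an arbitrary element of $\fdiffid$, so nothing forces the particular pair delivered by Martinet--Ramis to be transversally symplectic, and indeed in general it is not. What is true --- and what you correctly observe --- is that $\det\left(\tx D\phi_{+}\right)=\det\left(\tx D\phi_{-}\right)$ on the overlaps because the cocycle is symplectic, so $J$ glues and extends by Riemann's theorem to an analytic germ $f$ with $f(0)=1$. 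The missing step is then not to prove $f\equiv 1$ but to \emph{correct} the splitting: one must produce a single global $\psi\in\fdiffid$ with $\left(f\circ\psi\right)\det\left(\tx D\psi\right)=1$, so that $\sigma_{\pm}:=\phi_{\pm}\circ\psi$ are transversally symplectic and still split the same cocycle (since $\sigma_{+}\circ\sigma_{-}^{-1}=\phi_{+}\circ\phi_{-}^{-1}$). This is the content of the paper's Corollary~\ref{cor: MR symplectic}, and its proof is elementary: take $\psi$ of the form $\left(x,\psi_{1}\left(x,\mathbf{y}\right),y_{2}\right)$, reduce the condition to $\left(f\circ\psi\right)\frac{\partial\psi_{1}}{\partial y_{1}}=1$, integrate $f$ in $y_{1}$ and invert by the inverse function theorem. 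In particular it does not rely on the $1$-summability of $\hat{\Phi}$ or on Gevrey-$1$ asymptotics, contrary to what your last paragraph suggests; you have left the essential point as an unproved "main technical obstacle" and pointed toward tools that are not the ones that resolve it. Once the corrected symplectic splitting $\sigma_{\pm}$ is in hand, your conclusion (the glued $Y$ is transversally Hamiltonian, and Theorem~\ref{thm: Th ham formel} together with the uniqueness in Theorem~\ref{thm: forme normalel formelle} places $Y$ in $\cro{\ynorm}_{\omega}$) goes through as you describe.
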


\subsection{Outline of the paper}

~

In section 2, we introduce the different tools we need concerning
asymptotic expansion, Gevrey-1 series and 1-summability. We will in
particular introduce a notion of ``\textbf{weak}'' 1-summability.

In section 3, we prove Proposition \ref{prop: forme pr=0000E9par=0000E9e ordre N},
which states that we can always formally conjugate a non-degenerate
doubly-resonant saddle-node which is also div-integrable to its normal
form up to remaining terms of order $\tx O\left(x^{N}\right)$, for
all $N\in\ww N_{>0}$, and the conjugacy is actually $1$-summable.

In section 4, we prove that for all $Y\in\snofib$, there exists a
unique pair of sectorial normalizing maps $\left(\Phi_{+},\Phi_{-}\right)$
tangent to the identity which conjugates $Y$ to its normal form $\ynorm$
over sectors with opening greater than $\pi$ and arbitrarily close
to $2\pi$. The existence is given by Corollary \ref{cor: existence normalisations sectorielles},
while the uniqueness clause stated in Proposition \ref{prop: unique normalizations}
is proved thanks to Proposition \ref{prop: isot sect}. Moreover,
we will see that $\Phi_{+}$ and $\Phi_{-}$ both admit the unique
formal normalizing map $\hat{\Phi}$ given by Theorem \ref{thm: forme normalel formelle}
as weak Gevrey-1 asymptotic expansion (see Proposition \ref{prop: Weak sectorial normalizations}).

In section 5, we show that the Stokes diffeomorphisms $\Phi_{\lambda}$
and $\Phi_{-\lambda}$, which admit \emph{a priori} the identity only
as weak Gevrey-1 asymptotic expansion, admit in fact the identity
as ``true'' Gevrey-1 asymptotic expansion. This will be done by
studying more generally the germs of sectorial isotropies of the normal
form in sectorial domains with ``narrow'' opening (see Corollary
\ref{prop: isotropies plates}). Using a theorem by Martinet and Ramis~\cite{MR82}
reformulated in Theorem \ref{th: martinet ramis}, which is a ``non-abelian''
version of the Ramis-Sibuya theorem, we will obtain the fact that
$\hat{\Phi}$ is 1-summable in every direction $\theta\neq\arg\left(\pm\lambda\right)$,
of 1-sums $\Phi_{+}$ and $\Phi_{-}$ respectively in the corresponding
domains (see Corollary \ref{prop: sommabilit=0000E9 normalisation formelle}).
We then give a short proof of Theorem \ref{Th: Th drsn}, just by
using the different lemmas and propositions needed and proved earlier
in this paper. After that, we will once again use Theorem \ref{th: martinet ramis}
in order to obtain both Theorems \ref{thm: espace de module} and
\ref{th: espace de module symplectic}. We give in Proposition \ref{prop: description espace de module}
a description of the moduli space of analytic classification in terms
of some spaces of power series in the space of leaves. \tableofcontents{}

\section{Background}

We refer the reader to \cite{MR82,malgrange1995sommation,ramis1993divergent,DeMaesschalck}
for a detailed introduction to the theory of asymptotic expansion,
Gevrey series and summability (see also \cite{Stolo} for a useful
discussion of these concepts), where one can find the proofs of the
classical results we recall (but we do not prove here). We call $x\in\ww C$
the \emph{independent} variable and ${\displaystyle \mathbf{y}:=\left(y_{1},\dots,y_{n}\right)\in\ww C^{n}}$,
$n\in\ww N$, the \emph{dependent} variables. As usual we define ${\displaystyle \mathbf{y^{k}}:=y_{1}^{k_{1}}\dots y_{n}^{k_{n}}}$
for ${\displaystyle \mathbf{k}=\left(k_{1},\dots,k_{n}\right)\in\ww N^{n}}$,
and ${\displaystyle \abs{\mathbf{k}}=k_{1}+\dots+k_{n}}$. The notions
of asymptotic expansion, Gevrey-1 power series and 1-summability presented
here are always considered with respect to the independent variable
$x$ living in (open) sectors, the dependent variable $\mathbf{y}$
belonging to poly-discs 
\[
\mathbf{D\left(0,r\right)}:=\acc{\mathbf{y}=\left(y_{1},\dots,y_{n}\right)\in\ww C^{n}\mid\abs{y_{1}}<r_{1},\dots\abs{y_{n}}<r_{n}}\,\,\,,
\]
of poly-radius ${\displaystyle \mathbf{r}=\left(r_{1},\dots,r_{n}\right)\in\left(\ww R_{>0}\right)^{n}}$.
Given an open subset 
\[
\cal U\subset\ww C^{n+1}=\acc{\left(x,\mathbf{y}\right)\in\ww C\times\ww C^{n}}
\]
 we denote by $\cal O\left({\cal U}\right)$ the algebra of holomorphic
function in $\cal U$. The algebra of germs of analytic functions
of $m$ variables $\mathbf{x}:=\left(x_{1},\dots,x_{m}\right)$ at
the origin is denoted by $\germ{\mathbf{x}}$.

The results recalled in this section are valid when $n=0$. Some statements
for which we do not give a proof can be proved exactly as in the classical
case $n=0$, uniformly in the dependent multi-variable $\mathbf{y}$.
For convenience and homogeneity reasons we will present some classical
results not in their original (and more general) form, but rather
in more specific cases which we will need. Finally, we will introduce
a notion of \emph{weak }Gevrey-1 summability, which we will compare
to the classical notion of 1-summability.

\subsection{Sectorial germs}

~

Given $r>0$, and $\alpha,\beta\in\ww R$ with $\alpha<\beta$, we
denote by $\sect r{\alpha}{\beta}$ the following open sector:
\[
S\left(r,\alpha,\beta\right):=\acc{x\in\ww C\mid0<\abs x<r\mbox{ and }\alpha<\arg\left(x\right)<\beta}\,\,.
\]
Let $\theta\in\ww R$, $\eta\in\ww R_{\geq0}$ and $n\in\ww N$.
\begin{defn}
\label{def: sectorial germs}

\begin{enumerate}
\item An\emph{ x-sectorial neighborhood }(or simply \emph{sectorial neighborhood})
\emph{of the origin} (in $\ww C^{n+1}$) \emph{in the direction $\theta$
with opening $\eta$} is an open set $\cal S\subset\ww C^{n+1}$ such
that
\[
\cal S\supset S\left(r,\theta-\frac{\eta}{2}-\epsilon,\theta+\frac{\eta}{2}+\epsilon\right)\times\mathbf{D\left(0,r\right)}
\]
for some $r>0$, $\mathbf{r}\in\left(\ww R_{>0}\right)^{n}$ and $\epsilon>0$.
We denote by $\left(\germsect{\theta}{\eta},\leq\right)$ the directed
set formed by all such neighborhoods, equipped with the order relation
\begin{eqnarray*}
S_{1}\leq S_{2} & \Longleftrightarrow & S_{1}\supset S_{2}\,\,.
\end{eqnarray*}
\item The algebra of \emph{germs of holomorphic functions in a sectorial
neighborhood of the origin in the direction $\theta$ with opening
$\eta$} is the direct limit 
\[
\cal O\left(\cal S_{\theta,\eta}\right):=\underrightarrow{\lim}\,\cal O\left(\cal S\right)
\]
with respect to the directed system defined by $\acc{\cal O\left(\cal S\right):\cal S\in\germsect{\theta}{\eta}}$.
\end{enumerate}
\end{defn}

We now give the definition of \emph{a (germ of a) sectorial diffeomorphism}.
\begin{defn}
\label{def: sectorial diff}

\begin{enumerate}
\item Given an element $\cal S\in\cal S_{\theta,\eta}$, we denote by $\fdiff[\cal S,\tx{Id}]$
the set of holomorphic fibered diffeomorphisms of the form 
\begin{eqnarray*}
\Phi:\cal S & \rightarrow & \Phi\left(\cal S\right)\\
\left(x,\mathbf{y}\right) & \mapsto & \left(x,\phi_{1}\left(x,\mathbf{y}\right),\phi_{2}\left(x,\mathbf{y}\right)\right)\,\,,
\end{eqnarray*}
such that ${\displaystyle \Phi\left(x,\mathbf{y}\right)-\tx{Id}\left(x,\mathbf{y}\right)=\tx O\left(\norm{x,\mathbf{y}}^{2}\right),\,\,\mbox{ as }\left(x,\mathbf{y}\right)\rightarrow\left(0,\mathbf{0}\right)\mbox{ in }\cal S.}$
\footnote{This condition implies in particular that $\Phi\left(\cal S\right)\in\germsect{\theta}{\eta}$.}
\item The set of \emph{germs of (fibered) sectorial diffeomorphisms in the
direction $\theta$ with opening $\eta$, tangent to the identity},
is the direct limit 
\[
\diffsect[\theta][\eta]:=\underrightarrow{\lim}\,\fdiff[\cal S,\tx{Id}]
\]
with respect to the directed system defined by $\acc{{{\fdiff[\cal S,\tx{Id}]}}:\cal S\in\cal S_{\theta,\eta}}$.
We equip $\diffsect[\theta][\eta]$ with a group structure as follows:
given two germs $\Phi,\Psi\in{\displaystyle \diffsect}$ we chose
corresponding representatives $\Phi_{0}\in\fdiff[\cal S,\tx{Id}]$
and $\Psi_{0}\in\fdiff[\cal T,\tx{Id}]$ with $\cal S,\cal T\in\cal S_{\theta,\eta}$
such that $\cal T\subset\Phi_{0}\left(\cal S\right)$ and let $\Psi\circ\Phi$
be the germ defined by $\Psi_{0}\circ\Phi_{0}$. \footnote{One can prove that this definition is independent of the choice of
the representatives}
\end{enumerate}
\end{defn}

We will also need the notion of \emph{asymptotic sectors.}
\begin{defn}
\label{def: asymptotitc sector}An\emph{ (open) asymptotic sector
of the origin in the direction $\theta$ and with opening $\eta$}
is an open set $S\subset\ww C$ such that 
\[
S\in\bigcap_{0\leq\eta'<\eta}\cal S_{\theta,\eta'}\,\,.
\]
We denote by $\cal{AS}_{\theta,\eta}$ the set of all such (open)
asymptotic sectors.
\end{defn}

\subsection{\label{subsec: Strong-Gevrey-1-power} Gevrey-1 power series and
1-summability}

~

\subsubsection{Gevrey-1 asymptotic expansions}

~

In this subsection we fix a formal power series which we write under
two forms: 
\[
{\displaystyle \hat{f}\left(x,\mathbf{y}\right)=\sum_{k\geq0}f_{k}\left(\mathbf{y}\right)x^{k}=\sum_{\left(j_{0},\mathbf{j}\right)\in\ww N^{n+1}}f_{j_{0},\mathbf{j}}x^{j_{0}}\mathbf{y^{j}}\in\form{x,\mathbf{y}}}\,\,,
\]
using the canonical identification $\form{x,\mathbf{y}}=\form x\left\llbracket \mathbf{y}\right\rrbracket =\form{\mathbf{y}}\left\llbracket x\right\rrbracket $.
We also fix a norm $\norm{\cdot}$ in $\ww C^{n+1}$.
\begin{defn}
\label{def:Gevrey-1_expansion}~

\begin{itemize}
\item A function $f$ analytic in a domain ${\displaystyle \sect r{\alpha}{\beta}\times\mathbf{D\left(0,r\right)}}$
admits $\hat{f}$ as asymptotic expansion \emph{in the sense of Gérard-Sibuya}
in this domain if for all closed sub-sector $S'\subset S\left(r,\alpha,\beta\right)$
and compact $\mathbf{K}\subset\mathbf{D\left(0,r\right)}$, for all
$N\in\ww N$, there exists a constant $C_{S',K,N}>0$ such that: 
\[
\abs{f\left(x,\mathbf{y}\right)-\sum_{j_{0}+j_{1}+\dots j_{n}\leq N}f_{j_{0},\mathbf{j}}x^{j_{0}}\mathbf{y^{j}}}\leq C_{S',K,N}\norm{\left(x,\mathbf{y}\right)}^{N+1}
\]
for all $\left(x,\mathbf{y}\right)\in S'\times K$.
\item A function $f$ analytic in a domain ${\displaystyle \sect r{\alpha}{\beta}\times\mathbf{D\left(0,r\right)}}$
admits $\hat{f}$ as\emph{ asymptotic expansion (with respect to $x$)
}if for all $k\in\ww N$, $f_{k}\left(\mathbf{y}\right)$ is analytic
in $\mathbf{D\left(0,r\right)}$, and if for all closed sub-sector
$S'\subset S\left(r,\alpha,\beta\right)$, compact subset $\mathbf{K}\subset\mathbf{D\left(0,r\right)}$
and $N\in\ww N$, there exists $A_{S',K,N}>0$ such that: 
\[
\abs{f\left(x,\mathbf{y}\right)-\sum_{k\geq0}^{N}f_{k}\left(\mathbf{y}\right)x^{k}}\leq A_{S',K,N}\abs x^{N+1}
\]
for all $\left(x,\mathbf{y}\right)\in S'\times K$.
\item An analytic function $f$ in a sectorial domain ${\displaystyle \sect r{\alpha}{\beta}\times\mathbf{D\left(0,r\right)}}$
admits $\hat{f}$ as \emph{Gevrey-1 asymptotic expansion} in this
domain, if for all $k\in\ww N$, $f_{k}\left(\mathbf{y}\right)$ is
analytic in $\mathbf{D\left(0,r\right)}$, and if for all closed sub-sector
$S'\subset\sect r{\alpha}{\beta}$, there exists $A,C>0$ such that:
\[
\abs{f\left(x,\mathbf{y}\right)-\sum_{k=0}^{N-1}f_{k}\left(\mathbf{y}\right)x^{k}}\leq AC^{N}\left(N!\right)\abs x^{N}
\]
for all $N\in\ww N$ and $\left(x,\mathbf{y}\right)\in S'\times\mathbf{D}\left(\mathbf{0},\mathbf{r}\right)$.
\end{itemize}
\end{defn}

\begin{rem}
~

\begin{enumerate}
\item If a function admits $\hat{f}$ as Gevrey-1 asymptotic expansion in
${\displaystyle \sect r{\alpha}{\beta}\times\mathbf{D\left(0,r\right)}}$,
then it also admits $\hat{f}$ as asymptotic expansion. 
\item If a function admits $\hat{f}$ as asymptotic expansion in ${\displaystyle \sect r{\alpha}{\beta}\times\mathbf{D\left(0,r\right)}}$,
then it also admits $\hat{f}$ as asymptotic expansion in the the
sense of Gérard-Sibuya. 
\item An asymptotic expansion (in any of the different senses described
above) is unique. 
\end{enumerate}
\end{rem}

As a consequence of Stirling formula, we have the following characterization
for functions admitting $0$ as Gevrey-1 asymptotic expansion.
\begin{prop}
\label{prop: dev asympt nul expo plat}The set of analytic functions
admitting $0$ as Gevrey-1 asymptotic expansion at the origin in a
sectorial domain ${\displaystyle \sect r{\alpha}{\beta}\times\mathbf{D\left(0,r\right)}}$
is exactly the set of of analytic functions $f$ in ${\displaystyle \sect r{\alpha}{\beta}\times\mathbf{D\left(0,r\right)}}$
such that for all closed sub-sector $S'\subset\sect r{\alpha}{\beta}$
and all compact $\mathbf{K}\subset\mathbf{D\left(0,r\right)}$, there
exist $A_{S',K},B_{S',K}>0$ such that:
\[
\abs{f\left(x,\mathbf{y}\right)}\leq A_{S',K}\exp\left(-\frac{B_{S',K}}{\abs x}\right)\,\,.
\]
We say that such a function is exponentially flat at the origin in
the corresponding domain.
\end{prop}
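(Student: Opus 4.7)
The plan is to prove both implications by optimizing the free integer parameter $N$ appearing in the Gevrey-1 bound, by way of elementary features of Stirling's formula. Fix throughout a closed subsector $S' \subset \sect r{\alpha}{\beta}$ and a compact polydisc $\mathbf{K} \subset \mathbf{D}(0, \mathbf{r})$: all estimates below will be uniform in $\mathbf{y} \in \mathbf{K}$, so the dependence on $(S', \mathbf{K})$ will only enter through the constants.

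For the direct implication, assume $f$ admits $0$ as Gevrey-1 asymptotic expansion on $S' \times \mathbf{K}$, so there exist constants $A, C > 0$ with $|f(x, \mathbf{y})| \leq A C^N N! |x|^N$ for every $N \in \mathbb{N}$ and $(x, \mathbf{y}) \in S' \times \mathbf{K}$. Using Stirling in the form $N! \leq e \sqrt{N} (N/e)^N$, this bound reads
$$
|f(x, \mathbf{y})| \leq A \, e \sqrt{N} \left(\frac{C N |x|}{e}\right)^N.
$$
One minimizes the dominant factor $(CN|x|/e)^N$ over $N$: the critical point is $N \approx 1/(C|x|)$, at which the factor equals $\exp(-1/(C|x|))$. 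Taking $N$ to be the integer part of this optimum and absorbing the residual $\sqrt{N}$ into a slight reduction of the decay rate produces constants $A_{S',\mathbf{K}}, B_{S',\mathbf{K}} > 0$ such that $|f(x, \mathbf{y})| \leq A_{S',\mathbf{K}} \exp(-B_{S',\mathbf{K}} / |x|)$ throughout $S' \times \mathbf{K}$.

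For the converse, suppose $|f(x, \mathbf{y})| \leq A \exp(-B / |x|)$ on $S' \times \mathbf{K}$. For any fixed $N \in \mathbb{N}$, set $u := B / |x|$: the sought inequality $\exp(-B / |x|) \leq C_0^N N! |x|^N$ becomes $u^N e^{-u} \leq (B C_0)^N N!$. The one-variable function $u \mapsto u^N e^{-u}$ attains its supremum on $(0, +\infty)$ at $u = N$, with value $N^N e^{-N}$, and the lower half of Stirling's formula yields $N^N e^{-N} \leq N!$. Hence $C_0 := 1/B$ does the job, producing the Gevrey-1 flatness estimate
$$
|f(x, \mathbf{y})| \leq A \left(\frac{1}{B}\right)^N N! \, |x|^N
$$
for every $N \in \mathbb{N}$, with constants that depend only on $(A, B)$.

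No serious obstacle is expected; the argument is the standard dictionary between Gevrey-1 asymptotic flatness and exponential flatness of order one. The only point requiring attention is that in each direction the new pair of constants is an explicit function of the pair originally given, so that the universal quantifiers \emph{for all closed $S'$, for all compact $\mathbf{K}$} are preserved automatically. The presence of the transverse variable $\mathbf{y}$ plays no role beyond indexing these constants, which is why the classical one-variable statement extends verbatim to our sectorial-in-$x$, poly-disc-in-$\mathbf{y}$ setting.
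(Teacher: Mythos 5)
Your proof is correct and is exactly the standard Stirling-based argument the paper has in mind: the paper states this proposition without proof, introducing it only with the phrase ``as a consequence of Stirling formula'', and your two optimizations (minimizing $(CN|x|/e)^N$ over $N$ for the direct implication, and bounding $u^N e^{-u}$ by $N^N e^{-N}\le N!$ for the converse) supply precisely the details left implicit. Nothing further is needed.
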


\subsubsection{Borel transform and Gevrey-1 power series}

~
\begin{defn}
~\label{def:Borel_transform}

\begin{itemize}
\item We define the Borel transform $\cal B\left(\hat{f}\right)$ of $\hat{f}$
as: 
\[
\cal B\left(\hat{f}\right)\left(t,\mathbf{y}\right):=\sum_{k\geq0}\frac{f_{k}\left(\mathbf{y}\right)}{k!}t^{k}\,\,.
\]
\item We say that $\hat{f}$ is Gevrey-1 if $\cal B\left(\hat{f}\right)$
is convergent in a neighborhood of the origin in $\ww C\times\ww C^{n}$.
Notice that in this case the $f_{k}\left(\mathbf{y}\right),\,k\geq0$,
are all analytic in a same polydisc $\mathbf{D}\left(\mathbf{0},\mathbf{r}\right)$,
of poly-radius ${\displaystyle \mathbf{r}=\left(r_{n}\dots,r_{n}\right)\in\left(\ww R_{>0}\right)^{n}}$,
so that $\cal B\left(\hat{f}\right)$ is analytic in $\mbox{D}\left(0,\rho\right)\times\mathbf{D}\left(\mathbf{0},\mathbf{r}\right)$,
for some $\rho>0$. Possibly by reducing $\rho,r_{1},\dots,r_{n}>0$,
we can assume that $\cal B\left(\hat{f}\right)$ is bounded in $\mbox{D}\left(0,\rho\right)\times\mathbf{D}\left(\mathbf{0},\mathbf{r}\right)$.
\end{itemize}
\end{defn}

\begin{rem}
~

\begin{enumerate}
\item If a sectorial function $f$ admits $\hat{f}$ for Gevrey-$1$ asymptotic
expansion as in Definition~\ref{def:Gevrey-1_expansion} then $\hat{f}$
is a Gevrey-1 formal power series.
\item The set of all Gevrey-$1$ formal power series is an algebra closed
under partial derivatives $\pp x,\pp{y_{1}},\dots,\pp{y_{n}}$.
\end{enumerate}
\end{rem}

\bigskip{}
\begin{rem}
\label{rem: deifinition bis transformee de Borel}For technical reasons
we will sometimes need to use another definition of the Borel transform,
that is:
\[
\widetilde{\cal B}\left(\hat{f}\right)\left(t,\mathbf{y}\right):=\sum_{k\geq0}f_{k+1}\left(\mathbf{y}\right)\frac{t^{k}}{k!}\,\,.
\]
The first definition we gave has the advantage of being ``directly''
invertible (\emph{via }the Laplace transform) for all 1-summable formal
power series (see next subsection), but behaves not so good with respect
to the product. On the contrary, the second definition will be ``directly''
invertible only for 1-summable formal power series with null constant
term (otherwise a translation is needed). However, the advantage of
the second Borel transform is that it changes a product into a convolution
product: 
\[
\widetilde{\cal B}\left(\hat{f}\hat{g}\right)=\left(\widetilde{{\cal B}}\left(\hat{f}\right)\ast\widetilde{{\cal B}}\left(\hat{g}\right)\right)\,\,,
\]
where the convolution product of two analytic functions $h_{1}h_{2}$
is defined by 
\[
\left(h_{1}\ast h_{2}\right)\left(t,\mathbf{y}\right):=\int_{0}^{t}h_{1}\left(s\right)h_{2}\left(s-t\right)\dd[s]\,\,.
\]
The property of being Gevrey-1 or not does not depend on the choice
of the definition we take for the Borel transform.
\end{rem}

\subsubsection{Directional 1-summability and Borel-Laplace summation}

~
\begin{defn}
\label{def: 1_summability}Given $\theta\in\ww R$ and $\delta>0$,
we define the infinite sector in the direction $\theta$ with opening
$\delta$ as the set 
\[
\cal A_{\theta,\delta}^{\infty}:=\acc{t\in\ww C^{*}\mid\abs{\arg\left(t\right)-\theta}<\frac{\delta}{2}}\,\,.
\]
We say that $\hat{f}$ is \emph{1-summable in the direction} $\theta\in\ww R$,
if the following three conditions holds:

\begin{itemize}
\item $\hat{f}$ is a Gevrey-1 formal power series;
\item $\cal B\left(\hat{f}\right)$ can be analytically continued to a domain
of the form $\cal A_{\theta,\delta}^{\infty}\times\mathbf{D}\left(\mathbf{0},\mathbf{r}\right)$;
\item there exists $\lambda>0,M>0$ such that:
\[
\forall\left(t,\mathbf{y}\right)\in\cal A_{\theta,\delta}^{\infty}\times\mathbf{D}\left(\mathbf{0},\mathbf{r}\right),\,\abs{\cal B\left(\hat{f}\right)\left(t,\mathbf{y}\right)}\leq M\exp\left(\lambda\abs t\right)\,\,\,\,.
\]
\end{itemize}
In this case we set ${\displaystyle \Delta_{\theta,\delta,\rho}:=\cal A_{\theta,\delta}^{\infty}\cup\mbox{D}\left(0,\rho\right)}$
and 
\[
\norm{\hat{f}}_{\lambda,\theta,\delta,\rho,\mathbf{r}}:=\underset{\left(t,\mathbf{y}\right)\in\Delta_{\theta,\delta,\rho}\times\mathbf{D}\left(\mathbf{0},\mathbf{r}\right)}{\sup}\abs{\cal B\left(\hat{f}\right)\left(t,\mathbf{y}\right)\exp\left(-\lambda\abs t\right)}\quad.
\]
If the domain is clear from the context we will simply write ${\displaystyle \norm{\hat{f}}_{\lambda}}$.
\end{defn}

\begin{rem}
\label{rem: norme bis borel}~

\begin{enumerate}
\item For fixed $\left(\lambda,\theta,\delta,\rho,\mathbf{r}\right)$ as
above, the set $\mathfrak{B}_{\lambda,\theta,\delta,\rho,\mathbf{r}}$
of formal power series $\hat{f}$ 1-summable in the direction $\theta$
and such that ${\displaystyle \norm{\hat{f}}_{\lambda,\theta,\delta,\rho,\mathbf{r}}<+\infty}$
is a Banach vector space for the norm $\norm{\cdot}_{\lambda,\theta,\delta,\rho,\mathbf{r}}$.
We simply write ${\displaystyle \left(\mathfrak{B}_{\lambda},\norm{\cdot}_{\lambda}\right)}$
when there is no ambiguity.
\item We will also need a norm well-adapted to the second Borel transform
$\widetilde{B}$ (\emph{cf. }Remark \ref{rem: deifinition bis transformee de Borel}),
that is:
\[
\norm{\hat{f}}_{\lambda,\theta,\delta,\rho,\mathbf{r}}^{\tx{bis}}:=\underset{\left(t,\mathbf{y}\right)\in\Delta_{\theta,\delta,\rho}\times\mathbf{D}\left(\mathbf{0},\mathbf{r}\right)}{\sup}\abs{\cal B\left(\hat{f}\right)\left(t,\mathbf{y}\right)\left(1+\lambda^{2}\abs t^{2}\right)\exp\left(-\lambda\abs t\right)}\,\,.
\]
We write then $\mathfrak{B}_{\lambda,\theta,\delta,\rho,\mathbf{r}}^{\tx{bis}}$
the set space of formal power series $\hat{f}$ which are 1-summable
in the direction $\theta$ and such that ${\displaystyle \norm{\hat{f}}_{\lambda,\theta,\delta,\rho,\mathbf{r}}^{\tx{bis}}<+\infty}$.
\item If $\lambda'\geq\lambda$, then ${\displaystyle \mathfrak{B}_{\lambda,\theta,\delta,\rho,\mathbf{r}}}\subset\mathfrak{B}_{\lambda',\theta,\delta,\rho,\mathbf{r}}$
and ${\displaystyle \mathfrak{B}_{\lambda,\theta,\delta,\rho,\mathbf{r}}^{\tx{bis}}}\subset\mathfrak{B}_{\lambda',\theta,\delta,\rho,\mathbf{r}}^{\tx{bis}}$.
\end{enumerate}
\end{rem}

\begin{prop}[{\foreignlanguage{french}{\cite[Proposition 4.]{DeMaesschalck}}}]
\label{prop: norme d'algebre} If $\hat{f},\hat{g}\in\mathfrak{B}_{\lambda,\theta,\delta,\rho,\mathbf{r}}^{\tx{bis}}$,
then $\hat{f}\hat{g}\in\mathfrak{B}_{\lambda,\theta,\delta,\rho,\mathbf{r}}^{\tx{bis}}$
and:
\[
\norm{\hat{f}\hat{g}}_{\lambda,\theta,\delta,\rho,\mathbf{r}}^{\tx{bis}}\leq\frac{4\pi}{\lambda}\norm{\hat{f}}_{\lambda,\theta,\delta,\rho,\mathbf{r}}^{\tx{bis}}\norm{\hat{g}}_{\lambda,\theta,\delta,\rho,\mathbf{r}}^{\tx{bis}}\,\,.
\]
\end{prop}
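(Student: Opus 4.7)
The strategy is to leverage the convolution-like behavior of the Borel transform together with a sharp integral estimate. The key algebraic input is the identity
\[
\cal B(\hat f \hat g)(t,\mathbf y) \;=\; f_0(\mathbf y)\, \cal B(\hat g)(t,\mathbf y) \;+\; \bigl( \widetilde{\cal B}(\hat f) \ast \cal B(\hat g) \bigr)(t,\mathbf y),
\]
which I would derive from the convolution identity $\widetilde{\cal B}(\hat f \hat g) = \widetilde{\cal B}(\hat f) \ast \widetilde{\cal B}(\hat g)$ (valid on $x\form{x,\mathbf y}$, extended to the whole algebra by decomposing $\hat f = f_0 + \hat f_+$ with $\hat f_+\in x\form{x,\mathbf y}$), by integrating from $0$ to $t$ and applying Fubini to the resulting iterated integral. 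Equivalently, one can verify directly via a Cauchy-product computation using the Beta identity $\int_0^1 u^k(1-u)^l \dd[u] = k!\,l!/(k+l+1)!$ that $\cal B(\hat f \hat g) = \partial_t\bigl(\cal B(\hat f) \ast \cal B(\hat g)\bigr)$.

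With $F := \cal B(\hat f)$, $G := \cal B(\hat g)$, $M_f := \norm{\hat f}_{\lambda}^{\tx{bis}}$, $M_g := \norm{\hat g}_{\lambda}^{\tx{bis}}$, the pointwise estimates $|F(t,\mathbf y)| \leq M_f e^{\lambda |t|}/(1+\lambda^2|t|^2)$ and the corresponding bound for $G$ hold throughout $\Delta_{\theta,\delta,\rho} \times \mathbf D(\mathbf 0,\mathbf r)$. I would then bound each term of the formula above at a fixed point $(t,\mathbf y)$: the constant-term contribution satisfies $|f_0(\mathbf y)\, G(t,\mathbf y)| \leq M_f M_g\, e^{\lambda|t|}/(1+\lambda^2|t|^2)$, since taking $t=0$ in the definition of the norm forces $|f_0(\mathbf y)| \leq M_f$. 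For the convolution, I would parametrize linearly the segment from $0$ to $t$ inside $\Delta_{\theta,\delta,\rho}$ and estimate $\widetilde{\cal B}(\hat f)(s) = \partial_t F|_s$ by Cauchy's integral formula on an auxiliary circle of radius of order $1/\lambda$ around $s$, which converts the hypothesis into a pointwise bound of the same shape on $\widetilde{\cal B}(\hat f)$, up to a universal multiplicative constant. The convolution integral then reduces to
\[
\int_0^{|t|} \frac{\dd[\sigma]}{(1+\lambda^2\sigma^2)\bigl(1+\lambda^2(|t|-\sigma)^2\bigr)} \;\leq\; \int_{\ww R}\frac{\dd[u]/\lambda}{(1+u^2)\bigl(1+(\lambda|t|-u)^2\bigr)} \;=\; \frac{2\pi}{\lambda\bigl(4+\lambda^2|t|^2\bigr)},
\]
the last equality being a standard residue calculation.

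Combining these bounds, multiplying both sides by $(1+\lambda^2|t|^2)e^{-\lambda|t|}$, and passing to the supremum yields an inequality of the form $\norm{\hat f \hat g}_{\lambda}^{\tx{bis}} \leq (C/\lambda)\, M_f M_g$. The main obstacle is the bookkeeping of constants needed to obtain the sharp value $C = 4\pi$: this requires choosing the Cauchy-contour radius in the derivative estimate so that the loss of the weights $e^{\lambda|s|}$ and $(1+\lambda^2|s|^2)^{-1}$ across the auxiliary circle is controlled by a multiplicative factor of at most $2$, which combines with the residue value $2\pi$ (and with the trivial estimate of the $f_0\cal B(\hat g)$ summand, absorbed into the same constant) to produce the announced prefactor $\tfrac{4\pi}{\lambda}$. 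All other steps are mechanical substitutions.
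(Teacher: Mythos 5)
A preliminary remark: the paper does not prove this proposition at all --- it is imported verbatim from \cite{DeMaesschalck} (Proposition 4 there), so there is no internal proof to compare yours against. Judged on its own terms, your argument has two genuine gaps.

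The first is the step converting the hypothesis (a sup bound on $\cal B(\hat f)$ over $\Delta_{\theta,\delta,\rho}$) into a pointwise bound on $\widetilde{\cal B}(\hat f)=\partial_t\cal B(\hat f)$ via Cauchy's formula on a circle of radius $r$ of order $1/\lambda$. Such a circle exits $\Delta_{\theta,\delta,\rho}$ whenever $s$ lies within distance $r$ of the boundary of the sector $\cal A^{\infty}_{\theta,\delta}$ or of the disc $\tx D(0,\rho)$, so the estimate is only available on a strictly smaller domain, i.e.\ for a different norm. Moreover, even in the interior the unavoidable cost of this step is $\min_{r>0}e^{\lambda r}/(\lambda r)=e$, coming from the exponential weight alone (attained at $r=1/\lambda$), before one even accounts for the distortion of the factor $(1+\lambda^{2}\abs{t}^{2})^{-1}$ across the circle; your claimed loss ``of at most $2$'' is not achievable, and the convolution term alone then contributes more than $\frac{2\pi e}{\lambda}>\frac{4\pi}{\lambda}$ times the product of the norms. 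The second gap is the summand $f_{0}\,\cal B(\hat g)$: it contributes $\norm{\hat f}^{\tx{bis}}_{\lambda}\norm{\hat g}^{\tx{bis}}_{\lambda}$ to the norm of the product with coefficient $1$, and $1\leq\frac{4\pi}{\lambda}$ fails for $\lambda>4\pi$. Concretely, $\hat f=\hat g=1$ gives $\norm{1}^{\tx{bis}}_{\lambda}=1$ (the weight $(1+\lambda^{2}\abs t^{2})e^{-\lambda\abs t}$ is maximal at $t=0$), so the stated inequality is simply false for the norm built on the first Borel transform when $\lambda>4\pi$ --- which contradicts the remark following the proposition.

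The way out is that the formula in Remark \ref{rem: norme bis borel} contains a typo: the norm ``adapted to the second Borel transform'' must be read with $\widetilde{\cal B}$ in place of $\cal B$, exactly as it is used in the proof of Proposition \ref{prop: preparation}. With that reading, and on series without constant term, one has $\widetilde{\cal B}(\hat f\hat g)=\widetilde{\cal B}(\hat f)\ast\widetilde{\cal B}(\hat g)$ exactly, no derivative estimate is needed, and the whole proof collapses to the single elementary bound
\[
\int_{0}^{T}\frac{\tx d\sigma}{\left(1+\lambda^{2}\sigma^{2}\right)\left(1+\lambda^{2}\left(T-\sigma\right)^{2}\right)}\;\leq\;\frac{4\pi}{\lambda\left(1+\lambda^{2}T^{2}\right)}\,,
\]
obtained by splitting the integral at $T/2$, using $1+\lambda^{2}(T-\sigma)^{2}\geq\frac{1}{4}(1+\lambda^{2}T^{2})$ on the first half together with $\int_{0}^{\infty}\frac{\tx du}{1+\lambda^{2}u^{2}}=\frac{\pi}{2\lambda}$, and symmetry on the second half. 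This is where the constant $4\pi$ actually comes from; your sharper residue computation of the full-line integral is correct but irrelevant to reproducing it, and the Cauchy-formula machinery should be discarded entirely.
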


\begin{rem}
If $\lambda\geq4\pi$, then $\norm{\cdot}_{\lambda,\theta,\delta,\rho,\mathbf{r}}^{\tx{bis}}$
is a sub-multiplicative norm, \emph{i.e. 
\[
\norm{\hat{f}\hat{g}}_{\lambda,\theta,\delta,\rho,\mathbf{r}}^{\tx{bis}}\leq\norm{\hat{f}}_{\lambda,\theta,\delta,\rho,\mathbf{r}}^{\tx{bis}}\norm{\hat{g}}_{\lambda,\theta,\delta,\rho,\mathbf{r}}^{\tx{bis}}\,\,.
\]
}
\end{rem}

\begin{defn}
\label{def:Laplace}Let $g$ be analytic in a domain and $\mbox{\ensuremath{\cal A_{\theta,\delta}^{\infty}\times\mathbf{D}\left(\mathbf{0},\mathbf{r}\right)} and let }\lambda>0,M>0$
such that
\[
\forall\left(t,\mathbf{y}\right)\in\cal A_{\theta,\delta}^{\infty}\times\mathbf{D}\left(\mathbf{0},\mathbf{r}\right),\,\abs{g\left(t,\mathbf{y}\right)}\leq M\exp\left(\lambda\abs t\right)\,\,\,\,.
\]
We define the \emph{Laplace transform} of $g$ in the direction $\theta$
as: 
\[
\cal L_{\theta}\left(g\right)\left(x,\mathbf{y}\right):=\int_{e^{i\theta}\ww R_{>0}}g\left(t,\mathbf{y}\right)\exp\left(-\frac{t}{x}\right)\frac{\mbox{d}t}{x}\,,
\]
which is absolutely convergent for all $x\in\ww C$ with $\Re\left(\frac{e^{i\theta}}{x}\right)>\lambda$
and $\mathbf{y\in\mathbf{D}\left(\mathbf{0},\mathbf{r}\right)}$,
and analytic with respect to $\left(x,\mathbf{y}\right)$ in this
domain.
\end{defn}

\begin{rem}
As for the Borel transform, there also exists another definition of
the Laplace transform, that is:
\[
\widetilde{\cal L}_{\theta}\left(g\right)\left(x,\mathbf{y}\right):=\int_{e^{i\theta}\ww R_{>0}}g\left(t,\mathbf{y}\right)\exp\left(-\frac{t}{x}\right)\dd[t]\,\,.
\]
\end{rem}

\begin{prop}
\label{prop: Borel_Laplace_summation}A formal power series $\hat{f}\in\form{x,\mathbf{y}}$
is 1-summable in the direction $\theta$ if and only if there exists
a germ of a sectorial holomorphic function $f_{\theta}\in\cal O\left(\cal S_{\theta,\pi}\right)$
which admits $\hat{f}$ as Gevrey-1 asymptotic expansion in some $\cal S\in\cal S_{\theta,\pi}$.
Moreover, $f_{\theta}$ is unique $\big($as a germ in ${\displaystyle \cal O\left(\cal S_{\theta,\pi}\right)}$$\big)$
and in particular 
\[
f_{\theta}=\cal L_{\theta}\left(\cal B\left(\hat{f}\right)\right)\,\,.
\]
The function (germ) $f_{\theta}$ is called the \emph{1-sum of $\hat{f}$
in the direction $\theta$}.
\end{prop}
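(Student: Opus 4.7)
This is a classical Borel-Laplace summation statement (sometimes called the Nevanlinna-Sokal-type characterization of 1-summability), so I would not reprove everything from scratch but rather sketch the two implications and the uniqueness, emphasising that all estimates are uniform in the parameter $\mathbf{y}\in\mathbf{D}(\mathbf{0},\mathbf{r})$ so that the classical one-variable proofs go through verbatim.

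For the implication \emph{summability $\Rightarrow$ sectorial sum}, my plan is to simply define
\[
f_{\theta}(x,\mathbf{y}):=\cal L_{\theta}\bigl(\cal B(\hat{f})\bigr)(x,\mathbf{y})
\]
and verify its properties directly. By assumption $\cal B(\hat{f})$ extends analytically to $\cal A_{\theta,\delta}^{\infty}\times\mathbf{D}(\mathbf{0},\mathbf{r})$ with an exponential bound $M\exp(\lambda|t|)$. Rotating the direction of integration inside $(\theta-\delta/2,\theta+\delta/2)$ and using Definition~\ref{def:Laplace}, the Laplace integral converges and defines an analytic function in a sectorial neighbourhood of opening $\pi+\delta>\pi$, hence in some element of $\cal S_{\theta,\pi}$. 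For the Gevrey-1 asymptotic, I would split
\[
\cal B(\hat{f})(t,\mathbf{y})=\sum_{k=0}^{N-1}\frac{f_{k}(\mathbf{y})}{k!}t^{k}+R_{N}(t,\mathbf{y}),
\]
observe that by Cauchy estimates on $\cal B(\hat f)$ one gets $|R_{N}(t,\mathbf{y})|\leq M' C^{N}|t|^{N}\exp(\lambda|t|)/N!$ uniformly in $\mathbf{y}$ on a smaller polydisc, apply $\cal L_{\theta}$ termwise to the polynomial part (which reproduces the partial sum $\sum_{k<N}f_{k}(\mathbf{y})x^{k}$), and bound the remainder by $\int_{0}^{\infty}|x|^{-1}r^{N}\exp(-r(\cos\alpha/|x|-\lambda))\dd[r]/N!=O(C^{N}N!\,|x|^{N})$ on any closed subsector where $\cos\alpha/|x|-\lambda$ stays bounded below.

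For the converse \emph{sectorial sum $\Rightarrow$ summability}, the plan is: given $f_{\theta}$ analytic in some $\cal S\in\cal S_{\theta,\pi}$ with Gevrey-1 expansion $\hat{f}$, first note that $\hat{f}$ is automatically Gevrey-1 (take $\mathbf{y}$ in a compact and apply Cauchy estimates to the Taylor coefficients from the Gevrey bound at $N=k$). Then construct the analytic continuation of $\cal B(\hat f)$ beyond its disc of convergence using the truncated Borel-type integral
\[
\cal B(\hat{f})(t,\mathbf{y})=\frac{1}{2\pi i}\int_{\gamma}f_{\theta}(x,\mathbf{y})\exp\!\left(\frac{t}{x}\right)\frac{\dd x}{x^{2}},
\]
where $\gamma$ is a Hankel-like contour contained in $\cal S$ whose direction is rotated to optimise the estimate. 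The Gevrey-1 bound on $f_{\theta}-\sum_{k<N}f_{k}x^{k}$, combined with a saddle-point choice of the radius of $\gamma$ as a function of $|t|$, yields an analytic extension of $\cal B(\hat f)$ to $\cal A_{\theta,\delta}^{\infty}$ together with the required bound $|\cal B(\hat{f})(t,\mathbf{y})|\leq M\exp(\lambda|t|)$ for appropriate $\lambda,M$; this is the standard Nevanlinna-Sokal argument adapted with uniform $\mathbf{y}$-dependence.

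Uniqueness of $f_{\theta}$ as a germ in $\cal O(\cal S_{\theta,\pi})$ follows from Watson's lemma: if $f_{1},f_{2}$ both admit $\hat{f}$ as Gevrey-1 asymptotic expansion on some $\cal S\in\cal S_{\theta,\pi}$, then $f_{1}-f_{2}$ is exponentially flat on a sector of opening strictly greater than $\pi$ by Proposition~\ref{prop: dev asympt nul expo plat}, hence identically zero. The identity $f_{\theta}=\cal L_{\theta}(\cal B(\hat{f}))$ then follows by applying the forward direction and invoking uniqueness. The main technical obstacle is the converse implication, specifically producing uniform exponential bounds on the analytic continuation of $\cal B(\hat f)$ on the whole infinite sector $\cal A_{\theta,\delta}^{\infty}$ from only the Gevrey estimates on $f_{\theta}$; the calibration of the contour $\gamma$ (radius as a function of $|t|$, rotation as a function of $\arg t$) is what makes this work, and everything else is book-keeping.
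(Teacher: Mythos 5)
The paper does not prove this proposition: it is recalled as a classical fact in the Background section, with the reader referred to \cite{MR82,malgrange1995sommation,ramis1993divergent,DeMaesschalck} for proofs. Your sketch is the standard Borel--Laplace argument from those references (Laplace transform of the continued Borel transform for the forward direction, a Hankel-type inverse integral with optimised contour for the converse, Watson's lemma via Proposition~\ref{prop: dev asympt nul expo plat} for uniqueness), with the correct remark that all estimates are uniform in $\mathbf{y}$ on compact polydiscs, so there is no divergence of approach to report.

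One bookkeeping slip is worth fixing in the forward direction: your two displayed estimates are mutually inconsistent. The bound one actually gets on the Borel-plane remainder is $\abs{R_{N}\left(t,\mathbf{y}\right)}\leq M'C^{N}\abs t^{N}\exp\left(\lambda\abs t\right)$ \emph{without} the factor $1/N!$ (Cauchy estimates on $\cal B(\hat{f})$ give $\abs{f_{k}/k!}\leq M\rho^{-k}$, so the tail of the Taylor series is bounded by $M\left(\abs t/\rho\right)^{N}/\left(1-\abs t/\rho\right)$ near $0$, and the exponential bound takes over on the rest of the infinite sector). With that corrected bound, the Laplace integral $\int_{0}^{\infty}r^{N}\exp\left(-r\left(\cos\alpha/\abs x-\lambda\right)\right)\tx dr/\abs x=N!\,\beta^{-N-1}/\abs x$ with $\beta\geq c/\abs x$ produces exactly the Gevrey-1 estimate $O\left(C'^{N}N!\abs x^{N}\right)$. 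As you wrote it, with the spurious $1/N!$ on the remainder, the same integral would yield $O\left(C^{N}\abs x^{N}\right)$, i.e.\ a convergent expansion, which is too strong and signals the misnormalisation. This is a minor repair; the structure of the argument is otherwise the standard and correct one.
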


\begin{rem}
With the second definitions of Borel and Laplace transforms given
above, we have a similar result for formal power series of the form
${\displaystyle \hat{f}\left(x,\mathbf{y}\right)=\sum_{k}f_{k}\left(\mathbf{y}\right)x^{k}}$
with: 
\[
f_{\theta}=\widetilde{\cal L}_{\theta}\left(\widetilde{\cal B}\right)\left(\hat{f}\right)+\hat{f}\left(0,\mathbf{y}\right)\,\,.
\]
\end{rem}

We recall the following well-known result.
\begin{lem}
\label{lem:diff_alg_and_summability}The set $\Sigma_{\theta}\subset\form{x,\mathbf{y}}$
of $1$-summable power series in the direction $\theta$ is an algebra
closed under partial derivatives. Moreover the map
\begin{eqnarray*}
\Sigma_{\theta} & \longrightarrow & \cal O\left(\cal S_{\theta,\pi}\right)\\
\hat{f} & \longmapsto & f_{\theta}
\end{eqnarray*}
is an injective morphism of differential algebras.
\end{lem}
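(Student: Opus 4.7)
The plan is to leverage the Borel--Laplace characterization of Proposition \ref{prop: Borel_Laplace_summation}: $\hat{f} \in \Sigma_\theta$ if and only if some germ $f_\theta \in \cal O(\germsect{\theta}{\pi})$ admits $\hat{f}$ as Gevrey-1 asymptotic expansion on a representative $\cal S \in \germsect{\theta}{\pi}$. Injectivity of $\hat{f} \mapsto f_\theta$ is then immediate, since an analytic germ determines all coefficients of its (unique) Gevrey-1 asymptotic expansion; and once closure under each algebraic operation is established, the morphism property for that operation follows directly from uniqueness of the 1-sum.

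Closure under sums is trivial on representatives, by adding the Gevrey-1 estimates on a common sectorial neighborhood. Closure under multiplication is the only genuinely delicate step: given $\hat{f},\hat{g} \in \Sigma_\theta$, one can either invoke Proposition \ref{prop: norme d'algebre} in the Banach space $\mathfrak{B}^{\tx{bis}}_{\lambda,\theta,\delta,\rho,\mathbf{r}}$, using $\widetilde{\cal B}(\hat{f}\hat{g}) = \widetilde{\cal B}(\hat{f}) \ast \widetilde{\cal B}(\hat{g})$ to see that the Borel transform of the product remains analytic and exponentially bounded on the same domain; or one argues directly at the level of partial-sum remainders, combining the Gevrey-1 bounds on each factor with the elementary estimate $\sum_{k+l=N} k!\, l! \leq 2\, N!$ to recover a Gevrey-1 bound of the correct form for the product. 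Either way $\hat{f}\hat{g} \in \Sigma_\theta$, and uniqueness of the Gevrey-1 asymptotic expansion on a common representative identifies $(\hat{f}\hat{g})_\theta$ with $f_\theta \cdot g_\theta$.

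Closure under $\partial_x, \partial_{y_1}, \dots, \partial_{y_n}$ is standard via Cauchy's integral formula: if $f_\theta$ admits $\hat{f}$ as Gevrey-1 asymptotic expansion on $\sect r{\alpha}{\beta} \times \mathbf{D}(\mathbf{0},\mathbf{r})$, then on any strictly smaller sub-sector and strictly smaller polydisc, integrating the Gevrey-1 estimate along contours at positive distance from the boundary yields a Gevrey-1 estimate of the same type for each partial derivative, with termwise-differentiated asymptotic expansion. The restricted domain still belongs to $\germsect{\theta}{\pi}$, so the derivatives lie in $\Sigma_\theta$ and the map $\hat{f} \mapsto f_\theta$ commutes with each derivation.

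Putting these three steps together shows that $\Sigma_\theta$ is a differential subalgebra of $\form{x,\mathbf{y}}$ and that $\hat{f} \mapsto f_\theta$ is an injective morphism of differential algebras. The main obstacle is multiplicative closure, for which the required factorial estimate is precisely what motivates introducing the variant norm $\norm{\cdot}^{\tx{bis}}_{\lambda,\theta,\delta,\rho,\mathbf{r}}$ in Proposition \ref{prop: norme d'algebre}; everything else is a routine transposition to dependent multi-variable $\mathbf{y}$ of the classical one-variable theory.
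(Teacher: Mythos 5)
The paper does not prove this lemma: it is stated with ``we recall the following well-known result'' and, per the opening of Section~2, proofs of such classical facts are deferred to the cited references (Martinet--Ramis, Malgrange, Ramis, De~Maesschalck et al.). Your argument is a correct reconstruction of the standard proof via the characterization of Proposition~\ref{prop: Borel_Laplace_summation}: injectivity from uniqueness of the Gevrey-1 asymptotic expansion, multiplicative closure either through the Banach-algebra estimate of Proposition~\ref{prop: norme d'algebre} or by direct remainder estimates, and closure under derivations by Cauchy estimates on slightly shrunk sectors and polydiscs (which still have opening greater than $\pi$, hence remain in $\germsect{\theta}{\pi}$). Two minor points. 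First, the inequality $\sum_{k+l=N}k!\,l!\leq 2\,N!$ is false for $N=2$, where the left-hand side equals $\tfrac{5}{2}\,N!$; since $\sum_{k+l=N}k!\,l!=N!\sum_{k}\binom{N}{k}^{-1}$ is bounded by $3\,N!$ uniformly in $N$, any universal constant does the job and the argument is unaffected. Second, the convolution identity $\widetilde{\cal B}(\hat{f}\hat{g})=\widetilde{\cal B}(\hat{f})\ast\widetilde{\cal B}(\hat{g})$ holds verbatim only for series without constant term (the terms $f_{0}g_{m}$ and $f_{m}g_{0}$ are otherwise lost); this is the paper's own stated convention in Remark~\ref{rem: deifinition bis transformee de Borel}, but a fully self-contained multiplicative-closure argument should either split off the constant terms first or work with the first Borel transform.
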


\begin{defn}
A formal power series $\hat{f}\in\form{x,\mathbf{y}}$ is called \emph{1-summable}
if it is 1-summable in all but a finite number of directions, called
\emph{Stokes directions}. In this case, if ${\displaystyle \theta_{1},\dots,\theta_{k}\in\nicefrac{\ww R}{2\pi\ww Z}}$
are the possible Stokes directions, we say that $\hat{f}$ is 1-summable
except for\textbf{ }$\theta_{1},\dots,\theta_{k}$.

More generally, we say that an $m-$uple $\left(f_{1},\dots,f_{m}\right)\in\form{x,\mathbf{y}}^{m}$
is Gevrey-1 (\emph{resp. }1-summable in direction $\theta$) if this
property holds for each component $f_{j},j=1,\dots,m$. Similarly,
a formal vector field (or\emph{ }diffeomorphism) is said to be Gevrey-1
(\emph{resp. }1-summable in direction $\theta$) if each one of its
components has this property.
\end{defn}

The following classical result deals with composition of 1-summable
power series (an elegant way to prove it is to use an important theorem
of Ramis-Sibuya).
\begin{prop}
\label{prop: compositon summable}Let $\hat{\Phi}\left(x,\mathbf{y}\right)\in\form{x,\mathbf{y}}$
be 1-summable in directions $\theta$ and $\theta-\pi$, and let $\Phi_{+}\left(x,\mathbf{y}\right)$
and $\Phi_{-}\left(x,\mathbf{y}\right)$ be its 1-sums directions
$\theta$ and $\theta-\pi$ respectively. Let also ${\displaystyle \hat{f}_{1}\left(x,\mathbf{z}\right),\dots,\hat{f}_{n}\left(x,\mathbf{z}\right)}$
be 1-summable in directions $\theta$, $\theta-\pi$, and $f_{1,+},\dots,f_{n,+}$,
and $f_{1,-},\dots,f_{n,-}$ be their 1-sums in directions $\theta$
and $\theta-\pi$ respectively. Assume that 
\begin{equation}
\hat{f}_{j}\left(0,\mathbf{0}\right)=0\mbox{, for all }j=1,\dots,n\,\,.\label{eq: condition composition sommable}
\end{equation}
 Then 
\[
\hat{\Psi}\left(x,\mathbf{z}\right):=\hat{\Phi}\left(x,\hat{f}_{1}\left(x,\mathbf{z}\right),\dots,\hat{f}_{n}\left(x,\mathbf{z}\right)\right)
\]
 is 1-summable in directions $\theta,\theta-\pi$, and its 1-sum in
the corresponding direction is 
\[
\Psi_{\pm}\left(x,\mathbf{z}\right):=\Phi_{\pm}\left(x,f_{1,\pm}\left(x,\mathbf{z}\right),\dots,f_{n,\pm}\left(x,\mathbf{z}\right)\right)\,\,\,,
\]
which is a germ of a sectorial holomorphic function in this direction.
\end{prop}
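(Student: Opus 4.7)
The plan is to invoke the Ramis--Sibuya theorem: it suffices to exhibit analytic functions $\Psi_{+}$ (resp.\ $\Psi_{-}$) on sectorial neighborhoods of opening strictly larger than $\pi$ around the direction $\theta$ (resp.\ $\theta-\pi$), both admitting $\hat{\Psi}$ as asymptotic expansion and whose difference on the overlap is exponentially flat, and to verify that $\hat{\Psi}$ is itself Gevrey-$1$. Condition~\eqref{eq: condition composition sommable} gives each $\hat{f}_{j}$ valuation at least $1$, so for any multi-index $(k,\mathbf{m})$ only finitely many monomials of $\hat{\Phi}$ contribute to the $x^{k}\mathbf{z}^{\mathbf{m}}$-coefficient of $\hat{\Psi}$; in particular $\hat{\Psi}$ is well-defined in $\form{x,\mathbf{z}}$. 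That it is Gevrey-$1$ follows from the Banach-algebra structure of the spaces $\mathfrak{B}_{\lambda}^{\tx{bis}}$ in Proposition~\ref{prop: norme d'algebre}, combined with the stability under composition when the inner components vanish at the origin.

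The sectorial functions themselves are defined by the natural formula
\[
\Psi_{\pm}(x,\mathbf{z}) := \Phi_{\pm}\bigl(x, f_{1,\pm}(x,\mathbf{z}), \dots, f_{n,\pm}(x,\mathbf{z})\bigr).
\]
Since each $f_{j,\pm}$ admits $\hat{f}_{j}$ as asymptotic expansion and $\hat{f}_{j}(0,\mathbf{0})=0$, one has $f_{j,\pm}(x,\mathbf{z}) \to 0$ as $(x,\mathbf{z}) \to (0,\mathbf{0})$ in the sector; shrinking the polydisc in $\mathbf{z}$ accordingly, $\Psi_{\pm}$ is well-defined and holomorphic on a sectorial neighborhood of opening strictly greater than $\pi$ in the direction $\theta$ (resp.\ $\theta-\pi$). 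A Poincar\'{e}-type argument, expanding $\Phi_{\pm}$ and the $f_{j,\pm}$ up to order $N$, multiplying out and bounding the resulting remainder by the Gevrey-$1$ estimates, shows that $\Psi_{\pm}$ admits $\hat{\Psi}$ as (ordinary) asymptotic expansion in the above sectors.

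For the overlap of the two sectorial domains I would write
\[
\Psi_{+} - \Psi_{-} = \bigl(\Phi_{+} - \Phi_{-}\bigr)\bigl(x, f_{1,+}, \dots, f_{n,+}\bigr) + \Phi_{-}\bigl(x, f_{1,+}, \dots, f_{n,+}\bigr) - \Phi_{-}\bigl(x, f_{1,-}, \dots, f_{n,-}\bigr).
\]
The first summand is exponentially flat by Proposition~\ref{prop: dev asympt nul expo plat}, since $\Phi_{+}$ and $\Phi_{-}$ both admit $\hat{\Phi}$ as Gevrey-$1$ asymptotic expansion on this overlap. The second summand, telescoped one variable at a time and bounded using Hadamard's lemma applied to $\Phi_{-}$, inherits exponential flatness from each $f_{j,+} - f_{j,-}$. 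Ramis--Sibuya then applies to our covering of a punctured disk by two sectors of opening $>\pi$ and delivers the $1$-summability of $\hat{\Psi}$ in the directions $\theta$ and $\theta-\pi$ with respective $1$-sums $\Psi_{+}$ and $\Psi_{-}$. The most delicate point is the Poincar\'{e} argument of the second paragraph: the asymptotic estimates on $\Phi_{\pm}$ and on the $f_{j,\pm}$ must be combined uniformly on closed subsectors, and it is precisely the hypothesis $\hat{f}_{j}(0,\mathbf{0})=0$ that keeps the substitution tame.
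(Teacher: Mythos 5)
Your argument is correct and follows precisely the route the paper itself indicates: the paper gives no proof of this proposition, remarking only that ``an elegant way to prove it is to use an important theorem of Ramis-Sibuya'', which is exactly the scheme you implement (two wide sectors covering the punctured disc, exponential flatness of $\Psi_{+}-\Psi_{-}$ on the overlaps via the splitting into $(\Phi_{+}-\Phi_{-})$ composed with the $f_{j,+}$ plus a telescoped Lipschitz term, then Ramis--Sibuya). The only minor remark is that your separate verification that $\hat{\Psi}$ is Gevrey-$1$ is redundant, since Ramis--Sibuya already produces a common Gevrey-$1$ asymptotic expansion for $\Psi_{\pm}$, which uniqueness of (ordinary) asymptotic expansions identifies with $\hat{\Psi}$.
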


Consider $\hat{Y}$ a formal singular vector field at the origin and
a formal fibered diffeomorphism $\hat{\varphi}:\left(x,\mathbf{y}\right)\mapsto\left(x,\hat{\phi}\left(x,\mathbf{y}\right)\right)$.
Assume that both $\hat{Y}$ and $\hat{\varphi}$ are 1-summable in
directions $\theta$ and $\theta-\pi$, for some $\theta\in\ww R$,
and denote by $Y_{+},Y_{-}$ (\emph{resp. $\varphi_{+},\varphi_{-}$})
their 1-sums in directions $\theta$ and $\theta-\pi$ respectively.
As a consequence of Proposition \ref{prop: compositon summable} and
Lemma \ref{lem:diff_alg_and_summability}, we can state the following:
\begin{cor}
\label{cor: summability push-forward}Under the assumptions above,
$\hat{\varphi}_{*}\left(\hat{Y}\right)$ is 1-summable in both directions
$\theta$ and $\theta-\pi$, and its 1-sums in these directions are
$\varphi_{+}\left(Y_{+}\right)$ and $\varphi_{-}\left(Y_{-}\right)$
respectively.
\end{cor}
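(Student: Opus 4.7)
The plan is to reduce the claim to Proposition~\ref{prop: compositon summable} and Lemma~\ref{lem:diff_alg_and_summability} by first handling the formal inverse $\hat{\varphi}^{-1}$, and then unfolding the definition of the push-forward.

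The preliminary step is to establish that $\hat{\varphi}^{-1}$ is itself 1-summable in directions $\theta$ and $\theta-\pi$, with 1-sums $\varphi_{+}^{-1}$ and $\varphi_{-}^{-1}$ respectively. Since $\hat{\varphi}$ is a formal fibered diffeomorphism, its linear part at the origin is invertible; consequently the sectorial sums $\varphi_{\pm}$ are genuine holomorphic sectorial fibered diffeomorphisms on suitable sectorial neighborhoods of the origin, and their sectorial inverses $\varphi_{\pm}^{-1}$ are well-defined on appropriate subdomains. To identify a common formal asymptotic expansion for the $\varphi_{\pm}^{-1}$, I would argue that their difference on an overlap is exponentially flat: since $\varphi_{+}-\varphi_{-}$ is exponentially flat (both admit the common Gevrey-1 asymptotic expansion $\hat{\varphi}$, use Proposition~\ref{prop: dev asympt nul expo plat}), the identity $\varphi_{\pm}\circ\varphi_{\pm}^{-1}=\tx{Id}$ combined with the boundedness of $\tx D\varphi_{\pm}^{-1}$ forces $\varphi_{+}^{-1}-\varphi_{-}^{-1}$ to be exponentially flat on overlaps as well. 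A Ramis-Sibuya type argument then produces a 1-summable formal power series whose 1-sums are precisely the $\varphi_{\pm}^{-1}$; by uniqueness of asymptotic expansions, this formal series must coincide with the formal inverse $\hat{\varphi}^{-1}$.

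Once this preliminary is in place, I would compute the push-forward directly from its definition
\[
\hat{\varphi}_{*}(\hat{Y}) = \bigl(\tx D\hat{\varphi}\cdot\hat{Y}\bigr)\circ\hat{\varphi}^{-1}.
\]
By Lemma~\ref{lem:diff_alg_and_summability}, each entry of the Jacobian $\tx D\hat{\varphi}$, being a partial derivative of a component of $\hat{\varphi}$, is 1-summable in directions $\theta$ and $\theta-\pi$, with 1-sums given by the corresponding entries of $\tx D\varphi_{\pm}$. The algebra property of the same lemma then ensures that the matrix-vector product $\tx D\hat{\varphi}\cdot\hat{Y}$ is 1-summable in both directions, with 1-sums $\tx D\varphi_{\pm}\cdot Y_{\pm}$.

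Finally I would apply Proposition~\ref{prop: compositon summable} to the composition with $\hat{\varphi}^{-1}$. The hypothesis $\hat{\varphi}^{-1}(0,\mathbf{0})=(0,\mathbf{0})$ of that proposition holds automatically because $\hat{\varphi}$ fixes the origin. Therefore the composition $\bigl(\tx D\hat{\varphi}\cdot\hat{Y}\bigr)\circ\hat{\varphi}^{-1}$ is 1-summable in directions $\theta$ and $\theta-\pi$, and its 1-sums are
\[
\bigl(\tx D\varphi_{\pm}\cdot Y_{\pm}\bigr)\circ\varphi_{\pm}^{-1} = (\varphi_{\pm})_{*}(Y_{\pm}),
\]
which is what we wanted. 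The only non-routine ingredient is the 1-summability of $\hat{\varphi}^{-1}$ with the expected 1-sums: this is the main obstacle, and the Ramis-Sibuya argument sketched above (or, alternatively, a direct Gevrey-1 estimate derived from the implicit function theorem applied to $\varphi_{\pm}(x,\mathbf{y})=(x,\mathbf{z})$) is the technical heart of the proof.
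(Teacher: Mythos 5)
Your argument is correct and follows exactly the route the paper intends: the statement is given there without proof, simply as a consequence of Proposition~\ref{prop: compositon summable} and Lemma~\ref{lem:diff_alg_and_summability} applied to the definition $\hat{\varphi}_{*}(\hat{Y})=(\tx D\hat{\varphi}\cdot\hat{Y})\circ\hat{\varphi}^{-1}$. The one point you elaborate that the paper leaves implicit is the $1$-summability of $\hat{\varphi}^{-1}$ with $1$-sums $\varphi_{\pm}^{-1}$, and your Ramis--Sibuya (or direct Gevrey-$1$ estimate) argument for it is sound.
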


\subsubsection{An important result by Martinet and Ramis}

~

We are going to make an essential use of an isomorphism theorem proved
in \cite{MR82}. This result is of paramount importance in the present
paper since it will be a key tool in the proofs of both Theorems~\ref{Th: Th drsn}
and \ref{thm: espace de module} (see section~\ref{sec: analytic classification}).

Let us consider two open asymptotic sectors $\cal S$ and $\cal S'$
at the origin in directions $\theta$ and $\theta-\pi$ respectively,
both of opening $\pi$: 
\begin{eqnarray*}
S & \in & \cal{AS}_{\theta,\pi}\\
S' & \in & \cal{AS}_{\theta-\pi,\pi}
\end{eqnarray*}
(see Definition \ref{def: asymptotitc sector}). In this particular
setting, the cited theorem can be stated as follows. 
\begin{thm}
\label{th: martinet ramis}\cite[Théorème 5.2.1]{MR82} Consider a
pair of germs of sectorial diffeomorphisms 
\[
\left(\varphi,\varphi'\right)\in\diffsect[\theta][0]\times\diffsect[\theta-\pi][0]
\]
such that $\varphi$ and $\varphi'$ extend analytically and admit
the identity as Gevrey-1 asymptotic expansion in $S\times\left(\ww C^{2},0\right)$
and $S'\times\left(\ww C^{2},0\right)$ respectively. Then, there
exists a pair $\left(\phi_{+},\phi_{-}\right)$ of germs of sectorial
fibered diffeomorphisms 
\[
\left(\phi_{+},\phi_{-}\right)\in\diffsect[\theta+\frac{\pi}{2}][\eta]\times\diffsect[\theta-\frac{\pi}{2}][\eta]
\]
with $\eta\in\left]\pi,2\pi\right[$, which extend analytically in
$S_{+}\times\left(\ww C^{2},0\right)$ and $S_{-}\times\left(\ww C^{2},0\right)$
respectively, for some ${\displaystyle S_{+}\in\cal{AS}_{\theta+\frac{\pi}{2},2\pi}}$
and ${\displaystyle S_{-}\in\cal{AS}_{\theta-\frac{\pi}{2},2\pi}}$
, such that:
\[
\begin{cases}
\phi_{+}\circ\left(\phi_{-}\right)_{\mid S\times\left(\ww C^{2},0\right)}^{-1}=\varphi\\
\phi_{+}\circ\left(\phi_{-}\right)_{\mid S'\times\left(\ww C^{2},0\right)}^{-1}=\varphi' & \,\,.
\end{cases}
\]
There also exists a formal diffeomorphism $\hat{\phi}$ which is tangent
to the identity, such that $\phi_{+}$ and $\phi_{-}$ both admit
$\hat{\phi}$ as Gevrey-1 asymptotic expansion in $S_{+}\times\left(\ww C^{2},0\right)$
and $S_{-}\times\left(\ww C^{2},0\right)$ respectively.
\end{thm}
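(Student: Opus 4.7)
The plan is to recognize the statement as a non-abelian Cauchy--Heine / Malgrange--Sibuya type decomposition, realizing the pair $(\varphi,\varphi')$ as a $1$-cocycle with values in the sheaf of germs of fibered sectorial diffeomorphisms tangent to the identity and admitting the identity as Gevrey-$1$ asymptotic expansion, over the covering $\{S_+,S_-\}$ of a punctured neighborhood of $\{x=0\}$ whose two-component intersection sits inside $S\sqcup S'$. Trivializing this cocycle as a coboundary $\phi_+\circ\phi_-^{-1}$ is exactly what we want, and the shared Gevrey-$1$ formal expansion $\hat\phi$ on $S_+\cup S_-$ will come out of the construction by the Ramis--Sibuya argument.

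First I would handle the \emph{abelian model}. Write $\varphi=\tx{Id}+\delta$ and $\varphi'=\tx{Id}+\delta'$; by Proposition~\ref{prop: dev asympt nul expo plat} each component of $\delta,\delta'$ is exponentially flat on its narrow sector with values in the polydisc variables $\mathbf{y}$. The classical Cauchy--Heine transform (a contour integral along the boundary rays of the narrow sectors $S$ and $S'$) then produces, component by component, a pair $(\delta_+,\delta_-)$ of sectorial functions on $S_+$ and $S_-$ with $\delta_+-\delta_-=\delta$ on $S$ and $\delta_+-\delta_-=\delta'$ on $S'$, and with a common Gevrey-$1$ asymptotic expansion $\hat\delta$. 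This yields the linearized solution $\phi_\pm^{(0)}:=\tx{Id}+\delta_\pm$.

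Second, I would upgrade this to the non-abelian statement by an iterative Newton-like scheme. Given approximate solutions $\phi_\pm^{(n)}$, form the residual cocycle
\[
\varphi^{(n)}:=\bigl(\phi_+^{(n)}\bigr)^{-1}\circ\varphi\circ\phi_-^{(n)},\qquad \varphi'^{(n)}:=\bigl(\phi_+^{(n)}\bigr)^{-1}\circ\varphi'\circ\phi_-^{(n)},
\]
which is again Gevrey-$1$ asymptotic to the identity on the narrow sectors and, crucially, is flatter than at the previous step. Solve the abelian problem attached to $(\varphi^{(n)},\varphi'^{(n)})$ to obtain correctors $\psi_\pm^{(n)}$ and update $\phi_\pm^{(n+1)}:=\phi_\pm^{(n)}\circ\psi_\pm^{(n)}$. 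Convergence is monitored in the Borel plane via the weighted exponential norms $\norm{\cdot}_\lambda^{\tx{bis}}$ of Remark~\ref{rem: norme bis borel}, where the convolution product is sub-multiplicative (Proposition~\ref{prop: norme d'algebre}) and composition of fibered diffeomorphisms tangent to identity can be controlled by Cauchy estimates uniformly on slightly shrunk polydiscs. The common formal Gevrey-$1$ series $\hat\phi$ is obtained as the limit of the asymptotic expansions of the $\phi_\pm^{(n)}$.

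The main obstacle is the non-abelian step: composition of sectorial diffeomorphisms entails a loss on the poly-radius $\mathbf{r}$ and the opening $\eta$ of the sectors, and one must show that the iteration can be arranged with only a \emph{finite} total loss. The standard way around this is a majorant/Nash--Moser-type argument in the Borel variable: the flatness of the residual cocycle at step $n$ provides exponentially decreasing bounds $O(\exp(-B_n/|x|))$ on $\varphi^{(n)}-\tx{Id}$ which, transported to the Borel plane, yield a geometric factor in $\norm{\psi_\pm^{(n)}}_\lambda^{\tx{bis}}$ that absorbs the polynomial cost of each composition and polydisc shrinking. Once the scheme is shown to converge, uniqueness of the common Gevrey-$1$ formal series $\hat\phi$ follows from Proposition~\ref{prop: Borel_Laplace_summation} applied in each of the overlapping ``intermediate'' directions where $\phi_+$ and $\phi_-$ coincide. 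This is the structure of Martinet--Ramis's original argument in~\cite{MR82}, to which one can ultimately refer for the detailed estimates.
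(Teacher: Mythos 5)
The paper does not prove this statement: it is quoted directly from Martinet--Ramis \cite[Th\'eor\`eme 5.2.1]{MR82} and used as a black box, so there is no internal proof to compare yours against. Your reconstruction follows one of the two standard routes to such non-abelian sectorial gluing theorems (Cauchy--Heine transform for the linearized problem, then an iterative non-abelian correction), whereas the argument in \cite{MR82} rests on the Malgrange--Sibuya realization theorem (a $C^{\infty}$ splitting of the cocycle corrected by solving a $\bar\partial$-equation) together with the Ramis--Sibuya Gevrey refinement. Both routes are legitimate; yours is more constructive but analytically more delicate.

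Two points in your sketch deserve flagging. First, the assertion that the residual cocycle $\varphi^{(n)}$ is ``flatter than at the previous step'' is not a consequence of quadratic smallness in sup-norm alone: the correctors $\delta_{\pm}$ produced by the Cauchy--Heine transform are \emph{not} exponentially flat (they are only Gevrey-1 asymptotic to a common series $\hat\delta$), so the quadratic remainder in $\varphi-\phi_{+}^{(0)}\circ\bigl(\phi_{-}^{(0)}\bigr)^{-1}$ involves products such as $\delta_{-}\cdot\mathrm{D}\delta_{+}$ which are a priori only quadratically small, not flat. What saves the step is that $\phi_{+}^{(0)}\circ\bigl(\phi_{-}^{(0)}\bigr)^{-1}$ and $\varphi$ admit the \emph{same} Gevrey-1 asymptotic expansion (namely the identity) on the narrow sector, because $\phi_{+}^{(0)}$ and $\phi_{-}^{(0)}$ share the formal expansion $\mathrm{Id}+\hat\delta$; Proposition \ref{prop: dev asympt nul expo plat} then yields exponential flatness of the residual. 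This must be said explicitly, otherwise the residual cannot be fed back into the Cauchy--Heine step. Second, the convergence of the iteration is where all the work lies: each composition and each inversion costs a shrinking of the polydisc and of the sectors and degrades the rate $B$ in the bound $A\exp\left(-B/\abs x\right)$, and one must check that the total loss stays finite. Your proposal names this difficulty but resolves it only by referring back to \cite{MR82} --- which is circular as a proof of the very theorem being established. As a reconstruction of a cited classical result the outline is sound, but as a self-contained proof it has a genuine gap at precisely that point.
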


In particular, in the theorem above $\hat{\phi}$ is $1-$summable
in every direction except $\theta$ and $\theta-\pi$, and its 1-sums
in directions $\theta+\frac{\pi}{2}$ and $\theta-\frac{\pi}{2}$
respectively are $\phi_{+}$ and $\phi_{-}$. For future use, we are
going to prove a \emph{``transversally symplectic'' }version of
the above theorem.
\begin{cor}
\label{cor: MR symplectic}With the assumptions and notations of Theorem
\ref{th: martinet ramis}, if $\varphi$ and $\varphi'$ both are
transversally symplectic (see Definition \ref{def: intro}), then
there exists a germ of an analytic fibered diffeomorphism $\psi\in\fdiff[\ww C^{3},0,\tx{Id}]$
(tangent to the identity), such that
\[
\sigma_{+}:=\phi_{+}\circ\psi\mbox{ and }\sigma_{-}:=\phi_{-}\circ\psi
\]
 both are transversally symplectic. Moreover we also have:
\[
\begin{cases}
\sigma_{+}\circ\left(\sigma_{-}\right)_{\mid S\times\left(\ww C^{2},0\right)}^{-1}=\varphi\\
\sigma_{+}\circ\left(\sigma_{-}\right)_{\mid S'\times\left(\ww C^{2},0\right)}^{-1}=\varphi' & \,\,.
\end{cases}
\]
\end{cor}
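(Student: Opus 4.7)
The plan is to observe first that the cocycle identities for $\sigma_\pm$ will be satisfied automatically, for any choice of analytic fibered germ $\psi$ whatsoever: indeed
\[
\sigma_+ \circ \sigma_-^{-1} = \phi_+ \circ \psi \circ \psi^{-1} \circ \phi_-^{-1} = \phi_+ \circ \phi_-^{-1},
\]
which already equals $\varphi$ on $S \times \wcc$ and $\varphi'$ on $S' \times \wcc$ by Theorem~\ref{th: martinet ramis}. The task thus reduces to exhibiting a single $\psi \in \fdiffid$, tangent to identity, such that both $\phi_+ \circ \psi$ and $\phi_- \circ \psi$ are transversally symplectic. For a fibered diffeomorphism, being transversally symplectic is equivalent to $\det(\mathrm{D}\Phi) \equiv 1$, so what I need is a $\psi$ satisfying $g_\pm(\psi)\, \det(\mathrm{D}\psi) \equiv 1$, where $g_\pm := \det(\mathrm{D}\phi_\pm)$.

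The central step is to glue $g_+$ and $g_-$ into a single analytic germ $g$ on $\wccc$. Since $\varphi$ and $\varphi'$ are transversally symplectic, applying the chain rule to $\det(\mathrm{D}\varphi) = \det(\mathrm{D}\varphi') = 1$ using $\varphi = \phi_+ \circ \phi_-^{-1}$ gives $g_+ \equiv g_-$ on the component of $S_+ \cap S_-$ which contains $S \times \wcc$, and symmetrically on the component containing $S' \times \wcc$. Consequently, the two germs agree on the entire intersection and glue into a holomorphic function $g$ on $(S_+ \cup S_-) \times \wcc$. Because $S_+ \cup S_-$ covers a full punctured neighborhood of the origin in the $x$-plane, $g$ is defined on a punctured neighborhood of $\acc{x=0}$ in $\wccc$. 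Both $g_\pm$ admit $\det(\mathrm{D}\hat\phi)$ as Gevrey-1 asymptotic expansion, so $g$ is bounded near $\acc{x = 0}$ and, by Riemann's removable singularity theorem, extends to a holomorphic germ at $0 \in \ww C^3$, with $g(0) = 1$ (since $\hat\phi$ is tangent to the identity).

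Given this analytic $g$, I construct $\psi$ explicitly by a triangular ansatz $\psi(x, y_1, y_2) := (x,\, y_1,\, \Psi(x, y_1, y_2))$, where $\Psi$ solves the analytic Cauchy problem
\[
\partial_{y_2} \Psi(x, y_1, y_2) = \frac{1}{g\bigl(x,\, y_1,\, \Psi(x, y_1, y_2)\bigr)}, \qquad \Psi(x, y_1, 0) = 0.
\]
Since $g(0)=1$, this problem admits a unique analytic solution in a neighborhood of the origin; differentiating the initial condition shows that $\mathrm{D}\psi(0) = \tx{Id}$, and a straightforward chain-rule computation yields
\[
\det\bigl(\mathrm{D}(\phi_\pm \circ \psi)\bigr) = \bigl(g \circ \psi\bigr)\cdot \partial_{y_2}\Psi \equiv 1,
\]
so $\sigma_\pm := \phi_\pm \circ \psi$ are transversally symplectic, as required. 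The genuinely delicate ingredient is the removable-singularity step: it is there that the geometry of the problem (the union of the two opposite $2\pi$-sectors sweeping out a \emph{full} punctured neighborhood of $\acc{x = 0}$) combines with the Gevrey-1 control of $g_\pm$ to force analyticity across $\acc{x=0}$. Everything else is a routine application of Cauchy--Kowalevski and the chain rule.
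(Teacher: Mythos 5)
Your proof is correct and follows essentially the same route as the paper: glue $\det(\tx D\phi_+)$ and $\det(\tx D\phi_-)$ across $S_+\cap S_-$ using the unit determinants of $\varphi,\varphi'$, extend by Riemann's removable singularity theorem to an analytic germ equal to $1$ at the origin, and then correct $\phi_\pm$ by a fibered shear in one of the $\mathbf{y}$-coordinates whose Jacobian is the reciprocal of that germ. The only (cosmetic) difference is that the paper modifies the $y_1$-coordinate and integrates the resulting ODE via an explicit antiderivative together with the inverse function theorem, whereas you modify $y_2$ and invoke the holomorphic Cauchy problem directly; both yield the same $\psi$, and your observation that the cocycle identities are automatic is also implicit in the paper.
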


\begin{proof}
We recall that for any germ $\varphi$ of a sectorial fibered diffeomorphism
which is tangent to the identity, $\varphi$ is transversally symplectic
if and only if $\det\left(\tx D\varphi\right)=1$. 

First of all, let us show that 
\[
\det\left(\tx D\phi_{+}\right)=\det\left(\tx D\phi_{-}\right)\mbox{ in }\left(S_{+}\cap S_{-}\right)\times\left(\ww C^{2},0\right)\,\,\,.
\]
Since $\phi_{+}$ and $\phi_{-}$ both are sectorial fibered diffeomorphism
which are tangent to the identity and transversally symplectic, then
\[
\det\left(\phi_{+}\circ\left(\phi_{-}\right)_{\mid\left(S_{+}\cap S_{-}\right)\times\left(\ww C^{2},0\right)}^{-1}\right)=1\,\,.
\]
The \emph{chain rule} implies immediately that 
\[
\det\left(\tx D\phi_{+}\right)=\det\left(\tx D\phi_{-}\right)\mbox{ in }\left(S_{+}\cap S_{-}\right)\times\left(\ww C^{2},0\right)\,\,\,.
\]
Thus, this equality allows us to define (thanks to the Riemann's Theorem
of removable singularities) a germ of analytic function $f\in\cal O\left(\ww C^{3},0\right)$.
Notice that $f\left(0,0,0\right)=1$ because $\phi_{+}$ and $\phi_{-}$
are tangent to the identity. Now, let us look for an element $\psi\in\fdiff[\ww C^{3},0,\tx{Id}]$
of the form 
\begin{equation}
\psi:\left(x,y_{1},y_{2}\right)\mapsto\left(x,\psi_{1}\left(x,\mathbf{y}\right),y_{2}\right)\label{eq: rectififcation symplectic}
\end{equation}
 such that 
\[
\sigma_{+}:=\phi_{+}\circ\psi\mbox{ and }\sigma_{-}:=\phi_{-}\circ\psi
\]
 both be transversally symplectic. An easy computation gives: 
\[
\det\left(\sigma_{\pm}\right)=\left(\det\left(\tx D\phi_{\pm}\right)\circ\psi\right)\det\left(\tx D\psi\right)=1
\]
\emph{i.e.}
\[
\left(f\circ\psi\right)\det\left(\mbox{D}\psi\right)=1\,\,\,.
\]
According to (\ref{eq: rectififcation symplectic}), we must have:
\begin{equation}
\left(f\circ\psi\right)\ppp{\psi_{1}}{y_{1}}=1\,\,.\label{eq: equa diff rectification symplectic}
\end{equation}
Let us define 
\[
F\left(x,y_{1},y_{2}\right):=\int_{0}^{y_{1}}f\left(x,z,y_{2}\right)\tx dz\,\,,
\]
so that (\ref{eq: equa diff rectification symplectic}) can be integrated
as 
\[
F\circ\psi=y_{1}+h\left(x,y_{2}\right)\,\,\,,
\]
for some $h\in\germ{x,y_{2}}$. Notice that 
\[
\ppp F{y_{1}}\left(0,0,0\right)=1
\]
since $f\left(0,0,0\right)=1$. Let us chose $h=0$. Then, we have
to solve 
\[
F\circ\psi=y_{1}\,\,\,,
\]
with unknown $\psi\in\fdiff[\ww C^{3},0,\tx{Id}]$ as in (\ref{eq: equa diff rectification symplectic}).
If we define
\[
\Phi:\left(x,\mathbf{y}\right)\mapsto\left(x,F\left(x,\mathbf{y}\right),y_{2}\right)\,\,,
\]
the latter problem is equivalent to find $\psi$ as above such that:
\[
\Phi\circ\psi=\tx{Id}\,\,\,.
\]
Since $\tx D\Phi_{0}=\tx{Id}$, the inverse function theorem gives
us the existence of the germ $\psi=\Phi^{-1}\in\fdiff[\ww C^{3},0,\tx{Id}]$
.
\end{proof}

\subsection{\label{subsec:Weak-Gevrey-1-power}Weak Gevrey-1 power series and
weak 1-summability}

~

We present here a weaker notion of 1-summability that we will also
need. Any function $f\left(x,\mathbf{y}\right)$ analytic in a domain
$\cal U\times\mathbf{D}\left(\mathbf{0},\mathbf{r}\right)$, where
$\cal U\subset\ww C$ is open, and bounded in any domain $\cal U\times\overline{\mathbf{D}}\left(\mathbf{0},\mathbf{r'}\right)$
with $r'_{1}<r_{1},\dots,r'_{n}<r_{n}$, can be written 
\begin{equation}
f\left(x,\mathbf{y}\right)=\sum_{\mathbf{j}\in\ww N^{n}}F_{\mathbf{j}}\left(x\right)\mathbf{y}^{\mathbf{j}}\qquad,\label{eq: developpement selon y-1}
\end{equation}
where for all $\mathbf{j}\in\ww N^{n}$, $F_{\mathbf{j}}$ is analytic
and bounded on $\cal U$, and defined \emph{via }the Cauchy formula:
\[
F_{\mathbf{j}}\left(x\right)=\frac{1}{\left(2i\pi\right)^{n}}\int_{\abs{z_{1}}=r'_{1}}\dots\int_{\abs{z_{n}}=r'_{n}}\frac{f\left(x,\mathbf{z}\right)}{\left(z_{1}\right)^{j_{1}+1}\dots\left(z_{n}\right)^{j_{n}+1}}\mbox{d}z_{n}\dots\mbox{d}z_{1}\qquad.
\]
Notice that the convergence of the function series above is uniform
in every compact with respect to $x$ and $\mathbf{y}$.

In the same way, any formal power series $\hat{f}\left(x,\mathbf{y}\right)\in\form{x,\mathbf{y}}$
can be written as 
\[
{\displaystyle \hat{f}\left(x,\mathbf{y}\right)=\sum_{\mathbf{j}\in\ww N^{n}}\hat{F}_{\mathbf{j}}}\left(x\right)\mathbf{y}^{\mathbf{j}}\,\,.
\]
\begin{defn}
~

\begin{itemize}
\item The formal power series $\hat{f}$ is said to be \textbf{weakly Gevrey-1}
if for all $\mathbf{j}\in\ww N^{n}$, $\hat{F}_{\mathbf{j}}\left(x\right)\in\form x$
is a Gevrey-1 formal power series.
\item A function 
\[
{\displaystyle f\left(x,\mathbf{y}\right)=\sum_{\mathbf{j}\in\ww N^{n}}F_{\mathbf{j}}\left(x\right)\mathbf{y^{j}}}
\]
analytic and bounded in a domain $\sect r{\alpha}{\beta}\times\mathbf{D}\left(\mathbf{0},\mathbf{r}\right)$,
admits $\hat{f}$ as \textbf{weak Gevrey-1 asymptotic expansion} in
$x\in\sect r{\alpha}{\beta}$, if for all $\mathbf{j}\in\ww N^{n}$,
$F_{\mathbf{j}}$ admits $\hat{F}_{\mathbf{j}}$ as Gevrey-1 asymptotic
expansion in $\sect r{\alpha}{\beta}$.
\item The formal power series $\hat{f}$ is said to be \textbf{weakly 1-summable
in the direction $\theta\in\ww R$}, if the following conditions hold:

\begin{itemize}
\item for all $\mathbf{j}\in\ww N^{n}$, $\hat{F}_{\mathbf{j}}\left(x\right)\in\form x$
is 1-summable in the direction $\theta$, whose 1-sum in the direction
$\theta$ is denoted by $F_{\mathbf{j},\theta}$;
\item the series ${\displaystyle f_{\theta}\left(x,\mathbf{y}\right):=\sum_{\mathbf{j}\in\ww N^{n}}F_{\mathbf{j},\theta}\left(x\right)\mathbf{y^{j}}}$
defines a germ of a sectorial holomorphic function in a sectorial
neighborhood attached to the origin in the direction $\theta$ with
opening greater than $\pi$.
\end{itemize}
In this case, $f_{\theta}\left(x,\mathbf{y}\right)$ is called \textbf{the
weak 1-sum of $\hat{f}$ in the direction $\theta$}.
\end{itemize}
\end{defn}

As a consequence to the classical theory of summability and Gevrey
asymptotic expansions, we immediately have the following:
\begin{lem}
\label{lem: proprietes faibles}

\begin{enumerate}
\item The weak Gevrey-1 asymptotic expansion of an analytic function in
a domain $\sect r{\alpha}{\beta}\times\mathbf{D}\left(\mathbf{0},\mathbf{r}\right)$
is unique.
\item The weak 1-sum of a weak 1-summable formal power series in the direction
$\theta$, is unique as a germ in ${\displaystyle \cal O\left(\cal S_{\theta,\pi}\right)}$.
\item \label{enu: derivation faible}The set $\Sigma_{\theta}^{\left(\tx{weak}\right)}\subset\form{x,\mathbf{y}}$
of weakly $1$-summable power series in the direction $\theta$ is
an algebra closed under partial derivatives. Moreover the map
\begin{eqnarray*}
\Sigma_{\theta}^{\left(\tx{weak}\right)} & \longrightarrow & \cal O\left(\cal S_{\theta,\pi}\right)\\
\hat{f} & \longmapsto & f_{\theta}
\end{eqnarray*}
is an injective morphism of differential algebras.
\end{enumerate}
\end{lem}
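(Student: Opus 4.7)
All three assertions reduce, via the $\mathbf{y}$-expansion $(\ref{eq: developpement selon y-1})$, to the corresponding classical statements for formal power series in the single variable $x$, applied coefficient-by-coefficient to the Cauchy coefficients $F_{\mathbf{j}}(x)$.

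For item 1, I would observe that if $f$ is analytic and bounded on $\sect r{\alpha}{\beta}\times\mathbf{D}(\mathbf{0},\mathbf{r})$, then its coefficients $F_{\mathbf{j}}(x)$ are uniquely determined by $f$ through the Cauchy formula recalled in $(\ref{eq: developpement selon y-1})$, and each $F_{\mathbf{j}}$ admits at most one Gevrey-1 asymptotic expansion (classical uniqueness, itself a consequence of Proposition~\ref{prop: dev asympt nul expo plat}, since two asymptotic expansions differ by something exponentially flat whose Taylor coefficients at $0$ all vanish). Hence $\hat{f}=\sum_{\mathbf{j}}\hat{F}_{\mathbf{j}}(x)\mathbf{y}^{\mathbf{j}}$ is unique. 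Item 2 follows along the same lines: given a weakly $1$-summable $\hat{f}$ in direction $\theta$, Proposition~\ref{prop: Borel_Laplace_summation} provides a unique $1$-sum $F_{\mathbf{j},\theta}\in\cal O(\cal S_{\theta,\pi})$ for each $\hat{F}_{\mathbf{j}}$, and any candidate weak $1$-sum must have these as its Cauchy coefficients, so two such germs agree on a common sectorial neighborhood and coincide in the direct limit.

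For item 3, I would verify closure of $\Sigma_{\theta}^{(\tx{weak})}$ under each operation coefficient-wise. The derivations $\pp{y_{i}}$ act on the $\mathbf{y}$-expansion by re-indexing, which trivially preserves weak $1$-summability. The derivation $\pp x$ acts on each $\hat{F}_{\mathbf{j}}$, which remains $1$-summable with $1$-sum $F_{\mathbf{j},\theta}'$ by Lemma~\ref{lem:diff_alg_and_summability}; term-by-term $x$-differentiation of the series in $\mathbf{y}^{\mathbf{j}}$ commutes with $\pp x$ on the resulting germ thanks to normal convergence on compact subsets. For products, the coefficients $\hat{H}_{\mathbf{j}}:=\sum_{\mathbf{i}+\mathbf{k}=\mathbf{j}}\hat{F}_{\mathbf{i}}\hat{G}_{\mathbf{k}}$ are finite sums of products in the classical algebra $\Sigma_{\theta}$, hence $1$-summable, and their $1$-sums coincide with the Cauchy coefficients of $f_{\theta}g_{\theta}$. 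Injectivity of $\hat{f}\mapsto f_{\theta}$ follows because $f_{\theta}=0$ forces every $F_{\mathbf{j},\theta}=0$ by uniqueness of the Cauchy coefficients, and then every $\hat{F}_{\mathbf{j}}=0$ by the classical injectivity statement in Lemma~\ref{lem:diff_alg_and_summability}.

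The only non-routine point, and the one I would be careful about, is the management of domains: all the series manipulations must occur on a \emph{common} sectorial neighborhood. I would handle this by fixing, once and for all, a closed sub-sector $S'\subset\sect r{\alpha}{\beta}$ and a slightly smaller poly-disc $\mathbf{D}(\mathbf{0},\mathbf{r}')$ on which the candidate germs $f_{\theta}$, $g_{\theta}$, $f_{\theta}g_{\theta}$, $\pp x f_{\theta}$, etc.\ are bounded, so that their Cauchy expansions converge normally there; the identification between the formal series operations and the sectorial ones then takes place uniformly on this common domain, completing the proof.
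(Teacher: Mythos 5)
Your coefficient-wise reduction via the Cauchy expansion is correct and is exactly the argument the paper intends: the lemma is stated there without proof, as an immediate consequence of the classical one-variable theory applied to each $\hat{F}_{\mathbf{j}}$. Your extra care about working on a common closed sub-sector and slightly smaller poly-disc (so that products and $x$-derivatives of the sectorial sums remain bounded and their Cauchy coefficients can be identified) is the right way to fill in the one point the paper leaves implicit.
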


The following proposition is an analogue of Proposition \ref{prop: compositon summable}
for weak 1-summable formal power series, with the a stronger condition
instead of (\ref{eq: condition composition sommable}).
\begin{prop}
\label{prop: composition faible}Let 
\[
{\displaystyle \hat{\Phi}\left(x,\mathbf{y}\right)=\sum_{\mathbf{j}\in\ww N^{n}}\hat{\Phi}_{\mathbf{j}}\left(x\right)\mathbf{y^{j}}\in\form{x,\mathbf{y}}}
\]
and 
\[
{\displaystyle \hat{f}^{\left(k\right)}\left(x,\mathbf{z}\right)=\sum_{\mathbf{j}\in\ww N^{n}}\hat{F}_{\mathbf{j}}^{\left(k\right)}\left(x\right)\mathbf{z^{j}}\in\form{x,\mathbf{z}}}\,\,,
\]
for $k=1,\dots,n$, be n+1 formal power series which are weakly 1-summable
in directions $\theta$ and $\theta-\pi$. Let us denote by $\Phi_{+},f_{+}^{\left(1\right)},\dots,f_{+}^{\left(n\right)}$
$\big($\emph{resp. }$\Phi_{-},f_{-}^{\left(1\right)},\dots,f_{-}^{\left(n\right)}$$\big)$
their respective weak 1-sums in the direction $\theta$ (\emph{resp.}
$\theta-\pi$). Assume that $\hat{F}_{\mathbf{0}}^{\left(k\right)}=0$
for all $k=1,\dots,n$. Then, 
\[
\hat{\Psi}\left(x,\mathbf{z}\right):=\hat{\Phi}\left(x,\hat{f}^{\left(1\right)}\left(x,\mathbf{z}\right),\dots,\hat{f}^{\left(n\right)}\left(x,\mathbf{z}\right)\right)
\]
 is weakly 1-summable directions $\theta$ and $\theta-\pi$, and
its 1-sum in the corresponding direction is 
\[
\Psi_{\pm}\left(x,\mathbf{z}\right)=\Phi_{\pm}\left(x,f_{\pm}^{\left(1\right)}\left(x,\mathbf{z}\right),\dots,f_{\pm}^{\left(n\right)}\left(x,\mathbf{z}\right)\right)\,\,\,,
\]
which is a germ of a sectorial holomorphic function in this direction
with opening $\pi$.
\end{prop}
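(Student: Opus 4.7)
The plan is to leverage the classical composition result for 1-summable series (Proposition \ref{prop: compositon summable}) together with the uniqueness of weak 1-sums (Lemma \ref{lem: proprietes faibles}), bootstrapping from coefficient-wise arguments in $\mathbf{z}$ to a full sectorial composition. The role of the hypothesis $\hat{F}^{(k)}_{\mathbf{0}}=0$ is twofold: it ensures that the formal composition $\hat{\Psi}$ is well-defined as an element of $\form{x,\mathbf{z}}$, and, by uniqueness of the 1-sum applied to the zero series, it forces the analytic weak 1-sums to satisfy $f^{(k)}_{\pm}(x,\mathbf{0})=0$, which will make the actual sectorial composition well-defined in a neighborhood of $\mathbf{z}=\mathbf{0}$.

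First, I expand $\hat{\Psi}(x,\mathbf{z})=\sum_{\mathbf{j}}\hat{\Psi}_{\mathbf{j}}(x)\mathbf{z}^{\mathbf{j}}$ formally. Because $\hat{f}^{(k)}$ vanishes identically at $\mathbf{z}=\mathbf{0}$, each $\hat{\Psi}_{\mathbf{j}}(x)$ is an explicit finite polynomial expression in the one-variable coefficients $\hat{\Phi}_{\mathbf{l}}(x)$ and $\hat{F}^{(k)}_{\mathbf{m}}(x)$ with $\mathbf{m}\neq\mathbf{0}$. Since the set of $1$-summable power series in the direction $\theta$ is an algebra and the Borel--Laplace map is a morphism of algebras (Lemma~\ref{lem:diff_alg_and_summability}), each $\hat{\Psi}_{\mathbf{j}}(x)\in\form{x}$ is $1$-summable in directions $\theta$ and $\theta-\pi$, and its $1$-sum $\Psi_{\mathbf{j},\pm}(x)$ is obtained from the same polynomial expression applied to the $1$-sums $\Phi_{\mathbf{l},\pm}$ and $F^{(k)}_{\mathbf{m},\pm}$.

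Next, I assemble the sectorial function. By hypothesis, $\Phi_{\pm}$ is analytic on some sectorial domain $\mathcal{S}_{\pm}^{\Phi}\times\mathbf{D}(\mathbf{0},\mathbf{r})$ and each $f^{(k)}_{\pm}$ on some $\mathcal{S}_{\pm}^{f}\times\mathbf{D}(\mathbf{0},\mathbf{r}')$, both with opening strictly greater than $\pi$; we may intersect and take $\mathcal{S}_{\pm}\in\cal{S}_{\theta\mp 0,\pi}$ (and similarly for the $\theta-\pi$ direction) on which all these functions are simultaneously defined and bounded. Since $\hat{F}^{(k)}_{\mathbf{0}}=0$, uniqueness of the $1$-sum yields $f^{(k)}_{\pm}(x,\mathbf{0})=0$. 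By continuity, there is a smaller polydisc $\mathbf{D}(\mathbf{0},\mathbf{r}'')$ such that for $(x,\mathbf{z})\in\mathcal{S}_{\pm}\times\mathbf{D}(\mathbf{0},\mathbf{r}'')$ the point $(x,f^{(1)}_{\pm}(x,\mathbf{z}),\dots,f^{(n)}_{\pm}(x,\mathbf{z}))$ lies in $\mathcal{S}_{\pm}\times\mathbf{D}(\mathbf{0},\mathbf{r})$. The composition
\[
\Psi_{\pm}(x,\mathbf{z}):=\Phi_{\pm}\bigl(x,f^{(1)}_{\pm}(x,\mathbf{z}),\dots,f^{(n)}_{\pm}(x,\mathbf{z})\bigr)
\]
therefore defines a germ of sectorial holomorphic function in the correct direction with opening greater than $\pi$.

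Finally, I identify the coefficients of the Taylor expansion of $\Psi_{\pm}$ in $\mathbf{z}$ with the $\Psi_{\mathbf{j},\pm}$ from the first step: this is a matter of expanding $\Phi_{\pm}(x,\cdot)$ in a convergent power series in its second argument on $\mathbf{D}(\mathbf{0},\mathbf{r})$, substituting the convergent expansions of the $f^{(k)}_{\pm}(x,\mathbf{z})$ in $\mathbf{z}$, and collecting terms; the result is the same polynomial identity in the 1-sums that we already observed on the formal side, with uniform absolute convergence on every compact subpolydisc. This verifies both conditions in the definition of weak $1$-summability and, by uniqueness of the weak $1$-sum, shows that $\Psi_{\pm}$ is the weak $1$-sum of $\hat{\Psi}$ in the direction $\theta$ (resp.\ $\theta-\pi$). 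The main technical point—and the only place the argument differs from Proposition~\ref{prop: compositon summable}—is the uniform control ensuring that the coefficient-wise $1$-sums actually assemble into a single sectorial function on a common sectorial neighborhood; this is precisely what the composition with the already-constructed sectorial functions $\Phi_{\pm}$ and $f^{(k)}_{\pm}$ delivers for free.
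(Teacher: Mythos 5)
Your proposal is correct and follows essentially the same route as the paper's own proof: use $\hat{F}^{(k)}_{\mathbf{0}}=0$ to make both the formal and the sectorial compositions well-defined, observe that each coefficient $\hat{\Psi}_{\mathbf{j}}$ is a finite polynomial in the $\hat{\Phi}_{\mathbf{l}}$ and $\hat{F}^{(k)}_{\mathbf{m}}$ so that the algebra structure of $1$-summable series in one direction gives its $1$-sum, and then match these with the Taylor coefficients in $\mathbf{z}$ of $\Phi_{\pm}\bigl(x,f^{(1)}_{\pm},\dots,f^{(n)}_{\pm}\bigr)$. No gap to report.
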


\begin{proof}
First of all, 
\[
\hat{\Psi}\left(x,\mathbf{z}\right):=\hat{\Phi}\left(x,\hat{f}^{\left(1\right)}\left(x,\mathbf{z}\right),\dots,\hat{f}^{\left(n\right)}\left(x,\mathbf{z}\right)\right)
\]
 is well defined as formal power series since for all $k=1,\dots,n$,
$\hat{F}_{\mathbf{0}}^{\left(k\right)}=0$. It is also clear that
\[
\Psi_{\pm}\left(x,\mathbf{z}\right):={\displaystyle \Phi_{\pm}\left(x,f_{\pm}^{\left(1\right)}\left(x,\mathbf{z}\right),\dots,f_{\pm}^{\left(n\right)}\left(x,\mathbf{z}\right)\right)}
\]
 is an analytic in a domain $\cal S_{+}\in\cal S_{\theta,\pi}$ (\emph{resp.
}$\cal S_{-}\in\cal S_{\theta-\pi,\pi}$), because $f_{\pm}^{\left(k\right)}\left(x,\mathbf{0}\right)=0$
for all $k=1,\dots,n$. Finally, we check that $\Psi_{\pm}$ admits
$\hat{\Psi}$ as weak Gevrey-1 asymptotic expansion in $\cal S_{\pm}$.
Indeed:
\begin{eqnarray*}
\Psi_{\pm}\left(x,\mathbf{z}\right) & = & \Phi_{\pm}\left(x,f_{\pm}^{\left(1\right)}\left(x,\mathbf{z}\right),\dots,f_{\pm}^{\left(n\right)}\left(x,\mathbf{z}\right)\right)\\
 & = & \sum_{\mathbf{j}\in\ww N^{n}}\left(\Phi_{\mathbf{j}}\right)_{\pm}\left(x\right)\left(f_{\pm}^{\left(1\right)}\left(x,\mathbf{z}\right)\right)^{j_{1}}\dots\left(f_{\pm}^{\left(1\right)}\left(x,\mathbf{z}\right)\right)^{j_{n}}\\
 & = & \sum_{\mathbf{j}\in\ww N^{n}}\left(\Phi_{\mathbf{j}}\right)_{\pm}\left(x\right)\left(\sum_{\abs{\mathbf{l}}\geq1}\left(F_{\mathbf{l}}^{\left(1\right)}\right)_{\pm}\left(x\right)\mathbf{z^{l}}\right)^{j_{1}}\dots\\
 &  & \,\,\,\,\,\,\dots\left(\sum_{\abs{\mathbf{l}}\geq1}\left(F_{\mathbf{l}}^{\left(n\right)}\right)_{\pm}\left(x\right)\mathbf{z^{l}}\right)^{j_{n}}\\
 & = & \sum_{\mathbf{j}\in\ww N^{n}}\left(\Psi_{\mathbf{j}}\right)_{\pm}\left(x\right)\mathbf{y^{j}}
\end{eqnarray*}
where for all $\mathbf{j}\in\ww N^{n}$, $\left(\Psi_{\mathbf{j}}\right)_{\pm}\left(x\right)$
is obtained as a finite number of additions and products of the $\left(\Phi_{\mathbf{k}}\right)_{\pm}$,$\left(F_{\mathbf{k}}^{\left(1\right)}\right)_{\pm}$,$\dots$,$\left(F_{\mathbf{k}}^{\left(n\right)}\right)_{\pm}$,
$\abs{\mathbf{k}}\leq\abs{\mathbf{l}}$. The same computation is valid
for the associated formal power series, and allows us to define the
$\hat{\Psi}_{\mathbf{j}}\left(x\right)$, for all $\mathbf{j}\in\ww N^{n}$.
Then, each $\left(\Psi_{\mathbf{j}}\right)_{\pm}$ has $\hat{\Psi}_{\mathbf{j}}$
as Gevrey-1 asymptotic expansion in $\cal S_{\pm}$.
\end{proof}
As a consequence of Proposition \ref{prop: composition faible} and
Lemma \ref{lem: proprietes faibles}, we have an analogue version
of Corollary (\ref{cor: summability push-forward}) in the weak 1-summable
case. Consider $\hat{Y}$ a formal singular vector field at the origin
and a formal fibered diffeomorphism $\hat{\varphi}:\left(x,\mathbf{y}\right)\mapsto\left(x,\hat{\phi}\left(x,\mathbf{y}\right)\right)$
such that $\hat{\phi}\left(x,\mathbf{0}\right)=\mathbf{0}$. Assume
that both $\hat{Y}$ and $\hat{\varphi}$ are weakly 1-summable in
directions $\theta$ and $\theta-\pi$, for some $\theta\in\ww R$,
and denote by $Y_{+},Y_{-}$ (\emph{resp. $\varphi_{+},\varphi_{-}$})
their weak 1-sums in directions $\theta$ and $\theta-\pi$ respectively.
\begin{cor}
\label{cor: summability push-forward faible} Under the assumptions
above, $\hat{\varphi}_{*}\left(\hat{Y}\right)$ is weakly 1-summable
in both directions $\theta$ and $\theta-\pi$, and its 1-sums in
these directions are $\varphi_{+}\left(Y_{+}\right)$ and $\varphi_{-}\left(Y_{-}\right)$
respectively.
\end{cor}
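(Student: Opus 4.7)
The plan is to mirror the proof of Corollary~\ref{cor: summability push-forward}, replacing its tools (Proposition~\ref{prop: compositon summable} and Lemma~\ref{lem:diff_alg_and_summability}) with their weak-summability counterparts, Proposition~\ref{prop: composition faible} and Lemma~\ref{lem: proprietes faibles}. Factor the push-forward as
\[
\hat{\varphi}_*(\hat{Y})=\bigl(\mathrm{D}\hat{\varphi}\cdot\hat{Y}\bigr)\circ\hat{\varphi}^{-1}
\]
and handle the two operations in succession.

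The first factor $\mathrm{D}\hat{\varphi}\cdot\hat{Y}$ is a polynomial combination of the components of $\hat{Y}$, $\hat{\varphi}$ and their first-order partial derivatives. Lemma~\ref{lem: proprietes faibles}\eqref{enu: derivation faible} states that weakly 1-summable series in a given direction form a differential algebra on which the weak 1-sum is a differential-algebra morphism; applied componentwise, this yields weak 1-summability of $\mathrm{D}\hat{\varphi}\cdot\hat{Y}$ in directions $\theta,\theta-\pi$ with weak 1-sums $\mathrm{D}\varphi_{\pm}\cdot Y_{\pm}$.

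The main obstacle is the treatment of $\hat{\varphi}^{-1}$. The hypothesis $\hat{\phi}(x,\mathbf{0})=\mathbf{0}$ first forces $\hat{\varphi}^{-1}$ to be of the form $(x,\mathbf{z})\mapsto(x,\hat{\psi}(x,\mathbf{z}))$ with $\hat{\psi}(x,\mathbf{0})=\mathbf{0}$, which is exactly the vanishing condition needed later in Proposition~\ref{prop: composition faible}. To prove weak 1-summability, solve the formal identity $\hat{\phi}(x,\hat{\psi}(x,\mathbf{z}))=\mathbf{z}$ recursively in the homogeneous degree in $\mathbf{z}$: each $\mathbf{z}$-coefficient $\hat{\Psi}_{\mathbf{j}}(x)$ comes out as a polynomial in finitely many $\mathbf{y}$-coefficients of $\hat{\phi}$ with $|\mathbf{k}|\leq|\mathbf{j}|$, divided by a power of the unit $\det\mathrm{D}_{\mathbf{y}}\hat{\phi}(x,\mathbf{0})\in\forminv{x}$. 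By the differential-algebra structure of 1-summable series in a single variable, together with the fact that such a unit is invertible within that algebra, each $\hat{\Psi}_{\mathbf{j}}$ is 1-summable in directions $\theta,\theta-\pi$. Uniform application of the analytic inverse function theorem to $\varphi_{\pm}$ then furnishes analytic inverses on sectorial neighborhoods of opening greater than $\pi$, and the uniqueness clause of Lemma~\ref{lem: proprietes faibles}(2) identifies them with the weak 1-sums of $\hat{\varphi}^{-1}$.

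Finally, Proposition~\ref{prop: composition faible} applies to $(\mathrm{D}\hat{\varphi}\cdot\hat{Y})\circ\hat{\varphi}^{-1}$, its vanishing hypothesis $\hat{\psi}(x,\mathbf{0})=\mathbf{0}$ being satisfied, and yields that $\hat{\varphi}_*(\hat{Y})$ is weakly 1-summable in both directions, with weak 1-sums $(\mathrm{D}\varphi_{\pm}\cdot Y_{\pm})\circ\varphi_{\pm}^{-1}=(\varphi_{\pm})_*(Y_{\pm})$, as claimed.
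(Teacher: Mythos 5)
Your proof is correct and follows exactly the route the paper intends: the paper states this corollary without proof, merely as a consequence of Proposition~\ref{prop: composition faible} and Lemma~\ref{lem: proprietes faibles}, and your argument is the natural filling-in of that assertion — the differential-algebra structure of Lemma~\ref{lem: proprietes faibles} for $\mathrm{D}\hat{\varphi}\cdot\hat{Y}$, a coefficientwise recursion together with the sectorial analytic inverse to establish weak 1-summability of $\hat{\varphi}^{-1}$, and Proposition~\ref{prop: composition faible} for the final composition. The only ingredient you use that the paper leaves implicit is the invertibility of 1-summable units in one variable, which is standard and does not constitute a gap.
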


\subsection{Weak 1-summability \emph{versus} 1-summability}

~

As in the previous subsection, let a formal power series $\hat{f}\left(x,\mathbf{y}\right)\in\form{x,\mathbf{y}}$
which is written as 
\[
{\displaystyle \hat{f}\left(x,\mathbf{y}\right)=\sum_{\mathbf{j}\in\ww N^{n}}\hat{F}_{\mathbf{j}}}\left(x\right)\mathbf{y}^{\mathbf{j}}\,\,,
\]
so that its Borel transform is 
\[
{\displaystyle \cal B\left(\hat{f}\right)\left(t,\mathbf{y}\right)=\sum_{\mathbf{j}\in\ww N^{n}}\cal B\left(\hat{F}_{\mathbf{j}}\right)\left(t\right)\mathbf{y^{j}}\,\,.}
\]
The next lemma is immediate.
\begin{lem}
\label{lem: defition equivalent 1-summable}~

\begin{enumerate}
\item The power series $\cal B\left(\hat{f}\right)\left(t,\mathbf{y}\right)$
is convergent in a neighborhood of the origin in $\ww C^{n+1}$ if
and only if the $\cal B\left(\hat{F}_{\mathbf{j}}\right),\,\mathbf{j}\in\ww N^{n}$,
are all analytic and bounded in a same disc $\mbox{D}\left(0,\rho\right)$,
$\rho>0$, and if there exists $B,L>0$ such that for all $\mathbf{j}\in\ww N^{n}$,
${\displaystyle \underset{t\in\mbox{D}\left(0,\rho\right)}{\sup}\abs{\cal B\left(\hat{F}_{\mathbf{j}}\right)\left(t\right)}\leq L.B^{\abs{\mathbf{j}}}}$. 
\item If $1.$ is satisfied, then $\cal B\left(\hat{f}\right)$ can be analytically
continued to a domain $\cal A_{\theta,\delta}^{\infty}\times\mathbf{D}\left(\mathbf{0},\mathbf{r}\right)$
if and only if for all $\mathbf{j}\in\ww N^{n}$, $\cal B\left(\hat{F}_{\mathbf{j}}\right)$
can be analytically continued to $\cal A_{\theta,\delta}^{\infty}$
and if for all compact $K\subset\cal A_{\theta,\delta}^{\infty}$,
there exists $B,L>0$ such that for all $\mathbf{j}\in\ww N^{n}$,
${\displaystyle \underset{t\in K}{\sup}\abs{\cal B\left(\hat{F}_{\mathbf{j}}\right)\left(t\right)}\leq L.B^{\abs{\mathbf{j}}}}$.
\item If and $1.$ and $2.$ are satisfied, then there exists $\lambda,M>0$
such that:
\[
\forall\left(t,\mathbf{y}\right)\in\cal A_{\theta,\delta}^{\infty}\times\mathbf{D}\left(\mathbf{0},\mathbf{r}\right),\,\abs{\cal B\left(\hat{f}\right)\left(t,\mathbf{y}\right)}\leq M.\exp\left(\lambda\abs t\right)
\]
if and only if there exists $\lambda,L,B>0$ such that for all $\mathbf{j}\in\ww N^{n}$,
\[
\forall t\in\cal A_{\theta,\delta}^{\infty},\,\abs{\cal B\left(\hat{F}_{\mathbf{j}}\right)\left(t\right)}\leq L.B^{\abs{\mathbf{j}}}\exp\left(\lambda\abs t\right)\qquad.
\]
\end{enumerate}
\end{lem}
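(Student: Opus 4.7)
The proof of all three parts rests on the classical duality between holomorphic functions of $(t,\mathbf{y})$ on a poly-domain of the form $U\times\mathbf{D}(\mathbf{0},\mathbf{r})$ and their Taylor coefficients in $\mathbf{y}$, regarded as parametrized by $t\in U$. The central identity is the Cauchy integral formula
\[
\cal B(\hat F_{\mathbf{j}})(t) = \frac{1}{(2i\pi)^n}\int_{\abs{z_1}=r_1}\cdots\int_{\abs{z_n}=r_n}\frac{\cal B(\hat f)(t,\mathbf{z})}{z_1^{j_1+1}\cdots z_n^{j_n+1}}\,\tx dz_n\cdots\tx dz_1,
\]
valid whenever $\cal B(\hat f)$ is holomorphic and bounded on $U\times\overline{\mathbf{D}}(\mathbf{0},\mathbf{r})$, together with its converse incarnation: given a family of holomorphic functions $g_{\mathbf{j}}(t)$ on $U$ satisfying $\sup_U\abs{g_{\mathbf{j}}}\leq L\cdot B^{\abs{\mathbf{j}}}$, the series $\sum_{\mathbf{j}}g_{\mathbf{j}}(t)\mathbf{y^{j}}$ is dominated termwise by $L\prod_{k=1}^n(1-B\abs{y_k})^{-1}$ and therefore defines a bounded holomorphic function on $U\times\mathbf{D}(\mathbf{0},\mathbf{r}')$ for every polyradius $\mathbf{r}'$ with $r'_k<1/B$.

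For part (1), the direct implication follows by applying Cauchy's formula with $U=\mbox{D}(0,\rho)$, after shrinking the neighborhood of the origin into a polydisc on which $\cal B(\hat f)$ is bounded by some $M$. One obtains $\sup_{\mbox{D}(0,\rho)}\abs{\cal B(\hat F_{\mathbf{j}})}\leq M\prod_k r_k^{-j_k}\leq M\cdot(\min_k r_k)^{-\abs{\mathbf{j}}}$, so $L:=M$ and $B:=(\min_k r_k)^{-1}$ do the job. The converse is the geometric-series majoration above applied with $g_{\mathbf{j}}=\cal B(\hat F_{\mathbf{j}})$ and $U=\mbox{D}(0,\rho)$; uniqueness of the Taylor expansion in $\mathbf{y}$ then identifies the resulting sum with $\cal B(\hat f)$.

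Part (2) is obtained from the very same argument by letting $U$ range over compact subsets of $\cal A_{\theta,\delta}^{\infty}$ and invoking the principle of analytic continuation. In the direct sense, Cauchy's formula applied on $K\times\overline{\mathbf{D}}(\mathbf{0},\mathbf{r}')$, for any compact $K\subset\cal A_{\theta,\delta}^{\infty}$ and any $\mathbf{r}'$ strictly smaller than $\mathbf{r}$, yields the desired $K$-dependent constants $L,B$. Conversely, the geometric-series majoration produces, on each compact $K$, a bounded analytic function on a polydisc neighborhood of $K\times\{\mathbf{0}\}$; these local functions share the same Taylor coefficients in $\mathbf{y}$, hence agree on overlaps and glue to an analytic extension of $\cal B(\hat f)$ to a domain of the form $\cal A_{\theta,\delta}^{\infty}\times\mathbf{D}(\mathbf{0},\mathbf{r}')$.

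Part (3) is the same dictionary carried out in the presence of an exponential weight: Cauchy's formula applied on the fixed polydisc $\overline{\mathbf{D}}(\mathbf{0},\mathbf{r})$ transforms the uniform bound $M\exp(\lambda\abs t)$ on $\cal B(\hat f)$ into the pointwise bound $M\cdot(\min_k r_k)^{-\abs{\mathbf{j}}}\exp(\lambda\abs t)$ on each $\cal B(\hat F_{\mathbf{j}})$, while the converse geometric-series argument yields a sum bounded by $L\prod_k(1-B\abs{y_k})^{-1}\exp(\lambda\abs t)$ on $\cal A_{\theta,\delta}^{\infty}\times\mathbf{D}(\mathbf{0},\mathbf{r}')$ whenever $r'_k<1/B$. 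No serious obstacle is anticipated; the only care required is to couple the polyradius in $\mathbf{y}$ with the constant $B$ so that the geometric series converges uniformly, which is purely a matter of bookkeeping.
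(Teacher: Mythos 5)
Your overall strategy (Cauchy's integral formula in the $\mathbf{y}$-variables for the ``only if'' directions, geometric-series domination for the ``if'' directions) is the right one, and it is evidently what the paper has in mind when it declares the lemma immediate; your treatments of items (1) and (3) are correct. There is, however, a genuine gap in your converse implication for item (2), at the step ``these local functions \dots{} glue to an analytic extension of $\cal B(\hat{f})$ to a domain of the form $\cal A_{\theta,\delta}^{\infty}\times\mathbf{D}\left(\mathbf{0},\mathbf{r}'\right)$''. The constants $L,B$ furnished by the hypothesis depend on the compact $K$, so your geometric-series majoration only yields analyticity on $K^{\circ}\times\mathbf{D}\left(\mathbf{0},\mathbf{r}_{K}\right)$ with $r_{K,k}<1/B_{K}$. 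As $K$ exhausts the \emph{infinite} sector $\cal A_{\theta,\delta}^{\infty}$, nothing prevents $B_{K}\to\infty$, in which case the glued domain pinches down to the zero section in $\mathbf{y}$ and contains no product $\cal A_{\theta,\delta}^{\infty}\times\mathbf{D}\left(\mathbf{0},\mathbf{r}'\right)$ with a fixed polyradius. Concretely, for $n=1$ take $\cal B\left(\hat{F}_{j}\right)\left(t\right)=\left(1+t\right)^{j}$ (so $\hat{F}_{j}$ is the polynomial with $F_{j,k}=\binom{j}{k}k!$): condition (1) holds, each $\cal B\left(\hat{F}_{j}\right)$ is entire, and on every compact $K$ one has $\sup_{K}\abs{\cal B\left(\hat{F}_{j}\right)}\leq B_{K}^{j}$ with $B_{K}=\max_{K}\abs{1+t}$; yet the sum is $\cal B\left(\hat{f}\right)\left(t,y\right)=\left(1-\left(1+t\right)y\right)^{-1}$, whose polar set meets $\cal A_{\theta,\delta}^{\infty}\times\mbox{D}\left(0,r\right)$ for every $r>0$, so no continuation to a product domain exists.

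The implication you want only becomes true once $B$ is uniform over the whole sector, which is exactly what the exponential bound of item (3) supplies: under $\abs{\cal B\left(\hat{F}_{\mathbf{j}}\right)\left(t\right)}\leq L\,B^{\abs{\mathbf{j}}}\exp\left(\lambda\abs t\right)$ with $L,B,\lambda$ independent of $t$, the series converges on all of $\cal A_{\theta,\delta}^{\infty}\times\mathbf{D}\left(\mathbf{0},\mathbf{r}'\right)$ for $r'_{k}<1/B$ and is dominated by $L\exp\left(\lambda\abs t\right)\prod_{k}\left(1-Br'_{k}\right)^{-1}$, which gives both the continuation of item (2) and the bound of item (3) in one stroke. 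This is also the only form in which the converse is ever used downstream (Corollary \ref{cor: faible et forte sommabilite} assumes the uniform norm bound $\norm{\hat{F}_{\mathbf{j}}}_{\lambda,\theta,\delta,\rho}\leq LB^{\abs{\mathbf{j}}}$). You should therefore either add the uniformity of $B$ as a hypothesis in your converse to (2), or derive (2)'s converse jointly with (3)'s rather than in isolation.
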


\begin{rem}
~

\begin{enumerate}
\item Condition $1.$ above states that the formal power series $\hat{f}$
is Gevrey-1.
\item As usual, there exists an equivalent lemma for the second definitions
of the Borel transform (see Remark \ref{rem: deifinition bis transformee de Borel}).
\end{enumerate}
\end{rem}

The following corollary gives a link between 1-summability and weak
1-summability.
\begin{cor}
\label{cor: faible et forte sommabilite}Let 
\begin{eqnarray*}
{\displaystyle \hat{f}\left(x,\mathbf{y}\right)} & = & \sum_{\mathbf{j}\in\ww N^{n}}\hat{F}_{\mathbf{j}}\left(x\right)\mathbf{y^{j}}\in\form{x,\mathbf{y}}
\end{eqnarray*}
 be a formal power series. Then, $\hat{f}$ is 1-summable in the direction
$\theta\in\ww R$, of 1-sum ${\displaystyle f\in\cal O\left(\cal S_{\theta,\pi}\right)}$,
if and only if the following two conditions hold:

\begin{itemize}
\item $\hat{f}$ is weakly 1-summable in the direction $\theta$;
\item there exists $\lambda,\delta,\rho$ such that for all $\mathbf{j}\in\ww N^{n}$,
${\displaystyle \norm{\hat{F}_{\mathbf{j}}}_{\lambda,\theta,\delta,\rho}<\infty}$
and the power series ${\displaystyle \sum_{\mathbf{j}\in\ww N^{n}}\norm{\hat{F}_{\mathbf{j}}}_{\lambda,\theta,\delta,\rho}\mathbf{y}^{\mathbf{j}}}$\emph{
}\textup{\emph{is convergent near the origin of $\ww C^{n}$.}}
\end{itemize}
\end{cor}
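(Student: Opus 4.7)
The plan is to use Lemma~\ref{lem: defition equivalent 1-summable} as the common bridge for both implications. That lemma translates the 1-summability of $\hat{f}$ (defined through $\cal B\left(\hat{f}\right)$ on a product domain $\Delta_{\theta,\delta,\rho}\times\mathbf{D}\left(\mathbf{0},\mathbf{r}\right)$) into three conditions on the family $\Big(\cal B\left(\hat{F}_{\mathbf{j}}\right)\Big)_{\mathbf{j}\in\ww N^{n}}$, namely uniform analyticity on a common disc, uniform analytic continuation to $\cal A_{\theta,\delta}^{\infty}$, and a uniform exponential estimate of the form $L\cdot B^{\abs{\mathbf{j}}}\exp\left(\lambda\abs t\right)$ on $\Delta_{\theta,\delta,\rho}$. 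This last estimate is precisely the statement that $\norm{\hat{F}_{\mathbf{j}}}_{\lambda,\theta,\delta,\rho}\leq L\cdot B^{\abs{\mathbf{j}}}$, so the geometric-growth bound on the norms appearing in the corollary is equivalent to the global exponential bound on $\cal B\left(\hat{f}\right)$.

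For the forward direction I would start from a 1-summable $\hat{f}$ with 1-sum $f\in\cal O\left(\cal S_{\theta,\pi}\right)$ and immediately extract, via Lemma~\ref{lem: defition equivalent 1-summable}, constants $\lambda,\delta,\rho,L,B>0$ for which $\abs{\cal B\left(\hat{F}_{\mathbf{j}}\right)(t)}\leq L\cdot B^{\abs{\mathbf{j}}}\exp\left(\lambda\abs t\right)$ holds uniformly. This yields $\norm{\hat{F}_{\mathbf{j}}}_{\lambda,\theta,\delta,\rho}\leq L\cdot B^{\abs{\mathbf{j}}}$, so the dominating series $\sum_{\mathbf{j}}\norm{\hat{F}_{\mathbf{j}}}_{\lambda,\theta,\delta,\rho}\mathbf{y}^{\mathbf{j}}$ converges on any polydisc of poly-radius $<1/B$, giving the second bullet. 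Each $\hat{F}_{\mathbf{j}}$ is then 1-summable in direction $\theta$ with 1-sum $F_{\mathbf{j},\theta}=\cal L_{\theta}\left(\cal B\left(\hat{F}_{\mathbf{j}}\right)\right)$. To obtain weak 1-summability I would identify the formal $\mathbf{y}$-expansion $\sum F_{\mathbf{j},\theta}(x)\mathbf{y}^{\mathbf{j}}$ with the 1-sum $f(x,\mathbf{y})$ by expanding $\cal B\left(\hat{f}\right)(t,\mathbf{y})=\sum_{\mathbf{j}}\cal B\left(\hat{F}_{\mathbf{j}}\right)(t)\mathbf{y}^{\mathbf{j}}$ and exchanging summation with the Laplace integral, which is legitimate by dominated convergence thanks to the uniform exponential bound; this produces a holomorphic function in the sectorial neighborhood $\cal S_{\theta,\pi}$ on which $f$ itself is defined.

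For the converse, I would start from conditions (1) and (2). Convergence of $\sum_{\mathbf{j}}\norm{\hat{F}_{\mathbf{j}}}_{\lambda,\theta,\delta,\rho}\mathbf{y}^{\mathbf{j}}$ on some polydisc $\mathbf{D}\left(\mathbf{0},\mathbf{r}\right)$, applied to a series with non-negative coefficients, yields via standard Cauchy-type estimates constants $L,B>0$ with $\norm{\hat{F}_{\mathbf{j}}}_{\lambda,\theta,\delta,\rho}\leq L\cdot B^{\abs{\mathbf{j}}}$ for every $\mathbf{j}$. Unfolding the definition of the norm gives the pointwise bound $\abs{\cal B\left(\hat{F}_{\mathbf{j}}\right)(t)}\leq L\cdot B^{\abs{\mathbf{j}}}\exp\left(\lambda\abs t\right)$ on $\Delta_{\theta,\delta,\rho}$, and weak 1-summability furnishes the required analytic continuations. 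The three items of Lemma~\ref{lem: defition equivalent 1-summable} are then all met, so $\hat{f}$ is 1-summable in direction $\theta$.

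The only genuinely non-routine point is the interchange of $\cal L_{\theta}$ and the $\mathbf{y}$-series in the forward direction, and, symmetrically, the Cauchy-style extraction of geometric bounds on $\norm{\hat{F}_{\mathbf{j}}}_{\lambda,\theta,\delta,\rho}$ from the assumed convergence of $\sum_{\mathbf{j}}\norm{\hat{F}_{\mathbf{j}}}_{\lambda,\theta,\delta,\rho}\mathbf{y}^{\mathbf{j}}$. Both reduce to standard arguments once one has verified that a single quadruple $(\lambda,\delta,\rho,\mathbf{r})$ can be chosen that works uniformly in $\mathbf{j}$, which is guaranteed precisely by Lemma~\ref{lem: defition equivalent 1-summable}.
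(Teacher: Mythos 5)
Your proposal is correct and follows exactly the route the paper takes: the paper's own proof is the one-line observation that the corollary is an immediate consequence of Lemma~\ref{lem: defition equivalent 1-summable}, and your argument is simply a careful unpacking of that reduction (translating the uniform geometric bound $L\cdot B^{\abs{\mathbf{j}}}$ on the Borel transforms into the convergence of the norm series, and back). Nothing to add.
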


\begin{proof}
This is an immediate consequence of Lemma \ref{lem: defition equivalent 1-summable}.
\end{proof}
\begin{rem}
We can replace the norm $\norm{\cdot}_{\lambda,\theta,\delta,\rho}$
by $\norm{\cdot}_{\lambda,\theta,\delta,\rho}^{\tx{bis}}$in the second
point of the above corollary.
\end{rem}

Notice that there exists formal power series which are weakly 1-summable
in some direction but which are not Gevrey-1: for instance, the series
\[
{\displaystyle \hat{f}:=\sum_{j}\hat{F}_{j}\left(x\right)y^{j}}\,\,,
\]
where for all $j\in\ww N$, $\hat{F}_{j}\left(x\right)$ is such that
${\displaystyle \cal B\left(\hat{F}_{j}\right)\left(t\right)=\frac{1}{t+\frac{1}{j}}}$,
is weakly 1-summable in the direction $0\in\ww R$, but is \emph{not}
Gevrey-1, since $\cal B\left(\hat{F}_{j}\right)$ has a pole in every
${\displaystyle -\frac{1}{j}\underset{j\rightarrow+\infty}{\longrightarrow}0}$.

\subsection{Some useful tools on 1-summability of solutions of singular linear
differential equations}

~

For future reuse, we give here two results on the 1-summability of
formal solutions to some singular linear differential equations with
1-summable right hand side, which generalize (and precise) a similar
result proved in \cite{MR82} (Proposition p. 126). The authors use
a norm $\norm{\cdot}_{\beta}$, but we will need to use a norm $\norm{\cdot}_{\beta}^{\tx{bis}}$
later (in the proof of Proposition \ref{prop: preparation}).
\begin{prop}
\label{prop: solution borel sommable precise}Let $\hat{b}$ be a
formal power series 1-summable in the direction $\theta$; consider
a domain $\Delta_{\theta,\delta,\rho}$ as in Definition~\ref{def: 1_summability}.
Let use denote by $b_{\theta}$ its 1-sum in this direction $\theta$.
Let us also fix $\alpha,k\in\ww C$.

\begin{enumerate}
\item \label{enu:Assume--and}Assume $\norm{\hat{b}}_{\beta}^{\tx{bis}}<+\infty$
and that $k\in\ww C\backslash\acc 0$ is such that $d_{k}:=\mbox{dist}\left(-k,\Delta_{\theta,\delta,\rho}\right)>0$
and 
\[
\beta d_{k}>C\abs{\alpha k}\qquad,
\]
where $C>0$ is a constant large enough, independent from parameters
$k,\beta,\theta,\delta,\rho$ (for instance, one can take $C=\frac{2\exp\left(2\right)}{5}+5$).
Then, the irregular singular differential equation 
\begin{equation}
x^{2}\ddd ax\left(x\right)+\left(1+\alpha x\right)ka\left(x\right)=\hat{b}\left(x\right)\label{eq: equation borel sommable}
\end{equation}
has a unique formal solution $\hat{a}$ such that $\hat{a}\left(0\right)=\frac{1}{k}\hat{b}\left(0\right)$.
Moreover, $\hat{a}$ is 1-summable in the direction $\theta$, and
\begin{equation}
\norm{\hat{a}}_{\beta}\leq\frac{\beta}{\beta d_{k}-C\abs{\alpha k}}\norm{\hat{b}}_{\beta}\qquad.\label{eq: inegalite norme borel}
\end{equation}
Finally, the 1-sum $a_{\theta}$ of $\hat{a}$ in the direction $\theta$
is the only solution to
\[
x^{2}\ddd{a_{\theta}}x\left(x\right)+\left(1+\alpha x\right)ka_{\theta}\left(x\right)=b_{\theta}\left(x\right)
\]
which is bounded in some $S_{\theta,\pi}\in\germsect{\theta}{\pi}$.
\item \label{enu:Assume--and-1}Assume $\norm{\hat{b}}_{\beta}<+\infty$
and that $\Re\left(k\right)>0$. Then the regular singular differential
equation 
\begin{equation}
x\ddd ax\left(x\right)+ka\left(x\right)=\hat{b}\left(x\right)\label{eq: regular diff equation}
\end{equation}
admits a unique formal solution $\hat{a}$ which is also 1-summable
in the direction $\theta$, of 1-sum $a_{\theta}$. Moreover, $a_{\theta}$
is the only germ of solution to the differential equation 
\[
x\ddd ax\left(x\right)+ka\left(x\right)=b_{\theta}\left(x\right)
\]
which is bounded in some $S_{\theta,\pi}\in\germsect{\theta}{\pi}$. 
\end{enumerate}
\end{prop}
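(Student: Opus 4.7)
My plan for part~(\ref{enu:Assume--and}) is to combine the Borel--Laplace framework of subsection~\ref{subsec: Strong-Gevrey-1-power} with a Banach fixed-point argument in the space $\mathfrak{B}_{\beta}^{\tx{bis}}$ of Remark~\ref{rem: norme bis borel}. First I would match powers of $x$ in~(\ref{eq: equation borel sommable}) to obtain $k\hat{a}(0)=\hat{b}(0)$ and the triangular recurrence $k a_{n}=b_{n}-(n-1+\alpha k)a_{n-1}$ for $n\geq 1$, which determines $\hat{a}$ uniquely as a formal power series. To establish its 1-summability and the norm bound~(\ref{eq: inegalite norme borel}), I would apply the second Borel transform $\widetilde{\cal B}$, using the identities $\widetilde{\cal B}(x^{2}\hat{a}')(t)=t\,A(t)$ and $\widetilde{\cal B}(x\hat{a})(t)=\hat{a}(0)+\int_{0}^{t}A(s)\tx ds$, where $A:=\widetilde{\cal B}(\hat{a})$ and $B:=\widetilde{\cal B}(\hat{b})$. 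The ODE then transforms into the Volterra-type integral equation
\[
A(t)\;=\;\frac{B(t)-\alpha\hat{b}(0)}{t+k}\;-\;\frac{\alpha k}{t+k}\int_{0}^{t}A(s)\tx ds\qquad,
\]
which makes sense on $\Delta_{\theta,\delta,\rho}$ because $d_{k}>0$ keeps $t+k$ bounded away from zero.

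Next I would view the right-hand side as an affine operator $\cal T:\mathfrak{B}_{\beta}^{\tx{bis}}\to\mathfrak{B}_{\beta}^{\tx{bis}}$ and show that $\cal T$ is a contraction of ratio $\kappa:=C\abs{\alpha k}/(\beta d_{k})<1$, which is precisely the hypothesis $\beta d_{k}>C\abs{\alpha k}$. The Banach fixed-point theorem would then produce a unique $A$, whose geometric-series bound gives exactly~(\ref{eq: inegalite norme borel}); the 1-sum would be $a_{\theta}:=\widetilde{\cal L}_{\theta}(A)+\hat{a}(0)$, which by Lemma~\ref{lem:diff_alg_and_summability} solves the original ODE on some $S_{\theta,\pi}\in\germsect{\theta}{\pi}$. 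I expect the hardest step to be pinning down the explicit admissible constant $C=\tfrac{2e^{2}}{5}+5$: the crux is the estimate
\[
\left\Vert\frac{1}{t+k}\int_{0}^{t}A(s)\tx ds\right\Vert_{\beta}^{\tx{bis}} \;\leq\; \frac{C}{\beta d_{k}}\,\norm A_{\beta}^{\tx{bis}}\qquad,
\]
which, after straightening the integration contour to the segment $[0,t]$, reduces to a uniform bound on the elementary integral $(1+\beta^{2}\abs t^{2})e^{-\beta\abs t}\int_{0}^{\abs t}(1+\beta^{2}s^{2})^{-1}e^{\beta s}\tx ds$ in $t\in\Delta_{\theta,\delta,\rho}$; carefully tracking residual constants is the bookkeeping that produces the announced value of $C$.

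For the uniqueness of a bounded sectorial solution in part~(\ref{enu:Assume--and}), two such solutions would differ by a bounded solution of the homogeneous equation $x^{2}f'+(1+\alpha x)kf=0$, whose general solution is $f(x)=C\,x^{-\alpha k}\exp(k/x)$; since the opening of $S_{\theta,\pi}$ exceeds $\pi$, there exists a direction along which $\Re(k/x)\to+\infty$ as $x\to 0$, forcing $C=0$. Part~(\ref{enu:Assume--and-1}) will then follow along the same lines, but much more easily: the recurrence $(n+k)a_{n}=b_{n}$ is solvable term-by-term since $\Re(k)>0$, and applying the \emph{first} Borel transform $\cal B$ to~(\ref{eq: regular diff equation}), together with $\cal B(x\hat{f}')(t)=t\,\cal B(\hat{f})'(t)$, transforms it into the \emph{regular} first-order ODE $tA'(t)+kA(t)=B(t)$, explicitly solved by $A(t)=t^{-k}\int_{0}^{t}s^{k-1}B(s)\tx ds$. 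Since $\Re(k)>0$, this solution is analytic on $\Delta_{\theta,\delta,\rho}$ and inherits the exponential growth of $B$, yielding the 1-summability; uniqueness of the bounded sectorial solution follows exactly as above, the homogeneous solution $x^{-k}$ being unbounded near the origin.
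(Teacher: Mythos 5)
Your proposal is correct and, at its core, travels the same road as the paper: both arguments hinge on the second Borel transform $\widetilde{\cal B}$, the resulting Volterra equation $\left(t+k\right)A\left(t\right)+\alpha k\int_{0}^{t}A\left(s\right)\tx ds=\widetilde{\cal B}\left(\hat{b}\right)\left(t\right)+\mathrm{const}$, and the very same elementary bound $\int_{0}^{t}\frac{e^{\beta u}}{1+\beta^{2}u^{2}}\tx du\leq\frac{C}{\beta}\frac{e^{\beta t}}{1+\beta^{2}t^{2}}$ with $C=\frac{2e^{2}}{5}+5$, which is exactly where the admissible constant comes from. Where you genuinely diverge is the mechanism producing existence and analytic continuation of $A=\widetilde{\cal B}\left(\hat{a}\right)$ on all of $\Delta_{\theta,\delta,\rho}$: the paper differentiates the Volterra equation to obtain a first-order linear ODE whose only singularity $t=-k$ lies outside $\Delta_{\theta,\delta,\rho}$, writes the solution explicitly via the integrating factor $\left(t+k\right)^{\alpha k}$, and only then extracts the norm inequality by an a priori estimate on the truncated norms $\norm{\cdot}_{\beta,R}^{\tx{bis}}$, absorbing the term $\frac{C\abs{\alpha k}}{\beta}\norm{\hat{a}}_{\beta,R}^{\tx{bis}}$; you instead run a Banach fixed point directly on the weighted space, which packages existence, continuation and the bound $\norm A\leq\norm{\cal T\left(0\right)}/\left(1-\kappa\right)$ in one stroke. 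Both are valid; your route avoids the explicit solution formula but obliges you to note that the relevant space (functions analytic on the star-shaped domain $\Delta_{\theta,\delta,\rho}$ with finite $\norm{\cdot}_{\beta}^{\tx{bis}}$-norm) is complete and that the segment $\left[0,t\right]$ stays inside the domain. Two bookkeeping remarks: first, your Borel-transformed equation correctly carries the constant $-\alpha\hat{b}\left(0\right)=-\alpha k\hat{a}\left(0\right)$, which the paper's displayed equation omits, so your geometric series yields $\frac{\beta}{\beta d_{k}-C\abs{\alpha k}}$ times the norm of $\widetilde{\cal B}\left(\hat{b}\right)-\alpha\hat{b}\left(0\right)$ rather than literally (\ref{eq: inegalite norme borel}) unless $\hat{b}\left(0\right)=0$ --- a discrepancy already latent in the paper and harmless in its applications; second, your uniqueness argument for the bounded sectorial solution, via the homogeneous solution $Cx^{-\alpha k}e^{k/x}$ and the observation that a sector of opening greater than $\pi$ must contain a direction along which $\Re\left(k/x\right)\rightarrow+\infty$, is correct and is in fact more explicit than what the paper records. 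Your treatment of part (\ref{enu:Assume--and-1}) coincides with the paper's.
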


\begin{proof}
~

(\ref{enu:Assume--and}) Since $\hat{b}$ is 1-summable in the direction
$\theta$, we can choose $\rho>0$ and $\delta>0$ such that $\widetilde{\cal B}\left(\hat{b}\right)$
can be analytically continued to (and is bounded in) any domain of
the form $\Delta_{\theta,\delta,\rho}\cap\overline{\mbox{D}}\left(0,R\right)$,
$R>0$.\\
Let us apply the Borel transform $\widetilde{\cal B}$ to equation
(\ref{eq: equation borel sommable}): we obtain 
\begin{equation}
\left(t+k\right)\widetilde{\cal B}\left(\hat{a}\right)\left(t\right)+\alpha k\int_{0}^{t}\widetilde{\cal B}\left(\hat{a}\right)\left(s\right)\mbox{d}s=\widetilde{\cal B}\left(\hat{b}\right)\left(t\right)\qquad.\label{eq: borel equa diff}
\end{equation}
The derivative with respect to $t$ of this equation shows that $\widetilde{\cal B}\left(\hat{a}\right)$
is solution of a linear differential equation, with only one (regular)
singularity at $t=-k$ (but this singularity is not in $\Delta_{\theta,\delta,\rho}$
by assumption):
\[
\left(t+k\right)\ddd{\widetilde{\cal B}\left(\hat{a}\right)}t\left(t\right)+\left(1+\alpha k\right)\widetilde{\cal B}\left(\hat{a}\right)\left(t\right)=\ddd{\widetilde{\cal B}\left(\hat{b}\right)}t\left(t\right)\qquad.
\]
 Since $\widetilde{\cal B}\left(\hat{b}\right)$ can be analytically
continued to $\Delta_{\theta,\delta,\rho}$, the same goes for ${\displaystyle \ddd{\widetilde{\cal B}\left(\hat{b}\right)}t\left(t\right)}$
and then for $\widetilde{\cal B}\left(\hat{a}\right)$. Since ${\displaystyle \widetilde{\cal B}\left(\hat{a}\right)\left(0\right)=\frac{\widetilde{\cal B}\left(\hat{b}\right)\left(0\right)}{k}=\frac{\hat{b}'\left(0\right)}{k}}$,
we can write: {\small{}
\begin{eqnarray*}
\widetilde{\cal B}\left(\hat{a}\right)\left(t\right) & = & \left(t+k\right)^{-1-\alpha k}\left(\hat{b}'\left(0\right).k^{\alpha k}+\int_{0}^{t}\ddd{\widetilde{\cal B}\left(\hat{b}\right)}s\left(s\right).\left(s+k\right)^{\alpha k}\mbox{d}s\right)\\
 & = & \left(t+k\right)^{-1-\alpha k}\Bigg(\hat{b}'\left(0\right).k^{\alpha k}+\widetilde{\cal B}\left(\hat{b}\right)\left(t\right).\left(t+k\right)^{\alpha k}-\widetilde{\cal B}\left(\hat{b}\right)\left(0\right).k^{\alpha k}\\
 &  & \,\,\,\,\,\,-\alpha k\int_{0}^{t}\widetilde{\cal B}\left(\hat{b}\right)\left(s\right).\left(s+k\right)^{\alpha k-1}\mbox{d}s\Bigg)\\
 & = & \left(t+k\right)^{-1-\alpha k}\Bigg(\widetilde{\cal B}\left(\hat{b}\right)\left(t\right).\left(t+k\right)^{\alpha k}\\
 &  & \,\,\,\,\,\,-\alpha k\int_{0}^{t}\widetilde{\cal B}\left(\hat{b}\right)\left(s\right).\left(s+k\right)^{\alpha k-1}\mbox{d}s\Bigg)\\
\widetilde{\cal B}\left(\hat{a}\right) & = & \frac{\widetilde{\cal B}\left(\hat{b}\right)\left(t\right)}{\left(t+k\right)}-\alpha k.\left(t+k\right)^{-1-\alpha k}\int_{0}^{t}\widetilde{\cal B}\left(\hat{b}\right)\left(s\right).\left(s+k\right)^{\alpha k-1}\mbox{d}s\,\,.
\end{eqnarray*}
}The fact that $\widetilde{\cal B}\left(\hat{b}\right)$ is bounded
in any domain of the form $\Delta_{\theta,\delta,\rho}\cap\overline{\mbox{D}}\left(0,R\right)$,
$R>0$, implies that the same goes for $\widetilde{\cal B}\left(\hat{a}\right)$.
Let us prove inequality (\ref{eq: inegalite norme borel}). For all
$R>0$, for all Gevrey-1 series $\hat{f}\in\form{x,\mathbf{y}}$ such
that $\cal B\left(\hat{f}\right)$ can be analytically continued to
$\Delta_{\theta,\delta,r}$, we set: 
\[
\norm{\hat{f}}_{\beta,R}^{\tx{bis}}:=\underset{t\in\Delta_{\theta,\delta,\rho}\cap\mbox{\ensuremath{\overline{\mbox{D}}\left(0,R\right)}}}{\sup}\acc{\abs{\widetilde{\cal B}\left(\hat{f}\right)\left(t\right)\left(1+\beta^{2}\abs t^{2}\right)\exp\left(-\beta\abs t\right)}}\in\ww R\cup\acc{\infty}\,\,.
\]
Notice that ${\displaystyle \norm{\hat{f}}_{\beta}^{\tx{bis}}=\underset{R>0}{\sup}\acc{\norm{\hat{f}}_{\beta,R}^{\tx{bis}}}}$
for all $\hat{f}$ as above, and that for all $R>0$, $\norm{\hat{a}}_{\beta,R}^{\tx{bis}}<+\infty$,
since $\widetilde{\cal B}\left(\hat{a}\right)$ is bounded in any
domain of the form $\Delta_{\theta,\delta,\rho}\cap\overline{\mbox{D}}\left(0,R\right)$.
Fix some $R>0$, and let $t\in\Delta_{\theta,\delta,\rho}\cap\mbox{\ensuremath{\overline{\mbox{D}}\left(0,R\right)}}$.
From equation (\ref{eq: borel equa diff}) we obtain 
\begin{eqnarray*}
\widetilde{\cal B}\left(\hat{a}\right)\left(t\right) & = & \frac{1}{\left(t+k\right)}\left(\widetilde{{\cal B}}\left(\hat{b}\right)\left(t\right)-\alpha k\int_{0}^{t}\widetilde{{\cal B}}\left(\hat{a}\right)\left(s\right)\mbox{d}s\right)
\end{eqnarray*}
an then
\begin{eqnarray*}
\abs{\widetilde{\cal B}\left(\hat{a}\right)\left(t\right)} & \leq & \frac{1}{\abs{t+k}}\cro{\norm{\hat{b}}_{\beta}^{\tx{bis}}\frac{\exp\left(\beta\abs t\right)}{1+\beta^{2}\abs t^{2}}+\abs{\alpha k}.\norm{\hat{a}}_{\beta,R}^{\tx{bis}}\int_{0}^{\abs t}\frac{\exp\left(\beta u\right)}{1+\beta^{2}u^{2}}{{\dd[u]}}}\\
 & \leq & \frac{1}{d_{k}}\frac{\exp\left(\beta\abs t\right)}{1+\beta^{2}\abs t^{2}}\cro{\norm{\hat{b}}_{\beta}^{\tx{bis}}+\abs{\alpha k}\norm{\hat{a}}_{\beta,R}^{\tx{bis}}\frac{C}{\beta}}\,\,,
\end{eqnarray*}
with $C=\frac{2\exp\left(2\right)}{5}+5$. Here we use the following:

\begin{fact}
There exists a constant $C>0$ (\emph{e.g.} $C=\frac{2\exp\left(2\right)}{5}+5$),
such that for all $\beta>0$, we have:
\[
\forall t\geq0,\int_{0}^{t}\frac{\exp\left(\beta u\right)}{1+\beta^{2}u^{2}}\tx du\leq\frac{C}{\beta}\frac{\exp\left(\beta t\right)}{1+\beta^{2}t^{2}}\,\,.
\]
\end{fact}

\begin{proof}
Let $F:u\mapsto\frac{\exp\left(\beta u\right)}{1+\beta^{2}u^{2}}$,
for $u\geq0$. For $t\in\cro{0,\frac{2}{\beta}}$, we have:
\[
\int_{0}^{t}F\left(u\right)\tx du\leq\frac{\exp\left(2\right)}{5}.\frac{2}{\beta}\,\,,
\]
since $F$ is an increasing function over $\wr_{+}$: 
\[
F'\left(u\right)=\beta F\left(u\right).\frac{\left(1-\beta u\right)^{2}}{1+\beta^{2}u^{2}}\geq0\,\,.
\]
Moreover for all $t\geq0$, we have $F\left(t\right)\geq F\left(0\right)=1$.
Hence for all $t\in\cro{0,\frac{2}{\beta}}$:
\[
\int_{0}^{t}F\left(u\right)\tx du\leq\frac{\exp\left(2\right)}{5}.\frac{2}{\beta}.F\left(t\right)\,\,.
\]
For $t\geq\frac{2}{\beta}$, the following inequality holds:
\begin{eqnarray*}
\int_{0}^{t}F\left(u\right)\tx du & \leq & \frac{\exp\left(2\right)}{5}.\frac{2}{\beta}F\left(t\right)+\int_{\frac{2}{\beta}}^{t}F\left(u\right)\tx du\,\,.
\end{eqnarray*}
In addition, if $u\geq\frac{\beta}{2}$, then: 
\begin{eqnarray*}
\frac{\left(1-\beta u\right)^{2}}{1+\beta^{2}u^{2}} & \geq & \frac{1}{5}\,\,,
\end{eqnarray*}
Therefore, for all $u\geq\frac{\beta}{2}$:
\[
F'\left(u\right)=\beta F\left(u\right).\frac{\left(1-\beta u\right)^{2}}{1+\beta^{2}u^{2}}\geq\frac{\beta}{5}F\left(u\right)\,\,.
\]
Hence:
\begin{eqnarray*}
\int_{0}^{t}F\left(u\right)\tx du & \leq & \int_{0}^{\frac{2}{\beta}}F\left(u\right)\tx du+\int_{\frac{2}{\beta}}^{t}F\left(u\right)\tx du\,\,.\\
 & \leq & \frac{\exp\left(2\right)}{5}.\frac{2}{\beta}F\left(t\right)+\frac{5}{\beta}\int_{\frac{2}{\beta}}^{t}F'\left(u\right)\tx du\\
 & \leq & \frac{F\left(t\right)}{5\beta}.\left(2\exp\left(2\right)+25\right)\,\,.
\end{eqnarray*}
\end{proof}
Let us go back to the proof of the lemma. Finally, we have: 
\begin{eqnarray*}
\norm{\hat{a}}_{\beta,R}^{\tx{bis}} & \leq & \frac{1}{d_{k}}\cro{\norm{\hat{b}}_{\beta}^{\tx{bis}}+\frac{C.\abs{\alpha k}.\norm{\hat{a}}_{\beta,R}^{\tx{bis}}}{\beta}}\,\,,
\end{eqnarray*}
and consequently:
\[
\norm{\hat{a}}_{\beta,R}^{\tx{bis}}\leq\frac{\beta}{\beta d_{k}-C\abs{\alpha k}}\norm{\hat{b}}_{\beta}^{\tx{bis}}\,\,.
\]
As a conclusion:
\[
\norm{\hat{a}}_{\beta}^{\tx{bis}}\leq\frac{\beta}{\beta d_{k}-C\abs{\alpha k}}\norm{\hat{b}}_{\beta}^{\tx{bis}}\,\,,
\]
and $a_{\theta}$ is the 1-sum of $\hat{a}$ in the direction $\theta$.\bigskip{}

(\ref{enu:Assume--and-1}) Let us write ${\displaystyle \hat{b}\left(x\right)=\sum_{j\geq0}b_{j}x^{j}}$.
A direct computation shows that the only formal solution to equation
$\left(\mbox{\ref{eq: regular diff equation}}\right)$ is ${\displaystyle \hat{a}\left(x\right)=\sum_{j\geq0}a_{j}x^{j}}$
where for all $j\in\ww N$, ${\displaystyle a_{j}=\frac{b_{j}}{j+k}}$:
it exists since $k\notin\ww Z_{\leq0}$, and then $k+j\neq0$. In
particular, we see immediately that $\hat{a}$ is Gevrey-1, because
the same goes for $\hat{b}$. In other words, the Borel transform
$\cal B\left(\hat{a}\right)$ is analytic in some disc $D\left(0,\rho\right)$,
$\rho>0$. In $D\left(0,\rho\right)$, $\cal B\left(\hat{a}\right)$
satisfies:
\begin{eqnarray*}
t\ddd{\cal B\left(\hat{a}\right)}t\left(t\right)+k\cal B\left(\hat{a}\right)\left(t\right) & = & \cal B\left(\hat{b}\right)\left(t\right)\,\,.
\end{eqnarray*}
The general solution near the origin to this equation is 
\[
y\left(t\right)=\frac{c}{t^{k}}+\frac{1}{t^{k}}\int_{0}^{t}\cal B\left(\hat{b}\right)\left(s\right)s^{k-1}\mbox{d}s\,\,,\,c\in\ww C.
\]
In particular, the only solution analytic in $D\left(0,\rho\right)$
is the one with $c=0$, \emph{i.e.}
\[
\cal B\left(\hat{a}\right)\left(t\right)=\frac{1}{t^{k}}\int_{0}^{t}\cal B\left(\hat{b}\right)\left(s\right)s^{k-1}\mbox{d}s\,\,.
\]
 Since $\cal B\left(\hat{b}\right)$ can be analytically continued
to an infinite domain that have denoted by $\Delta_{\theta,\delta,\rho}$
bisected by $\ww R_{+}e^{i\theta}$ (because $\hat{b}$ is 1-summable
in the direction $\theta$), $\cal B\left(\hat{a}\right)$ can also
be analytically continued to the same domain. Moreover, there exists
$\beta>0$ such that $\norm{\hat{b}}_{\beta}<+\infty$, \emph{i.e.
$\forall t\in\Delta_{\theta,\delta,\rho}$}:\emph{
\[
\abs{\cal B\left(\hat{b}\right)\left(t\right)}\leq\norm{\hat{b}}_{\beta}\exp\left(\beta\abs t\right)\,\,.
\]
}Thus, for all $t\in\Delta_{\theta,\delta,\rho}$, we have:
\begin{eqnarray*}
\abs{\exp\left(-\beta\abs t\right)\cal B\left(\hat{a}\right)\left(t\right)} & \leq & \frac{1}{\abs{t^{k}}}\int_{0}^{\abs t}\abs{\exp\left(-\beta\abs t\right)}\abs{\cal B\left(\hat{b}\right)\left(s\mbox{e}^{i\arg\left(t\right)}\right)}\abs{s^{k-1}\mbox{e}^{i\left(k-1\right)\arg\left(t\right)}}\mbox{d}s\\
 & \leq & \frac{1}{\abs t^{\Re\left(k\right)}}\int_{0}^{\abs t}\abs{\exp\left(-\beta s\right)}\abs{\cal B\left(\hat{b}\right)\left(s\mbox{e}^{i\arg\left(t\right)}\right)}s^{\Re\left(k\right)-1}\mbox{d}s\\
 & \leq & \frac{\norm{\hat{b}}_{\beta}}{\abs t^{\Re\left(k\right)}}\int_{0}^{\abs t}s^{\Re\left(k\right)-1}\mbox{d}s\\
 & = & \frac{\norm{\hat{b}}_{\beta}}{\Re\left(k\right)}\,\,.
\end{eqnarray*}
Thus, $\hat{a}$ is 1-summable in the direction $\theta$.
\end{proof}

\section{1-summable preparation up to any order $N$}

The aim of this section is to prove that we can always formally conjugate
a non-degenerate doubly-resonant saddle-node, which is also div-integrable,
to its normal form up to a remainder of order $\tx O\left(x^{N}\right)$
for every $N\in\ww N_{>0}$. Moreover, we prove that this conjugacy
is in fact 1-summable in every direction $\theta\neq\arg\left(\pm\lambda\right)$,
hence analytic over sectorial domains of opening at least $\pi$.
\begin{prop}
\label{prop: forme pr=0000E9par=0000E9e ordre N}Let $Y\in\sndiag$
be a non-degenerate diagonal doubly-resonant saddle-node which is
div-integrable, with $\tx D_{0}Y=\tx{diag}\left(0,-\lambda,\lambda\right)$,
$\lambda\neq0$. Then, for all $N\in\ww N_{>0}$, there exists a formal
fibered diffeomorphism $\Psi^{\left(N\right)}\in\fdiffformid$ tangent
to the identity and 1-summable in every direction $\theta\neq\arg\left(\pm\lambda\right)$
such that:
\begin{eqnarray*}
\left(\Psi^{\left(N\right)}\right)_{*}\left(Y\right) & = & x^{2}\pp x+\left(\left(-\left(\lambda+d^{\left(N\right)}\left(y_{1}y_{2}\right)\right)+a_{1}x\right)+x^{N}F_{1}^{\left(N\right)}\left(x,\mathbf{y}\right)\right)y_{1}\pp{y_{1}}\\
 &  & +\left(\left(\lambda+d^{\left(N\right)}\left(y_{1}y_{2}\right)+a_{2}x\right)+x^{N}F_{2}^{\left(N\right)}\left(x,\mathbf{y}\right)\right)y_{2}\pp{y_{2}}\\
 & =: & Y^{\left(N\right)}\,\,\,,
\end{eqnarray*}
where $\lambda\in\ww C^{*}$, ${\displaystyle \left(a_{1}+a_{2}\right)=\tx{res}\left(Y\right)\in\ww C\backslash\ww Q_{\leq0}}$,
$d^{\left(N\right)}\left(v\right)\in v\germ v$ is an analytic germ
vanishing at the origin, and $F_{1}^{\left(N\right)},F_{2}^{\left(N\right)}\in\form{x,\mathbf{y}}$
are 1-summable in the direction $\theta$, and of order at least one
with respect to $\mathbf{y}$. Moreover, one can choose $d^{\left(2\right)}=\dots=d^{\left(N\right)}$
for all $N\geq2$. 
\end{prop}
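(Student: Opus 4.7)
The plan is an induction on $N$ in which the $\acc{x=0}$-normalization is done first by standard analytic means and the successive $x$-orders are killed via a cohomological equation whose 1-summability is read off from Proposition~\ref{prop: solution borel sommable precise}. For the initialization, the div-integrability hypothesis together with Brjuno's theorem (invoked just after Definition~\ref{def: asympt hamil}) provides an analytic orbital linearization of $Y_{|\acc{x=0}}$, and a further analytic normal-form reduction for the multiplicative unit (standard for $(-1,1)$-resonant saddles) brings that restriction into the prescribed form $(-\lambda-d(v))y_1\pp{y_1}+(\lambda+d(v))y_2\pp{y_2}$ with $d\in v\germ v$ analytic. Since these two changes of coordinates depend only on $\mathbf{y}$, they are trivially 1-summable in every direction of $x$. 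A further conjugation by a map of the form $\tx{Id}+x(0,k_1(\mathbf{y}),k_2(\mathbf{y}))$ then kills the non-resonant part of the $x^1$-order and identifies the resonant $xy_i$-coefficients $a_1,a_2$ (with $a_1+a_2=\res{Y}\notin\ww Q_{\leq 0}$), producing $Y^{(2)}$. The values of $d$ and of $(a_1,a_2)$ obtained at this stage are then fixed throughout: we set $d^{(N)}:=d$ for every $N\geq 2$.

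For the inductive step $N\to N+1$ (with $N\geq 2$) we look for $\Phi^{(N)}=\tx{Id}+(0,x^N k_1(x,\mathbf{y}),x^N k_2(x,\mathbf{y}))$, with each $k_i$ of $\mathbf{y}$-valuation at least $1$ and 1-summable in $x$, such that $(\Phi^{(N)})_*Y^{(N)}=Y^{(N+1)}$. Expanding the conjugation equation in powers of $x$ and then monomials $\mathbf{y}^{\mathbf{j}}$, the $x^{N}$-part reduces, for each $(i,\mathbf{j})$, to a scalar singular linear differential equation
\[
x^2\ddd{k_{i,\mathbf{j}}}x(x)+\Lambda_{i,\mathbf{j}}(x)\,k_{i,\mathbf{j}}(x)=g_{i,\mathbf{j}}(x)\,,
\]
where $\Lambda_{i,\mathbf{j}}(0)=\lambda(j_1-j_2-\epsilon_i)$ with $\epsilon_1=1$, $\epsilon_2=-1$, the derivative $\Lambda_{i,\mathbf{j}}'(0)$ is an explicit affine function of $\mathbf{j}$ depending on $a_1,a_2$, and the right-hand side $g_{i,\mathbf{j}}\in\form x$ is assembled from the corresponding Taylor coefficient of $F_i^{(N)}$ plus the lower-triangular contributions from $k_{i,\mathbf{j}'}$, $|\mathbf{j}'|<|\mathbf{j}|$, coupled in through the resonant term $d(v)$. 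When $\mathbf{j}$ is non-resonant, $\Lambda_{i,\mathbf{j}}(0)\in\lambda\ww Z^{*}$ and part~\eqref{enu:Assume--and} of Proposition~\ref{prop: solution borel sommable precise} produces a 1-summable solution in every direction $\theta\neq\arg(\pm\lambda)$, together with the explicit norm bound $(\ref{eq: inegalite norme borel})$. When $\mathbf{j}$ is resonant, the factor $x$ appearing in $\Lambda_{i,\mathbf{j}}$ allows us to divide through and apply instead part~\eqref{enu:Assume--and-1} of the same proposition, whose spectral hypothesis $\Re(k)>0$ is provided by strict non-degeneracy $\Re(\res{Y})>0$ and the integer sign structure of the resonant spectrum.

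Assembling the component-wise solutions into a jointly 1-summable $k_i(x,\mathbf{y})$ is the heart of the argument: by Corollary~\ref{cor: faible et forte sommabilite} it suffices to show that $\sum_{\mathbf{j}}\norm{k_{i,\mathbf{j}}}_{\lambda,\theta,\delta,\rho}^{\tx{bis}}\mathbf{y}^{\mathbf{j}}$ converges for $\mathbf{y}$ near $\mathbf{0}$. The contraction factor appearing in $(\ref{eq: inegalite norme borel})$ is bounded uniformly in $\mathbf{j}$ provided $\beta$ is chosen large enough, since both $d_{\Lambda_{i,\mathbf{j}}(0)}$ and $|\alpha_{i,\mathbf{j}}\Lambda_{i,\mathbf{j}}(0)|$ grow linearly in $|\mathbf{j}|$. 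The main technical obstacle is precisely the coupling injected by $d(v)$: it turns the equations for $k_{i,\mathbf{j}}$ into an infinite lower-triangular system mixing $\mathbf{j}$ with indices $\mathbf{j}+(1,1),\mathbf{j}+(2,2),\dots$, which must be solved recursively on $\min(j_1,j_2)$; the sub-multiplicative norm estimate of Proposition~\ref{prop: norme d'algebre} is used to propagate the uniform contraction through this recursion. Finally, setting $\Psi^{(N)}:=\Phi^{(N-1)}\circ\cdots\circ\Phi^{(1)}$ and invoking Proposition~\ref{prop: compositon summable} for the preservation of 1-summability under composition yields the sought fibered diffeomorphism.
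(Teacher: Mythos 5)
You have two genuine gaps, one at the initialization and one in the inductive step. First, your induction never gets off the ground: to reach $Y^{\left(2\right)}$ the $\partial_{y_{i}}$-component must be divisible by $y_{i}$ to \emph{all} orders in $x$ with $F_{i}^{\left(2\right)}=\tx O\left(\norm{\mathbf{y}}\right)$, which means the center manifold $\left(y_{1},y_{2}\right)=\left(\hat{y}_{1}\left(x\right),\hat{y}_{2}\left(x\right)\right)$, the diagonalized $\mathbf{y}$-linear part, and the two invariant hypersurfaces $\mathscr{H}_{1},\mathscr{H}_{2}$ must already be straightened. These objects are given by generically divergent (merely 1-summable) series in $x$, so no map that is polynomial of degree $1$ in $x$ --- your $\tx{Id}+x\left(0,k_{1}\left(\mathbf{y}\right),k_{2}\left(\mathbf{y}\right)\right)$ --- can produce them; and your inductive maps $\tx{Id}+\left(0,x^{N}k_{1},x^{N}k_{2}\right)$ with $k_{i}$ of $\mathbf{y}$-valuation at least $1$ can never remove the valuation-$0$ (center-manifold) terms either. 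This is exactly where the paper spends its effort: the center manifold and the reduction of the affine/linear part come from the Ramis--Sibuya theorem on irregular systems (Proposition \ref{prop: diag prep}), and the hypersurfaces from a Mattei--Moussu-type majorant argument in the Banach algebra $\left(\mathfrak{B}_{\beta}^{\tx{bis}},\norm{\cdot}_{\beta}^{\tx{bis}}\right)$ (Proposition \ref{prop: preparation}). That last step does use Proposition \ref{prop: solution borel sommable precise} monomial-by-monomial in $\mathbf{y}$ together with Corollary \ref{cor: faible et forte sommabilite} and a uniform contraction constant, much as you describe --- but as a one-shot straightening, not inside an order-in-$x$ induction.

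Second, the inductive step fails on the resonant monomials. The homological operator sends $x^{N}\cdot v^{k}y_{i}$ (with $v=y_{1}y_{2}$) to $\left(N+k\left(a_{1}+a_{2}\right)\right)x^{N+1}v^{k}y_{i}+\tx O\left(x^{N+2}\right)$, so a conjugation of the form $\tx{Id}+\left(0,x^{N}k_{1},x^{N}k_{2}\right)$ can never kill a resonant term sitting at order exactly $x^{N}$ --- yet $x^{N}F_{i}^{\left(N\right)}y_{i}$ generically contains such terms. The paper's fix in Proposition \ref{prop: ordre N avec eq homologique} is to take the conjugating flow-time of the form $x^{N-1}\tau_{0}\left(y_{1}y_{2}\right)+x^{N}\tau_{1}\left(\mathbf{y}\right)$, with the resonant part one order \emph{lower} in $x$; $\tau_{0}$ is then determined by a regular \emph{analytic} ODE in $v$ whose solvability is precisely $a_{1}+a_{2}\notin\ww Q_{\leq0}$, and no summation in $x$ is needed at that stage (1-summability of the remainders is merely propagated through the composition by Proposition \ref{prop: compositon summable}). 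Relatedly, your resonant case appeals to $\Re\left(\res Y\right)>0$ to invoke the second item of Proposition \ref{prop: solution borel sommable precise}, but the present proposition is stated --- and is explicitly noted by the paper to hold --- under the weaker hypothesis $\res Y\notin\ww Q_{\leq0}$ only, so strict non-degeneracy is not available to you here.
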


\begin{defn}
A vector field $Y^{\left(N\right)}$ as is the proposition above is
said to be \emph{normalized up to order $N$}.
\end{defn}

\begin{rem}
~

\begin{enumerate}
\item Observe that this result does not require the more restrictive assumption
of being ``strictly non-degenerate'' (\emph{i.e.} $\Re\left(a_{1}+a_{2}\right)>0$)
.
\item As a consequence of Corollary \ref{cor: summability push-forward},
the 1-sum $\Psi_{\theta}^{\left(N\right)}$ of $\Psi^{\left(N\right)}$
in the direction $\theta$ is a germ of sectorial fibered diffeomorphism
tangent to the identity, \emph{i.e.} ${\displaystyle \Psi_{\theta}^{\left(N\right)}\in\diffsect[\theta][\pi]}$,
which conjugates $Y$ to the 1-sum $Y_{\theta}^{\left(N\right)}$
of $Y^{\left(N\right)}$ in the direction $\theta$.
\end{enumerate}
\end{rem}

In order to prove this result we will proceed in several steps and
use after each step Proposition \ref{prop: compositon summable} and
Corollary \ref{cor: summability push-forward} in order to prove the
1-summability in every direction $\theta\neq\arg\left(\pm\lambda\right)$
of the different objects. First, we will normalize analytically the
vector field restricted to $\acc{x=0}$. Then, we will straighten
the formal separatrix to $\acc{y_{1}=y_{2}=0}$ in suitable coordinates.
Next, we will simplify the linear terms with respect to $\mathbf{y}$.
After that, we will straighten two invariant hypersurfaces to $\acc{y_{1}=0}$
and $\acc{y_{2}=0}$. Finally, we will conjugate the vector field
to its final normal form up to remaining terms of order $\tx O\left(x^{N}\right)$.

\subsection{Analytic normalization on the hyperplane $\protect\acc{x=0}$}

~

\subsubsection{Transversally Hamiltonian \emph{versus} div-integrable}

~

We start by proving that an element of $\sndiag$ which is transversally
Hamiltonian is necessarily div-integrable.
\begin{prop}
If $Y\in\sndiag$ is transversally Hamiltonian, then $Y$ is div-integrable. 
\end{prop}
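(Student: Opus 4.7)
The plan is to turn the transversal Hamiltonian condition into the vanishing of the divergence of the planar germ $X := Y_{\mid\acc{x=0}}$, then recognize $X$ as Hamiltonian with a Morse Hamiltonian, and obtain orbital linearization by applying the holomorphic Morse lemma to a first integral.

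Writing $Y = x^{2}\pp x + G_{1}\pp{y_{1}} + G_{2}\pp{y_{2}}$ with $G_{1} = -\lambda y_{1}+F_{1}$ and $G_{2} = \lambda y_{2}+F_{2}$, I would first establish by a direct application of Cartan's formula that
\[
\lie Y(\omega)=\frac{\partial_{y_{1}}G_{1}+\partial_{y_{2}}G_{2}-x}{x}\,\tx dy_{1}\wedge\tx dy_{2}+\eta\wedge\tx dx
\]
for some meromorphic $1$-form $\eta$ with poles at most on $\acc{x=0}$. Since $\tx dy_{1}\wedge\tx dy_{2}$ is linearly independent from the wedges $\tx dx\wedge\tx dy_{i}$ in the module of meromorphic $2$-forms, the hypothesis $\lie Y(\omega)\in\ps{\tx dx}$ is equivalent to the pointwise identity $\partial_{y_{1}}G_{1}+\partial_{y_{2}}G_{2}=x$. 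Specializing at $x=0$ and using the shape of $G_{1},G_{2}$, this says precisely that $X\in\vf[\ww C^{2},0]$ is divergence-free, hence (by contractibility of $\wcc$) Hamiltonian: there exists $H\in\germ{y_{1},y_{2}}$ with $H(\mathbf{0})=0$, $\tx dH(\mathbf{0})=0$, and $X=-\partial_{y_{2}}H\pp{y_{1}}+\partial_{y_{1}}H\pp{y_{2}}$. Integrating the linear part of $X$ yields $H=\lambda y_{1}y_{2}+\tx O(\norm{\mathbf{y}}^{3})$, whose Hessian at the origin has determinant $-\lambda^{2}\neq 0$, so $H$ has a Morse critical point at $\mathbf{0}$.

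The holomorphic Morse lemma then furnishes a biholomorphism $\Phi\in\diff[\ww C^{2},0]$ such that $H\circ\Phi^{-1}(\mathbf{z})=\lambda z_{1}z_{2}$. Since $H$ is a first integral of $X$, the push-forward $\Phi_{*}X$ admits $z_{1}z_{2}$ as a first integral, hence shares its foliation with the linear vector field $L:=-z_{1}\pp{z_{1}}+z_{2}\pp{z_{2}}$; it follows that $\Phi_{*}X=U(\mathbf{z})\,L$ for some $U\in\germ{\mathbf{z}}$, and comparison of linear parts forces $U(\mathbf{0})=\lambda\neq 0$, so $U$ is a unit. This exhibits $X$ as analytically orbitally linearizable, which is exactly the assertion that $Y$ is div-integrable. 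The only non-elementary ingredient is the holomorphic Morse lemma; everything else reduces to the Lie-derivative computation above together with the classical equivalence, in $\wcc$, between vanishing divergence and being Hamiltonian.
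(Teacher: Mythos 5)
Your proof is correct, and its core step --- applying the holomorphic Morse lemma to the Hamiltonian $H=\lambda y_{1}y_{2}+\mathrm{O}\left(\norm{\mathbf{y}}^{3}\right)$ of the restricted field $Y_{\mid\acc{x=0}}$ --- is exactly the paper's. You differ in two places. First, you actually derive from $\mathcal{L}_{Y}\left(\omega\right)\in\ps{\tx dx}$ the pointwise identity $\partial_{y_{1}}G_{1}+\partial_{y_{2}}G_{2}=x$, hence that $Y_{\mid\acc{x=0}}$ is divergence-free and therefore Hamiltonian by the Poincar\'e lemma; the paper instead takes ``$Y_{\mid\acc{x=0}}$ is Hamiltonian'' as its hypothesis and only asserts in passing that this holds in the transversally Hamiltonian case, so your Lie-derivative computation fills in a step the paper leaves implicit (and, as a byproduct, recovers that the residue equals $1$). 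Second, after the Morse lemma the paper invokes the linear-algebra identity $PJ\,{}^{t}P=\det\left(P\right)J$ to obtain $\varphi_{*}\left(J\nabla H\right)=\det\left(\tx D\varphi\circ\varphi^{-1}\right)J\nabla\widetilde{H}$, which exhibits the unit explicitly as a Jacobian determinant; you instead observe that a holomorphic vector field annihilating $z_{1}z_{2}$ must be a holomorphic multiple of $-z_{1}\pp{z_{1}}+z_{2}\pp{z_{2}}$ (by coprimality of $z_{1}$ and $z_{2}$ in the factorial ring $\germ{\mathbf{z}}$) and read off the non-vanishing of $U\left(\mathbf{0}\right)$ from the linear part. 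Both closing arguments are valid: the paper's gives the unit in closed form, while yours uses only that $z_{1}z_{2}$ is a first integral. One cosmetic remark: the comparison of linear parts only pins down $U\left(\mathbf{0}\right)=\pm\lambda$, since the Morse chart may permute the two branches of $\acc{H=0}$; either value is nonzero, which is all that is needed.
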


\begin{proof}
Let us consider more generally a diagonal doubly-resonant saddle-node
$Y\in\sndiag$ such that $Y_{\mid\acc{x=0}}$ is a Hamiltonian vector
field with respect to $\mbox{d}y_{1}\wedge\mbox{d}y_{2}$ (this is
the case if $Y$ is transversally Hamiltonian): there exists a Hamiltonian
${\displaystyle H\left(\mathbf{y}\right)=\lambda y_{1}y_{2}+O\left(\norm{\mathbf{y}}^{3}\right)\in\germ{\mathbf{y}}}$,
such that 
\[
Y=x^{2}\pp x+\left(\left(-\ppp H{y_{2}}+xF_{1}\left(x,\mathbf{y}\right)\right)\pp{y_{1}}+\left(\ppp H{y_{1}}+xF_{2}\left(x,\mathbf{y}\right)\right)\pp{y_{2}}\right)\,\,\,,
\]
where $F_{1},F_{2}\in\germ{x,\mathbf{y}}$ are vanishing at the origin.
If we define ${\displaystyle J:=\left(\begin{array}{cc}
0 & -1\\
1 & 0
\end{array}\right)\in M_{2}\left(\ww C\right)}$ and $\nabla H:=\,^{t}\left(\mbox{D}H\right)$, then ${\displaystyle Y_{\mid\acc{x=0}}=J\nabla H}$.
According to the Morse lemma for holomorphic functions, there exists
a germ of an analytic change of coordinates $\varphi\in\diff[\ww C^{2},0]$
given by
\begin{eqnarray*}
\mathbf{y}=\left(y_{1},y_{2}\right) & \mapsto & \varphi\left(\mathbf{y}\right)=\left(y_{1}+O\left(\left\Vert \mathbf{y}\right\Vert ^{2}\right),y_{2}+O\left(\left\Vert \mathbf{y}\right\Vert ^{2}\right)\right)\,\,\,\,\,,
\end{eqnarray*}
such that ${\displaystyle {\displaystyle \widetilde{H}\left(\mathbf{y}\right):=H\left(\varphi^{-1}\left(\mathbf{y}\right)\right)=y_{1}y_{2}}}$.
Let us now recall a trivial result from linear algebra.

\begin{fact*}
Let ${\displaystyle J:=\left(\begin{array}{cc}
0 & -1\\
1 & 0
\end{array}\right)\in M_{2}\left(\ww C\right)}$, and $P\in M_{2}\left(\ww C\right)$. Then, ${\displaystyle PJ\,^{t}P=\mbox{det}\left(P\right)J}$.
\end{fact*}
As a consequence we have: 

\begin{cor}
\label{cor: hamiltonian system}Let $H\in\ww C\acc{\mathbf{y}}$ be
a germ of an analytic function at $0$, $Y_{0}:=J\nabla H$ the associated
Hamiltonian vector field in $\ww C^{2}$ (for the usual symplectic
form $dy_{1}\wedge dy_{2}$), and an analytic diffeomorphism near
the origin denoted by $\varphi$. Then: 
\begin{eqnarray*}
\varphi_{*}\left(Y_{0}\right) & := & \left(\tx D\varphi\circ\varphi^{-1}\right)\cdot\left(Y_{0}\circ\varphi^{-1}\right)=\det\left(\tx D\varphi\circ\varphi^{-1}\right)J\nabla\widetilde{H}\,\,\,\,,
\end{eqnarray*}
where $\widetilde{H}:=H\circ\varphi^{-1}$.
\end{cor}

As a conclusion we have proved that $Y$ is div-integrable.
\end{proof}

\subsubsection{General case}

~

Now we prove how to normalize the restriction to $\acc{x=0}$ of a
div-integrable element of $\sndiag$.
\begin{prop}
\label{prop: preparation sur hypersurface invariante}Let $Y\in\sndiag$
be div-integrable. Then, there exists $\psi\in\fdiffid$ of the form
\[
\psi:\left(x,\mathbf{y}\right)\mapsto\left(x,y_{1}+O\left(\norm{\mathbf{y}}^{2}\right),y_{2}+O\left(\norm{\mathbf{y}}^{2}\right)\right)
\]
such that 
\[
\psi_{*}\left(Y\right)=x^{2}\pp x+\left(-\left(\lambda+d\left(v\right)\right)y_{1}+xT_{1}\left(x,\mathbf{y}\right)\right)\pp{y_{1}}+\left(\left(\lambda+d\left(v\right)\right)y_{2}+xT_{2}\left(x,\mathbf{y}\right)\right)\pp{y_{2}}\,\,\,,
\]
with $v:=y_{1}y_{2}$, $d\left(v\right)\in v\germ v$ and $T_{1},T_{2}\in\germ{x,\mathbf{y}}$
vanishing at the origin.
\end{prop}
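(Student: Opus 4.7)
The strategy is to construct $\psi$ as a fibered diffeomorphism of the form $\psi(x,\mathbf{y}) = (x,\varphi(\mathbf{y}))$, where $\varphi \in \diff[\ww C^{2},0]$ tangent to the identity is obtained by analytically normalizing the two-dimensional restriction $Y_{\mid\acc{x=0}}$. Once this restriction is brought into the target form, the full shape of $\psi_{*}(Y)$ will be read off by Taylor-expanding the components in $x$. The construction of $\varphi$ splits into two sub-steps.

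For the first sub-step, the div-integrability hypothesis directly supplies an analytic $\varphi_{0}\in\diff[\ww C^{2},0]$ such that $(\varphi_{0})_{*}\bigl(Y_{\mid\acc{x=0}}\bigr)=U(\mathbf{y})\,R_{0}$, where $R_{0}:=-\lambda y_{1}\pp{y_{1}}+\lambda y_{2}\pp{y_{2}}$ and $U\in\germinv{\mathbf{y}}$. Since the linear part of $Y_{\mid\acc{x=0}}$ already coincides with $R_{0}$, one may arrange $\varphi_{0}$ to be tangent to the identity; comparing linear parts then forces $U(0)=1$.

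For the second (and main) sub-step, I claim that there exists $\varphi_{1}\in\diff[\ww C^{2},0]$ tangent to the identity such that $(\varphi_{1})_{*}(U R_{0})=\tilde{U}(v)\,R_{0}$, with $\tilde{U}\in\germ{v}$, $\tilde{U}(0)=1$, and $v:=y_{1}y_{2}$. Indeed, seeking $\varphi_{1}$ of the form $\exp(h R_{0})$ with $h\in\germ{\mathbf{y}}$, $h(0)=0$, and expanding the adjoint action reduces the problem, at each homogeneous degree, to a homological equation of the shape $R_{0}(h)=U-1-d(v)$, where $d(v)$ collects the resonant monomials of $U-1$. Since $R_{0}$ acts diagonally on $y_{1}^{i}y_{2}^{j}$ with eigenvalue $\lambda(j-i)$, bounded below in modulus by $|\lambda|>0$ off the resonant diagonal $\{i=j\}$, no small-divisor obstruction arises and the Poincar\'e--Dulac iteration converges absolutely in a neighborhood of the origin. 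Writing $\tilde{U}(v)=1+d(v)/\lambda$ with $d\in v\germ{v}$ yields
\[
\tilde{U}(v)\,R_{0}=-(\lambda+d(v))\,y_{1}\pp{y_{1}}+(\lambda+d(v))\,y_{2}\pp{y_{2}}\,\,.
\]

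Set $\varphi:=\varphi_{1}\circ\varphi_{0}$ and $\psi(x,\mathbf{y}):=(x,\varphi(\mathbf{y}))\in\fdiffid$. Then $\psi_{*}(Y)$ is analytic of the form $x^{2}\pp{x}+G_{1}(x,\mathbf{y})\pp{y_{1}}+G_{2}(x,\mathbf{y})\pp{y_{2}}$, and decomposing $G_{i}(x,\mathbf{y})=G_{i}(0,\mathbf{y})+x\,T_{i}(x,\mathbf{y})$ with $T_{i}$ analytic produces the announced expression, since by construction $G_{1}(0,\mathbf{y})=-(\lambda+d(v))y_{1}$ and $G_{2}(0,\mathbf{y})=(\lambda+d(v))y_{2}$. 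It remains to check that $T_{i}(0,0,0)=0$: this value equals the coefficient of the $x$-monomial (of $\mathbf{y}$-degree $0$) in $G_{i}$, which vanishes because $Y\in\sndiag$ has no $x\pp{y_{i}}$ linear term (the nonlinearities $F_{1},F_{2}$ have order $\geq 2$) and this property is preserved under fibered conjugation tangent to the identity. The principal difficulty is the second sub-step; here the saving grace is the strict 1-resonance with no small divisors, a direct consequence of $\lambda\neq 0$.
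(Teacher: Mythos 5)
Your proof is correct and follows essentially the same route as the paper: take $\psi=(x,\varphi(\mathbf y))$ with $\varphi$ normalizing $Y_{\mid\{x=0\}}$, use div-integrability to reach $U(\mathbf y)\,L$ with $L=-y_1\partial_{y_1}+y_2\partial_{y_2}$, and kill the non-resonant part of $U$ by a diffeomorphism of the form $(y_1e^{-\gamma},y_2e^{\gamma})$ (the flow of $L$ at variable time), the key point in both arguments being that $\mathcal L_L$ acts on non-resonant monomials with integer eigenvalues bounded away from $0$, so no small divisors occur. The only difference is presentational: you run a degree-by-degree Poincar\'e--Dulac iteration where the paper solves a single exact equation $\mathcal L_L(\gamma)=U_{\mathrm{n\text{-}res}}/U$; the $x$-dependent tail $xT_i$ is then read off identically in both.
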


\begin{proof}
By assumption, and according to a theorem due to Brjuno (\emph{cf
}\cite{Martinet}), up to a first transformation analytic at the origin
in $\ww C^{2}$, we can suppose that 
\[
Y_{\mid\acc{x=0}}=\left(\lambda+h\left(\mathbf{y}\right)\right)\left(-y_{1}\pp{y_{1}}+y_{2}\pp{y_{2}}\right)\,\,\,.
\]
Then, it remains to apply the following lemma to $Y_{\mid\acc{x=0}}$.
\end{proof}
\begin{lem}
\label{lem: preparation sur hypersurface invariante}Let $Y_{0}$
be a germ of analytic vector field in $\left(\ww C^{2},0\right)$
of the form
\begin{eqnarray*}
Y_{0} & = & \left(\lambda+h\left(\mathbf{y}\right)\right)\left(-y_{1}\pp{y_{1}}+y_{2}\pp{y_{2}}\right)\,\,\,,
\end{eqnarray*}
with $h\in\germ{\mathbf{y}}$ vanishing at the origin. Then there
exists ${\displaystyle \phi\in\diff[\ww C^{2},0,\tx{Id}]}$ such that
\begin{eqnarray*}
\phi_{*}\left(Y_{0}\right) & = & \left(\lambda+d\left(v\right)\right)\left(-y_{1}\pp{y_{1}}+y_{2}\pp{y_{2}}\right)\,\,\,\,,
\end{eqnarray*}
with $v:=y_{1}y_{2}$ and $d\in v\ww C\acc v$.
\end{lem}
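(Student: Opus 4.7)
The plan is to exploit the first integral $v := y_{1}y_{2}$ shared by $Y_{0}$ and the model $X_{0} := -y_{1}\pp{y_{1}}+y_{2}\pp{y_{2}}$: any conjugacy preserving $v$ acts on each leaf $\acc{v=c}$ as a mere reparametrization, and all I have to do is absorb the unit factor $\lambda+h$ into a function constant along those leaves. Concretely I would seek $\phi$ in the fibered form
\[
\phi(y_{1},y_{2}) \;=\; \left(y_{1}U(\mathbf{y}),\,y_{2}/U(\mathbf{y})\right)
\]
with $U\in\germinv{\mathbf{y}}$ a unit satisfying $U(0)=1$. Such a $\phi$ automatically belongs to $\diff[\ww C^{2},0,\tx{Id}]$, is tangent to the identity, and satisfies $v\circ\phi = v$. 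A direct computation of $\mbox{D}\phi\cdot Y_{0}$ (using $X_{0}(1/U) = -X_{0}(U)/U^{2}$) reduces the conjugacy equation $\phi_{*}(Y_{0}) = (\lambda + d(v))X_{0}$ to the single scalar homological equation
\[
X_{0}(\log U) \;=\; \frac{h - d(v)}{\lambda + h}\qquad.
\]

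Next I would analyze $X_{0}$ using the decomposition $\germ{\mathbf{y}} = \germ{v}\oplus\mathcal{N}$, where the resonant subspace $\germ{v}$ is spanned by the monomials $v^{n} = y_{1}^{n}y_{2}^{n}$ and $\mathcal{N}$ is spanned by the $y_{1}^{i}y_{2}^{j}$ with $i\neq j$. Since $X_{0}(y_{1}^{i}y_{2}^{j}) = (j-i)y_{1}^{i}y_{2}^{j}$, the operator $X_{0}$ vanishes on $\germ{v}$ and is an isomorphism of $\mathcal{N}$ whose inverse simply divides Taylor coefficients by the nonzero integer $j-i$; the corresponding projection $P\colon\germ{\mathbf{y}}\twoheadrightarrow\germ{v}$ and the partial inverse $X_{0}^{-1}\colon\mathcal{N}\to\mathcal{N}$ both preserve analyticity on any polydisc, since the eigenvalues $|j-i|$ are bounded away from zero by $1$. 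Consequently the homological equation is solvable analytically in $U$ with $U(0)=1$ if and only if
\[
P\!\left(\frac{h - d(v)}{\lambda + h}\right) \;=\; 0\qquad.
\]

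Since $P(d(v)\cdot q) = d(v)\,P(q)$ for every $q\in\germ{\mathbf{y}}$ (immediate on monomials), the solvability condition rearranges to the linear identity
\[
d(v)\cdot P\!\left(\tfrac{1}{\lambda + h}\right)(v) \;=\; P\!\left(\tfrac{h}{\lambda + h}\right)(v)\qquad.
\]
Because $P(1/(\lambda+h))(0) = 1/\lambda\neq 0$, this uniquely determines an analytic germ
\[
d(v) \;=\; \frac{P(h/(\lambda+h))(v)}{P(1/(\lambda+h))(v)} \;\in\; v\,\germ{v}
\]
(the vanishing at $v=0$ comes from $h(0)=0$). With this choice of $d$, the right-hand side of the homological equation lies in $\mathcal{N}$; its unique preimage $L\in\mathcal{N}$ under $X_{0}$ is analytic, and setting $U := \exp(L)$ together with $\phi := (y_{1}U,\,y_{2}/U)$, a short verification based on the identity $U - X_{0}(U) = U\cdot(\lambda+d(v))/(\lambda+h)$ (which is equivalent to the homological equation) gives $\mbox{D}\phi(\mathbf{y})\cdot Y_{0}(\mathbf{y}) = (\lambda + d(v))\,X_{0}(\phi(\mathbf{y}))$, \emph{i.e.} $\phi_{*}(Y_{0}) = (\lambda + d(v))X_{0}$. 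The main technical point, which I expect to be the only real obstacle, is to track analyticity through the three operations $P$, $X_{0}^{-1}$ and the quotient by a unit in $\germ{v}$ on a common polydisc; the rest of the argument is algebraic.
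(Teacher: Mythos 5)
Your proof is correct and follows the same basic strategy as the paper: the conjugacy is sought in the fibered, $v$-preserving form $\left(y_{1}e^{-\gamma},y_{2}e^{\gamma}\right)$ (your $U=e^{-\gamma}$), and everything reduces to a scalar homological equation for the operator $X_{0}=-y_{1}\pp{y_{1}}+y_{2}\pp{y_{2}}$, which is diagonal on monomials with integer eigenvalues $j-i$. Where you genuinely diverge --- and in fact improve on the printed argument --- is in the determination of $d$. The paper writes its homological equation as $\mathcal{L}_{L}\left(\gamma\right)=U_{\tx{n-res}}/U$ with $U=\lambda+h$, which amounts to taking $d=P\left(h\right)$, the resonant part of $h$; but that right-hand side need not lie in the image of $\mathcal{L}_{L}$ (for $h=y_{1}+y_{2}$ the expansion of $\left(y_{1}+y_{2}\right)/\left(\lambda+y_{1}+y_{2}\right)$ contains the resonant monomial $-2y_{1}y_{2}/\lambda^{2}$), so the equation as displayed is not solvable in general. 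Your formulation isolates the obstruction correctly: solvability forces $P\left(\left(h-d\left(v\right)\right)/\left(\lambda+h\right)\right)=0$, and since $P$ commutes with multiplication by elements of $\germ v$ and $P\left(1/\left(\lambda+h\right)\right)$ is a unit, this determines $d=P\left(h/\left(\lambda+h\right)\right)/P\left(1/\left(\lambda+h\right)\right)\in v\germ v$, which in general differs from $P\left(h\right)$ (the lemma claims no uniqueness of $d$, so this is harmless). The analytic points you flag --- that $P$ and the partial inverse of $X_{0}$ on the non-resonant subspace preserve convergence on a fixed polydisc because the divisors $\abs{j-i}$ are bounded below by $1$, and that division by the unit $P\left(1/\left(\lambda+h\right)\right)$ stays in $\germ v$ --- are standard and correctly handled, and your final verification of the conjugacy identity is sound. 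In short, your argument proves the lemma and quietly repairs a small gap in the paper's own proof.
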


\begin{rem}
In other words, we have removed every non-resonant term in $h\left(\mathbf{y}\right)$.
In fact, we re-obtain here a particular case (with one vector field
in dimension 2) of the principal result in \cite{StoloChampsCommutants}
(which is itself inspired of Vey's works).
\end{rem}

\begin{proof}
We claim that $\phi$ can be chosen of the form 
\[
{\displaystyle \phi\left(\mathbf{y}\right)=\left(y_{1}e^{-\gamma\left(\mathbf{y}\right)},y_{2}e^{\gamma\left(\mathbf{y}\right)}\right)}\,\,,
\]
for a conveniently chosen $\gamma\in\germ{\mathbf{y}}$. Indeed, let
us study how such a diffeomorphism acts on $Y_{0}$. Let us write
$U:=\left(\lambda+h\left(v\right)\right)$ and $L:=\left(-y_{1}\pp{y_{1}}+y_{2}\pp{y_{2}}\right)$,
such that $Y_{0}=UL$. An easy computation shows:
\begin{eqnarray*}
\phi_{*}\left(Y_{0}\right) & = & \phi_{*}\left(U.L\right)\\
 & = & \left(\cro{U\cdot\left(1-{\cal L_{L}\left(\gamma\right)}\right)}\circ\phi^{-1}\right)L\,\,,
\end{eqnarray*}
where $\cal L_{L}$ is the Lie derivative of associate to $L$. We
want to to find $\gamma$ such that the unit 
\[
D:=\cro{U\left(1-{\cal L_{L}\left(\gamma\right)}\right)}\circ\phi^{-1}
\]
 is free from \emph{non-resonant} terms, \emph{i.e.} is of the form
\[
D=\lambda+d\left(y_{1}y_{2}\right)=\lambda+\sum_{k\geq1}d{}_{k}\left(y_{1}y_{2}\right)^{k}\,\,.
\]
Notice that if a unit ${\displaystyle W=\sum_{k\geq0}W_{k}\left(y_{1}y_{2}\right)^{k}\germ{\mathbf{y}}}^{\times}$
is free from non-resonant terms, then: 
\[
\begin{cases}
W\circ\phi^{-1}=W\\
\cal L_{L}\left(W\right)=0 & .
\end{cases}
\]
Thus, let us split both $U$ and $\gamma$ in a ``resonant'' and
a ``non-resonant'' part:
\[
\begin{cases}
U=U_{\tx{res}}+U_{\tx{n-res}}\\
\gamma=\gamma_{\tx{res}}+\gamma_{\tx{n-res}}
\end{cases}
\]
where 
\[
\begin{cases}
U_{\tx{n-res}}=\underset{\substack{k_{1}\neq k_{2}}
}{\sum}U_{k_{1},k_{2}}y_{1}^{k_{1}}y_{2}^{k_{2}}\\
U_{\tx{res}}=\underset{\substack{k}
}{\sum}U_{k,k}\left(y_{1}y_{2}\right)^{k}\\
\gamma_{\tx{n-res}}=\underset{\substack{k_{1}\neq k_{2}}
}{\sum}\gamma_{k_{1},k_{2}}y_{1}^{k_{1}}y_{2}^{k_{2}}\\
\gamma_{\tx{res}}=\underset{k}{\sum}\gamma_{k,k}\left(y_{1}y_{2}\right)^{k} & .
\end{cases}
\]
Then the non-resonant terms of $U\left(1-{\cal L_{L}\left(\gamma\right)}\right)$
are 
\[
\left(U_{\tx{n-res}}-\left(U_{\tx{n-res}}+U_{\tx{res}}\right){\cal L_{L}\left(\gamma_{\tx{n-res}}\right)}\right)\circ\phi^{-1}\,\,.
\]
Hence, the partial differential equation we want to solve is: 
\[
\cal L_{L}\left(\gamma\right)=\frac{U_{\tx{n-res}}}{U_{\tx{res}}+U_{\tx{n-res}}}\,\,.
\]
One sees immediately that this equation admit an analytic solution
(and even infinitely many solutions) $\gamma\in\germ{\mathbf{y}}$,
since the unit $U\in\germ{\mathbf{y}}$ is analytic. 
\end{proof}

\subsection{1-summable simplification of the ``dependent'' affine part}

~

We are concerned by studying vector fields of the form 
\begin{equation}
Y=x^{2}\pp x+\left(-\lambda y_{1}+f_{1}\left(x,\mathbf{y}\right)\right)\pp{y_{1}}+\left(\lambda y_{2}+f_{2}\left(x,\mathbf{y}\right)\right)\pp{y_{2}}\,\,\,\,,\label{eq: Y asympt hamilt}
\end{equation}
with
\[
\begin{cases}
{\displaystyle f_{1}\left(x,\mathbf{y}\right)=-d\left(y_{1}y_{2}\right)y_{1}+xT_{1}\left(x,\mathbf{y}\right)}\\
{\displaystyle f_{2}\left(x,\mathbf{y}\right)=d\left(y_{1}y_{2}\right)y_{2}+xT_{2}\left(x,\mathbf{y}\right)} & ,
\end{cases}
\]
where ${\displaystyle d\left(v\right)\in v\germ v}$ and ${\displaystyle T_{1},T_{2}\in\germ{x,\mathbf{y}}}$
are of order at least one.
\begin{prop}
\label{prop: diag prep}Let $Y\in\sndiag$ be a doubly-resonant saddle-node
of the form 
\[
Y=x^{2}\pp x+\left(-\lambda y_{1}+f_{1}\left(x,\mathbf{y}\right)\right)\pp{y_{1}}+\left(\lambda y_{2}+f_{2}\left(x,\mathbf{y}\right)\right)\pp{y_{2}}\,\,\,\,,
\]
where $f_{1},f_{2}\in\germ{x,\mathbf{y}}$ are such that ${\displaystyle f_{1}\left(x,\mathbf{y}\right),f_{2}\left(x,\mathbf{y}\right)=\mbox{\ensuremath{\tx O}}\left(\norm{\left(x,\mathbf{y}\right)}^{2}\right)}$.
Then there exist formal power series $\hat{y}_{1}$, $\hat{y}_{2}$,
$\hat{\alpha}_{1}$, $\hat{\alpha}_{2}$, $\hat{\beta}_{1}$, $\hat{\beta}_{2}\in x\form x$
which are 1-summable in every direction $\theta\neq\arg\left(\pm\lambda\right)$,
such that the formal fibered diffeomorphism 
\[
\hat{\Phi}:\left(x,y_{1},y_{2}\right)\mapsto\left(x,\hat{y}_{1}\left(x\right)+\left(1+\hat{\alpha}_{1}\left(x\right)\right)y_{1}+\hat{\beta}_{1}\left(x\right)y_{2},\hat{y}_{2}\left(x\right)+\hat{\alpha}_{2}\left(x\right)y_{1}+\left(1+\hat{\beta}_{1}\left(x\right)\right)y_{2}\right)\,\,\,,
\]
(which is tangent to the identity and 1- summable in every direction
$\theta\neq\arg\left(\pm\lambda\right)$) conjugates $Y$ to 
\[
\hat{\Phi}_{*}\left(Y\right)=x^{2}\pp x+\left(\left(-\lambda+a_{1}x\right)y_{1}+\hat{F}_{1}\left(x,\mathbf{y}\right)\right)\pp{y_{1}}+\left(\left(\lambda+a_{2}x\right)y_{2}+\hat{F}_{2}\left(x,\mathbf{y}\right)\right)\pp{y_{2}}\,\,\,,
\]
where $a_{1},a_{2}\in\ww C$ and $\hat{F}_{1},\hat{F}_{2}\in\form{x,\mathbf{y}}$
are of order at least 2 with respect to $\mathbf{y}$, and 1-summable
in every direction $\theta\neq\arg\left(\pm\lambda\right)$.
\end{prop}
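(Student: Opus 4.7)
The plan is to construct $\hat\Phi$ as a composition $\hat\Phi=\Phi_1\circ\Phi_2$, where $\Phi_1(x,\mathbf y)=(x,\mathbf y-\hat{\mathbf w}(x))$ is a translation straightening the formal center manifold (killing the constant-in-$\mathbf y$ term of $Y$) and $\Phi_2(x,\mathbf y)=(x,P(x)\mathbf y)$, with $P(x)=I+Q(x)$, is an $x$-dependent linear change of $\mathbf y$ that puts the linear-in-$\mathbf y$ part of $Y$ into the desired diagonal form $\tx{diag}(-\lambda+a_1x,\lambda+a_2x)$. Both $\hat{\mathbf w}$ and $Q$ are obtained as fixed points of Picard iterations whose linearized step consists of a decoupled family of scalar singular ODEs of the form $x^2a'+(1+\alpha x)ka=b$ with $k\in\{\pm\lambda,\pm 2\lambda,0\}$. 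Proposition~\ref{prop: solution borel sommable precise}(\ref{enu:Assume--and}) provides existence and the quantitative Borel estimate (\ref{eq: inegalite norme borel}) whenever $k\neq 0$, while Proposition~\ref{prop: norme d'algebre} controls the nonlinear right-hand sides; running the iteration in $\mathfrak B_{\beta,\theta,\delta,\rho,\mathbf r}^{\tx{bis}}$ with $\beta$ large enough yields convergence together with 1-summability in every direction $\theta\neq\arg(\pm\lambda)$. The 1-summability of the remainders $\hat F_1,\hat F_2$ will then follow from Corollary~\ref{cor: summability push-forward} applied to the pushforward $\hat\Phi_*Y$.

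For Step~1, writing $f_i(x,\mathbf y)=\alpha_i(x)+\sum_jB_{ij}(x)y_j+R_i(x,\mathbf y)$ with $\alpha_i\in x^2\germ x$, $B_{ij}\in x\germ x$ and $R_i=\tx O(\norm{\mathbf y}^2)$, the translation $\Phi_1$ kills the constant-in-$\mathbf y$ term of $Y$ precisely when $\hat{\mathbf w}=(\hat w_1,\hat w_2)\in(x^2\form x)^2$ satisfies the coupled system
\[
x^2\hat w_1'+\lambda\hat w_1=f_1(x,\hat{\mathbf w}),\qquad x^2\hat w_2'-\lambda\hat w_2=f_2(x,\hat{\mathbf w}).
\]
Moving the leading $x$-coefficient of $B_{ii}$ into an $\alpha x$ term on the left-hand side, each equation has the form $x^2a'+(1+\alpha x)ka=b$ with $k=\pm\lambda$ and $b$ collecting $\alpha_i$, the cross-coupling $B_{ij}\hat w_j$ ($i\neq j$), the higher-order correction of $B_{ii}\hat w_i$, and $R_i(x,\hat{\mathbf w})$. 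Starting from $\hat{\mathbf w}^{(0)}:=\mathbf 0$ and defining $\hat{\mathbf w}^{(n+1)}$ as the 1-summable solution of the linearized system with $\hat{\mathbf w}^{(n)}$ inserted in the nonlinear part, the bound~(\ref{eq: inegalite norme borel}) combined with the sub-multiplicativity of $\norm{\cdot}_\beta^{\tx{bis}}$ (Proposition~\ref{prop: norme d'algebre}) guarantees, for $\beta$ large enough, that the map is a contraction on a small closed ball of $\mathfrak B_{\beta,\theta,\delta,\rho,\mathbf r}^{\tx{bis}}$. The Stokes directions $\arg(\mp\lambda)$ are excluded, so 1-summability is obtained in every direction $\theta\neq\arg(\pm\lambda)$.

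For Step~2, the linear-in-$\mathbf y$ part of $Y$ is now encoded by a matrix $M(x)=\tx{diag}(-\lambda,\lambda)+\tilde B(x)$, $\tilde B\in x\form x$, 1-summable in every admissible direction. Seek $P=I+Q$, $Q\in x\form x$, and $(a_1,a_2)\in\ww C^2$ so that $x^2P'=DP-PM$ with $D=\tx{diag}(-\lambda+a_1x,\lambda+a_2x)$. Splitting into scalar equations, modulo nonlinear coupling through $x\Delta Q-Q\tilde B$ where $\Delta=\tx{diag}(a_1,a_2)$:
\[
x^2q_{12}'+2\lambda q_{12}=-\tilde B_{12}+\tx{nonlin},\quad x^2q_{21}'-2\lambda q_{21}=-\tilde B_{21}+\tx{nonlin},\quad x^2q_{ii}'=-\tilde B_{ii}+a_ix+\tx{nonlin}.
\]
The off-diagonal equations are handled by Proposition~\ref{prop: solution borel sommable precise}(\ref{enu:Assume--and}) with $k=\pm 2\lambda$ (Stokes directions again $\arg(\pm\lambda)$), while the choice $a_i:=(\tilde B_{ii}/x)(0)$ forces the right-hand side of each diagonal equation to lie in $x^2\form x$ (the nonlinear corrections being automatically of order $\tx O(x^2)$). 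A Picard iteration analogous to Step~1 then produces a 1-summable $Q$, and $\hat\Phi:=\Phi_1\circ\Phi_2$ has the form stated; reading off components identifies $\hat y_i,\hat\alpha_i,\hat\beta_i$, and $\hat F_1,\hat F_2$ are 1-summable by Corollary~\ref{cor: summability push-forward}.

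The genuinely new difficulty is the diagonal case $k=0$ in Step~2, which is not covered by Proposition~\ref{prop: solution borel sommable precise}. Once $a_i$ is chosen as above so that the right-hand side $b$ belongs to $x^2\form x$, the formal solution $q$ of $x^2q'=b$ exists, but an independent Borel-side estimate is needed for its 1-summability. A short computation yields $\cal B(q)(t)=\int_0^t\cal B(b)'(s)/s\,\tx ds$, an integrand analytic at $0$ because $b_1=0$ and inheriting from $\cal B(b)'$ an exponential bound of type $\tx e^{\beta\abs t}$ on the same sector $\Delta_{\theta,\delta,\rho}$; this yields an estimate $\norm q_\beta^{\tx{bis}}\leq C_\beta\norm b_\beta^{\tx{bis}}$ of the same shape as~(\ref{eq: inegalite norme borel}), which slots into the contraction argument. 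A secondary technicality is to calibrate $\beta,\rho,\mathbf r$ simultaneously for all resonant indices $k\in\{\pm\lambda,\pm 2\lambda\}$ arising in Steps~1--2 so that the hypothesis $\beta d_k>C\abs{\alpha k}$ of Proposition~\ref{prop: solution borel sommable precise}(\ref{enu:Assume--and}) is uniformly satisfied and all contraction constants are strictly less than~$1$.
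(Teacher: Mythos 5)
Your proposal is correct in outline and reaches the stated conclusion, but it takes a genuinely different route from the paper. The paper factors the affine change into three elementary pieces: a translation $(x,\mathbf y)\mapsto(x,y_{1}-\hat y_{1}(x),y_{2}-\hat y_{2}(x))$ straightening the formal center manifold, an off-diagonal conjugation $(x,\mathbf y)\mapsto(x,y_{1}+x\hat p_{2}(x)y_{2},y_{2}+x\hat p_{1}(x)y_{1})$ whose coefficients solve two Riccati-type irregular equations, and a diagonal rescaling $(x,\mathbf y)\mapsto(x,\hat q_{1}(x)y_{1},\hat q_{2}(x)y_{2})$. The $1$-summability of $\hat y_{i}$ and $\hat p_{i}$ is obtained by invoking, as a black box, the Ramis--Sibuya theorem on summability of formal solutions of nonlinear irregular systems \cite{ramis1989hukuhara}, while the $\hat q_{i}$ are given by the closed formula ${\displaystyle \hat q_{i}(x)=\exp\Big(\int_{0}^{x}\frac{\hat a_{i}(s)-a_{i}}{s}\,\tx ds\Big)}$, whose summability follows from Proposition \ref{prop: compositon summable}. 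You instead construct everything by a fixed-point argument in the Banach algebra $\big(\mathfrak B_{\beta}^{\tx{bis}},\norm{\cdot}_{\beta}^{\tx{bis}}\big)$ using Proposition \ref{prop: solution borel sommable precise}~(\ref{enu:Assume--and}) and Proposition \ref{prop: norme d'algebre}. This buys self-containedness (no appeal to \cite{ramis1989hukuhara}) at the price of (i) writing out composition estimates in the Borel norm for $R_{i}(x,\hat{\mathbf w})$, which do follow from sub-multiplicativity applied term by term to the Taylor expansion of $R_{i}$ but are not free, and (ii) the resonant $k=0$ diagonal equations, which the paper's three-step factorization avoids entirely and which you must handle with an ad hoc estimate. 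On that last point your formula ${\displaystyle \cal B(q)(t)=\int_{0}^{t}\widetilde{\cal B}(b)(s)\,s^{-1}\,\tx ds}$ is correct (since $\cal B(b)'=\widetilde{\cal B}(b)$), but the asserted bound $\norm{q}_{\beta}^{\tx{bis}}\leq C_{\beta}\norm{b}_{\beta}^{\tx{bis}}$ needs one extra remark near $s=0$: the norm $\norm{b}_{\beta}^{\tx{bis}}$ alone does not control $\abs{\widetilde{\cal B}(b)(s)/s}$ as $s\to0$, so you must use that $\widetilde{\cal B}(b)$ vanishes at the origin (a Schwarz-lemma or maximum-principle argument on a smaller disc) before integrating. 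With those two points filled in, your argument is a valid, more elementary alternative to the paper's proof.
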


\begin{rem}
Notice that $\hat{\Phi}_{\mid\acc{x=0}}=\mbox{Id}$, so that $\hat{F}_{i}\left(0,\mathbf{y}\right)=f_{i}\left(0,\mathbf{y}\right)$
for $i=1,2$. Moreover, the residue of $\hat{\Phi}_{*}\left(Y\right)$
is $a_{1}+a_{2}$.
\end{rem}

The proof of Proposition \ref{prop: diag prep} is postponed to subsection
\ref{subsec:Proof-of-Proposition}.

\subsubsection{Technical lemmas on irregular differential systems}

~
\begin{lem}
There exists a pair of formal power series $\left(\hat{y}_{1}\left(x\right),\hat{y}_{2}\left(x\right)\right)\in\left(x\form x\right)^{2}$
which are 1-summable in every direction $\theta\neq\arg\left(\pm\lambda\right)$,
such that the formal diffeomorphism given by 
\[
{\displaystyle \hat{\Phi}_{1}\left(x,y_{1},y_{2}\right)=\left(x,y_{1}-\hat{y}_{1}\left(x\right),y_{2}-\hat{y}_{2}\left(x\right)\right)},
\]
(which is 1-summable in every direction $\theta\neq\arg\left(\pm\lambda\right)$)
conjugates $Y$ in (\ref{eq: Y asympt hamilt}) to
\begin{equation}
\hat{Y}_{1}\left(x,\mathbf{y}\right)=x^{2}\pp x+\left(-\lambda y_{1}+\hat{g}_{1}\left(x,\mathbf{y}\right)\right)\pp{y_{1}}+\left(\lambda y_{2}+\hat{g}_{2}\left(x,\mathbf{y}\right)\right)\pp{y_{2}}\qquad,\label{eq: redressement separatrice}
\end{equation}
where $\hat{g}_{1},\hat{g}_{2}$ are formal power series of order
at least $2$ such that $\hat{g}_{1}\left(x,\mathbf{0}\right)=\hat{g}_{2}\left(x,\mathbf{0}\right)=0$,
and are 1-summable in every direction $\theta\neq\arg\left(\pm\lambda\right)$.
\end{lem}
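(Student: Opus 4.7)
The plan is to realise $(\hat y_1,\hat y_2)\in(x\form x)^2$ as the unique formal invariant curve of $Y$ close to the formal separatrix $\acc{\mathbf{y}=\mathbf{0}}$, \emph{i.e.}~as the unique formal solution in $(x\form x)^2$ to
\[
x^{2}\ddd{\hat y_i}{x}=\epsilon_i\lambda\,\hat y_i+f_i(x,\hat y_1,\hat y_2),\qquad(\epsilon_1,\epsilon_2)=(-1,+1).
\]
Once such a pair is produced, a direct computation using $\pp x|_{\mathbf y}=\pp x|_{\mathbf z}-\hat y_1'\pp{z_1}-\hat y_2'\pp{z_2}$ under $z_i=y_i-\hat y_i(x)$ shows that $(\hat\Phi_1)_{*}Y$ has exactly the desired shape $\left(\mbox{\ref{eq: redressement separatrice}}\right)$, with
\[
\hat g_i(x,\mathbf z)=f_i(x,\mathbf z+\hat{\mathbf y}(x))-f_i(x,\hat{\mathbf y}(x)).
\]
This formula yields at once $\hat g_i(x,\mathbf 0)=0$; combined with $\hat g_i(0,\mathbf z)=f_i(0,\mathbf z)=\tx O(\norm{\mathbf z}^2)$ and the vanishing on $\acc{\mathbf z=\mathbf 0}$, one gets order $\geq 2$ at the origin. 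Formal existence and uniqueness of $\hat y_i$ follow by identification of Taylor coefficients: writing $\hat y_i(x)=\sum_{k\geq 1}c_{i,k}x^k$, the coefficient of $x^k$ in the $i$-th equation determines $\epsilon_i\lambda\, c_{i,k}$ as a polynomial in the previously determined $c_{j,\ell}$, $\ell<k$, because $f_i$ has order $\geq 2$ and $\hat y_j\in x\form x$; this is well-posed since $\lambda\neq 0$.

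To establish $1$-summability in a direction $\theta\neq\arg(\pm\lambda)$, I would run a Borel-plane Banach fixed-point argument. Choose auxiliary parameters $\delta,\rho>0$ small enough so that the domain $\Delta_{\theta,\delta,\rho}$ avoids both singular points $-\lambda$ and $+\lambda$ of the Borel-transformed linear equations. Proposition~\ref{prop: solution borel sommable precise}(\ref{enu:Assume--and}) applied with $\alpha=0$ and $k=\epsilon_i\lambda$ (so the hypothesis $\beta d_k>C\abs{\alpha k}=0$ is automatic) then provides inverses $L_i^{-1}$ of the linear operators $\hat y\mapsto x^{2}\hat y'-\epsilon_i\lambda\, \hat y$ that map the Banach algebra $\bigl(\mathfrak B^{\tx{bis}}_{\beta,\theta,\delta,\rho},\norm{\cdot}^{\tx{bis}}_\beta\bigr)$ continuously into itself, with operator norm bounded by $1/d$, where $d:=\min\bigl(\mathrm{dist}(-\lambda,\Delta_{\theta,\delta,\rho}),\mathrm{dist}(\lambda,\Delta_{\theta,\delta,\rho})\bigr)>0$. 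I then consider the nonlinear operator
\[
T(\hat y_1,\hat y_2):=\bigl(L_1^{-1}[f_1(x,\hat y_1,\hat y_2)],\,L_2^{-1}[f_2(x,\hat y_1,\hat y_2)]\bigr)
\]
acting on $\bigl(\mathfrak B^{\tx{bis}}_{\beta,\theta,\delta,\rho}\bigr)^{2}$. Expanding $f_i$ in its (convergent) Taylor series at $\mathbf y=\mathbf 0$, bounding each coefficient by Cauchy estimates, and using Proposition~\ref{prop: norme d'algebre} repeatedly to control products $\hat y_1^{j_1}\hat y_2^{j_2}$, one verifies that for $\beta$ large enough $T$ stabilises and is a contraction on a small closed ball of $\bigl(\mathfrak B^{\tx{bis}}_\beta\bigr)^{2}$; its unique fixed point is then 1-summable in direction~$\theta$, and by formal uniqueness must coincide with $\hat y_1,\hat y_2$.

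The main technical difficulty is to produce the contraction estimate with bounds that are \emph{uniform} on compact arcs of directions $\theta$ avoiding the two forbidden rays $\arg(\pm\lambda)$; this follows by continuity of $d$ in $\theta$ together with the fact that the order-$2$ assumption on $f_i$ forces the Lipschitz constant of $(\hat y_1,\hat y_2)\mapsto\bigl(f_1,f_2\bigr)(x,\hat y_1,\hat y_2)$ on a ball of radius $r$ to vanish with~$r$. Once $\hat y_1,\hat y_2$ are shown to be $1$-summable in every direction $\theta\neq\arg(\pm\lambda)$, the $1$-summability of $\hat g_i(x,\mathbf z)=f_i(x,\mathbf z+\hat{\mathbf y}(x))-f_i(x,\hat{\mathbf y}(x))$ in the same directions is a direct consequence of Proposition~\ref{prop: compositon summable}, whose composition hypothesis $\hat y_j(0)=0$ holds by construction. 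This completes the proof.
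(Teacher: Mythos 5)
Your identification of $(\hat y_1,\hat y_2)$ as the unique formal solution in $\left(x\form x\right)^2$ of the irregular system $x^2\hat y_i'=\epsilon_i\lambda\hat y_i+f_i(x,\hat{\mathbf y})$, the formula $\hat g_i(x,\mathbf z)=f_i(x,\mathbf z+\hat{\mathbf y}(x))-f_i(x,\hat{\mathbf y}(x))$ with the ensuing order computation, and the final appeal to Proposition \ref{prop: compositon summable} for the $1$-summability of $\hat g_1,\hat g_2$, all coincide with the paper's proof. Where you diverge is the middle step: the paper obtains the $1$-summability of $\hat y_1,\hat y_2$ in one line by citing the Ramis--Sibuya theorem on summability of formal solutions of irregular systems \cite{ramis1989hukuhara}, whereas you re-prove this special case by a Borel-plane fixed-point argument assembled from Proposition \ref{prop: solution borel sommable precise} and the algebra norm of Proposition \ref{prop: norme d'algebre}. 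That is a legitimate, more self-contained route, at the cost of redoing estimates the citation provides for free. (A harmless bookkeeping slip: the operator $\hat y\mapsto x^2\hat y'-\epsilon_i\lambda\hat y$ corresponds to $k=-\epsilon_i\lambda$, not $k=\epsilon_i\lambda$, in the notation of Proposition \ref{prop: solution borel sommable precise}; this is irrelevant since your domain $\Delta_{\theta,\delta,\rho}$ avoids both of $\pm\lambda$.)

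One justification, however, does not survive scrutiny: you assert that the order-$2$ hypothesis on $f_i$ forces the Lipschitz constant of $(\hat y_1,\hat y_2)\mapsto(f_1,f_2)(x,\hat y_1,\hat y_2)$ on a ball of radius $r$ to vanish with $r$. That is false for the part of $f_i$ that is \emph{linear} in $\mathbf y$: order $2$ in $(x,\mathbf y)$ jointly still permits monomials of the form $x\,c\,y_j$ with $c\neq0$ (and such terms are genuinely present in the vector fields under consideration), and the Lipschitz constant of a linear map does not depend on the radius of the ball on which it acts. If the contraction rested solely on shrinking $r$, it would fail on these terms. The correct mechanism --- which your own setup already contains --- is that these linear-in-$\mathbf y$ coefficients are divisible by $x$, and multiplication by $x$ has operator norm $\tx O(1/\beta)$ on $\left(\mathfrak{B}_{\beta}^{\tx{bis}},\norm{\cdot}_{\beta}^{\tx{bis}}\right)$ thanks to the inequality $\norm{\hat f\hat g}_{\beta}^{\tx{bis}}\leq\frac{4\pi}{\beta}\norm{\hat f}_{\beta}^{\tx{bis}}\norm{\hat g}_{\beta}^{\tx{bis}}$ of Proposition \ref{prop: norme d'algebre}; hence the linear terms are absorbed by taking $\beta$ large, while only the terms of order at least $2$ in $\mathbf y$ are controlled by shrinking the ball. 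With that substitution your contraction estimate closes and the rest of the argument is sound.
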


In other words, in the new coordinates, the curve given by $\left(y_{1},y_{2}\right)=\left(0,0\right)$
is invariant by the flow of the vector field, and contains the origin
in its closure: it is usually called the (formal, 1-summable) \emph{center
manifold.}
\begin{proof}
This is an immediate consequence of an important theorem by Ramis
and Sibuya on the summability of formal solutions to irregular differential
systems \cite{ramis1989hukuhara}. This theorem proves the existence
and the 1-summability in every direction $\theta\neq\arg\left(\pm\lambda\right)$,
of $\hat{y}_{1}$ and $\hat{y}_{2}$: $\left(\hat{y}_{1}\left(x\right),\hat{y}_{2}\left(x\right)\right)$
is defined as the unique formal solution to 
\[
\begin{cases}
{\displaystyle x^{2}\ddd{y_{1}}x=-\lambda y_{1}\left(x\right)+f_{1}\left(x,y_{1}\left(x\right),y_{2}\left(x\right)\right)}\\
{\displaystyle x^{2}\ddd{y_{2}}x=\lambda y_{2}\left(x\right)+f_{2}\left(x,y_{1}\left(x\right),y_{2}\left(x\right)\right)}
\end{cases}\,\,,
\]
such that $\left(\hat{y}_{1}\left(0\right),\hat{y}_{2}\left(0\right)\right)=\left(0,0\right)$.
The 1-summability of $\hat{g}_{1}$ and $\hat{g}_{2}$ comes from
Proposition \ref{prop: compositon summable}.
\end{proof}
The next step is aimed at changing to linear terms with respect to
$\mathbf{y}$ in ``diagonal'' form.
\begin{lem}
There exists a pair of formal power series $\left(\hat{p}_{1},\hat{p}_{2}\right)\in\left(\form x\right)^{2}$
which are 1-summable in every direction $\theta\neq\arg\left(\pm\lambda\right)$,
such that the formal fibered diffeomorphism given by 
\[
\hat{\Phi}_{2}\left(x,y_{1},y_{2}\right)=\left(x,y_{1}+x\hat{p}_{2}\left(x\right)y_{2},y_{2}+x\hat{p}_{1}\left(x\right)y_{1}\right)\,\,,
\]
(which is tangent to the identity and 1-summable in every direction
$\theta\neq\arg\left(\pm\lambda\right)$) conjugates $\hat{Y}_{1}$
in (\ref{eq: redressement separatrice}), to
\begin{eqnarray}
\hat{Y}_{2}\left(x,\mathbf{y}\right) & = & x^{2}\pp x+\left(\left(-\lambda+x\hat{a}_{1}\left(x\right)\right)y_{1}+\hat{H}_{1}\left(x,\mathbf{y}\right)\right)\pp{y_{1}}\nonumber \\
 &  & +\left(\left(\lambda+x\hat{a}_{2}\left(x\right)\right)y_{2}+\hat{H}_{2}\left(x,\mathbf{y}\right)\right)\pp{y_{2}}\qquad,\label{eq: Y semi-diag}
\end{eqnarray}
where $\hat{a}_{1},\hat{a}_{2},\hat{H}_{1},\hat{H}_{2}$ are formal
power series which are 1-summable in every direction $\theta\neq\arg\left(\pm\lambda\right)$
and $\hat{H}_{1},\hat{H}_{2}$ are of order at least $2$ with respect
to $\mathbf{y}$. 
\end{lem}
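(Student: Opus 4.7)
The plan is to convert the conjugacy condition into a pair of scalar Riccati-type irregular differential equations for $\hat{p}_1$ and $\hat{p}_2$, whose ``irregular'' eigenvalues are $\pm 2\lambda\ne 0$, and then establish 1-summability via a Banach fixed-point argument built on Proposition~\ref{prop: solution borel sommable precise}(\ref{enu:Assume--and}).

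First I would isolate the linear-in-$\mathbf{y}$ part of $\hat{g}_i$ by writing
\[
\hat{g}_i(x,\mathbf{y}) = x\,\hat{\alpha}_i(x)\, y_1 + x\,\hat{\beta}_i(x)\, y_2 + \hat{H}_i^{(\geq 2)}(x,\mathbf{y}), \qquad i=1,2,
\]
where the factor of $x$ is forced because $\hat{g}_i$ has total order at least $2$ and satisfies $\hat{g}_i(x,\mathbf{0})=0$. Since $\hat{g}_1,\hat{g}_2$ are 1-summable in every direction $\theta\ne\arg(\pm\lambda)$, so are $\hat{\alpha}_i,\hat{\beta}_i$. Computing $(\hat{\Phi}_2)_*(\hat{Y}_1)$ and demanding that the off-diagonal coefficients of the resulting linear-in-$\mathbf{y}$ part vanish yields two decoupled Riccati-type equations
\[
x^{2}\hat{p}_2'(x) + \bigl(2\lambda + x\,\hat{\sigma}_2(x)\bigr)\hat{p}_2(x) = -\hat{\beta}_1(x) + x^{2}\,\hat{\alpha}_2(x)\,\hat{p}_2(x)^{2},
\]
\[
x^{2}\hat{p}_1'(x) + \bigl(-2\lambda + x\,\hat{\sigma}_1(x)\bigr)\hat{p}_1(x) = -\hat{\alpha}_2(x) + x^{2}\,\hat{\beta}_1(x)\,\hat{p}_1(x)^{2},
\]
where $\hat{\sigma}_1,\hat{\sigma}_2\in\form{x}$ are explicit affine combinations of $\hat{\alpha}_1,\hat{\beta}_2$ (hence 1-summable in the same directions). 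A term-by-term identification (using that $\pm 2\lambda\ne 0$) then produces a unique formal solution $\hat{p}_1,\hat{p}_2\in\form{x}$.

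To prove 1-summability in a fixed direction $\theta\ne\arg(\pm\lambda)$, I would set up a fixed-point problem $\hat{p}_2 = \cal T(\hat{p}_2)$ in a small ball of the Banach space $\mathfrak{B}_{\beta,\theta,\delta,\rho}^{\tx{bis}}$, where $\cal T$ is the composition of the right-hand-side map with the inverse of $x^{2}\partial_x + 2\lambda$. For $\delta>0$ small enough, $d_{2\lambda}=\mathrm{dist}(-2\lambda,\Delta_{\theta,\delta,\rho})>0$; Proposition~\ref{prop: solution borel sommable precise}(\ref{enu:Assume--and}) applied with $\alpha=0$, $k=2\lambda$ then gives that $(x^{2}\partial_x + 2\lambda)^{-1}$ is bounded on $\mathfrak{B}_{\beta}^{\tx{bis}}$ with operator norm $\leq 1/d_{2\lambda}$, independent of $\beta$. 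The perturbative terms $x\,\hat{\sigma}_2\,\hat{p}_2$ and $x^{2}\,\hat{\alpha}_2\,\hat{p}_2^{\,2}$ contribute factors $O(1/\beta)$ and $O(1/\beta^{2})$ respectively in the $\|\cdot\|_{\beta}^{\tx{bis}}$-norm, by combining Proposition~\ref{prop: norme d'algebre} with the fact that multiplying by $x$ corresponds in the Borel plane to integration from $0$, hence gains a factor of order $1/\beta$. Consequently $\cal T$ is a strict contraction on a sufficiently small ball of $\mathfrak{B}_{\beta}^{\tx{bis}}$ for $\beta$ large enough, and its unique fixed point is a 1-summable power series in direction $\theta$ which, by uniqueness of the formal solution, coincides with $\hat{p}_2$. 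The same argument, with $k=-2\lambda$, handles $\hat{p}_1$.

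Finally, since $\hat{\Phi}_2$ is 1-summable in every direction $\theta\ne\arg(\pm\lambda)$, Corollary~\ref{cor: summability push-forward} guarantees that all coefficients of $\hat{Y}_2=(\hat{\Phi}_2)_*(\hat{Y}_1)$ are 1-summable in the same directions; the diagonal form of its linear-in-$\mathbf{y}$ part is exactly what the Riccati equations were built to enforce, while all higher-order-in-$\mathbf{y}$ contributions get absorbed into $\hat{H}_1,\hat{H}_2$. The main obstacle is controlling the Riccati nonlinearity together with the coupling term $x\hat{\sigma}_i\hat{p}_i$; both are handled by the fact that they carry explicit positive powers of $x$, which translate to small factors $O(\beta^{-j})$ in the Borel-plane norm by sub-multiplicativity (Proposition~\ref{prop: norme d'algebre}), making $\cal T$ a contraction for $\beta$ sufficiently large.
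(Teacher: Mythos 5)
Your proof is correct, and its first half coincides with the paper's: the paper makes exactly the same ansatz for the linear change of coordinates, extracts the linear-in-$\mathbf{y}$ coefficients of $\hat{g}_{1},\hat{g}_{2}$ (which indeed carry a factor of $x$), and arrives at the same pair of decoupled Riccati-type irregular equations for $\hat{p}_{1},\hat{p}_{2}$ with leading constants $\pm2\lambda\neq0$ and nonlinearity weighted by $x^{2}$; the coefficients $\hat{a}_{1},\hat{a}_{2}$ and the remainders $\hat{H}_{1},\hat{H}_{2}$ are then recovered exactly as you describe, via Proposition \ref{prop: compositon summable}. Where you genuinely diverge is the summability step: the paper settles the Riccati system in one line by invoking the Ramis--Sibuya theorem on $1$-summability of formal solutions of irregular systems (\cite{ramis1989hukuhara}) as a black box, whereas you construct the solution as the fixed point of a contraction on a small ball of $\mathfrak{B}_{\beta,\theta,\delta,\rho}^{\tx{bis}}$, using Proposition \ref{prop: solution borel sommable precise}(\ref{enu:Assume--and}) with $\alpha=0$ to invert $x^{2}\partial_{x}\pm2\lambda$ with operator norm $1/d_{\pm2\lambda}$ (positive precisely because $\theta\neq\arg(\pm\lambda)$), and Proposition \ref{prop: norme d'algebre} plus the $O(1/\beta)$ gain under multiplication by $x$ to shrink the perturbative and quadratic terms for $\beta$ large. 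Your route is longer but self-contained within the paper's own toolbox --- it is essentially the same Borel-plane majorant technique the paper deploys later in the proof of Proposition \ref{prop: preparation} --- and it yields explicit quantitative estimates on $\cal B(\hat{p}_{j})$; the only points demanding a little extra care are the constant term of $\hat{p}_{j}$ when applying the convolution inequality for the second Borel transform, and the verification that the chosen ball is stable under $\cal T$ once $\beta$ is enlarged (the inclusion $\mathfrak{B}_{\beta}^{\tx{bis}}\subset\mathfrak{B}_{\beta'}^{\tx{bis}}$ for $\beta'\geq\beta$ keeps the data in the space). Neither point is a gap, and the identification of the fixed point with the unique formal solution, hence the $1$-summability of $\hat{p}_{1},\hat{p}_{2}$ and then of $\hat{\Phi}_{2}$ and $\hat{Y}_{2}$, goes through as you state.
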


\begin{proof}
Let us write
\[
\begin{cases}
{\displaystyle \hat{g}_{1}\left(x,\mathbf{y}\right)=x\hat{b}_{1}\left(x\right)y_{1}+x\hat{c}_{1}\left(x\right)y_{2}+\hat{G}_{1}\left(x,\mathbf{y}\right)}\\
{\displaystyle \hat{g}_{2}\left(x,\mathbf{y}\right)=x\hat{c}_{2}\left(x\right)y_{1}+x\hat{b}_{2}\left(x\right)y_{2}+\hat{G}_{2}\left(x,\mathbf{y}\right)}
\end{cases}\qquad,
\]
where $\hat{b}_{1},\hat{b}_{2},\hat{c}_{1},\hat{c}_{2},\hat{G}_{1},\hat{G}_{2}$
are formal power series 1-summable in the direction $\theta\neq\arg\left(\pm\lambda\right)$,
such that $\hat{G}_{1}$ and $\hat{G}_{2}$ are of order at least
$2$ with respect to $\mathbf{y}$. Let us consider the following
irregular differential system naturally associated to $\hat{Y}_{1}$: 

\begin{equation}
x^{2}\ddd{\mathbf{z}}x\left(x\right)=\mathbf{\hat{B}}\left(x\right)\mathbf{z}\left(x\right)+\mathbf{\hat{G}}\left(x,\mathbf{z}\left(x\right)\right)\qquad,\label{eq: systeme diff irreg}
\end{equation}
where 
\[
\mathbf{\hat{B}}\left(x\right)=\left(\begin{array}{cc}
-\lambda+x\hat{b}_{1}\left(x\right) & x\hat{c}_{1}\left(x\right)\\
x\hat{c}_{2}\left(x\right) & \lambda+x\hat{b}_{2}\left(x\right)
\end{array}\right),\quad\mathbf{\hat{G}}\left(x,\mathbf{z}\left(x\right)\right)=\left(\begin{array}{c}
\hat{G}_{1}\left(x,\mathbf{z}\left(x\right)\right)\\
\hat{G}_{2}\left(x,\mathbf{z}\left(x\right)\right)
\end{array}\right)\qquad.
\]
We are looking for 
\[
{\displaystyle \mathbf{\hat{P}}\left(x\right)=\left(\begin{array}{cc}
1 & x\hat{p}_{2}\left(x\right)\\
x\hat{p}_{1}\left(x\right) & 1
\end{array}\right)\in\mbox{GL}_{2}\left(\form x\right)}\,\,,
\]
where $\hat{p}_{1},\hat{p}_{2}$ are 1-summable formal power series
in $x$ every direction $\theta\neq\arg\left(\pm\lambda\right)$,
such that the linear transformation given by $\mathbf{z}\left(x\right)=\hat{\mathbf{P}}\left(x\right)\mathbf{y}\left(x\right)$
changes equation (\ref{eq: systeme diff irreg}) to 
\[
x^{2}\ddd{\mathbf{y}}x\left(x\right)=\mathbf{\hat{A}}\left(x\right)\mathbf{y}\left(x\right)+\mathbf{\hat{H}}\left(x,\mathbf{y}\left(x\right)\right)\qquad,
\]
with 
\[
\hat{\mathbf{A}}\left(x\right)=\left(\begin{array}{cc}
-\lambda+x\hat{a}_{1}\left(x\right) & 0\\
0 & \lambda+x\hat{a}_{2}\left(x\right)
\end{array}\right),\quad\hat{\mathbf{H}}\left(x,\mathbf{y}\left(x\right)\right)=\left(\begin{array}{c}
\hat{H}_{1}\left(x,\mathbf{y}\left(x\right)\right)\\
\hat{H}_{2}\left(x,\mathbf{y}\left(x\right)\right)
\end{array}\right)\quad,
\]
where $\hat{a}_{1},\hat{a}_{2},\hat{H}_{1},\hat{H}_{2}$ are 1-summable
formal power series in $x$ every direction $\theta\neq\arg\left(\pm\lambda\right)$. 

We have

\[
x^{2}\ddd{\mathbf{y}}x\left(x\right)=\hat{\mathbf{P}}\left(x\right)^{-1}\left(\mathbf{\hat{B}}(x)\hat{\mathbf{P}}\left(x\right)-x^{2}\ddd{\hat{\mathbf{P}}}x\left(x\right)\right)\mathbf{y}\left(x\right)+\hat{\mathbf{P}}\left(x\right)^{-1}\mathbf{\hat{G}}\left(x,\hat{\mathbf{P}}\left(x\right)\mathbf{y}\left(x\right)\right)
\]
and we want to determine $\mathbf{\hat{A}}\left(x\right)$ and $\hat{\mathbf{P}}\left(x\right)$
as above so that 
\[
\mathbf{\hat{B}}(x)\hat{\mathbf{P}}\left(x\right)-x^{2}\ddd{\hat{\mathbf{P}}}x\left(x\right)=\mathbf{\hat{A}}\left(x\right)\qquad.
\]
This gives four equations: 
\begin{equation}
\begin{cases}
{\displaystyle \hat{a}_{1}\left(x\right)=\hat{b}_{1}\left(x\right)+x\hat{c}_{1}\left(x\right)\hat{p}_{1}\left(x\right)}\\
{\displaystyle \hat{a}_{2}\left(x\right)=\hat{b}_{2}\left(x\right)+x\hat{c}_{2}\left(x\right)\hat{p}_{2}\left(x\right)}\\
{\displaystyle x^{2}\ddd{\hat{p}_{1}}x\left(x\right)=\left(2\lambda+x\hat{b}_{2}\left(x\right)-x-x\hat{b}_{1}\left(x\right)\right)\hat{p}_{1}\left(x\right)+\hat{c}_{2}\left(x\right)-x^{2}\hat{c}_{1}\left(x\right)\hat{p}_{1}\left(x\right)^{2}}\\
{\displaystyle x^{2}\ddd{\hat{p}_{2}}x\left(x\right)=\left(-2\lambda+x\hat{b}_{1}\left(x\right)-x-x\hat{b}_{2}\left(x\right)\right)\hat{p}_{2}\left(x\right)+\hat{c}_{1}\left(x\right)-x^{2}\hat{c}_{2}\left(x\right)\hat{p}_{2}\left(x\right)^{2}}
\end{cases}\qquad.\label{eq: system 4 eq}
\end{equation}
Thanks to the theorem by Ramis and Sibuya on the summability of formal
solutions to irregular systems \cite{ramis1989hukuhara}, we have
the existence and the 1-summability in every direction $\theta\neq\arg\left(\pm\lambda\right)$,
of $\hat{p}_{1}$ and $\hat{p}_{2}$: $\left(\hat{p}_{1}\left(x\right),\hat{p}_{2}\left(x\right)\right)$
is defined as the unique formal solution to 
\[
\begin{cases}
{\displaystyle x^{2}\ddd{\hat{p}_{1}}x\left(x\right)=\left(2\lambda+x\hat{b}_{2}\left(x\right)-x-x\hat{b}_{1}\left(x\right)\right)\hat{p}_{1}\left(x\right)+\hat{c}_{2}\left(x\right)-x^{2}\hat{c}_{1}\left(x\right)\hat{p}_{1}\left(x\right)^{2}}\\
{\displaystyle x^{2}\ddd{\hat{p}_{2}}x\left(x\right)=\left(-2\lambda+x\hat{b}_{1}\left(x\right)-x-x\hat{b}_{2}\left(x\right)\right)\hat{p}_{2}\left(x\right)+\hat{c}_{1}\left(x\right)-x^{2}\hat{c}_{2}\left(x\right)\hat{p}_{2}\left(x\right)^{2}}
\end{cases}
\]
such that 
\[
{\displaystyle \left(\hat{p}_{1}\left(0\right),\hat{p}_{2}\left(0\right)\right)=\left(\frac{-\hat{c}_{2}\left(0\right)}{2\lambda},\frac{\hat{c}_{1}\left(0\right)}{2\lambda}\right)}\,\,.
\]
Notice that $\hat{a}_{1}$ and $\hat{a}_{2}$ are defined by the first
two equations in (\ref{eq: system 4 eq}). Finally, $\hat{\mathbf{H}}$
is defined by 
\[
{\displaystyle \hat{\mathbf{H}}\left(x,\mathbf{y}\right):=\hat{\mathbf{P}}\left(x\right)^{-1}\mathbf{\hat{G}}\left(x,\hat{\mathbf{P}}\left(x\right)\mathbf{y}\right)}\,\,,
\]
and it is 1-summable in every direction $\theta\neq\arg\left(\pm\lambda\right)$,
by Proposition \ref{prop: compositon summable}.
\end{proof}
The goal of the last following lemma is to transform $\hat{a}_{1}\left(x\right)$
and $\hat{a}_{2}\left(x\right)$ in (\ref{eq: Y semi-diag}) to constant
terms.
\begin{lem}
There exists a pair of formal power series $\left(\hat{q}_{1},\hat{q}_{2}\right)\in\left(\form x\right)^{2}$
with $\hat{q}_{1}\left(0\right)=\hat{q}_{2}\left(0\right)=1$, which
are 1-summable in every direction $\theta\neq\arg\left(\pm\lambda\right)$,
such that the formal fibered diffeomorphism of the form 
\[
\hat{\Phi}_{3}\left(x,y_{1},y_{2}\right)=\left(x,\hat{q}_{1}\left(x\right)y_{1},\hat{q}_{2}\left(x\right)y_{2}\right)\,\,,
\]
(which is tangent to the identity and 1-summable in every direction
$\theta\neq\arg\left(\pm\lambda\right)$) conjugates $\hat{Y}_{2}$
in (\ref{eq: Y semi-diag}), to
\begin{eqnarray*}
\hat{Y}_{3}\left(x,\mathbf{y}\right) & = & x^{2}\pp x+\left(\left(-\lambda+a_{1}x\right)y_{1}+\hat{F}_{1}\left(x,\mathbf{y}\right)\right)\pp{y_{1}}\\
 &  & +\left(\left(\lambda+a_{2}x\right)y_{2}+\hat{F}_{2}\left(x,\mathbf{y}\right)\right)\pp{y_{2}}\qquad,
\end{eqnarray*}
where $\hat{F}_{1},\hat{F}_{2}$ are formal power series of order
at least $2$ with respect to $\mathbf{y}$ which are 1-summable in
every direction $\theta\neq\arg\left(\pm\lambda\right)$ and $\left(a_{1},a_{2}\right)=\left(\hat{a}_{1}\left(0\right),\hat{a}_{2}\left(0\right)\right)$.
\end{lem}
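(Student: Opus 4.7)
The plan is to look for $\hat{\Phi}_{3}$ of the stated diagonal form $(x, y_{1}, y_{2}) \mapsto (x, \hat{q}_{1}(x) y_{1}, \hat{q}_{2}(x) y_{2})$ and to read off the requirements on $\hat{q}_{1}, \hat{q}_{2}$ from the expression of the push-forward. A direct computation of $(\hat{\Phi}_{3})_{*}(\hat{Y}_{2})$ shows that the linear-in-$\mathbf{y}$ coefficient of $\partial_{y_{i}}$ becomes
\[
\varepsilon_{i}\lambda + x\hat{a}_{i}(x) + \frac{x^{2}\,\hat{q}_{i}'(x)}{\hat{q}_{i}(x)}\qquad(\varepsilon_{1}=-1,\ \varepsilon_{2}=+1),
\]
while the non-linear part is pushed to $\hat{q}_{i}(x)\,\hat{H}_{i}\bigl(x, y_{1}/\hat{q}_{1}(x), y_{2}/\hat{q}_{2}(x)\bigr)$. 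Matching the linear coefficient with the desired $\varepsilon_{i}\lambda + a_{i} x$ yields the scalar ODE
\[
x \hat{q}_{i}'(x) \;=\; \bigl(a_{i} - \hat{a}_{i}(x)\bigr)\,\hat{q}_{i}(x), \qquad \hat{q}_{i}(0)=1.
\]

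The crucial observation is that this equation becomes a \emph{regular} (not irregular) ODE as soon as the right-hand side is divisible by $x$. I would therefore define $a_{i} := \hat{a}_{i}(0)$, write $a_{i} - \hat{a}_{i}(x) = -x\,\hat{\alpha}_{i}(x)$ with $\hat{\alpha}_{i}\in\form x$, and solve the reduced equation $\hat{q}_{i}'(x) = -\hat{\alpha}_{i}(x) \hat{q}_{i}(x)$ explicitly as
\[
\hat{q}_{i}(x) \;=\; \exp\!\Bigl(-\int_{0}^{x} \hat{\alpha}_{i}(s)\,\tx ds\Bigr).
\]
Since $\hat{a}_{i}$ is $1$-summable in every direction $\theta\neq\arg(\pm\lambda)$, the same is true for $\hat{\alpha}_{i}$ (subtracting a constant and dividing by $x$ preserves $1$-summability), then for its primitive, and finally for the composition with the entire function $\exp$ applied to a series vanishing at $0$. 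Each of these steps is covered by the closure results recalled in Section~2, namely Lemma~\ref{lem:diff_alg_and_summability} and Proposition~\ref{prop: compositon summable}. Note that $\hat{q}_{i}(0)=1$, so $\hat{\Phi}_{3}$ is genuinely tangent to the identity and invertible as a formal fibered diffeomorphism.

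Finally, the non-linear remainder
\[
\hat{F}_{i}(x, \mathbf{y}) \;:=\; \hat{q}_{i}(x)\,\hat{H}_{i}\!\left(x, \frac{y_{1}}{\hat{q}_{1}(x)}, \frac{y_{2}}{\hat{q}_{2}(x)}\right)
\]
is shown to be $1$-summable in every direction $\theta\neq\arg(\pm\lambda)$ by one more application of Proposition~\ref{prop: compositon summable}: the inner germs $y_{j}/\hat{q}_{j}(x)$ vanish when $\mathbf{y}=\mathbf{0}$, which is precisely the hypothesis needed to substitute into $\hat{H}_{i}$, and the invertibility $\hat{q}_{j}(0)=1$ ensures $1/\hat{q}_{j}$ is itself a $1$-summable unit in $\form x$. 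The order-at-least-two condition on $\hat{F}_{i}$ with respect to $\mathbf{y}$ is simply inherited from $\hat{H}_{i}$. In contrast to the two preceding lemmas, there is no real obstacle: the singularity of the ODE governing $\hat{q}_{i}$ is regular, so the Ramis--Sibuya theorem on summability of solutions of irregular systems is not invoked, and the step is essentially a bookkeeping argument on $1$-summability.
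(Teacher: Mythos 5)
Your proposal is correct and follows essentially the same route as the paper: both reduce to the scalar equation $x\hat{q}_i' = \pm(\hat{a}_i(x)-a_i)\hat{q}_i$ (the sign depending only on which direction of the conjugacy one writes down), observe that the right-hand side is divisible by $x$ so the equation is regular and solved explicitly by the exponential of the constant-free primitive, and then conclude $1$-summability of $\hat{q}_i$ and of $\hat{F}_i$ from the closure properties of Lemma~\ref{lem:diff_alg_and_summability} and Proposition~\ref{prop: compositon summable}. The paper phrases the computation via the associated matrix system $x^2\hat{\mathbf{Q}}' = \hat{\mathbf{A}}\hat{\mathbf{Q}} - \hat{\mathbf{Q}}\mathbf{A}$ rather than a direct push-forward, but this is only cosmetic.
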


\begin{proof}
We can associate to $\hat{Y}_{2}$ the following irregular differential
system: 
\[
x^{2}\ddd{\mathbf{z}}x\left(x\right)=\mathbf{\hat{A}}\left(x\right)\mathbf{z}\left(x\right)+\mathbf{\hat{H}}\left(x,\mathbf{z}\left(x\right)\right)\qquad,
\]
and we are looking for a change of coordinates of the form ${\displaystyle \mathbf{z}\left(x\right)=\hat{\mathbf{Q}}\left(x\right)\mathbf{y}\left(x\right)}$,
where 
\[
{\displaystyle \mathbf{\hat{Q}}\left(x\right)=\left(\begin{array}{cc}
\hat{q}_{1}\left(x\right) & 0\\
0 & \hat{q}_{2}\left(x\right)
\end{array}\right)\in\mbox{GL}_{2}\left(\form x\right)}
\]
 with $\hat{q}_{1}\left(0\right)=\hat{q}_{2}\left(0\right)=1$, such
that the new system is 
\[
x^{2}\ddd{\mathbf{y}}x\left(x\right)=\mathbf{A}\left(x\right)\mathbf{y}\left(x\right)+\mathbf{\hat{F}}\left(x,\mathbf{y}\left(x\right)\right)\qquad,
\]
with 
\[
\mathbf{A}\left(x\right)=\left(\begin{array}{cc}
-\lambda+a_{1}x & 0\\
0 & \lambda+a_{2}x
\end{array}\right),\quad\hat{\mathbf{F}}\left(x,\mathbf{y}\left(x\right)\right)=\left(\begin{array}{c}
\hat{F}_{1}\left(x,\mathbf{y}\left(x\right)\right)\\
\hat{F}_{2}\left(x,\mathbf{y}\left(x\right)\right)
\end{array}\right)\quad,
\]
and $\left(a_{1},a_{2}\right)=\left(\hat{a}_{1}\left(0\right),\hat{a}_{2}\left(0\right)\right)$. 

We have
\begin{eqnarray*}
x^{2}\ddd{\mathbf{y}}x\left(x\right) & = & \underbrace{\hat{\mathbf{Q}}\left(x\right){}^{-1}\left(\hat{\mathbf{A}}\left(x\right)\hat{\mathbf{Q}}\left(x\right)-x^{2}\ddd{\hat{\mathbf{Q}}}x\left(x\right)\right)}\mathbf{y}(x)+\hat{\mathbf{Q}}(x)^{-1}\hat{\mathbf{H}}\left(x,\hat{\mathbf{Q}}(x)\mathbf{y}(x)\right)\\
 &  & \qquad\,\qquad\qquad\quad=\\
 &  & \qquad\quad\left(\begin{array}{cc}
-\lambda+a_{1}x & 0\\
0 & \lambda+a_{2}x
\end{array}\right)
\end{eqnarray*}
so that

\[
x^{2}\ddd{\hat{\mathbf{Q}}}x(x)=\hat{\mathbf{A}}(x)\hat{\mathbf{Q}}(x)-\hat{\mathbf{Q}}(x)\left(\begin{array}{cc}
-\lambda+a_{1}x & 0\\
0 & \lambda+a_{2}x
\end{array}\right)\qquad
\]
and we obtain:

\begin{eqnarray*}
 & \begin{cases}
{\displaystyle x^{2}\ddd{\hat{q}_{1}}x(x)=x\hat{q}_{1}(x)\left(\hat{a}_{1}\left(x\right)-a_{1}\right)}\\
{\displaystyle x^{2}\ddd{\hat{q}_{2}}x(x)=x\hat{q}_{2}(x)\left(\hat{a}_{2}\left(x\right)-a_{2}\right)}
\end{cases}\\
\Longleftrightarrow & \begin{cases}
{\displaystyle \ddd{\hat{q}_{1}}x(x)=\hat{q}_{1}(x)\left(\frac{\hat{a}_{1}\left(x\right)-a_{1}}{x}\right)}\\
{\displaystyle \ddd{\hat{q}_{2}}x(x)=\hat{q}_{2}(x)\left(\frac{\hat{a}_{2}\left(x\right)-a_{2}}{x}\right)}
\end{cases}\\
\Longleftrightarrow & \begin{cases}
{\displaystyle \hat{q}_{1}(x)=\mbox{exp}\left(\int_{0}^{x}\frac{\hat{a}_{1}\left(s\right)-a_{1}}{s}\mbox{d}s\right)}\\
{\displaystyle \hat{q}_{2}(x)=\mbox{exp}\left(\int_{0}^{x}\frac{\hat{a}_{2}\left(s\right)-a_{2}}{s}\mbox{d}s\right)}
\end{cases} & ,\,\mbox{if we set }\hat{q}_{1}\left(0\right)=\hat{q}_{2}\left(0\right)=1\,\,,
\end{eqnarray*}
and the expression ${\displaystyle \int_{0}^{x}\frac{\hat{a}_{j}\left(s\right)-a_{j}}{s}\mbox{d}s}$,
for $j=1,2$, means the only anti-derivative of ${\displaystyle \frac{\hat{a}_{j}\left(s\right)-a_{j}}{s}}$
without constant term. Since $\hat{a}_{1}$ and $\hat{a}_{2}$ are
1-summable in every direction $\theta\neq\arg\left(\pm\lambda\right)$,
the same goes for $\hat{q}_{1}$ and $\hat{q}_{2}$, and then for
$\hat{F}_{1}$ and $\hat{F}_{2}$ by Proposition \ref{prop: compositon summable}.
\end{proof}

\subsubsection{\label{subsec:Proof-of-Proposition}Proof of Proposition \ref{prop: diag prep}.}

~

We are now able to prove Proposition \ref{prop: diag prep}.
\begin{proof}[Proof of Proposition \ref{prop: diag prep}\emph{.}]

We have to use successively Lemma \ref{lem: preparation sur hypersurface invariante}
(with $Y_{0}:=Y_{\mid\acc{x=0}}$), followed by Proposition \ref{prop: diag prep},
then Proposition \ref{prop: preparation} and finally Proposition
\ref{prop: ordre N avec eq homologique}, using at each step Corollary
\ref{cor: summability push-forward} to obtain the 1-summability.
\end{proof}

\subsection{1-summable straightening of two invariant hypersurfaces}

~

For any $\theta\in\ww R$, we recall that we denote by $F_{\theta}$
the 1-sum of a 1-summable series $\hat{F}$ in the direction $\theta$.

Let $\theta\in\ww R$ with $\theta\neq\arg\left(\pm\lambda\right)$
and consider a formal vector field $\hat{Y}$, 1-summable in the direction
$\theta$ of 1-sum $Y_{\theta}$, of the form 
\begin{equation}
\hat{Y}=x^{2}\pp x+\left(\lambda_{1}\left(x\right)y_{1}+\hat{F}_{1}\left(x,\mathbf{y}\right)\right)\pp{y_{1}}+\left(\lambda_{2}\left(x\right)y_{2}+\hat{F}_{2}\left(x,\mathbf{y}\right)\right)\pp{y_{2}}\qquad,\label{eq: Y semi prep}
\end{equation}
where:
\begin{itemize}
\item $\lambda_{1}\left(x\right)=-\lambda+a_{1}x$
\item $\lambda_{2}\left(x\right)=\lambda+a_{2}x$
\item $\lambda\neq0$
\item $a_{1},a_{2}\in\ww C$
\item for $j=1,2$, 
\[
{\displaystyle \hat{F}_{j}\left(x,\mathbf{y}\right)=\sum_{\substack{\mathbf{n}\in\ww N^{2}\\
\abs{\mathbf{n}}\geq2
}
}\hat{F}_{\mathbf{n}}^{\left(j\right)}\left(x\right)\mathbf{y^{n}}}\in\form{x,\mathbf{y}}
\]
 is 1-summable in the direction $\theta$ of 1-sum 
\[
{\displaystyle F_{j,\theta}\left(x,\mathbf{y}\right)=\sum_{\substack{\mathbf{n}\in\ww N^{2}\\
\abs{\mathbf{n}}\geq2
}
}F_{j,\mathbf{n},\theta}\left(x\right)\mathbf{y^{n}}}\,\,.
\]
In particular, there exists $A,B,\mu>0$ such that for all $\mathbf{n}\in\ww N^{2}$,
$\abs n\geq2$, for $j=1,2$: 
\[
\forall t\in\Delta_{\theta,\epsilon,\rho},\,\abs{\widetilde{\cal B}\left(\hat{F}_{j,\mathbf{n}}\right)\left(t\right)}\leq A.B^{\abs{\mathbf{n}}}\frac{\exp\left(\mu\abs t\right)}{1+\mu^{2}\abs t^{2}}\qquad,
\]
for some $\rho>0$ and $\epsilon>0$ such that $\left(\ww R.\lambda\right)\cap\cal A_{\theta,\epsilon}=\emptyset$
(see Definition \ref{def: 1_summability} and Remark \ref{rem: norme bis borel}
for the notations). Notice that $F_{j,\theta}$ is analytic and bounded
in some sectorial neighborhood $\cal S\in{\cal S}_{\theta,\pi}$ of
the origin. For technical reasons, we use in this subsection the alternative
definition of the Borel transform $\widetilde{\cal B}$, with its
associate norm $\norm{\cdot}_{\mu}^{\tx{bis}}$ (see Remarks \ref{rem: deifinition bis transformee de Borel}
and \ref{rem: norme bis borel} and Proposition \ref{prop: norme d'algebre}
\end{itemize}
\begin{prop}
\label{prop: preparation}Under the assumptions above, there exists
a pair of formal power series $\left(\hat{\phi}_{1},\hat{\phi}_{2}\right)\in\left(\form{x,\mathbf{y}}\right)^{2}$
of order at least two with respect to $\mathbf{y}$ which are 1-summable
in every direction $\theta\neq\arg\left(\pm\lambda\right)$, such
that the formal fibered diffeomorphism 
\[
\hat{\Phi}\left(x,\mathbf{y}\right)=\left(x,y_{1}+\hat{\phi}_{1}\left(x,\mathbf{y}\right),y_{2}+\hat{\phi}_{2}\left(x,\mathbf{y}\right)\right)\qquad,
\]
(which is tangent to the identity and 1-summable in every direction
$\theta\neq\arg\left(\pm\lambda\right)$) conjugates $\hat{Y}$ in
(\ref{eq: Y semi prep}) to 
\[
\hat{Y}_{\tx{prep}}=x^{2}\pp x+\left(\left(-\lambda+a_{1}x\right)+y_{2}\hat{R}_{1}\left(x,\mathbf{y}\right)\right)y_{1}\pp{y_{1}}+\left(\left(\lambda+a_{2}x\right)+y_{1}\hat{R}_{2}\left(x,\mathbf{y}\right)\right)y_{2}\pp{y_{2}}\qquad,
\]
where ${\displaystyle \hat{R}_{1},\hat{R}_{2}\in\form{x,\mathbf{y}}}$
are 1-summable in every direction $\theta\neq\arg\left(\pm\lambda\right)$.
\end{prop}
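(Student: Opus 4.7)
The plan is to reduce the statement to a geometric problem about formal invariant hypersurfaces and then handle summability via a cascade of linear ODEs furnished by Proposition~\ref{prop: solution borel sommable precise}. Observe that $\hat{Y}_{\tx{prep}}$ is characterised by the invariance of the two hypersurfaces $\{y_{1}=0\}$ and $\{y_{2}=0\}$. Since the formal curve $\{y_{1}=y_{2}=0\}$ is already invariant for $\hat{Y}$ (the $\hat{F}_{j}$ vanish to order~$2$ in $\mathbf{y}$), and the linear part of $\hat{Y}$ at the origin has simple eigenvalues $0,-\lambda,\lambda$, the two ``strong'' invariant formal hypersurfaces through this curve (tangent to $\{y_{1}=0\}$ and $\{y_{2}=0\}$ respectively) are natural candidates. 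I therefore seek them as graphs
\begin{equation*}
\{y_{1}+\hat{\psi}_{1}(x,y_{2})=0\}\quad\text{and}\quad\{y_{2}+\hat{\psi}_{2}(x,y_{1})=0\},
\end{equation*}
with $\hat{\psi}_{1}\in y_{2}^{2}\form{x,y_{2}}$ and $\hat{\psi}_{2}\in y_{1}^{2}\form{x,y_{1}}$. Setting $\hat{\phi}_{1}(x,\mathbf{y}):=\hat{\psi}_{1}(x,y_{2})$ and $\hat{\phi}_{2}(x,\mathbf{y}):=\hat{\psi}_{2}(x,y_{1})$ produces the desired fibered diffeomorphism $\hat{\Phi}$, tangent to the identity.

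By symmetry I focus on $\hat{\psi}_{1}$. The invariance of $\{y_{1}+\hat{\psi}_{1}=0\}$, written by plugging $y_{1}=-\hat{\psi}_{1}(x,y_{2})$ into $\hat{Y}\cdot(y_{1}+\hat{\psi}_{1})=0$, yields the singular nonlinear PDE
\begin{equation*}
x^{2}\ppp{\hat{\psi}_{1}}{x}+\lambda_{2}(x)\,y_{2}\ppp{\hat{\psi}_{1}}{y_{2}}-\lambda_{1}(x)\hat{\psi}_{1}=-\hat{F}_{1}(x,-\hat{\psi}_{1},y_{2})-\hat{F}_{2}(x,-\hat{\psi}_{1},y_{2})\ppp{\hat{\psi}_{1}}{y_{2}}.
\end{equation*}
Expanding $\hat{\psi}_{1}=\sum_{n\geq 2}\hat{\psi}_{1}^{(n)}(x)y_{2}^{n}$ and identifying the coefficient of $y_{2}^{n}$ gives a cascade of singular ODEs
\begin{equation*}
x^{2}\bigl(\hat{\psi}_{1}^{(n)}\bigr)'(x)+\bigl(k_{n}+(na_{2}-a_{1})x\bigr)\hat{\psi}_{1}^{(n)}(x)=\hat{R}^{(n)}(x),\qquad k_{n}:=(n+1)\lambda,
\end{equation*}
where $\hat{R}^{(n)}$ is a universal polynomial expression in $\hat{\psi}_{1}^{(2)},\dots,\hat{\psi}_{1}^{(n-1)}$, with coefficients drawn from the Taylor expansions of $\hat{F}_{1},\hat{F}_{2}$. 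Each such equation has the form $x^{2}a'+(1+\alpha_{n}x)k_{n}a=\hat{R}^{(n)}$ with $\alpha_{n}=(na_{2}-a_{1})/k_{n}$, bounded as $n\to\infty$, so it falls exactly under Proposition~\ref{prop: solution borel sommable precise}, part~(\ref{enu:Assume--and}).

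Since $\theta\neq\arg(\pm\lambda)$, the point $-k_{n}=-(n+1)\lambda$ lies at distance $d_{k_{n}}\geq c(n+1)|\lambda|$ from the domain $\Delta_{\theta,\epsilon,\rho}$, for some $c>0$ depending only on $\theta$. Hence for $\beta$ fixed large enough, independently of $n$, the hypothesis $\beta d_{k_{n}}>C|\alpha_{n}k_{n}|$ holds uniformly, and Proposition~\ref{prop: solution borel sommable precise} yields the unique $1$-summable solution $\hat{\psi}_{1}^{(n)}$ together with an estimate of the form
\begin{equation*}
\norm{\hat{\psi}_{1}^{(n)}}_{\beta}^{\tx{bis}}\leq\frac{M}{n}\norm{\hat{R}^{(n)}}_{\beta}^{\tx{bis}}
\end{equation*}
for some $M>0$ independent of $n$. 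Choosing $\beta\geq 4\pi$ so that $\norm{\cdot}_{\beta}^{\tx{bis}}$ is sub-multiplicative (Proposition~\ref{prop: norme d'algebre}), a majorant-series argument combined with an induction on $n$ yields geometric bounds $\norm{\hat{\psi}_{1}^{(n)}}_{\beta}^{\tx{bis}}\leq LB^{n}$ for suitable $L,B>0$, and Corollary~\ref{cor: faible et forte sommabilite} upgrades this to the $1$-summability of $\hat{\psi}_{1}$ in the direction~$\theta$. Constructing $\hat{\psi}_{2}$ symmetrically and invoking Corollary~\ref{cor: summability push-forward} for $\hat{\Phi}_{*}(\hat{Y})$ then gives the $1$-summability of the residual coefficients $\hat{R}_{1},\hat{R}_{2}$.

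The main obstacle is precisely this inductive step: the nonlinear coupling in $\hat{R}^{(n)}$---notably the contribution of $\hat{F}_{2}\ppp{\hat{\psi}_{1}}{y_{2}}$, in which the derivation in $y_{2}$ costs a factor of order $n$---must be balanced against the $1/n$ gain supplied by the resolvent estimate. The freedom to enlarge $\beta$ (while possibly shrinking $\rho$, $\epsilon$ and the $\mathbf{y}$-polydisc) is what permits the simultaneous absorption of the sub-multiplicativity constant and of this derivative cost, and thus what makes it possible to close the induction with a uniform geometric majorant.
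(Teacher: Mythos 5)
There is a genuine gap, and it sits in your opening ``observation'': the prepared form $\hat{Y}_{\tx{prep}}$ is \emph{not} characterised by the invariance of $\acc{y_{1}=0}$ and $\acc{y_{2}=0}$. Invariance of those two hypersurfaces only forces the coefficient of $\pp{y_{1}}$ to be divisible by $y_{1}$ and that of $\pp{y_{2}}$ by $y_{2}$; the statement demands more, namely that the whole nonlinear part of the $\pp{y_{1}}$-coefficient equal $y_{1}y_{2}\hat{R}_{1}$, hence be divisible by $y_{1}y_{2}$, so that no monomial $y_{1}^{m}\pp{y_{1}}$ with $m\geq2$ (and symmetrically no $y_{2}^{m}\pp{y_{2}}$) survives. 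Your ansatz $\hat{\phi}_{1}(x,\mathbf{y})=\hat{\psi}_{1}(x,y_{2})$, $\hat{\phi}_{2}(x,\mathbf{y})=\hat{\psi}_{2}(x,y_{1})$ contains no pure powers of $y_{1}$ in the first component and no pure powers of $y_{2}$ in the second, so it cannot remove such terms. Concretely, for $\hat{Y}=x^{2}\pp x+\left(-\lambda y_{1}+y_{1}^{2}\right)\pp{y_{1}}+\lambda y_{2}\pp{y_{2}}$ both hypersurfaces are already invariant, your cascade forces $\hat{\psi}_{1}=\hat{\psi}_{2}=0$, and the output still carries the forbidden term $y_{1}^{2}\pp{y_{1}}$. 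What is missing is the normalization of the restriction of the vector field to each straightened hypersurface, not just the straightening itself.

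The repair is to enlarge the ansatz exactly as the paper does: take each correction supported on \emph{both} axes of the monomial lattice, $\hat{\phi}_{j}(x,\mathbf{y})=\sum_{n\geq2}\hat{\psi}_{j,(n,0)}(x)y_{1}^{n}+\sum_{n\geq2}\hat{\psi}_{j,(0,n)}(x)y_{2}^{n}$ for $j=1,2$, and require that the residual nonlinearity $\hat{T}_{j}$ have \emph{no} monomials on the axes (which is precisely divisibility by $y_{1}y_{2}$). This matches unknowns to conditions degree by degree, and the additional ODEs (for the $y_{1}^{n}$-coefficients of the first component, say) have the same shape $x^{2}a'+(1+\alpha x)ka=\hat{b}$ but with $k=-(n-1)\lambda$ instead of $(n+1)\lambda$; since $\theta\neq\arg\left(\pm\lambda\right)$ they are covered by Proposition \ref{prop: solution borel sommable precise}~(\ref{enu:Assume--and}) exactly as the ones you treat. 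Your summability mechanism (a $\beta$ chosen large but uniform in $n$, sub-multiplicativity of $\norm{\cdot}_{\beta}^{\tx{bis}}$, a majorant series, and Corollary \ref{cor: faible et forte sommabilite}) is the paper's and closes for the enlarged system; in fact, with the complementary supports above, the terms $\hat{T}_{i}\,\partial\hat{\psi}_{j}/\partial y_{k}$ land entirely off the axes and so never enter the equations you must solve, which makes the derivative cost you worry about at the end disappear altogether.
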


\begin{proof}
We follow and adapt the proof of analytic straightening of invariant
curves for resonant saddles in two dimensions in \cite{MatteiMoussu}.

We are looking for 
\[
\hat{\Psi}\left(x,\mathbf{y}\right)=\left(x,y_{1}+\hat{\psi}_{1}\left(x,\mathbf{y}\right),y_{2}+\hat{\psi}_{2}\left(x,\mathbf{y}\right)\right)\quad,
\]
with $\hat{\psi}_{1},\hat{\psi}_{2}$ of order at least 2, and $\hat{R}_{1},\hat{R}_{2}$
as above such that: 
\[
\hat{\Psi}_{*}\left(\hat{Y}_{\tx{prep}}\right)=\hat{Y}\qquad,
\]
\emph{i.e. 
\begin{eqnarray}
\mbox{D}\hat{\Psi}\cdot\hat{Y}_{\tx{prep}} & = & \hat{Y}\circ\hat{\Psi}\qquad.\label{eq: conjugaison forme prep}
\end{eqnarray}
}Then, we will set $\Phi:=\Psi^{-1}$. Let us write 
\begin{eqnarray*}
\hat{T}_{1} & := & y_{1}y_{2}\hat{R}_{1}=\sum_{\abs{\mathbf{n}}\geq2}\hat{T}_{1,\mathbf{n}}\left(x\right)\mathbf{y^{n}}\\
\hat{T}_{2} & := & y_{1}y_{2}\hat{R}_{2}=\sum_{\abs{\mathbf{n}}\geq2}\hat{T}_{2,\mathbf{n}}\left(x\right)\mathbf{y^{n}}\qquad\\
\hat{\psi}_{1} & = & \sum_{\abs{\mathbf{n}}\geq2}\hat{\psi}_{1,\mathbf{n}}\left(x\right)\mathbf{y^{n}}\qquad\\
\hat{\psi}_{2} & = & \sum_{\abs{\mathbf{n}}\geq2}\hat{\psi}_{2,\mathbf{n}}\left(x\right)\mathbf{y^{n}}\qquad,
\end{eqnarray*}
so that equation $\left(\mbox{\ref{eq: conjugaison forme prep}}\right)$
becomes:
\begin{eqnarray*}
 &  & x^{2}\ppp{\hat{\psi}_{1}}{x^{2}}+\left(1+\ppp{\hat{\psi}_{1}}{y_{1}}\right)\left(\lambda_{1}\left(x\right)y_{1}+\hat{T}_{1}\right)+\ppp{\hat{\psi}_{1}}{y_{2}}\left(\lambda_{2}\left(x\right)y_{2}+\hat{T}_{2}\right)\\
 &  & =\lambda_{1}\left(x\right)\left(y_{1}+\hat{\psi}_{1}\right)+\hat{F}_{1}\left(x,y_{1}+\hat{\psi}_{1},y_{2}+\hat{\psi}_{2}\right)
\end{eqnarray*}
and 
\begin{eqnarray*}
 &  & x^{2}\ppp{\hat{\psi}_{2}}{x^{2}}+\ppp{\hat{\psi}_{2}}{y_{1}}\left(\lambda_{1}\left(x\right)y_{1}+\hat{T}_{1}\right)+\left(1+\ppp{\hat{\psi}_{2}}{y_{2}}\right)\left(\lambda_{2}\left(x\right)y_{2}+\hat{T}_{2}\right)\\
 &  & =\lambda_{2}\left(x\right)\left(y_{2}+\hat{\psi}_{2}\right)+\hat{F}_{2}\left(x,y_{1}+\hat{\psi}_{1},y_{2}+\hat{\psi}_{2}\right)\,\,\,.
\end{eqnarray*}
These equations can be written as:
\begin{equation}
\begin{cases}
{\displaystyle \underset{\left|\mathbf{n}\right|\geq2}{\sum}\left(\delta_{1,\mathbf{n}}(x)\hat{\psi}_{1,\mathbf{n}}\left(x\right)+x^{2}\ddd{\hat{\psi}_{1,\mathbf{n}}}x(x)+\hat{T}_{1,\mathbf{n}}\left(x\right)\right)\mathbf{y}^{\mathbf{n}}}\\
{\displaystyle =\hat{F}_{1}\left(x,y_{1}+\hat{\psi}_{1}\left(x,\mathbf{y}\right),y_{2}+\hat{\psi}_{2}\left(x,\mathbf{y}\right)\right)-\hat{T}_{1}(x)\frac{\partial\hat{\psi}_{1}}{\partial y_{1}}(x,\mathbf{y})-\hat{T}_{2}(x)\frac{\partial\hat{\psi}_{1}}{\partial y_{2}}(x,\mathbf{y})}\\
{\displaystyle =:\underset{\left|\mathbf{n}\right|\geq2}{\sum}\zeta_{1,\mathbf{n}}(x)\mathbf{y}^{\mathbf{n}}}\\
{\displaystyle \underset{\left|\mathbf{n}\right|\geq2}{\sum}\left(\delta_{2,\mathbf{n}}(x)\hat{\psi}_{2,\mathbf{n}}\left(x\right)+x^{2}\ddd{\hat{\psi}_{2,\mathbf{n}}}x(x)+\hat{T}_{2,\mathbf{n}}\left(x\right)\right)\mathbf{y}^{\mathbf{n}}}\\
{\displaystyle =\hat{F}_{2}\left(x,y_{1}+\hat{\psi}_{1}\left(x,\mathbf{y}\right),y_{2}+\hat{\psi}_{2}\left(x,\mathbf{y}\right)\right)-\hat{T}_{1}(x)\frac{\partial\hat{\psi}_{2}}{\partial y_{1}}(x,\mathbf{y})-\hat{T}_{2}(x)\frac{\partial\hat{\psi}_{2}}{\partial y_{2}}(x,\mathbf{y})}\\
{\displaystyle =:\underset{\left|\mathbf{n}\right|\geq2}{\sum}\zeta_{2,\mathbf{n}}(x)\mathbf{y}^{\mathbf{n}}}
\end{cases}\label{eq: =0000E9quation conjugaison serie}
\end{equation}
where ${\displaystyle \delta_{j,\mathbf{n}}(x)=\lambda_{1}(x)n_{1}+\lambda_{2}(x)n_{2}-\lambda_{j}(x)}$,
$j=1,2$. We are looking for $\hat{T}_{1},\hat{T}_{2}$ such that
\[
\begin{cases}
\hat{T}_{1,\mathbf{n}}=0 & \mbox{, if }n_{1}=0\mbox{ or }n_{2}=0\\
\hat{T}_{2,\mathbf{n}}=0 & \mbox{, if }n_{1}=0\mbox{ or }n_{2}=0
\end{cases}\mbox{ }\qquad.
\]
Notice that $\zeta_{j,\mathbf{n}}$, for $j=1,2$ and $\abs{\mathbf{n}}\geq2$,
depends only on the $\hat{\psi}_{i,\mathbf{k}}\mbox{'s}$ and the
$\hat{F}_{i,\mathbf{k}}\mbox{'s}$, for $i=1,2$, $\abs{\mathbf{k}}<\mathbf{n}$.
We can then determine the coefficients $\hat{\psi}_{j,\mathbf{n}}$
and $\hat{T}_{j,\mathbf{n}}$, $j=1,2$, $\abs{\mathbf{n}}\geq2$,
by induction on $\abs{\mathbf{n}}$, setting 
\[
\begin{cases}
\hat{T}_{1,\mathbf{n}}=0 & \mbox{, if }n_{1}=0\mbox{ or }n_{2}=0\\
\hat{T}_{2,\mathbf{n}}=0 & \mbox{, if }n_{1}=0\mbox{ or }n_{2}=0\\
\hat{\psi}_{1,\mathbf{n}}=0 & ,\mbox{ if }n_{1}\geq1\mbox{ and }n_{2}\geq1\\
\hat{\psi}_{2,\mathbf{n}}=0 & ,\mbox{ if }n_{1}\geq1\mbox{ and }n_{2}\geq1
\end{cases}\qquad,
\]
and solving for each $\mathbf{n}=\left(n_{1},n_{2}\right)\in\ww N^{2}$
with $\abs{\mathbf{n}}\geq2$, the equations
\[
\begin{cases}
{\displaystyle \delta_{1,\mathbf{n}}(x)\hat{\psi}_{1,\mathbf{n}}\left(x\right)+x^{2}\ddd{\hat{\psi}_{1,\mathbf{n}}}x(x)=\zeta_{1,\mathbf{n}}\left(x\right)} & \mbox{, if }n_{1}=0\mbox{ or }n_{2}=0\\
{\displaystyle \delta_{2,\mathbf{n}}(x)\hat{\psi}_{2,\mathbf{n}}\left(x\right)+x^{2}\ddd{\hat{\psi}_{2,\mathbf{n}}}x(x)=\zeta_{2,\mathbf{n}}\left(x\right)} & \mbox{, if }n_{1}=0\mbox{ or }n_{2}=0
\end{cases}\,\,\,\,\,.
\]

\begin{lem}
There exists $\beta>4\pi,M>0$ such that for all $\mathbf{n}\in\ww N^{2}$
with $\abs{\mathbf{n}}\geq2$, and for $j=1,2$, $\norm{\zeta_{j,\mathbf{n}}}_{\beta}^{\tx{bis}}<+\infty$
and: 
\[
\norm{\hat{\psi}_{j,\mathbf{n}}}_{\beta}^{\tx{bis}}\leq M.\norm{\zeta_{j,\mathbf{n}}}_{\beta}^{\tx{bis}}\qquad,
\]
where the norm corresponds to the domain $\triangle_{\theta,\epsilon,\rho}$
(see Definition \ref{def: 1_summability}).
\end{lem}

\begin{proof}
For $\mathbf{n}=\left(n_{1},n_{2}\right)\in\ww N^{2}$ with $n_{1}+n_{2}\geq2$
we want to solve: 
\[
\begin{cases}
{\displaystyle \delta_{1,\mathbf{n}}(x)=\lambda_{1}(x)\left(n_{1}-1\right)+\lambda_{2}(x)n_{2}=} & \begin{cases}
\lambda\left(n_{2}+1\right)+x\left(-a_{1}+a_{2}n_{2}\right) & \mbox{ , if }n_{1}=0\\
-\lambda\left(n_{1}-1\right)+a_{1}x\left(n_{1}-1\right) & \mbox{ , if }n_{2}=0
\end{cases}\\
{\displaystyle \delta_{2,\mathbf{n}}(x)=\lambda_{2}(x)\left(n_{2}-1\right)+\lambda_{1}(x)n_{1}=} & \begin{cases}
\lambda\left(n_{2}-1\right)+a_{2}x\left(n_{2}-1\right) & \mbox{ , if }n_{1}=0\\
-\lambda\left(n_{1}+1\right)+x\left(-a_{2}+a_{1}n_{1}\right) & \mbox{ , if }n_{2}=0\,\,.
\end{cases}
\end{cases}
\]
We will only deal with $\delta_{1,\mathbf{n}}(x)$ (the case of $\delta_{2,\mathbf{n}}(x)$
being similar). Notice that we are exactly in the situation of Proposition
\ref{prop: solution borel sommable precise}. In particular, using
notation in this definition, we respectively have:

\begin{eqnarray*}
 &  & \begin{cases}
{\displaystyle k=\lambda\left(n_{2}+1\right),\,\alpha=\frac{\left(-a_{1}+a_{2}n_{2}\right)}{\lambda\left(n_{2}+1\right)},}\\
{\displaystyle d_{k}=\min\acc{\abs{\lambda\left(n_{2}+1\right)}-\rho,\abs{\lambda\left(n_{2}+1\right)}\abs{\sin\left(\theta+\epsilon\right)},\abs{\lambda\left(n_{2}+1\right)}\abs{\sin\left(\theta-\epsilon\right)}}}
\end{cases}\\
 &  & (\mbox{when }n_{1}=0)
\end{eqnarray*}
and

\begin{eqnarray*}
 &  & \begin{cases}
{\displaystyle k=-\lambda\left(n_{1}+1\right),\,\alpha=\frac{\left(-a_{2}+a_{1}n_{1}\right)}{-\lambda\left(n_{1}+1\right)},}\\
{\displaystyle d_{k}=\min\acc{\abs{\lambda\left(n_{1}+1\right)}-\rho,\abs{\lambda\left(n_{1}+1\right)}\abs{\sin\left(\theta+\epsilon\right)},\abs{\lambda\left(n_{1}+1\right)}\abs{\sin\left(\theta-\epsilon\right)}}}
\end{cases}\\
 &  & (\mbox{when }n_{2}=0).
\end{eqnarray*}
We can chose the domain $\Delta_{\theta,\epsilon,\rho}$ corresponding
to the 1-summability of $\hat{F}_{1}$ and $\hat{F}_{2}$ with $0<\rho<\abs{\lambda}$,
so that $d_{k}>0$, since $\epsilon>0$ is such that $\left(\ww R.\lambda\right)\cap\cal A_{\theta,\epsilon}=\emptyset$.
Finally, we chose 
\[
\beta>\frac{C\left(\abs{a_{1}}+\abs{a_{2}}\right)}{\min\acc{\abs{\lambda}-\rho,\abs{\lambda\sin\left(\theta+\epsilon\right)},\abs{\lambda\sin\left(\theta-\epsilon\right)}}}>0\,\,,
\]
(with $C=\frac{2\exp\left(2\right)}{5}+5)$, so that $\norm{\hat{F}_{1}}_{\beta}^{\tx{bis}}<+\infty$.
This choice of $\beta$ implies $\beta d_{k}>C\abs{\alpha k}$ as
needed in Proposition \ref{prop: solution borel sommable precise},
in both considered situations, namely $n_{1}=0$ and $n_{2}=0$ respectively.
Since for $j=1,2$ and $\abs{\mathbf{n}}\geq2$, $\zeta_{j,\mathbf{n}}$
depends only on the $\hat{\psi}_{i,\mathbf{k}}$'s and the $\hat{F}_{i,\mathbf{k}}$'s,
for $i=1,2$, $\abs{\mathbf{k}}<\mathbf{n}$, we deduce by induction
that 
\[
\begin{cases}
{\displaystyle \norm{\zeta_{1,\mathbf{n}}}_{\beta}^{\tx{bis}}<+\infty} & \mbox{ , if }n_{1}=0\mbox{ or }n_{2}=0\\
{\displaystyle \norm{\zeta_{2,\mathbf{n}}}_{\beta}^{\tx{bis}}<+\infty} & \mbox{ , if }n_{1}=0\mbox{ or }n_{2}=0
\end{cases}
\]
and then, thanks to Proposition \ref{prop: solution borel sommable precise}:
\[
{\displaystyle \norm{\hat{\psi}_{j,\mathbf{n}}}_{\beta}^{\tx{bis}}\leq\left(\frac{\beta}{\beta\left(\abs{\lambda}-\rho\right)-C\left(\abs{a_{1}}+\abs{a_{2}}\right)}\right).\norm{\zeta_{j,\mathbf{n}}}_{\beta}^{\tx{bis}}}\,\,,\mbox{ for \ensuremath{j=1,2}.}
\]
The lemma is proved, with 
\[
M=\left(\frac{\beta}{\beta\min\acc{\abs{\lambda}-\rho,\abs{\lambda\sin\left(\theta+\epsilon\right)},\abs{\lambda\sin\left(\theta-\epsilon\right)}}-C\left(\abs{a_{1}}+\abs{a_{2}}\right)}\right)\,\,.
\]
\end{proof}
In order to finish the proof of Proposition \ref{prop: preparation},
we have to prove that for $j=1,2$, the series ${\displaystyle \overline{\hat{\psi}_{j}}:=\sum_{\mathbf{n}\in\ww N^{2}}\norm{\hat{\psi}_{j,\mathbf{n}}}_{\beta}^{\tx{bis}}\mathbf{y}^{\mathbf{n}}}$
is convergent in a poly-disc $\mathbf{D\left(0,r\right)}$, with ${\displaystyle \mathbf{r}=\left(r_{1},r_{2}\right)\in\left(\ww R_{>0}\right)^{2}}$
(then, Corollary \ref{cor: faible et forte sommabilite} gives 1-summability).
We will prove this by using a method of dominant series. Let us introduce
some useful notations. If $\left(\mathfrak{B},\norm{\cdot}\right)$
is a Banach algebra, for any formal power series ${\displaystyle f\left(\mathbf{y}\right)=\sum_{\mathbf{n}}f_{\mathbf{n}}\mathbf{y^{n}}}\in\mathfrak{B}\left\llbracket \mathbf{y}\right\rrbracket $,
we define ${\displaystyle \overline{f}:=\sum_{\mathbf{n}}\norm{f_{\mathbf{n}}}\mathbf{y^{n}}},$
and $\overline{\overline{f}}\left(y\right):=\overline{f}\left(y,y\right)$.
If ${\displaystyle g=\sum_{\mathbf{n}}g_{\mathbf{n}}\mathbf{y^{n}}}\in\mathfrak{B}\left\llbracket \mathbf{y}\right\rrbracket $
is another formal power series, we write $\overline{f}\prec\overline{g}$
if for all $\mathbf{n}\in\ww N^{2}$, we have $\norm{f_{\mathbf{n}}}\leq\norm{g_{\mathbf{n}}}$.
We remind the following classical result (the proof is performed in
\cite{rebelo_reis} when $\left(\mathfrak{B},\norm{\cdot}\right)=\left(\ww C,\abs{\cdot}\right)$
, but the same proof works for any Banach algebra).

\begin{lem}
\cite[Theorem 2.2 p.48]{rebelo_reis} For $j=1,2$, let ${\displaystyle f_{j}=\sum_{\abs{\mathbf{n}}\geq2}f_{j,\mathbf{n}}\mathbf{y^{n}}}\in\mathfrak{B}\left\llbracket \mathbf{y}\right\rrbracket $
be two formal power series with coefficients in a Banach algebra $\left(\mathfrak{B},\norm{\cdot}\right)$,
and of order at least two. Consider also two other series ${\displaystyle g_{j}=\sum_{\abs{\mathbf{n}}\geq2}g_{j,\mathbf{n}}\mathbf{y^{n}}}\in\mathfrak{B}\acc{\ww{\mathbf{y}}}$,
$j=1,2$, of order at least two, which have a non-zero radius of convergence
at the origin. Assume that there exists $\sigma>0$ such that for
$j=1,2$: 
\[
\sigma\overline{f_{j}}\prec\overline{g_{j}}\left(y_{1}+\overline{f_{1}},y_{2}+\overline{f_{2}}\right)\qquad.
\]
 Then, $f_{1}$ and $f_{2}$ have a non-zero radius of convergence.
\end{lem}

Taking $\beta>4\pi$, according to Proposition \ref{prop: norme d'algebre},
for all $\hat{f},\hat{g}\in\mathfrak{B}_{\beta}^{\tx{bis}}$, we have:
\[
\norm{\hat{f}\hat{g}}_{\beta}^{\tx{bis}}\leq\norm{\hat{f}}_{\beta}^{\tx{bis}}\norm{\hat{g}}_{\beta}^{\tx{bis}}\,\,.
\]
This implies that $\left(\mathfrak{B}_{\beta}^{\tx{bis}},\norm{\cdot}_{\beta}^{\tx{bis}}\right)$
is a Banach algebra as needed in the above lemma. It remains to prove
that there exists $\sigma>0$ such that for $j=1,2$:
\[
\sigma\overline{\hat{\psi}_{j}}\prec\overline{\hat{F}_{j}}\left(y_{1}+\overline{\hat{\psi}_{1}},y_{2}+\overline{\hat{\psi}_{2}}\right)\qquad.
\]
Remember that there exists $M>0$ such that for $j=1,2$:
\[
\norm{\hat{\psi}_{j,\mathbf{n}}}_{\beta}^{\tx{bis}}\leq M.\norm{\zeta_{j,\mathbf{n}}}_{\beta}^{\tx{bis}}\qquad
\]
where
\[
\begin{cases}
{\displaystyle \zeta_{1}:=\underset{\left|\mathbf{n}\right|\geq2}{\sum}\zeta_{1,\mathbf{n}}(x)\mathbf{y}^{\mathbf{n}}}\\
{\displaystyle =\hat{F}_{1}\left(x,y_{1}+\hat{\psi}_{1}\left(x,\mathbf{y}\right),y_{2}+\hat{\psi}_{2}\left(x,\mathbf{y}\right)\right)-\hat{T}_{1}(x)\frac{\partial\hat{\psi}_{1}}{\partial y_{1}}(x,\mathbf{y})-\hat{T}_{2}(x)\frac{\partial\hat{\psi}_{1}}{\partial y_{2}}(x,\mathbf{y})}\\
{\displaystyle \zeta_{2}:=\underset{\left|\mathbf{n}\right|\geq2}{\sum}\zeta_{2,\mathbf{n}}(x)\mathbf{y}^{\mathbf{n}}}\\
{\displaystyle =\hat{F}_{2}\left(x,y_{1}+\hat{\psi}_{1}\left(x,\mathbf{y}\right),y_{2}+\hat{\psi}_{2}\left(x,\mathbf{y}\right)\right)-\hat{T}_{1}(x)\frac{\partial\hat{\psi}_{2}}{\partial y_{1}}(x,\mathbf{y})-\hat{T}_{2}(x)\frac{\partial\hat{\psi}_{2}}{\partial y_{2}}(x,\mathbf{y})} & \,\,\,.
\end{cases}
\]
If we set $\sigma:=\frac{1}{M}$, then we have
\[
\begin{cases}
{\displaystyle \sigma\overline{\hat{\psi}_{1}}\prec\overline{\zeta_{1}}\prec\overline{\hat{F}}_{1}\left(x,y_{1}+\overline{\hat{\psi}_{1}}\left(x,\mathbf{y}\right),y_{2}+\overline{\hat{\psi}_{2}}\left(x,\mathbf{y}\right)\right)+\overline{\hat{T}_{1}}(x)\frac{\partial\overline{\hat{\psi}_{1}}}{\partial y_{1}}(x,\mathbf{y})+\overline{\hat{T}_{2}}(x)\frac{\partial\overline{\hat{\psi}_{1}}}{\partial y_{2}}(x,\mathbf{y})}\\
{\displaystyle \sigma\overline{\hat{\psi}_{2}}\prec\overline{\zeta_{2}}\prec\overline{\hat{F}}_{2}\left(x,y_{1}+\overline{\hat{\psi}_{1}}\left(x,\mathbf{y}\right),y_{2}+\overline{\hat{\psi}_{2}}\left(x,\mathbf{y}\right)\right)+\overline{\hat{T}_{1}}(x)\frac{\partial\overline{\hat{\psi}_{2}}}{\partial y_{1}}(x,\mathbf{y})+\overline{\hat{T}_{2}}(x)\frac{\partial\overline{\hat{\psi}_{2}}}{\partial y_{2}}(x,\mathbf{y})}
\end{cases}\qquad.
\]
Moreover, we recall that 
\[
\begin{cases}
\hat{T}_{1,\mathbf{n}}=0 & \mbox{, if }n_{1}=0\mbox{ or }n_{2}=0\\
\hat{T}_{2,\mathbf{n}}=0 & \mbox{, if }n_{1}=0\mbox{ or }n_{2}=0\\
\hat{\psi}_{1,\mathbf{n}}=0 & ,\mbox{ if }n_{1}\geq1\mbox{ and }n_{2}\geq1\\
\hat{\psi}_{2,\mathbf{n}}=0 & ,\mbox{ if }n_{1}\geq1\mbox{ and }n_{2}\geq1
\end{cases}\qquad,
\]
so that we have in fact more precise dominant relations: 
\[
\begin{cases}
{\displaystyle \sigma\overline{\hat{\psi}_{1}}\prec\overline{\zeta_{1}}\prec\overline{\hat{F}}_{1}\left(x,y_{1}+\overline{\hat{\psi}_{1}}\left(x,\mathbf{y}\right),y_{2}+\overline{\hat{\psi}_{2}}\left(x,\mathbf{y}\right)\right)}\\
{\displaystyle \sigma\overline{\hat{\psi}_{2}}\prec\overline{\zeta_{2}}\prec\overline{\hat{F}}_{2}\left(x,y_{1}+\overline{\hat{\psi}_{1}}\left(x,\mathbf{y}\right),y_{2}+\overline{\hat{\psi}_{2}}\left(x,\mathbf{y}\right)\right)}
\end{cases}\qquad.
\]
It remains the apply the lemma above to conclude.
\end{proof}
\begin{rem}
\label{rem: redressement hypersurfaces cas ham}In the previous proposition,
assume that for $j=1,2$, 
\[
{\displaystyle \hat{F}_{j}\left(x,\mathbf{y}\right)=\sum_{\mathbf{n}\in\ww N^{2},\,\abs{\mathbf{n}}\geq2}\hat{F}_{\mathbf{n}}^{\left(j\right)}\left(x\right)\mathbf{y^{n}}}
\]
 in the expression of $\hat{Y}$ satisfies
\[
\begin{cases}
\hat{F}_{\mathbf{n}}^{\left(1\right)}\left(0\right)=0 & ,\,\forall\mathbf{n}=\left(n_{1},n_{2}\right)\mid n_{1}+n_{2}\geq2\mbox{ and }\big(n_{1}=0\mbox{ or }n_{2}=0\big)\\
\hat{F}_{\mathbf{n}}^{\left(2\right)}\left(0\right)=0 & ,\,\forall\mathbf{n}=\left(n_{1},n_{2}\right)\mid n_{1}+n_{2}\geq2\mbox{ and }\big(n_{1}=0\mbox{ or }n_{2}=0
\end{cases}\,\,\,.
\]
Then, the diffeomorphism $\hat{\Phi}$ in the proposition can be chosen
to be the identity on $\acc{x=0}$, so that
\[
\begin{cases}
y_{1}y_{2}\hat{R}_{1}\left(x,\mathbf{y}\right) & =\hat{F}_{1}\left(0,\mathbf{y}\right)+x\hat{S}_{1}\left(x,\mathbf{y}\right)\\
y_{1}y_{2}\hat{R}_{2}\left(x,\mathbf{y}\right) & =\hat{F}_{2}\left(0,\mathbf{y}\right)+x\hat{S}_{2}\left(x,\mathbf{y}\right)\qquad,
\end{cases}
\]
 where $\hat{S}_{1},\hat{S}_{2}$ are 1-summable in the direction
$\theta\neq\arg\left(\pm\lambda\right)$ and ${\displaystyle \hat{F}_{1}\left(0,\mathbf{y}\right),\hat{F}_{2}\left(0,\mathbf{y}\right)\in\germ{\mathbf{y}}}$
are convergent in neighborhood of the origin in $\ww C^{2}$. Indeed,
we easily see by induction on $\abs{\mathbf{n}}=n_{1}+n_{2}\geq2$
that $\hat{\psi}_{1}$ and $\hat{\psi}_{2}$ can be chosen ``divisible''
by $x$, and that $\zeta_{1},\zeta_{2}$ are such that $\zeta_{j,\mathbf{n}}\left(x\right)$
is also ``divisible'' by $x$ if $n_{1}=0$ or $n_{2}=0$.
\end{rem}

\subsection{\label{subsec:Summable-normal-form}1-summable normal form up to
arbitrary order $N$}

~

We consider now a (formal) non-degenerate diagonal doubly-resonant
saddle node, which is supposed to be div-integrable and 1-summable
in every direction $\theta\neq\arg\left(\pm\lambda\right)$, of the
form 
\begin{eqnarray*}
\hat{Y}_{\tx{prep}} & = & x^{2}\pp x+\left(-\lambda+a_{1}x-d\left(y_{1}y_{2}\right)+x\hat{S}_{1}\left(x,\mathbf{y}\right)\right)y_{1}\pp{y_{1}}\\
 &  & +\left(\lambda+a_{2}x+d\left(y_{1}y_{2}\right)+x\hat{S}_{2}\left(x,\mathbf{y}\right)\right)y_{2}\pp{y_{2}}\qquad,
\end{eqnarray*}
where:
\begin{itemize}
\item $\lambda\in\ww C\backslash\acc 0$;
\item $\hat{S}_{1},\hat{S}_{2}\in\form{x,\mathbf{y}}$ are of order at least
one with respect to $\mathbf{y}$ and 1-summable in every direction
$\theta\in\ww R$ with $\theta\neq\arg\left(\pm\lambda\right)$;
\item $a:=\tx{res}\left(\hat{Y}_{\tx{prep}}\right)=a_{1}+a_{2}\notin\ww Q_{\leq0}$
;
\item $d\left(v\right)\in v\ww C\acc v$ is the germ of an analytic function
in $v:=y_{1}y_{2}$ vanishing at the origin.
\end{itemize}
As usual, we denote by $Y_{\tx{prep},\theta},S_{1,\theta},S_{2,\theta}$
the respective 1-sums of $\hat{Y},\hat{S}_{1},\hat{S}_{2}$ in the
direction $\theta$. Let us introduce some useful notations:

\begin{eqnarray*}
\hat{Y}_{\tx{prep}} & = & Y_{0}+D\overrightarrow{\mathcal{C}}+R\overrightarrow{\mathcal{R}}\qquad,
\end{eqnarray*}
where
\begin{itemize}
\item $\overrightarrow{\cal C}:=-y_{1}\pp{y_{1}}+y_{2}\pp{y_{2}}$
\item $\overrightarrow{\cal R}:=y_{1}\pp{y1}+y_{2}\pp{y_{2}}$
\item $Y_{0}:=\lambda\overrightarrow{\cal C}+x\left(x\pp x+a_{1}y_{1}\pp{y_{1}}+a_{2}y_{2}\pp{y2}\right)$ 
\item ${\displaystyle D\left(x,\mathbf{y}\right)=d\left(y_{1}y_{2}\right)+xD^{\left(1\right)}\left(x,\mathbf{y}\right)=d\left(y_{1}y_{2}\right)+x\left(\frac{\hat{S}_{2}-\hat{S}_{1}}{2}\right)}$
is 1-summable in the direction $\theta$ of 1-sum $D_{\theta}:$ it
is called the ``\emph{tangential}'' part. $D_{\theta}$ is also
dominated by $\norm{\mathbf{y}}=\max\left(\abs{y_{1}},\abs{y_{2}}\right)$
($D$ is of order at least one with respect to $\mathbf{y}$).
\item ${\displaystyle R\left(x,\mathbf{y}\right)=xR^{\left(1\right)}\left(x,\mathbf{y}\right)=x\left(\frac{\hat{S}_{2}+\hat{S}_{1}}{2}\right)}$
is 1-summable in the direction $\theta$ of 1-sum $R_{\theta}$: it
is called the ``\emph{radial}'' part. $R_{\theta}$ is also dominated
by $\norm{\mathbf{y}}_{\infty}=\max\left(\abs{y_{1}},\abs{y_{2}}\right)$
($R$ is of order at least one with respect to $\mathbf{y}$).
\end{itemize}
The following proposition gives the existence of a 1-summable normalizing
map, up to any order $N\in\ww N_{>0}$, with respect to $x$.
\begin{prop}
\label{prop: ordre N avec eq homologique}Let 
\begin{eqnarray*}
\hat{Y}_{\tx{prep}} & = & Y_{0}+D\overrightarrow{\mathcal{C}}+R\overrightarrow{\mathcal{R}}\qquad
\end{eqnarray*}
be as above. 

Then for all $N\in\ww N_{>0}$ there exist $d^{\left(N\right)}\left(v\right)\in\ww C\acc v$
of order at least one and $\Phi^{\left(N\right)}\in\fdiff[\ww C^{3},0,\tx{Id}]$
which conjugates $\hat{Y}_{\tx{prep}}$ (\emph{resp. }its 1-sums $Y_{\tx{prep},\theta}$
in the direction $\theta$) to 
\begin{eqnarray*}
Y^{\left(N\right)} & = & Y_{0}+\left(d^{\left(N\right)}\left(y_{1}y_{2}\right)+x^{N}D^{\left(N\right)}\left(x,\mathbf{y}\right)\right)\overrightarrow{\cal C}+x^{N}R^{\left(N\right)}\left(x,\mathbf{y}\right)\overrightarrow{\cal R}\\
\bigg(\emph{resp. }\,Y_{\theta}^{\left(N\right)} & = & Y_{0}+\left(d^{\left(N\right)}\left(y_{1}y_{2}\right)+x^{N}D_{\theta}^{\left(N\right)}\left(x,\mathbf{y}\right)\right)\overrightarrow{\cal C}+x^{N}R_{\theta}^{\left(N\right)}\left(x,\mathbf{y}\right)\overrightarrow{\cal R}\bigg)\qquad,
\end{eqnarray*}
where $D^{\left(N\right)},R^{\left(N\right)}$ are 1-summable in the
direction $\theta$, of order at least one with respect to $\mathbf{y}$,
of 1-sums $D_{\theta}^{\left(N\right)},R_{\theta}^{\left(N\right)}$
in the direction $\theta$. Moreover, one can choose $d^{\left(2\right)}=\dots=d^{\left(N\right)}$
for all $N\geq2$, and $d^{\left(1\right)}=d$.
\end{prop}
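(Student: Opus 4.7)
My plan is induction on $N$. The base case $N=1$ is trivial: take $\Phi^{(1)} = \tx{Id}$ and $d^{(1)} = d$, so that $Y^{(1)} = \hat{Y}_{\tx{prep}}$. For the inductive step from $N$ to $N+1$, I would seek an auxiliary fibered diffeomorphism $\psi^{(N)}$, tangent to the identity and 1-summable in every direction $\theta \neq \arg(\pm\lambda)$, of the form
\[
\psi^{(N)}(x,\mathbf{y}) \;=\; \bigl(x,\;\mathbf{y} + x^N \varphi(x,\mathbf{y})\bigr),
\]
with $\varphi$ vector-valued, 1-summable, and of order at least one in $\mathbf{y}$, such that $(\psi^{(N)})_\ast Y^{(N)} = Y^{(N+1)}$. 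Setting $\Phi^{(N+1)} := \psi^{(N)} \circ \Phi^{(N)}$ closes the induction, 1-summability being preserved under composition by Corollary~\ref{cor: summability push-forward}.

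Isolating the coefficient of $x^N$ in the conjugacy equation produces a homological equation of the form
\[
\tx{ad}_{Y_0 + d^{(N)}(v)\overrightarrow{\cal C}}(\varphi) \;=\; -\,D^{(N)}(0,\mathbf{y})\overrightarrow{\cal C} \;-\; R^{(N)}(0,\mathbf{y})\overrightarrow{\cal R} \;+\; \cal E,
\]
where $\cal E$ collects the resonant obstructions. Expanding on monomials $\mathbf{y}^{\mathbf{k}}$, the projection on each $\mathbf{y}^{\mathbf{k}}$ with $k_1 \neq k_2$ becomes a scalar irregular linear ODE in $x$, solvable in a 1-summable way (in every direction $\theta \neq \arg(\pm\lambda)$) by Proposition~\ref{prop: solution borel sommable precise}, while the resonant monomials $v^j\overrightarrow{\cal C}$ and $v^j\overrightarrow{\cal R}$ (i.e. $k_1 = k_2$) lie in the kernel of $\tx{ad}_{\lambda\overrightarrow{\cal C}}$ and constitute obstructions. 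The tangential $\overrightarrow{\cal C}$-obstruction is absorbed into the update $d^{(N+1)} - d^{(N)}$, which is genuinely non-zero only at step $N=1\to 2$ (and vanishes for $N\geq 2$ by uniqueness of the formal normal form from Theorem~\ref{thm: forme normalel formelle}, any surviving obstruction would otherwise produce a second normal form distinct from $\ynorm$). The radial $\overrightarrow{\cal R}$-obstruction must vanish at every step, which is precisely ensured by the div-integrability hypothesis: the formal normal form of Theorem~\ref{thm: forme normalel formelle} satisfies $c_1 + c_2 = 0$, so no radial resonant term can appear during the normalization.

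The main technical hurdle will be propagating 1-summability \emph{uniformly} across the infinite family of monomial indices $\mathbf{k}$. I would follow the majorizing-series argument used in the proof of Proposition~\ref{prop: preparation}: each Borel-plane norm $\norm{\varphi_{\mathbf{k}}}_\beta^{\tx{bis}}$ is bounded via the explicit estimate of Proposition~\ref{prop: solution borel sommable precise}, then the generating series $\sum_{\mathbf{k}} \norm{\varphi_{\mathbf{k}}}_\beta^{\tx{bis}} \mathbf{y}^{\mathbf{k}}$ is dominated by a scalar analytic majorant, whose convergence near $\mathbf{0}$ uses the Banach-algebra property of $\norm{\cdot}_\beta^{\tx{bis}}$ for $\beta > 4\pi$ (Proposition~\ref{prop: norme d'algebre}). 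Once $\varphi$ is obtained as a convergent power series in $\mathbf{y}$ with 1-summable coefficients, the new data $D^{(N+1)}$ and $R^{(N+1)}$ are automatically 1-summable by Proposition~\ref{prop: compositon summable}, completing the inductive step.
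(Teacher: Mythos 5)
Your overall strategy (induction on $N$, a corrector supported at a single order in $x$, scalar irregular ODEs from Proposition \ref{prop: solution borel sommable precise} for the non-resonant monomials) matches the paper's in spirit, but there are two genuine gaps in your treatment of the resonant monomials $k_{1}=k_{2}$, and these are exactly the crux of this step. First, the ansatz $\psi^{(N)}=\tx{Id}+x^{N}\varphi$ cannot reach the resonant part of the $x^{N}$-coefficient: a resonant $\varphi$ commutes with $\lambda\overrightarrow{\cal C}$, so the bracket of $x^{N}\varphi\cdot\partial_{\mathbf{y}}$ with $Y_{0}$ only produces terms of order $x^{N+1}$, and terms such as $x^{N}\rho(v)\overrightarrow{\cal R}$ are invisible to it. The paper removes them with correctors whose resonant components live one (resp.\ two) orders \emph{lower} in $x$: it conjugates by $\Lambda_{\tau}$, the flow of $\overrightarrow{\cal R}$ at time $\tau=x^{N-1}\tau_{0}(v)+x^{N}\tau_{1}(\mathbf{y})$, then by flows $\Gamma_{\sigma},\Gamma_{\varphi}$ of $\overrightarrow{\cal C}$ with $\sigma=x^{N-2}\sigma_{0}(v)+x^{N-1}\sigma_{1}(\mathbf{y})$, etc.; the point is that
\[
\cal L_{Y_{0}}\left(x^{N-1}\tau_{0}(v)\right)=x^{N}\Big(\left(N-1\right)\tau_{0}(v)+\left(a_{1}+a_{2}\right)v\,\tau_{0}'(v)\Big)+\tx O\left(x^{N+1}\right)\,,
\]
and the operator $(N-1)+\left(a_{1}+a_{2}\right)v\frac{\tx d}{\tx dv}$ is invertible on $v\germ v$ thanks to $a_{1}+a_{2}\notin\ww Q_{\leq0}$. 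Second, and relatedly, your claim that the radial resonant obstruction vanishes by div-integrability is false: div-integrability only constrains $Y_{\mid\acc{x=0}}$, i.e.\ the $x^{0}$-coefficients, and says nothing about the resonant part of $R^{\left(N\right)}\left(0,\mathbf{y}\right)$ for $N\geq1$. The paper does not need this obstruction to vanish; it solves it away by the ODE above. Your appeal to the uniqueness of the formal normal form to dispose of the tangential obstruction for $N\geq2$ is likewise circular at this stage (that uniqueness is a statement about the end product, not about the $x^{N}$-coefficient of the intermediate object $Y^{\left(N\right)}$).

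A further discrepancy: the proposition asserts $\Phi^{\left(N\right)}\in\fdiff[\ww C^{3},0,\tx{Id}]$, an \emph{analytic} germ, whereas your construction would only deliver a $1$-summable formal diffeomorphism. In the paper the correctors $\tau,\sigma,\varphi$ are polynomial in $x$ with coefficients that are convergent germs in $\mathbf{y}$ (solutions of a regular ODE in $v$ and of the cohomological equation for $\lie{\overrightarrow{\cal C}}$ with analytic right-hand sides built from $R^{\left(N\right)}\left(0,\mathbf{y}\right)$ and $D^{\left(N\right)}\left(0,\mathbf{y}\right)$), so each conjugating map is genuinely analytic and no Borel-plane majorant argument \emph{\`a la} Proposition \ref{prop: preparation} is needed here; the $1$-summability of the new remainders then comes for free from Proposition \ref{prop: compositon summable}.
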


\begin{proof}
The proof is performed by induction on $N$.

\begin{itemize}
\item The case $N=1$ is the initial situation here, and is already proved
with $\hat{Y}_{\tx{prep}}=Y^{\left(1\right)}$.
\item Assume that the result holds for $N\in\ww N_{>0}$. We will proceed
in three steps.
\begin{enumerate}
\item First step: let us write
\[
R^{\left(N\right)}\left(x,\mathbf{y}\right)=\sum_{n_{1}+n_{2}\geq1}R_{n_{1},n_{2}}^{\left(N\right)}\left(x\right)y_{1}^{n_{1}}y_{2}^{n_{2}}\,\,\,.
\]
We are looking for an analytic solution $\tau$ to the equations:
\begin{eqnarray}
\cal L_{Y^{\left(N\right)}}\left(\tau\right) & = & -x^{N}R^{\left(N\right)}+\left(x^{N+1}\tilde{R}^{\left(N+1\right)}\right)\circ\Lambda_{\tau}\label{eq: eq homologic rec}\\
\cal L_{Y_{\theta}^{\left(N\right)}}\left(\tau\right) & = & -x^{N}R_{\theta}^{\left(N\right)}+\left(x^{N+1}\tilde{R}_{\theta}^{\left(N+1\right)}\right)\circ\Lambda_{\tau}\qquad,\nonumber 
\end{eqnarray}
for a convenient choice of $\tilde{R}^{\left(N+1\right)},\tilde{R}_{\theta}^{\left(N+1\right)}$,
with 
\[
\Lambda_{\tau}\left(x,\mathbf{y}\right):=\left(x,y_{1}\exp\left(\tau\left(x,\mathbf{y}\right)\right),y_{2}\exp\left(\tau\left(x,\mathbf{y}\right)\right)\right)\qquad,
\]
and 
\[
\tau\left(x,\mathbf{y}\right)=x^{N-1}\tau_{0}\left(y_{1}y_{2}\right)+x^{N}\tau_{1}\left(\mathbf{y}\right)\quad,
\]
where ${\displaystyle \tau_{1}\left(\mathbf{y}\right)=\sum_{j_{1}\neq j_{2}}\tau_{1,j_{1}j_{2}}y_{1}^{j_{1}}y_{2}^{j_{2}}}$.
More concretely, $\Lambda_{\tau}$ is the formal flow of $\overrightarrow{\cal R}$
at ``time'' $\tau\left(x,\mathbf{y}\right)$. \\
If we admit for a moment that such an analytic solution $\tau$ exists,
then $\Lambda_{\tau}\in\fdiffid$ and therefore $\Lambda_{\tau}^{-1}\in\fdiffid$.
Consider $d^{\left(N\right)}$ and $\tilde{D}^{\left(N-1\right)}$
such that
\begin{eqnarray*}
 & d^{\left(N+1\right)}\left(z_{1}z_{2}\right)+x^{N-1}\tilde{D}^{\left(N-1\right)}\left(x,\mathbf{z}\right):=\left(d^{\left(N\right)}\left(y_{1}y_{2}\right)+x^{N}D^{\left(N\right)}\left(x,\mathbf{y}\right)\right)\circ\Lambda_{\tau}^{-1}\left(x,\mathbf{z}\right)\\
 & d^{\left(N+1\right)}\left(z_{1}z_{2}\right)+x^{N-1}\tilde{D_{\theta}}^{\left(N-1\right)}\left(x,\mathbf{z}\right):=\left(d^{\left(N\right)}\left(y_{1}y_{2}\right)+x^{N}D_{\theta}^{\left(N\right)}\left(x,\mathbf{y}\right)\right)\circ\Lambda_{\tau}^{-1}\left(x,\mathbf{z}\right),
\end{eqnarray*}
with $\tilde{D}^{\left(N-1\right)}=0$ if $N=1$. Consequently, the
two equations given in $\left(\mbox{\ref{eq: eq homologic rec}}\right)$
imply that 
\begin{eqnarray*}
\left(\Lambda_{\tau}\right)_{*}\left(Y^{\left(N\right)}\right) & = & Y_{0}+\left(d^{\left(N+1\right)}\left(z_{1}z_{2}\right)+x^{N-1}\tilde{D}^{\left(N-1\right)}\left(x,\mathbf{z}\right)\right)\overrightarrow{\cal C}\\
 &  & +x^{N+1}\tilde{R}^{\left(N+1\right)}\left(x,\mathbf{z}\right)\overrightarrow{\cal R}\\
\left(\Lambda_{\tau}\right)_{*}\left(Y_{\theta}^{\left(N\right)}\right) & = & Y_{0}+\left(d^{\left(N+1\right)}\left(z_{1}z_{2}\right)+x^{N-1}\tilde{D}_{\theta}^{\left(N-1\right)}\left(x,\mathbf{z}\right)\right)\overrightarrow{\cal C}\\
 &  & +x^{N+1}\tilde{R}_{\theta}^{\left(N+1\right)}\left(x,\mathbf{z}\right)\overrightarrow{\cal R}\qquad.
\end{eqnarray*}
Indeed: 
\begin{eqnarray*}
 &  & \mbox{D}\Lambda_{\tau}\cdot Y^{\left(N\right)}=\left(\begin{array}{c}
\cal L_{Y^{\left(N\right)}}\left(x\right)\\
{\cal L}_{Y^{\left(N\right)}}\left(y_{1}\exp\left(\tau\left(x,\mathbf{y}\right)\right)\right)\\
\cal L_{Y^{\left(N\right)}}\left(y_{2}\exp\left(\tau\left(x,\mathbf{y}\right)\right)\right)
\end{array}\right)\\
 &  & =\left(\begin{array}{c}
x^{2}\\
\left(\cal L_{Y^{\left(N\right)}}\left(y_{1}\right)+y_{1}\left(\cal L_{Y^{\left(N\right)}}\left(\tau\right)\right)\right)\exp\left(\tau\left(x,\mathbf{y}\right)\right)\\
\left(\cal L_{Y^{\left(N\right)}}\left(y_{2}\right)+y_{2}\left(\cal L_{Y^{\left(N\right)}}\left(\tau\right)\right)\right)\exp\left(\tau\left(x,\mathbf{y}\right)\right)
\end{array}\right)\\
 &  & =\left(Y_{0}+\left(d^{\left(N+1\right)}+x^{N-1}\tilde{D}^{\left(N-1\right)}\right)\overrightarrow{\cal C}+x^{N+1}\tilde{R}^{\left(N+1\right)}\overrightarrow{\cal R}\right)\circ\Lambda_{\tau}\left(x,\mathbf{y}\right)\,\,\,.
\end{eqnarray*}
These computations are also true with the corresponding 1-sums of
formal objects considered here, \emph{i.e.} with $Y_{\theta}^{\left(N\right)},D_{\theta}^{\left(N\right)},\tilde{D}_{\theta}^{\left(N-1\right)},\tilde{R}_{\theta}^{\left(N+1\right)}$
instead of $Y^{\left(N\right)},D^{\left(N\right)},\tilde{D}^{\left(N-1\right)},\tilde{R}^{\left(N+1\right)}$
respectively. We use Proposition \ref{prop: compositon summable}
to obtain the 1-summability of the objects defined by compositions.\\
Let us prove that there exists a germ of analytic function of the
form 
\[
\tau\left(x,\mathbf{y}\right)=x^{N-1}\tau_{0}\left(y_{1}y_{2}\right)+x^{N}\tau_{1}\left(\mathbf{y}\right)\quad,
\]
of order ate least one with respect to $\mathbf{y}$ in the origin,
with 
\[
{\displaystyle \tau_{1}\left(\mathbf{y}\right)=\sum_{j_{1}\neq j_{2}}\tau_{1,j_{1}j_{2}}y_{1}^{j_{1}}y_{2}^{j_{2}}}
\]
satisfying equation $\left(\mbox{\ref{eq: eq homologic rec}}\right)$.
This equation can be written 
\begin{eqnarray*}
 &  & x^{2}\ppp{\tau}x+\left(-\lambda+a_{1}x-d^{\left(N\right)}\left(y_{1}y_{2}\right)-x^{N}D^{\left(N\right)}\left(x,\mathbf{y}\right)+x^{N}R^{\left(N\right)}\left(x,\mathbf{y}\right)\right)y_{1}\ppp{\tau}{y_{1}}\\
 &  & +\left(\lambda+a_{2}x+d^{\left(N\right)}\left(y_{1}y_{2}\right)+x^{N}D^{\left(N\right)}\left(x,\mathbf{y}\right)+x^{N}R^{\left(N\right)}\left(x,\mathbf{y}\right)\right)y_{2}\ppp{\tau}{y_{2}}\\
 &  & =-x^{N}R^{\left(N\right)}+\left(x^{N+1}\tilde{R}^{\left(N+1\right)}\right)\circ\Lambda_{\tau}\quad,
\end{eqnarray*}
or equivalently
\begin{eqnarray*}
 & x^{2}\ppp{\tau}x+a_{1}xy_{1}\ppp{\tau}{y_{1}}+a_{2}xy_{2}\ppp{\tau}{y_{2}}+\left(\lambda+d^{\left(N\right)}\left(y_{1}y_{2}\right)+x^{N}D^{\left(N\right)}\left(x,\mathbf{y}\right)\right)\cal L_{\overrightarrow{\cal C}}\left(\tau\right)\\
 & +\left(x^{N}R^{\left(N\right)}\left(x,\mathbf{y}\right)\right)\cal L_{\overrightarrow{\cal R}}\left(\tau\right)=-x^{N}R^{\left(N\right)}+\left(x^{N+1}\tilde{R}^{\left(N+1\right)}\right)\circ\Lambda_{\tau}\quad.
\end{eqnarray*}
Let us consider terms of degree $N$ with respect to $x$:
\begin{eqnarray}
 & \left(N-1\right)\tau_{0}\left(y_{1}y_{2}\right)+\left(a_{1}+a_{2}+2\delta_{N,1}R^{\left(N\right)}\left(0,\mathbf{y}\right)\right)y_{1}y_{2}\ppp{\tau_{0}}v\left(y_{1}y_{2}\right)\nonumber \\
 & +\left(\lambda+d^{\left(N\right)}\left(y_{1}y_{2}\right)\right)\cal L_{\overrightarrow{\cal C}}\left(\tau_{1}\right)=-R^{\left(N\right)}\left(0,\mathbf{y}\right)\label{eq: eq homo rec degre 1 wrt x}
\end{eqnarray}
(here $\delta_{N,1}$ is the Kronecker notation), and let us define
\[
R_{\mbox{res}}^{\left(N\right)}\left(0,v\right):=\sum_{k\geq1}R_{k,k}^{\left(N\right)}\left(0\right)v^{k}\quad.
\]
We use now the fact that $\tx{Im}\left(\lie{\overrightarrow{\cal C}}\right)\varoplus\tx{Ker}\left(\lie{\overrightarrow{\cal C}}\right)$
is a direct sum, and that $\tx{Ker}\left(\lie{\overrightarrow{\cal C}}\right)$
is the set of formal power series in the resonant monomial $v=y_{1}y_{2}$.
Isolating the term $\lie{\overrightarrow{\cal C}}\left(\tau_{1}\right)$
on the one hand, and the others on the other hand, the direct sum
above gives us: 
\[
\begin{cases}
{\displaystyle v\left(a_{1}+a_{2}+2\delta_{N,1}R_{\mbox{res}}^{\left(N\right)}\left(0,v\right)\right)\ddd{\tau_{0}}v\left(v\right)+\left(N-1\right)\tau_{0}\left(v\right)=-R_{\mbox{res}}^{\left(N\right)}\left(0,v\right)}\\
{\displaystyle \tau_{0}\left(0\right)=0}
\end{cases}
\]
 and
\[
\begin{cases}
{\displaystyle \cal L_{\overrightarrow{\cal C}}\left(\tau_{1}\right)=\frac{-1}{\lambda+d^{\left(N\right)}\left(y_{1}y_{2}\right)}\Bigg(\left(2\delta_{N,1}\left(R^{\left(N\right)}\left(0,\mathbf{y}\right)-R_{\mbox{res}}^{\left(N\right)}\left(0,v\right)\right)\right)y_{1}y_{2}\ddd{\tau_{0}}v\left(y_{1}y_{2}\right)}\\
{\displaystyle \qquad\,\,+R^{\left(N\right)}\left(0,\mathbf{y}\right)-R_{\mbox{res}}^{\left(N\right)}\left(0,v\right)\Bigg)}\\
{\displaystyle \tau_{1}\left(0\right)=0\,\,.}
\end{cases}
\]
Since $R^{\left(N\right)}$ is analytic with respect to $\mathbf{y}$,
$R_{\mbox{res}}^{\left(N\right)}\left(0,v\right)$ is analytic near
$v=0$. Furthermore, as $R_{\mbox{res}}^{\left(N\right)}\left(0,0\right)=0$
and $a_{1}+a_{2}\notin\ww Q_{\leq0}$, the first of the two equation
above has a unique formal solution $\tau_{0}$ with $\tau_{0}\left(0\right)$,
and this solution is convergent in a neighborhood of the origin. Once
$\tau_{0}$ is determined, there exists a unique formal solution $\tau_{1}$
to the second equation satisfying ${\displaystyle \tau_{1}\left(\mathbf{y}\right)=\sum_{j_{1}\neq j_{2}}\tau_{1,j_{1}j_{2}}y_{1}^{j_{1}}y_{2}^{j_{2}}}$,
which is moreover convergent in a neighborhood of the origin of $\ww C^{2}$
.\\
Therefore $\Lambda_{\tau}$ is a germ of analytic diffeomorphism fixing
the origin, fibered, tangent to the identity and conjugates $Y^{\left(N\right)}$
\emph{$\big($resp.} $Y_{\theta}^{\left(N\right)}$$\big)$ to ${\displaystyle \tilde{Y}^{\left(N\right)}:=\left(\Lambda_{\tau}\right)_{*}\left(Y^{\left(N\right)}\right)}$
$\big($\emph{resp.} ${\displaystyle \tilde{Y}_{\theta}^{\left(N\right)}:=\left(\Lambda_{\tau}\right)_{*}\left(Y_{\theta}^{\left(N\right)}\right)}$$\big)$.\\
Equation $\left(\mbox{\ref{eq: eq homo rec degre 1 wrt x}}\right)$
implies that ${\displaystyle \left(\cal L_{Y^{\left(N\right)}}\left(\tau\right)+x^{N}R^{\left(N\right)}\right)}$
and ${\displaystyle \left(\cal L_{Y_{\theta}^{\left(N\right)}}\left(\tau\right)+x^{N}R_{\theta}^{\left(N\right)}\right)}$
are divisible by $x^{N+1}$, so that we can define:
\begin{eqnarray*}
\tilde{R}^{\left(N+1\right)}\left(x,\mathbf{z}\right) & := & \left(\frac{\cal L_{Y^{\left(N\right)}}\left(\tau\right)+x^{N}R^{\left(N\right)}}{x^{N+1}}\right)\circ\Lambda_{\tau}^{-1}\left(x,\mathbf{z}\right)\\
\tilde{R}_{\theta}^{\left(N+1\right)}\left(x,\mathbf{z}\right) & := & \left(\frac{\cal L_{Y_{\theta}^{\left(N\right)}}\left(\tau\right)+x^{N}R_{\theta}^{\left(N\right)}}{x^{N+1}}\right)\circ\Lambda_{\tau}^{-1}\left(x,\mathbf{z}\right)\quad.
\end{eqnarray*}
By Proposition \ref{prop: compositon summable}, $\tilde{R}^{\left(N+1\right)}$
$\big($\emph{resp.} $\tilde{D}^{\left(N-1\right)}$$\big)$ is 1-summable
in the direction $\theta$, of 1-sum $\tilde{R}_{\theta}^{\left(N+1\right)}$
$\big($\emph{resp.} $\tilde{D}_{\theta}^{\left(N-1\right)}$ $\big)$.\\
Finally, notice that ${\displaystyle d^{\left(N+1\right)}\circ\Lambda_{\tau}\left(0,\mathbf{y}\right)=d^{\left(N\right)}\left(y_{1},y_{2}\right)}$,
$\tau\left(0,\mathbf{y}\right)=0$ and then $\Lambda_{\tau}\left(0,\mathbf{y}\right)=\left(0,y_{1},y_{2}\right)$
if $N>1$, so that $d^{\left(N+1\right)}=d^{\left(N\right)}$ when
$N>1$.
\item Second step: exactly as in the previous step which dealt with the
``radial part'' (in fact the computations are even easier here),
we can prove the existence of a germ of an analytic function $\sigma$,
solution to the equation: 
\begin{eqnarray}
\cal L_{\tilde{Y}^{\left(N\right)}}\left(\sigma\right) & = & -x^{N-1}\tilde{D}^{\left(N-1\right)}+\left(x^{N}\tilde{\tilde{D}}^{\left(N\right)}\right)\circ\Gamma_{\sigma}\nonumber \\
\cal L_{\tilde{Y}_{\theta}^{\left(N\right)}}\left(\sigma\right) & = & -x^{N-1}\tilde{D}_{\theta}^{\left(N-1\right)}+\left(x^{N}\tilde{\tilde{D}}_{\theta}^{\left(N\right)}\right)\circ\Gamma_{\sigma}\qquad,\label{eq: eq homologic rec-1}
\end{eqnarray}
for a good choice of $\tilde{\tilde{D}}^{\left(N\right)},\tilde{\tilde{D}}_{\theta}^{\left(N\right)}$,
with 
\[
\Gamma_{\sigma}\left(x,\mathbf{z}\right):=\left(x,y_{1}\exp\left(-\sigma\left(x,\mathbf{z}\right)\right),y_{2}\exp\left(\sigma\left(x,\mathbf{z}\right)\right)\right)\qquad
\]
and 
\[
\sigma\left(x,\mathbf{z}\right)=x^{N-2}\sigma_{0}\left(z_{1}z_{2}\right)+x^{N-1}\sigma_{1}\left(\mathbf{z}\right)\qquad,
\]
where ${\displaystyle \sigma_{1}\left(\mathbf{z}\right)=\sum_{j_{1}\neq j_{2}}\sigma_{1,j_{1},j_{2}}z_{1}^{j_{1}}z_{2}^{j_{2}}}$.
Here, we take $\sigma_{0}=0$ if $N=1$. Notice that $\Gamma_{\sigma}$
is the formal flow of $\overrightarrow{\cal C}$ at ``time'' $\sigma\left(x,\mathbf{z}\right)$.\\
Again, as in the first step with the ``radial part'', we have on
a $\Gamma_{\sigma}\in\fdiffid$ and then also $\Gamma_{\sigma}^{-1}\in\fdiffid$.
If we consider $\tilde{\tilde{R}}^{\left(N+1\right)}$ and $\tilde{\tilde{R}}_{\theta}^{\left(N+1\right)}$
such that
\begin{eqnarray*}
\tilde{\tilde{R}}^{\left(N+1\right)}\left(x,\mathbf{y}\right) & := & \tilde{R}^{\left(N+1\right)}\circ\Gamma_{\sigma}^{-1}\left(x,\mathbf{y}\right)\\
\tilde{\tilde{R}}_{\theta}^{\left(N+1\right)}\left(x,\mathbf{z}\right) & := & \tilde{R}_{\theta}^{\left(N+1\right)}\circ\Gamma_{\sigma}^{-1}\left(x,\mathbf{y}\right)\qquad,
\end{eqnarray*}
then it follows from $\left(\mbox{\ref{eq: eq homologic rec-1}}\right)$
that 
\begin{eqnarray*}
\left(\Gamma_{\sigma}\right)_{*}\left(\tilde{Y}^{\left(N\right)}\right) & = & Y_{0}+\left(d^{\left(N+1\right)}\left(y_{1}y_{2}\right)+x^{N}\tilde{\tilde{D}}^{\left(N\right)}\left(x,\mathbf{y}\right)\right)\overrightarrow{\cal C}\\
 &  & +x^{N+1}\tilde{\tilde{R}}^{\left(N+1\right)}\left(x,\mathbf{y}\right)\overrightarrow{\cal R}\\
\left(\Gamma_{\sigma}\right)_{*}\left(\tilde{Y}_{\theta}^{\left(N\right)}\right) & = & Y_{0}+\left(d^{\left(N+1\right)}\left(y_{1}y_{2}\right)+x^{N}\tilde{\tilde{D}}_{\theta}^{\left(N\right)}\left(x,\mathbf{y}\right)\right)\overrightarrow{\cal C}\\
 &  & +x^{N+1}\tilde{\tilde{R}}_{\theta}^{\left(N+1\right)}\left(x,\mathbf{y}\right)\overrightarrow{\cal R}\qquad.
\end{eqnarray*}
Notice that the degree of the monomial $x^{N+1}$ in front of $\tilde{\tilde{R}}^{\left(N+1\right)}$
is indeed $N+1$ (and not $N$): this essentially comes form the fact
that $\Gamma_{\sigma}$ (and $\Gamma_{\sigma}^{-1}$) preserves the
resonant monomial $v=y_{1}y_{2}$. We choose:
\begin{eqnarray*}
\tilde{\tilde{D}}^{\left(N\right)}\left(x,\mathbf{y}\right) & := & \left(\frac{\cal L_{\tilde{Y}^{\left(N\right)}}\left(\sigma\right)+x^{N-1}\tilde{D}^{\left(N-1\right)}}{x^{N}}\right)\circ\Gamma_{\sigma}^{-1}\left(x,\mathbf{y}\right)\\
\tilde{\tilde{D}}_{\theta}^{\left(N\right)}\left(x,\mathbf{y}\right) & := & \left(\frac{\cal L_{\tilde{Y}_{\theta}^{\left(N\right)}}\left(\sigma\right)+x^{N-1}\tilde{D}_{\theta}^{\left(N-1\right)}}{x^{N}}\right)\circ\Gamma_{\sigma}^{-1}\left(x,\mathbf{y}\right)\quad.
\end{eqnarray*}
By Proposition \ref{prop: compositon summable}, $\tilde{\tilde{D}}^{\left(N\right)}$
$\big($\emph{resp.} $\tilde{\tilde{R}}^{\left(N+1\right)}$$\big)$
is 1-summable in the direction $\theta$, of 1-sum $\tilde{\tilde{D}}_{\theta}^{\left(N\right)}$
$\big($\emph{resp.} $\tilde{\tilde{R}}_{\theta}^{\left(N+1\right)}$
$\big)$. We finally define $\tilde{\tilde{Y}}^{\left(N\right)}:=\left(\Gamma_{\sigma}\right)_{*}\left(\tilde{Y}^{\left(N\right)}\right)$,
and $\tilde{\tilde{Y}}_{\theta}^{\left(N\right)}:=\left(\Gamma_{\sigma}\right)_{*}\left(\tilde{Y}_{\theta}^{\left(N\right)}\right)$.
\item Third (and last) step: as in both previous steps, we can prove the
existence of a germ of an analytic function $\varphi$, solution to
the equation: 
\begin{eqnarray*}
\cal L_{\tilde{\tilde{Y}}^{\left(N\right)}}\left(\varphi\right) & = & -x^{N}\tilde{\tilde{D}}^{\left(N\right)}+\left(x^{N+1}D^{\left(N+1\right)}\right)\circ\Gamma_{\varphi}\,\,,\\
\cal L_{\tilde{\tilde{Y}}_{\theta}^{\left(N\right)}}\left(\varphi\right) & = & -x^{N}\tilde{\tilde{D}}_{\theta}^{\left(N\right)}+\left(x^{N+1}D_{\theta}^{\left(N+1\right)}\right)\circ\Gamma_{\varphi}\,\,,
\end{eqnarray*}
for a good choice of $D^{\left(N+1\right)},D_{\theta}^{\left(N+1\right)}$,
with 
\[
\Gamma_{\varphi}\left(x,\mathbf{y}\right):=\left(x,y_{1}\exp\left(-\varphi\left(x,\mathbf{y}\right)\right),y_{2}\exp\left(\varphi\left(x,\mathbf{y}\right)\right)\right)\qquad
\]
and 
\[
\varphi\left(x,\mathbf{y}\right)=x^{N-1}\varphi_{0}\left(y_{1}y_{2}\right)+x^{N}\varphi_{1}\left(\mathbf{y}\right)\qquad,
\]
where ${\displaystyle \varphi_{1}\left(\mathbf{y}\right)=\sum_{j_{1}\neq j_{2}}\varphi_{1,j_{1},j_{2}}y_{1}^{j_{1}}y_{2}^{j_{2}}}$.\\
Again, we have on a $\Gamma_{\varphi}\in\fdiffid$ and then also $\Gamma_{\varphi}^{-1}\in\fdiffid$.
If we consider $R^{\left(N+1\right)}$ and $R_{\theta}^{\left(N+1\right)}$
such that
\begin{eqnarray*}
R^{\left(N+1\right)}\left(x,\mathbf{y}\right) & := & \tilde{\tilde{R}}^{\left(N+1\right)}\circ\Gamma_{\varphi}^{-1}\left(x,\mathbf{y}\right)\\
R_{\theta}^{\left(N+1\right)}\left(x,\mathbf{z}\right) & := & \tilde{\tilde{R}}_{\theta}^{\left(N+1\right)}\circ\Gamma_{\varphi}^{-1}\left(x,\mathbf{y}\right)\qquad,
\end{eqnarray*}
then we have:
\begin{eqnarray*}
\left(\Gamma_{\varphi}\right)_{*}\left(\tilde{\tilde{Y}}^{\left(N\right)}\right) & = & Y_{0}+\left(d^{\left(N+1\right)}\left(y_{1}y_{2}\right)+x^{N+1}D^{\left(N+1\right)}\left(x,\mathbf{y}\right)\right)\overrightarrow{\cal C}\\
 &  & +x^{N+1}R^{\left(N+1\right)}\left(x,\mathbf{y}\right)\overrightarrow{\cal R}\\
\left(\Gamma_{\varphi}\right)_{*}\left(\tilde{\tilde{Y}}_{\theta}^{\left(N\right)}\right) & = & Y_{0}+\left(d^{\left(N+1\right)}\left(y_{1}y_{2}\right)+x^{N+1}D_{\theta}^{\left(N+1\right)}\left(x,\mathbf{y}\right)\right)\overrightarrow{\cal C}\\
 &  & +x^{N+1}R_{\theta}^{\left(N+1\right)}\left(x,\mathbf{y}\right)\overrightarrow{\cal R}\qquad.
\end{eqnarray*}
As above, notice that the degree of the monomial $x^{N+1}$ in front
of $R^{\left(N+1\right)}$ is indeed $N+1$. We choose:
\begin{eqnarray*}
D^{\left(N+1\right)}\left(x,\mathbf{y}\right) & := & \left(\frac{\cal L_{\tilde{\tilde{Y}}^{\left(N\right)}}\left(\varphi\right)+x^{N}\tilde{\tilde{D}}^{\left(N\right)}}{x^{N+1}}\right)\circ\Gamma_{\varphi}^{-1}\left(x,\mathbf{y}\right)\\
D_{\theta}^{\left(N+1\right)}\left(x,\mathbf{y}\right) & := & \left(\frac{\cal L_{\tilde{\tilde{Y}}_{\theta}^{\left(N\right)}}\left(\varphi\right)+x^{N}\tilde{\tilde{D}}_{\theta}^{\left(N\right)}}{x^{N+1}}\right)\circ\Gamma_{\varphi}^{-1}\left(x,\mathbf{y}\right)\quad.
\end{eqnarray*}
By Proposition \ref{prop: compositon summable}, $D^{\left(N+1\right)}$
$\big($\emph{resp.} $R^{\left(N+1\right)}$$\big)$ is 1-summable
in the direction $\theta$, of 1-sum $D_{\theta}^{\left(N\right)}$
$\big($\emph{resp.} $R_{\theta}^{\left(N+1\right)}$ $\big)$. We
finally define $Y^{\left(N+1\right)}:=\left(\Gamma_{\varphi}\right)_{*}\left(\tilde{\tilde{Y}}^{\left(N\right)}\right)$,
and $Y_{\theta}^{\left(N+1\right)}:=\left(\Gamma_{\varphi}\right)_{*}\left(\tilde{\tilde{Y}}_{\theta}^{\left(N\right)}\right)$.
\end{enumerate}
\end{itemize}
\end{proof}

\subsection{Proof of Proposition \ref{prop: forme pr=0000E9par=0000E9e ordre N}}

~

We now give a short proof of Proposition \ref{prop: forme pr=0000E9par=0000E9e ordre N},
using the different results proved in this section.
\begin{proof}[Proof of Proposition \ref{prop: forme pr=0000E9par=0000E9e ordre N}.]

We just have to use consecutively Proposition \ref{prop: preparation sur hypersurface invariante}
(applied to $Y_{0}:=Y_{\mid\acc{x=0}}$), Proposition \ref{prop: diag prep},
Proposition \ref{prop: preparation} and finally Proposition \ref{prop: ordre N avec eq homologique},
using at each time Corollary \ref{cor: summability push-forward}
in order to obtain the directional 1-summability.
\end{proof}

\section{\label{sec:Sectorial-isotropies-and}Sectorial analytic normalization}

The aim of this section is to prove that for any $Y\in\snodiag$ and
for any ${\displaystyle \eta\in\big[\pi,2\pi\big[}$, there exists
a unique pair 
\[
\left(\Phi_{+},\Phi_{-}\right)\in\diffsect[\arg\left(i\lambda\right)][\eta]\times\diffsect[\arg\left(-i\lambda\right)][\eta]
\]
whose elements analytically conjugate $Y$ to its normal form $\ynorm$
(given by Theorem \ref{thm: forme normalel formelle}) in sectorial
neighborhoods of the origin with wide opening. The existence of sectorial
normalizing maps $\Phi_{+}$ and $\Phi_{-}$ in domains of the form
$\cal S_{+}\in\cal S_{\arg\left(i\lambda\right),\eta}$ and $\cal S_{-}\in\cal S_{\arg\left(-i\lambda\right),\eta}$
for all ${\displaystyle \eta\in\big[\pi,2\pi\big[}$, is equivalent
to the existence of a sectorial normalizing map $\Phi_{\theta}$ in
domains $\cal S\in{\cal S}_{\theta,\pi}$, for all $\theta\in\ww R$
such that $\theta\neq\arg\left(\pm\lambda\right)$. At the end of
this section we will also prove that $\Phi_{+}$ and $\Phi_{-}$ both
admit the unique formal normalizing map $\hat{\Phi}$ (given by Theorem
\ref{thm: forme normalel formelle}) as weak Gevrey-1 asymptotic expansion
in domains $\cal S_{+}\in\cal S_{\arg\left(i\lambda\right),\eta}$
and $\cal S_{-}\in\cal S_{\arg\left(-i\lambda\right),\eta}$ respectively.
In particular, this will prove that $\hat{\Phi}$ is weakly 1-summable
in every direction $\theta\neq\arg\left(\pm\lambda\right)$.

We start with a vector field $Y^{\left(N\right)}$ normalized up to
order $N\geq2$ as in Proposition \ref{prop: forme pr=0000E9par=0000E9e ordre N}.
First of all, we prove the existence of germs of sectorial analytic
functions $\alpha_{+}\in\cal O\left(\cal S_{+}\right),\alpha_{-}\in\cal O\left(\cal S_{-}\right)$,
which are solutions to homological equations of the form: 
\[
\cal L_{Y^{\left(N\right)}}\left(\alpha_{\pm}\right)=x^{M+1}A_{\pm}\left(x,\mathbf{y}\right)\qquad,
\]
where $M\in\ww N_{>0}$ and $A_{\pm}\in{\cal O}\left(\cal S_{\pm}\right)$
is analytic in $\cal S_{\pm}$ (see Lemma \ref{lem: solution eq homo}).
In order to construct such solutions, we will integrate some appropriate
meromorphic 1-form on asymptotic paths (see subsection \ref{subsec: proof lemma}).
Once we have these solutions $\alpha_{+},\alpha_{-}$, we will construct
the desired germs of sectorial diffeomorphisms as the flows of some
elementary linear vector fields at ``time'' $\alpha_{\pm}\left(x,\mathbf{y}\right)$.
After that, we will prove in subsection \ref{sub :Sectorial isotropies in big sectors}
that there exist unique germs of sectorial fibered diffeomorphisms
tangent to the identity which conjugate $Y\in\snofib$ to its normal
form, by studying the sectorial isotropies in sectorial domains with
wide opening.

We go on using the notations introduced in subsection \ref{subsec:Summable-normal-form},
\emph{i.e.}
\begin{itemize}
\item $\lambda\in\ww C^{*}$
\item $a_{1}+a_{2}\notin\ww Q_{\leq0}$
\item $\overrightarrow{\cal C}:=-y_{1}\pp{y_{1}}+y_{2}\pp{y_{2}}$
\item $\overrightarrow{\cal R}:=y_{1}\pp{y1}+y_{2}\pp{y_{2}}$
\item $Y_{0}:=\lambda\overrightarrow{\cal C}+x\left(x\pp x+a_{1}y_{1}\pp{y_{1}}+a_{2}y_{2}\pp{y2}\right)$.
\end{itemize}
For ${\displaystyle \epsilon\in\left]0,\frac{\pi}{2}\right[}$ and
$r>0$, we will consider two sectors, namely 
\[
S_{+}\left(r,\epsilon\right):=S\left(r,\arg\left(i\lambda\right)-\frac{\pi}{2}-\epsilon,\arg\left(i\lambda\right)+\frac{\pi}{2}+\epsilon\right)
\]
 and 
\[
S_{-}\left(r,\epsilon\right)=S\left(r,\arg\left(-i\lambda\right)-\frac{\pi}{2}-\epsilon,\arg\left(-i\lambda\right)+\frac{\pi}{2}+\epsilon\right).
\]

Let us consider a (weakly) 1-summable non-degenerate div-integrable
doubly-resonant saddle-node normalized up to an order $N+2$, with
$N>0$: 
\begin{eqnarray*}
Y^{\left(N+2\right)} & = & Y_{0}+\left(c\left(y_{1}y_{2}\right)+x^{N+2}D^{\left(N+2\right)}\left(x,\mathbf{y}\right)\right)\overrightarrow{\cal C}+x^{N+2}R^{\left(N+2\right)}\left(x,\mathbf{y}\right)\overrightarrow{\cal R}\\
 &  & \mbox{(formal)}\\
Y_{\pm}^{\left(N+2\right)} & = & Y_{0}+\left(c\left(y_{1}y_{2}\right)+x^{N+2}D_{\pm}^{\left(N+2\right)}\left(x,\mathbf{y}\right)\right)\overrightarrow{\cal C}+x^{N+2}R_{\pm}^{\left(N+2\right)}\left(x,\mathbf{y}\right)\overrightarrow{\cal R}\\
 &  & \mbox{(analytic in }S_{\pm}\left(r,\epsilon\right)\times\mathbf{D\left(0,r\right)})
\end{eqnarray*}
where $D^{\left(N+2\right)},R^{\left(N+2\right)}$ are of order at
least one with respect to $\mathbf{y}$, and (weak) 1-summable in
every direction $\theta\in\ww R$ with $\theta\neq\arg\left(\pm\lambda\right)$:
their respective (weak) 1-sums in the direction $\arg\left(\pm i\lambda\right)$
are $D_{\pm}^{\left(N+2\right)},R_{\pm}^{\left(N+2\right)}$, which
can be analytically extended in $S_{\pm}\left(r,\epsilon\right)\times\left(\ww C^{2},0\right)$.
In order to have the complete sectorial normalizing map, we have to
assume now that our vector field is \textbf{\emph{strictly non-degenerate}},
\emph{i.e.}

\[
\fbox{\ensuremath{\Re\left(a_{1}+a_{2}\right)}>0}\,\,\,.
\]
\begin{prop}
\label{prop: norm sect}Under the assumptions above, for all $\eta\in\left]\pi,2\pi\right[$,
there exist two germs of sectorial fibered diffeomorphisms
\[
\begin{cases}
\Psi_{+}\in\diffsect[\arg\left(i\lambda\right)][\eta]\\
\Psi_{-}\in\diffsect[\arg\left(-i\lambda\right)][\eta]
\end{cases}
\]
 of the form 
\begin{eqnarray*}
\Psi_{\pm}:\left(x,\mathbf{y}\right) & \mapsto & \left(x,\mathbf{y}+\tx O\left(\norm{\mathbf{y}}^{2}\right)\right)\,\,,
\end{eqnarray*}
 which conjugate $Y_{\pm}^{\left(N+2\right)}$ to its formal normal
form 
\[
\ynorm=x^{2}\pp x+\left(-\lambda+a_{1}x-c\left(y_{1}y_{2}\right)\right)y_{1}\pp{y_{1}}+\left(\lambda+a_{2}x+c\left(y_{1}y_{2}\right)\right)y_{2}\pp{y_{2}}\qquad,
\]
where $c\left(v\right)\in v\ww C\acc v$ is the germ of an analytic
function in $v:=y_{1}y_{2}$ vanishing at the origin. Moreover, we
can choose $\Psi_{\pm}$ above such that 
\[
\Psi_{\pm}\left(x,\mathbf{y}\right)=\tx{Id}\left(x,\mathbf{y}\right)+x^{N}\mathbf{P}_{\pm}^{\left(N\right)}\left(x,\mathbf{y}\right)\,\,,
\]
where $\mathbf{P}_{\pm}^{\left(N\right)}=\left(0,P_{1,\pm},P_{2,\pm}\right)$
is analytic in $S_{\pm}\left(r,\epsilon\right)\times\left(\ww C^{2},0\right)$
(for some $r>0$ and $\epsilon>\frac{\eta}{2}$) and of order at least
two with respect to $\mathbf{y}$.
\end{prop}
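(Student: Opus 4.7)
The plan is to implement the three-step formal construction of Proposition~\ref{prop: ordre N avec eq homologique} at the level of sectorial analytic objects. Starting from $Y^{(N+2)}_\pm$, which differs from $\ynorm$ by a remainder $x^{N+2}\big(D^{(N+2)}_\pm\overrightarrow{\cal C} + R^{(N+2)}_\pm\overrightarrow{\cal R}\big)$ analytic on $S_\pm(r,\epsilon)\times\mathbf D(0,\mathbf r)$, I would build $\Psi_\pm$ as a composition $\Gamma_{\sigma_\pm}\circ\Lambda_{\tau_\pm}$ of time-$\alpha$ flows of the two commuting linear vector fields $\overrightarrow{\cal R}$ and $\overrightarrow{\cal C}$, with sectorial analytic ``times'' $\tau_\pm,\sigma_\pm\in\cal O(\cal S_\pm)$ to be constructed.

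Each such flow absorbs one piece of the remainder provided one can solve, in the sectorial category, a homological equation of the form
\[
\cal L_{Y^{(M)}_\pm}(\alpha_\pm)=x^{M+1}A_\pm(x,\mathbf y)
\]
with given $A_\pm\in\cal O(\cal S_\pm)$ and unknown $\alpha_\pm\in\cal O(\cal S_\pm)$, for $M\geq N+2$. This is precisely Lemma~\ref{lem: solution eq homo}, and is the only truly non-formal input. Following the geometric approach \emph{\`a la} Teyssier, I would produce $\alpha_\pm$ as a path-integral along trajectories of $Y^{(M)}_\pm$ (or of the leading part $Y_0$): if $\phi_t$ denotes such a flow, then
\[
\alpha_\pm(x,\mathbf y)=\int_{-\infty}^{0} x^{M+1}(\phi_t)\,A_\pm(\phi_t)\,\tx dt,
\]
interpreted along a suitable complex-time path ending at $(x,\mathbf y)$ and starting asymptotically at the origin. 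Geometrically, one writes $x^{M+1}A_\pm=\omega(Y^{(M)}_\pm)$ for a well-chosen meromorphic $1$-form $\omega$, and $\alpha_\pm$ is the path-integral of $\omega$.

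Convergence and uniformity of this integral rely on a careful choice of contour dictated by the sector geometry. Setting $\tau=1/x$, a trajectory of $Y_0$ satisfies $\tau(t)=1/x_0-t$, and along it $y_1\sim y_1(0)e^{\lambda\tau}(x/x_0)^{a_1}$, $y_2\sim y_2(0)e^{-\lambda\tau}(x/x_0)^{a_2}$. Keeping $|y_1|,|y_2|$ bounded thus forces $\Re(\lambda\tau)$ to remain bounded as $\tau\to\infty$, which selects the two asymptotic directions $x\to0$ along $\arg(\pm i\lambda)$ --- exactly the bisectors of $S_\pm$. The strict non-degeneracy assumption $\Re(a_1+a_2)>0$ ensures that $|y_1y_2|\sim(|x|/|x_0|)^{\Re(a_1+a_2)}$ decays as $x\to 0$ along the path, while the factor $x^{M+1}$ provides the additional polynomial decay $|x|^{M+1}\sim|t|^{-(M+1)}$ needed to make the integral absolutely convergent with estimates uniform in $(x,\mathbf y)$ on compact sub-sectors. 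Once $\alpha_\pm$ is in hand, the three-step factorization of Proposition~\ref{prop: ordre N avec eq homologique} transposes without change: successively killing the radial remainder by $\Lambda_{\tau_\pm}$ and the tangential one by $\Gamma_{\sigma_\pm}$ yields $\Psi_\pm$, and tracking the orders in $x$ and $\mathbf y$ of the $\alpha_\pm$'s gives the form $\Psi_\pm=\tx{Id}+x^N\mathbf P^{(N)}_\pm$ with $\mathbf P^{(N)}_\pm=O(\|\mathbf y\|^2)$.

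The main obstacle is to show that for every opening $\eta\in\,]\pi,2\pi[\,$, one can choose the asymptotic paths $\gamma_{(x,\mathbf y)}$ so as to (i) remain inside the sectorial domain $\cal S_\pm\times\mathbf D(0,\mathbf r)$, (ii) depend holomorphically on $(x,\mathbf y)$, and (iii) yield uniform convergence estimates. Near the boundary directions $\arg(\pm\lambda)$ the ``straight'' trajectories of $Y_0$ exit the sector, and one must bend the integration contour carefully to reach the asymptotic direction $\arg(\pm i\lambda)$ from within the sector. This technical heart of the sectorial normalization is the content of Lemma~\ref{lem: solution eq homo}, whose proof is the main new ingredient compared with the formal setting of Proposition~\ref{prop: ordre N avec eq homologique}.
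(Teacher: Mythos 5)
Your proposal follows essentially the same route as the paper: Proposition~\ref{prop: norm sect} is obtained there by composing a radial flow $\Lambda_{\rho_\pm}$ of $\overrightarrow{\cal R}$ with a tangential flow $\Gamma_{\chi_\pm}$ of $\overrightarrow{\cal C}$ (Propositions~\ref{prop: radial part} and~\ref{prop: tangential part}), each ``time'' being the solution of the homological equation of Lemma~\ref{lem: solution eq homo}, itself produced exactly as you describe --- by integrating $x^{M+1}A_\pm\,\frac{\tx dx}{x^2}$ along asymptotic trajectories confined to a stable domain $\Omega_\pm$, with the bending of paths near the directions $\arg(\pm\lambda)$ and the use of $\Re(a_1+a_2)>0$ playing precisely the roles you identify. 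The only cosmetic difference is that in the sectorial setting each of the two flows kills its remainder entirely (two steps, not the three order-by-order steps of the formal Proposition~\ref{prop: ordre N avec eq homologique}), which is what your construction actually does anyway.
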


By combining Propositions \ref{prop: forme pr=0000E9par=0000E9e ordre N}
and \ref{prop: norm sect} we immediately obtain the following result.
\begin{cor}
\label{cor: existence normalisations sectorielles}Let $Y\in\snofib$
be a strictly non-degenerate diagonal doubly-resonant saddle-node
which is div-integrable. Then, for all $\eta\in\left]\pi,2\pi\right[$,
there exist two germs of sectorial fibered diffeomorphisms
\[
\begin{cases}
\Phi_{+}\in\diffsect[\arg\left(i\lambda\right)][\eta]\\
\Phi_{-}\in\diffsect[\arg\left(-i\lambda\right)][\eta]
\end{cases}
\]
 tangent to the identity such that:
\begin{eqnarray*}
\left(\Phi_{\pm}\right)_{*}\left(Y\right) & = & x^{2}\pp x+\left(-\lambda+a_{1}x-c\left(y_{1}y_{2}\right)\right)y_{1}\pp{y_{1}}+\left(\lambda+a_{2}x+c\left(y_{1}y_{2}\right)\right)y_{2}\pp{y_{2}}\\
 & =: & \ynorm\,\,,
\end{eqnarray*}
where $\lambda\in\ww C^{*}$, $\Re\left(a_{1}+a_{2}\right)>0$, and
$c\left(v\right)\in v\ww C\acc v$ is the germ of an analytic function
in $v:=y_{1}y_{2}$ vanishing at the origin.
\end{cor}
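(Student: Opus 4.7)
The plan is to deduce this corollary by concatenating Proposition \ref{prop: forme pr=0000E9par=0000E9e ordre N} (which performs a formal, $1$-summable normalization up to arbitrary order $N$) with Proposition \ref{prop: norm sect} (which completes the normalization on wide sectors, starting from a vector field already normalized up to some order). I proceed in three short steps.

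\emph{Step 1 (formal preparation).} Fix $\eta \in \left]\pi, 2\pi\right[$ and choose any integer $N \geq 3$. Applying Proposition \ref{prop: forme pr=0000E9par=0000E9e ordre N} to $Y \in \snofib$ yields a formal fibered diffeomorphism $\Psi^{(N)} \in \fdiffformid$ tangent to the identity, $1$-summable in every direction $\theta \neq \arg(\pm\lambda)$, whose push-forward $(\Psi^{(N)})_{\ast}(Y) =: Y^{(N)}$ is normalized up to order $N$ and itself $1$-summable in those directions.

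\emph{Step 2 (gluing sectorial $1$-sums on almost $2\pi$-wide sectors).} For each allowed direction $\theta$ in the open arc $\left]\arg(\lambda), \arg(-\lambda)\right[$ passing through $\arg(i\lambda)$, Corollary \ref{cor: summability push-forward} furnishes a germ $\Psi^{(N)}_{\theta} \in \diffsect[\theta][\pi]$ conjugating $Y$ to the $1$-sum $Y^{(N)}_{\theta}$. By uniqueness of the Borel–Laplace sum on overlapping sectorial domains (Lemma \ref{lem:diff_alg_and_summability}), these glue, as $\theta$ varies continuously in the arc, into a single germ of sectorial fibered diffeomorphism $\Psi^{(N)}_{+}$ defined on a sectorial neighborhood bisected by $\arg(i\lambda)$ of opening arbitrarily close to (but strictly less than) $2\pi$. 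Consequently $\Psi^{(N)}_{+}$ and the analogously constructed $Y^{(N)}_{+}$ satisfy exactly the hypotheses required by Proposition \ref{prop: norm sect}; symmetrically for $\Psi^{(N)}_{-}$ and $Y^{(N)}_{-}$.

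\emph{Step 3 (concluding composition).} Apply Proposition \ref{prop: norm sect} to $Y^{(N)}_{\pm}$: it produces sectorial fibered diffeomorphisms $\Psi_{\pm} \in \diffsect[\arg(\pm i\lambda)][\eta]$, tangent to the identity, satisfying $(\Psi_{\pm})_{\ast}(Y^{(N)}_{\pm}) = \ynorm$. Setting
\[
\Phi_{\pm} \; := \; \Psi_{\pm} \circ \Psi^{(N)}_{\pm},
\]
one obtains a composition of two fibered sectorial diffeomorphisms tangent to the identity both lying in $\diffsect[\arg(\pm i\lambda)][\eta]$, so $\Phi_{\pm}$ belongs to the same space, is tangent to the identity, and by the chain rule for push-forwards satisfies $(\Phi_{\pm})_{\ast}(Y) = (\Psi_{\pm})_{\ast}(Y^{(N)}_{\pm}) = \ynorm$, which is the desired conclusion.

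The one point that deserves attention is Step 2: it is crucial that Proposition \ref{prop: forme pr=0000E9par=0000E9e ordre N} produces a normalizing map which is $1$-summable in \emph{every} direction of the two open arcs separating the Stokes rays $\arg(\pm\lambda)$, and not merely in the bisecting directions $\arg(\pm i\lambda)$. This is exactly what allows one, by gluing, to upgrade the sectorial opening from slightly more than $\pi$ (the opening given by a single directional $1$-sum) to almost $2\pi$ (the opening required to apply Proposition \ref{prop: norm sect} with any $\eta < 2\pi$). Every remaining verification is formal.
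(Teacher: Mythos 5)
Your proposal is correct and follows exactly the paper's route: the paper obtains this corollary by combining Proposition \ref{prop: forme pr=0000E9par=0000E9e ordre N} with Proposition \ref{prop: norm sect}, which is precisely your Steps 1 and 3. Your Step 2 merely makes explicit the standard gluing of directional $1$-sums over the open arc of non-singular directions, which is what the paper implicitly invokes when it takes the $1$-sums of $D^{(N+2)},R^{(N+2)}$ to be analytic on sectors $S_{\pm}(r,\epsilon)$ of opening greater than $\pi$ and arbitrarily close to $2\pi$.
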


As already mentioned, we will also prove at the end of this section
that $\Phi_{+}$ and $\Phi_{-}$ are unique as germs (see Proposition
\ref{prop: unique normalizations}), and that they are the weak 1-sums
of the unique formal normalizing map $\hat{\Phi}$ given by Theorem
\ref{thm: forme normalel formelle}.

\subsection{Proof of Proposition \ref{prop: norm sect}. }

~

We give here two consecutive propositions which allow to prove Proposition
\ref{prop: norm sect} as an immediate consequence. When we say that
a function $f:U\rightarrow\ww C$ is \emph{dominated} by another $g:U\rightarrow\ww R_{+}$
in $U$, it means that there exists $L>0$ such that for all $u\in U$,
we have $\abs{f\left(u\right)}\leq L.g\left(u\right)$.
\begin{prop}
\label{prop: radial part}

Let ${\displaystyle Y_{\pm}^{\left(N+2\right)}=Y_{0}+D_{\pm}\overrightarrow{\cal C}+R_{\pm}\overrightarrow{\cal R}},$
where 
\[
\begin{cases}
D_{\pm}\left(x,\mathbf{y}\right)=c\left(y_{1}y_{2}\right)+x^{N+2}D_{\pm}^{\left(N+2\right)}\left(x,\mathbf{y}\right)\\
R_{\pm}\left(x,\mathbf{y}\right)=x^{N+2}R_{\pm}^{\left(N+2\right)}\left(x,\mathbf{y}\right)
\end{cases}\qquad,
\]
with $N\in\ww N_{>0}$, $c\left(v\right)\in v\ww C\acc v$ of order
at least one, and $D_{\pm}^{\left(N+2\right)},R_{\pm}^{\left(N+2\right)}$
analytic in $S_{\pm}\left(r,\epsilon\right)\times\left(\ww C^{2},0\right)$
and dominated by $\norm{\mathbf{y}}_{\infty}$. Assume that $\Re\left(a_{1}+a_{2}\right)>0$.

Then, possibly by reducing $r>0$ and the neighborhood $\left(\ww C^{2},0\right)$,
there exist two germs of sectorial fibered diffeomorphisms $\varphi_{+}$
and $\varphi_{-}$ in $S_{+}\left(r,\epsilon\right)\times\left(\ww C^{2},0\right)$
and $S_{-}\left(r,\epsilon\right)\times\left(\ww C^{2},0\right)$
respectively, which conjugate $Y_{\pm}^{\left(N+2\right)}$ to 
\begin{eqnarray*}
Y_{\overrightarrow{\cal C},\pm} & := & Y_{0}+C_{\pm}\overrightarrow{\cal C}\qquad,
\end{eqnarray*}
where $C_{\pm}\left(x,\mathbf{y}\right)=D_{\pm}\circ\varphi_{\pm}^{-1}\left(x,\mathbf{z}\right)$.
Moreover we can chose $\varphi_{\pm}$ to be of the form 
\begin{eqnarray*}
\varphi_{\pm}\left(x,\mathbf{y}\right) & = & \left(x,y_{1}\exp\left(\rho_{\pm}\left(x,\mathbf{y}\right)\right),y_{2}\exp\left(\rho_{\pm}\left(x,\mathbf{y}\right)\right)\right)\qquad,
\end{eqnarray*}
where $\rho_{\pm}\left(x,\mathbf{y}\right)=x^{N+1}\tilde{\rho}_{\pm}\left(x,\mathbf{y}\right)$
and $\tilde{\rho}_{\pm}$ is analytic in $S_{\pm}\left(r,\epsilon\right)\times\left(\ww C^{2},0\right)$
and dominated by $\norm{\mathbf{y}}_{\infty}$.
\end{prop}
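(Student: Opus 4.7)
The plan is to reformulate the conjugacy equation as a single homological equation in one unknown function $\rho_\pm$, and then construct $\rho_\pm$ by integration along asymptotic paths in the sectorial domain.

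First, the prescribed form $\varphi_\pm(x,\mathbf{y})=(x,y_1 e^{\rho_\pm},y_2 e^{\rho_\pm})$ realizes $\varphi_\pm$ as the time-$\rho_\pm$ map of the commuting linear field $\overrightarrow{\mathcal{R}}$. Writing out $\mathrm{D}\varphi_\pm\cdot Y_\pm^{(N+2)}=(Y_0+C_\pm\overrightarrow{\mathcal{C}})\circ\varphi_\pm$ componentwise and then adding, respectively subtracting, the two $y$-equations, the system collapses to
$$\mathcal{L}_{Y_\pm^{(N+2)}}(\rho_\pm)=-R_\pm\qquad\text{and}\qquad C_\pm=D_\pm\circ\varphi_\pm^{-1}.$$
The second relation merely \emph{defines} $C_\pm$ once $\rho_\pm$ is known, so everything reduces to producing $\rho_\pm$ satisfying the first equation on $S_\pm(r,\epsilon)\times(\ww C^2,0)$ with $\rho_\pm=x^{N+1}\tilde\rho_\pm$ and $\tilde\rho_\pm$ analytic and dominated by $\norm{\mathbf{y}}_\infty$.

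Second, I would construct $\rho_\pm$ by flow-integration. Fixing $\mathbf{p}_0=(x_0,\mathbf{y}_0)$ in the domain and letting $(x(t),\mathbf{y}(t))$ be the trajectory of $Y_\pm^{(N+2)}$ through $\mathbf{p}_0$, one defines
$$\rho_\pm(\mathbf{p}_0):=\int_0^{+\infty}R_\pm(x(t),\mathbf{y}(t))\,\dd t,$$
along an asymptotic path in the complex $t$-plane for which $x(t)\to 0$ while $x(t)$ stays in $S_\pm$. Re-parametrising by $x$ via the leading relation $\dd t=\dd x/x^2$ turns the integral into $\int_\gamma x^{N}R_\pm^{(N+2)}(x,\mathbf{y}(x))\,\dd x$ along an asymptotic path $\gamma\subset S_\pm$ running from $x_0$ to $0$; the estimate $\abs{R_\pm^{(N+2)}}\lesssim\norm{\mathbf{y}}_\infty$ then gives absolute convergence together with the bound $\abs{\rho_\pm(\mathbf{p}_0)}\lesssim\abs{x_0}^{N+1}\norm{\mathbf{y}_0}_\infty$, precisely the required ansatz. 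Differentiating under the integral along the flow yields $\mathcal{L}_{Y_\pm^{(N+2)}}(\rho_\pm)=-R_\pm$, while analyticity follows from analytic dependence of solutions of ODEs on their initial data.

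The main technical obstacle is controlling the trajectory globally: one must show that $(x(t),\mathbf{y}(t))$ remains inside $S_\pm(r,\epsilon)\times\mathbf{D}(\mathbf{0},\mathbf{r})$ for every $t$ along the chosen asymptotic path, possibly after shrinking $r$. For the $x$-variable, the opening $\pi+2\epsilon$ of $S_\pm(r,\epsilon)$ leaves enough room to draft $\gamma$ within $S_\pm$, since $S_\pm$ is the natural ``outgoing'' sector for the model flow $\dot x=x^2$. For $\mathbf{y}(t)$, the unperturbed $Y_0$-flow gives $y_1(t)y_2(t)=y_{1,0}y_{2,0}\left(x(t)/x_0\right)^{a_1+a_2}$, so the assumption $\Re(a_1+a_2)>0$ is essential: it forces the resonant product to vanish as $x(t)\to 0$, while the individual factors $y_j(t)=y_{j,0}(x(t)/x_0)^{a_j}e^{\mp\lambda t}$ stay bounded along paths on which $\Re(\lambda t)$ is controlled (which is possible precisely because $S_\pm$ is bisected by the direction $\arg(\pm i\lambda)$ perpendicular to $\lambda$). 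The additional contributions of $D_\pm\overrightarrow{\mathcal{C}}+R_\pm\overrightarrow{\mathcal{R}}$ to the flow are of size $\mathcal{O}\left(\abs{x}^{N+2}\norm{\mathbf{y}}_\infty\right)$ plus a bounded resonant term in $y_1y_2$ acting only tangentially, and they are absorbed by a standard Picard iteration on the integral equation for $\mathbf{y}(t)$. Once these uniform trajectory estimates are secured, the convergence, analyticity and ansatz for $\rho_\pm$ follow, and $\varphi_\pm$ is the sectorial diffeomorphism of the stated form conjugating $Y_\pm^{(N+2)}$ to $Y_0+C_\pm\overrightarrow{\mathcal{C}}$.
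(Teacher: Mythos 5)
Your proposal is correct and follows essentially the same route as the paper: the conjugacy is reduced to the single homological equation $\mathcal{L}_{Y_\pm^{(N+2)}}(\rho_\pm)=-R_\pm$ (with $C_\pm:=D_\pm\circ\varphi_\pm^{-1}$ defined a posteriori), which is then solved by integrating $x^{N}R_\pm^{(N+2)}\,\mathrm{d}x$ along asymptotic paths tangent to the vector field inside a flow-invariant sectorial domain, the hypothesis $\Re(a_1+a_2)>0$ being used exactly as you indicate to control the trajectories. The paper packages this as Lemma \ref{lem: solution eq homo} plus the separate (routine but necessary) verification in Lemma \ref{lem: diffeo sectoriel} that the exponential twist $(x,\mathbf{y})\mapsto(x,y_1e^{\rho_\pm},y_2e^{\rho_\pm})$ is indeed injective on the sectorial domain, a point you leave implicit.
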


\begin{rem}
Notice that $\varphi_{\pm}^{-1}$ is of the form 
\[
\varphi_{\pm}^{-1}\left(x,\mathbf{z}\right)=\left(x,z_{1}\left(1+x^{N+1}\vartheta\left(x,\mathbf{z}\right)\right),z_{2}\left(1+x^{N+1}\vartheta\left(x,\mathbf{z}\right)\right)\right)\qquad,
\]
where $\vartheta$ is analytic in $S_{\pm}\left(r,\epsilon\right)\times\left(\ww C^{2},0\right)$
and dominated by $\norm{\mathbf{z}}_{\infty}$. Consequently: 
\[
C_{\pm}\left(x,\mathbf{z}\right)=c\left(z_{1}z_{2}\right)+x^{N+1}C_{\pm}^{\left(N+1\right)}\left(x,\mathbf{z}\right)\qquad,
\]
where $c$ is the same as above and $C_{\pm}$ is analytic in $S_{\pm}\left(r,\epsilon\right)\times\left(\ww C^{2},0\right)$
and dominated by $\norm{\mathbf{z}}_{\infty}$.
\end{rem}

\begin{prop}
\label{prop: tangential part}Let ${\displaystyle Y_{\cal C,\pm}:=Y_{0}+C_{\pm}\overrightarrow{\cal C}}$,
where 
\[
C_{\pm}\left(x,\mathbf{z}\right)=c\left(z_{1}z_{2}\right)+x^{N+1}C_{\pm}^{\left(N+1\right)}\left(x,\mathbf{z}\right)\qquad,
\]
with $N\in\ww N_{>0}$, $c\left(v\right)\in v\ww C\acc v$ of order
at least one, and $C_{\pm}^{\left(N+1\right)}$ analytic in $S_{\pm}\left(r,\epsilon\right)\times\left(\ww C^{2},0\right)$
and dominated by $\norm{\mathbf{z}}_{\infty}$ . Assume $\Re\left(a_{1}+a_{2}\right)>0$. 

Then, possibly by reducing $r>0$ and the neighborhood $\left(\ww C^{2},0\right)$,
there exist two germs of sectorial fibered diffeomorphisms $\psi_{+}$
and $\psi_{-}$ in $S_{+}\left(r,\epsilon\right)\times\left(\ww C^{2},0\right)$
and $S_{-}\left(r,\epsilon\right)\times\left(\ww C^{2},0\right)$
respectively, which conjugate $Y_{\cal C,\pm}$ to 
\begin{eqnarray*}
\ynorm & := & Y_{0}+c\left(v\right)\overrightarrow{\cal C}\qquad.
\end{eqnarray*}
Moreover, we can chose $\psi_{\pm}$ to be of the form 
\begin{eqnarray*}
\psi_{\pm}\left(x,\mathbf{z}\right) & = & \left(x,z_{1}\exp\left(-\chi_{\pm}\left(x,\mathbf{z}\right)\right),z_{2}\exp\left(\chi_{\pm}\left(x,\mathbf{z}\right)\right)\right)\qquad,
\end{eqnarray*}
where $\chi_{\pm}\left(x,\mathbf{z}\right)=x^{N}\tilde{\chi}_{\pm}\left(x,\mathbf{z}\right)$
and $\tilde{\chi}$ is analytic in $S_{\pm}\left(r,\epsilon\right)\times\left(\ww C^{2},0\right)$
and dominated by $\norm{\mathbf{z}}_{\infty}$.
\end{prop}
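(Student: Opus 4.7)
My plan is to search for $\psi_\pm$ in the prescribed exponential form, namely as the ``flow'' of the tangential vector field $\overrightarrow{\cal C} = -z_1\pp{z_1} + z_2\pp{z_2}$ at complex time $\chi_\pm(x,\mathbf{z})$, so that
\[
\psi_\pm(x,\mathbf{z}) = \bigl(x,\,z_1 e^{-\chi_\pm(x,\mathbf{z})},\,z_2 e^{\chi_\pm(x,\mathbf{z})}\bigr).
\]
A direct computation of $\tx D\psi_\pm\cdot Y_{\cal C,\pm}$, entirely analogous to the second step of the proof of Proposition \ref{prop: ordre N avec eq homologique} and using crucially that $\overrightarrow{\cal C}$ preserves both $x$ and the resonant monomial $v = z_1 z_2$, shows that the conjugacy $(\psi_\pm)_*(Y_{\cal C,\pm}) = \ynorm$ is equivalent to the scalar cohomological equation
\[
\lie{Y_{\cal C,\pm}}(\chi_\pm) \;=\; c(v) - C_\pm(x,\mathbf{z}) \;=\; -x^{N+1}C_\pm^{(N+1)}(x,\mathbf{z}).
\]

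The central task is then to solve this equation with $\chi_\pm$ analytic on $S_\pm(r,\epsilon)\times\left(\ww C^2,0\right)$ and of the prescribed form $\chi_\pm = x^N\tilde{\chi}_\pm$, with $\tilde{\chi}_\pm$ dominated by $\norm{\mathbf{z}}_\infty$. For this I would invoke Lemma \ref{lem: solution eq homo}, to be established separately in subsection \ref{subsec: proof lemma} by integrating a suitably chosen meromorphic $1$-form along asymptotic characteristics of $Y_{\cal C,\pm}$, following the geometric approach of Teyssier. Since $\lie{Y_{\cal C,\pm}}$ has leading part $x^2\pp x + \cdots$, integration along these characteristics drops one power of $x$, so that a right-hand side divisible by $x^{N+1}$ produces a solution divisible by $x^N$; and since the resolution depends linearly on its right-hand side, the domination of $C_\pm^{(N+1)}$ by $\norm{\mathbf{z}}_\infty$ propagates to $\tilde{\chi}_\pm$.

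To conclude, after possibly shrinking $r$ and the polyradius, $\chi_\pm$ together with its partial derivatives becomes uniformly small on $S_\pm(r,\epsilon)\times\left(\ww C^2,0\right)$; the inverse function theorem then ensures that $\psi_\pm$ is a sectorial fibered biholomorphism, and the factor $x^N$ guarantees that $\psi_\pm-\tx{Id}$ vanishes on $\acc{\mathbf{z}=\mathbf 0}$ and decays as $x\to 0$. The sole genuine difficulty is the sectorial resolution of the cohomological equation, packaged into Lemma \ref{lem: solution eq homo}; granting that, the present statement reduces to the short calculation above together with a standard smallness argument. The tangential case is actually slightly less delicate than its radial counterpart (Proposition \ref{prop: radial part}): because $\overrightarrow{\cal C}$ preserves $v$ exactly, the resonant term $c(v)$ cancels automatically on the right-hand side, leaving only the perturbative $x^{N+1}C_\pm^{(N+1)}$ to be absorbed.
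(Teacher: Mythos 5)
Your proposal is correct and follows essentially the same route as the paper: reduce the conjugacy to the single homological equation $\lie{Y_{\cal C,\pm}}\left(\chi_{\pm}\right)=-x^{N+1}C_{\pm}^{\left(N+1\right)}$ (the resonant part $c(v)$ cancelling because $\overrightarrow{\cal C}$ preserves $v$), solve it via Lemma \ref{lem: solution eq homo} with $M=N$ and $A_{\pm}=-C_{\pm}^{\left(N+1\right)}$, and check that the resulting exponential map is a sectorial fibered diffeomorphism. The only cosmetic difference is that the paper packages the last step as a separate injectivity lemma (Lemma \ref{lem: diffeo sectoriel}, proved by a mean-value/Cauchy estimate on the sector) rather than appealing loosely to the inverse function theorem, but the content is the same.
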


If we assume for a moment the two propositions above, the proof of
Proposition becomes obvious.
\begin{proof}[Proof of Proposition \ref{prop: norm sect}.]

It is an immediate consequence of the consecutive application of the
previous two propositions, just by taking $\Psi_{\pm}=\psi_{\pm}\circ\varphi_{\pm}$
with the notations above.
\end{proof}

\subsection{Proof of Propositions \ref{prop: radial part} and \ref{prop: tangential part}}

~

In order to prove Propositions \ref{prop: radial part} and \ref{prop: tangential part},
we will need the following lemmas. The first one gives the existence
of analytic solutions (in sectorial domains) to a homological equations
we need to solve.
\begin{lem}
\label{lem: solution eq homo}Let ${\displaystyle Z_{\pm}:=Y_{0}+C_{\pm}\left(x,\mathbf{y}\right)\overrightarrow{\cal C}+xR_{\pm}^{\left(1\right)}\left(x,\mathbf{y}\right)\overrightarrow{\cal R}}$,
with $C_{\pm},R_{\pm}^{\left(1\right)}$ analytic in $S_{\pm}\left(r,\epsilon\right)\times\left(\ww C^{2},0\right)$
and dominated by $\norm{\mathbf{y}}_{\infty}$ and let also $A_{\pm}\left(x,\mathbf{y}\right)$
be analytic in $S_{\pm}\left(r,\epsilon\right)\times\left(\ww C^{2},0\right)$
and dominated by $\norm{\mathbf{y}}_{\infty}$. Then for all $M\in\ww N_{>0}$,
possibly by reducing $r>0$ and the neighborhood $\left(\ww C^{2},0\right)$,
there exists a solution $\alpha_{\pm}$ to the homological equation
\begin{equation}
\cal L_{Z_{\pm}}\left(\alpha_{\pm}\right)=x^{M+1}A_{\pm}\left(x,\mathbf{y}\right)\qquad,\label{eq: eq homo lemme}
\end{equation}
such that $\alpha_{\pm}\left(x,\mathbf{y}\right)=x^{M}\tilde{\alpha}_{\pm}\left(x,\mathbf{y}\right)$,
where $\tilde{\alpha}_{\pm}$ is a germ of analytic function in $S_{\pm}\left(r,\epsilon\right)\times\left(\ww C^{2},0\right)$
and dominated by $\norm{\mathbf{y}}_{\infty}$.
\end{lem}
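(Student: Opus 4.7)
The plan is to follow the method of characteristics in the spirit of Arnold and Teyssier, constructing $\alpha_{\pm}$ as a path integral of the right-hand side along trajectories of $Z_{\pm}$ going to the origin in the $x$-coordinate. First I would change the time variable by setting $s = -1/x$, so that $x^{2}\pp x = \pp s$ and the $x$-component of the flow of $Z_{\pm}$ becomes a mere translation $s(t) = s_0 + t$. Under this substitution the sector $S_{\pm}(r,\epsilon)$ for $x$ is mapped onto a neighborhood of infinity in the $s$-plane bisected by the direction $\pi - \arg(\pm i\lambda)$, i.e., by the direction of the complex number $\pm i/\lambda$.

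Next I would fix the asymptotic integration path in the form $\sigma \mapsto t(\sigma) := \pm\sigma\, i/\lambda$ for $\sigma \in [0,+\infty)$. The decisive property of this ray is that $\lambda t(\sigma)$ is purely imaginary, so the leading-order behaviour of the $\mathbf{y}$-components along the trajectory, namely $(y_{1,0}e^{-\lambda t}, y_{2,0}e^{\lambda t})$, stays bounded in modulus. A Gronwall-type argument (possibly after reducing $r$ and the initial polydisc) would then show that the full flow of $Z_{\pm}$, which differs from $Y_0$ only by the contributions $C_{\pm}\overrightarrow{\cal C}$ (dominated by $\norm{\mathbf{y}}_\infty$) and $xR_{\pm}^{(1)}\overrightarrow{\cal R}$ (dominated by $|x|\norm{\mathbf{y}}_\infty$, hence integrable along the path since $|x(\sigma)| \lesssim 1/\sigma$), is defined for all $\sigma \geq 0$ starting from $(x_0,\mathbf{y}_0)$, and that $\mathbf{y}(\sigma)$ stays in a fixed polydisc.

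Denoting this trajectory by $\phi_\sigma(x_0,\mathbf{y}_0) = (x(\sigma), \mathbf{y}(\sigma))$, I would then define
\[
\alpha_{\pm}(x_0, \mathbf{y}_0) := -\int_0^{+\infty} x(\sigma)^{M+1}\, A_{\pm}\!\left(x(\sigma),\mathbf{y}(\sigma)\right)\cdot \frac{\pm i}{\lambda}\, \dd\sigma\,.
\]
Since $|x(\sigma)|$ decays like $1/\sigma$ and $|A_{\pm}(\phi_\sigma)| \lesssim \norm{\mathbf{y}(\sigma)}_\infty$ is uniformly bounded, the integral converges absolutely as soon as $M \geq 1$. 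Factoring out $x_0^M$ by the substitution $u = \sigma/|s_0|$ and bounding $\norm{\mathbf{y}(\sigma)}_\infty$ uniformly by a constant times $\norm{\mathbf{y}_0}_\infty$ yields the desired estimate $|\alpha_{\pm}(x_0,\mathbf{y}_0)| \lesssim |x_0|^M\, \norm{\mathbf{y}_0}_\infty$, and therefore $\alpha_{\pm} = x^M\tilde{\alpha}_{\pm}$ with $\tilde{\alpha}_{\pm}$ dominated by $\norm{\mathbf{y}}_\infty$. Analyticity of $\tilde{\alpha}_{\pm}$ in $S_{\pm}(r,\epsilon)\times(\ww C^2,0)$ follows from the analytic dependence of the flow on the initial data together with the uniform convergence of the integral.

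Finally, the homological equation $\cal L_{Z_{\pm}}(\alpha_{\pm}) = x^{M+1}A_{\pm}$ holds by differentiation under the integral sign: translating $(x_0,\mathbf{y}_0)$ along the flow of $Z_{\pm}$ amounts to shifting the lower bound of integration, so the fundamental theorem of calculus recovers exactly $x_0^{M+1}A_{\pm}(x_0,\mathbf{y}_0)$. The core technical point — and the place where the ``possibly by reducing $r$ and $(\ww C^2,0)$'' clause is crucial — is the Gronwall estimate: along the purely-imaginary direction $\lambda t \in i\wr$ the opposite eigenvalues cancel exactly at the linear level, and the residual nonlinear perturbations contribute only small integrable corrections provided $r$ and the polydisc are small enough. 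Once this estimate is secured, the remaining construction and verification are essentially mechanical.
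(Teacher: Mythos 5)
Your overall strategy --- integrating $x^{M+1}A_{\pm}$ against the dual form $\frac{\mathrm{d}x}{x^{2}}$ along real-time trajectories asymptotic to the origin, and recovering the homological equation from the fundamental theorem of calculus --- is exactly the paper's, and the convergence, analyticity and differentiation-under-the-integral steps are indeed mechanical once a good family of paths is secured. The genuine gap is in the choice of path and in the Gronwall estimate you assert for it. You integrate along the straight ray $t(\sigma)=\pm\sigma i/\lambda$ in the complex time of $Z_{\pm}$, i.e.\ along the real-time flow of $\frac{\pm i}{\lambda}Z_{\pm}$. Along that ray,
\[
\frac{\mathrm{d}\log y_{1}}{\mathrm{d}\sigma}=\mp i+\frac{\pm i}{\lambda}\Big(a_{1}x-C_{\pm}+xR_{\pm}^{\left(1\right)}\Big),\qquad\frac{\mathrm{d}\log y_{2}}{\mathrm{d}\sigma}=\pm i+\frac{\pm i}{\lambda}\Big(a_{2}x+C_{\pm}+xR_{\pm}^{\left(1\right)}\Big),
\]
and the tangential perturbation $C_{\pm}$ is \emph{not} divisible by $x$: it is only dominated by $\norm{\mathbf{y}}_{\infty}$, which does not decay along the trajectory, so its contribution is not integrable. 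Its real part enters $\log\abs{y_{1}}$ and $\log\abs{y_{2}}$ with opposite signs; for instance with $C_{\pm}=c_{1}y_{1}y_{2}+\dots$ one has $y_{1}y_{2}\left(\sigma\right)\sim y_{1,0}y_{2,0}\,\sigma^{-a}$ along the ray, and the accumulated drift grows like $\abs{y_{1,0}y_{2,0}}\,\sigma^{1-\Re\left(a\right)}$ (like $\log\sigma$ when $\Re\left(a\right)=1$, the Painlev\'e case), so one of $\abs{y_{1}\left(\sigma\right)},\abs{y_{2}\left(\sigma\right)}$ leaves every fixed polydisc in finite time and the integral is not even defined. Even the linear terms $a_{j}xy_{j}$ contribute factors $\sigma^{-\Re\left(a_{j}\right)}$, and individually $\Re\left(a_{j}\right)$ may be negative. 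Symptomatically, your sketch never uses the hypothesis $\Re\left(a_{1}+a_{2}\right)>0$, which is essential here; this is precisely the ``first new phenomenon'' discussed in Remark \ref{rem:New difficulties} (the nonlinearity is not divisible by the time variable).

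The paper repairs this by renormalizing the time \emph{before} integrating: it follows the real-time flow of $X_{\pm}=\frac{\pm i}{1+\frac{a_{2}-a_{1}}{2}x+C_{\pm}}\,Z_{\pm}$. This unit is chosen so that the tangential perturbation cancels exactly in the modulus equations and the two linear coefficients are symmetrized, giving $\frac{\mathrm{d}y_{j}}{\mathrm{d}t}=\pm iy_{j}\big(\mp1+\big(\frac{a/2+R_{\pm}^{\left(1\right)}}{1+\cdots}\big)x\big)$ with $a=a_{1}+a_{2}$; since $\Re\left(a/2\right)>0$, both $\abs{y_{1}}$ and $\abs{y_{2}}$ decrease once $x\left(t\right)$ enters the sector bisected by $i\bar{a}\,\ww R_{+}$. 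The resulting integration path is therefore not a straight ray in $s=-1/x$ but a perturbed spiral confined to an explicitly constructed flow-invariant domain $\Omega_{\pm}$ (Proposition \ref{prop: domaine stable}), and it is on that domain that the uniform bounds $\norm{\mathbf{y}\left(t\right)}_{\infty}\lesssim\norm{\mathbf{y}_{0}}_{\infty}$ and $\abs{x\left(t\right)}\lesssim1/t$ --- which your convergence and domination estimates require --- are actually established. If you replace your ray by this renormalized flow, the remainder of your argument goes through essentially as written.
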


We will prove this lemma in subsection \ref{subsec: proof lemma}.
The following lemma proves that $\varphi_{\pm}$ and $\psi_{\pm}$
constructed in Propositions \ref{prop: radial part} and \ref{prop: tangential part}
are indeed germs of sectorial fibered diffeomorphisms in domains of
the form $S_{\pm}\left(r,\epsilon\right)\times\left(\ww C^{2},0\right)$.
\begin{lem}
\label{lem: diffeo sectoriel}Let $f_{\pm},g_{\pm}$ be two germs
of analytic and bounded functions in $S_{\pm}\left(r,\epsilon\right)\times\left(\ww C^{2},0\right)$,
which tend to $0$ as $\left(x,\mathbf{y}\right)\rightarrow\left(0,\mathbf{0}\right)$
in $S_{\pm}\left(r,\epsilon\right)\times\left(\ww C^{2},0\right)$.
Then 
\[
\phi_{\pm}:\left(x,\mathbf{y}\right)\mapsto\left(x,y_{1}\exp\left(f_{\pm}\left(x,\mathbf{y}\right)\right),y_{2}\exp\left(g_{\pm}\left(x,\mathbf{y}\right)\right)\right)
\]
 defines a germ of sectorial fibered diffeomorphism analytic in $S_{\pm}\left(r,\epsilon\right)\times\left(\ww C^{2},0\right)$
(possibly by reducing $r>0$ and the neighborhood $\left(\ww C^{2},0\right)$).
\end{lem}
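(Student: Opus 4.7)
The plan is to construct the inverse of $\phi_\pm$ as the unique fixed point of an explicit contraction, obtained as the uniform limit of holomorphic iterates; this will simultaneously establish that $\phi_\pm$ is a local biholomorphism onto a sectorial neighborhood.

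First, I would shrink the sectorial domain. Since $f_\pm, g_\pm \to 0$ as $(x,\mathbf{y}) \to (0,\mathbf{0})$ in the initial domain, I may pass to a sub-sector $\tilde S \subset S_\pm(r,\epsilon)$ and a polydisc $\mathbf{D}(\mathbf{0}, 2\mathbf{r}_1)$ on which $\abs{f_\pm}, \abs{g_\pm} \leq \tfrac14$. Applying Cauchy's inequalities on $\mathbf{D}(\mathbf{0},2\mathbf{r}_1)$ yields a uniform Lipschitz bound $K$ for $f_\pm(x,\cdot)$ and $g_\pm(x,\cdot)$ on the half-size polydisc $\mathbf{D}(\mathbf{0},\mathbf{r}_1)$, with $K$ independent of $x \in \tilde S$ since Cauchy bounds only depend on the $L^\infty$-norm of $f_\pm, g_\pm$ on the larger polydisc.

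Next, solving $(x,\mathbf{z}) = \phi_\pm(x,\mathbf{y})$ for $\mathbf{y}$ rewrites as the fixed-point equation
\[
\mathbf{y} \;=\; T_{x,\mathbf{z}}(\mathbf{y}) \;:=\; \bigl(z_1 e^{-f_\pm(x,\mathbf{y})},\; z_2 e^{-g_\pm(x,\mathbf{y})}\bigr).
\]
Shrinking once more to a polydisc $\mathbf{D}(\mathbf{0},\mathbf{r}_2)$ for the target variable $\mathbf{z}$, with $\mathbf{r}_2$ chosen so that $e^{1/4} r_{2,j} < r_{1,j}$ and $e^{1/4} K \max_j r_{2,j} < \tfrac12$, the operator $T_{x,\mathbf{z}}$ becomes, for every $(x,\mathbf{z}) \in \tilde S \times \mathbf{D}(\mathbf{0},\mathbf{r}_2)$, a $\tfrac12$-contraction of the closed polydisc $\overline{\mathbf{D}}(\mathbf{0},\mathbf{r}_1)$ into itself.

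By the Banach fixed-point theorem, $T_{x,\mathbf{z}}$ admits a unique fixed point $\mathbf{y} =: \phi_\pm^{-1}(x,\mathbf{z})$. Writing it as the uniform limit of the iterates $T_{x,\mathbf{z}}^n(\mathbf{0})$, each of which is holomorphic in $(x,\mathbf{z})$ by induction, Weierstrass' theorem ensures $\phi_\pm^{-1}$ is holomorphic on the sectorial neighborhood $\tilde S \times \mathbf{D}(\mathbf{0},\mathbf{r}_2)$. The identity $\phi_\pm \circ \phi_\pm^{-1} = \tx{Id}$ holds by construction, and the reverse identity $\phi_\pm^{-1} \circ \phi_\pm = \tx{Id}$ follows from uniqueness of fixed points on a suitable sub-domain where the image of $\phi_\pm$ lands in $\mathbf{D}(\mathbf{0},\mathbf{r}_2)$. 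The only subtle point—and my expected obstacle—is the bookkeeping of these nested shrinkings: one must verify that the Lipschitz constant from Cauchy's inequalities and all subsequent radii can be chosen uniformly in the direction parameter across the whole sector $\tilde S$, which indeed holds because every bound appearing in the argument depends only on $\|f_\pm\|_\infty$, $\|g_\pm\|_\infty$, and the polydisc radii, none of which involve $\arg(x)$.
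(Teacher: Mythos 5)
Your proof is correct, but it takes a genuinely different route from the paper's. The paper establishes injectivity of $\phi_{\pm}$ directly: it assumes $\phi_{\pm}\left(\mathbf{x}\right)=\phi_{\pm}\left(\mathbf{x}'\right)$ with $\mathbf{y}\neq\mathbf{y}'$, applies the mean value theorem to $e^{f_{\pm}}$, bounds $\left\Vert D_{\mathbf{y}}\left(e^{f_{\pm}}\right)\right\Vert$ via Cauchy's integral formula on a polydisc of tripled radius, and derives a contradiction once $r$ is small enough that $\exp\left(2\sup\abs{f_{\pm}}\right)<\frac{4}{3}$; it then concludes by invoking the fact that an injective holomorphic map between equidimensional domains is a biholomorphism onto its (open) image. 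You instead construct the inverse explicitly as the unique fixed point of the contraction $T_{x,\mathbf{z}}$, with holomorphy of the inverse obtained from Weierstrass convergence of the iterates. Both arguments hinge on the same quantitative input --- a uniform Lipschitz bound for $f_{\pm}\left(x,\cdot\right)$, $g_{\pm}\left(x,\cdot\right)$ coming from Cauchy estimates on an enlarged polydisc, uniform in $\arg\left(x\right)$ --- and both are legitimate since the statement is about germs, so all the nested shrinkings are permitted. Your version is more self-contained (it does not appeal to the injectivity-implies-biholomorphism theorem for several complex variables) and yields an explicit sectorial domain of definition for $\phi_{\pm}^{-1}$; the paper's version avoids constructing the inverse at all. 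It is worth noting, as a point of style rather than substance, that the paper advertises avoiding fixed-point methods in its main construction, whereas you reintroduce one here for this elementary lemma; this is harmless. One small caveat common to both proofs: neither explicitly verifies the asymptotic tangency-to-identity condition $\phi_{\pm}-\tx{Id}=\tx O\left(\norm{\left(x,\mathbf{y}\right)}^{2}\right)$ required by Definition \ref{def: sectorial diff}, which in the applications follows from the specific form $f_{\pm}=x^{N+1}\tilde{\rho}_{\pm}$ with $\tilde{\rho}_{\pm}$ dominated by $\norm{\mathbf{y}}_{\infty}$.
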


Let us explain why these lemmas imply Propositions \ref{prop: radial part}
and \ref{prop: tangential part}.
\begin{proof}[Proof of both Propositions \ref{prop: radial part} and \ref{prop: tangential part}.]

It is sufficient to apply Lemma \ref{lem: solution eq homo} with
$M=N+1$ , $A_{\pm}=-R_{\pm}^{\left(N+2\right)}$, $\alpha_{\pm}=\rho_{\pm}$
and $Z_{\pm}=Y_{\pm}^{\left(N+2\right)}$ for Proposition \ref{prop: radial part},
and with $M=N$, $A_{\pm}=-C_{\pm}^{\left(N+1\right)}$, $\alpha_{\pm}=\chi_{\pm}$
and $Z_{\pm}=Y_{\overrightarrow{\cal C},\pm}$ for Proposition \ref{prop: tangential part}.
Then we use Lemma \ref{lem: diffeo sectoriel} to see that $\varphi_{\pm}$
and $\psi_{\pm}$ are germs of sectorial fibered diffeomorphisms on
the considered domains, and we finally check that they do the conjugacy
we want. With the notations above:
\begin{eqnarray*}
\mbox{D}\varphi_{\pm}\cdot Y_{\pm}^{\left(N+2\right)} & = & \left(\begin{array}{c}
\cal L_{Y_{\pm}^{\left(N+2\right)}}\left(x\right)\\
\cal L_{Y_{\pm}^{\left(N+2\right)}}\left(y_{1}\exp\left(\rho_{\pm}\left(x,\mathbf{y}\right)\right)\right)\\
\cal L_{Y_{\pm}^{\left(N+2\right)}}\left(y_{2}\exp\left(\rho_{\pm}\left(x,\mathbf{y}\right)\right)\right)
\end{array}\right)\\
 & = & \left(\begin{array}{c}
x^{2}\\
\left(\cal L_{Y_{\pm}^{\left(N+2\right)}}\left(y_{1}\right)+y_{1}\left(\cal L_{Y_{\pm}^{\left(N+2\right)}}\left(\rho_{\pm}\right)\right)\right)\exp\left(\rho_{\pm}\left(x,\mathbf{y}\right)\right)\\
\left(\cal L_{Y_{\pm}^{\left(N+2\right)}}\left(y_{2}\right)+y_{2}\left(\cal L_{Y_{\pm}^{\left(N+2\right)}}\left(\rho_{\pm}\right)\right)\right)\exp\left(\rho_{\pm}\left(x,\mathbf{y}\right)\right)
\end{array}\right)\\
 & = & \left(\begin{array}{c}
x^{2}\\
\left(-\lambda+a_{1}x-D_{\pm}\left(x,\mathbf{y}\right)\right)y_{1}\exp\left(\rho_{\pm}\left(x,\mathbf{y}\right)\right)\\
\left(\lambda+a_{2}x+D_{\pm}\left(x,\mathbf{y}\right)\right)y_{2}\exp\left(\rho_{\pm}\left(x,\mathbf{y}\right)\right)
\end{array}\right)\\
 &  & \big(\mbox{we have used }\cal L_{Y_{\pm}^{\left(N+2\right)}}\left(\rho_{\pm}\right)=-x^{N+2}R_{\pm}^{\left(N+2\right)}\big)\\
 & = & \left(Y_{0}+C_{\pm}\overrightarrow{\cal C}\right)\circ\varphi_{\pm}\left(x,\mathbf{y}\right)\\
 & = & Y_{\overrightarrow{\cal C},\pm}\circ\varphi_{\pm}\left(x,\mathbf{y}\right)\hfill,
\end{eqnarray*}
so that $\left(\varphi_{\pm}\right)_{*}\left(Y_{\pm}^{\left(N+2\right)}\right)=Y_{\overrightarrow{\cal C},\pm}$
and then
\begin{eqnarray*}
\mbox{D}\psi_{\pm}\cdot Y_{\overrightarrow{\cal C},\pm} & = & \left(\begin{array}{c}
\cal L_{Y_{\overrightarrow{\cal C},\pm}}\left(x\right)\\
\cal L_{Y_{\overrightarrow{\cal C},\pm}}\left(z_{1}\exp\left(-\chi\left(x,\mathbf{z}\right)\right)\right)\\
\cal L_{Y_{\overrightarrow{\cal C},\pm}}\left(z_{2}\exp\left(\chi\left(x,\mathbf{z}\right)\right)\right)
\end{array}\right)\\
 & = & \left(\begin{array}{c}
x^{2}\\
\left(\cal L_{Y_{\overrightarrow{\cal C},\pm}}\left(z_{1}\right)+z_{1}\left(\cal L_{Y_{\overrightarrow{\cal C},\pm}}\left(\chi\right)\right)\right)\exp\left(-\chi\left(x,\mathbf{z}\right)\right)\\
\left(\cal L_{Y_{\overrightarrow{\cal C},\pm}}\left(z_{2}\right)+z_{2}\left(\cal L_{Y_{\overrightarrow{\cal C},\pm}}\left(\chi\right)\right)\right)\exp\left(\chi\left(x,\mathbf{z}\right)\right)
\end{array}\right)\\
 & = & \left(\begin{array}{c}
x^{2}\\
\left(-\lambda+a_{1}x-c\left(z_{1}z_{2}\right)\right)z_{1}\exp\left(-\chi\left(x,\mathbf{z}\right)\right)\\
\left(\lambda+a_{2}x+c\left(z_{1}z_{2}\right)\right)z_{2}\exp\left(\chi\left(x,\mathbf{y}\right)\right)
\end{array}\right)\\
 &  & \big(\mbox{we have used }\cal L_{Y_{\overrightarrow{\cal C},\pm}}\left(\chi_{\pm}\right)=-x^{N+1}C_{\pm}^{\left(N+1\right)}\big)\\
 & = & \left(Y_{0}+c\left(u\right)\overrightarrow{\cal C}\right)\circ\psi_{\pm}\left(x,\mathbf{z}\right)\\
 & = & \ynorm\circ\psi_{\pm}\left(x,\mathbf{z}\right)\hfill,
\end{eqnarray*}
so that $\left(\psi_{\pm}\right)_{*}\left(Y_{\overrightarrow{\cal C},\pm}\right)=\ynorm$
.
\end{proof}

\subsection{Proof of Lemma \ref{lem: diffeo sectoriel}}

~
\begin{proof}[Proof of Lemma \ref{lem: diffeo sectoriel}.]

We consider two germs of analytic functions $f_{\pm},g_{\pm}$ in
$S_{\pm}\left(r,\epsilon\right)\times\left(\ww C^{2},0\right)$ which
which tend to $0$ as $\left(x,\mathbf{y}\right)\rightarrow\left(0,\mathbf{0}\right)$
in $S_{\pm}\left(r,\epsilon\right)\times\left(\ww C^{2},0\right)$,
and we define 
\[
\phi_{\pm}:\left(x,\mathbf{y}\right)\mapsto\left(x,y_{1}\exp\left(f_{\pm}\left(x,\mathbf{y}\right)\right),y_{2}\exp\left(g_{\pm}\left(x,\mathbf{y}\right)\right)\right)\,\,.
\]
Let us first prove that $\phi_{\pm}$ is into. Let $\mathbf{x}=\left(x,y_{1},y_{2}\right)$
and $\mathbf{x}'=\left(x',y'_{1},y'_{2}\right)$ in $S_{\pm}\left(r,\epsilon\right)\times\left(\ww C^{2},0\right)$
such that $\phi_{\pm}\left(\mathbf{x}\right)=\phi_{\pm}\left(\mathbf{x}'\right)$.
Since $\phi_{\pm}$ is fibered, necessarily $x=x'$. Then assume that
$\left(y_{1},y_{2}\right)\neq\left(y'_{1},y'_{2}\right)$, such that
\[
\left\Vert \left(y_{1}-y'_{1},y_{2}-y'_{2}\right)\right\Vert _{\infty}>0
\]
 and for instance $\left\Vert \left(y_{1}-y'_{1},y_{2}-y'_{2}\right)\right\Vert _{\infty}=\abs{y_{1}-y'_{1}}>0$
(the other case can be done similarly). We denote by $D_{\mathbf{y}}$
the derivative with respect to variables $\left(y_{1},y_{2}\right)$.
According to the mean value theorem: 
\begin{eqnarray*}
\abs{\frac{e^{f_{\pm}\left(\mathbf{x}\right)}-e^{f_{\pm}\left(\mathbf{x'}\right)}}{y_{1}-y'_{1}}} & \leq & \underset{\left(z_{1},z_{2}\right)\in[\left(y_{1},y_{2}\right),\left(y'_{1},y'_{2}\right)]}{\mbox{sup}}\left\Vert D_{\mathbf{y}}\left(e^{f\pm}\right)\left(x,z_{1},z_{2}\right)\right\Vert _{\infty}.
\end{eqnarray*}

Consequently we have: 
\begin{eqnarray*}
0 & = & \abs{y_{1}e^{f_{\pm}\left(\mathbf{x}\right)}-y'_{1}e^{f_{\pm}\left(\mathbf{x'}\right)}}\\
 & = & \abs{e^{f_{\pm}\left(\mathbf{x}\right)}}.\abs{y_{1}-y'_{1}}.\abs{1+\frac{y'_{1}}{e^{f_{\pm}\left(\mathbf{x}\right)}}.\frac{e^{f_{\pm}\left(\mathbf{x}\right)}-e^{f_{\pm}\left(\mathbf{x'}\right)}}{y_{1}-y'_{1}}}\\
 & \geq & \abs{e^{f_{\pm}\left(\mathbf{x}\right)}}.\abs{y_{1}-y'_{1}}.\left(1-\abs{\frac{y'_{1}}{e^{f_{\pm}\left(\mathbf{x}\right)}}}.\abs{\frac{e^{f_{\pm}\left(\mathbf{x}\right)}-e^{f_{\pm}\left(\mathbf{x'}\right)}}{y_{1}-y'_{1}}}\right)\\
 & \geq & \abs{e^{f_{\pm}\left(\mathbf{x}\right)}}.\abs{y_{1}-y'_{1}}.\left(1-\abs{\frac{y'_{1}}{e^{f_{\pm}\left(\mathbf{x}\right)}}}.\underset{\left(z_{1},z_{2}\right)\in[\left(y_{1},y_{2}\right),\left(y'_{1},y'_{2}\right)]}{\mbox{sup}}\left\Vert D_{\mathbf{y}}\left(e^{f_{\pm}}\right)\left(x,z_{1},z_{2}\right)\right\Vert _{\infty}.\right)
\end{eqnarray*}

Assume that we chose $\left(\ww C^{2},0\right)=\mathbf{D\left(0,r\right)}$
small enough such that $f_{\pm}$ is analytic in 
\[
S_{\pm}\left(r,\epsilon\right)\times D\left(0,3r_{1}+\delta\right)\times D\left(0,3r_{2}+\delta\right)
\]
with $\delta>0$ small. Without lost of generality we can take $r_{1}=r_{2}$.
We apply Cauchy \textquoteright s integral formula to $z_{1}\mapsto e^{f_{\pm}\left(x,z_{1},z_{2}\right)}$,
for all fixed $z_{2}$ , integrating on the circle of center $0$
and radius $3r_{1}=3r_{2}$. Similarly we also apply Cauchy \textquoteright s
integral formula to $z_{2}\mapsto e^{f_{\pm}\left(x,z_{1},z_{2}\right)}$,
for all fixed $z_{1}$, integrating on the circle of center $0$ and
radius $3r_{2}=3r_{1}$. Then we obtain 
\[
\underset{\left(z_{1},z_{2}\right)\in[\left(y_{1},y_{2}\right),\left(y'_{1},y'_{2}\right)]}{\mbox{sup}}\left\Vert D_{\mathbf{y}}\left(e^{f_{\pm}}\right)\left(x,z_{1},z_{2}\right)\right\Vert _{\infty}\leq\frac{3}{4r_{1}}.\exp\left(\underset{\mathbf{x}\in S_{\pm}\left(r,\epsilon\right)\times\mathbf{D\left(0,r\right)}}{\mbox{sup}}\left(\abs{f_{\pm}\left(\mathbf{x}\right)}\right)\right)\quad,
\]

such that: 
\begin{eqnarray*}
0 & = & \abs{y_{1}e^{f_{\pm}\left(\mathbf{x}\right)}-y'_{1}e^{f_{\pm}\left(\mathbf{x'}\right)}}\\
 & \geq & \abs{e^{f_{\pm}\left(\mathbf{x}\right)}}.\abs{y_{1}-y'_{1}}.\left(1-\frac{3}{4}\exp\left(\underset{\mathbf{x}\in S_{\pm}\left(r,\epsilon\right)\times\mathbf{D\left(0,r\right)}}{\mbox{sup}}\left(2\abs{f_{\pm}\left(\mathbf{x}\right)}\right)\right)\right)\qquad.
\end{eqnarray*}

Since $f_{\pm}\left(\mathbf{x}\right)\underset{\mathbf{x}\rightarrow\mathbf{0}}{\rightarrow}0$,
we can choose $r,r_{1}$ and $r_{2}$ small enough such that: 
\[
\exp\left(\underset{\mathbf{x}\in S_{\pm}\left(r,\epsilon\right)\times\mathbf{D\left(0,r\right)}}{\mbox{sup}}\left(2\abs{f_{\pm}\left(\mathbf{x}\right)}\right)\right)\leq\frac{5}{4}<\frac{4}{3}\qquad.
\]

Finally we obtain: 
\begin{eqnarray*}
0 & = & \abs{y_{1}e^{f_{\pm}\left(\mathbf{x}\right)}-y'_{1}e^{f_{\pm}\left(\mathbf{x'}\right)}}\\
 & \geq & \abs{e^{f_{\pm}\left(\mathbf{x}\right)}}\frac{\abs{y_{1}-y'_{1}}}{16}>0\qquad,
\end{eqnarray*}

and so, if $y_{1}\neq y'_{1}$, $0=\abs{y_{1}e^{\rho\left(\mathbf{x}\right)}-y'_{1}e^{\rho\left(\mathbf{x'}\right)}}>0$,
which is a contradiction.

Conclusion: $\left(y_{1},y_{2}\right)=\left(y'_{1},y'_{2}\right)$
and then $\phi_{\pm}$ is into in $S_{\pm}\left(r,\epsilon\right)\times\left(\ww C^{2},0\right)$.

Since $\phi_{\pm}$ is into and analytic in $S_{\pm}\left(r,\epsilon\right)\times\left(\ww C^{2},0\right)$,
it is a biholomorphism between $S_{\pm}\left(r,\epsilon\right)\times\left(\ww C^{2},0\right)$
and its image which is necessarily open (an analytic function is open),
and of the same form.
\end{proof}

\subsection{\label{subsec: proof lemma}Resolution of the homological equation:
proof of Lemma \ref{lem: solution eq homo}}

~

The goal of this subsection is to prove Lemma \ref{lem: solution eq homo}
by studying the existence of paths asymptotic to the singularity and
tangent to the foliation, and then to use them to construct the solution
to the homological equation $\left(\mbox{\ref{eq: eq homo lemme}}\right)$.

\emph{For convenience and without lost of generality we assume $\lambda=1$}
\emph{during this subsection} $\Big($otherwise we can divide our
vector field by $\lambda\neq0$, make $x\mapsto\lambda x$ and finally
consider $\exp\left(-i\arg\left(\lambda\right)\right).S_{\pm}\left(r,\epsilon\right)$
instead of $S_{\pm}\left(r,\epsilon\right)$: these modifications
do not change $a_{1}$ and $a_{2}$,$\Big)$.

\subsubsection{\label{subsec:Domain-of-stability}Domain of stability and asymptotic
paths}

~

We consider 
\begin{eqnarray*}
Z_{\pm} & = & Y_{0}+C_{\pm}\left(x,\mathbf{y}\right)\overrightarrow{\cal C}+xR_{\pm}^{\left(1\right)}\left(x,\mathbf{y}\right)\overrightarrow{\cal R}\\
 & = & \left(\begin{array}{c}
x^{2}\\
y_{1}\left(-\left(1+C_{\pm}\left(x,\mathbf{y}\right)\right)+a_{1}x+xR_{\pm}^{\left(1\right)}\left(x,\mathbf{y}\right)\right)\\
y_{2}\left(1+C_{\pm}\left(x,\mathbf{y}\right)+a_{2}x+xR_{\pm}^{\left(1\right)}\left(x,\mathbf{y}\right)\right)
\end{array}\right)
\end{eqnarray*}
with $\Re\left(a_{1}+a_{2}\right)>0$, and $C_{\pm},R_{\pm}^{\left(1\right)}$
analytic in $S_{\pm}\left(r,\epsilon\right)\times\mathbf{D\left(0,r\right)}$
and dominated by $\norm{\mathbf{y}}_{\infty}$. More precisely, we
consider the Cauchy problem of unknown $\mathbf{x}\left(t\right):=\left(x\left(t\right),y_{1}\left(t\right),y_{2}\left(t\right)\right)$,
with real and increasing time $t\geq0$, associated to 
\[
X_{\pm}:=\frac{\pm i}{1+\left(\frac{a_{2}-a_{1}}{2}\right)x+C_{\pm}}Z_{\pm}\,\,,
\]
\emph{i.e.} 
\begin{equation}
\begin{cases}
\ddd xt=\frac{\pm ix^{2}}{1+\left(\frac{a_{2}-a_{1}}{2}\right)x+C_{\pm}}\\
\ddd{y_{1}}t=\frac{\pm iy_{1}}{1+\left(\frac{a_{2}-a_{1}}{2}\right)x+C_{\pm}}\left(-\left(1+C_{\pm}\left(x,\mathbf{y}\right)\right)+a_{1}x+xR_{\pm}^{\left(1\right)}\left(x,\mathbf{y}\right)\right)\\
\ddd{y_{2}}t=\frac{iy_{2}}{1+\left(\frac{a_{2}-a_{1}}{2}\right)x+C_{\pm}}\left(1+C_{\pm}\left(x,\mathbf{y}\right)+a_{2}x+xR_{\pm}^{\left(1\right)}\left(x,\mathbf{y}\right)\right)\\
\mathbf{x}\left(t\right)=\mathbf{x}_{0}=\left(x_{0},y_{1,0},y_{2,0}\right)\in S_{\pm}\left(r,\epsilon\right)\times\times\mathbf{D\left(0,r\right)} & .
\end{cases}\label{eq: Probleme de cauchy}
\end{equation}
We denote by $\left(t,\mathbf{x}_{0}\right)\mapsto\Phi_{X_{\pm}}^{t}\left(\mathbf{x}_{0}\right)$
the flow of $X_{\pm}$ with increasing time $t\geq0$ and with initial
point $\mathbf{x}_{0}$: $\Phi_{X\pm}^{0}\left(\mathbf{x}_{0}\right)=\mathbf{x}_{0}$.

We will prove the following:
\begin{prop}
\label{prop: domaine stable}For all ${\displaystyle \epsilon\in\left]0,\frac{\pi}{2}\right[}$,
there exists finite sectors $S_{\pm}\left(r,\epsilon\right),S_{\pm}\left(r',\epsilon\right)$
with $r,r'>0$ and an open domain $\Omega_{\pm}$ stable by the flow
of $\left(\mbox{\ref{eq: Probleme de cauchy}}\right)$ with increasing
time $t\geq0$ such that 
\[
S_{\pm}\left(r',\epsilon\right)\times\mathbf{D\left(0,r'\right)}\subset\Omega_{\pm}\subset S_{\pm}\left(r,\epsilon\right)\times\mathbf{D\left(0,r\right)}\qquad,
\]
(\emph{cf. }figure \ref{fig: domaine stable}). Moreover, if $\mathbf{x}_{0}\in\Omega_{\pm}$
then the corresponding solution of $\left(\mbox{\ref{eq: Probleme de cauchy}}\right)$,
namely $\mathbf{x}\left(t\right):=\Phi_{X_{\pm}}^{t}\left(\mathbf{x}_{0}\right)$
exists for all $t\geq0$ and $\mathbf{x}\left(t\right)\rightarrow\mathbf{0}$
as $t\rightarrow+\infty$.
\end{prop}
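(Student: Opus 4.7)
The plan is to construct $\Omega_{\pm}$ as the set of initial data whose forward $X_{\pm}$-trajectory never escapes a controlled region, and to establish the stated asymptotic properties by a combination of explicit analysis on the base variable $x$ and a bootstrap on $\mathbf{y}$. Throughout I focus on the ``$+$'' case (the other being symmetric).

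\textbf{Step 1 (analysis of the $x$-component).} The model equation $\dot x=ix^2$ becomes $\dot w=-i$ after the change of chart $w=-1/x$, so its trajectories are downward vertical half-lines in the $w$-plane; one checks by hand that if $x_0\in S_+(r,\epsilon)$ then the orbit $x(t)=x_0/(1-ix_0t)$ stays in an enlarged sector $S_+(r/\cos\epsilon,\epsilon)$, converges to $0$ along direction $\arg(i)$, and satisfies $|x(t)|\leq K/(1+t)$ for a constant depending only on $\epsilon$. For the perturbed equation $\dot x=ix^2/(1+\tfrac{a_2-a_1}{2}x+C_+)$, the factor $1/(1+\cdots)$ is $1+O(x,\mathbf{y})$, so in the $w$-variable the equation reads $\dot w=-i+O(1/w,\mathbf{y})$. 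A Gronwall-type comparison with the model shows that, provided $|x_0|$ and $\|\mathbf{y}(t)\|$ remain small, the perturbed orbit stays in a slightly larger sector and retains the same $1/(1+t)$ decay.

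\textbf{Step 2 (bootstrap on the $\mathbf{y}$-component).} Writing $\dot y_j/y_j$ and using the cancellation between the $-C_+$ in the numerator of the $\mathbf{y}$-equation and the $+C_+$ in the denominator of $X_+$, one finds
\[
\frac{\dot y_j}{y_j}=-i+i\frac{a_1+a_2}{2}x(t)+x\cdot O(\|\mathbf{y}\|)+O(x^2),
\]
so that $\tfrac{d}{dt}\log|y_j|^2=2\Re(\dot y_j/y_j)$ has a vanishing leading ($t$-independent) part and first real contribution $-\Im((a_1+a_2)x(t))$. Since the error terms involve either $x^2$ (integrable) or $x\cdot\|\mathbf{y}\|$ (integrable on bounded $\mathbf{y}$), a standard bootstrap argument---starting from $\mathbf{y}_0$ in a small poly-disc and closing the estimate on a maximal interval---shows that $\|\mathbf{y}(t)\|$ stays bounded by, say, $2\|\mathbf{y}_0\|$ on $[0,+\infty)$. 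Once this bound is in hand, Step 1 yields $\int_0^t x(s)\,\mathrm{d}s=i\log(1+t)+O(1)$, whence
\[
\log|y_j(t)|^2=\log|y_{j,0}|^2-\Re(a_1+a_2)\log(1+t)+O(1),
\]
and the strict non-degeneracy $\Re(a_1+a_2)>0$ forces $|y_j(t)|\to 0$ as $t\to+\infty$.

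\textbf{Step 3 (definition, openness, containment of $\Omega_\pm$).} Define $\Omega_+$ as the set of initial points $\mathbf{x}_0\in S_+(r,\epsilon)\times\mathbf{D}(0,r)$ for which the maximal $X_+$-trajectory is defined on $[0,+\infty)$ and remains in $S_+(r,\epsilon)\times\mathbf{D}(0,r)$. Stability under the forward flow is tautological. Steps 1--2, applied with room to spare, show that $S_+(r',\epsilon)\times\mathbf{D}(0,r')\subset\Omega_+$ for $r'>0$ small enough. Openness follows from the smooth (even analytic) dependence of the flow on initial conditions together with the strict interiority provided by the quantitative decay estimates above, which leave a positive margin between $(x(t),\mathbf{y}(t))$ and the boundary of $S_+(r,\epsilon)\times\mathbf{D}(0,r)$ for all $t\geq 0$.

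The main obstacle is closing the bootstrap of Step 2 uniformly in the initial data: one must choose $r,r'$ so that the cumulative contribution of the error terms $\int_0^\infty(|x(s)|^2+|x(s)|\,\|\mathbf{y}(s)\|)\,\mathrm{d}s$ remains small enough that the a priori bound $\|\mathbf{y}(t)\|\leq 2\|\mathbf{y}_0\|$ is preserved on the entire half-line before the logarithmic decay from the resonant term takes over. This is a purely quantitative matter once the algebraic cancellation between numerator and denominator of $X_+$ has been identified, and it is the structural reason the hypothesis $\Re(a_1+a_2)>0$ is used in an essential way.
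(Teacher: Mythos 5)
Your overall strategy (pass to the chart $w=-1/x$ to control the base variable, then bootstrap on $\mathbf{y}$ using $\Re\left(a_{1}+a_{2}\right)>0$) is a legitimate alternative organization to the paper's, which instead covers $S_{\pm}\left(r,\epsilon\right)$ by three subsectors --- one, $\Sigma_{+}\left(1,r,\omega\right)$, on which $\abs x,\abs{y_{1}},\abs{y_{2}}$ all decrease, and two, $\Theta_{\pm}\left(r,\mu\right)$, on which $\arg\left(x\right)$ is monotone and drives the point into the first subsector in finite time with controlled growth --- and then defines $\Omega_{\pm}$ explicitly by argument-dependent radius bounds. However, your Step 2 contains a genuine gap: you assert that the error term $x\cdot O\left(\norm{\mathbf{y}}\right)$ in $\dot{y}_{j}/y_{j}$ is \emph{integrable on bounded} $\mathbf{y}$. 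It is not. Along any trajectory one has $\abs{x\left(t\right)}\geq\abs{x_{0}}/\left(1+K\abs{x_{0}}t\right)$, so $\int_{0}^{\infty}\abs{x\left(s\right)}\,\mathrm{d}s$ diverges logarithmically, and mere boundedness of $\norm{\mathbf{y}}$ gives no control of $\int_{0}^{t}\abs{x\left(s\right)}\,\norm{\mathbf{y}\left(s\right)}\,\mathrm{d}s$; this quantity grows like $\log\left(1+\abs{x_{0}}t\right)$, i.e.\ at exactly the same order as your main term $-\Im\bigl(\left(a_{1}+a_{2}\right)\int_{0}^{t}x\bigr)=-\Re\left(a_{1}+a_{2}\right)\log\left(1+\abs{x_{0}}t\right)+O\left(1\right)$, with uncontrolled sign. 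So the bootstrap does not close for the reason you give. What saves the argument is \emph{absorption}, not integrability: the cross term is bounded by $L\,\norm{\mathbf{y}}_{\infty}\log\left(1+\abs{x_{0}}t\right)$ and is dominated by the main term once $\norm{\mathbf{y}}_{\infty}$ is small compared to $\Re\left(a_{1}+a_{2}\right)$. This is precisely how the paper proceeds: it never splits off this term, but shows the full coefficient $\Re\bigl(ix\left(\tfrac{a}{2}+R_{\pm}^{\left(1\right)}\right)/\left(1+bx+C_{\pm}\right)\bigr)$ is $\leq-\left(\omega'-\delta'\right)\abs x<0$ once $x$ lies in $\Sigma_{+}\left(a,r,\omega'\right)$ with $\omega'$ chosen strictly below $\Re\left(a\right)/\abs a$, the margin $\delta'$ absorbing the $O\left(\norm{\mathbf{y}}\right)$ perturbation.

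Two secondary points. First, the ``Gevrey/Gronwall-type comparison with the model'' in Step 1 is not quite the right tool: the perturbation of $\dot{w}$ is $O\left(1/\abs w\right)+O\left(\norm{\mathbf{y}}\right)$, whose time integral is unbounded, so $w\left(t\right)$ drifts from the model line by an amount that is $o\left(t\right)$ but not $O\left(1\right)$; a comparison of trajectories therefore cannot keep points near the boundary inside the sector. What does work (and is the content of the paper's monotonicity of $\theta\left(t\right)$ on $\Theta_{\pm}$) is a direct trapping argument: $\dot{w}\in i\cdot\overline{D}\left(1,\delta\right)$ points strictly inward on the two angular boundaries of the image of $S_{+}\left(r,\epsilon\right)$, and $\abs w$ can only drop by a factor $\cos\epsilon+O\left(\delta\right)$ before $\Im\left(w\right)$ becomes positive. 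Second, the transient growth factor of $\norm{\mathbf{y}}$ before the decay sets in is a constant depending on $\epsilon$ and $a_{1}+a_{2}$ (in the paper, of the form $\exp\bigl(\tfrac{\abs{a/2}+\delta'}{\mu-\delta}\left(\epsilon+\arcsin\omega\right)\bigr)$), not $2$; this is harmless but it is exactly what dictates the ratio $r'/r$ in the inclusion $S_{\pm}\left(r',\epsilon\right)\times\mathbf{D}\left(\mathbf{0},\mathbf{r'}\right)\subset\Omega_{\pm}$.
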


\begin{figure}
\includegraphics[bb=0bp 0bp 750bp 440bp,scale=0.55]{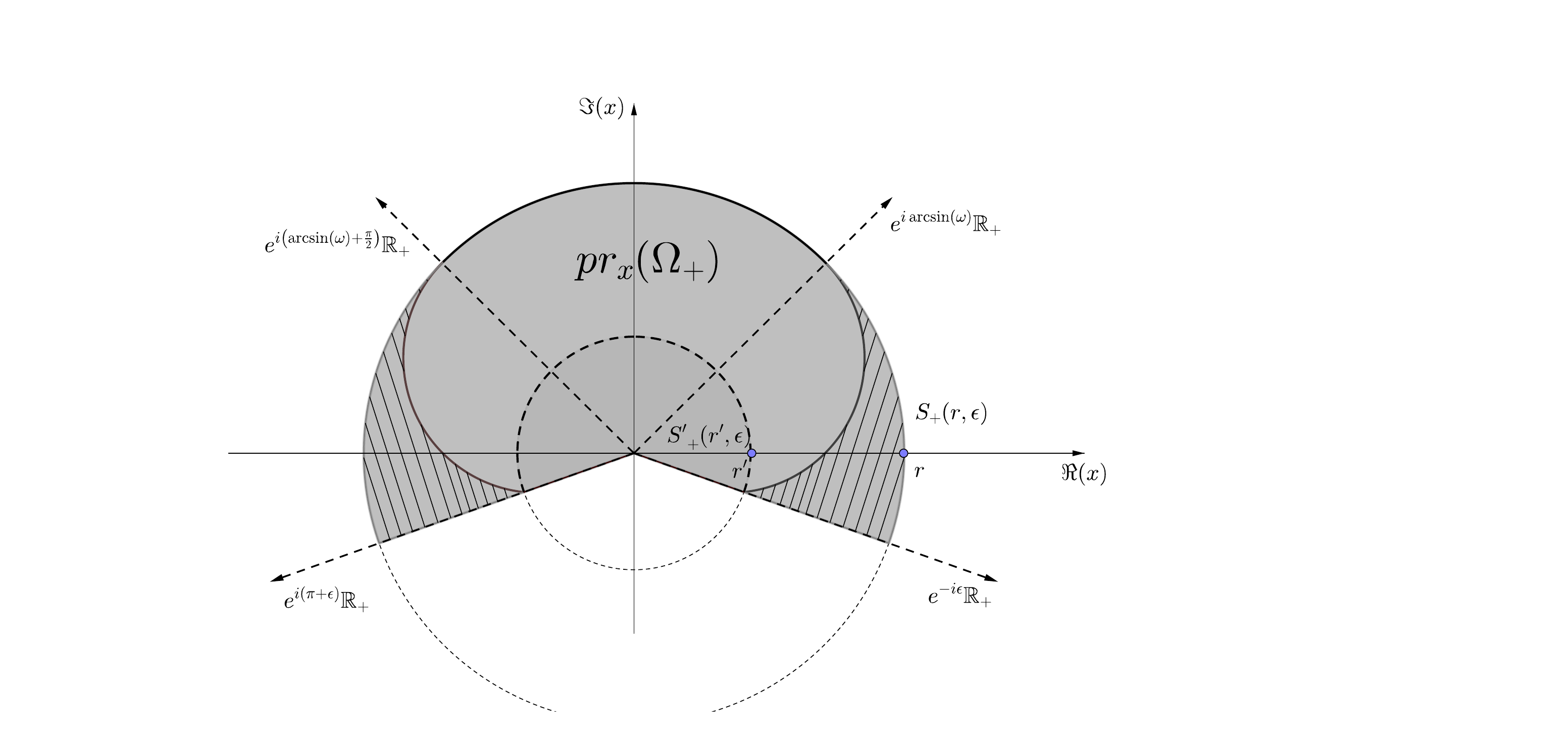}\caption{\foreignlanguage{french}{\label{fig: domaine stable}\foreignlanguage{english}{Representation
of the projection $pr_{x}\left(\Omega_{+}\right)$ of the stable domain
$\Omega_{+}$ in the $x$-space. }}}
\end{figure}
\begin{rem}
This will prove that the solution $\mathbf{x}\left(t\right)$ to $\left(\mbox{\ref{eq: Probleme de cauchy}}\right)$
exists for all $t\geq0$ and tends to the origin: it defines a path
tangent to the foliation and asymptotic to the origin. Moreover, notice
that the domain $\Omega_{\pm}$ depends on the choice of $r$ and
$r'>0$.
\end{rem}

\begin{defn}
\label{def: chemin asympto}We define the \emph{asymptotic path} with
base point $\mathbf{x}_{0}\in\Omega_{\pm}$ associated to $X_{\pm}$
the path $\gamma_{\pm,\mathbf{x}_{0}}:=\acc{\Phi_{X_{\pm}}^{t}\left(\mathbf{x}_{0}\right),\,t\geq0}$. 
\end{defn}

\emph{For convenience and without lost of generality we }only detail
the case where ``$\pm=+$'' (the case where ``$\pm=-$'' is totally
similar).

If we write $a:=a_{1}+a_{2}$ and $b:=\frac{a_{2}-a_{1}}{2}$, in
the case ``$\pm=+$'' we have:
\[
\begin{cases}
\ddd xt=\frac{ix^{2}}{1+bx+C_{+}}\\
\ddd{y_{1}}t=iy_{1}\left(-1+\left(\frac{\frac{a}{2}+R_{+}^{\left(1\right)}\left(x,\mathbf{y}\right)}{1+bx+C_{+}\left(x,\mathbf{y}\right)}\right)x\right)\\
\ddd{y_{2}}t=iy_{2}\left(1+\left(\frac{\frac{a}{2}+R_{+}^{\left(1\right)}\left(x,\mathbf{y}\right)}{1+bx+C_{+}\left(x,\mathbf{y}\right)}\right)x\right)\\
\mathbf{x}\left(t\right)=\mathbf{x}_{0}=\left(x_{0},y_{1,0},y_{2,0}\right)\in S_{+}\left(r,\epsilon\right)\times\mathbf{D\left(0,r\right)}.
\end{cases}
\]
We also consider the differential equations satisfied by $\abs{x\left(t\right)}$,
$\abs{y_{1}\left(t\right)}$, $\abs{y_{2}\left(t\right)}$ and $\theta\left(t\right):=\arg\left(x\left(t\right)\right)$:
\[
\begin{cases}
\ddd{\abs{x\left(t\right)}}t=\abs{x\left(t\right)}\Re\left(\frac{ix\left(t\right)}{1+bx\left(t\right)+C_{+}\left(\mathbf{x}\left(t\right)\right)}\right)\\
\ddd{\abs{y_{1}\left(t\right)}}t=\abs{y_{1}\left(t\right)}\Re\left(ix\left(t\right)\left(\frac{\frac{a}{2}+R_{+}^{\left(1\right)}\left(\mathbf{x}\left(t\right)\right)}{1+bx\left(t\right)+C_{+}\left(\mathbf{x}\left(t\right)\right)}\right)\right)\\
\ddd{\abs{y_{2}\left(t\right)}}t=\abs{y_{2}\left(t\right)}\Re\left(ix\left(t\right)\left(\frac{\frac{a}{2}+R_{+}^{\left(1\right)}\left(\mathbf{x}\left(t\right)\right)}{1+bx\left(t\right)+C_{+}\left(\mathbf{x}\left(t\right)\right)}\right)\right)\\
\ddd{\theta\left(t\right)}t=\Im\left(\frac{ix\left(t\right)}{1+bx\left(t\right)+C_{+}\left(\mathbf{x}\left(t\right)\right)}\right).
\end{cases}
\]

\medskip{}
For any non-zero complex number $\zeta$ and positive numbers $R,B>0$,
we denote by $\Sigma_{+}\left(\zeta,R,B\right)$ the sector of radius
$R$ bisected by $i\bar{\zeta}\ww R_{+}$ and of opening $\pi-2\arcsin\left(B\right)=2\arccos\left(B\right)$:
\begin{eqnarray*}
\Sigma_{+}\left(\zeta,R,B\right) & := & \acc{x\in D\left(0,R\right)\mid\Im\left(\zeta x\right)>B\abs{\zeta x}}\\
 & = & \acc{x\in D\left(0,R\right)\mid-\arccos\left(B\right)<\arg\left(x\right)-\arg\left(i\bar{\zeta}\right)<\arccos\left(B\right)}\,\,.
\end{eqnarray*}
For $T,R>0$, we denote by $\Theta_{+}\left(R,T\right)$ $\big($\emph{resp.}
$\Theta_{-}\left(R,T\right)$$\big)$ the sector of radius $R$ bisected
by $\ww R_{+}$ $\big($\emph{resp.} $\ww R_{-}$$\big)$ and of opening
$2\arccos\left(T\right)$:
\begin{eqnarray*}
\Theta_{+}\left(R,T\right) & := & \acc{x\in D\left(0,R\right)\mid\Re\left(x\right)>T\abs x}\\
 & = & \acc{x\in D\left(0,R\right)\mid-\arccos\left(T\right)<\arg\left(x\right)<\arccos\left(T\right)}\\
\Theta_{-}\left(R,T\right) & := & \acc{x\in D\left(0,R\right)\mid\Re\left(x\right)<-T\abs x}\\
 & = & \acc{x\in D\left(0,R\right)\mid-\arccos\left(T\right)<\arg\left(x\right)-\pi<\arccos\left(T\right)}\,\,
\end{eqnarray*}
Since $\Re\left(a\right)>0$ by assumption, we can choose $\omega'\in\left]0,\frac{\Re\left(a\right)}{\abs a}\right[$,
such that $\Sigma_{+}\left(a,r,\omega'\right)$ contains $i\ww R_{>0}$.
Indeed, we have 
\[
\abs{\arg\left(i\right)-\arg\left(i\overline{a}\right)}=\abs{\arg\left(a\right)}<\arccos\left(\omega'\right)\,\,.
\]
In particular, we have: 
\[
0<\arccos\left(\omega'\right)-\left|\arg\left(a\right)\right|<\frac{\pi}{2}
\]
so that 
\[
0<\cos\left(\arccos\left(\omega'\right)-\left|\arg\left(a\right)\right|\right)<1\,\,.
\]
Hence we take $\omega>0$ such that 
\begin{equation}
\omega\in\left]\cos\left(\arccos\left(\omega'\right)-\left|\arg\left(a\right)\right|\right),1\right[\,\,,\label{eq: choix de w}
\end{equation}
and then ${\displaystyle \Sigma_{+}\left(1,r,\omega\right)\subset\Sigma_{+}\left(a,r,\omega'\right)}$.
Indeed, if $x\in\Sigma_{+}\left(1,r,\omega\right)$, then:
\begin{equation}
-\arccos\left(\omega\right)<\arg\left(x\right)-\frac{\pi}{2}<\arccos\left(\omega\right)\,\,,\label{eq: ineg arg et arccos1}
\end{equation}
and therefore
\begin{eqnarray*}
\abs{\arg\left(x\right)-\arg\left(i.\overline{a}\right)} & < & \arccos\left(\omega\right)+\abs{\arg\left(a\right)}\\
 &  & (\mbox{by \eqref{eq: ineg arg et arccos1}})\\
 & < & \arccos\left(\omega'\right)\\
 &  & (\mbox{by \eqref{eq: choix de w}}).
\end{eqnarray*}
Finally, we fix $\mu\in\left]0,\sqrt{1-\omega^{2}}\right[$ small
enough such that 
\begin{eqnarray*}
\Theta_{+}\left(r,\mu\right)\cap\Sigma_{+}\left(1,r,\omega\right) & \neq & \emptyset\\
\Theta_{-}\left(r,\mu\right)\cap\Sigma_{+}\left(1,r,\omega\right) & \neq & \emptyset
\end{eqnarray*}
and 
\[
{\displaystyle S_{+}\left(r,\epsilon\right)\subset\Sigma_{+}\left(1,r,\omega\right)\cup\Theta_{+}\left(r,\mu\right)\cup\Theta_{-}\left(r,\mu\right)\,\,.}
\]
More precisely, we must have $0<\epsilon<\arccos\left(\mu\right)$.
The idea is now to study the behavior of $t\mapsto x\left(t\right)$
(where $t\mapsto\mathbf{x}\left(t\right)=\left(x\left(t\right),y_{1}\left(t\right),y_{2}\left(t\right)\right)$
is the solution of (\ref{eq: Probleme de cauchy})) over each domains
$\Sigma_{+}\left(1,r,\omega\right),\Theta_{+}\left(r,\mu\right),\Theta_{-}\left(r,\mu\right)$
(\emph{cf. }figure \ref{fig:Repr=0000E9sentation-des-domaines})\emph{.}
\begin{figure}
\includegraphics[bb=8cm 3bp 850bp 500bp,clip,scale=0.55]{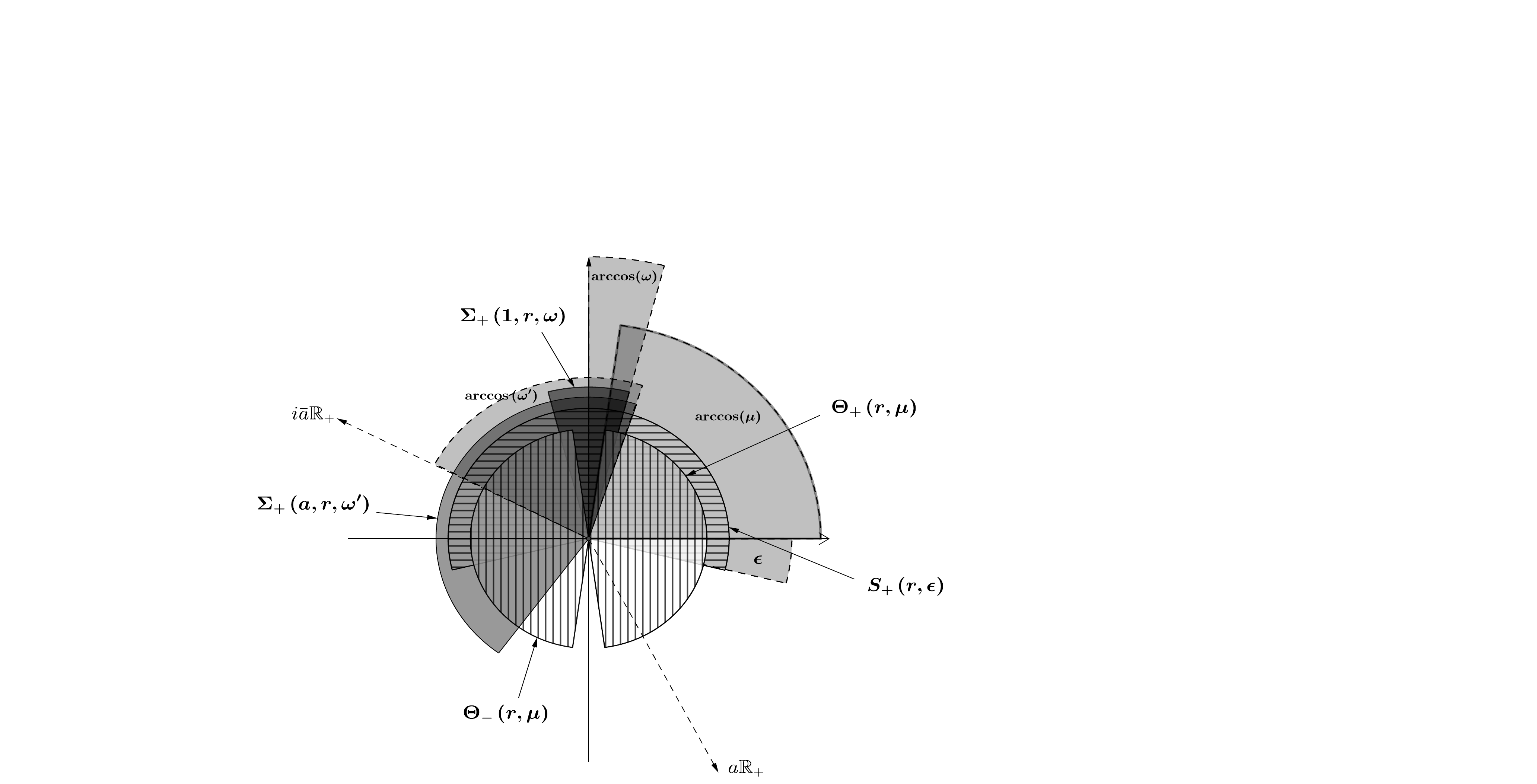}\caption{\label{fig:Repr=0000E9sentation-des-domaines}Representation of domains
$\Sigma_{+}\left(1,r,\omega\right),\Sigma_{+}\left(a,r,\omega'\right),\Theta_{+}\left(r,\mu\right),\Theta_{-}\left(r,\mu\right),S_{+}\left(r,\epsilon\right)$
(with modified radii for more clarity).}
\end{figure}

We can now prove the following result, which is a precision of Proposition
\ref{prop: domaine stable}.

~
\begin{lem}
~

\begin{enumerate}
\item There exists $r,r_{1},r_{2}>0$ such that $\Sigma_{+}\left(1,r,\omega\right)\times\mathbf{D\left(0,r\right)}$
is stable by the flow of $\left(\mbox{\ref{eq: Probleme de cauchy}}\right)$
with increasing time $t\geq0$. Moreover in this region $\abs{x\left(t\right)}$,
$\abs{y_{1}\left(t\right)}$ and $\abs{y_{2}\left(t\right)}$ decrease
and go to $0$ as $t\rightarrow+\infty$.
\item There exists $0<r'<r$, $0<r'_{1}<r_{1}$, $0<r'_{2}<r_{2}$ and an
open domain $\Omega_{+}$ stable under the action flow of $\left(\mbox{\ref{eq: Probleme de cauchy}}\right)$
with increasing time $t\geq0$ such that 
\[
S_{+}\left(r',\epsilon\right)\times\mathbf{D\left(0,r'\right)}\subset\Omega_{+}\subset S_{+}\left(r,\epsilon\right)\times\mathbf{D\left(0,r\right)}\,\,.
\]
Moreover, if $x_{0}\in\Theta_{+}\left(r',\mu\right)$ $\Big($\emph{resp.}
$x_{0}\in\Theta_{-}\left(r',\mu\right)$$\Big)$ , then $\theta\left(t\right)=\arg\left(x\left(t\right)\right),t\geq0$
is increasing (\emph{resp.} decreasing) as long as $x\left(t\right)$
remains in $\Theta_{+}\left(r',\mu\right)$ $\Big($\emph{resp.} $\Theta_{-}\left(r',\mu\right)$$\Big)$.
Finally, there exists $t_{0}\geq0$ such that for all $t\geq t_{0}$,
$x\left(t\right)\in\Sigma_{+}\left(1,r,\omega\right)$.
\end{enumerate}
\end{lem}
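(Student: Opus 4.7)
The plan is to analyze the real-time ODE \eqref{eq: Probleme de cauchy} through three complementary devices: (i) the scalar ODEs satisfied by $\abs{x(t)}$, $\theta(t)=\arg(x(t))$ and $\abs{y_i(t)}$; (ii) the change of variable $w=1/x$, in which $\dot{w}=-i/(1+bx+C_+)$ is a small perturbation of the constant vector field $-i$ (translation downward); (iii) the partition of $S_+(r,\epsilon)$ into $\Sigma_+(1,r,\omega)$ and the two ``lateral'' pieces $\Theta_\pm(r,\mu)\setminus\Sigma_+(1,r,\omega)$. The crucial arithmetic input, used throughout, is the inclusion $\Sigma_+(1,r,\omega)\subset\Sigma_+(a,r,\omega')$ together with strict non-degeneracy $\Re(a)>0$.

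For the first assertion, I would start from the identity $\frac{\dd{\abs x}}{\dd t}=\abs x\,\Re\!\left(\frac{ix}{1+bx+C_+}\right)$. Since $\Re(ix)=-\Im(x)<-\omega\abs x$ in $\Sigma_+(1,r,\omega)$ and the perturbative term can be made arbitrarily small by shrinking $r$, one gets $\frac{\dd{\abs x}}{\dd t}\leq-\frac{\omega}{2}\abs x^{2}$, which both keeps $\abs x$ decreasing and yields $\abs{x(t)}=\mathrm{O}(1/t)$. A parallel sign check on $\dot\theta=\Im\!\left(\frac{ix}{1+bx+C_+}\right)\approx\Re(x)$ shows the two boundary rays $\arg(x)=\frac{\pi}{2}\pm\arccos(\omega)$ are repellent from inside, so $\Sigma_+(1,r,\omega)$ is invariant for the $x$-flow. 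For $y_i$, a direct computation gives $\frac{\dd{\abs{y_i}}}{\dd t}=-\abs{y_i}\,\Im\!\left(\xi\,x\right)$ with $\xi=\frac{a/2+R_+^{(1)}}{1+bx+C_+}$. Since $x\in\Sigma_+(a,r,\omega')$ one has $\Im(ax)\geq\omega'\abs a\abs x$, and the correction $R_+^{(1)}$ is dominated by $\norm{\mathbf y}_\infty$, so shrinking $r$ secures $\Im(\xi x)\geq\frac{\omega'\abs a}{4}\abs x>0$; the polydisc $\mathbf{D}(\mathbf 0,\mathbf r)$ is therefore invariant as well. Finally, integrating $\abs x\sim\mathrm{O}(1/t)$ yields $\int_0^\infty\abs{x(s)}\,\dd s=+\infty$, hence $\log\abs{y_i(t)}\to-\infty$, which forces $\abs{y_i(t)}\to 0$.

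For the second assertion I would switch to the coordinate $w=1/x$ on the ``lateral'' pieces. In $w$, the sector $\Sigma_+(1,r,\omega)$ becomes a sector $\tilde\Sigma$ around $-i\ww R_+$, while $\Theta_\pm(r,\mu)$ become sectors $\tilde\Theta_\pm$ around $\pm\ww R_+$ and condition $\abs x<r$ becomes $\abs w>1/r$. Since $\dot w=-i+\mathrm{O}(\abs x)$, any initial point $w_0\in\tilde\Theta_\pm$ satisfies $w(t)=w_0-it+\mathrm{O}\!\left(\int_0^t\abs{x(s)}\,\dd s\right)$, and an elementary geometric computation shows the trajectory enters $\tilde\Sigma$ in a transit time $T\leq C\abs{w_0}$. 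During this transition, $\abs{w(t)}^2\geq\Re(w_0)^2\geq\mu^2\abs{w_0}^2$, so $\abs{x(t)}\leq\abs{x_0}/\mu$, which forces the radius condition $r>r'/\mu$ upstream. For the $y_i$-coordinates, the same formula $\frac{\dd{\abs{y_i}}}{\dd t}=-\abs{y_i}\Im(\xi x)$ no longer has a favorable sign in $\Theta_\pm\setminus\Sigma_+$, but the key observation is that $\int_0^T\abs{x(s)}\,\dd s\leq T\cdot\abs{x_0}/\mu\leq C/\mu$ is a uniform constant, so the multiplicative growth factor $M:=\exp(K_0 C/\mu)$ is uniform. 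Choosing $r'\leq r/M$ then ensures $\abs{y_i(t)}\leq r$ throughout the transit. The monotonicity of $\theta$ in $\Theta_\pm(r',\mu)$ is read off $\dot\theta\approx\Re(x)$ combined with the defining inequality $\pm\Re(x)>\mu\abs x$. I would finally set
\[
\Omega_+:=\acc{(x_0,\mathbf{y}_0)\in S_+(r,\epsilon)\times\mathbf{D}(\mathbf 0,\mathbf r)\,:\,\Phi_{X_+}^t(x_0,\mathbf{y}_0)\in S_+(r,\epsilon)\times\mathbf{D}(\mathbf 0,\mathbf r)\text{ for all }t\geq0}\,,
\]
which is forward-invariant by construction, contains $S_+(r',\epsilon)\times\mathbf{D}(\mathbf 0,\mathbf{r}')$ by the preceding estimates, and is open by continuous dependence on initial conditions combined with the fact that every such trajectory enters the open set $\Sigma_+(1,r,\omega)\times\mathbf{D}(\mathbf 0,\mathbf r)$ in finite time and remains there by Part~1.

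The main obstacle is the control of $\abs{y_i(t)}$ during the lateral transit: outside $\Sigma_+(1,r,\omega)$ the sign of $\Im(\xi x)$ is not favorable, so one must bound both the duration $T$ of the excursion and the cumulative integral $\int_0^T\abs{x(s)}\,\dd s$ by constants independent of the starting point. This is precisely what the $w$-coordinate permits, because in $\tilde\Theta_\pm$ the bound $\abs{\Re(w)}\geq\mu\abs w$ simultaneously prevents $w$ from crossing the circle $\abs w=1/r$ and gives the uniform bound $T\cdot\abs{x_0}/\mu\leq C/\mu$. Once this ``budget'' estimate is established the rest of the lemma follows by assembling the three regions; the remaining verifications (openness of $\Omega_+$, monotonicity of $\theta$, and the finite-time absorption into $\Sigma_+(1,r,\omega)$) are direct consequences of the sign computations and continuous dependence.
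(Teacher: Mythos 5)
Your proof is correct and follows the same architecture as the paper's: the decomposition of $S_{+}\left(r,\epsilon\right)$ into $\Sigma_{+}\left(1,r,\omega\right)\cup\Theta_{+}\left(r,\mu\right)\cup\Theta_{-}\left(r,\mu\right)$, the sign analysis of $\ddd{\abs x}t$, $\ddd{\abs{y_{i}}}t$ and $\ddd{\theta}t$ in each piece (using $\Sigma_{+}\left(1,r,\omega\right)\subset\Sigma_{+}\left(a,r,\omega'\right)$ and $\Re\left(a\right)>0$ to make the $\abs{y_{i}}$ decrease), the finite-time absorption into $\Sigma_{+}\left(1,r,\omega\right)$, and a uniform multiplicative ``budget'' for the lateral transit. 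You differ in two localized places. For the transit estimate the paper reparametrizes the orbit by $\theta$ and integrates the bounds $\abs{\ddd{\log\abs x}{\theta}}\leq\frac{1+\delta}{\mu-\delta}$ (and the analogues for $\abs{y_{i}}$) over the bounded angular range, which produces the explicit exponential estimates it then uses to \emph{write down} $\Omega_{+}$ as a spiraling domain defined by inequalities in $\arg\left(x\right)$; you instead pass to $w=1/x$, where the flow is a near-translation by $-i$, bound the transit time by $C\abs{w_{0}}$ and $\abs{x\left(t\right)}$ by $\abs{x_{0}}/\mu$, and hence get the same uniform bound on $\int\abs{x\left(s\right)}\tx ds$. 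Correspondingly you take $\Omega_{+}$ to be the maximal forward-invariant subset of $S_{+}\left(r,\epsilon\right)\times\mathbf{D}\left(\mathbf{0},\mathbf{r}\right)$, proving openness by continuous dependence together with absorption into the open set $\Sigma_{+}\left(1,r,\omega\right)\times\mathbf{D}\left(\mathbf{0},\mathbf{r}\right)$; this is a clean alternative to the paper's explicit construction and buys you a shorter definition at the cost of a non-explicit domain. Two points should be tightened. First, the divergence of $\int_{0}^{+\infty}\abs{x\left(s\right)}\tx ds$, which you need to conclude $\abs{y_{i}\left(t\right)}\rightarrow0$, does not follow from the upper bound $\abs{x\left(t\right)}=\tx O\left(1/t\right)$ alone: you must also record the lower bound $\abs{x\left(t\right)}\geq\abs{x_{0}}/\left(1+\left(1+\delta\right)\abs{x_{0}}t\right)$ coming from $\ddd{\abs x}t\geq-\left(1+\delta\right)\abs x^{2}$, exactly as the paper does. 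Second, in the $w$-chart the identity $\Re\left(w\left(t\right)\right)=\Re\left(w_{0}\right)$ holds only for the unperturbed translation; with the perturbation $\abs{\dot{w}+i}<\delta$ you get $\abs{\Re\left(w\left(t\right)\right)}\geq\left(\mu-C\delta\right)\abs{w_{0}}$ over the transit, so $\delta$ must be chosen small compared with $\mu$ (and $\omega$) for the bound $\abs{x\left(t\right)}\lesssim\abs{x_{0}}/\mu$ and the transit-time estimate to survive. Both are routine fixes that do not affect the structure of the argument.
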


\begin{proof}
We fix $\delta\in\left]0,\min\left(\omega,\mu\right)\right[$, $\delta'\in\left]0,\omega'\right[$
and we take $r>0$ small enough such that for all $\mathbf{x}=\left(x,\mathbf{y}\right)\in S_{+}\left(r,\epsilon\right)\times\mathbf{D\left(0,r\right)}$,
we have
\[
\begin{cases}
\abs{\frac{1}{1+bx+C_{+}\left(\mathbf{x}\right)}-1}<\delta\\
\abs{\frac{\frac{a}{2}+R_{+}^{\left(1\right)}\left(\mathbf{x}\right)}{1+bx+C_{+}\left(\mathbf{x}\right)}-\frac{a}{2}}<\delta' & \,\,.
\end{cases}
\]
Consequently for all $\mathbf{x}\in S_{+}\times\mathbf{D\left(0,r\right)}$
we have the following estimations:
\[
\begin{cases}
-\abs x\left(1+\delta\right)<\Re\left(\frac{ix}{1+bx+C_{+}\left(\mathbf{x}\right)}\right)<\abs x\left(1+\delta\right)\\
-\abs x\left(\abs{\frac{a}{2}}+\delta'\right)<\Re\left(ix\left(\frac{\frac{a}{2}+R_{+}^{\left(1\right)}\left(x,\mathbf{y}\right)}{1+bx+C_{+}\left(x,\mathbf{y}\right)}\right)\right)<\abs x\left(\abs{\frac{a}{2}}+\delta'\right) & \,\,.
\end{cases}
\]
Moreover:

\begin{itemize}
\item if $x\in\Sigma_{+}\left(1,r,\omega\right)$ then 
\begin{eqnarray*}
\Re\left(\frac{ix}{1+bx+C_{+}\left(\mathbf{x}\right)}\right) & < & -\abs x\left(\omega-\delta\right)\qquad;
\end{eqnarray*}
\item if $x\in\Sigma_{+}\left(a,r,\omega'\right)$ $\Big($in particular
if $x\in\Sigma_{+}\left(1,r,\omega\right)$$\Big)$ then
\begin{eqnarray*}
\Re\left(ix\left(\frac{\frac{a}{2}+R_{+}^{\left(1\right)}\left(x,\mathbf{y}\right)}{1+bx+C_{+}\left(x,\mathbf{y}\right)}\right)\right) & < & -\abs x\left(\omega'-\delta'\right)\qquad;
\end{eqnarray*}
\item if $x\in\Theta_{-}\left(r,\mu\right)$ $\Big($\emph{resp.} $\Theta_{+}\left(r,\mu\right)$$\Big)$
then 
\begin{eqnarray*}
\Im\left(\frac{ix}{1+bx+C_{+}\left(\mathbf{x}\right)}\right) & < & -\abs x\left(\mu-\delta\right)\\
\Bigg(\mbox{\emph{resp. }}\Im\left(\frac{ix}{1+bx+C_{+}\left(\mathbf{x}\right)}\right) & > & \abs x\left(\mu-\delta\right)\Bigg)\qquad.
\end{eqnarray*}
\end{itemize}
Hence:

\begin{itemize}
\item for all $t\geq0$
\begin{eqnarray*}
-\left(1+\delta\right)\abs{x\left(t\right)}^{2} & <\ddd{\abs{x\left(t\right)}}t< & -\left(1+\delta\right)\abs{x\left(t\right)}^{2}
\end{eqnarray*}
and then, as long as $\mathbf{x}\left(t\right)\in S_{+}\left(r,\epsilon\right)\times\mathbf{D\left(0,r\right)}$,
we have 
\[
\abs{x\left(t\right)}>\frac{\abs{x_{0}}}{1+\left(1+\delta\right)\abs{x_{0}}t}\qquad;
\]
\item for all $t\geq0$, if $x\left(t\right)\in\Sigma_{+}\left(1,r,\omega\right)$,
then 
\begin{eqnarray}
\ddd{\abs{x\left(t\right)}}t & < & -\left(\omega-\delta\right)\abs{x\left(t\right)}^{2}\label{eq: majoration derivee de x}
\end{eqnarray}
and 
\begin{equation}
\begin{cases}
\ddd{\abs{y_{1}\left(t\right)}}t<-\left(\omega'-\delta'\right)\abs{y_{1}\left(t\right)}\abs{x\left(t\right)}\\
\ddd{\abs{y_{2}\left(t\right)}}t<-\left(\omega'-\delta'\right)\abs{y_{2}\left(t\right)}\abs{x\left(t\right)}
\end{cases}\label{eq: majoration derivee de y}
\end{equation}
so that $\abs{x\left(t\right)},\abs{y_{1}\left(t\right)}\mbox{ and }\abs{y_{2}\left(t\right)}$
are decreasing as long as $x\left(t\right)\in\Sigma_{+}\left(1,r,\omega\right)$;
\item for all $t\geq0$, if $x\left(t\right)\in\Theta_{-}\left(r,\mu\right)$
$\Big($\emph{resp.} $\Theta_{+}\left(r,\mu\right)$$\Big)$ then
\begin{eqnarray*}
\ddd{\theta}t\left(t\right) & < & -\left(\mu-\delta\right)\abs{x\left(t\right)}<\frac{-\left(\mu-\delta\right)\abs{x_{0}}}{1+\left(1+\delta\right)\abs{x_{0}}t}\\
\Bigg(\mbox{\emph{resp. }}\ddd{\theta}t\left(t\right) & > & \left(\mu-\delta\right)\abs{x\left(t\right)}>\frac{\left(\mu-\delta\right)\abs{x_{0}}}{1+\left(1+\delta\right)\abs{x_{0}}t}\Bigg)
\end{eqnarray*}
so that $t\mapsto\theta\left(t\right)$ is strictly decreasing (\emph{resp.
}increasing) as long as $x\left(t\right)\in\Theta_{-}\left(r,\mu\right)$
$\Big($\emph{resp.} $\Theta_{+}\left(r,\mu\right)$$\Big)$. Moreover,
if $\theta_{0}=\theta\left(0\right)$ is such that $x_{0}=x\left(0\right)\in\Theta_{-}\left(t,\mu\right)\backslash\Sigma_{+}\left(1,r,\omega\right)$
$\Big($\emph{resp. }$\Theta_{+}\left(r,\mu\right)\backslash\Sigma_{+}\left(1,r,\omega\right)$$\Big)$,
then as long as $x\left(t\right)\in\Theta_{-}\left(r,\mu\right)$
$\Big($\emph{resp. }$\Theta_{+}\left(r,\mu\right)$$\Big)$ we have:
\begin{eqnarray*}
\theta\left(t\right) & < & \theta_{0}-\left(\frac{\mu-\delta}{1+\delta}\right)\ln\left(1+\left(1+\delta\right)\abs{x_{0}}t\right)\\
\Bigg(\mbox{\emph{resp. }}\theta\left(t\right) & > & \theta_{0}+\left(\frac{\mu-\delta}{1+\delta}\right)\ln\left(1+\left(1+\delta\right)\abs{x_{0}}t\right)\Bigg)\qquad.
\end{eqnarray*}
We see that $x\left(t\right)\in\Sigma_{+}\left(1,r,\omega\right)$
for all 
\[
t\geq t_{0}:=\frac{\left(\exp\left(\frac{1+\delta}{\mu-\delta}\left(\theta_{0}-\frac{\pi}{2}-\arccos\left(\omega\right)\right)\right)-1\right)}{\left(1+\delta\right)\abs{x_{0}}}
\]
 
\[
\Bigg(\mbox{\emph{resp. }}t_{0}:=\frac{\left(\exp\left(\frac{1+\delta}{\mu-\delta}\left(\frac{\pi}{2}-\arccos\left(\omega\right)-\theta_{0}\right)\right)-1\right)}{\left(1+\delta\right)\abs{x_{0}}}\Bigg)\qquad.
\]
Indeed, if $t\geq t_{0}$, with $t_{0}$ as above, and if $x\left(t\right)\in\Theta_{+}\left(r,\mu\right)$,
then we have: 
\begin{eqnarray*}
\theta\left(t\right) & > & \theta_{0}+\left(\frac{\mu-\delta}{1+\delta}\right)\ln\left(1+\left(1+\delta\right)\abs{x_{0}}t\right)\\
 & > & \theta_{0}+\left(\frac{\mu-\delta}{1+\delta}\right)\ln\left(\exp\left(\frac{1+\delta}{\mu-\delta}\left(\theta_{0}-\frac{\pi}{2}-\arccos\left(\omega\right)\right)\right)\right)\\
 & = & \theta_{0}+\frac{\pi}{2}-\arccos\left(\omega\right)-\theta_{0}=\frac{\pi}{2}-\arccos\left(\omega\right)
\end{eqnarray*}
and therefore 
\[
-\arccos\left(\omega\right)<\arg\left(x\left(t\right)\right)-\frac{\pi}{2}<0\,\,.
\]
Hence, we have $x\left(t\right)\in\Sigma_{+}\left(1,r,\omega\right)$.
Moreover, notice that 
\begin{equation}
t_{0}\leq\frac{\exp\left(\left(\frac{1+\delta}{\mu-\delta}\right)\left(\epsilon+\arcsin\left(\omega\right)\right)\right)}{\left(1+\delta\right)\abs{x_{0}}}\qquad.\label{eq: temps critique}
\end{equation}
\end{itemize}
\medskip{}
On the one hand $\Sigma_{+}\left(1,r,\omega\right)\times\mathbf{D\left(0,r\right)}$
is stable by the flow of $\left(\mbox{\ref{eq: Probleme de cauchy}}\right)$
with increasing time $t\geq0$. Indeed in this region $\abs{x\left(t\right)},\abs{y_{1}\left(t\right)}\mbox{ and }\abs{y_{2}\left(t\right)}$
are decreasing, and as soon as $x\left(t\right)$ goes in $\Sigma_{+}\left(1,r,\omega\right)\cap\Theta_{-}\left(r,\mu\right)$
$\Big($\emph{resp. }$\Sigma_{+}\left(1,r,\omega\right)\cap\Theta_{+}\left(r,\mu\right)$$\Big)$,
which is non-empty and contains a part of the boundary of $\Sigma_{+}\left(1,r,\omega\right)$
with constant argument, $\theta\left(t\right)$ is decreasing\emph{
}(\emph{resp.} increasing). Then, $x\left(t\right)$ remains in $\Sigma_{+}\left(1,r,\omega\right)$.

On the other hand, as long as we are $x\left(t\right)$ belongs to
$\Theta_{-}\left(r,\mu\right)$ $\Big($\emph{resp.} $\Theta_{+}\left(r,\mu\right)$$\Big)$
we can re-parametrized the solutions by $\left(-\theta\right)$ ($resp$
$\theta$) (we are now going to make an abuse of notation, writing
when needed $x\left(\theta\right)$ or $x\left(t\right)$):
\[
\begin{cases}
\ddd{\abs x}{\left(-\theta\right)}=-\abs x\frac{\Re\left(\frac{ix}{1+bx+C_{+}\left(\mathbf{x}\right)}\right)}{\Im\left(\frac{ix}{1+bx+C_{+}\left(\mathbf{x}\right)}\right)}\leq\abs x.\frac{1+\delta}{\mu-\delta}\\
\Bigg(\mbox{\emph{resp. }}\ddd{\abs x}{\theta}=\abs x\frac{\Re\left(\frac{ix}{1+bx+C_{+}\left(\mathbf{x}\right)}\right)}{\Im\left(\frac{ix}{1+bx+C_{+}\left(\mathbf{x}\right)}\right)}\leq\abs x.\frac{1+\delta}{\mu-\delta}\Bigg)\\
\ddd{\abs{y_{1}}}{\left(-\theta\right)}=-\abs{y_{1}}\frac{\Re\left(ix\left(\frac{\frac{a}{2}+R_{+}^{\left(1\right)}\left(x,\mathbf{y}\right)}{1+bx+C_{+}\left(x,\mathbf{y}\right)}\right)\right)}{\Im\left(\frac{ix}{1+bx+C_{+}\left(\mathbf{x}\right)}\right)}\leq\abs{y_{1}}.\frac{\abs{\frac{a}{2}}+\delta'}{\mu-\delta}\\
\Bigg(\mbox{\emph{resp. }}\ddd{\abs{y_{1}}}{\theta}=\abs{y_{1}}\frac{\Re\left(ix\left(\frac{\frac{a}{2}+R_{+}^{\left(1\right)}\left(x,\mathbf{y}\right)}{1+bx+C_{+}\left(x,\mathbf{y}\right)}\right)\right)}{\Im\left(\frac{ix}{1+bx+C_{+}\left(\mathbf{x}\right)}\right)}\leq\abs{y_{1}}.\frac{\abs{\frac{a}{2}}+\delta'}{\mu-\delta}\Bigg)\\
\ddd{\abs{y_{2}}}{\left(-\theta\right)}=-\abs{y_{2}}\frac{\Re\left(ix\left(\frac{\frac{a}{2}+R_{+}^{\left(1\right)}\left(x,\mathbf{y}\right)}{1+bx+C_{+}\left(x,\mathbf{y}\right)}\right)\right)}{\Im\left(\frac{ix}{1+bx+C_{+}\left(\mathbf{x}\right)}\right)}\leq\abs{y_{2}}.\frac{\abs{\frac{a}{2}}+\delta'}{\mu-\delta}\\
\Bigg(\mbox{\emph{resp. }}\ddd{\abs{y_{2}}}{\theta}=\abs{y_{2}}\frac{\Re\left(ix\left(\frac{\frac{a}{2}+R_{+}^{\left(1\right)}\left(x,\mathbf{y}\right)}{1+bx+C_{+}\left(x,\mathbf{y}\right)}\right)\right)}{\Im\left(\frac{ix}{1+bx+C_{+}\left(\mathbf{x}\right)}\right)}\leq\abs{y_{2}}.\frac{\abs{\frac{a}{2}}+\delta'}{\mu-\delta}\Bigg).
\end{cases}
\]
Hence, if $\theta_{0}:=\theta\left(0\right)$ is such that $x_{0}:=x\left(0\right)\in\Theta_{-}\left(r,\mu\right)$
$\Big($\emph{resp.} $\Theta_{+}\left(r,\mu\right)$$\Big)$, for
$t\leq t_{0}$ we have: 
\begin{equation}
\begin{cases}
\abs{x\left(t\right)}\leq\abs{x_{0}}\exp\left(\frac{1+\delta}{\mu-\delta}\left(\theta_{0}-\theta\left(t\right)\right)\right)\\
\Bigg(\mbox{\emph{resp. }}\abs{x\left(t\right)}\leq\abs{x_{0}}\exp\left(\frac{1+\delta}{\mu-\delta}\left(\theta\left(t\right)-\theta_{0}\right)\right)\Bigg)\\
\abs{y_{1}\left(t\right)}\leq\abs{y_{1,0}}\exp\left(\frac{\abs{\frac{a}{2}}+\delta'}{\mu-\delta}\left(\theta_{0}-\theta\left(t\right)\right)\right)\\
\Bigg(\mbox{\emph{resp. }}\abs{y_{1}\left(t\right)}\leq\abs{y_{1,0}}\exp\left(\frac{\abs{\frac{a}{2}}+\delta'}{\mu-\delta}\left(\theta\left(t\right)-\theta_{0}\right)\right)\Bigg)\\
\abs{y_{2}\left(t\right)}\leq\abs{y_{2,0}}\exp\left(\frac{\abs{\frac{a}{2}}+\delta'}{\mu-\delta}\left(\theta_{0}-\theta\left(t\right)\right)\right)\\
\Bigg(\mbox{\emph{resp. }}\abs{y_{1}\left(t\right)}\leq\abs{y_{1,0}}\exp\left(\frac{\abs{\frac{a}{2}}+\delta'}{\mu-\delta}\left(\theta\left(t\right)-\theta_{0}\right)\right)\Bigg).
\end{cases}\label{eq: estimation zone croissante}
\end{equation}

\begin{defn}
\label{def: stable domain}We define the domain $\Omega_{+}$ as the
set of all 
\[
\begin{array}{c}
\mathbf{x}=\left(x,y_{1},y_{2}\right)\in S_{+}\left(r,\epsilon\right)\times\mathbf{D\left(0,r\right)}\end{array}
\]
 such that:
\end{defn}

\begin{itemize}
\item $\mbox{ if }\Im\left(x\right)\geq\omega\abs x\mbox{ then }\begin{cases}
\abs x\leq r\exp\left(\frac{1+\delta}{\mu-\delta}\left(\arg\left(x\right)-\arcsin\left(\omega\right)\right)\right)\\
\abs{y_{1}}\leq r_{1}\exp\left(\frac{\abs{\frac{a}{2}}+\delta'}{\mu-\delta}\left(\arg\left(x\right)-\arcsin\left(\omega\right)\right)\right)\\
\abs{y_{2}}\leq r_{2}\exp\left(\frac{\abs{\frac{a}{2}}+\delta'}{\mu-\delta}\left(\arg\left(x\right)-\arcsin\left(\omega\right)\right)\right)
\end{cases}$;
\item $\mbox{if }\Im\left(x\right)\leq-\omega\abs x\mbox{ then }\begin{cases}
\abs x\leq r\exp\left(\frac{1+\delta}{\mu-\delta}\left(\pi-\arcsin\left(\omega\right)-\arg\left(x\right)\right)\right)\\
\abs{y_{1}}\leq r_{1}\exp\left(\frac{\abs{\frac{a}{2}}+\delta'}{\mu-\delta}\left(\pi-\arcsin\left(\omega\right)-\arg\left(x\right)\right)\right)\\
\abs{y_{2}}\leq r_{2}\exp\left(\frac{\abs{\frac{a}{2}}+\delta'}{\mu-\delta}\left(\pi-\arcsin\left(\omega\right)-\arg\left(x\right)\right)\right)
\end{cases}$.
\end{itemize}
We see that $\Omega_{+}$ is stable by the flow of $\left(\mbox{\ref{eq: Probleme de cauchy}}\right)$
with increasing time $t\geq0$. We have seen that for any initial
condition in $\Omega_{+}$, the solution exists for any $t\geq0$,
stays in $\Omega_{+}$, and after a finite time $t_{0}\geq0$ enters
and remains in $\Sigma_{+}\left(1,r,\omega\right)$. Finally, we have:
\[
S_{+}\left(r',\epsilon\right)\times\mathbf{D}\left(\mathbf{0},\mathbf{r'}\right)\subset\Omega_{+}\subset S_{+}\left(r,\epsilon\right)\times\mathbf{D\left(0,r\right)}\,\,,
\]
where 
\[
\begin{cases}
r'=r\exp\left(-\left(\frac{1+\delta}{\mu-\delta}\right)\left(\epsilon+\arcsin\left(\omega\right)\right)\right) & <r\\
r'_{1}=r_{1}\exp\left(-\left(\frac{\abs{\frac{a}{2}}+\delta'}{\mu-\delta}\right)\left(\epsilon+\arcsin\left(\omega\right)\right)\right) & <r_{1}\\
r'_{2}=r_{2}\exp\left(-\left(\frac{\abs{\frac{a}{2}}+\delta'}{\mu-\delta}\right)\left(\epsilon+\arcsin\left(\omega\right)\right)\right) & <r_{2}\,\,.
\end{cases}
\]
\medskip{}
Let $\mathbf{x}_{0}=\left(x_{0},\mathbf{y}_{0}\right)\in\Sigma_{+}\left(1,r,\omega\right)\times\mathbf{D\left(0,r\right)}$.
From $\left(\mbox{\ref{eq: majoration derivee de x}}\right)$ and
$\left(\mbox{\ref{eq: majoration derivee de y}}\right)$we have for
all $t\geq0$:
\begin{equation}
\begin{cases}
\abs{x\left(t\right)}\leq\frac{\abs{x_{0}}}{1+\left(\omega-\delta\right)\abs{x_{0}}t}\\
\abs{y_{1}\left(t\right)}\leq\frac{\abs{y_{1,0}}}{\left(1+\left(1+\delta\right)\abs{x_{0}}t\right)^{\frac{\omega'-\delta'}{1+\delta}}}\\
\abs{y_{1}\left(t\right)}\leq\frac{\abs{y_{2,0}}}{\left(1+\left(1+\delta\right)\abs{x_{0}}t\right)^{\frac{\omega'-\delta'}{1+\delta}}} & \,\,,
\end{cases}\label{eq: estimation zone decroissante}
\end{equation}
which proves that the solutions goes to $\mathbf{0}$ as $t\rightarrow+\infty$. 
\end{proof}
\begin{rem}
Another stable domain $\Omega_{-}$ is defined similarly when dealing
with the case ``$\pm=-$''
\end{rem}

\subsubsection{Construction of a sectorial analytic solution to the homological
equation}

~

We consider the meromorphic 1-form $\tau:=\frac{\mbox{d}x}{x^{2}}$
, which satisfies $\tau\cdot\left(Z_{\pm}\right)=1$. Let also $A_{\pm}\left(x,\mathbf{y}\right)$
be analytic in $S_{\pm}\left(r,\epsilon\right)\times\left(\ww C^{2},0\right)$
and dominated by $\norm{\mathbf{y}}_{\infty}$, and $M\in\ww N_{>0}$.
The following proposition is a precision of Lemma \ref{lem: solution eq homo}.
\begin{prop}
For all $\mathbf{x}_{0}\in\Omega_{\pm}$ (see Definition \ref{def: stable domain}),
the integral defined by 
\[
{\displaystyle \alpha_{\pm}\left(\mathbf{x}_{0}\right):=-\int_{\gamma_{\pm,\mathbf{x}_{0}}}x^{M+1}A_{\pm}\left(\mathbf{x}\right)\,\tau}
\]
 is absolutely convergent (the integration path $\gamma_{\pm,\mathbf{x_{0}}}$
is the one of Definition \ref{def: chemin asympto}). Moreover, the
function $\mathbf{x}_{0}\mapsto\alpha_{\pm}\left(\mathbf{x}_{0}\right)$
is analytic in $\Omega_{\pm}$, satisfies 
\[
\cal L_{Z_{\pm}}\left(\alpha_{\pm}\right)=x^{M+1}A_{\pm}\left(\mathbf{x}\right)
\]
and $\alpha_{\pm}\left(x,\mathbf{y}\right)=x^{M}\tilde{\alpha}_{\pm}\left(x,\mathbf{y}\right)$,
where $\tilde{\alpha}_{\pm}$ is analytic on $\Omega_{\pm}$ and dominated
by $\norm{\mathbf{y}}_{\infty}$.
\end{prop}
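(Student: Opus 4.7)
The plan is to parametrize the asymptotic path $\gamma_{\pm,\mathbf{x}_0}$ by the real flow time $t\geq 0$ of $X_\pm$ and to exploit the polynomial decay estimates on $\abs{x(t)}$ and $\norm{\mathbf{y}(t)}_\infty$ established in Subsection~\ref{subsec:Domain-of-stability}. With $\mathbf{x}(t):=\Phi_{X_\pm}^t(\mathbf{x}_0)$ and $h_\pm(\mathbf{x}):=\tfrac{\pm i}{1+\tfrac{a_2-a_1}{2}x+C_\pm(\mathbf{x})}$, so that $X_\pm=h_\pm Z_\pm$ and hence $\tau\cdot X_\pm=h_\pm$ (since $\tau\cdot Z_\pm=1$), the line integral rewrites as
\[
\alpha_\pm(\mathbf{x}_0)=-\int_0^{\infty}x(t)^{M+1}\,A_\pm(\mathbf{x}(t))\,h_\pm(\mathbf{x}(t))\,\dd[t].
\]
Since $h_\pm$ is bounded on $\Omega_\pm$ and $\abs{A_\pm(\mathbf{x})}\leq L\norm{\mathbf{y}}_\infty$ by hypothesis, both absolute convergence and the key bound $\abs{\alpha_\pm(\mathbf{x}_0)}\leq L'\abs{x_0}^M\norm{\mathbf{y}_0}_\infty$ reduce to showing $\int_0^{\infty}\abs{x(t)}^{M+1}\norm{\mathbf{y}(t)}_\infty\,\dd[t]\leq K\abs{x_0}^M\norm{\mathbf{y}_0}_\infty$.

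I would split this integral at the entry time $t_0$ into the contracting sector $\Sigma_+(1,r,\omega)$, controlled by $t_0\leq K/\abs{x_0}$ via~(\ref{eq: temps critique}). On $[0,t_0]$, the transient estimates~(\ref{eq: estimation zone croissante}) give uniform bounds $\abs{x(t)}\leq C\abs{x_0}$ and $\norm{\mathbf{y}(t)}_\infty\leq C\norm{\mathbf{y}_0}_\infty$, so the contribution is at most $C'\abs{x_0}^{M+1}\norm{\mathbf{y}_0}_\infty\cdot t_0\leq C''\abs{x_0}^M\norm{\mathbf{y}_0}_\infty$. On $[t_0,\infty)$, the sharper estimates~(\ref{eq: estimation zone decroissante}) combined with the change of variable $u=\abs{x(t_0)}(t-t_0)$ reduce the integral to $\abs{x(t_0)}^M\norm{\mathbf{y}_0}_\infty$ times a finite constant, the $u$-integral converging because $M+1\geq 2$; since $\abs{x(t_0)}\lesssim\abs{x_0}$, this again produces the right size. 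Setting $\tilde\alpha_\pm:=\alpha_\pm/x^M$, which is automatically holomorphic on $\Omega_\pm$ because $\Omega_\pm\cap\{x=0\}=\emptyset$, the combined estimate is exactly the asserted domination by $\norm{\mathbf{y}}_\infty$.

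Analyticity of $\alpha_\pm$ on $\Omega_\pm$ follows from the holomorphic dependence of $\Phi_{X_\pm}^t$ on initial conditions (the vector field $X_\pm$ being holomorphic and $\Omega_\pm$ flow-invariant) combined with locally uniform absolute convergence of the integral: the partial integrals $\int_0^T$ are holomorphic in $\mathbf{x}_0$, and the estimates above being locally uniform in $\mathbf{x}_0\in\Omega_\pm$, they converge uniformly on compact subsets, yielding a holomorphic limit. To obtain the homological equation, I would differentiate the cocycle identity
\[
\alpha_\pm\bigl(\Phi_{X_\pm}^s(\mathbf{x}_0)\bigr)=\alpha_\pm(\mathbf{x}_0)+\int_0^{s}x(t)^{M+1}\,A_\pm(\mathbf{x}(t))\,h_\pm(\mathbf{x}(t))\,\dd[t]
\]
at $s=0$, getting $\cal L_{X_\pm}(\alpha_\pm)=x^{M+1}A_\pm\cdot h_\pm$, and conclude $\cal L_{Z_\pm}(\alpha_\pm)=x^{M+1}A_\pm$ by dividing through by $h_\pm$ thanks to $\cal L_{X_\pm}=h_\pm\,\cal L_{Z_\pm}$. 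The delicate point is the transient interval $[0,t_0]$, whose length $\sim 1/\abs{x_0}$ is large when $\abs{x_0}$ is small: a naive bound would destroy the prefactor $\norm{\mathbf{y}_0}_\infty$, and it is the simultaneous control of $\abs{x(t)}$ and $\norm{\mathbf{y}(t)}_\infty$ provided by~(\ref{eq: estimation zone croissante}), together with the length estimate~(\ref{eq: temps critique}), that makes the argument work.
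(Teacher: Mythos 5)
Your proposal is correct and follows essentially the same route as the paper: parametrize $\gamma_{\pm,\mathbf{x}_0}$ by the flow time of $X_\pm$, split the integral at the entry time $t_0$ into $\Sigma_+(1,r,\omega)$ using the transient estimates (\ref{eq: estimation zone croissante}) together with the length bound (\ref{eq: temps critique}) and the contracting estimates (\ref{eq: estimation zone decroissante}), and obtain the homological equation by differentiating along the flow (your cocycle identity is just a cleaner packaging of the paper's chain-rule computation inside the integral). No gaps.
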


\begin{proof}
We are going to use the estimations obtained in the previous paragraph.

\begin{itemize}
\item Let us start by proving that the integral above is convergent. We
begin with: 
\begin{eqnarray*}
\alpha_{\pm}\left(\mathbf{x}_{0}\right) & = & -\int_{0}^{+\infty}\frac{x\left(t\right)^{M+1}A_{\pm}\left(\mathbf{x}\left(t\right)\right)}{x\left(t\right)^{2}}\frac{ix\left(t\right)^{2}}{1+bx\left(t\right)+C_{+}\left(\mathbf{x}\left(t\right)\right)}\mbox{d}t\\
 & = & -i\int_{0}^{+\infty}\frac{x\left(t\right)^{M+1}A_{\pm}\left(\mathbf{x}\left(t\right)\right)}{1+bx\left(t\right)+C_{+}\left(\mathbf{x}\left(t\right)\right)}\mbox{d}t\,\,.
\end{eqnarray*}
Since $\mathbf{x}\left(t\right)\in\Omega_{\pm}$ for all $t\geq0$
and $A_{\pm}\left(x,\mathbf{y}\right)$ is dominated by $\norm{\mathbf{y}}_{\infty}$,
we have then: 
\begin{eqnarray*}
\abs{\frac{x\left(t\right)^{M+1}A_{\pm}\left(\mathbf{x}\left(t\right)\right)}{1+bx\left(t\right)+C_{+}\left(\mathbf{x}\left(t\right)\right)}} & \leq & C\abs{x\left(t\right)}^{M+1}\norm{\mathbf{y}\left(t\right)}_{\infty}
\end{eqnarray*}
where $C>0$ is some constant, independent of $\mathbf{x}_{0}$ and
$t$. For $t\geq0$ big enough, we deduce from paragraph \ref{subsec:Domain-of-stability}
that: 
\begin{eqnarray*}
\abs{\frac{x\left(t\right)^{M+1}A_{\pm}\left(\mathbf{x}\left(t\right)\right)}{1+bx\left(t\right)+C_{+}\left(\mathbf{x}\left(t\right)\right)}} & \leq & C\norm{\mathbf{y}_{0}}\left(\frac{\abs{x_{0}}}{1+\left(\omega-\delta\right)\abs{x_{0}}t}\right)^{M+1}\frac{1}{\left(1+\left(1+\delta\right)\abs{x_{0}}t\right)^{\frac{\omega'-\delta'}{1+\delta}}}\\
 & = & \underset{t\rightarrow+\infty}{\tx O}\left(\frac{1}{t^{M+1}}\right)
\end{eqnarray*}
and then the integral is absolutely convergent. 
\item Let us prove the analyticity of $\alpha_{\pm}$ in $\Omega_{\pm}$:
it is sufficient to prove that it is analytic in every compact $K\subset\Omega_{\pm}$.
Let $K$ be such a compact subset. Let $L>0$ such that for all $\mathbf{x}\in K$,
we have:
\[
\abs{\frac{A_{\pm}\left(\mathbf{x}\right)}{1+bx+C_{+}\left(\mathbf{x}\right)}}\leq L.
\]
Since $K$ in a compact subset of $\Omega_{\pm}\subset S_{\pm}\left(r,\epsilon\right)\times\wcc$
and $S_{\pm}\left(r,\epsilon\right)$ is open ($0\notin S_{\pm}\left(r,\epsilon\right)$),
there exists $\delta>0$ such that for all $\mathbf{x}=\left(x,y_{1},y_{2}\right)\in K$,
we have $\delta<\abs x<r$. Finally, according to the several estimates
in paragraph \ref{subsec:Domain-of-stability}, there exists $B>0$
such that for all $\mathbf{x}_{0}\in K$ and $t\geq0$, we have:
\[
\abs{x\left(t\right)}\leq B\frac{\abs{x_{0}}}{1+\left(\omega-\delta\right)\abs{x_{0}}t}\qquad.
\]
Hence: 
\begin{eqnarray*}
\abs{\frac{x\left(t\right)^{M+1}A_{\pm}\left(\mathbf{x}\left(t\right)\right)}{1+bx\left(t\right)+C_{+}\left(\mathbf{x}\left(t\right)\right)}} & \leq & LB^{M+1}\frac{\abs{x_{0}}^{M+1}}{\left(1+\left(\omega-\delta\right)\abs{x_{0}}t\right)^{M+1}}\\
 & \leq & \frac{LB^{M+1}r^{M+1}}{\left(1+\left(\omega-\delta\right)\delta t\right)^{M+1}},
\end{eqnarray*}
and the classical theorem concerning the analyticity of integral with
parameters proves that $\alpha_{\pm}$ is analytic in any compact
$K\subset\Omega_{\pm}$, and consequently in $\Omega_{\pm}$.
\item Let us write $F\left(\mathbf{x}\right):=\frac{\pm ix^{M+1}A_{\pm}\left(\mathbf{x}\right)}{1+bx+C_{+}\left(\mathbf{x}\right)}$,
so that 
\[
\alpha_{\pm}\left(\mathbf{x}_{0}\right)=-\int_{0}^{+\infty}F\left(\Phi_{X_{\pm}}^{t}\left(\mathbf{x}_{0}\right)\right)\mbox{d}t\,\,.
\]
 For all $\mathbf{x}_{0}\in\Omega_{\pm}$, the function $t\mapsto\mathbf{x}\left(t\right)=\Phi_{X_{\pm}}^{t}\left(\mathbf{x}_{0}\right)$
satisfies: 
\[
\pp t\left(\Phi_{X_{\pm}}^{t}\left(\mathbf{x}_{0}\right)\right)=\frac{\pm i}{1+bx\left(\Phi_{X_{\pm}}^{t}\left(\mathbf{x}_{0}\right)\right)+C_{+}\left(\Phi_{X_{\pm}}^{t}\left(\mathbf{x}_{0}\right)\right)}Z_{\pm}\left(\Phi_{\pm}^{t}\left(\mathbf{x}\right)\right)\,\,.
\]
The classical theorem about the analyticity of integral with parameters
tells us that we can compute the derivatives inside the integral symbol:
\begin{eqnarray*}
\left(\cal L_{Z_{\pm}}\alpha_{\pm}\right)\left(\mathbf{x}_{0}\right) & = & -\int_{0}^{+\infty}\cal L_{Z_{\pm}}\left(F\circ\Phi^{s}\right)\left(\mathbf{x}_{0}\right)\mbox{d}s\\
 & = & -\int_{0}^{+\infty}\mbox{D}F\left(\Phi_{X_{\pm}}^{s}\left(\mathbf{x}_{0}\right)\right).\mbox{D}\Phi_{X_{\pm}}^{s}\left(\mathbf{x}_{0}\right).Z_{\pm}\left(\mathbf{x}_{0}\right)\mbox{d}s\\
 & = & -\int_{0}^{+\infty}\mbox{D}F\left(\Phi_{X_{\pm}}^{s}\left(\mathbf{x}\right)\right).\pp t\left(\Phi_{X_{\pm}}^{s+t}\left(\mathbf{x}_{0}\right)\right)_{\mid t=0}\left(\pm\frac{1+bx_{0}+C_{\pm}\left(\mathbf{x}_{0}\right)}{i}\right)\mbox{d}s\\
 & = & -\left(\pm\frac{1+bx_{0}+C_{\pm}\left(\mathbf{x}_{0}\right)}{i}\right).\int_{0}^{+\infty}\mbox{D}F\left(\Phi_{X_{\pm}}^{s}\left(\mathbf{x}_{0}\right)\right).\pp t\left(\Phi_{X_{\pm}}^{t}\left(\mathbf{x}_{0}\right)\right)_{\mid t=s}\mbox{d}s\\
 & = & -\left(\pm\frac{1+bx_{0}+C_{\pm}\left(\mathbf{x}_{0}\right)}{i}\right).\int_{0}^{+\infty}\pp s\left(F\circ\Phi_{X_{\pm}}^{s}\left(\mathbf{x}_{0}\right)\right)ds\\
 & = & -\left(\pm\frac{1+bx_{0}+C_{\pm}\left(\mathbf{x}_{0}\right)}{i}\right).\cro{F\circ\Phi_{X_{\pm}}^{s}\left(\mathbf{x}_{0}\right)}_{s=0}^{s=+\infty}\\
 & = & -\left(\pm\frac{1+bx_{0}+C_{\pm}\left(\mathbf{x}_{0}\right)}{i}\right).\left(-F\left(\mathbf{x}_{0}\right)\right)\\
 & = & x_{0}^{M+1}A_{\pm}\left(\mathbf{x}_{0}\right).
\end{eqnarray*}
\item Let us prove that $\tilde{\alpha}_{\text{\ensuremath{\pm}}}\left(x,\mathbf{y}\right):=\frac{\alpha_{\pm}\left(x,\mathbf{y}\right)}{x^{M}}$
is bounded and dominated by $\norm{\mathbf{y}}_{\infty}$ in $\Omega_{\pm}$.
The fact that it is analytic in $\Omega_{\pm}$ is clear because $\alpha_{\pm}$
is analytic there and $0\notin\Omega_{\pm}$. As above, there exists
there exists $C>0$ such that for all $\mathbf{x}_{0}:=\left(x_{0},\mathbf{y}_{0}\right)\in\Omega_{\pm}$
and for all $t\geq0$: 
\begin{eqnarray*}
\abs{\frac{x\left(\Phi_{X_{\pm}}^{t}\left(\mathbf{x}_{0}\right)\right)^{M+1}A_{\pm}\left(\Phi_{X_{\pm}}^{t}\left(\mathbf{x}_{0}\right)\right)}{\left(1+bx\left(\Phi_{X_{\pm}}^{t}\left(\mathbf{x}_{0}\right)\right)+C_{+}\left(\Phi_{X_{\pm}}^{t}\left(\mathbf{x}_{0}\right)\right)\right)}} & \leq & C\abs{x\left(\Phi_{X_{\pm}}^{t}\left(\mathbf{x}_{0}\right)\right)}^{M+1}\norm{\mathbf{y}\left(\Phi_{X_{\pm}}^{t}\left(\mathbf{x}_{0}\right)\right)}_{\infty}\qquad.
\end{eqnarray*}
We will only deal with the case where $x_{0}\in\Theta_{\pm}\left(r,\mu\right)$
(the case where $\Sigma_{\pm}\left(1,r,\omega\right)$ is easier and
can be deduced from that case). On the one hand from $\left(\mbox{\ref{eq: estimation zone croissante}}\right)$
we have for all $t\leq t_{0}$: 
\[
\begin{cases}
\abs{x\left(\Phi_{X_{\pm}}^{t}\left(\mathbf{x}_{0}\right)\right)}\leq D\abs{x_{0}} & ,\,\mbox{where }D:=\exp\left(\frac{1+\delta}{\mu-\delta}\left(\arccos\left(\mu\right)+\epsilon\right)\right)\\
\norm{\mathbf{y}\left(\Phi_{X_{\pm}}^{t}\left(\mathbf{x}_{0}\right)\right)}_{\infty}\leq D'\norm{\mathbf{y}_{0}}_{\infty} & ,\,\mbox{where }D':=\exp\left(\frac{\abs{\frac{a}{2}}+\delta'}{\mu-\delta}\left(\arccos\left(\mu\right)+\epsilon\right)\right)\,\,.
\end{cases}
\]
On the other hand we have seen in $\left(\mbox{\ref{eq: estimation zone decroissante}}\right)$
that for all $t\geq t_{0}$:
\[
\begin{cases}
\abs{x\left(\Phi_{X_{\pm}}^{t}\left(\mathbf{x}_{0}\right)\right)}\leq\frac{\abs{x\left(\Phi_{X_{\pm}}^{t_{0}}\left(\mathbf{x}_{0}\right)\right)}}{1+\left(\omega-\delta\right)\abs{x\left(\Phi_{X_{\pm}}^{t}\left(\mathbf{x}_{0}\right)\right)}\left(t-t_{0}\right)}\\
\norm{\mathbf{y}\left(\Phi_{X_{\pm}}^{t}\left(\mathbf{x}_{0}\right)\right)}_{\infty}\leq\norm{\mathbf{y}_{0}}_{\infty} & \,\,.
\end{cases}
\]
Hence, we use the Chasles relation and the estimations above to obtain:
\begin{eqnarray*}
\abs{\tilde{\alpha}_{\pm}\left(x_{0},\mathbf{y}_{0}\right)} & \leq & \frac{\abs{\alpha_{\pm}\left(x_{0},\mathbf{y}_{0}\right)}}{\abs{x_{0}}^{M}}\\
 & \leq & \frac{CD^{M+1}D'\norm{\mathbf{y}_{0}}_{\infty}\abs{x_{0}}^{M+1}\abs{t_{0}}}{\abs{x_{0}}^{M}}\\
 &  & +\frac{C\norm{\mathbf{y}_{0}}_{\infty}}{\abs{x_{0}}^{M}}\int_{t_{0}}^{+\infty}\frac{\mbox{d}t}{\left(1+\left(\omega-\delta\right)\abs{x\left(\Phi_{X_{\pm}}^{t}\left(\mathbf{x}_{0}\right)\right)}\left(t-t_{0}\right)\right)}\\
 & \leq & CD^{M+1}D'\norm{\mathbf{y}_{0}}_{\infty}\abs{x_{0}}\abs{t_{0}}+\frac{C\norm{\mathbf{y}_{0}}_{\infty}\abs{x\left(\Phi_{X_{\pm}}^{t_{0}}\left(\mathbf{x}_{0}\right)\right)}^{M+1}}{M\left(\omega-\delta\right)\abs{x_{0}}^{M}\abs{x\left(\Phi_{X_{\pm}}^{t_{0}}\left(\mathbf{x}_{0}\right)\right)}}\,\,;
\end{eqnarray*}
and according to $\left(\mbox{\ref{eq: temps critique}}\right)$ we
have
\begin{eqnarray*}
\abs{\tilde{\alpha}_{\pm}\left(x_{0},\mathbf{y}_{0}\right)} & \leq & \left(\frac{D^{2}D'}{\left(1+\delta\right)}+\frac{1}{M\left(\omega-\delta\right)}\right)CD^{M}\norm{\mathbf{y}_{0}}_{\infty}\qquad.
\end{eqnarray*}
\end{itemize}
\end{proof}

\subsection{\label{sub :Sectorial isotropies in big sectors}Sectorial isotropies
in ``wide'' sectors and uniqueness of the normalizations: proof
of Proposition \ref{prop: unique normalizations}.}

~

We consider a normal form $\ynorm$ as given by Corollary \ref{cor: existence normalisations sectorielles}.
We study here the germs of sectorial isotropies of the normal form
$\ynorm$ in $S_{\pm}\times\left(\ww C^{2},0\right)$, where $S_{\pm}\in\germsect{\arg\left(\pm i\lambda\right)}{\eta}$
is a sectorial neighborhood of the origin with opening $\eta\in\left]\pi,2\pi\right[$
in the direction $\arg\left(\pm i\lambda\right)$. Proposition \ref{prop: unique normalizations}
states that the normalizing maps $\left(\Phi_{+},\Phi_{-}\right)$
are unique as sectorial germs. It is a straightforward consequence
of Proposition \ref{prop: isot sect} below, which show that the only
sectorial fibered isotropy (tangent to the identity) of the normal
form in over ``wide'' sector (\emph{i.e. }of opening $>\pi$) is
the identity itself.
\begin{defn}
A germ of sectorial fibered diffeomorphism $\Phi_{\theta,\eta}$ in
the direction $\theta\in\ww R$ with opening $\eta\geq0$ and tangent
to the identity, is a germ of fibered sectorial \emph{isotropy }of
$\ynorm$ (in the direction $\theta\in\ww R$ with opening $\eta\geq0$
and tangent to the identity ) if $\left(\Phi_{\theta,\eta}\right)_{*}\left(\ynorm\right)=\ynorm$
in $\cal S\in\cal S_{\theta,\eta}$. We denote by $\isotsect Y{\theta}{\eta}\subset\diffsect$
the subset formed composed of these elements.
\end{defn}

Proposition \ref{prop: unique normalizations} is an immediate consequence
of the following one.
\begin{prop}
\label{prop: isot sect}~For all $\eta\in\left]\pi,2\pi\right[$:
\[
\isotsect{\ynorm}{\arg\left(\pm i\lambda\right)}{\eta}=\acc{\tx{Id}}\,\,.
\]
\end{prop}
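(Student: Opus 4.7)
The strategy is to expand $\Phi-\mathrm{Id}$ in powers of $\mathbf{y}$ and show, by induction on the $\mathbf{y}$-degree, that every coefficient vanishes on the wide sectorial neighborhood. Write $\Phi(x,\mathbf{y})=(x,\,y_1+\psi_1,\,y_2+\psi_2)$ with $\psi_j$ analytic on some $\mathcal{S}_{\pm}\in\mathcal{S}_{\arg(\pm i\lambda),\eta}$, and decompose $\psi_j(x,\mathbf{y})=\sum_{\mathbf{n}}\psi_{j,\mathbf{n}}(x)\mathbf{y}^{\mathbf{n}}$ via the Cauchy formula, so that each $\psi_{j,\mathbf{n}}$ is bounded on $\mathcal{S}_{\pm}$. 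The tangency $\psi_j=\mathrm{O}(\|(x,\mathbf{y})\|^{2})$ gives in addition $\psi_{j,(0,0)}=\mathrm{O}(|x|^{2})$ and $\psi_{j,(1,0)},\psi_{j,(0,1)}=\mathrm{O}(|x|)$.

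Expanding the isotropy identity $\mathcal{L}_{Y_{\text{norm}}}(\psi_j)=Y_{\text{norm}}^{(j)}\circ\Phi-Y_{\text{norm}}^{(j)}$ by Taylor formula and collecting the coefficient of $\mathbf{y}^{\mathbf{n}}$ yields the triangular system
\[
x^{2}\psi_{1,\mathbf{n}}'+\bigl[(n_2-n_1+1)\lambda+((n_1-1)a_1+n_2a_2)x\bigr]\psi_{1,\mathbf{n}}=R_{1,\mathbf{n}}(x),
\]
together with its analogue for $\psi_{2,\mathbf{n}}$ with shift $(n_2-n_1-1)\lambda$. Since $c(v)\in v\mathbb{C}\{v\}$, all couplings mediated by the $c(v),c'(v)v$ terms drop the total $\mathbf{y}$-order, and the only quadratic-in-$\psi$ contribution that does not involves a factor $\psi_{j,(0,0)}$; consequently $R_{j,\mathbf{n}}$ depends only on coefficients $\psi_{j',\mathbf{n}'}$ with $|\mathbf{n}'|<|\mathbf{n}|$ as soon as the order-zero step is settled. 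The cornerstone of the argument is a \emph{wide-sector growth lemma}: for every $k\in\mathbb{C}^{*}$, $\exp(k\lambda/x)$ is unbounded on any $\mathcal{S}\in\mathcal{S}_{\arg(\pm i\lambda),\eta}$ with $\eta>\pi$, because the half-plane $\{\Re(k\lambda/x)>0\}$ is an $x$-sector of opening $\pi$ bisected by $\arg(k\lambda)$, which necessarily meets any $x$-sector of opening $>\pi$ bisected by $\arg(\pm i\lambda)$.

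The induction then goes as follows. For $|\mathbf{n}|=0$, the curve $x\mapsto(x,\psi_{1,(0,0)}(x),\psi_{2,(0,0)}(x))$ is $Y_{\text{norm}}$-invariant, so $w_j:=\psi_{j,(0,0)}$ satisfies $x^{2}w_j'=Y_{\text{norm}}^{(j)}(x,w_1,w_2)$. Dividing the $j=1$ equation by $w_1$ and integrating gives $w_1=C\exp(\lambda/x)\,x^{a_1}\exp\!\bigl(-\int c(w_1w_2)/x^{2}\,dx\bigr)$; since $w_1w_2=\mathrm{O}(|x|^{4})$ the last factor is bounded, and the growth lemma forces $C=0$, hence $w_1=w_2=0$. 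For $|\mathbf{n}|\geq 1$, assuming all lower-order coefficients vanish, $R_{j,\mathbf{n}}=0$ and the homogeneous solution is
\[
\psi_{j,\mathbf{n}}(x)=C\exp\!\bigl(\ell_j(\mathbf{n})\lambda/x\bigr)\,x^{-\alpha_{j,\mathbf{n}}},
\]
with $\ell_1(\mathbf{n})=n_2-n_1+1$, $\ell_2(\mathbf{n})=n_2-n_1-1$. If $\ell_j(\mathbf{n})\neq 0$ the growth lemma forces $C=0$. In the resonant cases $\ell_j(\mathbf{n})=0$ one computes $\alpha_{j,\mathbf{n}}=m(a_1+a_2)$ with $m\in\mathbb{N}$: for $m\geq 1$, strict non-degeneracy $\Re(a_1+a_2)>0$ makes $x^{-\alpha_{j,\mathbf{n}}}$ unbounded so $C=0$; the only remaining cases $\mathbf{n}\in\{(1,0),(0,1)\}$ correspond to $m=0$, where the constant solution is killed by the $\mathrm{O}(|x|)$ tangency. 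Thus every $\psi_{j,\mathbf{n}}$ vanishes and $\Phi=\mathrm{Id}$.

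The delicate step is $|\mathbf{n}|=0$: it is genuinely nonlinear and isolates the \emph{unique} $Y_{\text{norm}}$-invariant curve through the origin in $\mathcal{S}_{\pm}$. This is where the hypothesis $\eta>\pi$ is essential, since in narrower sectors there are continuous families of invariant curves tangent to $\{y_1=y_2=0\}$. Once the center coefficient is killed, the linear induction is essentially bookkeeping; the main technical care lies in verifying the triangularity of the system after tracking the contributions of the resonant perturbation $c(v)$ of the normal form.
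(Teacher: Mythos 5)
Your proof is correct and follows the same overall strategy as the paper's: expand the isotropy in powers of $\mathbf{y}$ with bounded sectorial coefficient functions, reduce to a triangular family of first-order linear ODEs $x^{2}u'+(\mu+\nu x)u=R$, and kill each coefficient using the unboundedness of $\exp(k\lambda/x)$ (for $k\in\mathbb{Z}\setminus\{0\}$) and of $x^{-m(a_{1}+a_{2})}$ (for $m\geq 1$, via $\Re(a_{1}+a_{2})>0$) on sectors of opening greater than $\pi$ bisected by $\arg(\pm i\lambda)$, the remaining constants being fixed by tangency to the identity. The one place you genuinely diverge is in neutralizing the nonlinear coupling through $c(\phi_{1}\phi_{2})$: the paper first studies the product $\psi=\phi_{1}\phi_{2}$, which satisfies the \emph{exactly linear} equation $\mathcal{L}_{Y_{\mathrm{norm}}}(\psi)=(a_{1}+a_{2})x\,\psi$ because the $c$-terms cancel identically, concludes $\phi_{1}\phi_{2}=y_{1}y_{2}$, and only then decouples and linearizes the two component equations; you instead dispose of the $\mathbf{y}$-constant coefficients first by the invariant-curve integration argument and then rely on order-counting to obtain triangularity. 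Both routes work; the paper's product trick eliminates the bookkeeping you flag as the main technical care, at the cost of one clean preliminary step, while your explicit treatment of the order-zero (center-curve) coefficients is if anything more careful, since the paper tacitly starts its expansions at $\mathbf{y}$-order $1$.
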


\begin{proof}
Let 
\[
\phi:\left(x,\mathbf{y}\right)\mapsto\left(x,\phi_{1}\left(x,\mathbf{y}\right),\phi_{2}\left(x,\mathbf{y}\right)\right)\in\isotsect{\ynorm}{\arg\left(\pm i\lambda\right)}{\eta}
\]
 be a germ of a sectorial fibered isotropy (tangent to the identity)
of $\ynorm$ in $\cal S_{\pm}\in\germsect{\arg\left(\pm i\lambda\right)}{\eta}$
with $\eta\in\left]\pi,2\pi\right[$. Possibly by reducing our domain,
we can assume that $\cal S_{\pm}$ is bounded and of the form $S_{\pm}\times\mathbf{D\left(0,r\right)}$
(where, as usual, $S_{\pm}$ is an adapted sector and $\mathbf{D\left(0,r\right)}$
a polydisc), and that $\phi$ is bounded in this domain. We have
\[
\phi_{*}\left(\ynorm\right)=\ynorm
\]
\emph{i.e.} 
\[
\mbox{D}\phi\cdot\ynorm=\ynorm\circ\phi
\]
 which is also equivalent to: 
\begin{equation}
\begin{cases}
x^{2}\ppp{\phi_{1}}x+\left(-1-c\left(y_{1}y_{2}\right)+a_{1}x\right)y_{1}\ppp{\phi_{1}}{y_{1}}+\left(1+c\left(y_{1}y_{2}\right)+a_{2}x\right)y_{2}\ppp{\phi_{1}}{y_{2}}\\
=\phi_{1}\left(-1-c\left(\phi_{1}\phi_{2}\right)+a_{1}x\right)\\
x^{2}\ppp{\phi_{2}}x+\left(-1-c\left(y_{1}y_{2}\right)+a_{1}x\right)y_{1}\ppp{\phi_{2}}{y_{1}}+\left(1+c\left(y_{1}y_{2}\right)+a_{2}x\right)y_{2}\ppp{\phi_{2}}{y_{2}}\\
=\phi_{2}\left(1+c\left(\phi_{1}\phi_{2}\right)+a_{2}x\right) & \,\,.
\end{cases}\label{eq: isot sect}
\end{equation}
Let us consider $\psi:=\phi_{1}\phi_{2}$. Then
\[
x^{2}\ppp{\psi}x+\left(-1-c\left(y_{1}y_{2}\right)+a_{1}x\right)y_{1}\ppp{\psi}{y_{1}}+\left(1+c\left(y_{1}y_{2}\right)+a_{2}x\right)y_{2}\ppp{\psi}{y_{2}}=\left(a_{1}+a_{2}\right)x\psi\qquad.
\]
By assumption we can write
\[
{\displaystyle \psi\left(x,\mathbf{y}\right)=\sum_{j_{1}+j_{2}\geq2}}\psi_{j_{1},j_{2}}\left(x\right)y_{1}^{j_{1}}y_{2}^{j_{2}}\,\,,
\]
where $\psi_{j_{1},j_{2}}\left(x\right)$ is analytic and bounded
in $S_{\pm}$ for all $j_{1},j_{2}\geq0$ and such that 
\[
{\displaystyle \sum_{j_{1}+j_{2}\geq1}\left(\underset{x\in S_{\pm}}{\sup}\left(\abs{\psi_{j_{1},j_{2}}\left(x\right)}\right)\right)y_{1}^{j_{1}}y_{2}^{j_{2}}}
\]
 is convergent near the origin of $\ww C^{2}$ (\emph{e.g. }in $\mathbf{D\left(0,r\right)})$.
Consequently, with an argument of uniform convergence in every compact
subset, we have for all $j_{1},j_{2}\geq0$:
\begin{eqnarray*}
 &  & x^{2}\ddd{\psi_{j_{1};j_{2}}}x\left(x\right)+\left(j_{2}-j_{1}+\left(a_{1}\left(j_{1}-1\right)+a_{2}\left(j_{2}-1\right)\right)x\right)\psi_{j_{1},j_{2}}\left(x\right)\\
 &  & =\left(j_{1}-j_{2}\right)\sum_{l=1}^{\min\left(j_{1},j_{2}\right)}\psi_{j_{1}-l,j_{2}-l}\left(x\right)c_{l}\,\,\,.
\end{eqnarray*}
For $j_{1}=j_{2}=j\geq1$, we have 
\[
\psi_{j,j}\left(x\right)=b_{j,j}x^{-\left(j-1\right)\left(a_{1}+a_{2}\right)},\qquad b_{j,j}\in\ww C.
\]
Since $\Re\left(a_{1}+a_{2}\right)>0$, the function $x\mapsto\psi_{j,j}\left(x\right)$
is bounded near the origin if and only if $b_{j,j}=0$ or $j=1$.
For $j_{1}>j_{2}$, we see recursively that $\psi_{j_{1},j_{2}}\left(x\right)=0$.
Indeed, we obtain by induction that 
\[
\psi_{j_{1},j_{2}}\left(x\right)=b_{j_{1},j_{2}}\exp\left(\frac{j_{2}-j_{1}}{x}\right)x^{-\left(a_{1}\left(j_{1}-1\right)+a_{2}\left(j_{2}-1\right)\right)}\qquad,
\]
and since it has to be bounded on $S_{\pm}$, we necessarily have
$b_{j_{1},j_{2}}=0$. Similarly, for $j_{1}<j_{2}$, we see recursively
that $\psi_{j_{1},j_{2}}\left(x\right)=0$. As a conclusion, $\psi\left(x,\mathbf{y}\right)=b_{1,1}y_{1}y_{2}=y_{1}y_{2}$
(we must have $b_{1,1}=1$ since $\phi$ is tangent to the identity). 

We can now solve separately each equation in $\left(\mbox{\ref{eq: isot sect}}\right)$:
\[
\begin{cases}
x^{2}\ppp{\phi_{1}}x+\left(-1-c\left(y_{1}y_{2}\right)+a_{1}x\right)y_{1}\ppp{\phi_{1}}{y_{1}}+\left(1+c\left(y_{1}y_{2}\right)+a_{2}x\right)y_{2}\ppp{\phi_{1}}{y_{2}}\\
=\phi_{1}\left(-1-c\left(y_{1}y_{2}\right)+a_{1}x\right)\\
x^{2}\ppp{\phi_{2}}x+\left(-1-c\left(y_{1}y_{2}\right)+a_{1}x\right)y_{1}\ppp{\phi_{2}}{y_{1}}+\left(1+c\left(y_{1}y_{2}\right)+a_{2}x\right)y_{2}\ppp{\phi_{2}}{y_{2}}\\
=\phi_{2}\left(1+c\left(y_{1}y_{2}\right)+a_{2}x\right) & \,\,\,.
\end{cases}
\]
As above for $i=1,2$ we can write 
\[
{\displaystyle \phi_{i}\left(x,\mathbf{y}\right)=\sum_{j_{1}+j_{2}\geq1}}\phi_{i,j_{1},j_{2}}\left(x\right)y_{1}^{j_{1}}y_{2}^{j_{2}}\,\,,
\]
 where $\phi_{i,j_{1},j_{2}}\left(x\right)$ is analytic and bounded
in $S_{\pm}$ for all $j_{1},j_{2}\geq0$ and such that 
\[
{\displaystyle \sum_{j_{1}+j_{2}\geq1}\left(\underset{x\in S_{\pm}}{\sup}\left(\abs{\phi_{i,j_{1},j_{2}}\left(x\right)}\right)\right)y_{1}^{j_{1}}y_{2}^{j_{2}}}
\]
 is a convergent entire series near the origin of $\ww C^{2}$ (\emph{e.g.
}in $\mathbf{D\left(0,r\right)})$. As above, using the uniform convergence
in every compact subset and identifying terms of same homogeneous
degree $\left(j_{1},j_{2}\right)$, we obtain:
\[
\begin{cases}
{\displaystyle x^{2}\ddd{\phi_{1,j_{1};j_{2}}}x\left(x\right)+\left(j_{2}-j_{1}+1+\left(a_{1}\left(j_{1}-1\right)+a_{2}j_{2}\right)x\right)\phi_{1,j_{1},j_{2}}\left(x\right)}\\
{\displaystyle =\sum_{l=1}^{\min\left(j_{1},j_{2}\right)}\phi_{1,j_{1}-l,j_{2}-l}\left(x\right)\left(j_{1}-j_{2}-1\right)c_{l}}\\
{\displaystyle x^{2}\ddd{\phi_{2,j_{1};j_{2}}}x\left(x\right)+\left(j_{2}-j_{1}-1+\left(a_{1}j_{1}+a_{2}\left(j_{2}-1\right)\right)x\right)\phi_{2,j_{1},j_{2}}\left(x\right)}\\
{\displaystyle =\sum_{l=1}^{\min\left(j_{1},j_{2}\right)}\phi_{2,j_{1}-l,j_{2}-l}\left(x\right)\left(j_{1}-j_{2}+1\right)c_{l}\qquad.}
\end{cases}
\]
From this we deduce: 
\[
\begin{cases}
\phi_{1,1,0}\left(x\right)=p_{1,0}\in\ww C\backslash\acc 0\\
\phi_{2,0,1}\left(x\right)=q_{0,1}\in\ww C\backslash\acc 0
\end{cases}
\]
with $p_{1,0}q_{0,1}=1$. Then, using the assumption that $\phi_{i,j_{1},j_{2}}\left(x\right)$
is analytic and bounded in $S_{\pm}$ for all $j_{1},j_{2}\geq0$,
we see (by induction on $j\geq1$) that
\[
\forall j\geq1\,\,\begin{cases}
\phi_{1,j+1,j}=0\\
\phi_{2,j,j+1}=0
\end{cases}\qquad.
\]
Indeed, we show recursively that for all $j\geq1$, we have:
\[
x^{2}\ddd{\phi_{1,j+2,j+1}}x\left(x\right)+\left(j+1\right)\left(a_{1}+a_{2}\right)x\phi_{1,j+2,j+1}\left(x\right)=0\,\,,
\]
 and the general solution to this equation is:
\[
\phi_{1,j+2,j+1}\left(x\right)={\displaystyle p_{j+2,j+1}x^{-\left(j+1\right)\left(a_{1}+a_{2}\right)}}\,\,,\,\,\mbox{with }p_{j+2j+1}\in\ww C\,\,.
\]
The quantity $\phi_{1,j+2,j+1}\left(x\right)$ is bounded near the
origin if and only if $p_{j+2,j+1}=0$, since $\Re\left(a_{1}+a_{2}\right)>0$.
The same arguments work for $\phi_{2,j,j+1}$, $j\geq1$. Consequently:
\[
{\displaystyle \begin{cases}
{\displaystyle x^{2}\ddd{\phi_{1,j_{1};j_{2}}}x\left(x\right)+\left(j_{2}-j_{1}+1+\left(a_{1}\left(j_{1}-1\right)+a_{2}j_{2}\right)x\right)\phi_{1,j_{1},j_{2}}\left(x\right)}\\
{\displaystyle =\left(j_{1}-j_{2}-1\right)\sum_{l=1}^{\min\left(j_{1},j_{2}\right)}\phi_{1,j_{1}-l,j_{2}-l}\left(x\right)c_{l}}\\
{\displaystyle x^{2}\ddd{\phi_{2,j_{1};j_{2}}}x\left(x\right)+\left(j_{2}-j_{1}-1+\left(a_{1}j_{1}+a_{2}\left(j_{2}-1\right)\right)x\right)\phi_{2,j_{1},j_{2}}\left(x\right)}\\
{\displaystyle =\left(j_{1}-j_{2}+1\right)\sum_{l=1}^{\min\left(j_{1},j_{2}\right)}\phi_{2,j_{1}-l,j_{2}-l}\left(x\right)c_{l}\qquad.}
\end{cases}}
\]
 Once again, we see recursively that for $j_{1}>j_{2}+1$, $\phi_{1,j_{1},j_{2}}\left(x\right)=0$.
Indeed, we obtain by induction that 
\[
\phi_{1,j_{1},j_{2}}\left(x\right)=p_{j_{1},j_{2}}\exp\left(\frac{j_{2}-j_{1}+1}{x}\right)x^{-\left(a_{1}\left(j_{1}-1\right)+a_{2}j_{2}\right)}\qquad,
\]
and since this has to be bounded on $S_{\pm}$, we necessarily have
$p_{j_{1},j_{2}}=0$, and therefore $\phi_{1,j_{1},j_{2}}\left(x\right)=0$.
Similarly, for $j_{1}<j_{2}+1$, we prove that $\phi_{j_{1},j_{2}}\left(x\right)=0$.
As a conclusion, $\phi_{1}\left(x,\mathbf{y}\right)=y_{1}$. By exactly
the same kind of arguments we have $\phi_{2}\left(x,\mathbf{y}\right)=y_{2}$.
\end{proof}

\subsection{\label{subsec: Weak 1-summability of normalization}Weak 1-summability
of the normalizing map}

~

Let us consider the same data as in Lemma \ref{lem: solution eq homo}.
The following lemma states that an analytic solution to the considered
homological equation in $\cal S_{\pm}\in\cal S_{\arg\left(\pm i\lambda\right),\eta}$
with $\eta\in\big[\pi,2\pi\big[$, admits a weak Gevrey-1 asymptotic
expansion in this sector. In other words, it is the weak 1-sum of
a formal solution the homological equation. Let us re-use the notations
introduced at the beginning of the latter section.
\begin{lem}
\label{lem: weak summability homo equation}Let 

\[
Z:=Y_{0}+C\left(x,\mathbf{y}\right)\overrightarrow{\cal C}+xR^{\left(1\right)}\left(x,\mathbf{y}\right)\overrightarrow{\cal R}\,\,
\]
be a formal vector field weakly 1-summable in $\cal S_{\pm}\in\cal S_{\arg\left(\pm i\lambda\right),\eta}$,
with $\eta\in\big[\pi,2\pi\big[$ and $C,R^{\left(1\right)}$ of order
at least one with respect to $\mathbf{y}$. We denote by 
\[
Z_{\pm}:=Y_{0}+C_{\pm}\left(x,\mathbf{y}\right)\overrightarrow{\cal C}+xR_{\pm}^{\left(1\right)}\left(x,\mathbf{y}\right)\overrightarrow{\cal R}
\]
the associate weak 1-sum in $\cal S_{\pm}$. Let also $A\in\form{x,\mathbf{y}}$
be weakly 1-summable in $\cal S_{\pm}$, of 1-sum $A_{\pm}$ and of
order at least one with respect to $\mathbf{y}$. Then, any sectorial
germ of an analytic function of the form $\alpha_{\pm}\left(x,\mathbf{y}\right)=x^{M}\tilde{\alpha}_{\pm}\left(x,\mathbf{y}\right)$
, with $M\in\ww N_{>0}$ and $\tilde{\alpha}_{\pm}$ analytic in $\cal S_{\pm}$,
which is dominated by $\norm{\mathbf{y}}_{\infty}$ and satisfies
\[
\cal L_{Z_{\pm}}\left(\alpha_{\pm}\right)=x^{M+1}A_{\pm}\left(x,\mathbf{y}\right)\qquad,
\]
has a Gevrey-1 asymptotic expansion in $\cal S_{\pm}$, denoted by
$\alpha$. Moreover, $\alpha$ is a formal solution to
\[
\cal L_{Z}\left(\alpha\right)=x^{M+1}A\left(x,\mathbf{y}\right)\qquad.
\]
\end{lem}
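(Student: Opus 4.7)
\medskip

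The plan is to reduce the problem to a coefficient-by-coefficient analysis in the $\mathbf{y}$-variables, and then apply Proposition~\ref{prop: solution borel sommable precise} to the resulting infinite family of singular linear ODEs in $x$. I would first expand
\[
\alpha_{\pm}(x,\mathbf{y}) = \sum_{\mathbf{n}\in\ww N^{2}} \alpha_{\pm,\mathbf{n}}(x)\mathbf{y}^{\mathbf{n}}, \qquad A_{\pm}(x,\mathbf{y}) = \sum_{\mathbf{n}} A_{\pm,\mathbf{n}}(x)\mathbf{y}^{\mathbf{n}},
\]
and likewise for $C_{\pm}$ and $R_{\pm}^{(1)}$, using the Cauchy formula in $\mathbf{y}\in\mathbf{D}(\mathbf{0},\mathbf{r})$ so that every coefficient is analytic and bounded on $\cal S_\pm$. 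By the weak $1$-summability hypothesis, each $A_{\pm,\mathbf{n}}$, $C_{\pm,\mathbf{n}}$, $R^{(1)}_{\pm,\mathbf{n}}$ is the $1$-sum in the direction $\arg(\pm i\lambda)$ of a Gevrey-$1$ series $A_{\mathbf{n}}, C_{\mathbf{n}}, R^{(1)}_{\mathbf{n}}\in\form{x}$. The domination hypothesis forces $\alpha_{\pm,\mathbf{0}}\equiv 0$ and, more generally, $\alpha_{\pm,\mathbf{n}}(x) = x^{M}\widetilde\alpha_{\pm,\mathbf{n}}(x)$ with $\widetilde\alpha_{\pm,\mathbf{n}}$ uniformly bounded on $\cal S_{\pm}$.

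Next, I would expand the equation $\cal L_{Z_\pm}(\alpha_\pm) = x^{M+1}A_\pm$ and collect terms of homogeneous degree $\mathbf{n}=(n_1,n_2)$. Using that $Y_0 = \lambda\overrightarrow{\cal C}+x(x\partial_x + a_1 y_1\partial_{y_1}+a_2 y_2\partial_{y_2})$, each identification yields an ODE of the shape
\[
x^{2}\ddd{\alpha_{\pm,\mathbf{n}}}{x}(x) + \bigl(\lambda(n_2-n_1) + (a_1 n_1 + a_2 n_2)x\bigr)\alpha_{\pm,\mathbf{n}}(x) = x^{M+1}A_{\pm,\mathbf{n}}(x) + \zeta_{\pm,\mathbf{n}}(x),
\]
where $\zeta_{\pm,\mathbf{n}}(x)$ is a polynomial expression in the $\alpha_{\pm,\mathbf{k}}$, $C_{\pm,\mathbf{k}}$ and $R^{(1)}_{\pm,\mathbf{k}}$ with $|\mathbf{k}|<|\mathbf{n}|$. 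When $n_1\neq n_2$ this is an irregular singular equation with $k=\lambda(n_2-n_1)\neq 0$ to which item~(\ref{enu:Assume--and}) of Proposition~\ref{prop: solution borel sommable precise} applies; when $n_1=n_2$, dividing through by $x$ puts the equation in the regular singular form of item~(\ref{enu:Assume--and-1}) with parameter $k=(a_1+a_2)n_1$, whose real part is strictly positive for $n_1\geq 1$ by the strict non-degeneracy assumption.

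Third, I would proceed by induction on $|\mathbf{n}|$ to produce a Gevrey-$1$ formal solution $\alpha_{\mathbf{n}}(x)\in\form x$, $1$-summable in the direction $\arg(\pm i\lambda)$, whose $1$-sum is precisely $\alpha_{\pm,\mathbf{n}}$. At each step, $\zeta_{\pm,\mathbf{n}}$ is $1$-summable by the inductive hypothesis and the algebra-and-derivation properties of Lemma~\ref{lem:diff_alg_and_summability}; Proposition~\ref{prop: solution borel sommable precise} then guarantees both the existence of a $1$-summable formal solution and its uniqueness among germs of solutions bounded in some $S_{\theta,\pi}$. Since $\alpha_{\pm,\mathbf{n}}$ itself is such a bounded germ, it must coincide with the $1$-sum of $\alpha_{\mathbf{n}}$. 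Setting $\alpha := \sum_{\mathbf{n}}\alpha_{\mathbf{n}}(x)\mathbf{y}^{\mathbf{n}}$ then yields the desired weak Gevrey-$1$ asymptotic expansion of $\alpha_\pm$ in $\cal S_\pm$, and the equation $\cal L_{Z}(\alpha) = x^{M+1}A$ follows by uniqueness of asymptotic expansions, since $\cal L_{Z}$ acts term-by-term and preserves weak $1$-summability (Lemma~\ref{lem: proprietes faibles}, item~\ref{enu: derivation faible}).

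The main obstacle I expect is the quantitative control required to apply Proposition~\ref{prop: solution borel sommable precise} \emph{uniformly} in $\mathbf{n}$: the condition $\beta d_k > C|\alpha k|$ must hold with a single Borel parameter $\beta>0$ for infinitely many $k=\lambda(n_2-n_1)$, with $\alpha=(a_1 n_1 + a_2 n_2)/k$ and $d_k$ scaling linearly in $|n_2-n_1|$. Choosing $\beta$ large enough to dominate $C(|a_1|+|a_2|)$ divided by $\min\{|\lambda|-\rho,|\lambda\sin(\theta\pm\epsilon)|\}$ makes the estimate uniform, exactly as in the proof of Proposition~\ref{prop: preparation}. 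The induction on $|\mathbf{n}|$ then closes, but only the weak (term-by-term) $1$-summability of $\alpha$ is produced here; upgrading to ``true'' Gevrey-$1$ asymptotics is a separate matter, handled later via the Martinet--Ramis theorem (Theorem~\ref{th: martinet ramis}).
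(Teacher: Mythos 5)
Your proposal follows essentially the same route as the paper: expand everything in powers of $\mathbf{y}$, identify homogeneous components to get the family of singular linear ODEs $x^{2}\alpha_{\mathbf{n}}'+(\lambda(n_{2}-n_{1})+(a_{1}n_{1}+a_{2}n_{2})x)\alpha_{\mathbf{n}}=G_{\mathbf{n}}$, and induct on $\abs{\mathbf{n}}$ using the two cases of Proposition \ref{prop: solution borel sommable precise} together with boundedness of $\alpha_{\pm,\mathbf{n}}$ on $S_{\pm}$ to identify each coefficient with a 1-sum. The only detail worth making explicit (as the paper does, via the inductive claim $G_{(j,j)}(0)=0$) is that in the resonant case $n_{1}=n_{2}$ the right-hand side is indeed divisible by $x$ before you pass to the regular singular equation; and your closing worry about uniformity of $\beta$ in $\mathbf{n}$ is unnecessary here precisely because, as you note, only weak (term-by-term) 1-summability is being claimed.
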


\begin{proof}
Let us write $Z$ as follow: 
\begin{eqnarray*}
Z & = & x^{2}\pp x+\left(-\left(\lambda+d\left(y_{1}y_{2}\right)\right)+a_{1}x+F_{1}\left(x,\mathbf{y}\right)\right)y_{1}\pp{y_{1}}\\
 &  & +\left(\lambda+d\left(y_{1}y_{2}\right)+a_{2}x+F_{2}\left(x,\mathbf{y}\right)\right)y_{2}\pp{y_{2}}\,\,,
\end{eqnarray*}
with $F_{1},F_{2}$ weakly 1-summable in $\cal S_{\pm}\in\cal S_{\arg\left(\pm i\lambda\right),\eta}$,
with $\eta\in\big[\pi,2\pi\big[$, of weak 1-sums $F_{1,\pm},F_{2,\pm}$
respectively, which are dominated by $\norm{\mathbf{y}}$, and with
$d\left(v\right)\in v\germ v$ without constant term. Consider the
Taylor expansion with respect to $\mathbf{y}$ of $d$,$F_{1}$,$F_{2}$,$A$
and $\alpha$:
\[
\begin{cases}
{\displaystyle d\left(y_{1}y_{2}\right)=\sum_{k\geq1}d_{k}y_{1}^{k}y_{2}^{k}}\\
{\displaystyle F_{1}\left(x,\mathbf{y}\right)=\sum_{j_{1}+j_{2}\geq1}F_{1,\mathbf{j}}\left(x\right)\mathbf{y^{j}}}\\
{\displaystyle F_{2}\left(x,\mathbf{y}\right)=\sum_{j_{1}+j_{2}\geq1}F_{2,\mathbf{j}}\left(x\right)\mathbf{y^{j}}}\\
{\displaystyle A\left(x,\mathbf{y}\right)=\sum_{j_{1}+j_{2}\geq1}A_{\mathbf{j}}\left(x\right)\mathbf{y^{j}}}\\
{\displaystyle \alpha\left(x,\mathbf{y}\right)=\sum_{j_{1}+j_{2}\geq1}\alpha_{\mathbf{j}}\left(x\right)\mathbf{y^{j}}}
\end{cases}
\]
(same expansions are valid in $\cal S_{\pm}$ for the corresponding
weak 1-sums). As usual, possibly by reducing $\cal S_{\pm}$, we can
assume that $\cal S_{\pm}=S_{\pm}\times\mathbf{D\left(0,r\right)}$
(where $S_{\pm}$ is an adapted sector and $\mathbf{D\left(0,r\right)}$
a polydisc). The homological equation 
\[
\cal L_{Z}\left(\alpha\right)=x^{M+1}A_{\pm}\left(x,\mathbf{y}\right)
\]
can be re-written:
\begin{eqnarray*}
x^{2}\ppp{\alpha}x+\left(-\left(\lambda+d\left(y_{1}y_{2}\right)\right)+a_{1}x+F_{1,\pm}\left(x,\mathbf{y}\right)\right)y_{1}\ppp{\alpha}{y_{1}}\\
+\left(\lambda+d\left(y_{1}y_{2}\right)+a_{2}x+F_{2,\pm}\left(x,\mathbf{y}\right)\right)y_{2}\ppp{\alpha}{y_{2}} & = & x^{M+1}A_{\pm}\left(x,\mathbf{y}\right)\,\,.
\end{eqnarray*}
Using normal convergence in any compact subset of $\cal S_{\pm}$,
we can compute the partial derivatives of 
\[
{\displaystyle \alpha\left(x,\mathbf{y}\right)=\sum_{j_{1}+j_{2}\geq1}\alpha_{\mathbf{j}}\left(x\right)\mathbf{y^{j}}}
\]
 with respect to $x$, $y_{1}$ or $y_{2}$ term by term, in order
to obtain after identification: $\forall\mathbf{j}=\left(j_{1},j_{2}\right)\in\ww N^{2}$,
\begin{eqnarray*}
x^{2}\ddd{\alpha_{\mathbf{j},\pm}}x\left(x\right)+\left(\lambda\left(j_{2}-j_{1}\right)+\left(a_{1}j_{1}+a_{2}j_{2}\right)x\right)\alpha_{\mathbf{j},\pm}\left(x\right) & = & G_{\mathbf{j},\pm}\left(x\right)\,\,\,,
\end{eqnarray*}
where $G_{\mathbf{j},\pm}\left(x\right)$ depends only on $d_{k},F_{1,\mathbf{k},\pm},F_{2,\mathbf{k},\pm},\alpha_{\mathbf{k},\pm}$
and $A_{\mathbf{l},\pm}$, for $k\leq\min\left(j_{1},j_{2}\right)$,
$\abs{\mathbf{k}}\leq\mathbf{\abs j}-1$ and $\abs{\mathbf{l}}\leq\abs{\mathbf{j}}$.
We obtain a similar differential equation for the associated formal
power series. Let us prove by induction on $\abs{\mathbf{j}}\geq0$
that:

\begin{enumerate}
\item $G_{\mathbf{j},\pm}$ is the 1-sum of $G_{\mathbf{j}}$ in $S_{\pm}$,
\item $G_{j,j}\left(0\right)=0$ if $\mathbf{j}=\left(j,j\right)$
\item $\alpha_{\mathbf{j},\pm}$ is the 1-sum $\alpha_{\mathbf{j}}$ in
$S_{\pm}$.
\end{enumerate}
It is paramount to use the fact that for all $\mathbf{j}\in\ww N^{2}$,
$\alpha_{\mathbf{j},\pm}$ is bounded in $S_{\pm}$. 

\begin{itemize}
\item For $\mathbf{j}=\left(0,0\right)$, we have $G_{\left(0,0\right)}=0$
and then $\alpha_{\left(0,0\right)}=0$.
\item Let $\mathbf{j}=\left(j_{1},j_{2}\right)\in\ww N^{2}$ with $\abs{\mathbf{j}}=j_{1}+j_{2}\geq1$.
Assume the property holds for all $\mathbf{k}\in\ww N^{2}$ with $\abs{\mathbf{k}}\leq\abs{\mathbf{j}}-1$.

\begin{enumerate}
\item Since $G_{\mathbf{j}}\left(x\right)$ depends only on $d_{k},F_{1,\mathbf{k}},F_{2,\mathbf{k}},\alpha_{\mathbf{k}}$
and $A_{\mathbf{l}}$, for $k\leq\min\left(j_{1},j_{2}\right)$, $\abs{\mathbf{k}}\leq\mathbf{\abs j}-1$
and $\abs{\mathbf{l}}\leq\abs{\mathbf{j}}$, then $G_{\mathbf{j}}$
is 1-summable in $S_{\pm}$, of 1-sum $G_{\mathbf{j},\pm}$. 
\item We also see that $G_{j,j}\left(0\right)=0$, if $\mathbf{j}=\left(j,j\right)$.
\item If $j_{1}\neq j_{2}$, then point 1. in Proposition \ref{prop: solution borel sommable precise}
tells us that there exists a unique formal solution $\alpha_{\mathbf{j}}\left(x\right)$
to the irregular differential equation we are looking at, and such
that $\alpha_{\mathbf{j}}\left(0\right)=\frac{1}{\lambda\left(j_{2}-j_{1}\right)}G_{\mathbf{j}}\left(0\right)$
. Moreover, this solution is 1-summable in $S_{\pm}$ since the same
goes for $G_{\mathbf{j}}$.\\
\item If however $j_{1}=j_{2}=j\geq1$, since $G_{\left(j,j\right)}\left(0\right)=0$
we can write $G_{\left(j,j\right)}\left(x\right)=x\tilde{G}_{\left(j,j\right)}\left(x\right)$
with $\tilde{G}_{\left(j,j\right)}\left(x\right)$ 1-summable in $S_{\pm}$,
and then the differential equation becomes regular: 
\[
x\ddd{\alpha_{\left(j,j\right),\pm}}x\left(x\right)+\left(a_{1}+a_{2}\right)j\alpha_{\left(j,j\right),\pm}\left(x\right)=\tilde{G}_{\left(j,j\right),\pm}\left(x\right)\,\,.
\]
Since $\Re\left(a_{1}+a_{2}\right)>0$, according to point 2. in Proposition
\ref{prop: solution borel sommable precise}, the latter equation
has a unique formal solution $\alpha_{\left(j,j\right)}\left(x\right)$
such that $\alpha_{\left(j,j\right)}\left(0\right)=\frac{\tilde{G}_{\left(j,j\right)}\left(0\right)}{\left(a_{1}+a_{2}\right)j}$,
and this solution is moreover 1-summable in $S_{\pm}$, and its 1-sum
is the only solution to this equation bounded in $S_{\pm}$. Thus,
it is necessarily $\alpha_{\left(j,j\right),\pm}$.
\end{enumerate}
\end{itemize}
\end{proof}
We are now able to prove the weak 1-summability of the formal normalizing
map.
\begin{prop}
\label{prop: Weak sectorial normalizations}The sectorial normalizing
maps $\left(\Phi_{+},\Phi_{-}\right)$ in Corollary \ref{cor: existence normalisations sectorielles}
are the weak 1-sums in $\cal S_{\pm}\in\cal S_{\arg\left(\pm\lambda\right),\eta}$
of the formal normalizing map $\hat{\Phi}$ given by Theorem \ref{thm: forme normalel formelle},
for all $\eta\in\big[\pi,2\pi\big[$. In particular, $\hat{\Phi}$
is weakly 1-summable, except for $\arg\left(\pm\lambda\right)$.
\end{prop}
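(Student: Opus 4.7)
The plan is to exhibit $\Phi_\pm$ as a composition of two pieces, each of which is individually (weakly) $1$-summable, and then to identify its weak Gevrey-$1$ asymptotic expansion with $\hat{\Phi}$ by the uniqueness clause of Theorem~\ref{thm: forme normalel formelle}. Fix a direction $\theta=\arg(\pm i\lambda)$ and pick $N\geq 2$. By Proposition~\ref{prop: forme pr=0000E9par=0000E9e ordre N} there is a formal fibered diffeomorphism $\Psi^{(N+2)}$ tangent to the identity, $1$-summable in every direction $\theta'\neq\arg(\pm\lambda)$, whose $1$-sum $\Psi^{(N+2)}_{\pm}$ in direction $\theta$ conjugates $Y$ to $Y_\pm^{(N+2)}$, a vector field normalized up to order $N+2$ and itself $1$-summable (hence weakly $1$-summable). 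Applying Propositions~\ref{prop: radial part} and~\ref{prop: tangential part} to $Y_\pm^{(N+2)}$, one obtains sectorial diffeomorphisms $\varphi_\pm=\Lambda_{\rho_\pm}$ and $\psi_\pm=\Gamma_{\chi_\pm}$ such that $\Psi_\pm:=\psi_\pm\circ\varphi_\pm$ conjugates $Y_\pm^{(N+2)}$ to $\ynorm$.

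The key technical step is to promote $\varphi_\pm$ and $\psi_\pm$ to weak $1$-sums of formal objects. For $\rho_\pm$ this is immediate from Lemma~\ref{lem: weak summability homo equation}: the datum $Z_\pm=Y_\pm^{(N+2)}$ is weakly $1$-summable of weak $1$-sum coming from $\hat{Z}=Y^{(N+2)}$, and $A_\pm=-R^{(N+2)}_\pm$ likewise, so $\rho_\pm$ admits a weak Gevrey-$1$ asymptotic expansion $\hat{\rho}$ which is a formal solution of the corresponding homological equation; hence $\hat{\varphi}:=\Lambda_{\hat{\rho}}$ is weakly $1$-summable with weak $1$-sum $\varphi_\pm$. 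Using Corollary~\ref{cor: summability push-forward faible}, the push-forward $Y_{\mathcal{C},\pm}=(\varphi_\pm)_*(Y_\pm^{(N+2)})$ is the weak $1$-sum of the formal vector field $\hat{\varphi}_*(Y^{(N+2)})$. A second application of Lemma~\ref{lem: weak summability homo equation} then yields a formal $\hat{\chi}$ whose weak $1$-sum is $\chi_\pm$, and hence $\hat{\psi}:=\Gamma_{\hat{\chi}}$ is weakly $1$-summable with weak $1$-sum $\psi_\pm$.

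Now form the formal composition $\hat{\Psi}:=\hat{\psi}\circ\hat{\varphi}\circ\Psi^{(N+2)}$. Because $\hat{\rho}$ and $\hat{\chi}$ have no constant term in $\mathbf{y}$ (being dominated by $\|\mathbf{y}\|_\infty$), the hypothesis of Proposition~\ref{prop: composition faible} is satisfied at each step, so $\hat{\Psi}$ is weakly $1$-summable in direction $\theta$ with weak $1$-sum $\Psi_\pm\circ\Psi^{(N+2)}_\pm$. Applying Corollary~\ref{cor: summability push-forward faible} three times, this composition formally conjugates $Y$ to $\ynorm$, and it is tangent to the identity because each of its three factors is. Theorem~\ref{thm: forme normalel formelle} therefore forces $\hat{\Psi}=\hat{\Phi}$, while Proposition~\ref{prop: unique normalizations} forces $\Psi_\pm\circ\Psi^{(N+2)}_\pm=\Phi_\pm$. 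Combining these identities, $\Phi_\pm$ is the weak $1$-sum of $\hat{\Phi}$ in direction $\theta$, and since $\theta\in\{\arg(\pm i\lambda)\}$ was arbitrary (one may, in fact, carry out the construction for any $\theta\neq\arg(\pm\lambda)$), the second assertion follows.

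The main obstacle is the bookkeeping at the middle step: one must check that Lemma~\ref{lem: weak summability homo equation} remains applicable after the first conjugation, which requires recognizing that the push-forward $Y_{\mathcal{C},\pm}$ inherits weak $1$-summability from $Y_\pm^{(N+2)}$ and $\varphi_\pm$ through Corollary~\ref{cor: summability push-forward faible}, and that the ``non-vanishing of constant term'' condition in Proposition~\ref{prop: composition faible} is met because $\hat\varphi(x,\mathbf{0})=(x,\mathbf{0})$ and $\hat\psi(x,\mathbf{0})=(x,\mathbf{0})$. Everything else is an assembly of earlier results.
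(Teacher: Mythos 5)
Your proposal is correct and follows essentially the same route as the paper: decompose $\Phi_{\pm}$ into the $1$-summable preparation of Proposition \ref{prop: forme pr=0000E9par=0000E9e ordre N} followed by the two flows $\Lambda_{\rho_{\pm}}$, $\Gamma_{\chi_{\pm}}$ of Propositions \ref{prop: radial part} and \ref{prop: tangential part}, use Lemma \ref{lem: weak summability homo equation} to give each factor a weak Gevrey-$1$ expansion, compose via Proposition \ref{prop: composition faible} and Corollary \ref{cor: summability push-forward faible}, and conclude by the uniqueness clause of Theorem \ref{thm: forme normalel formelle}. The only difference is that you spell out the bookkeeping (the transfer of weak $1$-summability to the intermediate vector field and the no-constant-term hypothesis) that the paper's proof leaves implicit.
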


\begin{proof}
The normalizing map $\Phi_{\pm}$ from Corollary \ref{cor: existence normalisations sectorielles}
is constructed as the composition of two germs of sectorial diffeomorphisms,
using successively Propositions \ref{prop: forme pr=0000E9par=0000E9e ordre N}
and \ref{prop: norm sect}. The sectorial map obtained in Proposition
\ref{prop: forme pr=0000E9par=0000E9e ordre N} is 1-summable except
in directions ${\displaystyle \arg\left(\pm\lambda\right)}$. The
sectorial transformation in Proposition \ref{prop: norm sect} is
constructed as the composition of two germs of sectorial diffeomorphisms,
using successively Proposition \ref{prop: radial part} and \ref{prop: tangential part}.
Both of these two sectorial maps are built thanks to Lemma \ref{lem: solution eq homo}.
Lemma \ref{lem: weak summability homo equation} above justifies that
each of these maps admits in fact a weak Gevrey-1 asymptotic expansion
in a domain of the form $\cal S_{\pm}\in\cal S_{\arg\left(\pm\lambda\right),\eta}$,
for all $\eta\in\big[\pi,2\pi\big[$. Consequently, the same goes
for the sectorial diffeomorphisms of Proposition \ref{prop: norm sect},
and then for those of Corollary \ref{cor: existence normalisations sectorielles}
(we used here Proposition \ref{prop: composition faible} for the
composition). 

Using item \ref{enu: derivation faible} in Lemma \ref{lem: proprietes faibles},
we deduce that the weak Gevrey-1 asymptotic expansion of the sectorial
normalizing maps of Corollary \ref{cor: existence normalisations sectorielles}
is therefore a formal normalizing map, such as the one given by Theorem
\ref{thm: forme normalel formelle}. By uniqueness of such a normalizing
map, it is $\hat{\Phi}$.
\end{proof}

\section{\label{sec: analytic classification}Analytic classification}

In this section, we end up the proofs of both Theorems \ref{Th: Th drsn}
and \ref{thm: espace de module}. In order to do this, we prove that
the Stokes diffeomorphisms $\Phi_{\lambda}$ and $\Phi_{-\lambda}$
obtained from the germs of sectorial normalizing maps $\Phi_{+}$
and $\Phi_{-}$, which a priori admit identity as \emph{weak} Gevrey-1
asymptotic expansion, admit in fact identity as ``true'' Gevrey-1
asymptotic expansion. This will be done by studying more generally
germs of sectorial isotropies of the normal form $\ynorm$ in sectorial
domains with ``narrow'' opening, and by considering theses isotropies
in the of space of leaves. Using Theorem \ref{th: martinet ramis},
which is a ``non-abelian'' version of the Ramis-Sibuya theorem due
to Martinet and Ramis \cite{MR82}, this will has as consequence the
fact that the sectorial normalizing maps $\Phi_{+}$ and $\Phi_{-}$
both admit the formal normalizing map $\hat{\Phi}$ as Gevrey-$1$
asymptotic expansion in the corresponding sectorial domains. This
proves Theorem \ref{Th: Th drsn}. Moreover, another consequence will
be Theorem \ref{thm: espace de module}. Finally, we will describe
the moduli space of analytic classification in terms of some spaces
of power series.

\bigskip{}

From now on, we fix a normal form 
\[
\ynorm=x^{2}\pp x+\left(-\lambda+a_{1}x-c\left(y_{1}y_{2}\right)\right)y_{1}\pp{y_{1}}+\left(\lambda+a_{2}x+c\left(y_{1}y_{2}\right)\right)y_{2}\pp{y_{2}}\,\,\,\,,
\]
with $\lambda\in\ww C^{*},$$\Re\left(a_{1}+a_{2}\right)>0$ and $c\in v\germ v$
vanishing at the origin. We denote by ${\displaystyle \cro{\ynorm}}$
the set of germs of holomorphic doubly-resonant saddle-nodes in $\left(\ww C^{3},0\right)$,
formally conjugate to $\ynorm$ by formal fibered diffeomorphisms
tangent to the identity. We refer the reader to Definition \ref{def: sectorial germs}
for notions relating to sectors.
\begin{defn}
We define $\Lambda_{\lambda}^{\left(\mbox{\ensuremath{\tx{weak}}}\right)}\left(\ynorm\right)$
$\left(\mbox{\emph{resp.} }\Lambda_{-\lambda}^{\left(\mbox{\ensuremath{\tx{weak}}}\right)}\left(\ynorm\right)\right)$
as the group of germs of sectorial fibered isotropies of $\ynorm$,
admitting the identity as \textbf{weak} Gevrey-1 asymptotic expansion
in sectorial domains of the form $S_{\lambda}\times\left(\ww C^{2},0\right)$
$\left(resp.\,S_{-\lambda}\times\left(\ww C^{2},0\right)\right)$,
where:
\begin{eqnarray*}
S_{\lambda} & \in & \cal{AS}_{\arg\left(\lambda\right),\pi}\\
S_{-\lambda} & \in & AS_{\arg\left(-\lambda\right),\pi}
\end{eqnarray*}
(see Definition \ref{def: asymptotitc sector}).
\end{defn}

We recall the notations given in the introduction: we have defined
$\Lambda_{\lambda}\left(\ynorm\right)$ $\left(\mbox{\emph{resp.} }\Lambda_{-\lambda}\left(\ynorm\right)\right)$
as the group of germs of sectorial fibered isotropies of $\ynorm$,
admitting the identity as Gevrey-1 asymptotic expansion in sectorial
domains of the form $S_{\lambda}\times\left(\ww C^{2},0\right)$ $\left(resp.\,S_{-\lambda}\times\left(\ww C^{2},0\right)\right)$.
It is clear that we have:
\[
\Lambda_{\pm\lambda}\left(\ynorm\right)\subset\Lambda_{\pm\lambda}^{\left(\mbox{\ensuremath{\tx{weak}}}\right)}\left(\ynorm\right)\subset\isotsect Y{\arg\left(\pm\lambda\right)}{\eta},\,\,\forall\eta\in\left]0,\pi\right[\,\,.
\]

The main result of this section is the following.
\begin{prop}
\label{prop: isotropies plates}Any $\psi\in\Lambda_{\pm\lambda}^{\left(\tx{weak}\right)}\left(\ynorm\right)$
admits the identity as Gevrey-1 asymptotic expansion in $S_{\pm\lambda}\times\left(\ww C^{2},0\right)$.
In other words:
\[
\Lambda_{\pm\lambda}^{\left(\tx{weak}\right)}\left(\ynorm\right)=\Lambda_{\pm\lambda}\left(\ynorm\right)\,\,.
\]
\end{prop}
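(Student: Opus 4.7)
The plan is to prove the stronger statement that every $\psi \in \Lambda_{\pm\lambda}^{(\tx{weak})}(\ynorm)$ satisfies $\psi - \tx{Id}$ exponentially flat on $S_{\pm\lambda} \times \mathbf{D}(\mathbf{0}, \mathbf{r})$; by Proposition~\ref{prop: dev asympt nul expo plat} this is equivalent to the conclusion. I treat only the case of direction $\arg(\lambda)$, the other being symmetric. The strategy, in line with Remark~\ref{rem:New difficulties}, is to transport the problem into the space of leaves of $\ynorm$ and to exploit the narrowness of the sector $S_\lambda$ (of opening exactly $\pi$), on which the resonant exponential $\exp(-2\lambda/x)$ is genuinely exponentially small.

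First I would construct two independent explicit first integrals of $\ynorm$ on $S_\lambda \times (\ww C^2, 0)$. Direct integration along trajectories yields objects of the schematic form
\[
H_1 = x^{-(a_1+a_2)} y_1 y_2 \cdot E_1(v), \qquad H_2 = \frac{y_2}{y_1} \exp\!\left(\frac{2\lambda}{x}\right) x^{a_1-a_2} \cdot E_2(x, H_1),
\]
where $E_1, E_2$ are units encoding the contribution of $c(v)$. The delicate feature is that whenever $c(v) = \sum c_k v^k$ contains a resonant index $m$ with $m(a_1+a_2) = 1$, the primitive of $c(v)/x^2$ along trajectories produces a logarithmic-resonant term $c_m H_1^m \log(x)/x$ inside $E_2$, as highlighted in Remark~\ref{rem:New difficulties}. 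Because $\psi$ is an isotropy of $\ynorm$ fixing $x$, it descends to a sectorial biholomorphism $\widetilde\psi$ of the $(H_1, H_2)$ leaf-space.

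The second step is to upgrade the coefficient-wise Gevrey-1 bounds of $\psi$ into a uniform exponential estimate for $\widetilde\psi - \tx{Id}$. The key observation is that $\widetilde\psi$ must be independent of $x$ when expressed on genuine first integrals, so it is forced into the form $\widetilde\psi(H_1, H_2) = (H_1, H_2) + (\cal A, \cal B)(H_1, H_2)$. On the other hand, the image of $S_\lambda \times \mathbf{D}(\mathbf{0}, \mathbf{r})$ in $(x, H_1, H_2)$-coordinates is such that $1/H_2$ carries the exponentially small factor $\exp(-2\lambda/x)$ throughout $S_\lambda$. Combining this structural constraint with the formal uniqueness of the normal form (Theorem~\ref{thm: forme normalel formelle}), which excludes any non-trivial purely formal content of $\widetilde\psi - \tx{Id}$, I expect to deduce that $\cal A, \cal B$ lie in the ideal generated by $1/H_2$ and hence are exponentially small; pullback via $(x, \mathbf{y}) \mapsto (x, H_1, H_2)$ then delivers the required exponential flatness of $\psi - \tx{Id}$ itself.

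The principal obstacle, emphasised in Remark~\ref{rem:New difficulties}, is precisely the logarithmic resonant term $c_m H_1^m \log(x)/x$ appearing in $H_2$: it intertwines the infinitely many homogeneous components of the homological equations governing $\widetilde\psi$, precluding the decoupled per-degree analysis available in \cite{Stolo}, which crucially used that $Y_{|\{x=0\}}$ was linear. To circumvent this I would work in weighted Borel-style norms adapted to the exponential $\exp(-2\lambda/x)$, running a bootstrap from the coefficient-wise weak Gevrey estimates supplied by the hypothesis and carefully tracking the contribution of the resonant logarithm through the nonlinear coupling it induces between the Taylor components of $\cal A$ and $\cal B$.
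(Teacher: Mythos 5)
Your overall strategy --- pass to the space of leaves of $\ynorm$, use that $\Psi:=\cal H\circ\psi\circ\cal H^{-1}$ is $x$-independent, and exploit the exponential behaviour of the resonant exponentials on the narrow sector --- is indeed the paper's strategy. But two of your choices would break down, and the decisive step is missing. First, your leaf coordinates $(H_1,H_2)$ with $H_2\sim (y_2/y_1)\exp(2\lambda/x)x^{a_1-a_2}$ are degenerate: $H_2$ is undefined on $\acc{y_1=0}$ and the pair $(H_1,H_2)$ only determines $(y_1,y_2)$ up to a two-fold ambiguity, so no estimate phrased in terms of the ideal generated by $1/H_2$ can be pulled back to the full polydisc $\mathbf{D}(\mathbf{0},\mathbf{r})$. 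The paper instead uses the two individual first integrals $h_1=y_1 f_1^{-1}$, $h_2=y_2f_2^{-1}$ (with $h_1h_2=y_1y_2x^{-a}$), for which $\cal H_{\pm\lambda}$ is a genuine fibered biholomorphism of the whole sectorial neighbourhood; note moreover that on $S_\lambda$ it is \emph{not} true that "$1/H_2$ is exponentially small throughout'', since $h_2$ (hence $H_2$) vanishes on $\acc{y_2=0}$.

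Second, the assertion that the formal uniqueness of the normal form ``excludes any non-trivial purely formal content of $\widetilde\psi-\tx{Id}$'' is not the right input and does not by itself force $\cal A,\cal B$ into any ideal: the hypothesis that must be used is that $\psi$ itself admits $\tx{Id}$ as \emph{weak} Gevrey-1 asymptotic expansion, and the whole difficulty is to convert this coefficient-wise information into the vanishing of exactly those leaf-space coefficients which are not exponentially small. The paper's mechanism is concrete: expand $\Psi_j(h_1,h_2)$ as a Laurent series $\sum_n\Psi_{j,n}(h_1h_2)h_1^{\,n}$ on the $x$-dependent annuli $\Omega_{x,w}$, apply Cauchy's estimates there, let $x\to0$ in $S_\lambda$ so that $|f_1|\to\infty$, $|f_2|\to0$ kills all negative-index coefficients, identify the lowest-order surviving coefficients with those of the identity by comparison with the weak asymptotic expansion, and finally obtain uniform exponential flatness of the remaining tail by evaluating the Cauchy bound at $2x$ and using $|f_1(2x,w)/f_1(x,w)|\le|2^{a_1}|e^{-B/|x|}$. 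None of these steps appears in your proposal (the ``weighted Borel-style norms'' and ``bootstrap'' are not specified), so as written the argument does not close; supplying this Laurent/Cauchy analysis is essentially reconstructing the paper's proof.
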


\subsection{Proofs of the main results (assuming Proposition \ref{prop: isotropies plates}
)}

~

In this subsection, we prove the main results of this paper, assuming
Proposition \ref{prop: isotropies plates} above holds.

\subsubsection{Analytic invariants: Stokes diffeomorphisms}

~

According to Corollary \ref{cor: existence normalisations sectorielles},
to any ${\displaystyle Y\in{\displaystyle \cro{\ynorm}}}$, we can
associate a pair of germs of sectorial fibered isotropies in $S_{\lambda}\times\left(\ww C^{2},0\right)$
and $S_{-\lambda}\times\left(\ww C^{2},0\right)$ respectively, denoted
by $\left(\Phi_{\lambda},\Phi_{-\lambda}\right)$:
\[
\begin{cases}
\Phi_{\lambda}:=\left(\Phi_{+}\circ\Phi_{-}^{-1}\right)_{\mid S_{\lambda}\times\left(\ww C^{2},0\right)}\in\isotsect Y{\arg\left(\lambda\right)}{\eta} & \,,\,\forall\eta\in\left]0,\pi\right[\\
\Phi_{-\lambda}:=\left(\Phi_{-}\circ\Phi_{+}^{-1}\right)_{\mid S_{-\lambda}\times\left(\ww C^{2},0\right)}\in\isotsect Y{\arg\left(-\lambda\right)}{\eta} & \,,\,\forall\eta\in\left]0,\pi\right[\,\,,
\end{cases}
\]
where $\left(\Phi_{+},\Phi_{-}\right)$ is the pair of the sectorial
normalizing maps given by Corollary \ref{cor: existence normalisations sectorielles}.
\begin{prop}
For any given $\eta\in\left]0,\pi\right[$ the map 
\begin{eqnarray*}
{\displaystyle {\displaystyle \cro{\ynorm}}} & \longrightarrow & \isotsect Y{\arg\left(\lambda\right)}{\eta}\times\isotsect Y{\arg\left(-\lambda\right)}{\eta}\\
Y & \longmapsto & \left(\Phi_{\lambda},\Phi_{-\lambda}\right)\,\,,
\end{eqnarray*}
actually ranges in $\Lambda_{\lambda}^{\left(\mbox{\ensuremath{\tx{weak}}}\right)}\left(\ynorm\right)\times\Lambda_{-\lambda}^{\left(\mbox{\ensuremath{\tx{weak}}}\right)}\left(\ynorm\right)$.
\end{prop}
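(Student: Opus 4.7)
The plan is to show that, for every $Y \in [\ynorm]$, the two transition maps $\Phi_\lambda$ and $\Phi_{-\lambda}$ (i) are genuine fibered sectorial isotropies of $\ynorm$ tangent to the identity on the appropriate narrow sectors, and (ii) admit the identity as weak Gevrey-1 asymptotic expansion there. The isotropy part is essentially formal: by Corollary~\ref{cor: existence normalisations sectorielles} both $\Phi_+$ and $\Phi_-$ conjugate $Y$ to $\ynorm$ in their respective wide sectorial domains $\mathcal{S}_+ \in \mathcal{S}_{\arg(i\lambda),\eta}$ and $\mathcal{S}_- \in \mathcal{S}_{\arg(-i\lambda),\eta}$. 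On the overlap, which contains sectorial neighborhoods of the origin in the directions $\arg(\pm\lambda)$ with opening arbitrarily close to $\pi$, the composition $\Phi_+ \circ \Phi_-^{-1}$ is defined and the chain rule gives $(\Phi_+ \circ \Phi_-^{-1})_*(\ynorm) = (\Phi_+)_*\bigl((\Phi_-^{-1})_*(\ynorm)\bigr) = (\Phi_+)_*(Y) = \ynorm$, and similarly for $\Phi_- \circ \Phi_+^{-1}$. Since $\Phi_\pm$ are fibered and tangent to the identity, so are their compositions, so $\Phi_{\pm\lambda} \in \mathrm{Isot}_{\mathrm{fib}}(\ynorm, \mathcal{S}_{\arg(\pm\lambda),\eta}; \mathrm{Id})$ for every $\eta \in \left]0,\pi\right[$.

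The core of the statement is the weak Gevrey-1 claim. The plan is to invoke Proposition~\ref{prop: Weak sectorial normalizations}: both $\Phi_+$ and $\Phi_-$ admit the unique formal normalizing map $\hat{\Phi}$ (from Theorem~\ref{thm: forme normalel formelle}) as weak Gevrey-1 asymptotic expansion in $\mathcal{S}_+$ and $\mathcal{S}_-$, respectively. Consequently, $\Phi_-^{-1}$ admits the formal inverse $\hat{\Phi}^{-1}$ as weak Gevrey-1 asymptotic expansion in $\Phi_-(\mathcal{S}_-)$ (the standard closure-under-inversion argument applies, since $\hat{\Phi}$ is tangent to the identity). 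Now I would apply the composition lemma for weak 1-summability (Proposition~\ref{prop: composition faible}) together with the uniqueness of weak Gevrey-1 asymptotic expansions (item~2 of Lemma~\ref{lem: proprietes faibles}) on the narrow sector $S_{\pm\lambda} \subset \mathcal{S}_+ \cap \mathcal{S}_-$: the composition $\Phi_+ \circ \Phi_-^{-1}$ inherits a weak Gevrey-1 asymptotic expansion equal to the formal composition $\hat{\Phi} \circ \hat{\Phi}^{-1} = \mathrm{Id}$. This gives $\Phi_{\lambda} \in \Lambda_\lambda^{(\mathrm{weak})}(\ynorm)$ and, symmetrically, $\Phi_{-\lambda} \in \Lambda_{-\lambda}^{(\mathrm{weak})}(\ynorm)$.

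The only mildly delicate point I foresee is checking that Proposition~\ref{prop: composition faible} applies in our situation: its hypothesis requires that the inner fibered map sends $\mathbf{y} = \mathbf{0}$ to itself, i.e.\ $\hat{\phi}(x, \mathbf{0}) = \mathbf{0}$. This is ensured because $\hat{\Phi}$ and $\hat{\Phi}^{-1}$ are tangent to the identity and fibered, hence preserve the formal center manifold $\{\mathbf{y} = \mathbf{0}\}$ at the formal level (as $\hat{\Phi}$ conjugates the center manifold $\{y_1 = y_2 = 0\}$ of $\ynorm$ to the formal center manifold of $Y$, but since both normal forms have $\{\mathbf{y}=\mathbf{0}\}$ invariant and $\hat{\Phi}$ is tangent to $\mathrm{Id}$, a short direct verification gives $\hat{\phi}(x, \mathbf{0}) = \mathbf{0}$). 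The verification also requires showing well-definedness of $\Phi_+ \circ \Phi_-^{-1}$ on the intersection, which follows from shrinking the polydisc $\mathbf{D}(\mathbf{0},\mathbf{r})$ so that $\Phi_-^{-1}$ maps $S_{\pm\lambda} \times \mathbf{D}(\mathbf{0},\mathbf{r})$ into $\mathcal{S}_+$; this uses again the tangency to the identity. Everything else is bookkeeping.
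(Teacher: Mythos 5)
Your route is the paper's own: the isotropy claim follows from the chain rule for push-forwards on the overlap of the two wide sectors, and the weak Gevrey-1 claim is reduced to Proposition \ref{prop: Weak sectorial normalizations} combined with the composition statement of Proposition \ref{prop: composition faible}. The one step that does not survive scrutiny is your ``short direct verification'' that $\hat{\phi}\left(x,\mathbf{0}\right)=\mathbf{0}$. Tangency to the identity only gives $\hat{\Phi}\left(x,\mathbf{0}\right)=\left(x,\tx O\left(x^{2}\right),\tx O\left(x^{2}\right)\right)$, and since $\hat{\Phi}_{*}\left(Y\right)=\ynorm$ is fibered, the curve $\hat{\Phi}^{-1}\left(x,\mathbf{0}\right)$ is exactly the formal centre manifold of $Y$, which for a general $Y\in\cro{\ynorm}$ is a nontrivial formal curve $\left(x,g_{1}\left(x\right),g_{2}\left(x\right)\right)$ with $g_{i}\in x^{2}\form x$; take $Y=\Psi_{*}\left(\ynorm\right)$ with $\Psi\left(x,\mathbf{y}\right)=\left(x,y_{1}+x^{2},y_{2}\right)$ for a counterexample. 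Your parenthetical argument even assumes that ``both normal forms have $\acc{\mathbf{y}=\mathbf{0}}$ invariant,'' but $Y$ is not a normal form and $\acc{\mathbf{y}=\mathbf{0}}$ need not be invariant for it. So the hypothesis $\hat{F}_{\mathbf{0}}^{\left(k\right)}=0$ of Proposition \ref{prop: composition faible} is genuinely violated when you feed it $\hat{\Phi}\circ\hat{\Phi}^{-1}$, and the justification you give for invoking that proposition is wrong. (To be fair, the paper's own two-line proof cites the same two propositions and leaves this point implicit.)

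The gap is repairable without changing the strategy. The constant terms $g_{1},g_{2}$ are the centre-manifold series, which are genuinely $1$-summable in every direction $\theta\neq\arg\left(\pm\lambda\right)$ by Ramis--Sibuya (this is the first lemma of the paper's Section 3.2), with $1$-sums $g_{i,\pm}$ on $S_{\pm}$. Factoring $\Phi_{\pm}^{-1}$ through the sectorial translation $\left(x,\mathbf{z}\right)\mapsto\left(x,z_{1}+g_{1,\pm}\left(x\right),z_{2}+g_{2,\pm}\left(x\right)\right)$ leaves inner maps whose constant term vanishes identically, to which Proposition \ref{prop: composition faible} does apply, while the translations themselves are handled by the strong composition result (Proposition \ref{prop: compositon summable}). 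Alternatively, one can observe that the proof of Proposition \ref{prop: composition faible} extends verbatim to inner maps whose constant terms $\hat{F}_{\mathbf{0}}^{\left(k\right)}$ are $1$-summable and vanish at $x=0$, since the coefficient of each monomial $\mathbf{z^{j}}$ in the composition is then still controlled in the relevant Banach algebra. Either way, something must be said here; as written, the claimed verification does not exist. The remaining ingredients of your argument (the isotropy computation on the overlap, the inversion of the weak expansion, the shrinking of the polydisc to make the composition well defined) are fine.
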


\begin{proof}
The fact that the sectorial normalizing maps $\Phi_{+},\Phi_{-}$
given by Corollary \ref{cor: existence normalisations sectorielles}
 both conjugate ${\displaystyle Y\in{\displaystyle \cro{\ynorm}}}$
to $\ynorm$ in the corresponding sectorial domains proves that the
arrow above is well-defined, with values in $\isotsect Y{\arg\left(\lambda\right)}{\eta}\times\isotsect Y{\arg\left(-\lambda\right)}{\eta}$,
for all $\eta\in\left]0,\pi\right[$. The fact that $\Phi_{\pm\lambda}$
admits the identity as weak Gevrey-1 asymptotic expansion in $S_{\pm\lambda}\times\left(\ww C^{2},0\right)$
comes from Proposition \ref{prop: Weak sectorial normalizations}
($\Phi_{+}$ and $\Phi_{-}$ admits the same weak Gevrey-1 asymptotic
expansion in $S_{\lambda}\times\left(\ww C^{2},0\right)$ and $S_{-\lambda}\times\left(\ww C^{2},0\right)$)
and from Proposition \ref{prop: composition faible}.
\end{proof}
The subgroup $\fdiff[\ww C^{3},0,\tx{Id}]\subset\fdiff$ formed by
fibered diffeomorphisms tangent to the identity acts naturally on
${\displaystyle {\displaystyle {\displaystyle \cro{\ynorm}}}}$ by
conjugacy. Now we show that the uniqueness of germs of sectorial normalizing
maps $\left(\Phi_{+},\Phi_{-}\right)$ implies that the Stokes diffeomorphisms
$\left(\Phi_{\lambda},\Phi_{-\lambda}\right)$ of a vector field ${\displaystyle Y\in{\displaystyle {\displaystyle \cro{\ynorm}}}}$
is invariant under the action of $\fdiff[\ww C^{3},0,\tx{Id}]$. Furthermore,
this map is one-to-one.
\begin{prop}
\label{prop: espace de module injection}The map 
\begin{eqnarray*}
{\displaystyle {\displaystyle {\displaystyle \cro{\ynorm}}}} & \longrightarrow & \Lambda_{\lambda}^{\left(\mbox{\ensuremath{\tx{weak}}}\right)}\left(\ynorm\right)\times\Lambda_{-\lambda}^{\left(\mbox{\ensuremath{\tx{weak}}}\right)}\left(\ynorm\right)\\
Y & \longmapsto & \left(\Phi_{\lambda},\Phi_{-\lambda}\right)
\end{eqnarray*}
 factorizes through a one-to-one map
\begin{eqnarray*}
\quotient{{\displaystyle {\displaystyle \cro{\ynorm}}}}{{\fdiff[\ww C^{3},0,\tx{Id}]}} & \longrightarrow & \Lambda_{\lambda}^{\left(\mbox{\ensuremath{\tx{weak}}}\right)}\left(\ynorm\right)\times\Lambda_{-\lambda}^{\left(\mbox{\ensuremath{\tx{weak}}}\right)}\left(\ynorm\right)\\
Y & \longmapsto & \left(\Phi_{\lambda},\Phi_{-\lambda}\right)\,\,.
\end{eqnarray*}
\end{prop}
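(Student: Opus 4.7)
The plan is to prove both well-definedness on the quotient and injectivity, each of which follows essentially from the uniqueness of sectorial normalizing maps (Proposition \ref{prop: unique normalizations}) combined with a Riemann-type extension argument across $\{x=0\}$.

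\textbf{Well-definedness on the quotient.} Let $Y_{1},Y_{2}\in\cro{\ynorm}$ with $Y_{2}=\Psi_{*}(Y_{1})$ for some $\Psi\in\fdiff[\ww C^{3},0,\tx{Id}]$, and let $\Phi_{1,\pm},\Phi_{2,\pm}$ denote their respective sectorial normalizing maps. The composition $\Phi_{1,\pm}\circ\Psi^{-1}$ is a germ of sectorial fibered diffeomorphism tangent to the identity on $S_{\pm}\times(\ww C^{2},0)$ conjugating $Y_{2}$ to $\ynorm$, so by the uniqueness clause of Proposition \ref{prop: unique normalizations} one has $\Phi_{2,\pm}=\Phi_{1,\pm}\circ\Psi^{-1}$. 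Inserting these equalities into the definition of the Stokes diffeomorphisms, the $\Psi$'s cancel telescopically and one obtains $\Phi_{2,\pm\lambda}=\Phi_{1,\pm\lambda}$. Hence the map descends to the quotient.

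\textbf{Injectivity.} Conversely, suppose $Y_{1},Y_{2}\in\cro{\ynorm}$ satisfy $\Phi_{1,\lambda}=\Phi_{2,\lambda}$ and $\Phi_{1,-\lambda}=\Phi_{2,-\lambda}$. Define on $S_{\pm}\times(\ww C^{2},0)$ the sectorial fibered diffeomorphisms
\[
\Psi_{\pm}:=\Phi_{2,\pm}^{-1}\circ\Phi_{1,\pm}\,\,.
\]
Writing the Stokes relation $\Phi_{1,+}\circ\Phi_{1,-}^{-1}=\Phi_{2,+}\circ\Phi_{2,-}^{-1}$ on $S_{\lambda}$ gives $\Psi_{+}=\Psi_{-}$ on $S_{\lambda}\times(\ww C^{2},0)$, and likewise on $S_{-\lambda}\times(\ww C^{2},0)$. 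Therefore $\Psi_{+}$ and $\Psi_{-}$ glue into a single holomorphic fibered map $\Psi$ defined on $(S_{+}\cup S_{-})\times(\ww C^{2},0)$. Since $S_{+}$ and $S_{-}$ each have opening strictly greater than $\pi$ and are bisected by opposite directions, their union is a full punctured neighborhood of $\{x=0\}$ in some polydisc.

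\textbf{Extension across $\{x=0\}$.} Because all four $\Phi_{i,\pm}$ are (asymptotically) tangent to the identity, $\Psi(x,\mathbf{y})=(x,\mathbf{y})+\tx O(\norm{(x,\mathbf{y})}^{2})$ as $(x,\mathbf{y})\to 0$ in $S_{\pm}\times(\ww C^{2},0)$, hence $\Psi$ is bounded in a neighborhood of $\{x=0\}\cap\{\mathbf{y}\in\mathbf{D}(\mathbf{0},\mathbf{r})\}$. Riemann's theorem on removable singularities (applied slicewise in $\mathbf{y}$, or Hartogs' theorem directly) then yields an extension of $\Psi$ to a holomorphic fibered germ at the origin, tangent to the identity; in particular $\Psi\in\fdiff[\ww C^{3},0,\tx{Id}]$. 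Finally, on $S_{+}\times(\ww C^{2},0)$ we compute
\[
\Psi_{*}(Y_{1})=(\Phi_{2,+}^{-1})_{*}\circ(\Phi_{1,+})_{*}(Y_{1})=(\Phi_{2,+}^{-1})_{*}(\ynorm)=Y_{2}\,\,,
\]
so by analytic continuation $\Psi_{*}(Y_{1})=Y_{2}$ on a full neighborhood of the origin. Thus $Y_{1}$ and $Y_{2}$ lie in the same $\fdiff[\ww C^{3},0,\tx{Id}]$-orbit, which establishes injectivity.

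The only nontrivial ingredient beyond bookkeeping is the removable singularity step; everything else is an immediate consequence of Proposition \ref{prop: unique normalizations} and of the fact that the two wide sectors $S_{+},S_{-}$ cover a punctured neighborhood of $\{x=0\}$.
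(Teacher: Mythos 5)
Your proof is correct and follows essentially the same route as the paper's: well-definedness via the uniqueness of the sectorial normalizing maps (which forces $\Phi_{2,\pm}=\Phi_{1,\pm}\circ\Psi^{-1}$ and hence telescopic cancellation in the Stokes diffeomorphisms), and injectivity by gluing $\Phi_{2,\pm}^{-1}\circ\Phi_{1,\pm}$ over the two wide sectors and removing the singularity along $\acc{x=0}$ with Riemann's theorem. The extra remarks on boundedness and tangency to the identity of the glued map are consistent with the paper's argument.
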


\begin{rem}
This very result means that the Stokes diffeomorphisms encode completely
the class of $Y$ in the quotient $\quotient{\left[\ynorm\right]}{{\fdiff[\ww C^{3},0,\tx{Id}]}}$
as they separate conjugacy classes.
\end{rem}

\begin{proof}
First of all, let us prove that the latter map is well-defined. Let
${\displaystyle Y,\tilde{Y}\in{\displaystyle {\displaystyle \cro{\ynorm}}}}$
and $\Theta\in\fdiff[\ww C^{3},0,\tx{Id}]$ be such that $\Theta_{*}\left(Y\right)=\tilde{Y}$.
We denote by $\Phi_{\pm}$ (\emph{resp.} $\tilde{\Phi}{}_{\pm}$)
the sectorial normalizing maps of $Y$ (\emph{resp. $\tilde{Y}$}),
and $\left(\Phi_{\lambda},\Phi_{-\lambda}\right)$ $\Big($\emph{resp.}
$\left(\tilde{\Phi}_{\lambda},\tilde{\Phi}_{-\lambda}\right)$$\Big)$
the Stokes diffeomorphisms of $Y$ (\emph{resp.} $\tilde{Y}$). By
assumption, $\tilde{\Phi}_{\pm}\circ\Theta$ is also a germ of a sectorial
fibered normalization of $Y$ in $S_{\pm}\times\left(\ww C^{2},0\right)$,
which is tangent to the identity. Thus, according to the uniqueness
statement in Theorem \ref{Th: Th drsn}: 
\[
\Phi_{\pm}=\tilde{\Phi}_{\pm}\circ\Theta\,\,.
\]
Consequently, in $S_{\pm\lambda}\times\left(\ww C^{2},0\right)$ we
have 
\begin{eqnarray*}
\Phi_{\lambda} & = & \left(\Phi_{+}\circ\Phi_{-}^{-1}\right)_{\mid S_{\lambda}\times\left(\ww C^{2},0\right)}\\
 & = & \tilde{\Phi}_{+}\circ\Theta\circ\Theta^{-1}\circ\tilde{\Phi}_{-}\\
 & = & \tilde{\Phi}_{\lambda}\,\,,
\end{eqnarray*}
and similarly
\begin{eqnarray*}
\Phi_{-\lambda} & = & \left(\Phi_{-}\circ\Phi_{+}^{-1}\right)_{\mid S_{-\lambda}\times\left(\ww C^{2},0\right)}\\
 & = & \tilde{\Phi}_{-}\circ\Theta\circ\Theta^{-1}\circ\left(\tilde{\Phi}_{+}\right)^{-1}\\
 & = & \tilde{\Phi}_{-\lambda}\,\,.
\end{eqnarray*}

Let us prove that the map is one-to-one. Let $Y,\tilde{Y}\in\cro{\ynorm}$
share the same Stokes diffeomorphisms $\left(\Phi_{\lambda},\Phi_{-\lambda}\right)$.
We denote by $\Phi_{\pm}$ (\emph{resp.} $\tilde{\Phi}{}_{\pm}$)
the germ of a sectorial fibered normalizing map of $Y$ (\emph{resp.
$\tilde{Y}$}) $S_{\pm}\times\left(\ww C^{2},0\right)$. We have:
\[
{\displaystyle \begin{cases}
\Phi_{+}\circ\left(\Phi_{-}\right)^{-1}=\Phi_{\lambda}=\tilde{\Phi}_{+}\circ\left(\tilde{\Phi}_{-}\right)^{-1} & \mbox{ in }S_{\lambda}\times\left(\ww C^{2},0\right)\\
\Phi_{-}\circ\left(\Phi_{+}\right)^{-1}=\Phi_{-\lambda}=\tilde{\Phi}_{-}\circ\left(\tilde{\Phi}_{+}\right)^{-1} & \mbox{ in }S_{-\lambda}\times\left(\ww C^{2},0\right)\,.
\end{cases}}
\]
Thus: 
\[
{\displaystyle \begin{cases}
\left(\tilde{\Phi}_{+}\right)^{-1}\text{\ensuremath{\circ}}\,\Phi_{+}=\left(\tilde{\Phi}_{-}\right)^{-1}\text{\ensuremath{\circ}}\,\Phi_{-} & \mbox{ in }S_{\lambda}\times\left(\ww C^{2},0\right)\\
\left(\tilde{\Phi}_{+}\right)^{-1}\text{\ensuremath{\circ}}\,\Phi_{+}=\left(\tilde{\Phi}_{-}\right)^{-1}\text{\ensuremath{\circ}}\,\Phi_{-} & \mbox{ in }S_{-\lambda}\times\left(\ww C^{2},0\right)\,.
\end{cases}}
\]
We can then define a map $\varphi$ analytic in a domain of the form
$\left(D\left(0,r\right)\backslash\acc 0\right)\times\mathbf{D\left(0,r\right)}$
by setting:
\[
{\displaystyle \begin{cases}
\varphi_{\mid S_{+}}=\left(\tilde{\Phi}_{+}\right)^{-1}\text{\ensuremath{\circ}}\,\Phi_{+} & \mbox{ in }S_{+}\\
\varphi_{\mid S_{-}}=\left(\tilde{\Phi}_{-}\right)^{-1}\text{\ensuremath{\circ}}\,\Phi_{-} & \mbox{ in }S_{-}\,.
\end{cases}}
\]
This map is analytic and bounded in $\left(D\left(0,r\right)\backslash\acc 0\right)\times\mathbf{D\left(0,r\right)}$,
and the Riemann singularity theorem tells us that this map can be
analytically extended to the entire poly-disc $D\left(0,r\right)\times\mathbf{D\left(0,r\right)}$.
As a conclusion, $\varphi\in\fdiff[\ww C^{3},0,\tx{Id}]$, $\Phi_{\pm}=\tilde{\Phi}_{\pm}\circ\varphi$
and $\varphi_{*}\left(Y\right)=\tilde{Y}$.
\end{proof}

\subsubsection{Proof of Theorem \ref{Th: Th drsn}: 1-summability of the formal
normalization}

~

As a first consequence of Proposition \ref{prop: isotropies plates},
we obtain Proposition \ref{prop: sommabilit=0000E9 normalisation formelle},
which states that the formal normalizing map from Theorem \ref{thm: forme normalel formelle}
\cite{bittmann1} is in fact 1-summable.
\begin{prop}
\label{prop: sommabilit=0000E9 normalisation formelle}The unique
formal normalizing map $\hat{\Phi}$ given in \ref{thm: forme normalel formelle}
is the Gevrey-1 asymptotic expansion of the unique germs of sectorial
normalizing maps $\Phi_{+}$ and $\Phi_{-}$ in $S_{+}\times\left(\ww C^{2},0\right)$
and $S_{-}\times\left(\ww C^{2},0\right)$ respectively. In particular,
$\hat{\Phi}$ is 1-summable in every direction $\theta\neq\arg\left(\pm\lambda\right)$,
and $\left(\Phi_{+},\Phi_{-}\right)$ is its Borel-Laplace 1-sum.
\end{prop}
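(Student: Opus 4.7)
The plan is to upgrade the \emph{weak} Gevrey-1 asymptotic expansion of $\Phi_\pm$ established in Proposition \ref{prop: Weak sectorial normalizations} to a genuine Gevrey-1 asymptotic expansion by means of a Martinet--Ramis gluing argument; once this is done, the 1-summability of $\hat{\Phi}$ in every direction $\theta\neq\arg(\pm\lambda)$, together with the identification of $(\Phi_+,\Phi_-)$ as its Borel--Laplace 1-sums, will be exactly the content of Proposition \ref{prop: Borel_Laplace_summation}.

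The first step is to observe that the Stokes diffeomorphisms $\Phi_\lambda=(\Phi_+\circ\Phi_-^{-1})|_{S_\lambda\times(\ww C^2,0)}$ and $\Phi_{-\lambda}=(\Phi_-\circ\Phi_+^{-1})|_{S_{-\lambda}\times(\ww C^2,0)}$ admit the identity as \emph{weak} Gevrey-1 asymptotic expansion: this follows from Proposition \ref{prop: Weak sectorial normalizations} (each $\Phi_\pm$ admits the same $\hat{\Phi}$ as weak Gevrey-1 expansion) together with Proposition \ref{prop: composition faible}. Proposition \ref{prop: isotropies plates} would then promote both $\Phi_\lambda$ and $\Phi_{-\lambda}$ to genuine elements of $\Lambda_{\pm\lambda}(\ynorm)$, and hence also $\Phi_{-\lambda}^{-1}$, since $\Lambda_{-\lambda}(\ynorm)$ is a group.

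Next, I would apply Theorem \ref{th: martinet ramis} to the pair $(\varphi,\varphi'):=(\Phi_\lambda,\Phi_{-\lambda}^{-1})$ with $\theta=\arg(\lambda)$. The output is a pair $(\phi_+,\phi_-)$ of germs of sectorial fibered diffeomorphisms defined on asymptotic sectors $\widetilde{S}_\pm\in\cal{AS}_{\arg(\pm i\lambda),2\pi}$, tangent to the identity, sharing a common formal $\hat{\phi}\in\fdiff[\ww C^3,0,\tx{Id}]$ as genuine Gevrey-1 asymptotic expansion, and satisfying
\[
\phi_+\circ\phi_-^{-1}\,=\,\Phi_+\circ\Phi_-^{-1}\qquad\text{on both }S_\lambda\times(\ww C^2,0)\text{ and }S_{-\lambda}\times(\ww C^2,0),
\]
the second identity holding because by definition $\Phi_{-\lambda}^{-1}=\Phi_+\circ\Phi_-^{-1}$ on $S_{-\lambda}$.

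The key gluing then consists in setting $\Theta:=\Phi_+^{-1}\circ\phi_+$ on $\widetilde{S}_+\times(\ww C^2,0)$ and $\Theta:=\Phi_-^{-1}\circ\phi_-$ on $\widetilde{S}_-\times(\ww C^2,0)$. The identities above force the two definitions to coincide on both overlapping components $S_\lambda\times(\ww C^2,0)$ and $S_{-\lambda}\times(\ww C^2,0)$, so $\Theta$ is a single bounded holomorphic fibered map on $(\widetilde{S}_+\cup\widetilde{S}_-)\times(\ww C^2,0)$, a domain covering a full punctured neighborhood of $\{x=0\}$ for suitable representatives of the asymptotic sectors. Riemann's theorem on removable singularities then yields $\Theta\in\fdiff[\ww C^3,0,\tx{Id}]$. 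Writing $\Phi_+=\phi_+\circ\Theta^{-1}$ with $\Theta$ analytic at the origin, I will obtain that $\Phi_+$ admits $\hat{\phi}\circ\hat{\Theta}^{-1}$ as \emph{true} Gevrey-1 asymptotic expansion, since composition with an analytic germ preserves Gevrey-1 asymptotics; being a formal fibered conjugation of $Y$ to $\ynorm$ tangent to the identity, this series must coincide with $\hat{\Phi}$ by the uniqueness clause of Theorem \ref{thm: forme normalel formelle}. The same reasoning applies to $\Phi_-$, and Proposition \ref{prop: Borel_Laplace_summation} concludes. The main obstacle in this plan is of course encapsulated in Proposition \ref{prop: isotropies plates}, whose proof will demand a delicate analysis of narrow-sector isotropies in the space of leaves; once that result is granted, the promotion to summability of $\hat{\Phi}$ is routine Martinet--Ramis-style gluing.
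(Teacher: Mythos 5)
Your proposal follows essentially the same route as the paper's own proof: upgrade the Stokes diffeomorphisms to genuine elements of $\Lambda_{\pm\lambda}\left(\ynorm\right)$ via Proposition \ref{prop: isotropies plates}, apply Theorem \ref{th: martinet ramis} to the pair $\left(\Phi_{\lambda},\Phi_{-\lambda}^{-1}\right)$, glue $\Phi_{\pm}^{-1}\circ\phi_{\pm}$ into a single analytic germ by Riemann's removable singularity theorem, and identify the resulting Gevrey-1 asymptotic expansion with $\hat{\Phi}$ by the uniqueness clause of Theorem \ref{thm: forme normalel formelle}. The argument is correct as it stands.
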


\begin{proof}
Let us consider the unique germs of a sectorial normalizing map $\Phi_{+}$
and $\Phi_{-}$ in $S_{+}\times\left(\ww C^{2},0\right)$ and $S_{-}\times\left(\ww C^{2},0\right)$
respectively, and their associated Stokes diffeomorphisms:
\[
\begin{cases}
\Phi_{\lambda}=\left(\Phi_{+}\circ\Phi_{-}^{-1}\right)_{\mid S_{\lambda}\times\left(\ww C^{2},0\right)}\in\Lambda_{\lambda}^{\left(\tx{weak}\right)}\left(\ynorm\right)\\
\Phi_{-\lambda}=\left(\Phi_{-}\circ\Phi_{+}^{-1}\right)_{\mid S_{-\lambda}\times\left(\ww C^{2},0\right)}\in\Lambda_{-\lambda}^{\left(\tx{weak}\right)}\left(\ynorm\right) & \,\,\,.
\end{cases}
\]
According to Proposition \ref{prop: isotropies plates}, 
\[
\Lambda_{\pm\lambda}^{\left(\tx{weak}\right)}\left(\ynorm\right)=\Lambda_{\pm\lambda}\left(\ynorm\right)\,\,,
\]
so that $\Phi_{\lambda}$ and $\Phi_{-\lambda}$ both admit the identity
as Gevrey-1 asymptotic expansion, in $S_{\lambda}\times\left(\ww C^{2},0\right)$
and $S_{-\lambda}\times\left(\ww C^{2},0\right)$ respectively. Then,
Theorem \ref{th: martinet ramis} gives the existence of 
\[
\left(\phi_{+},\phi_{-}\right)\in\diffsect[\arg\left(i\lambda\right)][\eta]\times\diffsect[\arg\left(-i\lambda\right)][\eta]
\]
 for all $\eta\in\left]\pi,2\pi\right[$, such that:{\footnotesize{}
\[
\begin{cases}
\phi_{+}\circ\left(\phi_{-}\right)_{\mid S_{\lambda}\times\left(\ww C^{2},0\right)}^{-1}=\Phi_{\lambda}\\
\phi_{-}\circ\left(\phi_{+}\right)_{\mid S_{-\lambda}\times\left(\ww C^{2},0\right)}^{-1}=\Phi_{-\lambda} & \,,
\end{cases}
\]
}and the existence of a formal diffeomorphism $\hat{\phi}$ which
is tangent to the identity, such that $\phi_{+}$ and $\phi_{-}$
both admit $\hat{\phi}$ as Gevrey-1 asymptotic expansion in $S_{+}\times\left(\ww C^{2},0\right)$
and $S_{-}\times\left(\ww C^{2},0\right)$ respectively. In particular,
we have:
\[
\left(\left(\Phi_{+}\right)^{-1}\circ\phi_{+}\right)_{\substack{\mid\left(S_{\lambda}\cup S_{-\lambda}\right)\times\left(\ww C^{2},0\right)}
}=\left(\left(\Phi_{-}\right)^{-1}\circ\phi_{-}\right)_{\substack{\mid\left(S_{\lambda}\cup S_{-\lambda}\right)\times\left(\ww C^{2},0\right)}
}\,\,.
\]
This proves that the function $\Phi$ defined by $\left(\Phi_{+}\right)^{-1}\circ\phi_{+}$
in $S_{+}\times\left(\ww C^{2},0\right)$ and by $\left(\Phi_{-}\right)^{-1}\circ\phi_{-}$
in $S_{-}\times\left(\ww C^{2},0\right)$ is well-defined and analytic
in $D\left(0,r\right)\backslash\acc 0\times\mathbf{D\left(0,r\right)}$.
Since it is also bounded, it can be extended to an analytic map $\Phi$
in $D\left(0,r\right)\times\mathbf{D\left(0,r\right)}$ by Riemann's
theorem. Hence: 
\[
\begin{cases}
\phi_{+}=\Phi_{+}\circ\Phi\\
\phi_{-}=\Phi_{-}\circ\Phi & \,\,.
\end{cases}
\]
In particular, by composition, $\Phi_{+}$ and $\Phi_{-}$ both admit
$\hat{\phi}\circ\Phi^{-1}$ as Gevrey-1 asymptotic expansion in $S_{\lambda}\times\left(\ww C^{2},0\right)$
and $S_{-\lambda}\times\left(\ww C^{2},0\right)$ respectively. Since
$\Phi_{+}$ and $\Phi_{-}$ conjugates $Y$ to $\ynorm$ and since
the notion of asymptotic expansion commutes with the partial derivative
operators, the formal diffeomorphism $\hat{\phi}\circ\Phi^{-1}$ formally
conjugates $Y$ to $\ynorm$. Finally, notice that $\hat{\phi}\circ\Phi^{-1}$
is necessarily tangent to the identity. Hence, by uniqueness of the
formal normalizing map given by Theorem \ref{thm: forme normalel formelle},
we deduce that $\hat{\phi}\circ\Phi^{-1}=\hat{\Phi}$, the unique
formal normalizing map tangent to the identity.
\end{proof}
We are now ready to prove Theorem \ref{Th: Th drsn}.
\begin{proof}[Proof of Theorem \ref{Th: Th drsn}.]

It is a straightforward consequence of Proposition \ref{prop: sommabilit=0000E9 normalisation formelle}
above.
\end{proof}

\subsubsection{Proof of Theorem \ref{thm: espace de module}}

~
\begin{proof}[Proof of Theorem \ref{thm: espace de module}.]
\emph{}
\end{proof}
Propositions \ref{prop: espace de module injection}, together with
Proposition \ref{prop: isotropies plates}, tell us that the considered
map is well-defined and one-to-one. It remains to prove that this
map is onto. Let 
\[
\begin{cases}
\Phi_{\lambda}\in\Lambda_{\lambda}\left(\ynorm\right)\\
\Phi_{-\lambda}\in\Lambda_{-\lambda}\left(\ynorm\right) & \,\,\,.
\end{cases}
\]
According to Theorem \ref{th: martinet ramis}, there exists 
\[
\left(\phi_{+},\phi_{-}\right)\in\diffsect[\arg\left(i\lambda\right)][\eta]\times\diffsect[\arg\left(-i\lambda\right)][\eta]
\]
with $\eta\in\left]\pi,2\pi\right[$, which extend analytically to
$S_{+}\times\left(\ww C^{2},0\right)$ and $S_{-}\times\left(\ww C^{2},0\right)$
respectively, such that:
\[
\phi_{\pm}\circ\left(\phi_{\mp}\right)_{\mid S_{\pm\lambda}\times\left(\ww C^{2},0\right)}^{-1}=\Phi_{\pm\lambda}
\]
and there also exists a formal diffeomorphism $\hat{\phi}$ which
is tangent to the identity, such that $\phi_{\pm}$ both admit $\hat{\phi}$
as asymptotic expansion in $S_{\pm}\times\left(\ww C^{2},0\right)$.
Let us consider the two germs of sectorial vector fields obtained
as
\[
Y_{\pm}:=\left(\phi_{\pm}^{-1}\right)_{*}\left(\ynorm\right)
\]
In particular, since $\hat{\phi}$ is the Gevrey-1 asymptotic expansion
of $\phi_{\pm}$, the vector fields $Y_{\pm}$ both admit $\left(\hat{\phi}\right)_{*}\left(\ynorm\right)$
as Gevrey-1 asymptotic expansion. The fact that $\phi_{+}\circ\left(\phi_{-}\right)^{-1}$
is an isotropy of $\ynorm$ implies immediately that $Y_{+}=Y_{-}$
on 
\begin{eqnarray*}
\left(S_{+}\cap S_{-}\right)\times\left(\ww C^{2},0\right) & = & \left(S_{\lambda}\cup S_{-\lambda}\right)\times\left(\ww C^{2},0\right)\,\,.
\end{eqnarray*}
Then, the vector field $Y$, which coincides with $Y_{\pm}$ in $S_{\pm}\times\left(\ww C^{2},0\right)$,
defines a germ of analytic vector field in $\left(\ww C^{3},0\right)$
by Riemann's theorem. By construction, ${\displaystyle Y\in\fdiff[\ww C^{3},0,\tx{Id}]_{*}\left(\ynorm\right)}$
and admits $\left(\Phi_{\lambda},\Phi_{-\lambda}\right)$ as Stokes
diffeomorphisms.

\subsubsection{Proof of Theorem \ref{th: espace de module symplectic}}

~

In a similar way, we prove now Theorem \ref{th: espace de module symplectic}.
\begin{proof}[Proof of Theorem \ref{th: espace de module symplectic}.]
\emph{}

Let $\ynorm\in\snodiag$ be a normal form which is also transversally
symplectic. We refer to subsection \ref{subsec: transversally symplectic}
for the notations. It is clear from Theorems \ref{thm: espace de module}
and \ref{thm: Th ham} that the mapping is well-defined and one-to-one.
It remains to prove that it is also onto. Let 
\[
\begin{cases}
\Phi_{\lambda}\in\Lambda_{\lambda}^{\omega}\left(\ynorm\right)\\
\Phi_{-\lambda}\in\Lambda_{-\lambda}^{\omega}\left(\ynorm\right) & \,\,\,.
\end{cases}
\]
Since $\Lambda_{\lambda}^{\omega}\left(\ynorm\right)\subset\Lambda_{\lambda}\left(\ynorm\right)$
and $\Lambda_{-\lambda}^{\omega}\left(\ynorm\right)\subset\Lambda_{-\lambda}\left(\ynorm\right)$,
according to Theorem \ref{th: martinet ramis} there exists 
\[
\left(\phi_{+},\phi_{-}\right)\in\diffsect[\arg\left(i\lambda\right)][\eta]\times\diffsect[\arg\left(-i\lambda\right)][\eta]
\]
with $\eta\in\left]\pi,2\pi\right[$, which extend analytically in
$S_{+}\times\left(\ww C^{2},0\right)$ and $S_{-}\times\left(\ww C^{2},0\right)$
respectively, such that:
\[
\phi_{\pm}\circ\left(\phi_{\mp}\right)_{\mid S_{\pm\lambda}\times\left(\ww C^{2},0\right)}^{-1}=\Phi_{\pm\lambda}
\]
and there also exists a formal diffeomorphism $\hat{\phi}$ which
is tangent to the identity, such that $\phi_{\pm}$ both admit $\hat{\phi}$
as Gevrey-1 asymptotic expansion in $S_{\pm}\times\left(\ww C^{2},0\right)$.
According to Corollary \ref{cor: MR symplectic}, there exists a germ
of an analytic fibered diffeomorphism $\psi\in\fdiff[\ww C^{3},0,\tx{Id}]$
(tangent to the identity), such that
\[
\sigma_{\pm}:=\phi_{\pm}\circ\psi
\]
both are transversally symplectic. Then, we have: 
\[
\sigma_{\pm}\circ\left(\Psi_{\mp}\right)_{\mid S_{\pm\lambda}\times\left(\ww C^{2},0\right)}^{-1}=\Phi_{\pm\lambda}~.
\]
 The end of the proof goes exactly as at the end of the proof of the
previous theorem. 
\end{proof}

\subsection{\label{subsec:Sectorial-isotropies-in}Sectorial isotropies in narrow
sectors and space of leaves: proof of Proposition \ref{prop: isotropies plates}.}

~

A normal form 
\[
\ynorm=x^{2}\pp x+\left(-\lambda+a_{1}x-c\left(y_{1}y_{2}\right)\right)y_{1}\pp{y_{1}}+\left(\lambda+a_{2}x+c\left(y_{1}y_{2}\right)\right)y_{2}\pp{y_{2}}
\]
is fixed for some $\lambda\in\ww C^{*},$ $\Re\left(a_{1}+a_{2}\right)>0$
and $c\in v\germ v$ (vanishing at the origin). The aim of this subsection
is to prove Proposition \ref{prop: isotropies plates} stated at the
beginning of this section.

Let us denote $a:=\res{\ynorm}=a_{1}+a_{2}$, $m:={\displaystyle \frac{1}{a}}$
and 
\[
c\left(v\right)=\sum_{k=1}^{+\infty}c_{k}v^{k}\,\,.
\]
If $m\notin\ww N_{>0}$, we set $c_{m}:=0$. We also define the following
power series 
\begin{eqnarray*}
\tilde{c}\left(v\right) & = & m\sum_{k\neq m}\frac{c_{k}}{k-m}v^{k}\,\,,
\end{eqnarray*}
and we notice that $\tilde{c}\left(v\right)\in v\germ v$.

\subsubsection{Sectorial first integrals and the space of leaves}

~

In a sectorial neighborhood of the origin of the form $S_{\lambda}\times\left(\ww C^{2},0\right)$
$\left(resp.\,S_{-\lambda}\times\left(\ww C^{2},0\right)\right)$,with
$S_{\pm\lambda}\in\germsect{\arg\left(\pm\lambda\right)}{\epsilon}$
and $\epsilon\in\left]0,\pi\right[$, we can give three first integrals
of $\ynorm$ which are analytic in the considered domain. Let us start
with the following proposition.
\begin{prop}
The following quantities are first integrals of $\ynorm$, analytic
in $S_{\pm\lambda}\times\left(\ww C^{2},0\right)$:

\begin{equation}
\begin{cases}
{\displaystyle w_{\pm\lambda}:=\frac{y_{1}y_{2}}{x^{a}}}\\
{\displaystyle h_{1,\pm\lambda}\left(x,\mathbf{y}\right):=y_{1}\exp\left(\frac{-\lambda}{x}+\frac{c_{m}\left(y_{1}y_{2}\right)^{m}\log\left(x\right)}{x}+\frac{\tilde{c}\left(y_{1}y_{2}\right)}{x}\right)x^{-a_{1}}}\\
{\displaystyle h_{2,\pm\lambda}\left(x,\mathbf{y}\right):=y_{2}\exp\left(\frac{\lambda}{x}-\frac{c_{m}\left(y_{1}y_{2}\right)^{m}\log\left(x\right)}{x}-\frac{\tilde{c}\left(y_{1}y_{2}\right)}{x}\right)x^{-a_{2}}}
\end{cases}\label{eq: integrales premieres}
\end{equation}
(we fix here a branch of the logarithm analytic in $S_{\pm\lambda}$,
and we write simply $h_{j}$ and $w$ instead of $h_{j,\pm\lambda}$
and $w_{\pm\lambda}$ respectively, if there is no ambiguity on the
sector $S_{\pm\lambda}$).

Moreover, we have the relation:
\[
h_{1}h_{2}=w\,\,.
\]
\end{prop}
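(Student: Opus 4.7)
The plan is to verify by direct computation that each of $w$, $h_1$, $h_2$ satisfies $\cal L_{\ynorm}(\cdot)=0$ on $S_{\pm\lambda}\times(\ww C^2,0)$, and then observe the multiplicative relation. Analyticity in the claimed domain is automatic once one fixes a determination of $\log$ on the sector $S_{\pm\lambda}$, which is simply connected since its opening is less than $2\pi$; the factors $x^{-a_j}$ and the fractional power $x^a$ are then well-defined analytic functions there.

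First I would dispatch $w = y_1y_2/x^{a}$. From the explicit form of $\ynorm$ one gets $\cal L_{\ynorm}(y_1 y_2) = (a_1+a_2)\,x\,y_1y_2 = ax\,v$, where $v:=y_1y_2$, because the resonant term $c(v)$ appears with opposite signs in the two components. Combined with $\cal L_{\ynorm}(x^{-a}) = -a\,x^{-a+1}$, the Leibniz rule gives $\cal L_{\ynorm}(w)=0$. The relation $h_1 h_2 = w$ is then immediate: the exponential factor in $h_2$ is the reciprocal of that in $h_1$, and $x^{-a_1}\cdot x^{-a_2}=x^{-a}$.

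The heart of the proof is the verification for $h_1$ (the case of $h_2$ is symmetric). Writing $h_1 = y_1 x^{-a_1}\exp(\phi)$ with $\phi(x,\mathbf{y}) := -\lambda/x + c_m v^m \log(x)/x + \tilde c(v)/x$, the vanishing of $\cal L_{\ynorm}(h_1)$ reduces, after dividing through by $h_1$, to the identity
\[
\cal L_{\ynorm}(\phi) \;=\; \lambda + c(v).
\]
Since $\phi$ depends on $\mathbf{y}$ only through $v$, and since $\cal L_{\ynorm}(v) = a\,x\,v$, the $\mathbf{y}$-derivative part contributes $a x v\,\partial_v \phi = c_m v^m \log(x) + a v\,\tilde c'(v)$ (using $am=1$). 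The $\partial_x$-part contributes $\lambda + c_m v^m - c_m v^m\log(x) - \tilde c(v)$. The logarithmic terms cancel, as do the polar parts in $1/x$ and the free $v^m$ terms, and what remains is precisely $\lambda + c(v)$ provided
\[
a v\,\tilde c'(v) - \tilde c(v) \;=\; c(v) - c_m v^m.
\]
This last identity is exactly the content of the definition $\tilde c(v) = m\sum_{k\neq m}\frac{c_k}{k-m}v^{k}$: a direct term-by-term check, using $am=1$ and $\frac{k}{k-m}=1+\frac{m}{k-m}$, confirms it (the convention $c_m=0$ when $m\notin\ww N_{>0}$ makes the sum unambiguous in all cases).

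The only subtlety — and the step that explains why the formula for $\tilde c$ takes the form it does — is the homological equation above; everything else is routine Leibniz-rule bookkeeping. Once this is established, $h_2$ follows by the symmetry $(\lambda,a_1,c)\leftrightarrow(-\lambda,a_2,-c)$ on the exponent, and the multiplicative relation $h_1h_2=w$ requires no further computation.
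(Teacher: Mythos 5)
Your verification is correct and is precisely the "elementary computation" the paper leaves to the reader: the reduction to $\mathcal{L}_{Y_{\mathrm{norm}}}(\phi)=\lambda+c(v)$ and the identity $av\,\tilde c'(v)-\tilde c(v)=c(v)-c_m v^m$ (using $am=1$) both check out. Nothing further is needed.
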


\begin{proof}
It is an elementary computation.
\end{proof}
\begin{rem}
In other words, in a sectorial domain, we can parametrize a leaf (which
is not in $\acc{x=0}$) of the foliation associated to $\ynorm$ by:
\begin{eqnarray}
 & \begin{cases}
{\displaystyle y_{1}\left(x\right)=h_{1}\exp\left(\frac{\lambda}{x}-c_{m}\left(h_{1}h_{2}\right)^{m}\log\left(x\right)-\frac{\tilde{c}\left(h_{1}h_{2}x^{a}\right)}{x}\right)x^{a_{1}}}\\
{\displaystyle y_{2}\left(x\right)=h_{2}\exp\left(-\frac{\lambda}{x}+c_{m}\left(h_{1}h_{2}\right)^{m}\log\left(x\right)+\frac{\tilde{c}\left(h_{1}h_{2}x^{a}\right)}{x}\right)x^{a_{2}}}
\end{cases}\label{eq: solutions parametrees}\\
 & \left(h_{1},h_{2}\right)\in\ww C^{2}\,.\nonumber 
\end{eqnarray}
\end{rem}

\begin{cor}
\label{cor: coordonn=0000E9es espace des feuiles}The map 
\begin{eqnarray*}
{\cal H_{\pm\lambda}}:S_{\pm\lambda}\times\left(\ww C^{2},0\right) & \rightarrow & S_{\pm\lambda}\times\ww C^{2}\\
\left(x,\mathbf{y}\right) & \mapsto & \left(x,h_{1,\pm\lambda}\left(x,\mathbf{y}\right),h_{2,\pm\lambda}\left(x,\mathbf{y}\right)\right)\,\,,
\end{eqnarray*}
(where $h_{1,\pm\lambda},h_{2,\pm\lambda}$ are defined in (\ref{eq: integrales premieres}))
is a sectorial germ of a fibered analytic map in $S_{\pm\lambda}\times\left(\ww C^{2},0\right)$,
which is into. Moreover, there exists an open neighborhood of the
origin in $\ww C^{2}$, denoted by $\ls{\pm}\subset\ww C^{2}$, such
that 
\[
{\cal H_{\pm\lambda}}\left(S_{\pm\lambda}\times\left(\ww C^{2},0\right)\right)=S_{\pm\lambda}\times\ls{\pm}\,\,.
\]
In particular, ${\cal H_{\pm}}$ induces a fibered biholomorphism
\[
S_{\pm\lambda}\times\left(\ww C^{2},0\right)\overset{{\cal H_{\pm\lambda}}}{\longrightarrow}S_{\pm\lambda}\times\ls{\pm}
\]
which conjugates $\ynorm$ to $x^{2}\pp x$, \emph{i.e.}
\[
\left({\cal H_{\pm\lambda}}\right)_{*}\left(\ynorm\right)=x^{2}\pp x\,\,.
\]
\end{cor}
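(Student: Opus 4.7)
My plan is to prove the corollary in three steps, handling both signs simultaneously.

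First, holomorphicity of $\mathcal{H}_{\pm\lambda}$ on a sectorial neighborhood $S_{\pm\lambda} \times \mathbf{D}(\mathbf{0},\mathbf{r})$ follows from the explicit formulas (\ref{eq: integrales premieres}): once a branch of $\log$ analytic on $S_{\pm\lambda}$ is chosen, each of the ingredients $\exp(\mp\lambda/x)$, $x^{-a_j}$, $c_m(y_1y_2)^m\log(x)/x$, and $\tilde c(y_1y_2)/x$ is holomorphic on the chosen domain (the last thanks to $\tilde c \in v\mathbb{C}\{v\}$ being of order at least one, so division by $x$ is absorbed by the vanishing of $\tilde c$ when $\mathbf{r}$ is small enough). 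The fibered property is immediate.

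Second, a direct multiplication of the two first integrals yields
\[
h_{1,\pm\lambda}(x,\mathbf{y}) \cdot h_{2,\pm\lambda}(x,\mathbf{y}) = y_1 y_2 \cdot x^{-a},
\]
so an equality $\mathcal{H}_{\pm\lambda}(x,\mathbf{y}) = \mathcal{H}_{\pm\lambda}(x,\mathbf{y}')$ forces $y_1 y_2 = y'_1 y'_2$. Since the exponential prefactors in $h_{j,\pm\lambda}$ depend only on $x$ and on the product $y_1 y_2$ (and are nowhere zero), equating the individual $h_j$ then gives $y_j = y'_j$; this establishes injectivity and simultaneously furnishes the inverse, recovering the parametrization (\ref{eq: solutions parametrees}).

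Third, at $\mathbf{y}=0$ the Jacobian of $\mathbf{y} \mapsto (h_{1,\pm\lambda}, h_{2,\pm\lambda})$ is the diagonal matrix with nonzero entries $e^{\mp\lambda/x}x^{-a_1}$ and $e^{\pm\lambda/x}x^{-a_2}$, so the inverse function theorem yields a local fibered biholomorphism near the center curve $\{\mathbf{y}=0\}$. To extract the product description of the image, I would use the first-integral property $\mathcal{L}_{\ynorm}(h_{j,\pm\lambda}) = 0$ established in the preceding proposition: the image is saturated by the pushforward flow, whose orbits in coordinates $(x, h_1, h_2)$ are $(h_1,h_2) = \mathrm{const}$ with $x$ varying. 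Adapting the $\mathbf{y}$-domain to the sector (using $\Re(a)>0$ to ensure $y_1 y_2 = h_1 h_2 x^a$ stays bounded on $S_{\pm\lambda}$), one obtains the product form $S_{\pm\lambda} \times \Gamma_{\pm\lambda}$, with $\Gamma_{\pm\lambda}$ the common fibre over any fixed $x_0 \in S_{\pm\lambda}$. The conjugation $(\mathcal{H}_{\pm\lambda})_*\ynorm = x^2 \pp x$ is then immediate from $\mathcal{L}_{\ynorm}(x) = x^2$ combined with the first-integral property. I expect the main obstacle to be exactly this product structure of the image: the Jacobian's eccentricity as $x \to 0$ along rays in $S_{\pm\lambda}$ (one eigenvalue growing like $e^{|\lambda|/|x|}$, the other decaying at the same rate) means the $\mathbf{y}$-domain cannot be a uniform polydisc but must be sector-adapted, and making this adaptation precise is where the real effort lies.
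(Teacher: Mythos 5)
Your proposal follows essentially the route the paper intends: the corollary is stated there without any written proof, as an immediate consequence of the proposition giving the first integrals and of the remark that parametrizes the leaves, and your three steps (holomorphy from the explicit formulas once a branch of $\log$ is fixed, injectivity from $h_{1,\pm\lambda}h_{2,\pm\lambda}=y_{1}y_{2}x^{-a}$ together with the nonvanishing of the exponential prefactors, and the conjugation from $\cal L_{\ynorm}\left(x\right)=x^{2}$ combined with $\cal L_{\ynorm}\left(h_{j,\pm\lambda}\right)=0$) are exactly the computations that make it work.

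The one place your argument does not hold together is the product structure of the image. You correctly observe that the $x$-fibres of the image degenerate as $x\to0$ (one prefactor blows up like $e^{|\lambda|/|x|}$ while the other decays), but you then describe $\Gamma_{\pm\lambda}$ as ``the common fibre over any fixed $x_{0}$'', which contradicts that observation: the fibre of the image over $x$ is the set $\Gamma_{\pm\lambda}\left(x,\mathbf{r}\right)$ of Definition \ref{def: espace des feuilles =0000E0 rayon fixe}, and these sets genuinely depend on $x$. Your flow-saturation argument cannot repair this, because $S_{\pm\lambda}\times\mathbf{D}\left(\mathbf{0},\mathbf{r}\right)$ is not saturated by the foliation (leaves exit a fixed polydisc). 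The paper itself resolves the tension only after the corollary, by setting $\Gamma_{\pm\lambda}=\bigcup_{x\in S_{\pm\lambda}}\Gamma_{\pm\lambda}\left(x,\mathbf{r}\right)$ and working exclusively with the fibre-wise containments in all subsequent lemmas; the displayed equality $\cal H_{\pm\lambda}\left(S_{\pm\lambda}\times\left(\ww C^{2},0\right)\right)=S_{\pm\lambda}\times\Gamma_{\pm\lambda}$ should be read in that loose, germ-level sense rather than as something your (or any) local biholomorphism argument establishes literally. Apart from this point, which is an imprecision you inherit from the statement itself, the proposal is sound.
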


\begin{defn}
\label{def: space of leaves}We call $\ls{\pm}$ \textbf{the space
of leaves of} $\ynorm$ in $S_{\pm\lambda}\times\left(\ww C^{2},0\right)$.
\end{defn}

\begin{rem}
The set $\ls{\pm}$ depends on the choice of the neighborhood $\left(\ww C^{2},0\right)$,
but also on the choice of the sectorial neighborhood $S_{\pm\lambda}\in\germsect{\arg\left(\pm\lambda\right)}{\epsilon}$.
\end{rem}

\subsubsection{Sectorial isotropies in the space of leaves}

~

Now, we consider a germ of a sectorial isotropy $\psi_{\pm\lambda}\in\Lambda_{\pm\lambda}^{\left(\tx{weak}\right)}\left(\ynorm\right)$
and we denote by $\Gamma'_{\pm\lambda}$ the (germ of an) open subset
of $\ww C^{2}$ such that: 
\[
\cal H_{\pm\lambda}\circ\psi_{\pm}\left(S_{\pm\lambda}\times\left(\ww C^{2},0\right)\right)=S_{\pm\lambda}\times\Gamma'_{\pm\lambda}\,\,.
\]
\begin{prop}
\label{prop: isotropie espace des feuilles} With the notations and
assumptions above, the map 
\begin{eqnarray*}
\Psi_{\pm\lambda}:=\cal H_{\pm\lambda}\circ\psi_{\pm}\circ\cal H_{\pm\lambda}^{-1}:S_{\pm\lambda}\times\Gamma_{\pm\lambda} & \longrightarrow & S_{\pm\lambda}\times\Gamma'_{\pm\lambda}
\end{eqnarray*}
is a sectorial germ of a fibered biholomorphism from $S_{\pm\lambda}\times\Gamma_{\pm\lambda}$
to $S_{\pm\lambda}\times\Gamma'_{\pm\lambda}$, which is of the form:
\[
\Psi_{\pm\lambda}\left(x,h_{1},h_{2}\right)=\left(x,\Psi_{1,\pm\lambda}\left(h_{1},h_{2}\right),\Psi_{2,\pm\lambda}\left(h_{1},h_{2}\right)\right)\,\,.
\]
In particular, $\Psi_{1,\pm\lambda}$ and $\Psi_{2,\pm\lambda}$ are
analytic and depend only on $\left(h_{1},h_{2}\right)\in\Gamma_{\pm\lambda}$,
while $\Psi_{\pm\lambda}$ induces a biholomorphism (still written
$\Psi_{\pm\lambda}$): 
\begin{eqnarray*}
\Psi_{\pm\lambda}:\Gamma_{\pm\lambda} & \rightarrow & \Gamma'_{\pm\lambda}\\
\left(h_{1},h_{2}\right) & \mapsto & \left(\Psi_{1,\pm\lambda}\left(h_{1},h_{2}\right),\Psi_{2,\pm\lambda}\left(h_{1},h_{2}\right)\right)\,\,.
\end{eqnarray*}
\end{prop}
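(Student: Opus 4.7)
The plan is to exploit the fact that $\mathcal{H}_{\pm\lambda}$ rectifies $Y_{\mathrm{norm}}$ to the trivial vector field $x^{2}\partial_{x}$ in the space of leaves. I would proceed as follows.

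First, I would observe that $\Psi_{\pm\lambda}$ is a well-defined sectorial fibered biholomorphism between $S_{\pm\lambda}\times\Gamma_{\pm\lambda}$ and $S_{\pm\lambda}\times\Gamma'_{\pm\lambda}$ simply as a composition of fibered biholomorphisms: $\mathcal{H}_{\pm\lambda}$ is a fibered biholomorphism by Corollary~\ref{cor: coordonn=0000E9es espace des feuiles}, $\psi_{\pm\lambda}$ is a fibered sectorial isotropy by assumption, and their composition in the indicated order preserves the $x$-coordinate. Thus we may already write
\[
\Psi_{\pm\lambda}(x,h_{1},h_{2})=\bigl(x,\Psi_{1,\pm\lambda}(x,h_{1},h_{2}),\Psi_{2,\pm\lambda}(x,h_{1},h_{2})\bigr).
\]

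The key step is to show that $\Psi_{1,\pm\lambda}$ and $\Psi_{2,\pm\lambda}$ do not depend on $x$. Since $\psi_{\pm\lambda}$ is an isotropy of $Y_{\mathrm{norm}}$, and $(\mathcal{H}_{\pm\lambda})_{*}(Y_{\mathrm{norm}})=x^{2}\partial_{x}$, the chain rule gives
\[
(\Psi_{\pm\lambda})_{*}(x^{2}\partial_{x})=x^{2}\partial_{x}
\quad\text{on }S_{\pm\lambda}\times\Gamma'_{\pm\lambda}.
\]
Writing out $\mathrm{D}\Psi_{\pm\lambda}\cdot(x^{2}\partial_{x})=(x^{2}\partial_{x})\circ\Psi_{\pm\lambda}$ componentwise, the first coordinate equation is trivially satisfied (the map is fibered), while the remaining two read
\[
x^{2}\,\partial_{x}\Psi_{j,\pm\lambda}(x,h_{1},h_{2})=0,\qquad j=1,2.
\]
Hence $\partial_{x}\Psi_{j,\pm\lambda}\equiv 0$ on the connected domain $S_{\pm\lambda}\times\Gamma_{\pm\lambda}$, so for each fixed $(h_{1},h_{2})$ the function $x\mapsto\Psi_{j,\pm\lambda}(x,h_{1},h_{2})$ is constant along the sector. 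One therefore obtains well-defined analytic functions $\Psi_{j,\pm\lambda}(h_{1},h_{2})$ on $\Gamma_{\pm\lambda}$, and the resulting map $(h_{1},h_{2})\mapsto(\Psi_{1,\pm\lambda}(h_{1},h_{2}),\Psi_{2,\pm\lambda}(h_{1},h_{2}))$ is a biholomorphism from $\Gamma_{\pm\lambda}$ onto $\Gamma'_{\pm\lambda}$ since $\Psi_{\pm\lambda}$ itself is, and both target domains are independent of $x\in S_{\pm\lambda}$ by Corollary~\ref{cor: coordonn=0000E9es espace des feuiles}.

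This argument is essentially routine once the rectification given by $\mathcal{H}_{\pm\lambda}$ is available; no obstacle arises at this step, and it is in fact this reduction to the ``space of leaves'' that sets the stage for the genuinely delicate result (Lemma~\ref{lem: isotropies exp plates} / Proposition~\ref{prop: isotropies plates}), where one must show that the $\Psi_{j,\pm\lambda}$ are exponentially close to the identity. The content here is only the structural statement that a fibered sectorial isotropy of $Y_{\mathrm{norm}}$ descends, via the first integrals $h_{1},h_{2}$, to an $x$-independent biholomorphism between the appropriate open sets of the leaf space.
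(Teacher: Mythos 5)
Your proof is correct and follows essentially the same route as the paper: one observes that $\Psi_{\pm\lambda}$ is a fibered isotropy of $x^{2}\partial_{x}$ (since $\mathcal{H}_{\pm\lambda}$ rectifies $\ynorm$), writes out the conjugacy equation componentwise, and concludes $\partial_{x}\Psi_{j,\pm\lambda}=0$. The paper's own proof is just a terser version of exactly this argument.
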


\begin{proof}
We only have to prove that $\Psi_{1,\pm\lambda}$ and $\Psi_{2,\pm\lambda}$
depend only on $\left(h_{1},h_{2}\right)\in\Gamma_{\pm\lambda}$.
By assumption, $\Psi_{\pm\lambda}$ is an isotropy of $x^{2}\pp x$:
\begin{eqnarray*}
\left(\Psi_{\pm\lambda}\right)_{*}\left(x^{2}\pp x\right) & = & x^{2}\pp x\,\,.
\end{eqnarray*}
We immediately obtain:
\[
\ppp{\Psi_{1,\pm\lambda}}x=\ppp{\Psi_{2,\pm\lambda}}x=0\,\,.
\]
 
\end{proof}
In the space of leaves $\Gamma_{\pm\lambda}$ equipped with coordinates
$\left(h_{1},h_{2}\right)$, we denote by $w$ the product of $h_{1}$
and $h_{2}$:
\[
w\left(h_{1},h_{2}\right):=h_{1}h_{2}\,\,.
\]
We define the two following quantities: 
\begin{equation}
\begin{cases}
f_{1}\left(x,w\right):=\exp\left(\frac{\lambda}{x}-c_{m}w^{m}\log\left(x\right)-\frac{\tilde{c}\left(wx^{a}\right)}{x}\right)x^{a_{1}}\\
f_{2}\left(x,w\right):=\exp\left(-\frac{\lambda}{x}+c_{m}w^{m}\log\left(x\right)+\frac{\tilde{c}\left(wx^{a}\right)}{x}\right)x^{a_{2}} & ,
\end{cases}\label{eq: f1 et f2}
\end{equation}
such that the leaves of the foliations are parametrized by: 
\[
\begin{cases}
y_{1}\left(x\right)=h_{1}f_{1}\left(x,h_{1}h_{2}\right)\\
y_{2}\left(x\right)=h_{2}f_{2}\left(x,h_{1}h_{2}\right)
\end{cases},\,\left(h_{1},h_{2}\right)\in\ww C^{2}\,.
\]
Notice that:
\begin{eqnarray*}
f_{1}\left(x,w\right)f_{2}\left(x,w\right) & = & x^{a}\,\,.
\end{eqnarray*}
Moreover, one checks immediately the following statement.
\begin{fact}
\label{fact: limites}For all $w\in\ww C$:
\[
\begin{cases}
\underset{\substack{x\rightarrow0\\
x\in S_{\lambda}
}
}{\lim}\abs{f_{1}\left(x,w\right)}=\underset{\substack{x\rightarrow0\\
x\in S_{-\lambda}
}
}{\lim}\abs{f_{2}\left(x,w\right)}=+\infty\\
\underset{\substack{x\rightarrow0\\
x\in S_{-\lambda}
}
}{\lim}\abs{f_{1}\left(x,w\right)}=\underset{\substack{x\rightarrow0\\
x\in S_{\lambda}
}
}{\lim}\abs{f_{2}\left(x,w\right)}=0 & \,\,.
\end{cases}
\]
\end{fact}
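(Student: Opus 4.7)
The plan is to take the modulus of $f_1$ and $f_2$, pass to the exponent, and show that among the three contributions $\frac{\lambda}{x}$, $c_m w^m \log(x)$ and $\frac{\tilde{c}(wx^a)}{x}$, the first one dominates as $x \to 0$ in $S_{\pm\lambda}$; the sign of its real part in each sector then dictates the four limits.

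Concretely, from \eqref{eq: f1 et f2} one has
\[
\log\abs{f_1(x,w)} = \Re\!\left(\frac{\lambda}{x}\right) - \Re\!\bigl(c_m w^m \log(x)\bigr) - \Re\!\left(\frac{\tilde{c}(wx^a)}{x}\right) + \Re(a_1)\log\abs{x},
\]
and similarly for $f_2$ with $\lambda$, $c_m w^m$, $\tilde c$, $a_1$ replaced by their opposites and $a_2$. First I would observe that, since $S_\lambda\in\germsect{\arg(\lambda)}{\epsilon}$ with $\epsilon<\pi$, there exists $\delta>0$ such that $\cos(\arg(\lambda)-\arg(x))\geq\delta$ for $x\in S_\lambda$, hence $\Re(\lambda/x)\geq \delta\abs{\lambda}/\abs{x}\to +\infty$; symmetrically $\Re(\lambda/x)\leq -\delta\abs{\lambda}/\abs{x}\to -\infty$ on $S_{-\lambda}$.

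Next I would control the subdominant terms. The contribution $\Re(c_m w^m\log(x)) = \Re(c_m w^m)\log\abs{x} - \Im(c_m w^m)\arg(x)$ is $O(\log\abs{x})$ uniformly in $x\in S_{\pm\lambda}$ (the branch of $\log$ having bounded imaginary part on a bounded sector), hence negligible compared with $1/\abs{x}$. For the remaining term, since $\tilde c(v)\in v\germ{v}$, for $\abs{w}$ bounded and $\abs{x}$ small one has $\abs{\tilde c(wx^a)}\leq C\abs{w}\abs{x}^{\Re(a)}$; because $\Re(a)=\Re(a_1+a_2)>0$, this gives
\[
\abs{\tilde c(wx^a)/x}\leq C\abs{w}\abs{x}^{\Re(a)-1},
\]
which is $o(1/\abs{x})$ as $\abs{x}\to 0$ (whatever the sign of $\Re(a)-1$, the growth is strictly slower than $1/\abs{x}$). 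The final polynomial correction $\Re(a_1)\log\abs{x}$ is obviously $o(1/\abs{x})$.

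Putting these estimates together yields
\[
\log\abs{f_1(x,w)} = \Re\!\left(\frac{\lambda}{x}\right) + o\!\left(\frac{1}{\abs{x}}\right),\qquad
\log\abs{f_2(x,w)} = -\Re\!\left(\frac{\lambda}{x}\right) + o\!\left(\frac{1}{\abs{x}}\right),
\]
as $x\to 0$ in $S_{\pm\lambda}$. Combined with the sign analysis of $\Re(\lambda/x)$ on each of the two sectors, this gives the four claimed limits for $\abs{f_1}$ and $\abs{f_2}$. No genuine obstacle is expected; the only point requiring mild care is to make the bound on $\tilde c(wx^a)/x$ uniform, which is immediate for $w$ ranging in a compact set since $\tilde c$ is analytic in a neighbourhood of $0$.
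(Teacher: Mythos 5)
Your argument is correct and is precisely the ``immediate check'' the paper alludes to (the paper states this Fact without proof): isolate $\Re(\lambda/x)$, which is comparable to $\pm\abs{\lambda}/\abs{x}$ on the narrow sectors $S_{\pm\lambda}$ of opening $<\pi$ bisected by $\arg(\pm\lambda)$, and verify that the $\log$-term, the term $\tilde c(wx^a)/x=O(\abs{x}^{\Re(a)-1})$ (using $\Re(a)>0$), and the factor $x^{a_j}$ all contribute $o(1/\abs{x})$ to $\log\abs{f_j}$. Nothing further is needed.
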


Using notations of Proposition \ref{prop: isotropie espace des feuilles},
we also assume from now on that $\left(\ww C^{2},0\right)=\mathbf{D\left(0,r\right)}$,
with $\mathbf{r}=\left(r_{1},r_{2}\right)\in\left(\ww R_{>0}\right)^{2}$
and $r_{1},r_{2}>0$ small enough so that 
\[
\psi_{\pm\lambda}\left(S_{\pm\lambda}\times\mathbf{D\left(0,r\right)}\right)\subset S_{\pm\lambda}\times\mathbf{D\left(0,r'\right)}
\]
for some $\mathbf{r'}=\left(r'_{1},r'_{2}\right)\in\left(\ww R_{>0}\right)^{2}$.
Let us now define in a general way the following set associated to
the sector $S_{\pm\lambda}$ and to a polydisc $\mathbf{D}\left(\mathbf{0},\tilde{\mathbf{r}}\right)$,
with $\tilde{\mathbf{r}}:=\left(\tilde{r}_{1},\tilde{r}_{2}\right)$.
\begin{defn}
\label{def: espace des feuilles =0000E0 rayon fixe}For all $x\in S_{\pm\lambda}$
et $\tilde{\mathbf{r}}:=\left(\tilde{r}_{1},\tilde{r}_{2}\right)\in\left(\ww R_{>0}\right)^{2}$,
we define
\[
\lsr{\pm}{x,\tilde{\mathbf{r}}}:=\acc{\left(h_{1},h_{2}\right)\in\ww C^{2}\mid\abs{h_{j}}\leq\frac{\tilde{r}_{j}}{\abs{f_{j}\left(x,h_{1}h_{2}\right)}}~,\,\mbox{for }j\in\left\{ 1,2\right\} }\,\,.
\]
We also consider the: 
\begin{eqnarray*}
\lsr{\pm}{\tilde{\mathbf{r}}} & := & \bigcup_{\substack{x\in S_{\pm\lambda}}
}\lsr{\pm}{x,\tilde{\mathbf{r}}}\\
 & = & \acc{\left(h_{1},h_{2}\right)\in\ww C^{2}\mid\exists x\in S_{\pm\lambda}\mbox{ s.t. }\abs{h_{j}}\leq\frac{\tilde{r}_{j}}{\abs{f_{j}\left(x,h_{1}h_{2}\right)}}~,~\mbox{for }j\in\left\{ 1,2\right\} }
\end{eqnarray*}
(\emph{cf. }figure \ref{fig:Illustration-de-l'espace}).
\end{defn}

Since we assume now that $\left(\ww C^{2},0\right)=\mathbf{D\left(0,r\right)}$,
then we have:
\[
\ls{\pm}=\lsr{\pm}{\mathbf{r}}\,\,,
\]
and 
\[
\lsp{\pm}\subset\lsr{\pm}{\mathbf{r}'}\,\,.
\]

\begin{figure}
\includegraphics[scale=0.22]{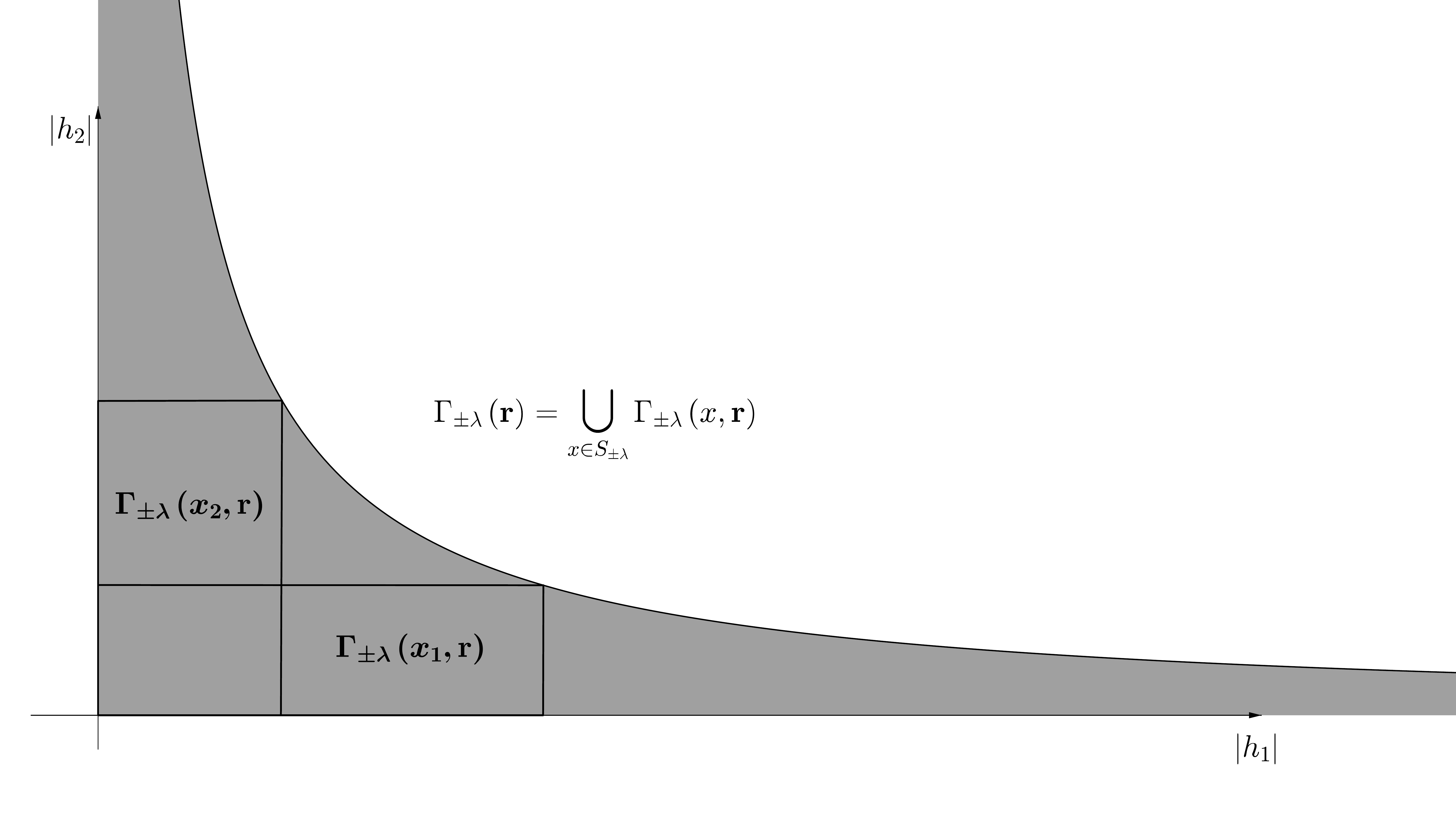}\caption{\foreignlanguage{french}{\label{fig:Illustration-de-l'espace}\foreignlanguage{english}{Representation
of the space of leaves in terms of $\protect\abs{h_{1}}$ and $\protect\abs{h_{2}}$
when $c=0$: in this case, it is a\emph{ Reinhardt domain }(\emph{cf.
}\cite{hormander1973introduction})}.}}
\end{figure}
\begin{rem}
~

\begin{enumerate}
\item It is important to notice that the particular form of $\Psi_{\pm\lambda}$
implies that the image of any fiber 
\[
\acc{x=x_{0}}\times\lsr{\pm}{x_{0},\mathbf{r}}
\]
by $\Psi_{\pm\lambda}$ is included in a fiber of the form 
\[
\acc{x=x_{0}}\times\lsr{\pm}{x_{0},\mathbf{r'}}\,\,.
\]
\item If $\left(h_{1},h_{2}\right)\in\lsr{\pm}{x,\mathbf{r}}$, then 
\begin{eqnarray*}
\abs{h_{1}h_{2}} & < & \frac{r_{1}r_{2}}{\abs{x^{a}}}\,\,.
\end{eqnarray*}
\item As $\left(h_{1},h_{2}\right)\in\Gamma_{\pm\lambda}$ varies the values
of $w=h_{1}h_{2}$ cover the whole $\ww C$.
\end{enumerate}
\end{rem}

\subsubsection{Action on the resonant monomial in the space of leaves}

~

Let us study the the action of $\Psi_{\pm\lambda}$ on the resonant
monomial $w=h_{1}h_{2}$ in the space of leaves.
\begin{lem}
\label{lem: isotropie monomone res esp des feuilles}We consider a
biholomorphism 
\begin{eqnarray*}
\Psi_{\pm\lambda}:\ls{\pm} & \tilde{\rightarrow} & \lsp{\pm}\\
\left(h_{1},h_{2}\right) & \mapsto & \left(\Psi_{1,\pm}\left(h_{1},h_{2}\right),\Psi_{2,\pm}\left(h_{1},h_{2}\right)\right)\,\,,
\end{eqnarray*}
such that for all $x\in S_{\pm\lambda}$, we have 
\[
\Psi_{\pm\lambda}\left(\lsr{\pm}{x_{0},\mathbf{r}}\right)\subset\lsr{\pm}{x_{0},\mathbf{r'}}\,\,.
\]
We also define $\Psi_{w,\pm\lambda}:=\Psi_{1,\pm\lambda}\Psi_{2,\pm\lambda}$.
Then, for all $n\in\ww N$, there exists entire (\emph{i.e. }analytic
over $\ww C$) functions $\Psi_{w,\lambda,n}$ and $\Psi_{w,-\lambda,n}$
such that 
\[
\begin{cases}
{\displaystyle \Psi_{w,\lambda}\left(h_{1},h_{2}\right)=\sum_{n\geq0}\Psi_{w,\lambda,n}\left(h_{1}h_{2}\right)h_{1}^{n}}\\
{\displaystyle \Psi_{w,-\lambda}\left(h_{1},h_{2}\right)=\sum_{n\geq0}\Psi_{w,-\lambda,n}\left(h_{1}h_{2}\right)h_{2}^{n}} & \,.
\end{cases}
\]
Moreover, the series above uniformly converge (for the $\sup$-norm)
in every subset of $\ls{\pm}$ of the form $\lsr{\pm}{\tilde{\mathbf{r}}}$,
with $\tilde{\mathbf{r}}:=\left(\tilde{r}_{1}\tilde{r}_{2}\right)$
and 
\[
0<\tilde{r}_{j}<r_{j}~~~,~j\in\left\{ 1,2\right\} 
\]
(\emph{cf.} Definition\emph{ }\ref{def: espace des feuilles =0000E0 rayon fixe}).
More precisely, for all $\tilde{r}_{1},\tilde{r}_{2},\delta>0$ such
that 
\[
0<\tilde{r}_{j}+\delta<r_{j}~~~,~j\in\left\{ 1,2\right\} 
\]
for all $x\in S_{\pm\lambda}$ and $w\in\ww C$ we have 
\begin{eqnarray*}
\abs{wx^{a}}\leq\tilde{r}_{1}\tilde{r}_{2} & \Longrightarrow & \begin{cases}
\abs{\Psi_{w,\lambda,n}\left(w\right)}\leq\frac{r'_{1}r'_{2}}{\abs{x^{a}}}\abs{\frac{f_{1}\left(x,w\right)}{\tilde{r}_{1}+\delta}}^{n}\\
\abs{\Psi_{w,-\lambda,n}\left(w\right)}\leq\frac{r'_{1}r'_{2}}{\abs{x^{a}}}\abs{\frac{f_{2}\left(x,w\right)}{\tilde{r}_{2}+\delta}}^{n}
\end{cases},\,\forall n\geq0\,.
\end{eqnarray*}
\end{lem}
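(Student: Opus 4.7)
The plan is to extract the coefficients $\Psi_{w,\pm\lambda,n}$ as Laurent coefficients of $\Psi_{w,\pm\lambda}$ on the fibers $\{h_1h_2=w\}\cap\ls{\pm}$, controlled by Cauchy's integral formula applied to a carefully chosen circle. By symmetry I focus on the $+\lambda$ case (expansion in $h_1$); the $-\lambda$ case is identical after swapping the roles of $h_1,h_2$ and of $f_1,f_2$. A preliminary step is the pointwise bound: under $\cal H_\lambda$, the change of variables $y_j=h_jf_j(x,h_1h_2)$ and the identity $f_1f_2=x^a$ give $\Psi_{w,\lambda}=\psi_1\psi_2/x^a$, so that $|\psi_j|\leq r'_j$ translates into
\[
|\Psi_{w,\lambda}(h_1,h_2)|\leq\frac{r'_1r'_2}{|x^a|}\qquad\text{for every }x\in S_\lambda\text{ and }(h_1,h_2)\in\lsr{+}{x,\mathbf{r}}.
\]

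For fixed $x\in S_\lambda$ and $w\in\ww C$ with $|wx^a|\leq r_1r_2$, the slice $\{h_1h_2=w\}\cap\lsr{+}{x,\mathbf{r}}$ is, in the $h_1$ variable, the annulus $|w||f_2(x,w)|/r_2\leq|h_1|\leq r_1/|f_1(x,w)|$, so $h_1\mapsto\Psi_{w,\lambda}(h_1,w/h_1)$ is holomorphic there with well-defined Laurent coefficients
\[
\Psi_{w,\lambda,n}(w):=\frac{1}{2\pi i}\oint_{|h_1|=R}\frac{\Psi_{w,\lambda}(h_1,w/h_1)}{h_1^{n+1}}\,\mathrm{d}h_1.
\]
As $x$ varies in $S_\lambda$ these annuli overlap continuously inside the connected set $\ls{+}\cap\{h_1h_2=w\}$, so by uniqueness of Laurent expansion $\Psi_{w,\lambda,n}$ depends only on $w$. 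To obtain the claimed bound for $n\geq 0$, pick $R=(\tilde r_1+\delta)/|f_1(x,w)|$; this lies in the annulus because $\tilde r_1+\delta\leq r_1$ gives the upper bound and $|wx^a|\leq\tilde r_1\tilde r_2\leq r_2(\tilde r_1+\delta)$ gives the lower bound. Cauchy's estimate then immediately yields
\[
|\Psi_{w,\lambda,n}(w)|\leq\frac{r'_1r'_2}{|x^a|}\biggl|\frac{f_1(x,w)}{\tilde r_1+\delta}\biggr|^n.
\]

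For $n<0$, choosing $R$ slightly above the lower annulus bound and then passing to the infimum gives
\[
|\Psi_{w,\lambda,n}(w)|\leq\biggl(\frac{|w||f_2(x,w)|}{r_2}\biggr)^{\!|n|}\cdot\frac{r'_1r'_2}{|x^a|}.
\]
Letting $x\to 0$ along the ray $\arg(x)=\arg(\lambda)$, the factor $|f_2(x,w)|$ behaves like $\exp(-|\lambda|/|x|)$ and decays exponentially, whereas $|x^a|^{-1}$ grows only polynomially; the stretched-exponential contributions coming from $c_mw^m\log(x)/x$ and $\tilde c(wx^a)/x$ are of strictly smaller order, so the right-hand side tends to $0$ and $\Psi_{w,\lambda,n}\equiv 0$. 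Concerning entirety: the bound for $n\geq 0$ shows $\Psi_{w,\lambda,n}$ is bounded on every disc $\{|w|\leq W\}$ (take $|x|$ small so that $|wx^a|\leq\tilde r_1\tilde r_2$), so it extends to an entire function of $w$. Uniform convergence of $\sum_n\Psi_{w,\lambda,n}(h_1h_2)h_1^n$ on each $\lsr{+}{\tilde{\mathbf r}}$ is then a direct consequence of the bound: given $(h_1,h_2)\in\lsr{+}{\tilde{\mathbf r}}$, pick $x$ with $|h_1|\leq\tilde r_1/|f_1(x,h_1h_2)|$; then the bound produces a geometric series with ratio $\tilde r_1/(\tilde r_1+\delta)<1$.

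The main obstacle is the vanishing of the negative-index Laurent coefficients. The argument rests on the exponential decay of $f_2$ along $\arg(\lambda)$ genuinely dominating not only the polynomial factor $|x^a|^{-1}$ but also the additional exponentials $\exp(c_mw^m\log(x)/x)$ and $\exp(\tilde c(wx^a)/x)$; since $\Re(a)$ is only assumed positive and may lie in $(0,1)$, the order of these parasitic exponentials must be compared carefully to that of $\exp(-\lambda/x)$, which is where the particular choice of direction $\arg(x)=\arg(\lambda)$ (rather than some neighbouring ray) becomes essential.
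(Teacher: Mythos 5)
Your proof follows essentially the same route as the paper's: Laurent expansion of $\Psi_{w,\lambda}$ on the fibres $\acc{h_{1}h_{2}=w}$, Cauchy's estimate on the circle of radius $\left(\tilde{r}_{1}+\delta\right)/\abs{f_{1}\left(x,w\right)}$, and vanishing of the negative-index coefficients by letting $x\rightarrow0$ in $S_{\lambda}$ (the paper uses the equivalent fact $\abs{f_{1}}\rightarrow\infty$ on the same circle rather than your lower-edge circle, but the quantitative bound obtained is identical). Two small points. First, your justification of entirety (``bounded on every disc, so it extends to an entire function'') is not a valid inference as stated; analyticity in $w$ comes from the fact that, for $\abs{\xi}<\delta$, the \emph{same} integration circle represents $\Psi_{w,\lambda,n}\left(w+\xi\right)$ and the contour integral depends holomorphically on the parameter --- which is exactly what your construction provides, so the repair is immediate. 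Second, in the leaf coordinates the resonant term in the exponent of $f_{2}$ is $c_{m}w^{m}\log\left(x\right)$, not $c_{m}w^{m}\log\left(x\right)/x$, because $\left(y_{1}y_{2}\right)^{m}=w^{m}x^{am}$ with $am=1$; with the extra $1/x$ that term would \emph{not} be of strictly smaller order than $\lambda/x$, so the concern raised in your final paragraph is resolved by the correct formula (the contribution is merely polynomial in $x$) rather than by the order comparison you invoke.
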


\begin{proof}
Let us give the proof for $\Psi_{w,\lambda},\Psi_{1,\lambda}$ and
$\Psi_{2,\lambda}$ in $\Gamma_{\lambda}$ (the same proof applies
also for $\Psi_{w,-\lambda}$ in $\Gamma_{-\lambda}$ by exchanging
the role played by $h_{1}$ and $h_{2}$). We fix some $0<\tilde{r}_{j}<r_{j}$,
$j\in\left\{ 1,2\right\} $, and $\delta>0$ such that 
\[
0<\tilde{r}_{j}+\delta<r_{j}~~~,~j\in\left\{ 1,2\right\} .
\]
For a fixed value $w\in\ww C$, we consider the restriction of $\Psi_{w,\lambda}$
to the hypersurface $M_{w}:=\acc{h_{1}h_{2}=w}\cap\Gamma_{\lambda}$:
this restriction is analytic in $M_{w}$. The map
\[
\varphi_{w}:h_{1}\mapsto\Psi_{w,\lambda}\left(h_{1},\frac{w}{h_{1}}\right)
\]
 is analytic in 
\[
M_{w,1}:=\bigcup_{\substack{x\in S_{\lambda}\\
\abs{wx^{a}}<r_{1}r_{2}
}
}\Omega_{x,w}\,\,,
\]
where for all $x\in S_{\lambda}$ with $\abs{wx^{a}}<r_{1}r_{2}$,
the set $\Omega_{x,w}$ is the following annulus: 
\[
\Omega_{x,w}:=\acc{h_{1}\in\ww C\mid\abs{\frac{wf_{2}\left(x,w\right)}{r_{2}}}<\abs{h_{1}}<\abs{\frac{r_{1}}{f_{1}\left(x,w\right)}}}\,\,.
\]
In particular, $\varphi_{w}$ admits a Laurent expansion 
\[
\varphi_{w}\left(h_{1}\right)=\Psi_{w,+}\left(h_{1},\frac{w}{h_{1}}\right)=\sum_{n\geq-L}\Psi_{w,+,n}\left(w\right)h_{1}^{n}
\]
in every annulus $\Omega_{x,w}$, with $x\in S_{\lambda}$ such that
$\abs{wx^{a}}<r_{1}r_{2}$. Moreover for all $x\in S_{\lambda}$ such
that $\abs{wx^{a}}<r_{1}r_{2}$, Cauchy's formula gives 
\begin{eqnarray*}
\Psi_{w,\lambda,n}\left(w\right) & = & \frac{1}{2i\pi}\oint_{\gamma\left(x,w\right)}\frac{\Psi_{w,\lambda}\left(h_{1},\frac{w}{h_{1}}\right)}{h_{1}^{n+1}}\mbox{d}h_{1}\,\,,\mbox{ for all }n\in\wn,
\end{eqnarray*}
where $\gamma\left(x,w\right)$ is any circle (oriented positively)
centered at the origin with a radius $\rho\left(x,w\right)$ satisfying
\[
\abs{\frac{wf_{2}\left(x,w\right)}{r_{2}}}<\rho\left(x,w\right)<\abs{\frac{r_{1}}{f_{1}\left(x,w\right)}}\,\,.
\]
If $\abs{wx^{a}}<\left(\tilde{r}_{1}+\delta\right)\left(\tilde{r}_{2}+\delta\right)$,
we can take for instance 
\[
\rho\left(x,w\right)=\abs{\frac{\tilde{r}_{1}+\delta}{f_{1}\left(x,w\right)}}\,\,.
\]
Therefore, for all $x\in S_{\lambda}$ and all $w\in\ww C$ such that
$\abs{wx^{a}}\leq\tilde{r}_{1}\tilde{r}_{2}$, for all $\xi\in\ww C$
with $\abs{\xi}<\delta$, we also have: 
\begin{eqnarray*}
\Psi_{w,\lambda,n}\left(w+\xi\right) & = & \frac{1}{2i\pi}\oint_{\gamma\left(x,w\right)}\frac{\Psi_{w,\lambda}\left(h_{1},\frac{w+\xi}{h_{1}}\right)}{h_{1}^{n+1}}\mbox{d}h_{1}\,\,,\mbox{ for all }n\in\ww Z,
\end{eqnarray*}
where $\gamma\left(x,w\right)$ is the same circle $\Big($of radius
${\displaystyle \rho\left(x,w\right)=\abs{\frac{\tilde{r}_{1}+\delta}{f_{1}\left(x,w\right)}}}$$\Big)$
for all $\abs{\xi}<\delta$. Moreover, since for all $x\in S_{\lambda}$,
we have 
\[
\Psi_{\lambda}\left(\lsr{}{x,\mathbf{r}}\right)\subset\lsr{}{x,\mathbf{r'}}\,\,,
\]
and since for all $\left(h'_{1},h'_{2}\right)\in\lsr{}{x,\mathbf{r'}}$
we have 
\[
\abs{h'_{1}h'_{2}}\leq\frac{r'_{1}r'_{2}}{\abs{x^{a}}}\,\,,
\]
then for all $x\in S_{\lambda}$ and $w\in\ww C$ such that $\abs{wx^{a}}\leq\tilde{r}_{1}\tilde{r}_{2}$,
the following inequality holds for all $h_{1}$ with $\abs{h_{1}}<\frac{r_{1}}{f_{1}\left(x,w\right)}$:
\begin{eqnarray*}
\abs{\Psi_{w,\lambda}\left(h_{1},\frac{w}{h_{1}}\right)} & < & \frac{r'_{1}r'_{2}}{\abs{x^{a}}}\,.
\end{eqnarray*}
The well-known theorem regarding integrals depending analytically
on a parameter asserts that for all $n\in\ww Z$ the mapping $\Psi_{w,\lambda,n}$
is analytic near any point $w\in\ww C$. Hence, it is an entire function
(\emph{i.e.} analytic over $\wc$). Moreover, the inequality above
and the Cauchy's formula together imply that for all $n\in\ww Z$
and for all $\left(x,w\right)\in S_{\lambda}\times\ww C$ such that
$\abs{wx^{a}}\leq\tilde{r}_{1}\tilde{r}_{2}$, we have: 
\begin{eqnarray*}
\abs{\Psi_{w,\lambda,n}\left(w\right)} & < & \frac{r'_{1}r'_{2}}{\abs{x^{a}}\rho\left(x,w\right)^{n}}=\frac{r'_{1}r'_{2}}{\abs{x^{a}}}\abs{\frac{f_{1}\left(x,w\right)}{\tilde{r}_{1}+\delta}}^{n}\,.
\end{eqnarray*}
According to Fact \ref{fact: limites}, for a fixed value $w\in\ww C$,
if $n<0$, the right hand-side tends to $0$ as $x$ tends to $0$
in $S_{\lambda}$. This implies in particular that $\Psi_{w,\lambda,n}=0$
for all $n<0$. Consequently: 
\begin{eqnarray*}
\Psi_{w,\lambda}\left(h_{1},\frac{w}{h_{1}}\right) & = & \sum_{n\geq0}\Psi_{w,\lambda,n}\left(w\right)h_{1}^{n}\,\,.
\end{eqnarray*}
Moreover, for all $w\in\ww C$ the series converges normally in every
domain of the form 
\[
\Omega_{x,w}:=\acc{h_{1}\in\ww C\mid\abs{h_{1}}\leq\abs{\frac{\tilde{r}_{1}}{f_{1}\left(x,w\right)}}}\,\,,\mbox{ for all }x\in S_{\lambda}\,\,,\,0<\tilde{r}_{1}<r_{1},
\]
since the Laurent expansion's range is $n\geq0$. This actually means
that the series converges normally in an entire neighborhood of the
origin in $\ww C$. In particular, for all fixed $w\in\ww C$, the
map 
\[
h_{1}\mapsto\Psi_{w,\lambda}\left(h_{1},\frac{w}{h_{1}}\right)=\sum_{n\geq0}\Psi_{w,\lambda,n}\left(w\right)h_{1}^{n}
\]
 is analytic in a neighborhood of the origin. Finally, the series
\begin{eqnarray*}
\Psi_{w,\lambda}\left(h_{1},h_{2}\right) & = & \sum_{n\geq0}\Psi_{w,\lambda,n}\left(h_{1}h_{2}\right)h_{1}^{n}
\end{eqnarray*}
converges normally, and hence its sum is analytic in every domain
of the form $\lsr{}{\tilde{\mathbf{r}}}$, with $0<\tilde{r}_{1}<r_{1}$
and $0<\tilde{r}_{2}<r_{2}$.
\end{proof}

\subsubsection{Action on the resonant monomial}

~

Since $\psi_{\pm\lambda}\in\Lambda_{\pm\lambda}^{\left(\tx{weak}\right)}\left(\ynorm\right)$,
the mapping $\psi_{\pm\lambda}$ is of the form 
\begin{eqnarray*}
\psi_{\pm\lambda}\left(x,\mathbf{y}\right) & = & \left(x,\psi_{1,\pm\lambda}\left(x,\mathbf{y}\right),\psi_{2,\pm\lambda}\left(x,\mathbf{y}\right)\right)\,\,,
\end{eqnarray*}
with $\psi_{1,\pm\lambda},\psi_{2,\pm\lambda}$ analytic and bounded
in $S_{\pm\lambda}\times\mathbf{D\left(0,r\right)}$. Moreover, by
assumption $\psi_{\pm\lambda}$ admits the identity as weak Gevrey-1
asymptotic expansion, \emph{i.e. }we have a normally convergent expansion:
\begin{eqnarray*}
\psi_{i,\pm\lambda}\left(x,\mathbf{y}\right) & = & y_{i}+\sum_{\mathbf{k}\in\ww N^{2}}\psi_{i,\mathbf{\pm\lambda,k}}\left(x\right)\mathbf{y^{k}}\,\,,
\end{eqnarray*}
where $\psi_{i,\pm\lambda,\mathbf{k}}$ is holomorphic in $S_{\pm\lambda}$
and admits $0$ as Gevrey-1 asymptotic expansion, for $i=1,2$ and
all $\mathbf{k}=\left(k_{1},k_{2}\right)\in\ww N^{2}$.
\begin{lem}
\label{lem: istropie monomiale}With the notations and assumptions
above, let us define $\psi_{v,\pm\lambda}:=\psi_{1,\pm\lambda}\psi_{2,\pm\lambda}$.
Then $\psi_{v,\lambda}$ and $\psi_{v,-\lambda}$ can be expanded
as the series 
\[
\begin{cases}
{\displaystyle \psi_{v,\lambda}\left(x,\mathbf{y}\right)=y_{1}y_{2}+x^{a}\sum_{n\ge1}\Psi_{w,\lambda,n}\left(\frac{y_{1}y_{2}}{x^{a}}\right)\left(\frac{y_{1}}{f_{1}\left(x,\frac{y_{1}y_{2}}{x^{a}}\right)}\right)^{n}}\\
{\displaystyle \psi_{v,-\lambda}\left(x,\mathbf{y}\right)=y_{1}y_{2}+x^{a}\sum_{n\ge1}\Psi_{w,-\lambda,n}\left(\frac{y_{1}y_{2}}{x^{a}}\right)\left(\frac{y_{2}}{f_{2}\left(x,\frac{y_{1}y_{2}}{x^{a}}\right)}\right)^{n}}
\end{cases}
\]
which are normally convergent in every subset of $S_{\pm\lambda}\times\mathbf{D\left(0,r\right)}$
of the form $S_{\pm\lambda}\times\mathbf{\overline{D}\left(0,\tilde{r}\right)},$
where $\mathbf{\overline{D}\left(0,\tilde{r}\right)}$ is a closed
poly-disc with $\mathbf{\tilde{r}}=\left(\tilde{r}_{1},\tilde{r}_{2}\right)$
such that 
\[
0<\tilde{r}_{j}<r_{j}~~~,~j\in\left\{ 1,2\right\} .
\]
Here $\Psi_{w,\lambda,n}$ and $\Psi_{w,-\lambda,n}$ , for $n\in\ww N$,
are the ones appearing in Lemma \ref{lem: isotropie monomone res esp des feuilles}.
Moreover, for all closed sub-sector $S'\subset S_{\pm\lambda}$ and
for all closed poly-disc $\mathbf{\overline{D}}\subset\mathbf{D\left(0,r\right)}$,
there exists $A,B>0$ such that:
\begin{eqnarray*}
\abs{\psi_{v,\pm\lambda}\left(x,y_{1},y_{2}\right)-y_{1}y_{2}} & \leq & A\exp\left(-\frac{B}{\abs x}\right)\,\,,\,\,\forall\left(x,\mathbf{y}\right)\in S'\times\mathbf{\overline{D}}\,\,.
\end{eqnarray*}
In particular, $\psi_{v,\pm\lambda}$ admits $y_{1}y_{2}$ as Gevrey-1
asymptotic expansion in $S_{\pm\lambda}\times\mathbf{D\left(0,r\right)}$.
\end{lem}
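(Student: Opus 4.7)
The plan is to transport the problem to the space of leaves \emph{via} $\mathcal{H}_{\pm\lambda}$, apply the already-established expansion of Lemma~\ref{lem: isotropie monomone res esp des feuilles} to the conjugated isotropy $\Psi_{\pm\lambda}$, and pull the result back to the $(x,\mathbf{y})$ coordinates. Since the explicit formulas $h_j=y_j/f_j(x,w)$ and $y_1y_2=wx^a$ imply the relation $\psi_{v,\pm\lambda}(x,\mathbf{y})=x^a\,\Psi_{w,\pm\lambda}(h_1,h_2)$, substituting the expansion furnished by Lemma~\ref{lem: isotropie monomone res esp des feuilles} immediately produces the series displayed in the statement, provided the constant-in-$h_1$ (resp.\ $h_2$) coefficient satisfies $\Psi_{w,\pm\lambda,0}(w)=w$.

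The identification $\Psi_{w,\pm\lambda,0}(w)=w$ is the precise translation of the weak tangency-to-identity hypothesis and will be proved by a limit argument on the slice $\{h_1h_2=w_0\}$. For arbitrary $w_0\in\ww C$, pick $t\in\ww C^{*}$ small and consider the curve $\gamma\colon x\mapsto(x,t,w_0x^a/t)$ in $S_{\pm\lambda}\times\mathbf{D}(\mathbf{0},\mathbf{r})$: along $\gamma$ one has $h_1h_2\equiv w_0$ while $h_1\to 0$ and $h_2\to\infty$ as $x\to 0$, by Fact~\ref{fact: limites}. The expansion then gives $\Psi_{w,\pm\lambda}(h_1,h_2)\to\Psi_{w,\pm\lambda,0}(w_0)$, while by construction $\Psi_{w,\pm\lambda}(h_1,h_2)=\psi_{v,\pm\lambda}(x,\mathbf{y})/x^a$. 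The weak Gevrey-1 hypothesis writes
\[
\psi_{v,\pm\lambda}(x,\mathbf{y})-y_1y_2=\sum_{\mathbf{k}}\zeta_{\mathbf{k}}(x)\,\mathbf{y}^{\mathbf{k}},
\]
where each $\zeta_{\mathbf{k}}$ is exponentially flat at $0$ and the series converges normally on compact subsets. Evaluating along $\gamma$ and dividing by $x^a$, one verifies term-by-term (and then upgrades via normal convergence) that the result tends to $0$, since exponential flatness of each $\zeta_{\mathbf{k}}$ beats the polynomial factor $|x|^{-\Re(a)}$. This establishes $\Psi_{w,\pm\lambda,0}(w_0)=w_0$ on a neighborhood of $0$, and as $\Psi_{w,\pm\lambda,0}$ is entire by Lemma~\ref{lem: isotropie monomone res esp des feuilles}, the equality extends to all of $\ww C$.

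Normal convergence on $S_{\pm\lambda}\times\overline{\mathbf{D}}(\mathbf{0},\tilde{\mathbf{r}})$ with $0<\tilde r_j<r_j$ follows from the Cauchy bound
\[
|\Psi_{w,\pm\lambda,n}(w)|\leq\frac{r'_1r'_2}{|x^a|}\left|\frac{f_j(x,w)}{\tilde r_j+\delta}\right|^n
\]
of Lemma~\ref{lem: isotropie monomone res esp des feuilles} applied at the admissible choice $x'=x$: substituting $h_j^n=y_j^n/f_j(x,w)^n$ cancels the factors $f_j(x,w)^n$ and produces the geometric majorant $r'_1r'_2(|y_j|/(\tilde r_j+\delta))^n$, summable as soon as $|y_j|<\tilde r_j+\delta$. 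For the exponential estimate, however, $x'=x$ is not useful --- the $f_j$ cancel and no exponential factor survives --- so I will instead apply the Cauchy bound at $x'=2x$ (admissible after halving the sector radius and after further reducing the polydisc so that $|y_1y_2|<\tilde r_1\tilde r_2/2^{\Re(a)}$). The dominant factor in the ratio $f_j(2x,w)/f_j(x,w)$ is $\exp(\mp\lambda/(2x))$, whose modulus is bounded by $\exp(-\delta_0/(2|x|))$ on any closed sub-sector $S'\subset S_{\pm\lambda}$; each $n$-th term of $\psi_{v,\pm\lambda}-y_1y_2$ is then bounded by a constant times $(C|y_j|/(\tilde r_j+\delta))^n\exp(-n\delta_0/(2|x|))$, and summation over $n\geq 1$ yields $|\psi_{v,\pm\lambda}-y_1y_2|\leq A\exp(-B/|x|)$. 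Proposition~\ref{prop: dev asympt nul expo plat} then upgrades this to the Gevrey-1 asymptotic expansion $y_1y_2$.

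The main technical obstacle is controlling the resonant contribution $c_mw^m\log(x)/x$ in the explicit expression of $f_j$: since $|w^m|$ can grow like $|x|^{-\Re(a)\Re(1/a)}$ with $\Re(a)\Re(1/a)\in(0,1]$, this term has modulus $O(|\log x|/|x|^{\beta})$ for some $\beta\leq 1$, which is strictly slower than the principal exponential $\exp(\delta_0/|x|)$ but nonetheless pollutes the ratio $f_j(2x,w)/f_j(x,w)$ with a non-trivially $w$-dependent factor; keeping the polydisc small enough so that this secondary factor is uniformly dominated by the main exponential decay (uniformly in $\mathbf{y}\in\overline{\mathbf{D}}$) is the delicate point that must be verified before the preceding estimates close up.
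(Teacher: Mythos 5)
Your overall strategy coincides with the paper's: transport to the space of leaves, invoke the expansion and the Cauchy bounds of Lemma \ref{lem: isotropie monomone res esp des feuilles}, identify the $n=0$ coefficient using the weak tangency to the identity, and obtain the exponential estimate by evaluating the Cauchy bound at $2x$. The genuine gap is in your identification $\Psi_{w,\pm\lambda,0}(w)=w$. You reduce it to showing that $\big(\psi_{v,\pm\lambda}(x,\mathbf{y})-y_{1}y_{2}\big)/x^{a}\rightarrow0$ along the curve $\gamma$, justified by ``term-by-term convergence upgraded via normal convergence''. That inference is false in general: termwise exponential flatness of the $\zeta_{\mathbf{k}}$ together with normal convergence of $\sum\zeta_{\mathbf{k}}(x)\mathbf{y}^{\mathbf{k}}$ does \emph{not} imply that the sum is $o\left(|x|^{\Re\left(a\right)}\right)$, because the flatness constants are not uniform in $\mathbf{k}$. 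For instance $\zeta_{(k,0)}(x)=M\rho^{-k}\exp\left(-\delta_{k}/x\right)$ with $\delta_{k}=\exp\left(-e^{k}\right)$ yields a normally convergent, termwise exponentially flat series whose sum at a fixed $y_{1}=t>0$ decays only like a negative power of $\log\left(1/|x|\right)$. The $k_{2}=0$ part of your expansion, summed and divided by $x^{a}$, is exactly of this type, so the limit argument does not close; indeed, controlling such sums uniformly is precisely the weak-versus-strong issue the lemma is meant to resolve, which makes the step essentially circular. The paper avoids this with a purely formal identification: the $n$-th term of the leaf-space expansion is $y_{1}^{n}$ times a function of $\left(x,y_{1}y_{2}\right)$, so only $n=0$ produces diagonal monomials $y_{1}^{k}y_{2}^{k}$; matching the coefficient of $v^{k}$ gives $\delta_{k,1}+\psi_{v,\pm\lambda,(k,k)}(x)=\alpha_{k}x^{a(1-k)}$ with $\psi_{v,\pm\lambda,(k,k)}$ flat, which forces $\alpha_{k}=\delta_{k,1}$ one coefficient at a time --- exactly what the weak hypothesis licenses.

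A secondary point: you leave the control of the resonant term in $f_{j}\left(2x,w\right)/f_{j}\left(x,w\right)$ ``to be verified'', and your size estimate for it is off. In the ratio the $\log\left(x\right)$ cancels, leaving $-c_{m}w^{m}\log2$ with $w=y_{1}y_{2}/x^{a}$; since $am=1$ and $m\in\ww N_{>0}$ whenever $c_{m}\neq0$, this equals $-c_{m}\left(y_{1}y_{2}\right)^{m}\log\left(2\right)/x$, of modulus at most $\eta/|x|$ with $\eta$ arbitrarily small after shrinking the polydisc (similarly for the $\tilde{c}$ terms, since $\tilde{c}$ vanishes at the origin). It is therefore absorbed by $\Re\left(-\lambda/\left(2x\right)\right)\leq-B_{0}/|x|$ on closed subsectors, which is how the paper closes the estimate; no $|\log x|/|x|^{\beta}$ contribution survives.
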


\begin{proof}
By definition, we have 
\[
\Psi_{\pm\lambda}\circ\cal H_{\pm\lambda}=\cal H_{\pm\lambda}\circ\psi_{\pm\lambda}\,.
\]
In particular, for all $\left(x,\mathbf{y}\right)\in S_{\pm\lambda}\times\mathbf{D\left(0,r\right)}$:
\begin{eqnarray*}
\Psi_{w,\pm}\left(x,\frac{y_{1}}{f_{1}\left(x,\frac{y_{1}y_{2}}{x^{a}}\right)},\frac{y_{2}}{f_{2}\left(x,\frac{y_{1}y_{2}}{x^{a}}\right)}\right) & = & \frac{\psi_{v,\pm}\left(x,y_{1},y_{2}\right)}{x^{a}}\,\,.
\end{eqnarray*}
Thus, according to Lemma \ref{lem: isotropie monomone res esp des feuilles}
we have: 
\begin{equation}
\begin{cases}
{\displaystyle \psi_{v,\lambda}\left(x,\mathbf{y}\right)=x^{a}\sum_{n\ge0}\Psi_{w,\lambda,n}\left(\frac{y_{1}y_{2}}{x^{a}}\right)\left(\frac{y_{1}}{f_{1}\left(x,\frac{y_{1}y_{2}}{x^{a}}\right)}\right)^{n}}\\
{\displaystyle \psi_{v,-\lambda}\left(x,\mathbf{y}\right)=x^{a}\sum_{n\ge0}\Psi_{w,-\lambda,n}\left(\frac{y_{1}y_{2}}{x^{a}}\right)\left(\frac{y_{2}}{f_{2}\left(x,\frac{y_{1}y_{2}}{x^{a}}\right)}\right)^{n}} & \,.
\end{cases}\label{eq: psi_v et Psi_v}
\end{equation}
Besides we know that $\psi_{v,\pm\lambda}$ admits $y_{1}y_{2}$ as
weak Gevrey-1 asymptotic expansion in $S_{\pm\lambda}\times\mathbf{D\left(0,r\right)}$:
\begin{eqnarray}
\psi_{v,\pm\lambda}\left(x,y_{1},y_{2}\right) & = & y_{1}y_{2}+\sum_{\mathbf{k}\in\ww N^{2}}\psi_{v,\pm\lambda,\mathbf{k}}\left(x\right)\mathbf{y^{k}}\,\,,\label{eq: psi_v developpement}
\end{eqnarray}
where for all $\mathbf{k}=\left(k_{1},k_{2}\right)\in\ww N^{2}$ the
mapping $\psi_{v,\pm\lambda,\mathbf{k}}$ is holomorphic in $S_{\pm\lambda}$
and admits $0$ as Gevrey-1 asymptotic expansion. Let us compare both
expressions of $\psi_{v,\pm\lambda}$ above. Looking at monomials
$\mathbf{y^{k}}$ with $k_{1}=k_{2}$ in (\ref{eq: psi_v developpement}),
and at terms corresponding to $n=0$ on the right-hand side of (\ref{eq: psi_v et Psi_v}),
we must have for all $x\in S_{\pm\lambda}$ and $v\in\ww C$ with
$\abs v<r_{1}r_{2}$:
\begin{eqnarray*}
v+\sum_{k\ge0}\psi_{v,\lambda,\mathbf{k}\left(k,k\right)}\left(x\right)v^{k} & = & x^{a}\Psi_{w,\lambda,0}\left(\frac{v}{x^{a}}\right)\,\,.
\end{eqnarray*}
Since $\Psi_{w,\pm\lambda,0}$ is analytic in $\ww C$, there exists
$\left(\alpha_{\text{\ensuremath{\pm\lambda},}k}\right)_{k\in\ww N}\subset\ww C$
such that
\begin{eqnarray*}
\Psi_{w,\pm\lambda,0}\left(\frac{v}{x^{a}}\right) & = & \sum_{k\ge0}\alpha_{\pm\lambda,k}\left(\frac{v}{x^{a}}\right)^{k}\,\,.
\end{eqnarray*}
This can only happen if $\alpha_{\pm\lambda,k}=0$ whenever $k\neq1$,
for $\psi_{v,\pm\lambda,\mathbf{k}}$ is holomorphic in $S_{\pm\lambda}$
and admits $0$ as Gevrey-1 asymptotic expansion. A further immediate
identification yields 
\begin{align*}
\Psi_{v,\pm\lambda,0}\left(w\right) & =w~.
\end{align*}
Thus
\[
\begin{cases}
{\displaystyle \psi_{v,\lambda}\left(x,\mathbf{y}\right)=y_{1}y_{2}+x^{a}\sum_{n\ge1}\Psi_{w,\lambda,n}\left(\frac{y_{1}y_{2}}{x^{a}}\right)\left(\frac{y_{1}}{f_{1}\left(x,\frac{y_{1}y_{2}}{x^{a}}\right)}\right)^{n}}\\
{\displaystyle \psi_{v,-\lambda}\left(x,\mathbf{y}\right)=y_{1}y_{2}+x^{a}\sum_{n\ge1}\Psi_{w,-\lambda,n}\left(\frac{y_{1}y_{2}}{x^{a}}\right)\left(\frac{y_{2}}{f_{2}\left(x,\frac{y_{1}y_{2}}{x^{a}}\right)}\right)^{n}} & \,\,.
\end{cases}
\]

Let us prove that $\psi_{v,\pm\lambda}$ admits $y_{1}y_{2}$ as Gevrey-1
asymptotic expansion in $S_{\pm\lambda}\times\left(\ww C^{2},0\right)$.
We have to show that ${\displaystyle \abs{\psi_{v,\pm\lambda}\left(x,y_{1},y_{2}\right)-y_{1}y_{2}}}$
is exponentially small with respect to $x\in S_{\pm\lambda}$, uniformly
in $\mathbf{y}\in\mathbf{D\left(0,r\right)}$. As for the previous
lemma, we perform the proof for $\psi_{v,\lambda}$ only (the same
proof applies for $\psi_{v,-\lambda}$ by exchanging $y_{1}$ and
$y_{2}$). 

From the computations above we derive 
\begin{eqnarray*}
\abs{\psi_{v,\lambda}\left(x,y_{1},y_{2}\right)-y_{1}y_{2}} & \leq & \sum_{n\geq1}\abs{x^{a}\Psi_{w,\lambda,n}\left(\frac{y_{1}y_{2}}{x^{a}}\right)\left(\frac{y_{1}}{f_{1}\left(x,\frac{y_{1}y_{2}}{x^{a}}\right)}\right)^{n}}\,\,.
\end{eqnarray*}
Let us fix $\tilde{r}_{1},\tilde{r}_{2},\delta>0$ in such a way that
\[
0<\tilde{r}_{j}+\delta<r_{j}~,~j\in\left\{ 1,2\right\} ~.
\]
Let us take $\abs x$, $\abs{y_{1}}$ and $\abs{y_{2}}$ small enough
so that 
\[
{\displaystyle 2x\in S_{\lambda}}
\]
 and 
\[
{\displaystyle \abs{y_{1}y_{2}}<\frac{\tilde{r}_{1}\tilde{r}_{2}}{\abs{2^{a}}}<r_{1}r_{2}\,\,.}
\]
According to Lemma \ref{lem: isotropie monomone res esp des feuilles},
for all $\tilde{x}\in S_{\lambda}$ and all $w\in\ww C$: 
\begin{eqnarray*}
\abs{w\tilde{x}^{a}}\leq\tilde{r}_{1}\tilde{r}_{2} & \Longrightarrow & \abs{\Psi_{w,\lambda,n}\left(w\right)}\leq\frac{r'_{1}r'_{2}}{\abs{\tilde{x}^{a}}}\abs{\frac{f_{1}\left(\tilde{x},w\right)}{\tilde{r}_{1}+\delta}}^{n}\,\,.
\end{eqnarray*}
In particular for $\tilde{x}=2x$ and ${\displaystyle w}=\frac{y_{1}y_{2}}{x^{a}}$
we derive $\abs{w\tilde{x}^{a}}<\tilde{r}_{1}\tilde{r}_{2}$, from
which we conclude 
\begin{eqnarray*}
\abs{\Psi_{w,\lambda,n}\left(\frac{y_{1}y_{2}}{x^{a}}\right)} & \leq & \frac{r'_{1}r'_{2}}{\abs{2^{a}x^{a}}}\abs{\frac{f_{1}\left(2x,\frac{y_{1}y_{2}}{x^{a}}\right)}{\tilde{r}_{1}+\delta}}^{n}\,\,.
\end{eqnarray*}
Consequently, for all $\left(x,y_{1},y_{2}\right)\in S_{\lambda}\times\mathbf{D\left(0,\tilde{r}\right)}$
with
\[
\begin{cases}
{\displaystyle 2x\in S_{\lambda}}\\
\abs{y_{1}y_{2}}<\frac{\tilde{r}_{1}\tilde{r}_{2}}{\abs{2^{a}}}<r_{1}r_{2} & ,
\end{cases}
\]
we have 
\begin{eqnarray*}
\abs{\psi_{v,\lambda}\left(x,y_{1},y_{2}\right)-y_{1}y_{2}} & \leq & \sum_{n\geq1}\abs{x^{a}\frac{r'_{1}r'_{2}}{2^{a}x^{a}}\left(\frac{f_{1}\left(2x,\frac{y_{1}y_{2}}{x^{a}}\right)}{\tilde{r}_{1}+\delta}\right)^{n}\left(\frac{y_{1}}{f_{1}\left(x,\frac{y_{1}y_{2}}{x^{a}}\right)}\right)^{n}}\\
 & \leq & \frac{r'_{1}r'_{2}}{\abs{2^{a}}}\sum_{n\geq1}\abs{\left(\frac{y_{1}}{\tilde{r}_{1}+\delta}\right)^{n}\left(\frac{f_{1}\left(2x,\frac{y_{1}y_{2}}{x^{a}}\right)}{f_{1}\left(x,\frac{y_{1}y_{2}}{x^{a}}\right)}\right)^{n}}\,\,.
\end{eqnarray*}
Since $\tilde{c}\left(v\right)$ is the germ of an analytic function
near the origin which is null at the origin, we can take $r_{1}$,$r_{2}>0$
small enough in order that for all closed sub-sector $S'\subset S_{\lambda}$
, for all $\tilde{r}_{1}\in\left]0,r_{1}\right[$ and $\tilde{r}_{2}\in\left]0,r_{2}\right[$,
there exist $A,B>0$ satisfying:
\begin{eqnarray*}
\left(x,y_{1},y_{2}\right)\in S'\times\mathbf{D\left(0,\tilde{r}\right)} & \Longrightarrow & \abs{\psi_{v,\lambda}\left(x,y_{1},y_{2}\right)-y_{1}y_{2}}A\exp\left(-\frac{B}{\abs x}\right)\,\,.
\end{eqnarray*}
Let us prove this. We need here to estimate the quantity:
\begin{eqnarray*}
\abs{\frac{f_{1}\left(2x,\frac{y_{1}y_{2}}{x^{a}}\right)}{f_{1}\left(x,\frac{y_{1}y_{2}}{x^{a}}\right)}} & = & \abs{2^{a_{1}}\exp\left(-\frac{\lambda}{2x}-c_{m}\frac{\left(y_{1}y_{2}\right)^{m}}{x}\log\left(2\right)-\frac{\tilde{c}\left(y_{1}y_{2}2^{a}\right)}{2x}+\frac{\tilde{c}\left(y_{1}y_{2}\right)}{x}\right)}\,\,.
\end{eqnarray*}
On only have tot deal with the case where $x\in S'$ is such that
$2x\in S'$ (otherwise, $x$ is ``far from the origin'', and we
conclude without difficulty). We have:
\begin{eqnarray*}
\left(x,y_{1},y_{2}\right)\in S'\times\mathbf{D\left(0,\tilde{r}\right)}\tx{\,et\,}2x\in S & \Longrightarrow & \abs{\frac{f_{1}\left(2x,\frac{y_{1}y_{2}}{x^{a}}\right)}{f_{1}\left(x,\frac{y_{1}y_{2}}{x^{a}}\right)}}\leq\abs{2^{a_{1}}}\exp\left(-\frac{B}{\abs x}\right)<1\,\,.
\end{eqnarray*}
Hence 
\begin{eqnarray*}
\abs{\psi_{v,\lambda}\left(x,y_{1},y_{2}\right)-y_{1}y_{2}} & \leq & \frac{r'_{1}r'_{2}}{\abs{2^{a}}}\sum_{n\geq1}\abs{\frac{2^{a_{1}}y_{1}}{\tilde{r}_{1}+\delta}\exp\left(-\frac{B}{\abs x}\right)}^{n}\\
 & \leq & \frac{r'_{1}r'_{2}}{\abs{2^{a}}}\frac{\abs{\frac{2^{a_{1}}y_{1}}{\tilde{r}_{1}+\delta}\exp\left(-\frac{B}{\abs x}\right)}}{1-\abs{\frac{2^{a_{1}}y_{1}}{\tilde{r}_{1}+\delta}\exp\left(-\frac{B}{\abs x}\right)}}\\
 & \leq & A\exp\left(-\frac{B}{\abs x}\right)\,\,,
\end{eqnarray*}
for a convenient $A>0$. 
\end{proof}
The latter lemma implies $\Psi_{v,\pm\lambda,0}\left(w\right)=w$,
having for consequence the next result.
\begin{cor}
\label{cor: quantit=0000E9 born=0000E9e}For all closed sub-sector
$S'\subset S_{\pm\lambda}$ and for all $\tilde{r}_{1}\in\left]0,r_{1}\right[$
and $\tilde{r}_{2}\in\left]0,r_{2}\right[$, there exists $A,B>0$
such that for all $x\in S'$: 
\begin{eqnarray*}
\left.\begin{array}{c}
{\displaystyle \abs{h_{1}}\leq\frac{\tilde{r}_{1}}{\abs{f_{1}\left(x,h_{1}h_{2}\right)}}}\\
{\displaystyle \abs{h_{2}}\leq\frac{\tilde{r}_{2}}{\abs{f_{2}\left(x,h_{1}h_{2}\right)}}}
\end{array}\right\}  & \Longrightarrow & \abs{\Psi_{w,\pm}\left(x,h_{1},h_{2}\right)-h_{1}h_{1}}\leq\frac{A\exp\left(-\frac{B}{\abs x}\right)}{\abs{x^{a}}}\,\,.
\end{eqnarray*}
In particular, there exists $C>0$ such that: 
\begin{eqnarray*}
\left.\begin{array}{c}
{\displaystyle \abs{h_{1}}\leq\frac{\tilde{r}_{1}}{\abs{f_{1}\left(x,h_{1}h_{2}\right)}}}\\
{\displaystyle \abs{h_{2}}\leq\frac{\tilde{r}_{2}}{\abs{f_{2}\left(x,h_{1}h_{2}\right)}}}
\end{array}\right\}  & \Longrightarrow & \frac{\abs{\exp\left(c_{m}\left(h_{1}h_{2}\right)^{m}\log\left(x\right)+\frac{\tilde{c}\left(x^{a}\left(h_{1}h_{2}\right)^{m}\right)}{x}\right)}}{\abs{\exp\left(c_{m}\left(\Psi_{w}\left(x,h_{1},h_{2}\right)\right)^{m}\log\left(x\right)+\frac{\tilde{c}\left(x^{a}\left(\Psi_{w}\left(x,h_{1},h_{2}\right)\right)^{m}\right)}{x}\right)}}<C\,.
\end{eqnarray*}
\end{cor}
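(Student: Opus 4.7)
The plan is to derive both inequalities from Lemma \ref{lem: istropie monomiale}, which already establishes that $\psi_{v,\pm\lambda}$ is genuinely (not just weakly) Gevrey-1 asymptotic to $y_{1}y_{2}$, hence exponentially flat away from $y_{1}y_{2}$ on closed sub-sectors and closed sub-polydiscs. The first inequality amounts to translating this fact from the $(x,\mathbf{y})$ chart into the leaf-space chart $(x,h_{1},h_{2})$ via the conjugacy $\mathcal{H}_{\pm\lambda}$; the second then drops out from the Lipschitz behaviour of the tangential data.

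First I would exploit the intertwining $\Psi_{\pm\lambda}\circ\mathcal{H}_{\pm\lambda}=\mathcal{H}_{\pm\lambda}\circ\psi_{\pm\lambda}$. Multiplying the last two components of this relation and using the identity $f_{1}(x,w)f_{2}(x,w)=x^{a}$ yields the purely algebraic statement
\[
\Psi_{w,\pm\lambda}(x,h_{1},h_{2})\cdot x^{a}\;=\;\psi_{v,\pm\lambda}\bigl(x,\,h_{1}f_{1}(x,h_{1}h_{2}),\,h_{2}f_{2}(x,h_{1}h_{2})\bigr).
\]
Setting $y_{i}:=h_{i}f_{i}(x,h_{1}h_{2})$, the standing hypotheses read exactly as $|y_{i}|\leq\tilde{r}_{i}$, so $(y_{1},y_{2})$ lies in the closed polydisc $\overline{\mathbf{D}(\mathbf{0},\tilde{\mathbf{r}})}\subset\mathbf{D}(\mathbf{0},\mathbf{r})$. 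Applying the exponential flatness estimate of Lemma \ref{lem: istropie monomiale} to the closed pair $(S',\overline{\mathbf{D}(\mathbf{0},\tilde{\mathbf{r}})})$, and using $y_{1}y_{2}=h_{1}h_{2}\cdot x^{a}$, the first inequality follows immediately after dividing by $|x^{a}|$.

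For the second inequality I would write the ratio as $\exp(\Delta)$ and bound $|\Delta|$ uniformly on the admissible set. Set $w:=h_{1}h_{2}$ and $w':=\Psi_{w,\pm\lambda}(x,h_{1},h_{2})$; the first inequality provides $|w-w'|\leq Ae^{-B/|x|}/|x^{a}|$, while the hypotheses force $|w|,|w'|\leq K/|x|^{\Re(a)}$ for some $K>0$. A mean value argument applied to the polynomial $z\mapsto z^{m}$ turns this into an exponentially small bound for $|c_{m}(w^{m}-(w')^{m})\log x|$, because the polynomial factor $|x|^{-\Re(a)(m-1)}|\log x|$ is crushed by $e^{-B/|x|}$. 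For the $\tilde{c}$ contribution, whose argument in the relevant exponent is $x^{a}\cdot h_{1}h_{2}=y_{1}y_{2}$ (uniformly bounded by $\tilde{r}_{1}\tilde{r}_{2}$), the Lipschitz property of the analytic germ $\tilde{c}$ at $0$ reduces matters to bounding $|x^{a}(w-w')|/|x|$, which is once again exponentially small.

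The main technical care needed is to choose $\tilde{r}_{1},\tilde{r}_{2}$ small enough at the outset so that every argument of $\tilde{c}$ that appears stays inside a disc where $\tilde{c}$ is holomorphic; aside from this, both inequalities are a direct transfer, through $\mathcal{H}_{\pm\lambda}$, of the Gevrey-1 flatness already proved in Lemma \ref{lem: istropie monomiale}. I do not anticipate any substantial obstacle beyond carefully tracking the factors of $|x^{a}|$ and $|\log x|$ to make sure the exponential beats them.
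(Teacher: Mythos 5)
Your proposal is correct and follows exactly the route the paper intends: the paper states this corollary without proof as an immediate consequence of Lemma \ref{lem: istropie monomiale}, and your argument — transferring the exponential flatness of $\psi_{v,\pm\lambda}-y_{1}y_{2}$ through $\mathcal{H}_{\pm\lambda}$ via $\Psi_{w,\pm}\cdot x^{a}=\psi_{v,\pm}\circ\mathcal{H}_{\pm\lambda}^{-1}$ and then bounding the exponent difference by a mean-value estimate on $z\mapsto z^{m}$ and the Lipschitz property of $\tilde{c}$ — is precisely the missing verification. Your reading of the $\tilde{c}$ argument as $x^{a}h_{1}h_{2}=y_{1}y_{2}$ (consistent with the definition of $f_{1},f_{2}$) is the right one.
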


\subsubsection{Power series expansion of sectorial isotropies in the space of leaves}

~

Now, we give a power series expansion of $\Psi_{1,\pm\lambda}$ and
$\Psi_{2,\pm\lambda}$ in the space of leaves. Let us introduce the
following notations:
\[
\begin{cases}
N\left(1,+\right):=N\left(2,-\right):=1\\
N\left(1,-\right):=N\left(2,+\right):=-1 & .
\end{cases}
\]
\begin{lem}
\label{lem: istropies espace des feuilles 2}With the notations and
assumptions above, there exists entire functions (\emph{i.e. }analytic
over $\ww C$) denoted by $\Psi_{j,\pm\lambda,n}$, $j\in\acc{1,2}$,
$n\geq N\left(j,\pm\right)$, such that for $j\in\acc{1,2}:$
\[
\begin{cases}
{\displaystyle \Psi_{j,\lambda}\left(h_{1},h_{2}\right)=\sum_{n\geq N\left(j,+\right)}\Psi_{j,\lambda,n}\left(h_{1}h_{2}\right)h_{1}^{n}}\\
{\displaystyle \Psi_{j,-\lambda}\left(h_{1},h_{2}\right)=\sum_{n\geq N\left(j,-\right)}\Psi_{j,\lambda,n}\left(h_{1}h_{2}\right)h_{2}^{n}} & .
\end{cases}
\]
These series converge normally in every subset of $\Gamma_{\pm\lambda}$
of the form $\lsr{\pm}{\tilde{\mathbf{r}}}$ with $0<\tilde{r}_{1}<r_{1}$
and $0<\tilde{r}_{2}<r_{2}$ (\emph{cf.} Definition\emph{ }\ref{def: espace des feuilles =0000E0 rayon fixe}).
More precisely, for all $\tilde{r}_{1},\tilde{r}_{2},\delta>0$ such
that 
\[
0<\tilde{r}_{j}+\delta<r_{j}~,~j\in\left\{ 1,2\right\} 
\]
there exists $C>0$ such that for all $x\in S_{\pm\lambda}$ and for
all $w\in\ww C$, we have: 
\begin{eqnarray*}
\abs{wx^{a}}\leq\tilde{r}_{1}\tilde{r}_{2} & \Longrightarrow & \begin{cases}
{\displaystyle \abs{\Psi_{1,\lambda,n}\left(w\right)}<Cr'_{1}\frac{\abs{f_{1}\left(x,w\right)}^{n-1}}{\left(\tilde{r}_{1}+\delta\right)^{n}}} & ,\,n\geq1\\
{\displaystyle \abs{\Psi_{2,\lambda,n}\left(w\right)}<\frac{Cr'_{2}}{\abs{x^{a}}}\frac{\abs{f_{1}\left(x,w\right)}^{n+1}}{\left(\tilde{r}_{1}+\delta\right)^{n}}} & ,\,n\geq-1\\
{\displaystyle \abs{\Psi_{1,-\lambda,n}\left(w\right)}<\frac{Cr'_{1}}{\abs{x^{a}}}\frac{\abs{f_{2}\left(x,w\right)}^{n+1}}{\left(\tilde{r}_{2}+\delta\right)^{n}}} & ,\,n\geq-1\\
{\displaystyle \abs{\Psi_{2,-\lambda,n}\left(w\right)}<Cr'_{2}\frac{\abs{f_{2}\left(x,w\right)}^{n-1}}{\left(\tilde{r}_{2}+\delta\right)^{n}}} & ,\,n\geq1\,\,.
\end{cases}
\end{eqnarray*}
Moreover: 
\[
\Psi_{1,-\lambda,-1}\left(0\right)=\Psi_{2,\lambda,-1}\left(0\right)=0\,\,.
\]
\end{lem}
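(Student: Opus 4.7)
The proof follows the scheme of Lemma \ref{lem: isotropie monomone res esp des feuilles}, augmented with tools needed to handle the more delicate coefficient bounds and the boundary cases $n = N(j, \pm)$. I restrict attention to $\pm = \lambda$, the opposite sector being treated symmetrically by swapping the roles of $h_1$ and $h_2$ (and of $f_1$ and $f_2$).

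First, for each $w \in \ww C$, I would restrict $\Psi_{j, \lambda}$, $j \in \acc{1, 2}$, to the hypersurface $M_w := \acc{h_1 h_2 = w} \cap \Gamma_\lambda$, obtaining analytic functions $h_1 \mapsto \Psi_{j, \lambda}(h_1, w/h_1)$ on the annuli $\Omega_{x, w}$ introduced in the proof of Lemma \ref{lem: isotropie monomone res esp des feuilles}. Their Laurent expansions $\sum_n \Psi_{j, \lambda, n}(w) h_1^n$ provide, through Cauchy's formula on the circle $\gamma(x, w)$ of radius $\rho(x, w) = (\tilde r_1 + \delta)/\abs{f_1(x, w)}$,
\[
\Psi_{j, \lambda, n}(w) = \frac{1}{2 i \pi} \oint_{\gamma(x, w)} \frac{\Psi_{j, \lambda}(h_1, w/h_1)}{h_1^{n+1}} \dd[h_1]\qquad.
\]
Replacing $w$ by $w + \xi$ and invoking analytic dependence of the Cauchy integral on parameters (as in Lemma \ref{lem: isotropie monomone res esp des feuilles}), each $\Psi_{j, \lambda, n}$ extends to an entire function of $w \in \ww C$.

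Next, I would estimate the integrand. Since $\Psi_\lambda(\lsr{}{x, \mathbf{r}}) \subset \lsr{}{x, \mathbf{r}'}$, the relation $f_1 f_2 = x^a$ gives
\begin{eqnarray*}
\abs{\Psi_{1, \lambda}(h_1, w/h_1)} & \leq & \frac{r'_1}{\abs{f_1(x, \Psi_w(h_1, w/h_1))}}\qquad, \\
\abs{\Psi_{2, \lambda}(h_1, w/h_1)} & \leq & \frac{r'_2\, \abs{f_1(x, \Psi_w(h_1, w/h_1))}}{\abs{x^a}}\qquad.
\end{eqnarray*}
Corollary \ref{cor: quantit=0000E9 born=0000E9e} ensures that $\abs{f_1(x, \Psi_w)/f_1(x, w)}$ is bounded above and below by constants independent of $(x, h_1, h_2)$ on the relevant subdomain, so up to a uniform constant $C$ we may replace $f_1(x, \Psi_w)$ by $f_1(x, w)$ in these estimates. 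Combining with Cauchy's formula yields the announced bounds on $\abs{\Psi_{j, \lambda, n}(w)}$. Fact \ref{fact: limites} then eliminates indices that are too negative: $\abs{f_1(x, w)}$ diverges exponentially in $1/\abs{x}$ as $x \to 0$ in $S_\lambda$, whereas $1/\abs{x^a}$ only blows up polynomially. Hence, for $\Psi_{1, \lambda, n}$ with $n \leq 0$, the bound $C r'_1 \abs{f_1}^{n-1}/(\tilde r_1 + \delta)^n$ tends to $0$, forcing $\Psi_{1, \lambda, n} \equiv 0$ and $N(1, +) = 1$; similarly, for $\Psi_{2, \lambda, n}$ with $n \leq -2$, the factor $\abs{f_1}^{n+1}$ decays super-exponentially and dominates $1/\abs{x^a}$, giving $\Psi_{2, \lambda, n} \equiv 0$ and $N(2, +) = -1$.

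Finally, to show $\Psi_{2, \lambda, -1}(0) = 0$ (the claim $\Psi_{1, -\lambda, -1}(0) = 0$ being symmetric), I would exploit the product identity $\Psi_{w, \lambda} = \Psi_{1, \lambda}\, \Psi_{2, \lambda}$ together with the equality $\Psi_{w, \lambda, 0}(w) = w$ from Lemma \ref{lem: istropie monomiale}. On the slice $h_1 h_2 = w$, the only pair $(n, m)$ with $n \geq 1$, $m \geq -1$, $n + m = 0$ is $(1, -1)$, so extracting the coefficient of $h_1^0$ yields $\Psi_{1, \lambda, 1}(w)\, \Psi_{2, \lambda, -1}(w) = w$. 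At $w = 0$ this gives $\Psi_{1, \lambda, 1}(0)\, \Psi_{2, \lambda, -1}(0) = 0$; to conclude I would check that $\Psi_{1, \lambda, 1}(0) = 1$. Restricting $\Psi_{1, \lambda}$ to $\acc{h_2 = 0}$ and pulling back via $\cal H_\lambda$ shows that $\Psi_{1, \lambda, 1}(0) = \ppp{\psi_{1, \lambda}}{y_1}(x, 0, 0)$ for every $x \in S_\lambda$; as $\psi_\lambda$ admits the identity as weak Gevrey-1 asymptotic expansion, this partial derivative tends to $1$ as $x \to 0$ in $S_\lambda$, and equality of a constant with a function tending to $1$ forces $\Psi_{1, \lambda, 1}(0) = 1$. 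The main obstacle will be precisely this boundary index $n = -1$: the Cauchy estimate alone is not strong enough to force the coefficient to vanish everywhere, and only the evaluation at $w = 0$ can be obtained --- through the indirect product-relation argument --- which will also require checking that the apparent pole at $h_1 = 0$ in $\sum_{n \geq -1} \Psi_{2, \lambda, n}(h_1 h_2) h_1^n$ is canceled by $\Psi_{2, \lambda, -1}(0) = 0$, ensuring that $\Psi_{2, \lambda}$ is genuinely holomorphic on $\Gamma_\lambda$.
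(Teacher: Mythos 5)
Your proposal follows the paper's proof almost verbatim for the core of the lemma: the restriction to the slices $\acc{h_{1}h_{2}=w}$, the Laurent expansion on the annuli $\Omega_{x,w}$, the Cauchy integral on the circle of radius $\left(\tilde{r}_{1}+\delta\right)/\abs{f_{1}\left(x,w\right)}$, the analytic continuation in $w$ to entire functions, the transfer from $f_{j}\left(x,\Psi_{w}\right)$ to $f_{j}\left(x,w\right)$ via Corollary \ref{cor: quantit=0000E9 born=0000E9e} (your remark that one needs the ratio bounded above \emph{and} below is a point the paper glosses over, since it needs $\abs{f_{1}\left(x,w\right)}\leq C\abs{f_{1}\left(x,\Psi_{w}\right)}$ for the $\Psi_{1,\lambda,n}$ bound and the reverse inequality for the $\Psi_{2,\lambda,n}$ bound), and the elimination of the too-negative indices by letting $x\to0$ in Fact \ref{fact: limites}. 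The only genuine divergence is the last claim $\Psi_{2,\lambda,-1}\left(0\right)=\Psi_{1,-\lambda,-1}\left(0\right)=0$. The paper's argument is a one-liner: $\Psi_{2,\lambda}$ is by construction analytic on all of $\Gamma_{\lambda}$, which contains points $\left(0,h_{2}\right)$, so the apparent simple pole $\Psi_{2,\lambda,-1}\left(0\right)/h_{1}$ in the already-established expansion must cancel. You instead extract the coefficient of $h_{1}^{0}$ in the product $\Psi_{w,\lambda}=\Psi_{1,\lambda}\Psi_{2,\lambda}$, getting $\Psi_{1,\lambda,1}\left(w\right)\Psi_{2,\lambda,-1}\left(w\right)=\Psi_{w,\lambda,0}\left(w\right)=w$ from Lemma \ref{lem: istropie monomiale}, and then argue $\Psi_{1,\lambda,1}\left(0\right)=1$. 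This works and has the side benefit of producing the identity $\Psi_{1,\lambda,1}\Psi_{2,\lambda,-1}=w$ (which anticipates the normalizations $\Psi_{1,\lambda,1}\equiv1$, $\Psi_{2,\lambda,-1}\left(w\right)=w$ proved later in Lemma \ref{lem: isotropies exp plates}), but it is longer and needs an extra input; the paper's route is the more economical one, and you in effect rediscover it at the very end when you observe that holomorphy of $\Psi_{2,\lambda}$ at $h_{1}=0$ is equivalent to the vanishing you want --- that holomorphy is not something to be ``checked'', it is given, and it already closes the argument.

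One small imprecision in your version: the identity $\Psi_{1,\lambda,1}\left(0\right)=\ppp{\psi_{1,\lambda}}{y_{1}}\left(x,0,0\right)$ does not hold exactly for each $x$, because $\psi_{\lambda}$ need not fix $\acc{\mathbf{y}=\mathbf{0}}$ (the coefficients $\psi_{1,\lambda,\left(0,0\right)}\left(x\right)$ and $\psi_{v,\lambda}\left(x,0,0\right)$ are only exponentially flat, not zero), so differentiating the conjugacy relation at $y_{1}=0$ produces correction terms. These corrections are exponentially small as $x\to0$ in $S_{\lambda}$, so your limiting argument still yields $\Psi_{1,\lambda,1}\left(0\right)=1$, but the equality should be stated asymptotically rather than exactly.
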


\begin{proof}
We use the same notations as in the proof of Lemma \ref{lem: isotropie monomone res esp des feuilles},
and as usual, we give the proof only for $\Psi_{\lambda}$ (the proof
for $\Psi_{-\lambda}$ is analogous, by exchanging the role played
by $h_{1}$ and $h_{2}$). For fixed $w\in\ww C$, the maps
\[
\varphi_{1}:h_{1}\mapsto\Psi_{1,\lambda}\left(h_{1},\frac{w}{h_{1}}\right)
\]
and 
\[
\varphi_{2}:h_{1}\mapsto\Psi_{2,\lambda}\left(h_{1},\frac{w}{h_{1}}\right)
\]
are analytic in 
\[
M_{w,1}=\bigcup_{\substack{x\in S_{\lambda}\\
\abs{wx^{a}}<r_{1}r_{2}
}
}\Omega_{x,w}
\]
(see the proof of Lemma \ref{lem: isotropie monomone res esp des feuilles}).
In particular, $\varphi_{1}$ and $\varphi_{2}$ admit Laurent expansions
\[
\begin{cases}
{\displaystyle \varphi_{1}\left(h_{1}\right)=\Psi_{1,\lambda}\left(h_{1},\frac{w}{h_{1}}\right)=\sum_{n\geq-L_{1}}\Psi_{1,\lambda,n}\left(w\right)h_{1}^{n}}\\
{\displaystyle \varphi_{2}\left(h_{1}\right)=\Psi_{2,\lambda}\left(h_{1},\frac{w}{h_{1}}\right)=\sum_{n\geq-L_{2}}\Psi_{2,\lambda,n}\left(w\right)h_{1}^{n}}
\end{cases}
\]
in every annulus $\Omega_{x,w}$, with $x\in S_{\lambda}$ such that
$\abs{wx^{a}}<r_{1}r_{2}$. Using the same method as in the proof
of Lemma \ref{lem: isotropie monomone res esp des feuilles}, we prove
without additional difficulties that for all $n\in\ww Z$, $\Psi_{1,\lambda,n}$
and $\Psi_{2,\lambda,n}$ are analytic in any point $w\in\ww C$,
and thus are entire functions (\emph{i.e.} analytic over $\wc$).
Moreover, we also show in the same way as earlier that for all $\tilde{r}_{1},\tilde{r}_{2},\delta>0$
with 
\[
0<\tilde{r}_{j}+\delta<r_{j}~,~j\in\left\{ 1,2\right\} \,\,,
\]
for all $n\in\ww Z$ and for all $\left(x,w\right)\in S_{\lambda}\times\ww C$
such that $\abs{wx^{a}}\leq\tilde{r}_{1}\tilde{r}_{2}$, we have:
\[
\begin{cases}
{\displaystyle \abs{\Psi_{1,\lambda,n}\left(w\right)}<\frac{r'_{1}}{\abs{f_{1}\left(x,\Psi_{w,\lambda}\left(x,h_{1},\frac{w}{h_{1}}\right)\right)}}\abs{\frac{f_{1}\left(x,w\right)}{\tilde{r}_{1}+\delta}}^{n}}\\
{\displaystyle \abs{\Psi_{2,\lambda,n}\left(w\right)}<\frac{r'_{2}}{\abs{f_{2}\left(x,\Psi_{w,\lambda}\left(x,h_{1},\frac{w}{h_{1}}\right)\right)}}\abs{\frac{f_{1}\left(x,w\right)}{\tilde{r}_{1}+\delta}}^{n}} & \,.
\end{cases}
\]
According to Corollary \ref{cor: quantit=0000E9 born=0000E9e}, there
exists $C>0$ such that for all $\left(x,w\right)\in S_{\lambda}\times\ww C$
with $\abs{wx^{a}}\leq\tilde{r}_{1}\tilde{r}_{2}$, we have: 
\[
\begin{cases}
{\displaystyle \abs{\Psi_{1,\lambda,n}\left(w\right)}<Cr'_{1}\frac{\abs{f_{1}\left(x,w\right)}^{n-1}}{\left(\tilde{r}_{1}+\delta\right)^{n}}}\\
{\displaystyle \abs{\Psi_{2,\lambda,n}\left(w\right)}<\frac{Cr'_{2}}{\abs{x^{a}}}\frac{\abs{f_{1}\left(x,w\right)}^{n+1}}{\left(\tilde{r}_{1}+\delta\right)^{n}}} & \,.
\end{cases}
\]
According to the statement in Fact \ref{fact: limites}, for a fixed
value $w\in\ww C$, if we look at the limit as $x$ tends to $0$
in $S_{\lambda}$ of the right hand-sides above we deduce that: 
\[
\begin{cases}
\abs{\Psi_{1,\lambda,n}\left(w\right)}=0 & ,\,\forall n\leq0\\
\abs{\Psi_{2,\lambda,n}\left(w\right)}=0 & ,\,\forall n\leq-2\,.
\end{cases}
\]
Consequently: 
\[
\begin{cases}
{\displaystyle \Psi_{1,\lambda}\left(h_{1},h_{2}\right)=\sum_{n\geq1}\Psi_{1,\lambda,n}\left(h_{1}h_{2}\right)h_{1}^{n}}\\
{\displaystyle \Psi_{2,\lambda}\left(h_{1},h_{2}\right)=\sum_{n\geq-1}\Psi_{2,\lambda,n}\left(h_{1}h_{2}\right)h_{1}^{n}} & \,\,.
\end{cases}
\]
These function series converges normally (and are analytic) in every
domain of the form$\lsr{}{\tilde{\mathbf{r}}}$ with $\tilde{\mathbf{r}}:=\left(\tilde{r}_{1},\tilde{r}_{2}\right)$
and 
\[
0<\tilde{r}_{j}+\delta<r_{j}~,~j\in\left\{ 1,2\right\} \,\,
\]
(\emph{cf.} Definition\emph{ }\ref{def: espace des feuilles =0000E0 rayon fixe}).
Moreover, for any fixed value of $h_{2}$, on the one hand the function
series 
\[
h_{1}\mapsto\Psi_{2,\lambda}\left(h_{1},h_{2}\right)=\sum_{n\geq-1}\Psi_{2,\lambda,n}\left(h_{1}h_{2}\right)h_{1}^{n}
\]
is analytic in a punctured disc, since 
\begin{eqnarray*}
{\displaystyle \abs{f_{2}\left(x,h_{1},h_{2}\right)}} & \underset{\substack{x\rightarrow0\\
x\in S_{\lambda}
}
}{\longrightarrow} & 0\,,
\end{eqnarray*}
and on the other hand, we already know that the function $h_{1}\mapsto\Psi_{2,\lambda}\left(h_{1},h_{2}\right)$
is analytic in a neighborhood of the origin. Thus, we must have $\Psi_{2,\lambda,-1}\left(0\right)=0$.
\end{proof}

\subsubsection{Sectorial isotropies: proof of Proposition \ref{prop: isotropies plates}}

~

The following lemma is a more precise version of Proposition \ref{prop: isotropies plates}.
We recall the notations:
\[
\begin{cases}
N\left(1,+\right)=N\left(2,-\right)=1\\
N\left(1,-\right)=N\left(2,+\right)=-1 & .
\end{cases}
\]
\begin{lem}
\label{lem: isotropies exp plates}With the notations and assumptions
above, we consider $\psi_{\pm\lambda}\in\Lambda_{\pm\lambda}^{\left(\tx{weak}\right)}\left(\ynorm\right)$,
with
\begin{eqnarray*}
\psi_{\pm\lambda}\left(x,\mathbf{y}\right) & = & \left(x,\psi_{1,\pm\lambda}\left(x,\mathbf{y}\right),\psi_{2,\pm\lambda}\left(x,\mathbf{y}\right)\right)\,\,.
\end{eqnarray*}
 Then, for $i\in\acc{1,2}$, $\psi_{i,\lambda}$ and $\psi_{i,-\lambda}$
can be written as power series as follows: 
\[
\begin{cases}
{\displaystyle \psi_{i,\lambda}\left(x,\mathbf{y}\right)=y_{i}+f_{i}\left(x,\frac{\psi_{v,\lambda}\left(x,\mathbf{y}\right)}{x^{a}}\right)\sum_{n\ge N\left(i,+\right)+1}\Psi_{i,\lambda,n}\left(\frac{y_{1}y_{2}}{x^{a}}\right)\left(\frac{y_{1}}{f_{1}\left(x,\frac{y_{1}y_{2}}{x^{a}}\right)}\right)^{n}}\\
{\displaystyle \psi_{i,-\lambda}\left(x,\mathbf{y}\right)=y_{i}+f_{i}\left(x,\frac{\psi_{v,-\lambda}\left(x,\mathbf{y}\right)}{x^{a}}\right)\sum_{n\ge N\left(i,-\right)+1}\Psi_{i,-\lambda,n}\left(\frac{y_{1}y_{2}}{x^{a}}\right)\left(\frac{y_{2}}{f_{2}\left(x,\frac{y_{1}y_{2}}{x^{a}}\right)}\right)^{n}} & .
\end{cases}
\]
which are normally convergent in every subset of $S_{\pm\lambda}\times\mathbf{D\left(0,r\right)}$
of the form $S_{\pm\lambda}\times\mathbf{\overline{D}\left(0,\tilde{r}\right)},$
where $\mathbf{\overline{D}\left(0,\tilde{r}\right)}$ is a closed
poly-disc with $\mathbf{\tilde{r}}=\left(\tilde{r}_{1},\tilde{r}_{2}\right)$
such that 
\[
0<\tilde{r}_{j}<r_{j}~~~~,~j\in\left\{ 1,2\right\} ~.
\]
Here $\Psi_{i,\lambda,n}$ , $\Psi_{i,-\lambda,n}$ (for $i=1,2$
and $n\in\ww N$) are given in Lemma \ref{lem: istropies espace des feuilles 2}.
Moreover, for all closed sub-sector $S'\subset S_{\pm\lambda}$ and
for all closed poly-disc $\mathbf{\overline{D}}\subset\mathbf{D\left(0,r\right)}$,
there exists $A,B>0$ such that for $j=1,2$:
\begin{eqnarray*}
\abs{\psi_{j,\pm\lambda}\left(x,y_{1},y_{2}\right)-y_{j}} & \leq & A\exp\left(-\frac{B}{\abs x}\right)\,\,,\,\,\forall\left(x,\mathbf{y}\right)\in S'\times\mathbf{\overline{D}}\,\,.
\end{eqnarray*}
As a consequence, $\psi_{j,\pm\lambda}$ admits $y_{j}$ as Gevrey-1
asymptotic expansion in $S_{\pm\lambda}\times\mathbf{D\left(0,r\right)}$.
\end{lem}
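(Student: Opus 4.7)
My plan is to transfer the Laurent series from Lemma \ref{lem: istropies espace des feuilles 2} back into the original coordinates via the first-integral coordinates $\cal H_{\pm\lambda}$ and then extract the exponential flatness by comparing with the weak Gevrey-1 hypothesis. I will treat the case $\psi_{1,\lambda}$ in detail; the three other cases are symmetric under $(y_1,y_2)\leftrightarrow(y_2,y_1)$ and $\lambda\leftrightarrow-\lambda$.

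First, starting from the identity $\psi_{i,\pm\lambda}(x,\mathbf{y})=\Psi_{i,\pm\lambda}(h_1,h_2)\,f_i(x,\Psi_{w,\pm\lambda}(h_1,h_2))$, which follows at once from $\cal H_{\pm\lambda}\circ\psi_{\pm\lambda}=\Psi_{\pm\lambda}\circ\cal H_{\pm\lambda}$, I substitute $h_j=y_j/f_j(x,y_1y_2/x^a)$, $h_1h_2=y_1y_2/x^a$ and $\Psi_{w,\pm\lambda}(h_1,h_2)=\psi_{v,\pm\lambda}(x,\mathbf{y})/x^a$ into the expansion of $\Psi_{i,\pm\lambda}$ provided by Lemma \ref{lem: istropies espace des feuilles 2}. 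The normal convergence of the resulting series on every domain $S_{\pm\lambda}\times\mathbf{\overline D(0,\tilde{\mathbf r})}$ is a direct consequence of the bounds on $\Psi_{i,\pm\lambda,n}$ given in that lemma combined with Corollary \ref{cor: quantit=0000E9 born=0000E9e} and the fact that on such a domain $|y_j/(f_j(x,y_1y_2/x^a)(\tilde r_j+\delta))|<1$. This is exactly the series expression appearing in the statement, once the term of index $n=N(i,\pm)$ is singled out and shown to reduce to $y_i$.

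To carry out this identification, I compare the $n=N(1,+)=1$ contribution of the series for $\psi_{1,\lambda}$ with the weak Gevrey-1 expansion $\psi_{1,\lambda}(x,\mathbf{y})=y_1+\sum_{\mathbf{k}}\psi_{1,\lambda,\mathbf{k}}(x)\mathbf{y^k}$, whose coefficients admit $0$ as Gevrey-1 asymptotic. Writing $\Psi_{1,\lambda,1}(w)=\sum_{k\geq 0}\alpha_k w^k$ and expanding $\Psi_{1,\lambda,1}(y_1y_2/x^a)=\sum_{k\geq 0}\alpha_k x^{-ak}(y_1y_2)^k$, each $\alpha_k x^{-ak}$ for $k\geq 1$ would contribute to the Taylor coefficient of $y_1^{k+1}y_2^k$ a term growing like $|x|^{-k\Re(a)}$, which cannot admit $0$ as Gevrey-1 asymptotic expansion since $\Re(a)>0$. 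This forces $\alpha_k=0$ for $k\geq 1$ and $\alpha_0=1$, i.e.\ $\Psi_{1,\lambda,1}\equiv 1$. The same identification argument (using that $\Psi_{2,\lambda,-1}(0)=0$ from Lemma \ref{lem: istropies espace des feuilles 2}) fixes $\Psi_{2,\lambda,-1}(w)=w$, and analogously for the $-\lambda$ sectors. This precisely produces the separated $y_i$ modulo the discrepancy caused by $\psi_{v,\pm\lambda}\neq y_1y_2$, a discrepancy which is itself exponentially flat by Lemma \ref{lem: istropie monomiale} and is absorbed into the ``error'' expression through Corollary \ref{cor: quantit=0000E9 born=0000E9e}.

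The main obstacle is the last step: proving that what remains, namely $f_i(x,\psi_v/x^a)\sum_{n\geq N(i,\pm)+1}\Psi_{i,\pm\lambda,n}(y_1y_2/x^a)(y_j/f_j(x,y_1y_2/x^a))^n$, is exponentially flat uniformly on $S'\times\mathbf{\overline D}$ for any closed subsector $S'\subset S_{\pm\lambda}$ and any closed polydisc $\mathbf{\overline D}\subset\mathbf D(\mathbf 0,\mathbf r)$. Following the same ``doubling trick'' as in the proof of Lemma \ref{lem: istropie monomiale}, I apply the bound of Lemma \ref{lem: istropies espace des feuilles 2} with $2x$ in place of $x$ (legitimate for $x\in S'$ small enough so that $2x\in S_{\pm\lambda}$, after a further shrinking of $\mathbf{r}$ so that $|2^a y_1 y_2|<\tilde r_1\tilde r_2$). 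This replaces $|f_i(x,\cdot)|^{n-N(i,\pm)}$ in the numerator by $|f_i(2x,\cdot)|^{n-N(i,\pm)}$, and the crucial ratio
\[
\left|\frac{f_i(2x,y_1y_2/x^a)}{f_i(x,y_1y_2/x^a)}\right|\leq A\exp(-B/|x|)
\]
provides the sought exponential flatness (observing that the residual factors $|f_i(x,\psi_v/x^a)/f_i(x,y_1y_2/x^a)|$ and the geometric factor $|y_j/(\tilde r_j+\delta)|^{n-N(i,\pm)}$ remain bounded). Summing the resulting geometric series yields $|\psi_{i,\pm\lambda}(x,\mathbf{y})-y_i|\leq A'\exp(-B'/|x|)$ on $S'\times\mathbf{\overline D}$. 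Combined with the fact that $\psi_{i,\pm\lambda}-y_i$ admits $0$ as weak Gevrey-1 asymptotic expansion, Proposition \ref{prop: dev asympt nul expo plat} and a standard Cauchy-formula argument in the $\mathbf{y}$ variables upgrade weak Gevrey-1 to true Gevrey-1, yielding the final conclusion.
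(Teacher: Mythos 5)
Your proposal is correct and follows essentially the same route as the paper's proof: transfer the Laurent expansions of Lemma \ref{lem: istropies espace des feuilles 2} back through $\cal H_{\pm\lambda}$, pin down the lowest-order coefficient ($\Psi_{1,\lambda,1}\equiv 1$, $\Psi_{2,\lambda,-1}(w)=w$, etc.) by confronting the unbounded factors $x^{-ak}$ with the weak Gevrey-1 hypothesis, and obtain exponential flatness via the doubling trick $x\mapsto 2x$ and the ratio $\abs{f_i(2x,\cdot)/f_i(x,\cdot)}\leq A\exp(-B/\abs x)$. Your explicit remark that the $n=N(i,\pm)$ term only reduces to $y_i$ up to the exponentially flat discrepancy coming from $\psi_{v,\pm\lambda}\neq y_1y_2$ (controlled by Lemma \ref{lem: istropie monomiale} and Corollary \ref{cor: quantit=0000E9 born=0000E9e}) is a point the paper passes over silently, and is worth keeping.
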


\begin{rem}
In particular, $\Psi_{1,\lambda,1}\left(w\right)=\Psi_{2,-\lambda,1}\left(w\right)=1$
and $\Psi_{1,-\lambda,-1}\left(w\right)=\Psi_{2,\lambda,-1}\left(w\right)=w$.
\end{rem}

\begin{proof}
By definition, we have 
\[
\Psi_{\pm\lambda}\circ\cal H_{\pm\lambda}=\cal H_{\pm\lambda}\circ\psi_{\pm}\,.
\]
In particular, for $j=1,2$ and all $\left(x,\mathbf{y}\right)\in S_{\pm\lambda}\times\mathbf{D\left(0,r\right)}$:
\begin{eqnarray*}
\Psi_{j,\pm\lambda}\left(x,\frac{y_{1}}{f_{1}\left(x,\frac{y_{1}y_{2}}{x^{a}}\right)},\frac{y_{2}}{f_{2}\left(x,\frac{y_{1}y_{2}}{x^{a}}\right)}\right) & = & \frac{\psi_{j,\pm\lambda}\left(x,y_{1},y_{2}\right)}{f_{j}\left(x,\frac{\psi_{v,\pm}\left(x,y_{1},y_{2}\right)}{x^{a}}\right)}\,\,.
\end{eqnarray*}
Thus, according to Lemma \ref{lem: istropies espace des feuilles 2}
we have for $i=1,2$: 
\begin{equation}
\begin{cases}
{\displaystyle \psi_{i,\lambda}\left(x,\mathbf{y}\right)=f_{i}\left(x,\frac{\psi_{v,\lambda}\left(x,\mathbf{y}\right)}{x^{a}}\right)\sum_{n\ge N\left(i,+\right)}\Psi_{i,\lambda,n}\left(\frac{y_{1}y_{2}}{x^{a}}\right)\left(\frac{y_{1}}{f_{1}\left(x,\frac{y_{1}y_{2}}{x^{a}}\right)}\right)^{n}}\\
{\displaystyle \psi_{i,-}\left(x,\mathbf{y}\right)=f_{i}\left(x,\frac{\psi_{v,-\lambda}\left(x,\mathbf{y}\right)}{x^{a}}\right)\sum_{n\ge N\left(i,-\right)}\Psi_{i,-\lambda,n}\left(\frac{y_{1}y_{2}}{x^{a}}\right)\left(\frac{y_{2}}{f_{2}\left(x,\frac{y_{1}y_{2}}{x^{a}}\right)}\right)^{n}} & ,
\end{cases}\label{eq:psi et Psi}
\end{equation}
and these series are normally convergent (and then define analytic
functions) in any domain of the form $S'\times\mathbf{\overline{D}\left(0,\tilde{r}\right)},$
where $S'$ is a closed sub-sector of $S_{\pm\lambda}$ and $\mathbf{\overline{D}\left(0,\tilde{r}\right)}$
is a closed poly-disc with $\mathbf{\tilde{r}}=\left(\tilde{r}_{1},\tilde{r}_{2}\right)$
such that 
\[
0<\tilde{r}_{j}<r_{j}~~~~,j\in\left\{ 1,2\right\} ~.
\]
Let us compare the different expressions of $\psi_{j,\pm\lambda}$,
$j=1,2$. We know that $\psi_{j,\pm\lambda}\left(x,y_{1},y_{2}\right)$
admits $y_{j}$ as weak Gevrey-1 asymptotic expansion in $S_{\pm\lambda}\times\mathbf{D\left(0,r\right)}$.
Thus, we can write:
\begin{eqnarray*}
\psi_{j,\pm\lambda}\left(x,y_{1},y_{2}\right) & = & y_{j}+\sum_{\mathbf{k}\in\ww N^{2}}\psi_{j,\pm\lambda,\mathbf{k}}\left(x\right)\mathbf{y^{k}}\,\,,
\end{eqnarray*}
where for all $\mathbf{k}=\left(k_{1},k_{2}\right)\in\ww N^{2}$,
$\psi_{j,\pm\lambda,\mathbf{k}}$ is analytic in $S_{\pm\lambda}$
and admits $0$ as Gevrey-1 asymptotic expansion. As usual, let us
deal with the case of $\psi_{1,\lambda}$ and $\psi_{2,\lambda}$
(the other one being similar by exchanging $y_{1}$ and $y_{2}$).

According to the expressions given by Lemmas \ref{lem: isotropie monomone res esp des feuilles}
and \ref{lem: istropies espace des feuilles 2}, we can be more precise
on the index sets in the sums above:
\begin{equation}
\begin{cases}
{\displaystyle \psi_{1,\lambda}\left(x,y_{1},y_{2}\right)=y_{1}+\sum_{\substack{\mathbf{k}=\left(k_{1},k_{2}\right)\in\ww N^{2}\\
k_{1}\geq k_{2}+1
}
}\psi_{1,\lambda,\mathbf{k}}\left(x\right)y_{1}^{k_{1}}y_{2}^{k_{2}}}\\
\psi_{2,\lambda}\left(x,y_{1},y_{2}\right)=y_{2}+\sum_{\substack{\mathbf{k}=\left(k_{1},k_{2}\right)\in\ww N^{2}\\
k_{1}\geq k_{2}
}
}\psi_{2,\lambda,\mathbf{k}}\left(x\right)y_{1}^{k_{1}}y_{2}^{k_{2}} & {\displaystyle .}
\end{cases}\label{eq: psi_1 et psi_2}
\end{equation}
 Let us deal with $\psi_{1,\lambda}$ (a similar proof holds for $\psi_{2,\lambda}$).
Looking at terms for $n=1$ in (\ref{eq:psi et Psi}) and at monomials
terms $\mathbf{y^{k}}$ such that $k_{1}\leq k_{2}+1$ in (\ref{eq: psi_1 et psi_2}),
we must have for all $x\in S_{\lambda}$, $y_{1},y_{2}\in\ww C$ with
$\abs{y_{1}}<r_{1}$, $\abs{y_{2}}<r_{2}$:
\begin{eqnarray*}
1+\sum_{k\ge0}\psi_{1,\lambda,\left(k+1,k\right)}\left(x\right)y_{1}^{k}y_{2}^{k} & = & \frac{f_{1}\left(x,\frac{\psi_{v,\lambda}\left(x,\mathbf{y}\right)}{x^{a}}\right)}{f_{1}\left(x,\frac{y_{1}y_{2}}{x^{z}}\right)}\Psi_{1,\lambda,1}\left(\frac{y_{1}y_{2}}{x^{a}}\right)\,\,.
\end{eqnarray*}
According to Lemma \ref{lem: istropie monomiale} and Corollary\ref{cor: quantit=0000E9 born=0000E9e},
we have: 
\begin{eqnarray*}
\frac{f_{1}\left(x,\frac{\psi_{v,\lambda}\left(x,\mathbf{y}\right)}{x^{a}}\right)}{f_{1}\left(x,\frac{y_{1}y_{2}}{x^{z}}\right)} & = & 1+\sum_{\substack{j_{1}\geq j_{2}+1\geq1}
}F_{j_{1},j_{2}}\left(x\right)y_{1}^{j_{1}}y_{2}^{j_{2}}\\
 & = & 1+\underset{\substack{\left(x,\mathbf{y}\right)\longrightarrow0\\
\left(x,\mathbf{y}\right)\in S_{\lambda}\times\mathbf{D\left(0,r\right)}
}
}{\tx O}\left(\abs{y_{1}}\right)\,\,,
\end{eqnarray*}
for some analytic and bounded functions $F_{j_{1},j_{2}}\left(x\right)$,
$j_{1}\ge j_{2}$. As in the proof of Lemma \ref{lem: istropie monomiale},
using the fact that $\psi_{\lambda}$ admits the identity as weak
Gevrey-1 asymptotic expansion, we deduce that $\Psi_{1,\lambda,1}\left(w\right)=1$,
and then:
\begin{eqnarray*}
\psi_{1,\lambda}\left(x,\mathbf{y}\right) & = & y_{1}+f_{1}\left(x,\frac{\psi_{v,\lambda}\left(x,\mathbf{y}\right)}{x^{a}}\right)\sum_{n\ge2}\Psi_{1,\lambda,n}\left(\frac{y_{1}y_{2}}{x^{a}}\right)\left(\frac{y_{1}}{f_{1}\left(x,\frac{y_{1}y_{2}}{x^{a}}\right)}\right)^{n}\\
 & = & y_{1}+\sum_{\substack{\mathbf{k}=\left(k_{1},k_{2}\right)\in\ww N^{2}\\
k_{1}\geq k_{2}+2
}
}\psi_{1,\lambda,\mathbf{k}}\left(x\right)y_{1}^{k_{1}}y_{2}^{k_{2}}\,\,.
\end{eqnarray*}
It remains to show that $\psi_{1,\lambda}$ admits $y_{1}$ as Gevrey-1
asymptotic expansion in $S_{\lambda}\times\mathbf{D\left(0,r\right)}$.
From the computations above, we deduce: 
\begin{eqnarray*}
\abs{\psi_{1,\lambda}\left(x,y_{1},y_{2}\right)-y_{1}} & \leq & \sum_{n\geq2}\abs{\Psi_{1,\lambda,n}\left(\frac{y_{1}y_{2}}{x^{a}}\right)\left(\frac{y_{1}}{f_{1}\left(x,\frac{y_{1}y_{2}}{x^{a}}\right)}\right)^{n-1}\frac{f_{1}\left(x,\frac{\psi_{v,\lambda}\left(x,\mathbf{y}\right)}{x^{a}}\right)}{f_{1}\left(x,\frac{y_{1}y_{2}}{x^{a}}\right)}y_{1}}\,\,.
\end{eqnarray*}
Using Lemma \ref{lem: istropies espace des feuilles 2}, Corollary
\ref{cor: quantit=0000E9 born=0000E9e} and the same method as at
the end of the proof of Lemma \ref{lem: istropie monomiale}, we can
show the following: we can take $r_{1},r_{2}>0$ small enough such
that for all closed sub-sector $S'$ of $S_{\lambda}$ for all $\tilde{r}_{1}\in\left]0,r_{1}\right[$
and $\tilde{r}_{2}\in\left]0,r_{2}\right[$, there exists $A,B>0$
satisfying:
\begin{eqnarray*}
\left(x,y_{1},y_{2}\right)\in S'\times\mathbf{D\left(0,\tilde{r}\right)} & \Longrightarrow & \abs{\psi_{1,\lambda}\left(x,y_{1},y_{2}\right)-y_{1}}\leq A\exp\left(-\frac{B}{\abs x}\right)\,\,.
\end{eqnarray*}

A similar proof holds for $\psi_{2,\lambda},\psi_{2,-\lambda}$ and
$\psi_{1,-\lambda}$.
\end{proof}
\begin{rem}
It should be noticed that in the expressions 
\[
\begin{cases}
{\displaystyle \psi_{1,\lambda}\left(x,\mathbf{y}\right)=y_{1}+f_{1}\left(x,\frac{\psi_{v,\lambda}\left(x,\mathbf{y}\right)}{x^{a}}\right)\sum_{n\ge2}\Psi_{1,\lambda,n}\left(\frac{y_{1}y_{2}}{x^{a}}\right)\left(\frac{y_{1}}{f_{1}\left(x,\frac{y_{1}y_{2}}{x^{a}}\right)}\right)^{n}}\\
{\displaystyle \psi_{1,-\lambda}\left(x,\mathbf{y}\right)=y_{1}+f_{1}\left(x,\frac{\psi_{v,-\lambda}\left(x,\mathbf{y}\right)}{x^{a}}\right)\sum_{n\ge0}\Psi_{1,-\lambda,n}\left(\frac{y_{1}y_{2}}{x^{a}}\right)\left(\frac{y_{2}}{f_{2}\left(x,\frac{y_{1}y_{2}}{x^{a}}\right)}\right)^{n}}
\end{cases}
\]
given by Lemma \ref{lem: isotropies exp plates}, the expansion of
$\psi_{1,\lambda}$ with respect to $\mathbf{y}=\left(y_{1},y_{2}\right)$
starts with a term of order $1$, namely $y_{1}$, followed by terms
of order at least $2$, while in the expansion of $\psi_{1,-\lambda}$,
the term of lowest order is a constant, namely $\Psi_{1,-\lambda,0}\left(0\right)$.
Similarly, the expansion of $\psi_{2,-\lambda}$ (with respect to
$\mathbf{y}=\left(y_{1},y_{2}\right)$) starts with $y_{2}$, while
the expansion of $\psi_{1,-\lambda}$ starts with the constant $\Psi_{2,\lambda,0}\left(0\right)$.
\end{rem}

\subsection{Description of the moduli space and some applications}

~

From Lemmas \ref{lem: istropies espace des feuilles 2} and \ref{lem: isotropies exp plates},
we can give a description of the moduli space $\Lambda_{\lambda}\left(\ynorm\right)\times\Lambda_{-\lambda}\left(\ynorm\right)$
of a fixed analytic normal form $\ynorm$.

\subsubsection{\label{subsec: description moduli space}A power series presentation
of the moduli space}

~

We use the notations introduced in section \ref{sec:Sectorial-isotropies-and}.
We denote by $\cal O\left(\text{\ensuremath{\ww C}}\right)$ the set
of entire functions, \emph{i.e.} of functions holomorphic in $\ww C$.
We consider the functions $f_{1}$ and $f_{2}$ defined in $\left(\mbox{\ref{eq: f1 et f2}}\right)$
and introduce four subsets of $\left(\cal O\left(\ww C\right)\right)^{\ww N}$,
denoted by $\cal E_{1,\lambda}\left(\ynorm\right)$, $\cal E_{2,\lambda}\left(\ynorm\right)$,
$\cal E_{1,-\lambda}\left(\ynorm\right)$ and $\cal E_{2,-\lambda}\left(\ynorm\right)$,
defined as follows. On remind the notations 
\[
\begin{cases}
N\left(1,+\right)=N\left(2,-\right)=1\\
N\left(1,-\right)=N\left(2,+\right)=-1 & .
\end{cases}
\]
\begin{defn}
For $j\in\acc{1,2}$, a sequence $\left(\psi_{n}\left(w\right)\right)_{n\geq N\left(j,\pm\right)+1}\in\left(\cal O\left(\ww C\right)\right)^{\ww N}$
belongs to $\cal E_{j,\pm\lambda}\left(\ynorm\right)$ if there exists
an open polydisc $\mathbf{D\left(0,r\right)}$ and an open asymptotic
sector 
\begin{eqnarray*}
S_{\pm\lambda} & \in & \cal{AS}_{\arg\left(\pm\lambda\right),2\pi}
\end{eqnarray*}
such that for all $\tilde{r}_{1},\tilde{r}_{2},\delta>0$ with 
\[
0<\tilde{r}_{i}+\delta<r_{i}~~~~,~i\in\left\{ 1,2\right\} 
\]
there exists $C>0$ such that for all $x\in S_{\lambda}$ (\emph{resp.
$x\in S_{-\lambda}$}) and for all $w\in\ww C$:
\begin{eqnarray*}
\abs{wx^{a}}\leq\tilde{r}_{1}\tilde{r}_{2} & \Longrightarrow & \begin{cases}
{\displaystyle \abs{\psi_{n}\left(w\right)}<C\frac{\abs{f_{1}\left(x,w\right)}^{n-1}}{\left(\tilde{r}_{1}+\delta\right)^{n}}\mbox{ , }\forall n\geq2,} & {\displaystyle \mbox{ if }\left(\psi_{n}\left(w\right)\right)_{n\geq2}\in\cal E_{1,\lambda}\left(\ynorm\right)}\\
{\displaystyle \abs{\psi_{n}\left(w\right)}<\frac{C}{\abs{x^{a}}}\frac{\abs{f_{1}\left(x,w\right)}^{n+1}}{\left(\tilde{r}_{1}+\delta\right)^{n}}\mbox{ , }\forall n\geq0,} & {\displaystyle \mbox{ if }\left(\psi_{n}\left(w\right)\right)_{n\geq2}\in\cal E_{2,\lambda}\left(\ynorm\right)}\\
{\displaystyle \abs{\psi_{n}\left(w\right)}<\frac{C}{\abs{x^{a}}}\frac{\abs{f_{2}\left(x,w\right)}^{n+1}}{\left(\tilde{r}_{2}+\delta\right)^{n}}\mbox{ , }\forall n\geq0,} & {\displaystyle \mbox{ if }\left(\psi_{n}\left(w\right)\right)_{n\geq2}\in\cal E_{1,-\lambda}\left(\ynorm\right)}\\
{\displaystyle \abs{\psi_{n}\left(w\right)}<C\frac{\abs{f_{2}\left(x,w\right)}^{n-1}}{\left(\tilde{r}_{2}+\delta\right)^{n}}\mbox{ , }\forall n\geq2,} & {\displaystyle \mbox{ if }\left(\psi_{n}\left(w\right)\right)_{n\geq2}\in\cal E_{2,-\lambda}\left(\ynorm\right)\,\,.}
\end{cases}
\end{eqnarray*}
\end{defn}

As explained in section \ref{sec:Sectorial-isotropies-and}, we can
associate to any pair 
\[
\left(\psi_{\lambda},\psi_{-\lambda}\right)\in\Lambda_{\lambda}\left(\ynorm\right)\times\Lambda_{-\lambda}\left(\ynorm\right)
\]
two germs of sectorial biholomorphisms of the space of leaves corresponding
to each ``narrow'' sector, which we denote by $\Psi_{\lambda}$
and $\Psi_{-\lambda}$, defined by: 
\begin{equation}
\Psi_{\pm\lambda}:=\cal H_{\pm\lambda}\circ\psi_{\pm\lambda}\circ\cal H_{\pm\lambda}^{-1}\,\,,\label{eq: isotropie dans espace des feuilles}
\end{equation}
where $\cal H_{\pm\lambda}$ is given by Corollary \ref{cor: coordonn=0000E9es espace des feuiles}.
According to Lemmas \ref{lem: istropies espace des feuilles 2} and
\ref{lem: isotropies exp plates}, if we write $\Psi_{\pm\lambda}=\left(x,\Psi_{1,\pm\lambda},\Psi_{2,\pm\lambda}\right)$,
then for $j=1,2$ we have: 
\begin{eqnarray}
{\displaystyle \Psi_{j,\lambda}\left(h_{1},h_{2}\right)} & = & h_{j}+\sum_{n\geq N\left(j,+\right)+1}\Psi_{j,\lambda,n}\left(h_{1}h_{2}\right)h_{1}^{n}\label{eq: description moduli space}\\
{\displaystyle \Psi_{j,-\lambda}\left(h_{1},h_{2}\right)} & = & h_{j}+\sum_{n\geq N\left(j,-\right)+1}\Psi_{j,-\lambda,n}\left(h_{1}h_{2}\right)h_{2}^{n}\nonumber 
\end{eqnarray}
 $\left(\Psi_{j,\pm\lambda,n}\right)_{n}\in{\cal E}_{j,\pm\lambda}$
. Conversely, given $\left(\Psi_{j,\pm\lambda}\right)_{n}\in\cal E_{j,\pm\lambda}$
for $j=1,2$, the estimates made in section \ref{sec:Sectorial-isotropies-and}
show that 
\[
\psi_{\pm\lambda}:=\cal H_{\pm\lambda}^{-1}\circ\Psi_{\pm\lambda}\circ\cal H_{\pm\lambda}~,
\]
where $\Psi_{\pm\lambda}\left(x,\mathbf{h}\right)=\left(x,\Psi_{1,\pm\lambda}\left(\mathbf{h}\right),\Psi_{2,\pm\lambda}\left(\mathbf{h}\right)\right)$,
belongs to $\Lambda_{\pm\lambda}\left(\ynorm\right)$. Consequently,
we can state:
\begin{prop}
\label{prop: description espace de module}We have the following bijections:
\begin{eqnarray*}
\Lambda_{\lambda}\left(\ynorm\right) & \tilde{\rightarrow} & \cal E_{1,\lambda}\left(\ynorm\right)\times\cal E_{2,\lambda}\left(\ynorm\right)\\
\psi_{\lambda} & \mapsto & \left(\Psi_{1,\lambda},\Psi_{2,\lambda}\right)
\end{eqnarray*}
and
\begin{eqnarray*}
\Lambda_{-\lambda}\left(\ynorm\right) & \tilde{\rightarrow} & \cal E_{1,-\lambda}\left(\ynorm\right)\times\cal E_{2,-\lambda}\left(\ynorm\right)\\
\psi_{-\lambda} & \mapsto & \left(\Psi_{1,-\lambda},\Psi_{2,-\lambda}\right)
\end{eqnarray*}
$\Big($we identify here $\Psi_{\pm\lambda}\left(x,\mathbf{h}\right)=\left(x,\Psi_{1,\pm\lambda}\left(\mathbf{h}\right),\Psi_{2,\pm\lambda}\left(\mathbf{h}\right)\right)$
with $\left(\Psi_{1,\pm\lambda}\left(\mathbf{h}\right),\Psi_{2,\pm\lambda}\left(\mathbf{h}\right)\right)$$\Big)$.
\end{prop}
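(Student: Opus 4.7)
The plan is to show both maps are well-defined bijections by constructing explicit inverses. We treat the $\lambda$-case, the $-\lambda$-case being symmetric via the exchange of $y_1$ and $y_2$.

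First, I verify the forward direction. Given $\psi_{\pm\lambda} \in \Lambda_{\pm\lambda}(\ynorm)$, define $\Psi_{\pm\lambda} := \cal H_{\pm\lambda} \circ \psi_{\pm\lambda} \circ \cal H_{\pm\lambda}^{-1}$ as in $(\ref{eq: isotropie dans espace des feuilles})$. By Proposition \ref{prop: isotropie espace des feuilles}, $\Psi_{\pm\lambda}$ is of the form $(x,\Psi_{1,\pm\lambda}(\mathbf{h}),\Psi_{2,\pm\lambda}(\mathbf{h}))$ with the $\Psi_{j,\pm\lambda}$ independent of $x$. Since $\psi_{\pm\lambda} \in \Lambda_{\pm\lambda}(\ynorm) \subset \Lambda_{\pm\lambda}^{(\tx{weak})}(\ynorm)$, Lemma \ref{lem: istropies espace des feuilles 2} applies, giving the power series expansions $(\ref{eq: description moduli space})$ together with the precise bounds that constitute membership in $\cal E_{j,\pm\lambda}(\ynorm)$. (The normalizing identifications $\Psi_{1,\lambda,1}(w)=1$ and $\Psi_{2,\lambda,-1}(w)=w$ obtained in Lemma \ref{lem: isotropies exp plates} fix the $n=N(j,+)$ term, which is why the free parameters range over $n \geq N(j,+)+1$.) This shows the arrow is well-defined.

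For injectivity, observe that $\cal H_{\pm\lambda}$ is a biholomorphism $S_{\pm\lambda}\times(\ww C^2,0) \to S_{\pm\lambda}\times\Gamma_{\pm\lambda}$ by Corollary \ref{cor: coordonn=0000E9es espace des feuiles}; hence the pair $(\Psi_{1,\pm\lambda},\Psi_{2,\pm\lambda})$ determines $\psi_{\pm\lambda}$ uniquely via $\psi_{\pm\lambda} = \cal H_{\pm\lambda}^{-1}\circ\Psi_{\pm\lambda}\circ\cal H_{\pm\lambda}$, and the coefficient sequences $(\Psi_{j,\pm\lambda,n})_n$ in turn determine each $\Psi_{j,\pm\lambda}$ by $(\ref{eq: description moduli space})$.

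The substantive step is surjectivity. Given sequences $(\Psi_{j,\pm\lambda,n})_n \in \cal E_{j,\pm\lambda}(\ynorm)$, $j=1,2$, associated to a common polydisc $\mathbf{D(0,r)}$ and sector $S_{\pm\lambda}$, I would first show that the formal series $(\ref{eq: description moduli space})$ converge normally on sets of the form $\Gamma_{\pm\lambda}(\tilde{\mathbf{r}})$ with $0<\tilde r_i+\delta<r_i$: this is a direct consequence of the exponential bounds in the definition of $\cal E_{j,\pm\lambda}(\ynorm)$, evaluated at points of the space of leaves where $|h_j f_j(x,h_1h_2)| \leq \tilde r_j$. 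Since $\Psi_{j,\pm\lambda}(\mathbf{h}) = h_j + O(\|\mathbf{h}\|^{N(j,\pm)+1})$ after restricting to a sufficiently small such domain, $(\Psi_{1,\pm\lambda},\Psi_{2,\pm\lambda})$ defines a biholomorphism of a suitable $\Gamma_{\pm\lambda}$ onto its image $\Gamma'_{\pm\lambda}$, and its fibered extension by the identity on the $x$-coordinate is automatically an isotropy of $x^2\partial_x$. Setting $\psi_{\pm\lambda} := \cal H_{\pm\lambda}^{-1}\circ\Psi_{\pm\lambda}\circ\cal H_{\pm\lambda}$ then produces a germ of sectorial fibered isotropy of $\ynorm$.

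The main obstacle is verifying that $\psi_{\pm\lambda}$ actually lies in $\Lambda_{\pm\lambda}(\ynorm)$, i.e.\ admits the identity as a Gevrey-1 asymptotic expansion in $S_{\pm\lambda}\times(\ww C^2,0)$. To do this, I would carry out the computation of Lemma \ref{lem: isotropies exp plates} in reverse: using $\cal H_{\pm\lambda}$ explicitly, one obtains
\[
\psi_{j,\pm\lambda}(x,\mathbf{y}) = y_j + f_j\!\left(x,\tfrac{\psi_{v,\pm\lambda}(x,\mathbf{y})}{x^{a}}\right)\sum_{n\geq N(j,\pm)+1}\Psi_{j,\pm\lambda,n}\!\left(\tfrac{y_1 y_2}{x^{a}}\right)\!\left(\tfrac{y_{\epsilon}}{f_{\epsilon}(x,\tfrac{y_1 y_2}{x^{a}})}\right)^{\!n},
\]
with $\epsilon=1$ in the $+\lambda$ case and $\epsilon=2$ in the $-\lambda$ case, and $\psi_{v,\pm\lambda}=\psi_{1,\pm\lambda}\psi_{2,\pm\lambda}$. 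Exactly the geometric-series estimate carried out at the end of the proof of Lemma \ref{lem: isotropies exp plates}, which uses only the bounds built into $\cal E_{j,\pm\lambda}(\ynorm)$ together with Fact \ref{fact: limites} and the exponential decay of the ratios $|f_j(2x,\cdot)/f_j(x,\cdot)|$ on closed subsectors, shows that $|\psi_{j,\pm\lambda}(x,\mathbf{y})-y_j| \leq A\exp(-B/|x|)$ on any closed subsector of $S_{\pm\lambda}$ and any closed subpolydisc. By Proposition \ref{prop: dev asympt nul expo plat}, this is precisely the statement that $\psi_{\pm\lambda}$ admits the identity as Gevrey-1 asymptotic expansion, so $\psi_{\pm\lambda}\in\Lambda_{\pm\lambda}(\ynorm)$. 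The construction is visibly inverse to the forward map by $(\ref{eq: description moduli space})$, which completes the bijection.
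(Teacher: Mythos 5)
Your proposal is correct and follows essentially the same route as the paper: the paper's "proof" is the paragraph immediately preceding the proposition, which derives the forward map and its range from Lemmas \ref{lem: istropies espace des feuilles 2} and \ref{lem: isotropies exp plates} and asserts the converse by appealing to "the estimates made in section \ref{sec:Sectorial-isotropies-and}", i.e.\ exactly the reversed geometric-series estimate you spell out. You have merely made explicit the convergence, the biholomorphism property on the space of leaves, and the exponential-flatness verification that the paper leaves implicit.
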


\subsubsection{Analytic invariant varieties and two-dimensional saddle-nodes}

~

We can give a necessary and sufficient condition for the existence
of analytic invariant varieties in terms of the moduli space described
above.

We recall that for any vector field ${\displaystyle Y\in\cro{\ynorm}}$
as in (\ref{eq: intro}) (\emph{cf.} Definition \ref{def: ynorm class}),
there always exist three formal invariant varieties: $\mathscr{C}=\acc{\left(y_{1},y_{2}\right)=\left(g_{1}\left(x\right),g_{2}\left(x\right)\right)}$,
$\mathscr{H}_{1}=\acc{{\displaystyle y_{1}=f_{1}\left(x,y_{2}\right)}}$
and $\mathscr{H}_{2}=\acc{{\displaystyle y_{2}}=f_{2}\left(x,y_{1}\right)}$,
where $g_{1},g_{2},f_{1},f_{2}$ are formal power series with null
constant term. The first one is classically called the \emph{center
variety, }and we have $\mathscr{C}=\mathscr{H}_{1}\cap\mathscr{H}_{2}$.
If $Y=\ynorm$, then:
\[
\begin{cases}
\mathscr{C}=\acc{y_{1}=y_{2}=0}\\
\mathscr{H}_{1}=\acc{y_{1}=0}\\
\mathscr{H}_{2}=\acc{y_{2}=0} & .
\end{cases}
\]
\begin{prop}
Let ${\displaystyle Y\in\cro{\ynorm}}$ and $\left(\Phi_{\lambda},\Phi_{-\lambda}\right)\in\Lambda_{\lambda}\left(\ynorm\right)\times\Lambda_{-\lambda}\left(\ynorm\right)$
be its Stokes diffeomorphisms. We consider $\Psi_{\pm}=\cal H_{\pm\lambda}\circ\Phi_{\pm\lambda}\circ\cal H_{\pm\lambda}^{-1}$
as above. Then:

\begin{enumerate}
\item the center variety $\mathscr{C}$ is convergent (analytic in the origin)
if and only if $\Psi_{2,\lambda,0}\left(0\right)=\Psi_{1,-\lambda,0}\left(0\right)=0$;
\item the invariant hypersurface $\mathscr{H}_{1}$ is convergent (analytic
in the origin) if and only if for all $n\geq0$, we have $\Psi_{1,-\lambda,n}\left(0\right)=0$;
\item the invariant hypersurface $\mathscr{H}_{2}$ is convergent (analytic
in the origin) if and only if for all $n\geq0$, we have $\Psi_{2,\lambda,n}\left(0\right)=0$.
\end{enumerate}
\end{prop}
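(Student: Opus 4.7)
The plan is to use the sectorial normalizing maps $\Phi_{+},\Phi_{-}$ (and their associated isotropies $\Psi_{\pm\lambda}$ in the space of leaves) to translate the question of convergence of the three formal invariant varieties into a gluing condition on the overlaps $S_{\lambda}\cup S_{-\lambda}$, then read this condition off the explicit power series expansion \eqref{eq: description moduli space}.

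First, I would observe that, since $\Phi_{\pm}$ conjugate $Y$ to $\ynorm$ in $S_{\pm}\times(\ww C^{2},0)$ and admit the unique formal normalizing map $\hat{\Phi}$ as Gevrey-$1$ asymptotic expansion (Theorem \ref{Th: Th drsn}), the preimages
\[
\mathscr{H}_{1,\pm}:=\Phi_{\pm}^{-1}\big(\{y_{1}=0\}\big),\quad\mathscr{H}_{2,\pm}:=\Phi_{\pm}^{-1}\big(\{y_{2}=0\}\big),\quad\mathscr{C}_{\pm}:=\Phi_{\pm}^{-1}\big(\{y_{1}=y_{2}=0\}\big)
\]
are sectorial invariant (hyper)surfaces of $Y$ in $S_{\pm}\times(\ww C^{2},0)$, admitting respectively $\mathscr{H}_{1}$, $\mathscr{H}_{2}$, $\mathscr{C}$ as asymptotic expansions. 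The key remark is then that the formal variety is convergent (analytic) if and only if the two sectorial pieces coincide on the overlap $\big(S_{\lambda}\cup S_{-\lambda}\big)\times(\ww C^{2},0)$: indeed, in that case the defining analytic function (for a hypersurface) or parametrization (for the curve $\mathscr{C}$) is well-defined and bounded on a punctured neighborhood of $\{x=0\}$ and extends analytically by Riemann's removable singularity theorem, and the extension must coincide with the unique formal invariant variety by uniqueness of asymptotic expansions.

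Next, I would reformulate this gluing condition in terms of the Stokes diffeomorphisms. The equality $\mathscr{H}_{1,+}=\mathscr{H}_{1,-}$ on $S_{\pm\lambda}\times(\ww C^{2},0)$ is equivalent to saying that both $\Phi_{\lambda}=\Phi_{+}\circ\Phi_{-}^{-1}$ and $\Phi_{-\lambda}=\Phi_{-}\circ\Phi_{+}^{-1}$ preserve the hypersurface $\{y_{1}=0\}$ of $\ynorm$ on their respective domains, and similarly for $\mathscr{H}_{2}$ and $\mathscr{C}$. Transferring through the fibered biholomorphism $\cal H_{\pm\lambda}$ of Corollary \ref{cor: coordonn=0000E9es espace des feuiles}, which turns $\{y_{1}=0\}$, $\{y_{2}=0\}$ and $\{y_{1}=y_{2}=0\}$ into $\{h_{1}=0\}$, $\{h_{2}=0\}$ and $\{h_{1}=h_{2}=0\}$ respectively, this becomes a condition on $\Psi_{\pm\lambda}=(\Psi_{1,\pm\lambda},\Psi_{2,\pm\lambda})$ preserving the analogous coordinate sets.

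Finally, I would read off each condition from the power series expansion \eqref{eq: description moduli space}. Since $\Psi_{1,\lambda}(h_{1},h_{2})=h_{1}+\sum_{n\ge 2}\Psi_{1,\lambda,n}(h_{1}h_{2})h_{1}^{n}$ vanishes identically on $\{h_{1}=0\}$, the invariance of $\{h_{1}=0\}$ under $\Psi_{\lambda}$ is automatic, while invariance under $\Psi_{-\lambda}$ amounts to $\Psi_{1,-\lambda}(0,h_{2})=\sum_{n\ge 0}\Psi_{1,-\lambda,n}(0)h_{2}^{n}\equiv 0$, i.e.\ $\Psi_{1,-\lambda,n}(0)=0$ for all $n\ge 0$; by symmetry one obtains the condition for $\mathscr{H}_{2}$, and by evaluating at $(h_{1},h_{2})=(0,0)$ the conditions $\Psi_{2,\lambda,0}(0)=\Psi_{1,-\lambda,0}(0)=0$ for $\mathscr{C}$. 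The main obstacle, which the Riemann extension argument addresses, is justifying that the gluing of the two sectorial varieties really produces the \emph{formal} invariant variety and not some other object; this rests on the uniqueness of the formal invariant varieties of $Y$ (easily checked by a direct order-by-order argument, using $\Re(a_{1}+a_{2})>0$ to avoid resonances) together with the uniqueness clause of asymptotic expansions.
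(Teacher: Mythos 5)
Your proposal is correct and follows essentially the same route as the paper's proof: the vanishing conditions are read off the power series expansion (\ref{eq: description moduli space}) as divisibility of $\Psi_{j,\pm\lambda}$ by the relevant coordinate, which translates into the Stokes diffeomorphisms preserving $\{y_{1}=0\}$, $\{y_{2}=0\}$ or $\{y_{1}=y_{2}=0\}$, so that the sectorial preimages under $\Phi_{+}$ and $\Phi_{-}$ agree on the overlap and glue into an analytic invariant variety. You merely spell out the Riemann-extension and uniqueness-of-asymptotic-expansion steps that the paper leaves implicit.
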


\begin{proof}
It is a direct consequence of the power series representation (\ref{eq: description moduli space})
of the Stokes diffeomorphisms $\left(\Phi_{\lambda},\Phi_{-\lambda}\right)$.
Let us explain item $2.$ (the same arguments hold for $1.$ and $3.$
with minor adaptation). The fact that $\Psi_{1,-\lambda,n}\left(0\right)=0$
for all $n\geq0$ means that $\Psi_{1,-\lambda}$ is divisible by
$h_{1}$. Equivalently, both $\Phi_{1,\lambda}$ and $\Phi_{1,-\lambda}$
are divisible by $y_{1}$, so that the analytic hypersurface $\acc{y_{1}=0}$
has the same pre-image by the sectorial normalizing maps $\Phi_{+}$
and $\Phi_{-}$. These pre-images glue together in order to define
an analytic invariant hypersurface $\mathscr{H}_{1}$.
\end{proof}
Notice that if we consider the restriction of a formal normal form
$\ynorm$ to one of the formal invariant hypersurfaces, we obtain
precisely the normal form for two-dimensional saddle-nodes as given
in \cite{MR82}. When one of these hypersurfaces is convergent (\emph{i.e.
}analytic), we recover the Martinet-Ramis invariants by restriction
to this hypersurface, as we present below.
\begin{prop}
Suppose that the formal invariant hypersurface $\mathscr{H}_{1}$
is convergent (\emph{i.e. }analytic in the origin). Then, the Martinet-Ramis
invariants for the saddle-node $Y_{\mid\mathscr{H}_{1}}$ are given
by:
\[
\begin{cases}
{\displaystyle \Psi_{2,\lambda}\left(0,h_{2}\right)=h_{2}+\Psi_{2,\lambda,0}\left(0\right)\in Aff\left(\ww C\right)}\\
{\displaystyle \Psi_{2,-\lambda}\left(0,h_{2}\right)=h_{2}+\sum_{n\geq2}\Psi_{2,-\lambda,n}\left(0\right)h_{2}^{n}\in\diff[\ww C,0]}.
\end{cases}
\]
\end{prop}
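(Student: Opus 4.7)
By the preceding proposition, convergence of $\mathscr{H}_{1}$ is encoded by the vanishing $\Psi_{1,-\lambda,n}(0)=0$ for all $n\geq 0$ in the space of leaves. The plan is to reduce the statement to the two-dimensional setting by restricting everything to the analytic hypersurface $\mathscr{H}_{1}$, and then to read off the claimed formulas from the power series expansions already available in Lemmas \ref{lem: istropies espace des feuilles 2} and \ref{lem: isotropies exp plates}.

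First I would perform a preliminary analytic change of coordinates, fibered and tangent to the identity, to assume $\mathscr{H}_{1}=\{y_{1}=0\}$; this keeps $Y$ in $\cro{\ynorm}$. Because both $Y$ and $\ynorm$ share the analytic invariant hypersurface $\{y_{1}=0\}$, and because it is the unique analytic invariant hypersurface of $\ynorm$ tangent to itself at the origin, the sectorial normalizing maps $\Phi_{+}$ and $\Phi_{-}$ of Corollary \ref{cor: existence normalisations sectorielles} must send $\{y_{1}=0\}$ onto $\{y_{1}=0\}$. Their restrictions $\Phi_{\pm}|_{\mathscr{H}_{1}}$ are then sectorial fibered diffeomorphisms tangent to the identity which conjugate the two-dimensional saddle-node
\[
Y|_{\mathscr{H}_{1}} = x^{2}\pp x + (\lambda + a_{2}x)y_{2}\pp{y_{2}}
\]
(using $c(0)=0$) to its classical normal form. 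This is precisely the Hukuhara--Kimura--Matuda setting of \cite{HKM} classified analytically in \cite{MR82}; by the uniqueness clause for two-dimensional sectorial normalizations, the restrictions $\Phi_{\pm\lambda}|_{\mathscr{H}_{1}}$ of the Stokes diffeomorphisms are the Martinet--Ramis analytic invariants of $Y|_{\mathscr{H}_{1}}$.

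Next I would pass to the space of leaves through $\cal H_{\pm\lambda}$: the locus $\{y_{1}=0\}$ corresponds to $\{h_{1}=0\}$ since $h_{1}$ is proportional to $y_{1}$ (see \eqref{eq: integrales premieres}). Combining the Laurent expansions of Lemma \ref{lem: istropies espace des feuilles 2} with the identifications $\Psi_{2,\lambda,-1}(w)=w$ and $\Psi_{2,-\lambda,1}(w)=1$ established at the end of Lemma \ref{lem: isotropies exp plates}, one rewrites
\[
\Psi_{2,\lambda}(h_{1},h_{2}) = h_{2} + \Psi_{2,\lambda,0}(h_{1}h_{2}) + \sum_{n\geq 1}\Psi_{2,\lambda,n}(h_{1}h_{2})h_{1}^{n}
\]
and
\[
\Psi_{2,-\lambda}(h_{1},h_{2}) = h_{2} + \sum_{n\geq 2}\Psi_{2,-\lambda,n}(h_{1}h_{2})h_{2}^{n}.
\]
Specialising to $h_{1}=0$ yields directly the two formulas of the statement: the first is the affine translation $h_{2}\mapsto h_{2}+\Psi_{2,\lambda,0}(0)$, and the second is tangent to the identity.

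The main obstacle is conceptual rather than computational: one must justify that the restrictions $\Psi_{2,\pm\lambda}(0,\cdot)$ really do coincide with the classical Martinet--Ramis Stokes invariants of $Y|_{\mathscr{H}_{1}}$, rather than being merely abstract restrictions of the three-dimensional Stokes data. The key point is that on $\{y_{1}=0\}$ the first integral $h_{2}$ from \eqref{eq: integrales premieres} restricts to $y_{2}\exp(\lambda/x)x^{-a_{2}}$, which is precisely the standard sectorial first integral of the two-dimensional saddle-node $Y|_{\mathscr{H}_{1}}$; once this matching of coordinates is in place, the affine/tangent-to-identity dichotomy is exactly the classical Martinet--Ramis presentation.
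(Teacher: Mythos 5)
The paper states this proposition without proof (only the surrounding remark that one ``recovers the Martinet--Ramis invariants by restriction''), and your argument is the correct and natural completion of that sketch, in the same spirit as the paper's proof of the preceding proposition: everything is read off the power-series representation \eqref{eq: description moduli space} after specialising to $h_{1}=0$, using $\Psi_{2,\lambda,-1}(w)=w$ and $\Psi_{2,-\lambda,1}(w)=1$ from Lemma \ref{lem: isotropies exp plates}, and the identification of $h_{2}\mid_{y_{1}=0}=y_{2}e^{\lambda/x}x^{-a_{2}}$ with the standard leaf coordinate of the two-dimensional saddle-node. Two minor points: the fact that $\Phi_{\pm}$ sends $\mathscr{H}_{1}$ onto $\left\{ y_{1}=0\right\}$ needs no appeal to uniqueness of invariant hypersurfaces of $\ynorm$ --- it is exactly what the proof of the preceding proposition establishes, namely $\mathscr{H}_{1}=\Phi_{\pm}^{-1}\left(\left\{ y_{1}=0\right\} \right)$; and your displayed equation reads $Y_{\mid\mathscr{H}_{1}}=x^{2}\pp x+\left(\lambda+a_{2}x\right)y_{2}\pp{y_{2}}$, which is the restriction of $\ynorm$ rather than of $Y$ --- what you mean is that $\Phi_{\pm}$ restricted to $\left\{ y_{1}=0\right\}$ conjugates $Y_{\mid\mathscr{H}_{1}}$ to that normal form, after which the Hukuhara--Kimura--Matuda uniqueness identifies these restrictions with the two-dimensional sectorial normalizations and hence $\Psi_{2,\pm\lambda}\left(0,\cdot\right)$ with the Martinet--Ramis pair. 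Neither point affects the validity of the proof.
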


Similar result holds for the hypersurface $\mathscr{H}_{2}$.

\subsubsection{The transversally symplectic case and quasi-linear Stokes phenomena
in the first Painlevé equation }

~

Let us now focus on the transversally symplectic case studied in Theorem
\ref{th: espace de module symplectic}. Let $\ynorm\in\snodiag$ be
transversally symplectic (\emph{i.e.} its residue is $\tx{res}\left(\ynorm\right)=1$).
Using the notations introduced in paragraph \ref{subsec: description moduli space},
we define the following sets:
\begin{eqnarray*}
\left(\cal E_{1,\lambda}\left(\ynorm\right)\times\cal E_{2,\lambda}\left(\ynorm\right)\right)_{\omega} & := & \acc{\begin{array}{c}
\Psi_{\lambda}=\left(\Psi_{1,\lambda},\Psi_{2,\lambda}\right)\in\cal E_{1,\lambda}\left(\ynorm\right)\times\cal E_{2,\lambda}\left(\ynorm\right)\\
\mbox{such that: }\det\left(\tx D\Psi_{\lambda}\right)=1
\end{array}}\\
\left(\cal E_{1,-\lambda}\left(\ynorm\right)\times\cal E_{2,-\lambda}\left(\ynorm\right)\right)_{\omega} & := & \acc{\begin{array}{c}
\Psi_{-\lambda}=\left(\Psi_{1,-\lambda},\Psi_{2,-\lambda}\right)\in\cal E_{1,-\lambda}\left(\ynorm\right)\times\cal E_{2,-\lambda}\left(\ynorm\right)\\
\mbox{such that: }\det\left(\tx D\Psi_{-\lambda}\right)=1
\end{array}}.
\end{eqnarray*}
According to Proposition \ref{prop: description espace de module},
the map
\begin{eqnarray*}
\Lambda_{\pm\lambda}\left(\ynorm\right) & \longrightarrow & \cal E_{1,\pm\lambda}\left(\ynorm\right)\times\cal E_{2,\pm\lambda}\left(\ynorm\right)\\
\psi_{\pm\lambda} & \mapsto & \Psi_{\pm\lambda}:=\cal H_{\pm\lambda}\circ\psi_{\pm\lambda}\circ\cal H_{\pm\lambda}^{-1}
\end{eqnarray*}
given in (\ref{eq: isotropie dans espace des feuilles}) is a bijection
$\Big($we identify here $\Psi_{\pm\lambda}\left(x,\mathbf{h}\right)=\left(x,\Psi_{1,\pm\lambda}\left(\mathbf{h}\right),\Psi_{2,\pm\lambda}\left(\mathbf{h}\right)\right)$
with $\left(\Psi_{1,\pm\lambda}\left(\mathbf{h}\right),\Psi_{2,\pm\lambda}\left(\mathbf{h}\right)\right)$$\Big)$.
An easy computation based on (\ref{eq: integrales premieres}) gives:
\begin{eqnarray*}
\left(\cal H_{\pm\lambda}^{-1}\right)^{*}\left(\frac{\tx dy_{1}\wedge\tx dy_{2}}{x}\right) & = & \tx dh_{1}\wedge\tx dh_{2}+\ps{\tx dx}\,\,.
\end{eqnarray*}
This means in particular that $\psi_{\pm\lambda}$ is transversally
symplectic with respect to ${\displaystyle \omega=\frac{\tx dy_{1}\wedge\tx dy_{2}}{x}}$,
\emph{i.e.} 
\begin{eqnarray*}
\left(\psi_{\pm\lambda}\right)^{*}\left(\omega\right) & = & \omega+\ps{\tx dx}\,\,,
\end{eqnarray*}
if and only if $\Psi_{\pm\lambda}=\left(\Psi_{1,\pm\lambda},\Psi_{2,\pm\lambda}\right)$
preserves the standard symplectic form $\tx dh_{1}\wedge\tx dh_{2}$
in the space of leaves, \emph{i.e. $\det\left(\tx D\Psi_{\pm\lambda}\right)=1$}.
In other words:
\begin{prop}
We have the following bijections:
\begin{eqnarray*}
\Lambda_{\lambda}^{\omega}\left(\ynorm\right) & \tilde{\rightarrow} & \left(\cal E_{1,\lambda}\left(\ynorm\right)\times\cal E_{2,\lambda}\left(\ynorm\right)\right)_{\omega}\\
\psi_{\lambda} & \mapsto & \left(\Psi_{1,\lambda},\Psi_{2,\lambda}\right)
\end{eqnarray*}
and
\begin{eqnarray*}
\Lambda_{-\lambda}^{\omega}\left(\ynorm\right) & \tilde{\rightarrow} & \left(\cal E_{1,-\lambda}\left(\ynorm\right)\times\cal E_{2,-\lambda}\left(\ynorm\right)\right)_{\omega}\\
\psi_{-\lambda} & \mapsto & \left(\Psi_{1,-\lambda},\Psi_{2,-\lambda}\right)
\end{eqnarray*}
$\Big($we identify here $\Psi_{\pm\lambda}\left(x,\mathbf{h}\right)=\left(x,\Psi_{1,\pm\lambda}\left(\mathbf{h}\right),\Psi_{2,\pm\lambda}\left(\mathbf{h}\right)\right)$
with $\left(\Psi_{1,\pm\lambda}\left(\mathbf{h}\right),\Psi_{2,\pm\lambda}\left(\mathbf{h}\right)\right)$$\Big)$.
\end{prop}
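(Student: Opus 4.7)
The plan is to deduce this result from Proposition \ref{prop: description espace de module}, which already establishes the bijections $\Lambda_{\pm\lambda}\left(\ynorm\right)\tilde{\rightarrow}\cal E_{1,\pm\lambda}\left(\ynorm\right)\times\cal E_{2,\pm\lambda}\left(\ynorm\right)$ through the conjugacy $\psi_{\pm\lambda}\mapsto\Psi_{\pm\lambda}:=\cal H_{\pm\lambda}\circ\psi_{\pm\lambda}\circ\cal H_{\pm\lambda}^{-1}$. It therefore suffices to show that, under this correspondence, the transversally symplectic condition on $\psi_{\pm\lambda}$ is equivalent to the condition $\det\left(\tx D\Psi_{\pm\lambda}\right)=1$, since the latter is precisely what defines $\left(\cal E_{1,\pm\lambda}\times\cal E_{2,\pm\lambda}\right)_{\omega}$.

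The first step is to establish rigorously the identity already announced before the statement, namely
\[
\left(\cal H_{\pm\lambda}^{-1}\right)^{*}\left(\frac{\tx dy_{1}\wedge\tx dy_{2}}{x}\right)\,\in\,\tx dh_{1}\wedge\tx dh_{2}+\ps{\tx dx}\qquad.
\]
Using the explicit expressions (\ref{eq: integrales premieres}) for $h_{1,\pm\lambda}$ and $h_{2,\pm\lambda}$, one computes the exterior derivatives $\tx dh_{1}$ and $\tx dh_{2}$; after wedging, the terms involving partial derivatives with respect to $y_{1}y_{2}$ (coming from the factor $\tilde{c}\left(y_{1}y_{2}\right)$ and from $c_{m}\left(y_{1}y_{2}\right)^{m}$) cancel by symmetry, leaving $\frac{\tx dy_{1}\wedge\tx dy_{2}}{x}$ modulo terms containing $\tx dx$ (which come from the explicit $x$-dependence of the exponential factor and of $x^{-a_{j}}$). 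This calculation is straightforward and uses only that $a_{1}+a_{2}=1=a$ for transversally Hamiltonian normal forms.

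The second step is the equivalence itself. Since $\psi_{\pm\lambda}$ and $\cal H_{\pm\lambda}$ are both $x$-fibered, the pullback relation $\Psi_{\pm\lambda}=\cal H_{\pm\lambda}\circ\psi_{\pm\lambda}\circ\cal H_{\pm\lambda}^{-1}$ together with the identity above gives
\[
\Psi_{\pm\lambda}^{*}\left(\tx dh_{1}\wedge\tx dh_{2}\right)-\tx dh_{1}\wedge\tx dh_{2}\,\in\,\ps{\tx dx}\quad\Longleftrightarrow\quad\psi_{\pm\lambda}^{*}\left(\omega\right)-\omega\,\in\,\ps{\tx dx}\qquad.
\]
Now because $\Psi_{1,\pm\lambda}$ and $\Psi_{2,\pm\lambda}$ depend only on $\left(h_{1},h_{2}\right)$ (Proposition \ref{prop: isotropie espace des feuilles}), one has
\[
\Psi_{\pm\lambda}^{*}\left(\tx dh_{1}\wedge\tx dh_{2}\right)=\det\left(\tx D\Psi_{\pm\lambda}\right)\tx dh_{1}\wedge\tx dh_{2}\qquad,
\]
with no $\tx dx$ term at all. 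Hence the left-hand condition above is equivalent to $\det\left(\tx D\Psi_{\pm\lambda}\right)=1$, and this establishes the desired equivalence.

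The only potential obstacle is the first step, namely ensuring that the exterior-algebra calculation of $\left(\cal H_{\pm\lambda}^{-1}\right)^{*}\omega$ is carried out carefully enough to track how the resonant logarithmic term $\frac{c_{m}\left(y_{1}y_{2}\right)^{m}\log x}{x}$ in the first integrals contributes; however, since this term is symmetric in the exponents appearing in $h_{1,\pm\lambda}$ and $h_{2,\pm\lambda}$ (with opposite signs), its contribution cancels in the wedge product, and the only surviving $\tx dx$-terms can be absorbed in $\ps{\tx dx}$. Once this is checked, the bijections follow immediately by restriction from Proposition \ref{prop: description espace de module}.
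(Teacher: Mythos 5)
Your proposal is correct and follows essentially the same route as the paper: the paper likewise deduces the result from Proposition \ref{prop: description espace de module} together with the computation $\left(\cal H_{\pm\lambda}^{-1}\right)^{*}\left(\frac{\tx dy_{1}\wedge\tx dy_{2}}{x}\right)\in\tx dh_{1}\wedge\tx dh_{2}+\ps{\tx dx}$, concluding that transversal symplecticity of $\psi_{\pm\lambda}$ is equivalent to $\det\left(\tx D\Psi_{\pm\lambda}\right)=1$. Your added care about the fibered structure and the cancellation of the resonant logarithmic term is a reasonable elaboration of what the paper calls ``an easy computation.''
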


\subsubsection{Quasi-linear Stokes phenomena in the first Painlevé equation}

~

In \cite{bittmann:tel-01367968}, we link the study of \emph{quasi-linear}
\emph{Stokes phenomena }(see \cite{Kapaev} for the first Painlevé
equation)\emph{ }to our Stokes diffeomorphisms. For instance, in the
case of the first Painlevé equation, we show that the quasi-linear
Stokes phenomena formula found by Kapaev in \cite{Kapaev} allows
to compute the terms $\Psi_{2,\lambda,0}\left(0\right)$ and $\Psi_{1,-\lambda,0}\left(0\right)$
in (\ref{eq: description moduli space}). More precisely, elementary
computations (using Kapaev's connection formula) give: 
\[
\Psi_{2,\lambda,0}\left(0\right)=i\Psi_{1,-\lambda,0}\left(0\right)=\frac{e^{\frac{i\pi}{8}}}{\sqrt{\pi}}2^{\frac{3}{8}}3^{\frac{1}{8}}\,\,.
\]
Moreover, our description of the Stokes diffeomorphisms implies a
more precise estimate of the order of the remaining terms in Kapaev's
formula. In a forthcoming paper, we will use the study of some \emph{non-linear
Stokes phenomena }for the second Painlevé equations\emph{ }(see \emph{e.g.
}\cite{ClarksonMcLeod}) in order to compute coefficients of the $\Psi_{i,\pm\lambda}$'s.

\bibliographystyle{alphaurl}
\bibliography{references_preprint_AIF}

\end{document}